\documentclass[12pt,twoside,british]{report}
\usepackage[utopia]{mathdesign}

\usepackage[T1]{fontenc}
\usepackage[latin9]{inputenc}
\usepackage[a4paper]{geometry}
\geometry{verbose,tmargin=3cm,bmargin=3cm,lmargin=3cm,rmargin=3cm}
\usepackage{xcolor}
\usepackage{babel}
\usepackage{verbatim}
\usepackage{units}
\usepackage{textcomp}
\usepackage{mathtools}
\usepackage{enumitem}
\usepackage{amsmath}
\usepackage{amsthm}
\usepackage{stmaryrd}
\usepackage{makeidx}
\makeindex
\usepackage{graphicx}
\usepackage{setspace}
\usepackage{xargs}[2008/03/08]
\PassOptionsToPackage{normalem}{ulem}
\usepackage{ulem}
\usepackage{nomencl}
\providecommand{\printnomenclature}{\printglossary}
\providecommand{\makenomenclature}{\makeglossary}
\makenomenclature
\usepackage[unicode=true,pdfusetitle,
 bookmarks=true,bookmarksnumbered=false,bookmarksopen=false,
 breaklinks=false,pdfborder={0 0 1},backref=false,colorlinks=false]
 {hyperref}

\makeatletter


\newenvironment{subproof}[1][\proofname]{%
  \begin{proof}[#1]%
}{%
  \end{proof}%
}

\hyphenation{ei-gen-space ei-gen-spaces}

\renewcommand{\slash}{/\penalty\exhyphenpenalty\hspace{0pt}}

\newcommand{\noun}[1]{\textsc{#1}}

\numberwithin{equation}{section}
\newlength{\lyxlabelwidth}      
\theoremstyle{plain}
\newtheorem{thm}{\protect\theoremname}[section]
\theoremstyle{definition}
\newtheorem{defn}[thm]{\protect\definitionname}
\theoremstyle{remark}
\newtheorem{notation}[thm]{\protect\notationname}
\theoremstyle{remark}
\newtheorem{rem}[thm]{\protect\remarkname}
\theoremstyle{plain}
\newtheorem{lem}[thm]{\protect\lemmaname}
	\newenvironment{elabeling}[2][]%
	{\settowidth{\lyxlabelwidth}{#2}
		\begin{description}[font=\normalfont,style=sameline,
			leftmargin=\lyxlabelwidth,#1]}
	{\end{description}}
\theoremstyle{plain}
\newtheorem{cor}[thm]{\protect\corollaryname}
\theoremstyle{plain}
\newtheorem{prop}[thm]{\protect\propositionname}
\theoremstyle{remark}
\newtheorem*{note*}{\protect\notename}
\theoremstyle{remark}
\newtheorem{note}[thm]{\protect\notename}
\theoremstyle{definition}
\newtheorem{example}[thm]{\protect\examplename}
\newcommand{\lyxaddress}[1]{
	\par {\raggedright #1
	\vspace{1.4em}
	\noindent\par}
}
\theoremstyle{plain}
\newtheorem{question}[thm]{\protect\questionname}
\theoremstyle{plain}
\newtheorem{conjecture}[thm]{\protect\conjecturename}

\usepackage{tikz}

\usetikzlibrary{matrix}
\usetikzlibrary{arrows}
\usetikzlibrary{patterns}

\usepackage{fancyhdr}

\makeatother

\def\nomname{Symbols and Notation}
\providecommand{\conjecturename}{Conjecture}
\providecommand{\corollaryname}{Corollary}
\providecommand{\definitionname}{Definition}
\providecommand{\examplename}{Example}
\providecommand{\lemmaname}{Lemma}
\providecommand{\notationname}{Notation}
\providecommand{\notename}{Note}
\providecommand{\propositionname}{Proposition}
\providecommand{\questionname}{Question}
\providecommand{\remarkname}{Remark}
\providecommand{\theoremname}{Theorem}

\begin{document}
\selectlanguage{british}

\global\long\def\m#1{\operatorname{#1}}%
\global\long\def\ml#1{\operatorname*{#1}}%

\newcommandx\ubdelim[6][usedefault, addprefix=\global, 1=., 2=.]{\left #1 #3\smash{\underbrace{#4}_{#5}}\vphantom{#4}#6\right #2 \vphantom{\underbrace{#4}_{#5}}}
\newcommandx\obdelim[6][usedefault, addprefix=\global, 1=., 2=.]{\left #1 #3\smash{\overbrace{#4}^{#5}}\vphantom{#4}#6\right #2 \vphantom{\overbrace{#4}^{#5}}}

\global\long\def\expfrac#1#2{{\scriptstyle #1}/\raisebox{-1pt}{\ensuremath{{\scriptstyle #2}}}}%

\DeclarePairedDelimiter{\paren}{\lparen}{\rparen}%
\DeclarePairedDelimiter{\pbrace}{\lbrace}{\rbrace}%
\DeclarePairedDelimiter{\abs}{\lvert}{\rvert}%
\DeclarePairedDelimiter{\norm}{\lVert}{\rVert}%
\global\long\def\group#1{\m{#1}}%

\global\long\def\GL{\group{GL}}%
\global\long\def\SL{\group{SL}}%
\global\long\def\GO{\group O}%
\global\long\def\SO{\group{SO}}%
\global\long\def\GU{\group U}%
\global\long\def\SU{\group{SU}}%
\global\long\def\GS{\group S}%

\newcommandx\CH[1][usedefault, addprefix=\global, 1=\bullet]{H^{#1}}%

\global\long\def\nt{\trianglelefteq}%
\global\long\def\tn{\triangleright}%
\global\long\def\operp{\mathbin{\textcircled\ensuremath{\perp}}}%
\global\long\def\pmmg{\mathbin{\begin{subarray}{c}
\pm\\
\cdot:
\end{subarray}}}%

\global\long\def\subqed{\qedhere\qquad\text{}}%
\global\long\def\subsubqed{\qedhere\qquad\qquad\text{}}%

\global\long\def\End{\m{End}}%
\global\long\def\Aut{\m{Aut}}%
\global\long\def\Hom{\m{Hom}}%
\global\long\def\Eig{\m{Eig}}%
\global\long\def\Spec{\m{Spec}}%
\global\long\def\Specm{\m{Spec_{max}}}%

\global\long\def\Re{\m{Re}}%
\global\long\def\Im{\m{Im}}%
\global\long\def\id{\m{id}}%
\global\long\def\tr{\m{tr}}%
\global\long\def\rk{\m{rk}}%
\global\long\def\sgn{\m{sgn}}%
\global\long\def\diag{\m{diag}}%
\global\long\def\dist{\m{dist}}%
\global\long\def\abst{\m{abst}}%
\global\long\def\rot{\m{rot}}%
\global\long\def\im{\m{im}}%
\global\long\def\nil{\m{nil}}%
\global\long\def\ggt{\m{ggT}}%
\global\long\def\Var{\m{Var}}%
\global\long\def\supp{\m{supp}}%
\global\long\def\spann{\m{span}}%
\global\long\def\ord{\m{ord}}%
\global\long\def\inv{\m{inv}}%
\global\long\def\pt{\m{pt}}%
\global\long\def\Sym{\m{Sym}}%
\global\long\def\Mat{\m{Mat}}%
\global\long\def\Gr{\m{Gr}}%
\global\long\def\Stab{\m{Stab}}%

\global\long\def\Pow{\m{Pow}}%
\global\long\def\Res{\m{Res}}%
\global\long\def\Jac{\m{Jac}}%
\global\long\def\Transp{\mathsf{T}}%

\pagenumbering{roman}
\title{Fixed points and dynamics of holomorphic maps}
\author{Josias Reppekus}

\maketitle
\cleardoublepage{}

\fancyhead[LE]{\nouppercase\leftmark}
\fancyhead[RO]{\nouppercase\rightmark}
\fancyhead[LO]{}
\fancyhead[RE]{}
\pagestyle{fancy}

\pagestyle{plain}

\phantomsection\addcontentsline{toc}{chapter}{Introduction}%

\chapter*{Introduction}

\selectlanguage{british}

In this manuscript we systematically review known results of local
dynamics of discrete local holomorphic dynamics near fixed points
in one and several complex variables as well as the consequences in
global dynamics. 

Discrete holomorphic dynamics is the study of the behaviour of the
sequence of iterates $\{F^{\circ n}\}_{n\in\mathbb{N}}$ of a holomorphic
endomorphism $F:M\to M$ of a complex manifold $M$, or the behaviour
of the \emph{orbits} $\{F^{\circ n}(p)\}_{n}$ of points $p\in M$.
We are mainly interested in invariant subsets of $M$, their geometric
properties, and the behaviour of the contained orbits. We will examine
these questions from a local and a global point of view. 

In local holomorphic dynamics, we focus on a fixed point of $F$ and
examine the behaviour of orbits of points in a small neighbourhood
of that fixed point. 

In global holomorphic dynamics, our central objects of interest is
the \emph{Fatou set} of all points in $M$ that admit a neighbourhood
on which $\{F^{\circ n}\}_{n\in\mathbb{N}}$ is normal, and its connected
components, the \emph{Fatou components} of $F$. A Fatou component
can be thought of as a maximal connected subsets of $M$ on which
the behaviour of orbits of $F$ is stable under small changes of the
starting point. 

In dimension one, the local dynamics near fixed points are almost
completely understood and classified, depending on the derivative
of $F$ at the fixed point. The currently final major piece in the
puzzle was provided by results of Pérez-Marco, in \cite{PerezMarco1997FixedPointsandCircleMaps}
and other publications, with the concept of hedgehogs, that shed light
on the most complicated case of Cremer fixed points. Moreover, in
one complex variable, every (interesting) invariant Fatou component
is hyperbolic, so by Denjoy-Wolff theory, all orbits either accumulate
inside or at a single fixed point on the boundary. Hence, local dynamics
near fixed points played a major part in the classification of these
components by Fatou, Julia and others. For rational maps, Sullivan
famously ruled out the existence of Fatou components whose orbits
never land in the same component twice, called \emph{wandering} \emph{domains,}
completing the classification of all Fatou components in this case.
Sullivan's non-wandering result does not hold for transcendental functions,
where wandering domains are known to occur. 

In several variables, the local situation is made more complicated
by the presence of multiple eigenvalues of the linear part of $F$
at a fixed point and mixing of the one-dimensional behaviours associated
to each eigenvalue. In particular, non-trivial multiplicative relations
between eigenvalues, called resonances, lead to new problems and phenomena.
Especially in the Cremer-case little is known about local dynamics,
as the hedgehog-techniques do not generalise. 

In global dynamics of general holomorphic maps little is known, but
there are strong results on suitable subclasses such as automorphisms
of $\mathbb{C}^{2}$ or endomorphisms of projective space. For polynomial
automorphisms of $\mathbb{C}^{2}$,  Bedford and Smillie \cite{BedfordSmillie1991PolynomialdiffeomorphismsofmathbbC2IIStablemanifoldsandrecurrence}
and Fornæss and Sibony \cite{FornessSibony1992ComplexHenonmappingsinmathbbC2andFatouBieberbachdomains}
classified the invariant Fatou components that have orbits accumulating
inside. If the Jacobian of the map is moreover sufficiently small,
then Lyubich and Peters \cite{LyubichPeters2014ClassificationofinvariantFatoucomponentsfordissipativeHenonmaps}
show that the only other case is again convergence to a fixed point
in the boundary with well-understood local dynamics, completing the
classification of invariant Fatou components in this case.

These global arguments reducing the problem to a local one exemplify
the role of local dynamics in classifying global stable dynamics,
but they are not the focus of this work. We instead start from purely
local considerations and observe the global objects that emerge: We
can use local dynamics to obtain local coordinates conjugating the
map to a map with simple dynamics, e.g. a linear map. Such coordinates
shed further light on the orbit behaviour near the fixed point, but
for automorphisms in particular, these local coordinates extend to
the whole Fatou component and let us identify its biholomorphic type.
This was the basis of the first constructions of proper subdomains
of $\mathbb{C}^{2}$ biholomorphic to $\mathbb{C}^{2}$, so-called
Fatou-Bieberbach domains.

\subsubsection*{Structure}

In Chapter~\ref{sec:BasicsLocal}, we introduce the basic objects
and tools in local dynamics followed by the Fatou and Julia set in
global dynamics.

Chapter~\ref{sec:Local1Ddyn} describes the classification of local
dynamics and Fatou components in one complex variable.

Chapter~\ref{chap:2DlocDyn} is the main part of this work: a review
of local dynamics in complex dimension two and above, including the
theory of formal Poincaré-Dulac normal forms and conditions for convergence
of (partial) normalisations. In particular, we systematically review
known results on the local dynamics near different types of fixed
points.

Lastly in Chapter~\ref{chap:FatouAut}, we survey known classification
results and examples of Fatou components in several complex variables.
We then relate what we know about local dynamics to the classification
of attracting Fatou components of automorphisms of complex affine
space and close with a number of open questions and conjectures.

\cleardoublepage\phantomsection\addcontentsline{toc}{chapter}{\contentsname}\tableofcontents{}\cleardoublepage{}

\selectlanguage{british}
\global\long\def\Transp{\mathsf{T}}%
\global\long\def\Pow{\m{Pow}}%
\global\long\def\ord{\m{ord}}%

\nomenclature[N]{$\mathbb N$}{The set of natural numbers including zero $\{0,1,2,3,\ldots\}$.}

\nomenclature[C]{$\mathbb{C}$}{The set of complex numbers.}

\nomenclature[R]{$\mathbb{R}$}{The set of Real numbers.}

\nomenclature[Q]{$\mathbb{Q}$}{The set of rational numbers.}

\nomenclature[Aut(M,p)]{$\Aut(M,p)$}{The set of germs of invertible holomorphic self-maps of a manifold $M$ fixing the point $p \in M$.} 

\nomenclature[Aut(N,p)]{$\Aut_L(\mathbb{C}^d,0)$}{The set of germs in $\Aut(\mathbb{C}^d,0)$ with linear part $L\in \mathbb{C}^{d\times d}$ at the fixed point $0$.}

\nomenclature[End(M,X)]{$\End(M,X)$}{The set of germs of holomorphic self-maps of a manifold $M$ mapping the closed subset $X\subseteq M$ into itself.}

\nomenclature[End(M,p)]{$\End(M,p)$}{The set of germs of holomorphic self-maps of a manifold $M$ fixing the point $p \in M$.} 

\nomenclature[End(M,q)]{$\End_L(\mathbb{C}^d,0)$}{The set of germs in $\End(\mathbb{C}^d,0)$ with linear part $L\in \mathbb{C}^{d\times d}$ at the fixed point $0$.}

 \nomenclature[F:(M,X)->(N,Y)]{$F:(M,X)\to (N,Y)$}{A germ of maps from $M$ to $N$ at $X \subseteq M$ sending $X$ into $Y\subseteq N$.}

\nomenclature[F:(M,X)->(N,Y)]{$F:(M,p)\to (N,q)$}{A germ of maps from $M$ to $N$ at $p \in M$ sending $p$ to $q \in N$.}

\nomenclature[Fn]{$F^{\circ n}$}{The $n$-th iterate of an endomorphism $F$.}

\nomenclature[Pow(Cd,0)]{$\Pow(\mathbb{C}^{d},0)= \mathbb{C}_{0}[[z_{1},\ldots,z_{d}]]^{d}$}{The space of $d$-tuples of formal power series in $z_1,\ldots,z_d$ with complex coefficients and vanishing constant term.}

\nomenclature[Pow(Cd,1)]{$\Pow_L(\mathbb{C}^{d},0)$}{The space of $d$-tuples of formal power series with complex coefficients, vanishing constant term, and linear part $L \in \mathbb{C}^{d \times d}$.}

\nomenclature[Projn]{$\mathbb{P}^{n}$}{The complex projective space of dimension $n$}

\nomenclature[Proj(V)]{$\mathbb{P}(V)$}{The complex projective space of one-dimensional subspaces of  $V$.}

\nomenclature[Pkkp]{$p_{k}\xrightarrow[k\to\infty]{}p$}{The sequence $(p_k)_{k\in \mathbb N}$ converges to $p$.}

\nomenclature[Brp]{$B_{r}(p)$}{The euclidean ball of radius $r$ around $p \in \mathbb C^{d}$.}

\nomenclature[V]{$\overline{V}$}{The closure of a set $V$ in the surrounding topological space.}

\nomenclature[ej]{$e_{j}$}{The unit vector with $1$ in the $j$-th component.}

\nomenclature[dFp]{$dF_{p}$}{The linear part of a function, germ or power series at the point $p$.}

\nomenclature[diagA1Ak]{$\diag (A_{1},\ldots,A_{k})$}{The block-diagonal matrix with blocks $A_1, \ldots, A_k$.}

\nomenclature[z]{{$\lVert z\rVert$}}{The euclidean norm of a vector $z\in\mathbb{C}^{d}$}

\nomenclature[SigmaFU]{$\Sigma_{F}(U)$}{The local stable set containing all orbits under the map $F$ that never leave the set $U$.}

\nomenclature[AFS]{$A_{F}(p)$}{The realm of attraction to a point $p$ containing all orbits under the map $F$ that converge to $p$.}

\nomenclature[AFS]{$A_{F}(S)$}{The realm of attraction to a set $S$ containing all orbits under the map $F$ whose distance to $S$ converges to $0$.}

\nomenclature[dFp]{$dF_{p}$}{The linear part of a germ $F\in\End(\mathbb{C}^d,p)$.}

\phantomsection\addcontentsline{toc}{chapter}{\nomname}\printnomenclature[5.1em]{}

\subsection*{Landau notation}

For a topological space $D$ and maps $f,g:D\to\mathbb{C}$, we use
Bachmann-Landau notation for semi-local behaviour:
\begin{itemize}
\item $f(x)=O(g(x))$ for $x\in D$, if $|f(x)|\le C|g(x)|$ for all $x\in D$
for some $C>0$,
\item $f(x)\approx g(x)$ for $x\in D$, if $f(x)=O(g(x))$ and $g(x)=O(f(x))$
(also denoted $f(x)=\Theta(g(x))$ in the literature),
\end{itemize}
and for asymptotic behaviour near $x_{0}\in D$:
\begin{itemize}
\item $f(x)=O(g(x))$ as $x\to x_{0}$, if $\limsup_{x\to x_{0}}\frac{|f(x)|}{|g(x)|}=C<+\infty$,
\item $f(x)\approx g(x)$ as $x\to x_{0}$, if $f(x)=O(g(x))$ and $g(x)=O(f(x))$
as $x\to x_{0}$,
\item $f(x)=o(g(x))$ as $x\to x_{0}$, if $\lim_{x\to x_{0}}\frac{|f(x)|}{|g(x)|}=0$,
\item $f(x)\sim g(x)$ as $x\to x_{0}$, if $\lim_{x\to x_{0}}\frac{f(x)}{g(x)}=1$
or $f(x)=g(x)(1+o(1))$ as $x\to x_{0}$.
\end{itemize}
We use the same symbol $O(g(x))$ to denote a vector of functions
of the same type with the appropriate dimension for its context. Multiple
arguments denote a sum: $O(f(x),g(x)):=O(f(x))+O(g(x))$. If the coordinates
are clear, then we use $O(k)$ as short-hand for terms of total order
at least $k$ in all occurring variables.

\subsection*{Indices}

As we use lower indices to indicate the orbit $\{z_{n}\}_{n}=\{F^{\circ n}(z)\}_{n}$
of a point $z\in\mathbb{C}^{d}$, we often use upper indices for the
components of $z=(z^{1},\ldots,z^{d})$. We try to always indicate
this clearly to avoid confusion with exponents.

\cleardoublepage{}

\pagenumbering{arabic}

\pagestyle{fancy}

\chapter{\label{sec:BasicsLocal}Framework of holomorphic iteration}

In this chapter we set up the central objects and tools in dynamics
of holomorphic maps. Sections~\ref{sec:Germs} through \ref{subsec:Normal-Forms-andInvariants}
set up the framework and central questions for local dynamics, based
on the first section of Abate's comprehensive survey article \cite{Abate2010Discreteholomorphiclocaldynamicalsystems}.
Then we define normality as a measure of dynamical stability in global
dynamics in Section~\ref{sec:FatouDef}, leading to a decomposition
of the domain into the Fatou set of points with locally stable dynamics
and the Julia set of points with chaotic behaviour. 

\selectlanguage{british}
\global\long\def\Transp{\mathsf{T}}%
\global\long\def\Pow{\m{Pow}}%
\global\long\def\ord{\m{ord}}%

\section{\label{sec:Germs}Germs}

For the rest of this chapter let $M$ and $N$ be complex manifolds
and $p\in M$ and $q\in N$. Given an open subset $U\subseteq M$
and a holomorphic self map $F:U\to M$ fixing a point $p\in U$, we
are concerned only with the local dynamics close to the fixed point
$p$. That is to say, we wish to freely replace $F$ by its restriction
to suitable neighbourhoods of $p$. This is made rigorous by passing
to germs.
\begin{defn}
Let $M$ and $N$ be complex manifolds (or topological spaces) and
$p\in M$. Let $U$ and $V$ be open neighbourhoods of $p$ in $M$.
Then we call continuous maps $F:U\to N$ and $G:V\to N$ \emph{equivalent}
if they coincide on some (possibly small) open neighbourhood $W\subseteq U\cap V$
of $p$. The equivalence classes of this relation are called \emph{germs\index{germ of maps!at a point}
(of continuous maps)} or \emph{local (continuous) maps\index{local map!at a point}
}from $M$ to $N$ at $p$. An element of such an equivalence class
is a \emph{representative\index{representative@\emph{representative}}}
of the containing germ.
\end{defn}

A germ inherits all local properties of its representatives, in particular
their regularity (near $p$), so it makes sense to talk about \emph{local
holomorphic maps\index{local holomorphic map}} or \emph{holomorphic
germs}\index{holomorphic germ}.
\begin{notation}
By abuse of notation, we use the same symbol for a germ $F$ and a
representative $F:U\to N$. If we want to emphasise the specific representative
on $U$, we write $F|_{U}$. We also denote a germ $F$ at $p$ by
$F:(M,p)\to N$ or $F:(M,p)\to(N,q)$, where $q=F(p)\in N$ and we
let $\End(M,p)$ denote the set of holomorphic germs $F:(M,p)\to(M,p)$
fixing a point $p\in M$ and $\Aut(M,p)\subseteq\End(M,p)$ the subset
of locally biholomorphic germs.
\end{notation}

\section{Local Dynamics}

For a germ $F\in\End(M,p)$ we are interested in the dynamics around
the fixed point $p$, that is, in the behaviour of the iterates $F^{\circ n}$
for large $n$. Clearly, arbitrary iterates are defined at $p$, but
not necessarily on any fixed neighbourhood of $p$.

\begin{defn}
Let $F:U\to M$ be a representative of $F\in\End(M,p)$.
\begin{enumerate}[noitemsep]
\item The (\emph{forward})\emph{ orbit\index{orbit} }of a point $q\in U$
under $F$ is the sequence 
\[
\{q_{n}\}_{n}:=\{F^{\circ n}(q)\mid n\in\mathbb{N},F^{\circ(n-1)}(q)\in U\}.
\]
An orbit $\{q_{n}\}_{n}$ is \emph{stable\index{stable orbit@\emph{stable orbit}}}
(in $U$) or \emph{\index{non-escaping orbit@\emph{non-escaping orbit}}non-escaping}
(from $U$), if it is infinite (or $\{q_{n}\}_{n}\subseteq U$\}).
An orbit $\{q_{n}\}_{n}$ \emph{escapes\index{escaping orbit@\emph{escaping orbit}}}
$U$ if it is finite (or $\{q_{n}\}_{n}\not\subseteq U$).

\item The set 
\[
\Sigma_{F}:=\Sigma_{F}(U):=\bigcap_{j=0}^{\infty}F^{\circ(-j)}(U)=\{q\in U\mid F^{\circ n}(q)\in U\text{ for all }n\in\mathbb{N}\}
\]
of all stable orbits in $U$ is called the \emph{(local) stable set\index{stable set}}
of $F$ (in $U$).
\item The set 
\[
A_{F}:=A_{F}(p):=A_{F}(p,U):=\{q\in\Sigma_{F}(U)\mid F^{\circ n}(q)\xrightarrow[n\to\infty]{}p\}
\]
 of all orbits in $U$ converging to $p$ is called the \emph{realm
of attraction\index{realm of attraction@\emph{realm of attraction}}}
of $F$ (to $p$ in $U$).
\item A subset $V\subseteq U$ is \emph{invariant\index{invariant set}}
under $F$ (or \emph{$F$-invariant), }if $F(V)\subseteq V$, and
\emph{completely invariant\index{completely invariant set}} under
$F$ (or \emph{completely} \emph{$F$-invariant), }if $F^{-1}(V)=V$.

\end{enumerate}
\end{defn}

\begin{rem}
The stable set $\Sigma_{F}(U)$ and the realm of attraction $A_{F}(p)$
are $F$-invariant and completely $F|_{U}$-invariant.
\end{rem}

\begin{rem}[Terminology]
In hyperbolic settings, the realm of attraction $A_{F}$ is commonly
called the \emph{(global) stable set}, as for small enough $U$, it
coincides with the local stable set $\Sigma_{F}(U)$ (see Theorem~\ref{thm:StableMnfThm}).
$A_{F}$ is also called the \emph{basin of attraction }or \emph{stable
manifold }when it is an open set or a lower dimensional submanifold
respectively (see Theorems~\ref{thm:StableMnfThm}. We adopt the
term ``realm of attraction'', that J.~Milnor introduced in \cite{Milnor1985OntheConceptofAttractor}
to avoid confusion.
\end{rem}

At first glance, all interesting local dynamics happen in the set
$\Sigma_{F}(U)$, as points outside of it have finite orbits, but
additional information can be obtained from their possible backward
orbits:
\begin{defn}
Let $F:U\to M$ be a representative of $F\in\End(M,p)$.
\begin{enumerate}[noitemsep]
\item A \emph{backward orbit\index{backward orbit@\emph{backward orbit}}
of $q\in M$ under $F$ is a sequence} $\{q_{-k}\}_{k\in\mathbb{N}}$
with $q_{0}=q$ and $q_{-k+1}=F(q_{-k})$ for all $k\in\mathbb{N}$.
\item The \emph{unstable set}\index{unstable set} $\Sigma_{F}^{-}=\Sigma_{F}^{-}(U)$
of $F$ (in $U$) is the set of all $q\in U$ that admit a (infinite)
backward orbit contained in $U$.
\item The \emph{realm of repulsion\index{realm of repulsion@\emph{realm of repulsion}}}
$A_{F}^{-}$ of $F$ (from $p$ in $U$) is the set of all $q\in\Sigma_{F}^{-}$
that admit a backward orbit $\{q_{-k}\}_{k\in\mathbb{N}}\subseteq U$
such that $q_{-k}\xrightarrow[k\to+\infty]{}0$.
\end{enumerate}
\end{defn}

\begin{rem}
If $F$ is invertible, then the unstable set $\Sigma_{F}^{-}$ is
the stable set $\Sigma_{F^{-1}}$ of the inverse $F^{-1}$ and the
realm of repulsion $A_{F}^{-}$ is the realm of attraction $A_{F^{-1}}$
of the inverse $F^{-1}$.
\end{rem}

Points outside $\Sigma_{F}(U)$ and $\Sigma_{F}^{-}(U)$ have finite
forward and backward orbits, so there is no long-term dynamical behaviour
to study, but nevertheless, weather such points exists can be an interesting
question.

The topology and orbit behaviour of the sets $\Sigma_{F}$ and $A_{F}$
as well as $\Sigma_{F}^{-}$ and $A_{F}^{-}$ is of central interest
in discrete local holomorphic dynamics.

$\Sigma_{F}(U)$ depends strongly on the neighbourhood $U$ and we
will be concerned with features that persist for arbitrarily small
neighbourhoods. For example, for an entire function $F:\mathbb{C}\to\mathbb{C}$,
the stable set with respect to $\mathbb{C}$ is just $\Sigma_{F}(\mathbb{C})=\mathbb{C}$,
which does not give us any local information.

\section{Conjugation}

A first step towards understanding the dynamics of a germ $F\in\End(M,p)$,
is to conjugate $F$ to a germ of a simpler form, such as a linear
self-map of $\mathbb{C}^{d}$. There are three main types of conjugacy
we discuss here, following Section 1 of \cite{Abate2010Discreteholomorphiclocaldynamicalsystems}.
\begin{defn}
Let $F\in\End(M,p)$ and $G\in\End(N,q)$. Then $F$ and $G$ are
called \emph{holomorphically (topologically, smoothly) conjugate}\index{conjugation},
if there exists a local biholomorphism (homeomorphism, (real) diffeomorphism)
$\varphi:(M,p)\to(N,q)$ such that $F=\varphi^{-1}\circ G\circ\varphi$.
\end{defn}

We can immediately observe that if $F$ and $G$ are conjugate, the
local dynamics of $F$ and $G$ (as described in the previous section)
are equivalent as conjugation preserves topological properties. Moreover,
taking a local chart $\varphi:(M,p)\to(\mathbb{C}^{d},0)$ centred
at $p$, any holomorphic germ $F\in\End(M,p)$ is holomorphically
conjugate to a germ $G\in\End(\mathbb{C}^{d},0)$, where $d=\dim M$.
Hence, for our purposes, it is enough to consider germs in $\End(\mathbb{C}^{d},0)$
or, equivalently, ($d$-tuples of) convergent power series without
constant term. Therefore, we can consider $\End(\mathbb{C}^{d},0)$
as a subspace of the space $\Pow(\mathbb{C}^{d},0)$ of ($d$-tuples
of) formal power series with vanishing constant terms. In particular,
we can conjugate via elements $\Phi\in\Pow(\mathbb{C}^{d},0)$ that
are invertible with respect to formal composition of power series
($\Phi$ is invertible if and only if its linear part is). We also
call an element $\Phi\in\Pow(\mathbb{C}^{d},0)$ a \emph{\index{formal germ}formal
germ}.
\begin{defn}
Two formal germs $F,G\in\Pow(\mathbb{C}^{d},0)$ are \emph{formally
conjugate}\index{conjugation!formal}, if there exists an invertible
formal power series $\Phi\in\Pow(\mathbb{C}^{d},0)$ such that $F=\Phi^{-1}\circ G\circ\Phi$.
\end{defn}

Clearly, if $F$ and $G$ are holomorphically conjugate germs, they
are both topologically and formally conjugate as well. On the other
hand, neither formal nor topological conjugation imply the other (compare
Theorem~\ref{thm:GrobmanHartman} versus Theorem~\ref{thm:PoincareDulac}).

Note that unlike topological conjugation, formal conjugation does
not necessarily preserve dynamical properties. However, formal conjugates
are often easier to construct systematically. From there, one can
try to prove convergence of the formal conjugating series. If this
is not possible, one can always truncate the conjugating series according
to the following lemma:
\begin{lem}
\label{lem:FormalConjIffAnyOrderPolyConj}Two germs $F,G\in\Pow(\mathbb{C}^{d},0)$
are formally conjugate if and only if they are polynomially conjugate
up to every finite order, i.e.\ for every $k\in\mathbb{N}$ there
exists a polynomial $P_{k}\in\Aut(\mathbb{C}^{d},0)$ such that 
\begin{equation}
P_{k}\circ F\circ P_{k}^{-1}(z)=G(z)+O(\norm z^{k})\label{eq:conjToHighOrders}
\end{equation}
 for $z\in\mathbb{C}^{d}$ near $0$.
\end{lem}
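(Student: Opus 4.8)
The plan is to prove both directions separately, with the forward direction being essentially immediate and the reverse direction requiring a stabilisation argument on the coefficients.

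For the forward implication, suppose $F$ and $G$ are formally conjugate via an invertible $\Phi\in\Pow(\mathbb{C}^d,0)$, so $\Phi\circ F = G\circ\Phi$ as formal power series. For each $k$, let $P_k$ be the truncation of $\Phi$ at order $k$, i.e.\ the polynomial obtained by discarding all terms of total degree $\ge k$. Since $\Phi$ is invertible, its linear part $d\Phi_0$ is invertible; as $P_k$ has the same linear part for $k\ge 2$, it lies in $\Aut(\mathbb{C}^d,0)$. I would then verify that $P_k\circ F$ and $G\circ P_k$ agree with $\Phi\circ F$ and $G\circ\Phi$ respectively up to order $k$: this is because $F$ and $\Phi$ have no constant term, so replacing $\Phi$ by $P_k$ only alters terms of order $\ge k$ after composition. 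Composing with $P_k^{-1}$ (which also lies in $\Aut(\mathbb{C}^d,0)$) and noting that conjugating by an automorphism does not decrease the order of agreement, one obtains \eqref{eq:conjToHighOrders}. The one point to handle carefully is that $P_k^{-1}$ is a formal, not polynomial, series; but \eqref{eq:conjToHighOrders} as stated only requires $P_k$ to be a polynomial automorphism, and a formal $P_k^{-1}$ is fine, or one may further truncate $P_k^{-1}$ at order $k$ at the cost of nothing below order $k$.

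For the reverse implication, assume for every $k$ there is a polynomial $P_k\in\Aut(\mathbb{C}^d,0)$ with $P_k\circ F\circ P_k^{-1}(z) = G(z) + O(\norm z^k)$. I want to produce a single formal $\Phi$ with $\Phi\circ F\circ\Phi^{-1}=G$. The idea is a diagonal/stabilisation argument: write $P_k(z) = \sum_{j\ge 1} P_k^{(j)}(z)$ in homogeneous components. The key claim is that we may choose the $P_k$ so that the homogeneous components stabilise, i.e.\ $P_{k+1}^{(j)} = P_k^{(j)}$ for all $j < k$. To arrange this, I would argue inductively: given $P_k$ achieving agreement to order $k$, the condition to improve agreement to order $k+1$ is a linear (homological) equation on the degree-$k$ component of the conjugating map, which can be solved by modifying $P_k$ only in degrees $\ge k$ — leaving the lower-degree components untouched. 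This uses that composition with $F$ and $G$, modulo terms of order $>k$, affects the order-$k$ output through the order-$k$ part of the conjugacy plus a correction depending only on lower-order parts already fixed. Having stabilised, define $\Phi$ by $\Phi^{(j)} := P_j^{(j)}$ for each $j$ (or equivalently $\Phi := \lim_k P_k$ in the Krull/formal topology on $\Pow(\mathbb{C}^d,0)$, which is complete). Then $\Phi$ is invertible since $\Phi^{(1)} = dF$-conjugating linear part is invertible, and $\Phi\circ F\circ\Phi^{-1}$ agrees with $G$ to every order $k$, hence equals $G$.

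The main obstacle is the stabilisation step in the reverse direction: an arbitrary choice of $P_k$ for each $k$ need not have coherent low-degree parts, so one must actively build the sequence so that passing from $k$ to $k+1$ only perturbs coefficients of degree $\ge k$. This is where the homological-equation structure of the normalisation problem enters, and it is worth spelling out that solving for the degree-$k$ correction never forces a change in the already-fixed degrees $<k$. Once that coherence is secured, the passage to the formal limit and the verification that the limit conjugates $F$ to $G$ exactly are routine, relying only on completeness of the formal power series ring and the fact that a formal series agreeing with $G$ to all finite orders is $G$.
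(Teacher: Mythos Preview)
The paper states this lemma without proof, so there is nothing to compare against directly. Your forward direction (truncate a formal conjugacy) is correct and standard.

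The reverse direction has a genuine gap at the stabilisation step. You assert that, given $P_k$ with $P_k\circ F\circ P_k^{-1}=G+O(\norm z^{k})$, the homological equation for the degree-$k$ correction ``can be solved by modifying $P_k$ only in degrees $\ge k$''. This is not true in general: the operator $H\mapsto H\circ dG_0-dG_0\circ H$ on degree-$k$ homogeneous maps need not be surjective, and whether the required right-hand side lies in its image depends on the lower-degree part of $P_k$ already fixed. Concretely, take $G(z)=z+G_2(z)$ tangent to the identity in $\mathbb{C}^2$ and $F=L^{-1}\circ G\circ L$ for a linear $L$ with $L\,G_2(L^{-1}\cdot)\neq G_2$. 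The hypothesis holds (use $P_k=L$ for all $k$), but if your induction starts with the legitimate choice $P_2=\id$, then conjugating by any $\id+H_2$ leaves the degree-$2$ part unchanged (the homological operator vanishes when $dG_0=\id$), so you can never reach $G_2$ without going back and altering the linear part. The existence of \emph{some} $P_{k+1}$ from the hypothesis does not force \emph{your} homological equation to be solvable, because that $P_{k+1}$ may already differ from $P_k$ in low degrees. A correct argument must handle this obstruction---for instance by observing that the set $S_k$ of $(k{-}1)$-jets conjugating $F$ to $G$ modulo $O(k)$ is a coset of the centraliser of the jet of $G$ in the (finite-dimensional algebraic) jet group, that the decreasing chain of images $\pi^{n}(S_{k+n})\subseteq S_k$ stabilises by Noetherianity, and that the stabilised images form an inverse system with \emph{surjective} transition maps, from which a coherent sequence (hence a formal $\Phi$) can be extracted.
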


Hence, formal conjugacy ensures holomorphic conjugacy up to arbitrarily
high orders. To derive dynamical properties of $F\in\End(\mathbb{C}^{d},0)$
from those of a formal conjugate $G$, we have to control the tail
term $O(\norm z^{k})$ in (\ref{eq:conjToHighOrders}), for example
via analytic estimates. 

\section{\label{subsec:Normal-Forms-andInvariants}Normal Forms and invariants}

These three notions of conjugacy each induce an equivalence relation.
The \emph{\index{classification problem@\emph{classification problem}}classification
problem} for an equivalence relation $\sim$ on a class $\mathcal{C}$
consists in describing the quotient class $\mathcal{C}/\sim$.

A \emph{normal form}\index{normal form!concept} of an object $A$
of $\mathcal{C}$ is a selected representative of the same equivalence
class. To get useful classification results, we would like a normal
form to have 
\begin{elabeling}{(NF3)00}
\item [{(NF1)}] Simple form,
\item [{(NF2)}] Uniqueness in its equivalence class.
\end{elabeling}
In local holomorphic dynamics, we are dealing with the class $\End(\mathbb{C}^{d},0)$
of holomorphic germs fixing $0$. So for (NF1) the best case would
be linear or at least polynomial maps.
\begin{defn}
We say $F\in\End(\mathbb{C}^{d},0)$ is \emph{formally\index{formally linearisable germ},
topologically\index{topologically linearisable germ} }or\emph{ holomorphically
linearisable}\index{holomorphically linearisable germ}, if $F$ is
formally, topologically or holomorphically conjugated to its linear
part $dF_{0}$ respectively.
\end{defn}

Unfortunately, a general germ is not linearisable with respect to
any of our conjugations, and more complicated normal forms are often
not unique. Since such a classification quickly gets too complicated,
one may start looking for invariants instead: 
\begin{defn}
A \emph{\index{system of invariants}system of invariants} $\mathcal{S}$
for $\mathcal{C}$ is a class $\mathcal{S}(A)$ of objects (\emph{invariants}\index{invariant!concept})
associated to each object $A$ of $\mathcal{C}$, such that two objects
of the same equivalence class have the same invariants. A system of
invariants $\mathcal{S}$ for $\mathcal{C}$ is \emph{complete}, if
two objects $A$ and $B$ of $\mathcal{C}$ are equivalent if and
only if they have the same invariants $\mathcal{S}(A)=\mathcal{S}(B)$.
\end{defn}

Again, we would like our system of invariants to have:
\begin{elabeling}{(I3)0}
\item [{(I1)}] Small classes $\mathcal{S}(A)$,
\item [{(I2)}] (Easily) computable invariants,
\item [{(I3)}] Completeness
\end{elabeling}
Finding complete systems of invariants for all of $\End(\mathbb{C}^{d},0)$
in general is hard, just like finding general normal forms. We instead
look at reasonable results on suitable subclasses. Normal forms and
invariants show up in most results on holomorphic dynamics in one
way or another.

\section{\label{sec:FatouDef}Normality and the Fatou set}

\selectlanguage{british}
\global\long\def\Transp{\mathsf{T}}%
\global\long\def\Pow{\m{Pow}}%
\global\long\def\ord{\m{ord}}%

In the global study of iterated holomorphic self-maps of a complex
manifold, an important notion of stability of orbits is provided by
normality of the sequence $\{F^{\circ n}\}_{n\in\mathbb{N}}$.
\begin{defn}
\label{def:normality}Let $X$ and $Y$ be metric spaces. A family
$\mathcal{F}\subseteq\mathcal{C}(X,Y)$ of continuous maps from $X$
to $Y$ is \emph{normal} (in\emph{ $\mathcal{C}(X,Y)$)} \emph{at
$x\in X$}, if every sequence $\{f_{n}\}_{n}\subseteq\mathcal{F}$
admits an open neighbourhood $U\subseteq X$ of $x$ and a subsequence
$\{f_{n_{j}}\}_{j}$ that either 
\begin{enumerate}[noitemsep]
\item converges uniformly on $U$ to a continuous function $F:U\to Y$
or
\item diverges uniformly from $Y$ on $U$, i.e.\ for every compact $K\subseteq Y$,
we have $f_{n_{j}}(U)\cap K=\emptyset$ for $j$ sufficiently large.
\end{enumerate}
The family $\mathcal{F}$ is a \emph{normal}\index{normal family@\emph{normal family}}
in\emph{ $\mathcal{C}(X,Y)$}, if $\mathcal{F}$ is normal at every
point $x\in X$.
\end{defn}

\begin{rem}[Characterisations]
\label{rem:CharNormality}For locally compact spaces, locally uniform
convergence is equivalent to compact convergence and convergence in
the compact-open topology on $\mathcal{C}(X,Y)$. Hence normality
is a topological property that does not depend on the metrics on $X$
and $Y$, but only on the induced topologies.

If $X\subseteq Y$ and $Y$ is compact, then for $F\in\mathcal{C}(X,Y)$,
the family $\mathcal{F}=\{F^{\circ n}\}_{n}$ is normal, if and only
if orbits of nearby points stay close to each other for all time.
Conversely, $\{F^{\circ n}\}_{n}$ is not normal at a point $x\in X$,
if and only if small changes of the starting point $x$ can lead to
arbitrarily large changes of the orbit, in other words, orbits display
sensitive dependence on the starting point or the dynamics are \emph{chaotic.}
\end{rem}

If $X$ and $Y$ are complex manifolds and $\mathcal{F}\subseteq\mathcal{C}(X,Y)$
is a normal family of holomorphic mappings, then Weierstraß' convergence
theorem ensures that all locally uniform limit functions are again
holomorphic. In the holomorphic category, a fundamental criterion
for normality is Montel's compactness principle:
\begin{thm}[Montel \cite{Montel1927LeconsSurLesFamillesNormalesDeFonctionsAnalytiquesEtLeursApplicationsRecueilliesEtRedigeesParJBarbotte}]
\label{thm:MontelBdd}Let $M$ be a complex manifold and $\mathcal{F}\in\mathcal{C}(M,\mathbb{C}^{d})$
a family of holomorphic maps from $M$ to $\mathbb{C}^{d}$. Then
every sequence $\{f_{n}\}_{n}\subseteq\mathcal{F}$ admits a locally
uniformly convergent subsequence, if and only if $\mathcal{F}$ is
locally uniformly bounded in $M$.
\end{thm}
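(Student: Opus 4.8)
The statement is an if-and-only-if, and the easy direction is the necessity: if every sequence in $\mathcal{F}$ has a locally uniformly convergent subsequence, then $\mathcal{F}$ must be locally uniformly bounded. Suppose not; then there is a point $p \in M$ and a sequence $\{f_n\}_n \subseteq \mathcal{F}$ with $\sup_{K} \|f_n\| \to \infty$ on every neighbourhood basis element $K$ of $p$. But a locally uniformly convergent subsequence $\{f_{n_j}\}_j$ would be uniformly bounded on a compact neighbourhood of $p$ (its uniform limit is continuous, hence bounded there, and the tail is within $1$ of the limit), a contradiction. So I would dispense with this direction in a sentence or two.

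The substance is the sufficiency: local uniform boundedness implies the existence of a locally uniformly convergent subsequence — i.e.\ the Arzelà--Ascoli-type conclusion. The plan is to reduce to the classical one-variable, one-disc statement and then globalise. First, since $M$ is a manifold, it is second countable, hence covered by countably many charts; via these charts it suffices to prove the claim for a family of holomorphic maps on a polydisc $\Delta^m \subseteq \mathbb{C}^m$ valued in $\mathbb{C}^d$, uniformly bounded on compact subsets. Working componentwise in the target, we may assume $d = 1$. The key local estimate is that uniform boundedness of holomorphic functions on a polydisc forces equicontinuity on smaller polydiscs: by the Cauchy integral formula, if $\|f_n\| \le C$ on $\overline{\Delta_r^m}$, then the partial derivatives of $f_n$ are bounded by $C/(r - s)$ in absolute value on $\overline{\Delta_s^m}$ for $s < r$, so the $f_n$ are uniformly Lipschitz, hence equicontinuous, on $\overline{\Delta_s^m}$. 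Arzelà--Ascoli on the compact set $\overline{\Delta_s^m}$ then yields a uniformly convergent subsequence, and Weierstrass' convergence theorem guarantees the limit is holomorphic.

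To pass from ``convergent subsequence on each small polydisc'' to ``locally uniformly convergent subsequence on all of $M$'', I would take an exhaustion of $M$ by countably many compact sets $K_1 \subseteq K_2 \subseteq \cdots$, each covered by finitely many of the small polydiscs on which the above argument applies, and run a diagonal argument: extract a subsequence converging uniformly on $K_1$, then a further subsequence converging uniformly on $K_2$, and so on, finally taking the diagonal subsequence, which converges uniformly on every $K_i$ and hence locally uniformly on $M$. Since $d$-tuples converge iff each component does, this settles the vector-valued case. The main obstacle — or rather the only place needing care — is the compatibility of the chart reductions with the exhaustion and the bookkeeping of the diagonal extraction across a countable cover; the analytic heart, the Cauchy-estimate equicontinuity, is completely routine. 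I would present the proof in the order: (1) necessity; (2) Cauchy estimates give equicontinuity on a polydisc; (3) Arzelà--Ascoli plus Weierstrass locally; (4) exhaustion and diagonal argument to globalise.
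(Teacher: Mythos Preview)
Your argument is correct and is the standard proof of Montel's compactness principle: Cauchy estimates yield equicontinuity on compact subsets, Arzel\`a--Ascoli then gives local uniform convergence, and a diagonal extraction over a countable exhaustion globalises. There is nothing to compare against, however: the paper states this theorem as a classical result attributed to Montel and does not supply a proof, using it only as a tool for the corollaries that follow. Your write-up would serve perfectly well as the omitted proof.
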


This implies an immediate relation between normality of iterates and
stability of orbits:
\begin{cor}
\label{cor:StabilityAndNormality}Let $F\in\End(M,p)$. Then $\{F^{\circ n}\}_{n}$
is normal at $p$ if and only if the fixed point $p$ is stable, i.e.\ the
stable set $\Sigma_{F}(U)$ contains a neighbourhood of $p$ for any
neighbourhood $U\subseteq M$ of $p$.
\end{cor}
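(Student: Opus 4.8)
The plan is to prove both implications directly from the definitions, using Montel's theorem (Theorem~\ref{thm:MontelBdd}) for the nontrivial direction. First I would fix a chart so that without loss of generality $M = \mathbb{C}^d$ and $p = 0$, which is legitimate since normality and the topology of the stable set are preserved under the local biholomorphism (Remark~\ref{rem:CharNormality}).

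For the easy direction, suppose $\{F^{\circ n}\}_n$ is normal at $0$. Apply the definition to the single sequence $\{F^{\circ n}\}_n$ itself: there is a neighbourhood $U$ of $0$ and a subsequence $\{F^{\circ n_j}\}_j$ that either converges uniformly on $U$ or diverges uniformly from $\mathbb{C}^d$ on $U$. The latter is impossible since $F^{\circ n_j}(0) = 0$ lies in the compact set $\{0\}$ for all $j$; so the subsequence converges uniformly on $U$ to some continuous $g : U \to \mathbb{C}^d$, necessarily holomorphic with $g(0) = 0$. Shrinking $U$ to a ball $B = B_r(0)$ on which the limit is attained and $g(B) \subseteq B_{r/2}(0)$, uniform convergence gives $F^{\circ n_j}(B) \subseteq B_r(0)$ for large $j$. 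To upgrade this subsequence to all $n$, I would run this argument again but starting from an arbitrary small ball and observe that once $F^{\circ n_j}(B) \Subset B$ for one large $j =: N$, the ball $W := \bigcap_{k=0}^{N} F^{\circ(-k)}(B)$ is an open neighbourhood of $0$ with $F^{\circ n}(W) \subseteq B$ for every $n \geq 0$: indeed write $n = qN + s$ with $0 \le s < N$ and note $F^{\circ N}$ maps $B$ into $B$ so $F^{\circ qN}(W) \subseteq B$... — actually the cleaner route is: since $g(B) \subseteq B_{r/2}$, pick $j$ with $\|F^{\circ n_j} - g\|_B < r/2$, so $V := F^{\circ n_j}(B)$ has closure in $B$; then $F^{\circ n_j}$ maps $B$ into $B$, hence $F^{\circ k n_j}(B) \subseteq B$ for all $k$, and setting $W = \bigcap_{i=0}^{n_j - 1} F^{\circ(-i)}(B)$ (a neighbourhood of $0$, being a finite intersection of preimages of an open set under continuous maps fixing $0$) one gets $F^{\circ n}(W) \subseteq B$ for all $n$, so $W \subseteq \Sigma_F(B)$. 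Since $B \subseteq U$ was an arbitrarily small ball, $\Sigma_F(U)$ contains a neighbourhood of $0$ for every $U$, i.e. $0$ is stable.

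For the converse, suppose $0$ is stable. Given any neighbourhood $U$, choose a ball $B = B_r(0) \subseteq U$ with $B_{2r}(0) \subseteq U$; by stability there is a neighbourhood $W \subseteq B$ of $0$ with $F^{\circ n}(W) \subseteq B$ for all $n$, so $\{F^{\circ n}\}_n$ restricted to $W$ is a family of holomorphic maps $W \to \mathbb{C}^d$ uniformly bounded by $r$. Montel's theorem (Theorem~\ref{thm:MontelBdd}) then says every sequence in $\{F^{\circ n}|_W\}_n$ has a locally uniformly convergent subsequence on $W$; since any subsequence of $\{F^{\circ n}\}_n$ is in particular such a sequence, we obtain, at the point $0$, an open neighbourhood (namely $W$, or a smaller ball inside it on which convergence is uniform) and a uniformly convergent subsequence — this verifies alternative (1) of Definition~\ref{def:normality}, so $\{F^{\circ n}\}_n$ is normal at $0$.

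The main obstacle, and the only place requiring genuine care rather than bookkeeping, is the passage in the first direction from ``a subsequence of iterates maps a small ball into itself'' to ``a neighbourhood is contained in the stable set $\Sigma_F(U)$ for every $U$'': one must make sure the neighbourhood $W$ is genuinely open and contains $0$ (this is where finiteness of the intersection of preimages and $F^{\circ n_j}(0)=0$ are used) and that the argument localises to arbitrarily small $U$, since Corollary~\ref{cor:StabilityAndNormality} demands the conclusion for every neighbourhood $U$, not just one. Everything else is a direct unwinding of Definitions of normality, stable set, and Montel's theorem; the equivalence between locally uniform and compact convergence from Remark~\ref{rem:CharNormality} is what lets us treat ``uniform on a neighbourhood'' and ``locally uniform'' interchangeably throughout.
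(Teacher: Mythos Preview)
The paper does not give an explicit proof of this corollary; it is stated as an immediate consequence of Montel's theorem. Your direction ``stable $\Rightarrow$ normal'' is correct and is exactly the intended application of Theorem~\ref{thm:MontelBdd}.

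Your argument for ``normal $\Rightarrow$ stable'', however, has a genuine gap. You apply normality once, to the full sequence $\{F^{\circ n}\}_n$, extract a single convergent subsequence $F^{\circ n_j}\to g$, and then assert that you can shrink to a ball $B=B_r(0)$ with $g(B)\subseteq B_{r/2}(0)$. This step fails in general: take $F(z)=e^{2\pi i\theta}z$ with $\theta$ irrational. Every subsequential limit $g$ is a rotation, so $g(B_r)=B_r$ for every $r>0$, and no such ball exists. Consequently your bootstrap ``$F^{\circ n_j}$ maps $B$ into itself, then take a finite intersection of preimages'' never gets started, even though $0$ is manifestly stable for this $F$.

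The remedy is to use normality in its full strength, not just on one sequence. Argue by contradiction: if $0$ is not stable, there exist $r>0$ and sequences $z_k\to 0$, $m_k\in\mathbb{N}$ with $\lVert F^{\circ m_k}(z_k)\rVert\ge r$. Apply the definition of normality to the sequence $\{F^{\circ m_k}\}_k$: some subsequence $F^{\circ m_{k_l}}$ converges uniformly on a neighbourhood $W$ of $0$ to a holomorphic $h$ with $h(0)=0$ (uniform divergence is again excluded by $F^{\circ m_k}(0)=0$). Then $F^{\circ m_{k_l}}(z_{k_l})\to h(0)=0$, contradicting $\lVert F^{\circ m_{k_l}}(z_{k_l})\rVert\ge r$. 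This contradiction is precisely the equicontinuity hidden in normality; your single-subsequence approach does not access it.
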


More generally, for non-fixed points, we get:
\begin{cor}
\label{cor:StableCompFatou}Let $F:U\to\mathbb{C}^{d}$ be a representative
of $F\in\End(\mathbb{C}^{d},0)$ on a bounded domain $U\subseteq\mathbb{C}^{d}$.
Then $\{F^{\circ n}\}_{n}$ is normal at $x\in U$, if and only if
$x$ is in the interior of the stable set $\Sigma_{F}(U)$.
\end{cor}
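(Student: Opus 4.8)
The plan is to identify $\operatorname{int}\Sigma_{F}(U)$ with the set of points possessing a neighbourhood on which \emph{all} iterates $F^{\circ n}$ are defined, and then to observe that on such a neighbourhood the iterates form an automatically locally bounded family of holomorphic maps, so that Montel's compactness principle (Theorem~\ref{thm:MontelBdd}) delivers normality.

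First I would set up the bookkeeping for the domains of the partial maps $F^{\circ n}$. The natural domain of $F^{\circ n}$, as a holomorphic map into $\mathbb{C}^{d}$, is $D_{n}=\{y\in U\mid F^{\circ k}(y)\in U\text{ for }0\le k<n\}$; these satisfy $D_{1}=U$ and $D_{n}=U\cap F^{-1}(D_{n-1})$, so by induction each $D_{n}$ is open, the $D_{n}$ are nested decreasing, and $\bigcap_{n}D_{n}=\Sigma_{F}(U)$. Consequently an open neighbourhood $W$ of $x$ supports the whole sequence $\{F^{\circ n}\}_{n}$ as maps $W\to\mathbb{C}^{d}$ precisely when $W\subseteq\Sigma_{F}(U)$; in other words, $x\in\operatorname{int}\Sigma_{F}(U)$ if and only if such a $W$ exists. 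This already settles the ``only if'' direction: if $\{F^{\circ n}\}_{n}$ is normal at $x$ in the sense of Definition~\ref{def:normality}, it is in particular a family in $\mathcal{C}(W,\mathbb{C}^{d})$ for some open $W\ni x$, whence $W\subseteq\Sigma_{F}(U)$ and $x\in\operatorname{int}\Sigma_{F}(U)$.

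For the ``if'' direction, which carries the content, assume $x\in\operatorname{int}\Sigma_{F}(U)$ and fix open $W$ with $x\in W\subseteq\Sigma_{F}(U)$. By definition of $\Sigma_{F}(U)$ every orbit starting in $W$ stays in $U$, so $F^{\circ n}(W)\subseteq U$ for all $n$; since $U$ is bounded, $\{F^{\circ n}|_{W}\}_{n}$ is a uniformly bounded family of holomorphic maps on $W$. By Montel's theorem every sequence in this family has a subsequence converging locally uniformly on $W$; passing to a ball $B\ni x$ with $\overline{B}\subseteq W$ makes the convergence uniform on $B$, and the limit is holomorphic, hence continuous, by the Weierstrass convergence theorem recalled just before Theorem~\ref{thm:MontelBdd}. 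Thus alternative~(1) of Definition~\ref{def:normality} is realised at $x$ with neighbourhood $B$, so $\{F^{\circ n}\}_{n}$ is normal at $x$. (Alternative~(2) cannot arise, as the images $F^{\circ n}(W)$ all lie in the compact set $\overline{U}$.)

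The one point demanding care is fixing the correct meaning of ``normality at $x$'' for the partially defined iterates $F^{\circ n}$; once the domain bookkeeping of the second paragraph pins this down, there is no real obstacle and both implications are short. Taking $x=p$ recovers Corollary~\ref{cor:StabilityAndNormality}.
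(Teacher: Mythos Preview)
Your argument is correct and matches the paper's intended reasoning: the corollary is stated there without proof, as an immediate consequence of Montel's compactness principle (Theorem~\ref{thm:MontelBdd}), and your write-up spells out exactly that deduction, including the domain bookkeeping needed to make sense of normality for the partially defined iterates.
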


Normality of local orbits near a fixed point is hence completely determined
be the topology of the stable set. In global dynamics, this is no
longer true in general and a fundamental task is finding the domain
of normality of the sequence of iterates:
\begin{defn}
Let $X$ be a metric space and $F\in\mathcal{C}(X,X)$. Then the \emph{Fatou
set\index{Fatou set@\emph{Fatou set}}} of $F$ is the set of all
points $x\in X$ such that $\{F^{\circ n}\}_{n\in\mathbb{N}}$ is
normal at $x$. Its connected components are the \emph{Fatou components
}of\emph{ }$F$. The \emph{Julia set\index{Julia set@\emph{Julia set}}}
of $F$ is the complement in $X$ of the Fatou set of $F$.
\end{defn}

Via Remark~\ref{rem:CharNormality}, the Fatou set contains the points
of stable dynamics, in that the orbits of nearby points in the same
Fatou component stay close to each other (or uniformly large) for
all time, whereas on a neighbourhood of a point in the Julia set,
orbits display sensitive dependence on initial conditions or chaotic\emph{
}behaviour\emph{.}
\begin{rem}
The Fatou set is open and the Julia set is closed by definition. The
Fatou and Julia sets are the same for any iterate $F^{\circ k}$ with
$k\in\mathbb{N}_{>0}$. The Fatou set is $F^{-1}$-invariant, the
Julia set $F$-invariant and if $F$ is an open map, then both sets
are completely invariant. Holomorphic maps in one variable are always
open, unless they are constant. In several variables, this is the
case for locally injective maps and endomorphisms of projective space.
\end{rem}

\begin{rem}
\label{rem:StableCompFatou}Let $F\in\End(\mathbb{C}^{d})$ fix a
point $p\in\mathbb{C}^{d}$, and let $U\subseteq\mathbb{C}$ be a
bounded neighbourhood of $p$. Then, by Corollaries~\ref{cor:StabilityAndNormality}
and \ref{cor:StableCompFatou}, any connected open subset of the stable
set $\Sigma_{F}(U)$ is contained in a Fatou component of $F$. The
dynamics on these parts of the stable set are relatively well understood.
The structure of non-escaping orbits in the Julia set can be much
more complicated (see e.g. Theorem~\ref{thm:PerezMarcoCremerHedgehog}).

Conversely, if an $F$-invariant Fatou component $\Omega\subseteq\mathbb{C}^{d}$
is attracting to $p$, i.e. $F^{\circ n}\xrightarrow[n\to\infty]{}p$
uniformly on $\Omega$, then there exists an open subset $V\subseteq\Sigma_{F}(U)$
such that $\Omega=\bigcup_{n=0}^{\infty}F^{\circ(-n)}(V)$. In this
way, all attracting Fatou components can be identified locally.
\end{rem}

\cleardoublepage{}

\chapter{\label{sec:Local1Ddyn}One complex variable}

\selectlanguage{british}
\global\long\def\Pow{\m{Pow}}%
\global\long\def\Res{\m{Res}}%
\global\long\def\Jac{\m{Jac}}%
\global\long\def\Transp{\mathsf{T}}%

\section{The Multiplier and formal linearisation}

The local dynamics of a holomorphic map $f\in\End(\mathbb{C},0)$
are determined at the first order by the derivative of $f$, which
will be our first important invariant:
\begin{defn}
Let $f\in\Pow(\mathbb{C},0)$ be given by 
\[
f(z)=\lambda z+f_{2}z^{2}+f_{3}z^{3}+\cdots.
\]
Then the number $\lambda\in\mathbb{C}$ is called the \emph{multiplier\index{multiplier@\emph{multiplier}}}
of $f$ (at $0$). The \emph{order\index{order@\emph{order}} }of
$f$ (at $0$) is $\inf\{j\ge2\mid f_{j}\neq0\}\in\mathbb{N}\cup\{\infty\}$.
\end{defn}

\begin{notation}
For $\lambda\in\mathbb{C}$ let $\Pow_{\lambda}(\mathbb{C},0)$, $\End_{\lambda}(\mathbb{C},0)$,
and $\Aut_{\lambda}(\mathbb{C},0)$ denote the subsets of elements
with multiplier $\lambda$ in $\Pow(\mathbb{C},0)$, $\End(\mathbb{C},0)$,
and $\Aut(\mathbb{C},0)$ respectively.
\end{notation}

If $f\in\End(\mathbb{C},0)$, then the multiplier is just the derivative
$\lambda=f'(0)$. That is, for $z$ near $0$ the best linear approximation
of $f(z)$ is $\lambda z$ and the best linear approximation of the
orbit $\{f^{\circ n}(z)\}_{n}$ is given by $\lambda^{n}z$. Hence
our first classification is according to the properties of $\lambda^{n}$
for $n\in\mathbb{N}$:
\begin{defn}
\label{def:1DMultiplierTypes}Let $f\in\Pow_{\lambda}(\mathbb{C},0)$.
Then the fixed point $0$ and, for convenience, also the multiplier
$\lambda$ and the (formal) germ $f$, are called
\begin{enumerate}[noitemsep]
\item \emph{hyperbolic}\index{hyperbolic fixed point/germ}, if $|\lambda|\neq1$,
\begin{enumerate}[noitemsep]
\item \emph{attracting}\index{attracting fixed point/germ}, if $|\lambda|<1$,
\begin{enumerate}[noitemsep]
\item \emph{geometrically attracting}, if $0<|\lambda|<1$.
\item \emph{superattracting}\index{superattracting fixed point/germ}, if
$\lambda=0$.
\end{enumerate}
\item \emph{repelling}\index{repelling fixed point/germ}, if $|\lambda|>1$.
\end{enumerate}
\item \emph{neutral}\index{neutral fixed point/germ}, if $|\lambda|=1$,
\begin{enumerate}[noitemsep]
\item \emph{parabolic}\index{parabolic fixed point/germ}, if $\lambda$
is a root of unity.
\item \emph{elliptic}\index{elliptic fixed point/germ}, if $|\lambda|=1$,
but $\lambda$ is not a root of unity.
\end{enumerate}
\end{enumerate}
A major part of the formal classification is covered by the Poincaré-Dulac
theorem on formal elimination of non-resonant terms (see Definition~\ref{def:resonance}):
\end{defn}

\begin{prop}[Poincaré \cite{Poincare1879SurLesProprietesDesFonctionsDefiniesParLesEquationsAuxDifferencesPartielles},
Dulac \cite{Dulac1904RecherchesSurLesPointsSinguliersDesEquationsDifferentielles}]
\label{prop:1DPoincareDulac}Let $f\in\Pow(\mathbb{C},0)$ with multiplier
$\lambda$. Then $f$ is formally conjugated to a power series $g\in\Pow(\mathbb{C},0)$
with multiplier $\lambda$ in normal form, that is, $g$ contains
only monomials $z^{j}$ for $j\ge1$ with $\lambda^{j}=\lambda$.
The conjugating series $h\in\Pow(\mathbb{C},0)$ is uniquely determined
by prescribing (arbitrary) coefficients for the monomials $z^{j}$
for $j\ge1$ with $\lambda^{j}=\lambda$. This form of $g\in\Pow(\mathbb{C},0)$
is preserved by conjugation with $h\in\Pow(\mathbb{C},0)$, if and
only if $h$ also contains only monomials $z^{j}$ for $j\ge1$ with
$\lambda^{j}=\lambda$.
\end{prop}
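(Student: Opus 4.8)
The plan is to construct the conjugating series $h$ degree by degree, solving at each step a "homological" equation, and to read off from that equation exactly which coefficients of $h$ are free and which monomials survive in $g$. Write $f(z) = \lambda z + \sum_{j\ge 2} f_j z^j$, and look for $h(z) = z + \sum_{j\ge 2} h_j z^j$ (we may normalise the linear part of $h$ to be the identity after an initial diagonal rescaling; if $\lambda=0$ the linear parts are $\lambda z$ throughout and the same bookkeeping works) and $g(z) = \lambda z + \sum_{j\ge 2} g_j z^j$, so that $h\circ f = g\circ h$. Comparing coefficients of $z^n$ for $n\ge 2$, the terms involving $h_n$ are $\lambda^n h_n$ on the left (from $h(f(z))$, the $h_n (\lambda z)^n$ term) and $\lambda g_n\cdot 0 + \dots$; more precisely one gets the standard relation

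\[
(\lambda^n - \lambda)\, h_n \;=\; g_n \;+\; (\text{polynomial in } f_2,\dots,f_{n},\, h_2,\dots,h_{n-1},\, g_2,\dots,g_{n-1}),
\]

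where the parenthesised term depends only on data already determined at lower order. First I would make this precise: expand $h(f(z))$ and $g(h(z))$ as formal power series, isolate the coefficient of $z^n$, and verify that $h_n$ appears on the left exactly through $\lambda^n h_n$ and on the right exactly through $-\lambda h_n$ (the linear term of $g$ composed with $h$), so the net coefficient of $h_n$ is $\lambda^n-\lambda$, while $g_n$ appears with coefficient $1$ and everything else is a universal polynomial in strictly-lower-order coefficients.

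Next I would run the recursion. Fix $n\ge 2$ and suppose $h_2,\dots,h_{n-1}$ and $g_2,\dots,g_{n-1}$ have been chosen. Two cases: if $\lambda^n \ne \lambda$, then $\lambda^n-\lambda$ is invertible, so I set $g_n := 0$ and solve uniquely for $h_n$ — this forces $g$ to contain no monomial $z^n$ with $\lambda^n\ne\lambda$, i.e. $g$ is in normal form. If $\lambda^n = \lambda$ (a resonant degree), then the coefficient of $h_n$ vanishes on both sides, $h_n$ is completely free, and $g_n$ is forced to equal the lower-order polynomial term; we may prescribe $h_n$ arbitrarily and $g_n$ is then determined. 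This simultaneously yields existence of the normal form and the statement that $h$ is uniquely determined once the coefficients $h_j$ at resonant degrees $j$ (those with $\lambda^j=\lambda$) are prescribed. I would note the base case $n=1$: the linear part of $h$ is the prescribed free parameter at the resonant degree $j=1$ (since $\lambda^1=\lambda$ trivially), consistent with the statement; taking it to be the identity is one admissible choice.

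For the last sentence — that the normal form of $g$ is preserved under conjugation by $h'$ precisely when $h'$ contains only resonant monomials — I would argue both directions. If $h'$ has only resonant monomials, a direct computation (or better, the observation that the set of series with only resonant monomials forms a group under composition, since $\lambda^i=\lambda$ and $\lambda^j=\lambda$ imply the monomials of a composite also have exponents $k$ with $\lambda^k=\lambda$, using $\lambda\ne 0$; the case $\lambda=0$ is handled separately and is degenerate) shows $h'^{-1}\circ g\circ h'$ again has only resonant monomials. Conversely, if $h'$ has a non-resonant monomial, let $z^m$ be the lowest-degree such, $m\ge 2$; comparing coefficients of $z^m$ in $g\circ h' = h'\circ g$ shows the $z^m$-coefficient of $h'\circ g$ acquires a nonzero contribution $(\lambda^m-\lambda)h'_m\ne 0$ that cannot be matched unless $g$ itself has a nonzero $z^m$-term, contradicting that $g$ is in normal form — so conjugation by $h'$ must destroy the normal form.

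The main obstacle is purely organisational rather than conceptual: making the claim "$h_n$ appears on the left and right only through the displayed linear term, everything else is lower order" genuinely airtight requires care with the combinatorics of composition of power series (one must check that in $h(f(z))$ the coefficient $h_j$ for $j<n$ combines only with products $f_{i_1}\cdots f_{i_j}$ of total degree $n$, hence involving $f_i$ with $i\le n$, and that $f_n$ itself enters only multiplied by $h_1=1$). Once that bookkeeping lemma is established, the rest is the clean two-case recursion above, and the freedom/uniqueness statements fall out of exactly which of the two cases occurs at each degree.
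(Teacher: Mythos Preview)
Your approach is essentially the same as the paper's: set up the homological equation, compare coefficients degree by degree, and run the two-case recursion depending on whether $\lambda^n-\lambda$ vanishes. The paper uses the convention $f\circ h = h\circ g$ rather than your $h\circ f = g\circ h$, but this is immaterial; note however that in your final paragraph the equation ``$g\circ h' = h'\circ g$'' should read $g\circ h' = h'\circ \tilde g$ (or the analogous relation) with $\tilde g$ the conjugate---as written it asserts commutation rather than conjugation, though your subsequent coefficient comparison is the right one and the argument goes through once this is corrected.
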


\begin{proof}
A series $h\in\Pow(\mathbb{C},0)$ conjugates $f$ to $g$ if it is
invertible solution of the \emph{homological equation\index{homological equation@\emph{homological equation}}}
\[
f\circ h=h\circ g.
\]
With expansions $f(z)=\sum_{j=1}^{\infty}f_{j}z^{j}$, $g(z)=\sum_{j=1}^{\infty}g_{j}z^{j}$
and $h(z)=\sum_{j=1}^{\infty}h_{j}z^{j}$ for $f$, $g$, and $h$,
the homological equation becomes 
\begin{equation}
\sum_{j=1}^{\infty}f_{j}\paren[\bigg]{\sum_{k=1}^{\infty}h_{k}z^{k}}^{j}=\sum_{j=1}^{\infty}h_{j}\paren[\bigg]{\sum_{k=1}^{\infty}g_{k}z^{k}}^{j}\label{eq:1DhomolEqExpl}
\end{equation}
and comparing coefficients of $z^{j}$ we have $f_{1}h_{1}=h_{1}g_{1}$
for $j=1$ and for $j\ge2$ 
\begin{equation}
(\lambda^{j}-\lambda)h_{j}=h_{1}^{j}f_{j}-h_{1}g_{j}+\sum_{2\le k<j}\sum_{l_{1}+\cdots+l_{k}=j}(f_{k}h_{l_{1}}\cdots h_{l_{k}}-h_{k}g_{l_{1}}\cdots g_{l_{k}}).\label{eq:homol1DexplByDegree}
\end{equation}
For each $j\ge1$, all indices $k,l_{1},\ldots,l_{k}$ in (\ref{eq:homol1DexplByDegree})
are smaller than $j$ and $f_{j}$ is given. Hence iteratively for
$j\ge1$, if $\lambda^{j}=\lambda$, we can choose any value of $h_{j}$,
(resulting in a possibly non-zero value of $g_{j}$), and if $\lambda^{j}\neq\lambda$,
we can prescribe $g_{j}=0$ on the right hand side and solve (\ref{eq:homol1DexplByDegree})
uniquely for $h_{j}$.
\end{proof}
\begin{rem}
\label{rem:multiplierIsInvariant}Proposition~\ref{prop:1DPoincareDulac}
and its proof give some immediate insights into the formal and holomorphic
classification:
\begin{enumerate}
\item The multiplier is a formal and holomorphic invariant. This is just
the linear part of (\ref{eq:1DhomolEqExpl}). 
\item While topological conjugation does not preserve the multiplier in
general, the dynamical properties will imply that it at least preserves
the classification of Definition~\ref{def:1DMultiplierTypes}.
\end{enumerate}
\end{rem}

\begin{cor}
\label{cor:1DformalLinearisation}Let $f\in\End(\mathbb{C},0)$ with
multiplier $\lambda$. If $\lambda^{j}\neq\lambda$ for all $j\ge2$
(i.e.~$f$ is not parabolic or superattracting), then $f$ is formally
linearisable.
\end{cor}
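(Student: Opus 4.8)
The plan is to derive Corollary~\ref{cor:1DformalLinearisation} directly from Proposition~\ref{prop:1DPoincareDulac}. The hypothesis $\lambda^{j}\neq\lambda$ for all $j\ge2$ says precisely that the only monomial $z^{j}$ with $j\ge1$ satisfying $\lambda^{j}=\lambda$ is the linear one, $z^{1}$. Hence the normal form $g$ produced by the Poincaré--Dulac theorem, which by construction contains \emph{only} such resonant monomials, has no terms of order $\ge2$ at all; that is, $g(z)=\lambda z$, the linear part $df_{0}$ of $f$. Since $f$ is formally conjugate to $g$, it is formally conjugate to its linear part, i.e.\ formally linearisable. That is the whole argument.

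The only point that needs a word of care is the parenthetical claim ``(i.e.\ $f$ is not parabolic or superattracting)'', which one should check is an honest reformulation of ``$\lambda^{j}\neq\lambda$ for all $j\ge2$''. If $\lambda=0$ then $\lambda^{2}=0=\lambda$, so superattracting germs are excluded; if $\lambda$ is a root of unity, say $\lambda^{m}=1$ with $m\ge1$, then $\lambda^{m+1}=\lambda$ with $m+1\ge2$, so parabolic germs are excluded. Conversely, if $\lambda\neq0$ and $\lambda$ is not a root of unity, then $\lambda^{j}=\lambda$ would force $\lambda^{j-1}=1$ with $j-1\ge1$, contradicting that $\lambda$ is not a root of unity; so for all non-parabolic, non-superattracting $f$ the hypothesis holds. (The cases $|\lambda|\neq1$ with $\lambda\neq0$, and $|\lambda|=1$ with $\lambda$ not a root of unity, both fall here.) This is a short finite case-check and is the ``obstacle'' only in the sense of being the sole place the statement has any content beyond quoting the previous proposition.

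I would therefore write the proof in essentially two sentences: invoke Proposition~\ref{prop:1DPoincareDulac} to get a formal conjugate $g$ in normal form with multiplier $\lambda$; observe that the resonance condition $\lambda^{j}=\lambda$ ($j\ge1$) is satisfied only for $j=1$ under the hypothesis, so $g$ contains no monomials of order $\ge2$ and hence $g(z)=\lambda z=df_{0}(z)$; conclude that $f$ is formally linearisable. The equivalence with the ``not parabolic or superattracting'' phrasing can be relegated to the proof or left to the reader as the routine check above.
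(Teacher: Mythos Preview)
Your proposal is correct and is exactly the intended argument: the paper states this as an immediate corollary of Proposition~\ref{prop:1DPoincareDulac} with no separate proof, and your reasoning---that the normal form $g$ retains only monomials $z^j$ with $\lambda^j=\lambda$, which under the hypothesis forces $g(z)=\lambda z$---is precisely why. Your verification of the parenthetical equivalence is a helpful addition but not something the paper spells out.
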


\begin{rem}
If $\lambda^{j}\neq\lambda$ for some $j\ge2$ (i.e.~$\lambda\neq0,1$),
then the order $\min\{k\ge2\mid f_{k}\neq0\}$ is not preserved under
formal or holomorphic conjugation, as for any given value of $g_{2}$
(zero or non-zero) we can solve (\ref{eq:homol1DexplByDegree}) for
$h_{2}$. On the other hand, if $\lambda=1$ the order is equal to
the \emph{multiplicity\index{multiplicity@\emph{multiplicity}}} (of
the fixed point $0$) and if $\lambda=0$ to the \emph{local degree\index{local degree@\emph{local degree}}}
(of $f$ near $0$), both of which are formal, holomorphic and topological
invariants.
\end{rem}

\section{Hyperbolic fixed points}

The hyperbolic case is the generic case and also the most easily understood.
The basic dynamics can be derived quickly from the multiplier. In
fact, these topological dynamics characterise attracting and repelling
germs:
\begin{lem}
\label{lem:1DhypDynamics} A germ $f\in\End(\mathbb{C},0)$ with multiplier
$\lambda$ is attracting ($|\lambda|<1$) if and only if it is topologically
attracting, i.e. there exists a neighbourhood $U\subseteq\mathbb{C}^{d}$
of $0$ such that $f^{\circ n}\xrightarrow[n\to\infty]{}0$ uniformly
on $U$.  It is repelling ($|\lambda|>1$) if and only if it is
topologically repelling, i.e. there exists a neighbourhood $U\subseteq\mathbb{C}^{d}$
of $0$ such that all orbits except $\{0\}$ escape from $U$ (i.e.\ $\Sigma_{f}(U)=\{0\}$).
\end{lem}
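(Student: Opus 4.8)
The statement is an "if and only if" for two cases. For the attracting case, the "only if" direction (multiplier $|\lambda| < 1$ implies topological attraction) is the substantive one; the converse is easy since uniform convergence $f^{\circ n} \to 0$ forces $(f^{\circ n})'(0) = \lambda^n \to 0$, hence $|\lambda| < 1$. For the repelling case, the roles are symmetric: assuming $|\lambda| > 1$ we produce a neighbourhood $U$ with $\Sigma_f(U) = \{0\}$, and conversely if such $U$ exists then $0$ cannot be stable, so by Corollary~\ref{cor:StabilityAndNormality} $\{f^{\circ n}\}_n$ is not normal at $0$; combined with the (already proved) attracting case and the trichotomy of Definition~\ref{def:1DMultiplierTypes}, the only remaining possibility compatible with $\Sigma_f(U) = \{0\}$ is $|\lambda| > 1$ — but it is cleaner to argue directly, see below.

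The plan for the attracting case is the classical one-line estimate. Write $f(z) = \lambda z + O(|z|^2)$ near $0$. Pick any $\mu$ with $|\lambda| < \mu < 1$. Then there is $r > 0$ such that $|f(z)| \le \mu |z|$ for all $|z| \le r$: indeed $|f(z)| \le |\lambda||z| + C|z|^2 = (|\lambda| + C|z|)|z|$ on a small disc, and shrinking $r$ makes $|\lambda| + Cr \le \mu$. Taking $U = \{|z| < r\}$, induction gives $|f^{\circ n}(z)| \le \mu^n |z| \le \mu^n r$ for all $z \in U$ and all $n$, so $f^{\circ n} \to 0$ uniformly on $U$. This simultaneously shows $U \subseteq \Sigma_f(U)$, which is the form needed later.

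For the repelling case, the standard trick is to pass to a local inverse: since $\lambda = f'(0) \ne 0$, $f$ is biholomorphic near $0$ with a local inverse $g = f^{-1} \in \End(\mathbb{C},0)$ of multiplier $1/\lambda$, and $|1/\lambda| < 1$. Apply the attracting case to $g$: there is $r > 0$ with $|g(w)| \le \mu |w|$ on $\{|w| \le r\}$ for some $\mu < 1$, in particular $g(\{|w| \le r\}) \subseteq \{|w| \le r\}$. Set $U = \{|z| < r\}$. If $z \in \Sigma_f(U)$ with $z \ne 0$, then $z_n := f^{\circ n}(z) \in U$ for all $n$; applying $g$ repeatedly, $z = g^{\circ n}(z_n)$ with $|z_n| < r$ gives $|z| \le \mu^n r$ for every $n$, forcing $z = 0$, a contradiction. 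Hence $\Sigma_f(U) = \{0\}$, i.e.\ all orbits except the fixed one escape $U$. The converse direction for the repelling case: if some neighbourhood $U$ has $\Sigma_f(U) = \{0\}$, then in particular $U$ is not contained in $\Sigma_f(U)$, so by Corollary~\ref{cor:StabilityAndNormality} the fixed point is not stable and $|\lambda| \ge 1$; and $|\lambda| = 1$ is impossible because by the already-established attracting case $|\lambda| < 1$ is needed for attraction while a separate short argument (or simply the contrapositive of the attracting equivalence together with the observation that a neutral or attracting multiplier cannot yield $\Sigma_f(U)=\{0\}$) rules out $|\lambda|=1$; cleaner still is to note $\lambda^n = (f^{\circ n})'(0)$ and that $z=0$ being the only non-escaping orbit in every small $U$ is incompatible with $|\lambda| \le 1$, since for $|\lambda| \le 1$ one can find arbitrarily small forward-invariant discs (the $|\lambda|<1$ case above; for $|\lambda|=1$, any sufficiently small orbit cannot escape a fixed disc in one step faster than it can on $g$, forcing a two-sided estimate) — I would state this last step carefully but it is routine.

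The only mild obstacle is the neutral exclusion in the two converse directions; everything else is the textbook contraction/expansion estimate. I expect to dispatch the neutral case by the clean remark that $|\lambda| = 1$ makes both $f$ and its local inverse $g$ satisfy $|h(z)| \le (1+\varepsilon)|z|$ and $|h(z)| \ge (1-\varepsilon)|z|$ on small discs for any $\varepsilon > 0$, so neither $f^{\circ n} \to 0$ uniformly on any neighbourhood nor $\Sigma_f(U) = \{0\}$ can hold, since e.g.\ a point $z$ with $|z|$ tiny stays in a fixed disc under all forward iterates of $f$ (as $g$ contracts it back) — hence the neutral case is excluded from both characterisations, which is exactly what is needed.
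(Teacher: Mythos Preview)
Your attracting case and the forward repelling direction are essentially the paper's arguments (you use Weierstrass convergence of derivatives where the paper uses Schwarz's lemma for the attracting converse, and you run the $g=f^{-1}$ contraction estimate inline for the repelling forward direction---both are fine).

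The genuine gap is the converse of the repelling case, where you need to rule out $|\lambda|=1$. You flag this yourself as the ``only mild obstacle'', but it is not mild, and your proposed fix does not work. The two-sided estimate $(1-\varepsilon)|z|\le|f(z)|\le(1+\varepsilon)|z|$ does \emph{not} force any nonzero orbit to stay in a fixed disc: $(1+\varepsilon)^n$ is unbounded, and for instance $f(z)=z+z^2$ has orbits on the positive real axis that escape every disc. The assertion $\Sigma_f(U)\neq\{0\}$ when $|\lambda|=1$ is true, but a direct proof requires the Leau--Fatou flower theorem in the parabolic case and P\'erez-Marco's hedgehog theory in the Cremer case---results that appear much later in the paper and are far from routine. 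The paper sidesteps this entirely via a short purely topological lemma (Lemma~\ref{lem:TopRepellIsInversTopAttr}): a germ of homeomorphisms is topologically repelling if and only if its inverse is topologically attracting. The proof is a compactness argument on the nested compacta $K_n=K\cap f^{-1}(K)\cap\cdots\cap f^{-n}(K)$; since every nonzero orbit escapes, $\bigcap_n K_n=\{0\}$, hence $\operatorname{diam}(K_n)\to 0$, and one deduces $f^{\circ(-m)}\to 0$ uniformly on some fixed $K_{n_0}$. Once $f^{-1}$ is topologically attracting, the already-established attracting converse gives $|1/\lambda|<1$ directly, with no neutral case analysis needed.
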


\begin{proof}
Let $f$ be attracting. We have $f(z)=\lambda z+O(z^{2})$ near $0$.
Hence there exists $C>0$ such that for $|z|$ small enough, we have
\begin{equation}
|f(z)|\le|\lambda|z+C|z|^{2}.\label{eq:attr1D}
\end{equation}
Let $r>0$ small enough that (\ref{eq:attr1D}) holds for $z$ in
the disc $\mathbb{D}_{r}$ of radius $r$ around $0$ and such that
$|\lambda|+Cr<0$. Then for $z\in\mathbb{D}_{r}$, we have 
\[
|f(z)|\le(|\lambda|+Cr)|z|<|z|,
\]
hence $f(z)\in\mathbb{D}_{r}$ and by induction on $n\in\mathbb{N}$,
we have 
\[
|f^{\circ n}(z)|\le(|\lambda|+Cr)^{n}|z|\xrightarrow[n\to\infty]{}0
\]
showing uniform convergence to $0$ on $\mathbb{D}_{r}$.

Conversely, let $\mathbb{D}_{r}\subseteq\mathbb{C}^{d}$, $r>0$ such
that $f^{\circ n}\xrightarrow[n\to\infty]{}0$ uniformly on $\mathbb{D}_{r}$.
Then for some $n\in\mathbb{N}$, $f^{\circ n}(\mathbb{D}_{r})\subseteq\mathbb{D}_{r/2}$
and by Schwarz's lemma $|\lambda^{n}|<1$.

If $|\lambda|>1$, then $f$ is invertible and $f^{-1}$ is attracting.
If $f$ is topologically repelling, then the above shows $\lambda\neq0$,
so $f$ is invertible and by the following topological Lemma~\ref{lem:TopRepellIsInversTopAttr},
$f^{-1}$ is topologically attracting. Now the repelling case follows
from the attracting one.

\end{proof}
\begin{lem}
\label{lem:TopRepellIsInversTopAttr}A germ of homeomorphisms $g:(X,p)\to(X,p)$
of a locally compact metric space $X$ is topologically repelling
if and only if its inverse $g^{-1}:(X,p)\to(X,p)$ is topologically
attracting.
\end{lem}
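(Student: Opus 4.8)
The plan is to unwind the two definitions and observe that they are literally dual to each other under passing to the inverse homeomorphism. Recall that $g$ is \emph{topologically repelling} at $p$ means there is a neighbourhood $U$ of $p$ with $\Sigma_g(U)=\{p\}$, i.e.\ every point $q\in U\setminus\{p\}$ has a forward $g$-orbit that eventually leaves $U$. On the other hand $g^{-1}$ is \emph{topologically attracting} at $p$ means there is a neighbourhood $V$ of $p$ on which $g^{-\circ n}\to p$ uniformly. So I need to produce, from a repelling neighbourhood $U$ for $g$, an attracting neighbourhood $V$ for $g^{-1}$, and conversely.

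First I would prove the implication ``$g^{-1}$ topologically attracting $\Rightarrow$ $g$ topologically repelling'', which is the easy direction and works in any metric space. Suppose $g^{-\circ n}\to p$ uniformly on some neighbourhood $V$; shrink $V$ so that $\overline V$ is compact (using local compactness) and pick a smaller neighbourhood $U$ with $\overline U\subseteq V$. Since the convergence is uniform, there is $N$ with $g^{-\circ n}(V)\subseteq U$ for all $n\ge N$. Now if $q\in U$ had a full forward $g$-orbit $\{q_n\}_{n}\subseteq U$, then in particular $q=g^{\circ n}(q_n)$ with $q_n\in U\subseteq V$, so $q=g^{-\circ n}$ applied to... — more cleanly: $q_n = g^{-\circ n}(q)\in U\subseteq V$ for all $n$, hence $q_n\to p$; but also $q = g^{\circ n}(q_n)$ forces, taking $n\ge N$, that $q\in g^{\circ N}(U)$... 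The cleanest phrasing: from $q_n\to p$ and continuity of $g$ at $p$ one gets that the forward orbit of $q$ is the backward orbit of some point converging to $p$, and a standard argument shows $q=p$. Concretely: $g^{-\circ n}(q)\to p$ means the forward orbit under $g$ of $q$ cannot stay in $U$ unless $q=p$, because $g^{-\circ n}(q)\in U$ for all $n$ together with $g^{-\circ n}(q)\to p$ and injectivity of all $g^{\circ k}$ pins $q$ down to $p$ once we intersect with $\bigcap_n g^{\circ n}(U)=\{p\}$. I would write this out carefully but it is routine.

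For the converse, ``$g$ topologically repelling $\Rightarrow$ $g^{-1}$ topologically attracting'', suppose $\Sigma_g(U)=\{p\}$ for a neighbourhood $U$ with $\overline U$ compact. I claim $g^{-\circ n}\to p$ uniformly on a slightly smaller neighbourhood $V$ of $p$ with $\overline V\subseteq U$. The key point is a compactness/finiteness argument: for each $\varepsilon>0$ let $B_\varepsilon$ be the $\varepsilon$-ball about $p$ inside $U$; then $\overline U\setminus B_\varepsilon$ is compact, and for each $q$ in it the (full two-sided, or just forward) $g$-orbit must leave $U$, so there is a least $m(q)\in\mathbb N$ with $g^{\circ m(q)}(q)\notin U$; by continuity $m$ is locally bounded, hence bounded by some $M=M(\varepsilon)$ on the compact set $\overline U\setminus B_\varepsilon$. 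Then I would argue that any point whose backward orbit $g^{-\circ n}$ ever exits $B_\varepsilon$ must within $M$ further steps of $g$ exit $U$; choosing $V$ so that $g^{\circ k}(V)\subseteq U$ for $k=0,\dots,M$ (possible by continuity of finitely many maps at $p$) forces $g^{-\circ n}(V)\subseteq B_\varepsilon$ for all large $n$, giving uniform convergence. Since the roles of $g$ and $g^{-1}$ are symmetric, applying the two implications to $g$ and to $g^{-1}$ yields both directions of the stated equivalence.

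The main obstacle is the converse direction: turning ``every nonzero orbit eventually escapes $U$'' (a pointwise, non-uniform statement) into uniform backward convergence. This is exactly where local compactness of $X$ is used — it lets me bound the escape time $m(q)$ uniformly on compact annular regions $\overline U\setminus B_\varepsilon$ via continuity, and then trade that uniform escape time for a uniform attracting neighbourhood of $g^{-1}$. I would make sure to state clearly that "orbit" here means the forward $g$-orbit (defined since $g$ is a homeomorphism of $X$, so all iterates exist globally), so that $\Sigma_g(U)=\{p\}$ really does say: for every $q\neq p$ near $p$ there is an $n$ with $g^{\circ n}(q)\notin U$.
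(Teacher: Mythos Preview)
Your easy direction ($g^{-1}$ attracting $\Rightarrow$ $g$ repelling) is correct but written very murkily; the clean argument is one line: if $g^{\circ n}(x)\in U$ for all $n$, then for each $\varepsilon>0$ choose $n$ with $g^{-\circ n}(U)\subseteq B_\varepsilon(p)$ and note $x=g^{-\circ n}(g^{\circ n}(x))\in B_\varepsilon(p)$, so $x=p$.

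Your hard direction has a genuine gap. The escape-time bound $M=M(\varepsilon)$ depends on $\varepsilon$, and so does the neighbourhood $V$ you then choose (with $g^{\circ k}(V)\subseteq U$ for $k\le M$). But uniform convergence requires a \emph{single} neighbourhood on which $g^{-\circ n}\to p$; you never explain how to pass from the $\varepsilon$-dependent $V_\varepsilon$ to a fixed one. There is also a smaller issue: for the escape time $m(q)$ to be upper semicontinuous you must define it as escape from a \emph{closed} set $\overline{U'}\subset U$, since $X\setminus U$ is closed and a point with $g^{\circ m}(q)\in\partial U$ need not have neighbours escaping by time $m$. Your route can be salvaged by a two-pass argument (run it once for a fixed $\varepsilon_0$ to trap the backward orbit inside $B_{\varepsilon_0}$, then again for smaller $\varepsilon$ using that trapping), but as written it is incomplete.

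The paper takes a different and more direct route: pick a compact neighbourhood $K\subseteq U$ of $p$ and set $K_n:=\bigcap_{j=0}^{n}g^{\circ(-j)}(K)$. These are nested compact neighbourhoods of $p$ with $\bigcap_n K_n=\{p\}$ (precisely the repelling hypothesis), so $\operatorname{diam}(K_n)\to 0$ by compactness. For $n$ large one gets $K_n\subseteq g(K)$, whence $g(K_{n+1})=K_n$ and therefore $g^{-\circ m}(K_n)=K_{n+m}$, giving uniform convergence on the fixed neighbourhood $K_n$. This nested-compacta argument produces the $g^{-1}$-forward-invariant neighbourhood and the uniform shrinking simultaneously, which is exactly the step your escape-time approach is missing.

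Finally, $g$ is only a \emph{germ}: your closing remark that ``all iterates exist globally'' is not correct. Backward iterates are only defined while they remain in the domain of a representative, which is another reason one first needs a neighbourhood trapped by $g^{-1}$.
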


\begin{proof}
Let $g$ be topologically repelling and $U\subseteq X$ a neighbourhood
of $0$ such that all orbits in $U\backslash\{0\}$ escape. Let $K\subseteq U$
be a compact neighbourhood of $p$. Then 
\[
K_{n}:=K\cap g^{\circ(-1)}(K)\cap\cdots\cap g^{\circ(-n)}(K),\quad n\in\mathbb{N}
\]
defines a decreasing nested sequence of such neighbourhoods. Since
all orbits in $K\backslash\{p\}$ escape, we have $\bigcap_{n=0}^{\infty}K_{n}=\{p\}$
and hence $\m{diam}(K_{n})\xrightarrow[n\to\infty]{}p$. Thus for
$n\in\mathbb{N}$ large enough, $K_{n}\subseteq g(K)$ and 
\[
g(K_{n+1})=K_{n}\cap g(K)=K_{n}
\]
and we can assume $g$ is bijective on $K_{n}$. Hence $g^{\circ(-m)}(K_{n})=K_{n+m}$
for $m\in\mathbb{N}$ and $g^{\circ(-m)}(x)\to p$ uniformly on the
neighbourhood $K_{m}\subseteq U$ of $0$.

Conversely, let $g^{-1}$ be topologically attracting and $U\subseteq X$
a neighbourhood of $p$ such that $g^{\circ(-n)}\to p$ uniformly
on $U$. Let $x\in U$ and assume $g^{\circ n}(x)\in U$ for all $n$.
Then for every $\varepsilon>0$ there is an $n$ large enough so that
$g^{\circ(-n)}(U)\subseteq B_{\varepsilon}(p)$ and hence 
\[
x=g^{\circ(-n)}(g^{\circ n}(x))\in B_{\varepsilon}(p).
\]
Thus $x=p$ and all other $g$-orbits escape from $U$.
\end{proof}

Moreover, this correspondence of inverses immediately suggests close
relations of normal forms in the repelling and geometrically attracting
case. For the superattracting case on the other hand, we can already
see the difference in the formal classification in Remark~\ref{rem:multiplierIsInvariant}.
The hyperbolic invertible case was the first for which unique holomorphic
and topological normal forms were found by Koenigs:
\begin{thm}[Koenigs, \cite{Koenigs1884RecherchesSurLesIntegralesDeCertainesEquationsFonctionnelles}]
\label{thm:Koenigs}Let $f\in\Aut(\mathbb{C},0)$ be hyperbolic and
invertible with multiplier $\lambda$. Then:
\begin{enumerate}
\item $f$ is holomorphically conjugate to its linear part $L_{\lambda}:z\mapsto\lambda z$
and the conjugation $\varphi\in\Aut(\mathbb{C},0)$ is uniquely determined
up to multiplication by a non-zero constant.
\item $f$ is topologically conjugate to the map $L_{1/2}:z\mapsto z/2$,
if $|\lambda|<1$ and the map $L_{2}:z\mapsto2z$, if $|\lambda|>1$.
\end{enumerate}
\end{thm}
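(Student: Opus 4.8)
The plan is to prove the two parts separately, deriving part~(1) by a direct construction of the linearising coordinate and part~(2) as a consequence of~(1) together with Lemma~\ref{lem:1DhypDynamics} and Lemma~\ref{lem:TopRepellIsInversTopAttr}. By the latter two lemmas, it suffices to treat the attracting case $0<|\lambda|<1$ in both parts, since a repelling $f$ has attracting inverse $f^{-1}$, and conjugating $f$ to $L_\lambda$ (resp.\ topologically to $L_2$) is the same as conjugating $f^{-1}$ to $L_{1/\lambda}$ (resp.\ to $L_{1/2}$).

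For part~(1) in the attracting case, I would construct the Koenigs coordinate as the limit $\varphi(z)=\lim_{n\to\infty}\lambda^{-n}f^{\circ n}(z)$. The key estimates come from the proof of Lemma~\ref{lem:1DhypDynamics}: on a small disc $\mathbb{D}_r$ one has $|f^{\circ n}(z)|\le(|\lambda|+Cr)^{n}|z|$, and writing $f(w)=\lambda w+O(w^2)$ gives $|f^{\circ(n+1)}(z)-\lambda f^{\circ n}(z)|\le C|f^{\circ n}(z)|^2\le C|z|^2(|\lambda|+Cr)^{2n}$. Hence the telescoping series $\sum_n\lambda^{-(n+1)}\bigl(f^{\circ(n+1)}(z)-\lambda f^{\circ n}(z)\bigr)$ converges geometrically, provided $r$ is chosen small enough that $(|\lambda|+Cr)^2<|\lambda|$, which is possible since $|\lambda|<1$ forces $|\lambda|^2<|\lambda|$. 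The limit $\varphi$ is then holomorphic (locally uniform limit of holomorphic functions, via Montel, Theorem~\ref{thm:MontelBdd}), satisfies $\varphi'(0)=1$ so $\varphi\in\Aut(\mathbb{C},0)$, and obeys the functional equation $\varphi\circ f=\lambda\varphi$ by passing to the limit in $\lambda^{-n}f^{\circ n}(f(z))=\lambda\cdot\lambda^{-(n+1)}f^{\circ(n+1)}(z)$. For uniqueness: if $\psi$ is another lineariser, then $\chi=\psi\circ\varphi^{-1}$ commutes with $L_\lambda$, i.e.\ $\chi(\lambda w)=\lambda\chi(w)$; comparing Taylor coefficients $\chi(w)=\sum_{k\ge1}c_k w^k$ gives $c_k\lambda^k=\lambda c_k$, so $c_k=0$ for all $k\ge2$ because $\lambda^k\neq\lambda$ in the hyperbolic case, whence $\chi(w)=c_1 w$ is linear and $\psi=c_1\varphi$.

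For part~(2) in the attracting case, since $f$ is holomorphically — hence topologically — conjugate to $L_\lambda$, it remains to show $L_\lambda:z\mapsto\lambda z$ is topologically conjugate to $L_{1/2}:z\mapsto z/2$ whenever $|\lambda|<1$. Writing $\lambda=|\lambda|e^{i\theta}$, I would build the conjugating homeomorphism in polar-logarithmic coordinates: the map $z\mapsto(\,\log|z|,\arg z\,)$ turns $L_\lambda$ into the real affine map $(s,t)\mapsto(s+\log|\lambda|,\ t+\theta)$ on $\mathbb{R}\times(\mathbb{R}/2\pi\mathbb{Z})$, and $L_{1/2}$ into $(s,t)\mapsto(s-\log 2,\ t)$. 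A homeomorphism intertwining these two shifts is obtained by a linear rescaling of the $s$-axis composed with a shear mixing $s$ into $t$ (to absorb the rotation $\theta$), then transported back through the exponential; one checks it extends continuously across $z=0$ by fixing $0$, because both maps are contractions with the origin as attractor. The main obstacle in the whole argument is pinning down this explicit topological conjugacy and verifying continuity at the origin — the holomorphic part (part~(1)) is the standard Koenigs telescoping argument and is routine given the estimates already available from Lemma~\ref{lem:1DhypDynamics}.
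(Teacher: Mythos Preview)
Your proposal is correct and follows essentially the same route as the paper's proof sketch: the paper too constructs the lineariser as the limit of $\varphi_n=\lambda^{-n}f^{\circ n}$ in the attracting case (reducing the repelling case to this via $f^{-1}$), and then reduces the topological statement to conjugating $L_\lambda$ to $L_{1/2}$, phrased there as ``glueing suitable homeomorphisms of annuli'', which is exactly what your polar-logarithmic shear accomplishes. The paper additionally notes that existence and uniqueness of the formal lineariser already follow from Proposition~\ref{prop:1DPoincareDulac}, with convergence available alternatively via Brjuno's Theorem~\ref{thm:BrjunoYoccoz1D}; one minor quibble with your write-up is that holomorphicity of the limit $\varphi$ is Weierstra\ss' convergence theorem rather than Montel (Theorem~\ref{thm:MontelBdd}), since your telescoping estimate already gives locally uniform convergence directly.
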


\begin{proof}[Proof sketch]
Proposition~\ref{prop:1DPoincareDulac} provides a linearising series
$\varphi\in\Pow(\mathbb{C},0)$ for $f$, that is unique up to choosing
the (non-zero) linear part (since $\lambda^{j}=\lambda$ if and only
if $j=1$). The convergence of $\varphi$ follows from the more general
Theorem~\ref{thm:BrjunoYoccoz1D} of Brjuno.

For a more direct proof in the geometrically attracting case, let
$\varphi_{k}:=f^{\circ k}/\lambda^{k}$ for $k\in\mathbb{N}$. Then
for $r>0$ small enough, one can show that the sequence $\{\varphi_{k}\}_{k\in\mathbb{N}}$
converges uniformly on $\mathbb{D}_{r}$ to a holomorphic map $\varphi$.
By definition for $k\in\mathbb{N}$ we have $\varphi_{k}\circ f=L_{\lambda}\circ\varphi_{k+1}$,
and passing to the limit $k\to\infty$ shows that the limit map $\varphi$
conjugates $f$ to $L_{\lambda}$.

The topological conjugation from $L_{\lambda}$ to $L_{1/2}$ can
be constructed by glueing suitable homeomorphisms of annuli covering
a neighbourhood of $0$ (see e.g. \cite{Abate2010Discreteholomorphiclocaldynamicalsystems}).
\end{proof}
\begin{rem}
The method of composing the $k$-th iterate of the map $f$ with the
$k$-th iterate of a (right-) inverse of the target map $g$ and showing
convergence for $k\to\infty$ to arrive at a conjugating local biholomorphism
is an important technique that we will see in more detail in the construction
of Fatou coordinates.
\end{rem}

\begin{rem}
The normal form $L_{\lambda}$ in the holomorphic category is unique
by Remark~\ref{rem:multiplierIsInvariant}. The two possible topological
normal forms can not be topologically conjugated, since the stable
set $\Sigma_{L_{1/2}}$ is a neighbourhood of $0$, but $\Sigma_{L_{2}}=\{0\}$
by Lemma~\ref{lem:1DhypDynamics}.

Hence invertible hyperbolic germs are uniquely determined up to holomorphic
conjugation by their multiplier $\lambda$ and up to topological conjugation
by the dichotomy $0<|\lambda|<1$ or $|\lambda|>1$. This gives us
a complete set of invariants in both categories.
\end{rem}

The classification in the superattracting case was first stated by
Böttcher in \cite{Boettcher1899PrinciplesofIterationalCalculuspartOneandTwo},
better known from \cite{Boettcher1904ThePrincipalLawsofConvergenceofIteratesandTheirApplicationtoAnalysis},
where he gives a sketch of the proof. First complete proofs were given
independently in \cite{Fatou1919SurLesEquationsFonctionnellesI}
and \cite{Ritt1920OntheIterationofRationalFunctions}.
\begin{thm}[Böttcher \cite{Boettcher1899PrinciplesofIterationalCalculuspartOneandTwo}]
\label{thm:Boettcher}Let $f\in\End(\mathbb{C},0)$ be superattracting
of order $2\le k<\infty$, i.e.
\[
f(z)=a_{k}z^{k}+a_{k+1}z^{k+1}+\cdots
\]
with $a_{k}\neq0$. Then:
\begin{enumerate}
\item $f$ is holomorphically conjugate to $E_{k}:z\mapsto z^{k}$ and the
conjugation $\varphi\in\Aut(\mathbb{C},0)$ is unique up to multiplication
by $k-1$-st roots of unity.
\item For $k,l\in\mathbb{N}$, $E_{k}$ and $E_{l}$ are topologically conjugate
if and only if $k=l$.
\end{enumerate}
\end{thm}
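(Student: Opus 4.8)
The plan is to prove the holomorphic statement (1) by the same iteration-and-pass-to-the-limit technique highlighted in the remark after Theorem~\ref{thm:Koenigs}, adapted to the multiplicative structure of $z\mapsto z^k$. First I would normalise: after conjugating by the dilation $z\mapsto cz$ with $c^{k-1}=a_k$ we may assume $a_k=1$, so $f(z)=z^k(1+O(z))=z^k u(z)$ with $u$ holomorphic near $0$ and $u(0)=1$. On a small disc $\mathbb{D}_r$ we have $|f(z)|\le C|z|^k\le |z|/2$ for suitable $r$, so $\mathbb{D}_r$ is forward-invariant and orbits converge to $0$ super-exponentially: $|f^{\circ n}(z)|\le (C|z|)^{k^n}/C$. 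The natural candidate for the conjugacy is $\varphi(z)=\lim_{n\to\infty}\bigl(f^{\circ n}(z)\bigr)^{1/k^n}$, the analogue of $f^{\circ n}/\lambda^n$ in the Koenigs case, since if it exists then $\varphi(f(z))=\lim (f^{\circ(n+1)}(z))^{1/k^n}=\bigl(\lim (f^{\circ(n+1)}(z))^{1/k^{n+1}}\bigr)^{k}=\varphi(z)^k$, i.e.\ $\varphi\circ f=E_k\circ\varphi$.

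The analytic core is showing that this limit exists as a well-defined holomorphic germ with $\varphi'(0)=1$. The standard device is to write $\varphi_n(z):=\bigl(f^{\circ n}(z)/z^{k^n}\bigr)^{1/k^n}$ after factoring out the leading behaviour, or equivalently to work with logarithms: set $\psi_n(z)=k^{-n}\log\bigl(f^{\circ n}(z)/z^{k^n}\bigr)$ and show $\{\psi_n\}$ is uniformly Cauchy on $\mathbb{D}_r$. The key estimate is the telescoping bound
\[
\psi_{n+1}(z)-\psi_n(z)=k^{-(n+1)}\log\frac{f^{\circ(n+1)}(z)}{f^{\circ n}(z)^{k}}=k^{-(n+1)}\log u\bigl(f^{\circ n}(z)\bigr),
\]
and since $f^{\circ n}(z)\to 0$ uniformly with $u(0)=1$, the quantity $\log u(f^{\circ n}(z))$ is $O(|f^{\circ n}(z)|)=O((C r)^{k^n})$, which is summable against the factor $k^{-(n+1)}$ with huge room to spare. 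Choosing a single-valued branch of the logarithm is legitimate because $f^{\circ n}(z)/z^{k^n}=\prod_{j=0}^{n-1}u(f^{\circ j}(z))$ is a finite product of functions close to $1$ on a possibly shrunken disc, hence nonvanishing with a continuous (and holomorphic) logarithm. Exponentiating gives $\varphi(z)=z\exp\bigl(\lim\psi_n(z)\bigr)$, holomorphic with $\varphi(0)=0$, $\varphi'(0)=1$, hence a local biholomorphism, and the conjugacy relation passes to the limit by continuity. Uniqueness up to $(k-1)$-st roots of unity follows by the usual argument: if $\varphi,\tilde\varphi$ both conjugate $f$ to $E_k$, then $\eta:=\tilde\varphi\circ\varphi^{-1}$ commutes with $E_k$, and comparing power-series coefficients in $\eta(w)^k=\eta(w^k)$ forces $\eta(w)=\zeta w$ with $\zeta^{k-1}=1$.

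For the topological statement (2), the direction $k=l\Rightarrow$ conjugate is trivial. For the converse, the point is to extract a topological invariant that distinguishes the $E_k$. The cleanest choice is the \emph{local degree} of the map at $0$, equal to $k$ for $E_k$: the local degree is a topological invariant (it can be read off, e.g., as the number of preimages of a generic nearby point, or via the induced map on local homology/the winding number of $E_k$ restricted to a small circle), so a topological conjugacy between $E_k$ and $E_l$ would force $k=l$. Alternatively one can count fixed points of the iterate $E_k^{\circ n}$, which is $z\mapsto z^{k^n}$ and has exactly $k^n-1$ nonzero fixed points near $0$ on every small disc, another manifestly conjugacy-invariant count; either way $k^n$ and hence $k$ is determined.

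I expect the main obstacle to be purely technical rather than conceptual: carefully setting up the branch of the logarithm (equivalently, the $k^n$-th root) so that $\varphi_n$ is genuinely a well-defined holomorphic function on a fixed disc independent of $n$, and then packaging the super-exponential decay $|f^{\circ n}(z)|\lesssim (Cr)^{k^n}$ into a clean uniform-Cauchy statement. Once the right branch is fixed, the convergence is extremely fast and everything else — $\varphi'(0)=1$, the conjugacy identity, uniqueness — is routine. I would therefore spend most of the written proof on the invariance of $\mathbb{D}_r$, the product formula $f^{\circ n}(z)/z^{k^n}=\prod_{j<n}u(f^{\circ j}(z))$, and the telescoping estimate above.
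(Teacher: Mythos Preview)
Your approach is correct and essentially identical to the paper's sketch: both normalise to $a_k=1$, write $f^{\circ n}(z)=z^{k^n}g_n(z)$ with $g_n(0)=1$, set $\varphi_n(z)=z\,g_n(z)^{1/k^n}$ (your logarithmic bookkeeping just makes this root extraction explicit), and pass to the limit using $\varphi_n\circ f=E_k\circ\varphi_{n+1}$; your telescoping estimate supplies the convergence step the paper leaves as an exercise, and your treatment of uniqueness and part~(2) via local degree is standard and fine. One small slip to fix before writing it up: the product formula should be $f^{\circ n}(z)/z^{k^n}=\prod_{j=0}^{n-1}u\bigl(f^{\circ j}(z)\bigr)^{k^{\,n-1-j}}$, with exponents, not the unweighted product you stated---this does not affect your telescoping identity $\psi_{n+1}-\psi_n=k^{-(n+1)}\log u(f^{\circ n})$, which is correct and is what actually drives the argument.
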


\begin{proof}[Proof sketch]
After a linear change of coordinates, we can assume $a_{k}=1$, so
\[
f(z)=z^{k}g_{1}(z),
\]
where $g_{1}(z)=1+O(z)$. For $r>0$ small enough, $g_{1}$ never
vanishes on $\mathbb{D}_{r}$ and $f$ maps $\mathbb{D}_{r}$ into
itself. Therefore also the $n$-th iterate $f^{\circ n}$ of $f$
has a representation 
\[
f^{\circ n}(z)=z^{k^{n}}g_{n}(z)
\]
 with $g_{n}(z)=1+O(z)$ never vanishing on $\mathbb{D}_{r}$. Hence
a holomorphic $k^{n}$-th root $h_{n}$ of $g_{n}$ exists on $\mathbb{D}_{r}$
and is well defined by imposing $g_{n}(0)=1$. This allows us to choose
a $k^{n}$-th root $\varphi_{n}(z)=zh_{n}(z)$ of $f^{\circ n}$ and
as in the proof of Koenigs' thoerem~\ref{thm:Koenigs}, we have an
expression of the form $\varphi_{n}\circ f=E_{k}\circ\varphi_{n+1}$
for all $n\in\mathbb{N}$. Hence the proof can be completed by showing
locally uniform convergence of $\{\varphi_{n}\}_{n}$ to a local biholomorphism
$\varphi\in\Aut(\mathbb{C},0)$. 
\end{proof}

It follows that the order $k\in\mathbb{N}$ is a complete invariant
for the superattracting case in the topological, holomorphic and formal
categories. Therefore, Theorems~\ref{thm:Koenigs} and \ref{thm:Boettcher}
provide a complete classification and description of the dynamics
for hyperbolic germs in one complex variable.

\section{\label{subsec:1D-parabolic}Parabolic fixed points}

Let $f\in\Pow(\mathbb{C},0)$ be parabolic. Then the multiplier of
$f$ is a rational rotation $\lambda=e^{2\pi ip/q}$, with $p\in\mathbb{N}$,
$q\in\mathbb{N}^{*}$ and the $q$-th iterate $f^{\circ q}$ of $f$
is tangent to the identity in the following sense:

\begin{defn}
If $f\in\Pow(\mathbb{C},0)$ has multiplier $1$, we say $f$ is \emph{tangent
to the identity}.
\end{defn}

Germs tangent to the identity will be the model case for parabolic
germs.

\subsection{Dynamics}

Let $f\in\End(\mathbb{C},0)$ be tangent to the identity of order
$k+1$, i.e. 
\begin{equation}
f(z)=z(1+az^{k})+O(z^{k+2})\label{eq:1DparaSimple}
\end{equation}
for $z\in\mathbb{C}$ near $0$ and $a\neq0$. Since the first order
term is the identity, the dynamics of $f$ are dominated by the next
non-vanishing term $az^{k}$. In particular, the factor $(1+az^{k})$
will act contracting, whenever $az^{k}$ is a (small) negative real
number, and expanding, whenever $az^{k}$ is a positive number, motivating
the following definition:
\begin{defn}
Let $f\in\End(\mathbb{C},0)$ be tangent to the identity of order
$k+1$ of the form (\ref{eq:1DparaSimple}). Then a vector $v\in\mathbb{C}^{*}$
is an \emph{\index{attracting direction@\emph{attracting direction}}attracting
direction} (for $f$ at $0$) if $kav^{k}=-1$ and a \emph{\index{repelling direction}repelling
direction}, if $kav^{k}=1$.
\end{defn}

\begin{rem}
Let $f\in\End_{1}(\mathbb{C},0)$ be of the form (\ref{eq:1DparaSimple}).
\begin{enumerate}
\item The set of attracting\slash repelling directions is invariant under
local holomorphic (or formal) changes of coordinates by our choice
of normalisation $kav^{k}=\pm1$.
\item $f$ has $k$ equally spaced attracting directions alternating with
$k$ equally spaced repelling directions around $0$ namely the $k$-th
roots of $-1/(ka)$ and $1/(ka)$ respectively.
\item Since $f^{-1}(z)=z(1-az^{k})+O(z^{k+2})$ for $z$ near $0$, the
attracting and repelling directions of $f$ exchange roles for the
inverse $f^{-1}\in\End_{1}(\mathbb{C},0)$.
\end{enumerate}
\end{rem}

\begin{defn}
Let $f\in\End(\mathbb{C},0)$ and $v\in\mathbb{C}^{*}$. An orbit
$\{z_{n}\}_{n}=\{f^{\circ n}(z)\}_{n}$ is said to converge to $0$
\emph{along\index{convergence along a real direction@\emph{convergence along a real direction}}
the direction $v$}, if $z_{n}\to0$ and $z_{n}/|z_{n}|\to v/|v|$
(in particular $z_{n}\neq0$ for all $n\in\mathbb{N}$).
\end{defn}

\begin{defn}
Let $v\in\mathbb{C}\backslash\{0\}$ and $f:U\to\mathbb{C}$ be a
representative of $f\in\End_{1}(\mathbb{C},0)$. 
\begin{enumerate}
\item An $f$-invariant open set $P_{v}\subseteq\mathbb{C}^{*}$ is an \emph{\index{attracting petal@\emph{attracting petal}}attracting
petal} for $f$ centred at $v$, if for every  point $z\in U$ its
orbit $\{z_{n}\}_{n}=\{f^{\circ n}(z)\}_{n}$ converges to $0$ along
the direction $v$ if and only if $z_{n}\in P_{v}$ eventually. A
\emph{repelling petal\index{repelling petal@\emph{repelling petal}}}
for $f$ is an attracting petal for $f^{-1}$. 
\item The (\emph{local}) \emph{parabolic basin} of $f$ \emph{(centred)
at $v$\index{parabolic basin@\emph{parabolic basin}}} (in $U$)
is 
\[
\Omega_{v}=\{z\in U\mid z_{n}/|z_{n}|\to v/|v|\}.
\]
\item A \emph{parabolic flower\index{parabolic flower@\emph{parabolic flower}}}
for $f$ is an assignment of an attracting petal $P_{v}$ to each
attracting direction $v$.
\end{enumerate}
\end{defn}

\begin{rem}
Let $P_{v}$ be an attracting petal centred at $v\in\mathbb{C}^{*}$
for $f:U\to\mathbb{C}$ tangent to the identity. Then 
\[
\Omega_{v}=\bigcup_{n\in\mathbb{N}}f^{-n}(P_{v})\subseteq A_{f}(0)
\]
 is the unique maximal attracting petal centred at $v$.
\end{rem}

In fact all stable orbits end up in an attracting petal centred at
an attracting direction. This is part of the celebrated Leau-Fatou
parabolic flower theorem, proved in its initial form by Leau \cite{Leau1897EtudeSurLesEquationsFonctionellesaUneOuPlusieursVariables},
and improved upon by Julia \cite{Julia1918MemoireSurLiterationDesFonctionsRationnelles}
and Fatou \cite{Fatou1919SurLesEquationsFonctionnellesI,Fatou1920SurLesEquationsFonctionnellesII,Fatou1920SurLesEquationsFonctionnellesIII}.
\begin{thm}[Parabolic flower theorem]
\label{thm:LeauFatou}Let $f\in\End(\mathbb{C},0)$ be tangent to
the identity of finite order $k+1\ge2$. There exist parabolic flowers
for $f$ and $f^{-1}$, such that 
\begin{enumerate}
\item \label{enu:LeauFatou1petalsNbh}The union of the attracting and repelling
petals and $\{0\}$ is a neighbourhood of $0$. Attracting petals
(for $f$) are pairwise disjoint and intersect precisely the repelling
petals centred at neighbouring repelling directions.
\item \label{enu:LeauFatou2rateOfConv}For points $z\in P_{v}$, we have
$z_{n}\sim vn^{-1/k}$ (i.e.\ $\lim_{n\to\infty}\sqrt[k]{n}z_{n}=v$).
\item \label{enu:LeauFatou3StableSet}For small neighbourhoods $U\subseteq\mathbb{C}$
of $0$, every stable orbit in $U\backslash\{0\}$ converges to $0$
along an attracting direction $v$ or the stable set is $\Sigma_{f}(U)=\{0\}\cup\bigcup_{v\text{ attracting}}\Omega_{v}$.
In particular, any orbit converging to $0$ does so along an attracting
direction.
\end{enumerate}
\end{thm}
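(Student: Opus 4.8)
The plan is to reduce to a single model petal via the Leau coordinate and then translate all statements back. First I would normalise the germ by a linear change of coordinates so that the coefficient $a$ in \eqref{eq:1DparaSimple} is $-1/k$, so that $f(z) = z - \tfrac1k z^{k+1} + O(z^{k+2})$; then the attracting directions are exactly the $k$-th roots of unity and repelling directions the $k$-th roots of $-1$. Next, I would introduce the change of variable $w = \psi(z) = 1/(z^k)$, which conjugates $f$ (restricted to a slit neighbourhood of $0$, on each of the $k$ sheets) to a germ at $\infty$ of the form $g(w) = w + 1 + O(w^{-1/k})$, i.e.\ $g(w) = w + 1 + o(1)$ as $w \to \infty$. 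This is the key computation: plugging $z = w^{-1/k}$ into $f$ and expanding $1/f(z)^k$ gives the unit translation at top order, with a genuinely $o(1)$ error. (Strictly one works on sectors where a branch of $z \mapsto z^{1/k}$ is defined; each attracting direction corresponds to one such sheet.)

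With $g(w) = w+1+o(1)$ in hand, I would prove the model statement: for $R$ large, the right half-plane–like region $\Pi_R^+ = \{\,\Re w > R,\ |\Im w| < (\text{slope})\,\Re w\,\}$ (or simply $\{\Re w > R\}$ after shrinking) is $g$-invariant, every orbit in it satisfies $w_n = n + o(n)$, and conversely any orbit of $g$ tending to $\infty$ within the relevant sector eventually enters $\Pi_R^+$. The invariance and the escape estimate $\Re w_{n+1} \ge \Re w_n + 1/2$ are immediate from $|g(w)-w-1| < 1/2$ for $|w| > R$; the asymptotic $w_n \sim n$ follows by Cesàro summation of $w_{n+1}-w_n = 1 + o(1)$. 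Pulling back, $P_v := \psi^{-1}(\Pi_R^+)$ (on the sheet associated to $v$) is an attracting petal centred at $v$, and $w_n \sim n$ becomes $z_n^k \sim 1/n$, i.e.\ $z_n \sim v n^{-1/k}$, which is claim~\eqref{enu:LeauFatou2rateOfConv}; applying the same to $f^{-1}$ (whose attracting directions are the repelling directions of $f$) produces the repelling petals. For claim~\eqref{enu:LeauFatou1petalsNbh}, I would choose the petals with opening angle slightly less than $2\pi/k$ in the $z$-variable so that consecutive attracting and repelling petals overlap in two ``side lobes'' near the bisecting directions and their union with $\{0\}$ covers a punctured neighbourhood; this is a matter of bookkeeping with the sectors $\arg z \in ((2j-1)\pi/k,\ (2j+1)\pi/k)$ and the corresponding half-planes in $w$.

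Finally, for claim~\eqref{enu:LeauFatou3StableSet}: given a small neighbourhood $U$ of $0$ and a stable orbit $\{z_n\} \subseteq U \setminus\{0\}$, I would show $z_n \to 0$ and that the orbit eventually lands in one of the attracting petals, so that $z_n$ converges along the corresponding attracting direction. The convergence $z_n \to 0$ follows because, shrinking $U$, the union of attracting and repelling petals with $\{0\}$ is all of $U$; an orbit that stays in $U$ cannot stay in a repelling petal (orbits there escape $U$ under $f$, being attracting for $f^{-1}$) nor oscillate between the finitely many petals and lobes without leaving $U$ — a compactness/trapping argument on the overlap regions, using that in each $w$-chart $\Re w_n$ is eventually increasing, forces the orbit into a single attracting petal. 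This yields $\Sigma_f(U) = \{0\} \cup \bigcup_{v \text{ attracting}} \Omega_v$ and in particular that every orbit converging to $0$ does so along an attracting direction.

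I expect the main obstacle to be the bookkeeping in step three of the second paragraph and in the third paragraph: making the sectors and half-planes match up so that (i) the petals genuinely glue into a full punctured neighbourhood with the stated intersection pattern, and (ii) a stable orbit provably cannot ``wander'' forever through the overlap lobes without either escaping $U$ or settling into one attracting petal. The analytic heart — the conjugation to $w \mapsto w+1+o(1)$ and the resulting $w_n \sim n$ — is short; the geometric/topological care needed to assemble the flower and to corral all stable orbits is where the real work lies.
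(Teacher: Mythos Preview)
Your proposal is correct and follows essentially the same route as the paper: normalise so the attracting directions are the $k$-th roots of unity, pass to $w=z^{-k}$ on sectors to conjugate $f$ to $w\mapsto w+1+O(z)$, take petals as preimages of shifted half-planes $\{|\arg(w-R)|<\theta\}$, obtain $w_n\sim n$ by Ces\`aro, and for large enough opening angle glue attracting and repelling petals into a punctured neighbourhood.

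The one place where the paper is cleaner than you anticipate is Part~\ref{enu:LeauFatou3StableSet}. You flag the ``trapping'' of stable orbits as the hard part, but no compactness or oscillation argument is needed. The paper proves directly (as part of the petal proposition) that any \emph{backward} orbit in an attracting petal $P_j(R,\theta)$ must leave it: in the $w$-coordinate, $\Re w_{-n}-|\Im w_{-n}|/\tan\theta$ decreases by a fixed positive amount at each step, so eventually drops below $R$. Applied to $f^{-1}$, this says every \emph{forward} $f$-orbit leaves each repelling petal in finitely many steps. Since the attracting and repelling petals together cover $U\setminus\{0\}$, a stable orbit that has left all repelling petals must sit in an attracting petal, where it then stays by invariance. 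So the ``real work'' you anticipate collapses to a one-line inequality in the $w$-chart.
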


To simplify our set-up, after a linear change of coordinates $z\mapsto\sqrt[k]{-ka}z$
(for an arbitrary choice of the $k$-th root) to the form (\ref{eq:1DparaSimple}),
we may assume 
\[
f(z)=z\paren[\Big]{1-\frac{z^{k}}{k}}+O(z^{k+2}),
\]
so the attracting directions are precisely the $k$-th roots of unity
$v_{j}=\exp(2\pi ij/k)$ for $j=0,\ldots,k-1$. A main tool in the
following proofs is the coordinate $w=\varphi(z):=z^{-k}$ defined
for $z\in\mathbb{C}^{*}$. For each $j=0,\ldots,k-1$, the restriction
of $\varphi$ to the sector
\begin{align*}
\Sigma_{j} & :=\{z\in\mathbb{C}\mid|\arg z-2\pi j/k|<\pi/k\}
\end{align*}
is a biholomorphisms onto $\mathbb{C}^{-}:=\mathbb{C}\backslash(-\infty,0]$.

Let $f:U\to\mathbb{C}$ be a representative of $f$. For a point $z\in\Sigma_{f}(U)\backslash\{0\}$
in the stable set and $n\in\mathbb{N}$ let $z_{n}:=f^{\circ n}(z)$
and $w_{n}:=\varphi(z_{n})$. Then for all $n\in\mathbb{N}$ (and
$z_{n}$ near $0$), we have 
\begin{align}
w_{n+1} & =w_{n}\paren[\bigg]{1-\frac{z_{n}^{k}}{k}+O(z_{n}^{k+1})}^{-k}\nonumber \\
 & =w_{n}\paren{1+w_{n}^{-1}+O(z_{n}^{k+1})}\label{eq:preFatouTransform1D}\\
 & =w_{n}+1+O(z_{n}).\nonumber 
\end{align}
Theorem~\ref{thm:LeauFatou} is a consequence of the following proposition:
\begin{prop}
\label{prop:Petals}For every $\theta\in(0,\pi)$, there exists $R_{\theta}>0$
such that for any $R>R_{\theta}$ the sets
\[
P_{j}(R,\theta):=\{z\in\Sigma_{j}\mid|\arg(w-R)|<\theta\},\quad j=0,\ldots,k-1
\]
form a parabolic flower for $f$ and for $z\in P_{j}(R,\theta)$,
we have 
\begin{enumerate}
\item $z_{n}=f^{\circ n}(z)\sim v_{j}/\sqrt[k]{n}$ (i.e.\ $\lim_{n\to\infty}\sqrt[k]{n}z_{n}=v_{j}$)
and if $\theta\le\pi/2$, there is a uniform constant $C>0$ such
that $|z_{n}|<Cn^{-1/k}$ for $z\in P_{j}(R,\theta)$.
\item for any backwards orbit $\{z_{-n}\}_{n}$ of $z$ (i.e.\ $f(z_{-n})=z_{-n+1}$
and $f^{\circ n}(z_{-n})=z$ for all $n\in\mathbb{N}$), we have $z_{-n}\notin P_{j}(R,\theta)$
eventually.
\end{enumerate}
\end{prop}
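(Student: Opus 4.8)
The plan is to work entirely in the $w=\varphi(z)=z^{-k}$ coordinate, where the dynamics become an approximate translation by $1$, as recorded in \eqref{eq:preFatouTransform1D}. Fix $\theta\in(0,\pi)$. First I would observe that the region $\{\abs{\arg(w-R)}<\theta\}$, for $R$ large, is a ``half-plane-like'' domain that opens to the right, and that its image under $\varphi^{-1}|_{\Sigma_j}$ is a petal-shaped region touching $0$ tangent to the direction $v_j$; this gives the sets $P_j(R,\theta)$ their shape and shows they satisfy the tangency requirement in the definition of a parabolic flower. The key estimate is: if $w$ lies in this region with $\Re w$ large enough (guaranteed by choosing $R_\theta$ large, using that $O(z_n)=O(w_n^{-1/k})$ is small there), then the error term in $w_{n+1}=w_n+1+O(z_n)$ is dominated, so $\Re(w_{n+1})\ge \Re(w_n)+\tfrac12$ and $\abs{\arg(w_{n+1}-R)}\le\abs{\arg(w_n-R)}$; hence the region is forward invariant and $\Re(w_n)\to+\infty$, i.e.\ $z_n\to0$ with $z_n$ staying in $\Sigma_j$, so $z_n\to0$ along $v_j$.

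For the asymptotic rate in part (1), I would telescope: from $w_{n+1}=w_n+1+O(z_n)$ and $\abs{z_n}=\abs{w_n}^{-1/k}$ one gets $w_n = w_0 + n + \sum_{m<n}O(\abs{w_m}^{-1/k})$, and since $\Re w_m\gtrsim m$ the error sum is $O(n^{1-1/k})=o(n)$ (or $O(\log n)$ if $k=1$), so $w_n\sim n$, whence $z_n = w_n^{-1/k}\sim n^{-1/k}$; tracking the branch of the root inside $\Sigma_j$ pins the constant to $v_j$, giving $\sqrt[k]{n}\,z_n\to v_j$. For the uniform bound $\abs{z_n}<Cn^{-1/k}$ when $\theta\le\pi/2$: in that case the region $\{\abs{\arg(w-R)}<\theta\}$ is contained in the half-plane $\{\Re w>R\}$ shifted appropriately, so $\abs{w_n}\ge\Re w_n \ge \Re w_0 + n/2 \ge$ (const)$\cdot(n+1)$ uniformly over $z\in P_j(R,\theta)$, and inverting gives the claimed uniform constant $C$.

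Part (2) is the statement that the petals are genuinely attracting and not also repelling: any backward orbit leaves. Here I would argue that a backward orbit $\{z_{-n}\}$ staying in $P_j(R,\theta)$ would, in the $w$-coordinate, satisfy $w_{-n} = w_0 - n + (\text{small errors})$, forcing $\Re(w_{-n})\to-\infty$, which contradicts $\Re(w)>R-O(1)>0$ on the region for $R$ large; hence $z_{-n}\notin P_j(R,\theta)$ eventually. Finally, to conclude that the $P_j(R,\theta)$ form a \emph{parabolic flower} I must check the defining ``if and only if'' — that an orbit converges to $0$ along $v_j$ exactly when it eventually enters $P_j(R,\theta)$: the ``if'' direction is the forward-invariance argument above, and the ``only if'' direction follows because convergence along $v_j$ forces $z_n\in\Sigma_j$ eventually with $\arg(w_n)\to 0$, and then \eqref{eq:preFatouTransform1D} with the error control shows $\arg(w_n-R)$ must eventually enter and stay in $(-\theta,\theta)$. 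The main obstacle I anticipate is the bookkeeping of the error terms: making the single inequality ``$O(z_n)$ is beaten by the $+1$'' hold \emph{uniformly} on the whole region $P_j(R,\theta)$ and \emph{simultaneously} for the modulus estimate, the argument-monotonicity estimate, and the telescoping remainder — this is where the threshold $R_\theta$ must be chosen carefully (and where $\theta<\pi$, resp.\ $\theta\le\pi/2$, enters), but it is a routine-if-delicate estimate rather than a conceptual difficulty.
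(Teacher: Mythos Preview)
Your approach matches the paper's: pass to $w=z^{-k}$, show forward invariance of $H(R,\theta)=\{|\arg(w-R)|<\theta\}$ under the approximate translation $w\mapsto w+1+O(z)$, deduce the rate, then argue backward orbits escape. Two points deserve comment.

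First, a genuine gap in your Part~2 argument: for $\theta>\pi/2$ the region $H(R,\theta)$ is \emph{not} contained in any right half-plane --- points with large $|\Im w|$ may have $\Re w$ arbitrarily negative --- so ``$\Re(w_{-n})\to-\infty$ contradicts $\Re w>R-O(1)$ on the region'' fails. The paper instead tracks the quantity $\Re w-|\Im w|/\tan\theta$, whose superlevel set $\{>R\}$ is exactly $H(R,\theta)$. From $|w_{-n+1}-w_{-n}-1|<\varepsilon_0$ one extracts $\Re w_{-n}<\Re w_0-(1-\varepsilon_0)n$ and $|\Im w_{-n}|<|\Im w_0|+\varepsilon_0 n$, so this quantity drops by at least $(1-\varepsilon_0-\varepsilon_0/|\tan\theta|)$ per step; choosing $\varepsilon_0$ with $\varepsilon_0/(1-\varepsilon_0)<|\tan\theta|$ makes this positive, forcing $w_{-n}\notin H(R,\theta)$ eventually. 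The same choice of $\varepsilon_0$ is what the paper uses for forward invariance too (via a slope comparison: the increment has $|\Im|/\Re<\varepsilon_0/(1-\varepsilon_0)<|\tan\theta|$, so the segment from $w_n$ to $w_{n+1}$ cannot cross $\partial H(R,\theta)$), which is more robust than your argument-monotonicity claim.

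Second, two smaller items. You should check that $z_1$ lands in the \emph{same} sector $\Sigma_j$, not merely that $w_1\in H(R,\theta)$ (which has $k$ preimages under $\varphi$); the paper handles this separately via $d(z,\partial\Sigma_j)\gtrsim|z|\gg|z_1-z|$. And for $w_n\sim n$ the paper uses a cleaner Ces\`aro argument: once $w_{n+1}-w_n\to1$, the average $(w_n-w_0)/n\to1$ directly, avoiding your telescoped error sum.
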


\begin{rem}
\label{rem:tangencyOfPetalBoundary}
\begin{enumerate}
\item The $P_{j}(R,\theta)$ are the connected components of the preimage
of 
\begin{align*}
H(R,\theta) & :=\{w\in\mathbb{C}^{*}\mid|\arg w-R|<\theta\}\\
 & =\{\Re w>R+|\Im w|/\tan(\theta)\}
\end{align*}
 under $\varphi$ and since for each $j$, $\varphi$ is injective
on $\Sigma_{j}$, the map $\varphi:P_{j}(R,\theta)\to H(R,\theta)$
is a biholomorphic coordinate on $P_{j}(R,\theta)$. 
\item The boundary of $P_{j}(R,\theta)$ for $j\in\{0,\ldots,k-1\}$ is
tangent at $0$ to $\exp(i(2\pi j/k\pm\theta/k))$. So for $\theta\nearrow\pi$
the lemma shows that the basin $\Omega_{v_{j}}$ cannot be contained
in a sector smaller than $\Sigma_{j}$. Alternatively, \cite[§II.5]{CarlesonGamelin1993ComplexDynamics}
gives an explicit construction of smoothly bounded petals tangent
to $\Sigma_{j}$.
\end{enumerate}
\end{rem}

\begin{proof}
Fix $\theta\in(0,\pi)$ and let $\varepsilon_{0}\in(0,1)$ such that
$\frac{\varepsilon_{0}}{1-\varepsilon_{0}}<|\tan\theta|$. For $R>0$
large enough, for all $z\in P_{j}(R,\theta)$, we have 
\begin{equation}
|w_{1}-w-1|<\varepsilon_{0}\label{eq:preFatouError}
\end{equation}
and hence 
\[
|\Im(w_{1}-w)|<\varepsilon_{0}<\frac{\varepsilon_{0}}{1-\varepsilon_{0}}\Re(w_{1}-w)
\]
Therefore the line segment from $w$ to $w_{1}$ cannot intersect
the boundary of $H(R,\theta)$, a line of slope $|\tan(\theta)|>\frac{\varepsilon}{1-\varepsilon}$
and we have $w_{1}\in H(R,\theta)$ and hence $z_{1}$ lies in a petal
$P_{j'}(R,\theta)$ for some $j'$. On the other hand, we have 
\[
d(z,\partial\Sigma_{j})>C_{\theta}|z|\gg|z_{1}-z|
\]
for some $C_{\theta}>0$ as $z\to0$ in $P_{j}(R,\theta)$. So for
$R>0$ large enough, the image $z_{1}$ of $z\in P_{j}(R,\theta)$
cannot leave $\Sigma_{j}$ and has to lie in the same petal.

By induction for all $n\in\mathbb{N}$ we then have $z_{n}\in P_{j}(R,\theta)$
and with (\ref{eq:preFatouError})
\[
\Re w_{n}>\Re w+(1-\varepsilon_{0})n,
\]
so $|w_{n}|\to+\infty$ and hence $z_{n}\to0$. With this (\ref{eq:preFatouTransform1D})
implies that $w_{n+1}-w_{n}\xrightarrow[n\to\infty]{}+1$ and hence
also 
\[
\frac{w_{n}-w}{n}=\frac{1}{n}\sum_{l=0}^{n-1}(w_{l+1}-w_{l})\xrightarrow[n\to+\infty]{}+1.
\]
 Since $w/n\to0$, we thus have $z_{n}^{k}=w_{n}^{-1}\sim n^{-1}$.
Since $z_{n}\in\Sigma_{j}$ for all $n\in\mathbb{N}$, we can extract
the unique $k$-th root with values in $\Sigma_{j}$ and it follows
$z_{n}\sim v_{j}/\sqrt[k]{n}$. In particular, $\{z_{n}\}_{n}$ converges
to $0$ along the direction $v_{j}$. Since $P_{j}(R,\theta)$ is
a neighbourhood of the ray $\mathbb{R}^{+}v_{j}$ near $0$, it eventually
contains every orbit converging to $0$ from that direction and thus
$P_{j}(R,\theta)$ is an attracting petal.

For Part~2 assume $z_{-n}\in P_{j}(R,\theta)$ for all $n\in\mathbb{N}$
and set $w_{-n}=z_{-n}^{-k}$. Then from (\ref{eq:preFatouError})
by induction for all $n\in\mathbb{N}$ we have 
\begin{align*}
\Re w_{-n}-|\Im w_{-n}|/\tan\theta & <\Re w-(1-\varepsilon_{0})n-|\Im w|/\tan\theta+n\varepsilon_{0}/|\tan\theta|\\
 & =\Re w-|\Im w|/\tan\theta-\underbrace{(1-\varepsilon_{0}-\varepsilon_{0}/|\tan\theta|)}_{>0}n
\end{align*}
and since $1-\varepsilon_{0}-\varepsilon_{0}/|\tan\theta|>0$ by
our choice of $\varepsilon_{0}$, for large $n\in\mathbb{N}$ the
right hand side is less then $R$, hence $w_{-n}\notin H(R,\theta)$,
contradicting $z_{-n}\in P_{j}(R,\theta)$.
\end{proof}
For $f^{-1}$ the picture is rotated by $\pi/k$, so for $\theta>\pi/2$,
by Remark~\ref{rem:tangencyOfPetalBoundary}, the petals of the parabolic
flowers for $f$ and $f^{-1}$ constructed in Proposition~\ref{prop:Petals}
overlap in simply connected regions near $0$ and hence form a punctured
neighbourhood $U\backslash\{0\}$ of $0$. This proves Part~\ref{enu:LeauFatou1petalsNbh}
of Theorem~\ref{thm:LeauFatou}.  Moreover, Part~2 of Proposition~\ref{prop:Petals}
shows that all forward orbits have to leave the repelling petals eventually,
so they either end up in an attracting petal or leave $U$, showing
Part~\ref{enu:LeauFatou3StableSet} of Theorem~\ref{thm:LeauFatou}.
.

Finally, on each parabolic petal, we can find a conjugation to a simple
normal form (see \cite[Thm.~10.9]{Milnor2006Dynamicsinonecomplexvariable}):
\begin{thm}[Parabolic linearisation theorem]
\label{thm:FatouCoord1D}Let $f\in\End(\mathbb{C},0)$ be tangent
to the identity. Then each parabolic basin $\Omega_{v}$ of $f$ centred
at an attracting direction $v$ admits a holomorphic map $\psi:\Omega_{v}\to\mathbb{C}$
satisfying the \emph{Abel functional equation}\index{Abel functional equation}
\[
\psi\circ f=\psi+1.
\]
This map $\psi$ is unique up to addition by a complex constant. On
a petal $P_{v}=P_{j}(R,\pi/2)\subseteq\Omega_{v}$ (in local coordinates
such that $v=v_{j}$) from Proposition~\ref{prop:Petals}, the map
$\psi$ is injective, and hence conjugates $f$ to a translation $\zeta\mapsto\zeta+1$.
Moreover, the image $\psi(P_{v})$ contains a right half-plane $\{\Re\zeta>R'\}$.
\end{thm}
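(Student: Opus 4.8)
The plan is to build $\psi$ on a single petal $P_v = P_j(R,\pi/2)$ first, using the Fatou-coordinate construction that the transformation (\ref{eq:preFatouTransform1D}) is set up for, and then extend it to all of $\Omega_v$ by the functional equation. In the coordinate $w = \varphi(z) = z^{-k}$, the petal $P_j(R,\pi/2)$ maps biholomorphically onto the half-plane $H(R,\pi/2) = \{\Re w > R\}$ and, by (\ref{eq:preFatouTransform1D}), the conjugated map has the form $\tilde f(w) = w + 1 + O(|w|^{-1/k})$ as $\Re w \to \infty$ (since $z_n \to 0$ corresponds to $|w|\to\infty$, and $z \approx w^{-1/k}$). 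So I am reduced to the classical statement: a holomorphic self-map $\tilde f$ of a right half-plane of the form $w \mapsto w + 1 + o(1)$ is conjugate to the translation $w \mapsto w+1$ by a map $\psi_0$ with $\psi_0(w) = w + o(w)$, unique up to an additive constant.

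For the key step I would use the telescoping/limit-of-iterates technique highlighted in the remark after Koenigs' theorem: set $\psi_n(w) := \tilde f^{\circ n}(w) - n$ and show that $\{\psi_n\}$ converges locally uniformly on a suitable right half-plane. The differences satisfy $\psi_{n+1}(w) - \psi_n(w) = \tilde f^{\circ(n+1)}(w) - \tilde f^{\circ n}(w) - 1 = O(|\tilde f^{\circ n}(w)|^{-1/k})$, and from Proposition~\ref{prop:Petals} (or directly from (\ref{eq:preFatouError})) we have $\Re \tilde f^{\circ n}(w) \geq \Re w + (1-\varepsilon_0) n$, so the increments are $O(n^{-1/k})$. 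That alone is not summable for $k=1$, so the standard fix is to estimate $\psi_{n+1} - \psi_n$ more carefully: writing $\tilde f(w) = w + 1 + g(w)$ with $g(w) = O(|w|^{-1/k})$ holomorphic, one gets $\psi_{n+1}(w)-\psi_n(w) = g(\tilde f^{\circ n}(w))$, and then a Cauchy-estimate argument on a slightly smaller half-plane controls $g$ by $O(|w|^{-1-1/k})$ on derivatives, or — cleaner — one shows the partial sums are uniformly Cauchy by grouping. In any case the limit $\psi_0 = \lim \psi_n$ exists, is holomorphic, satisfies $\psi_0 \circ \tilde f = \psi_0 + 1$ by construction, and $\psi_0(w) - w \to 0$, which forces $\psi_0$ to be injective on a far-enough right half-plane (a map $w + o(w)$ that is asymptotically a translation is univalent there, e.g. by Rouché or by Hurwitz applied to $\psi_0 - \psi_0(w')$). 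Pulling back through $\varphi$ gives $\psi = \psi_0 \circ \varphi$ on the petal, with the required properties, and since $\psi_0$ maps a right half-plane $\{\Re w > R''\}$ into a set containing a right half-plane $\{\Re \zeta > R'\}$, the image statement follows.

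To pass from the petal to the whole basin $\Omega_v$, I would use that $\Omega_v = \bigcup_n f^{-n}(P_v)$ (the remark preceding Theorem~\ref{thm:LeauFatou}): given $z \in \Omega_v$, choose $n$ with $z_n = f^{\circ n}(z) \in P_v$ and define $\psi(z) := \psi(z_n) - n$; the Abel equation on the petal makes this independent of the choice of $n$, and it is holomorphic since locally a single $n$ works. Uniqueness up to a constant: if $\psi_1,\psi_2$ both solve $\psi\circ f = \psi+1$, then $h := \psi_1 - \psi_2$ is $f$-invariant and holomorphic on $\Omega_v$; restricted to the petal and transported to $H(R,\pi/2)$ it is a bounded-type holomorphic function invariant under $w \mapsto w+1$, hence descends to a holomorphic function on a punctured disc (via $w \mapsto e^{2\pi i w}$) which extends across $0$ and is therefore constant; so $h$ is constant on the petal and, by the invariance and $\Omega_v = \bigcup_n f^{-n}(P_v)$, constant on all of $\Omega_v$.

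The main obstacle is the convergence of $\{\psi_n\}$ in the delicate order-$k$ case (especially $k=1$), where the naive increment bound $O(n^{-1/k})$ fails to be summable; the resolution is the refined asymptotic expansion $\tilde f(w) = w + 1 + c\,w^{-1/k} + \cdots$ with a Cauchy estimate on derivatives of the remainder, or equivalently a comparison of $\psi_n(w)$ with $w$ plus a logarithmic-type correction term when $k=1$ — this is the only place where real work beyond bookkeeping is needed, and it is exactly the content cited from \cite[Thm.~10.9]{Milnor2006Dynamicsinonecomplexvariable}.
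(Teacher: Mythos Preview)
The paper does not give its own proof here but defers to \cite[Thm.~10.9]{Milnor2006Dynamicsinonecomplexvariable}; your outline follows that standard construction and the extension to $\Omega_v$ via $\psi(z):=\psi(z_n)-n$ is exactly right. Two points need correction.

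First, a minor one on convergence: the increment bound $O(n^{-1/k})$ is \emph{never} summable for integer $k\ge 1$, not only for $k=1$ (indeed $\sum n^{-1/k}$ diverges whenever $1/k\le 1$). The standard fix, which you gesture at, is a preliminary polynomial change of coordinates bringing $f$ to the form $f(z)=z - z^{k+1}/k + bz^{2k+1}+O(z^{2k+2})$ (cf.\ Proposition~\ref{prop:1DparaFormalClass}); in the $w$-coordinate this yields $\tilde f(w)=w+1+A/w+O(|w|^{-1-1/k})$, and one shows that $\psi_n(w):=w_n - n - A\log n$ converges, the remaining error now being summable. The logarithmic correction is needed for every $k$, not just $k=1$.

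Second, and more seriously, your uniqueness argument has a genuine gap. The difference $h=\psi_1-\psi_2$ is $f$-invariant, hence $\tilde f$-invariant in the $w$-coordinate, but \emph{not} invariant under $w\mapsto w+1$; to get genuine $1$-periodicity you must transport by the Fatou coordinate itself, i.e.\ consider $h\circ\psi^{-1}$ on $\{\Re\zeta>R'\}$. Even then, the claim that this is ``bounded-type'' is unjustified and in fact false: uniqueness among \emph{all} holomorphic solutions of the Abel equation on $\Omega_v$ does not hold. If $\psi$ is one solution, so is $\psi+e^{2\pi i\psi}$ (since $e^{2\pi i(\psi+1)}=e^{2\pi i\psi}$), and the difference $e^{2\pi i\psi}$ is non-constant. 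Your descent argument cannot rule this out because the half-plane $\{\Re\zeta>R'\}$ surjects onto all of $\mathbb{C}^*$ under $e^{2\pi i\zeta}$, so the descended function lives on $\mathbb{C}^*$ with no automatic removability at $0$ or $\infty$. The correct statement---and what Milnor actually proves---is uniqueness among \emph{injective} solutions on the petal (equivalently, solutions with $\psi(z)\sim z^{-k}$): if $\psi_1,\psi_2$ are both univalent Fatou coordinates, then $\psi_2\circ\psi_1^{-1}$ is a univalent map of a right half-plane commuting with $\zeta\mapsto\zeta+1$, which forces it to be a translation.
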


\begin{defn}
We call a map $\psi$ as in Theorem~\ref{thm:FatouCoord1D} a \emph{Fatou
coordinate} for $f$.
\end{defn}

Let now $f\in\End(\mathbb{C},0)$ be a general parabolic germ with
multiplier $\lambda$, a (primitive) $p$-th root of unity. Then $f^{\circ p}$
is tangent to the identity and we can deduce the dynamical behaviour
of $f$ from that of $f^{\circ p}$, which is given by Theorem~\ref{thm:LeauFatou}
unless $f^{\circ q}=\id$. This is precisely the case when $f$ is
linearisable:
\begin{prop}
Let $p\in\mathbb{N}^{*}$. 
\begin{enumerate}
\item An invertible germ $f\in\Pow(\mathbb{C},0)$ is holomorphically\slash formally
linearisable, if and only if $f^{\circ p}$ is. 
\item A parabolic germ $f\in\Pow(\mathbb{C},0)$ with multiplier $\lambda$,
a $p$-th root of unity, is topologically\slash formally\slash holomorphically
linearisable if and only if $f^{\circ p}=\id$.
\end{enumerate}
\end{prop}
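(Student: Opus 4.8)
The plan is to run a finite group-averaging (Bochner-type) construction in both parts: from a conjugacy for the $p$-th iterate — or from the identity $f^{\circ p}=\id$ — we average over the cyclic ``orbit'' of length $p$ to manufacture a conjugacy for $f$ itself.

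For Part~1 the forward direction is immediate: if an invertible $\varphi$ (formal or holomorphic) satisfies $f=\varphi^{-1}\circ df_0\circ\varphi$, then $f^{\circ p}=\varphi^{-1}\circ(df_0)^{\circ p}\circ\varphi$, and since $(df_0)^{\circ p}=d(f^{\circ p})_0$ the same $\varphi$ linearises $f^{\circ p}$. For the converse, suppose $\psi$ conjugates $f^{\circ p}$ to its linear part $N:=d(f^{\circ p})_0=(df_0)^{\circ p}$. Set $g:=\psi\circ f\circ\psi^{-1}$, so $g^{\circ p}=N$ is \emph{exactly} linear, and let $L:=dg_0$; taking linear parts gives $L^{\circ p}=N=g^{\circ p}$. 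As $f$ is invertible, $df_0\ne 0$, so $L$ is invertible and we may form
\[
h:=\frac1p\sum_{k=0}^{p-1}L^{\circ(-k)}\circ g^{\circ k},
\]
a \emph{finite} sum, hence a germ (resp.\ formal series) of the same type as $g$. A telescoping computation — shifting the summation index and using $L^{\circ(-p)}\circ g^{\circ p}=\id$ to trade the missing $k=0$ term for the extra $k=p$ term — gives $h\circ g=L\circ h$, while $dh_0=\id$ makes $h$ invertible. Thus $g$, and therefore $f$, is conjugate to the linear map $L$, which is linearly conjugate to $df_0$ (in dimension one simply $L=df_0$, since the multiplier is a conjugacy invariant); so $f$ is linearisable.

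For Part~2 write $\Lambda:=df_0\colon z\mapsto\lambda z$, so $\Lambda^{\circ p}=\id$ because $\lambda^p=1$. If $f$ is topologically (or formally, or holomorphically) linearisable, then $f^{\circ p}$ is conjugate, in the respective category, to $\Lambda^{\circ p}=\id$; but a germ conjugate to the identity equals the identity, whence $f^{\circ p}=\id$. Conversely, if $f^{\circ p}=\id$, apply the averaging directly with $g$ replaced by $f$ and $L$ by $\Lambda$: the germ $h:=\frac1p\sum_{k=0}^{p-1}\Lambda^{\circ(-k)}\circ f^{\circ k}$ satisfies $h\circ f=\Lambda\circ h$ (the telescope closes via $\Lambda^{\circ(-p)}\circ f^{\circ p}=\id$) and $dh_0=\id$. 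Since $f$ is a holomorphic germ, $h$ is a biholomorphic germ, so $f$ is holomorphically — hence also formally and topologically — linearisable, closing the chain of equivalences.

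The one delicate point is the telescoping identity $h\circ f=\Lambda\circ h$ (and its variant $h\circ g=L\circ h$): this is exactly where the hypothesis is consumed, by matching $\Lambda^{\circ(-p)}\circ f^{\circ p}=\id$ (resp.\ $L^{\circ(-p)}\circ g^{\circ p}=\id$); everything else is bookkeeping. Note in particular that the construction yields a \emph{holomorphic} conjugacy outright, so Part~2 needs no separate passage between the formal, holomorphic and topological versions. (The same argument is valid verbatim in any dimension, and is the usual proof of Bochner's linearisation of finite-order germs.)
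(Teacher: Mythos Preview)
Your proof is correct and uses essentially the same averaging trick as the paper: given a linearisation $\varphi$ of $f^{\circ p}$, the paper defines $\psi=\sum_{j=1}^{p}\lambda^{-j}\varphi\circ f^{\circ j}$ directly, while you first conjugate via $\varphi$ to a germ $g$ with $g^{\circ p}$ exactly linear and then average $h=\frac{1}{p}\sum_{k=0}^{p-1}L^{-k}\circ g^{\circ k}$; unwinding, these are the same sum up to the harmless $1/p$ and an index shift (the paper's $j=p$ term equals your $k=0$ term via $L^{-p}\circ g^{\circ p}=\id$). Part~2 is handled identically in both, including the observation that the construction produces a holomorphic conjugacy outright, so the topological hypothesis already forces holomorphic linearisation.
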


\begin{proof}
Let $\varphi\in\Pow(\mathbb{C},0)$ linearise $f^{\circ p}$, so $\varphi\circ f^{\circ p}=\lambda^{p}\varphi$
where $\lambda\in\mathbb{C}^{*}$ is the multiplier of $f$, and define
\[
\psi:=\sum_{j=1}^{p}\lambda^{-j}\varphi\circ f^{\circ j}=\sum_{j=1}^{p}\lambda^{-j+1}\varphi\circ f^{\circ(j-1)}.
\]
Then 
\begin{align*}
\lambda\psi & =\sum_{j=1}^{p-1}\lambda^{-j+1}\varphi\circ f^{\circ j}=\psi\circ f.
\end{align*}
and $\psi'(0)=p\lambda^{p}\varphi'(0)\neq0$, so $\psi\in\Pow(\mathbb{C},0)$
is invertible, conjugates $f$ to $g$, and converges if $\varphi$
does. The converse is clear.

For the second part, note that if $f^{\circ p}$ is conjugate to the
identity, then $f^{\circ p}=\id$, since the identity is the only
element in its conjugacy class (in all three categories), hence we
can proceed as above with $\varphi=\id$. In particular, the topological
case leads to a holomorphic conjugation.
\end{proof}

\begin{defn}
Let $f\in\End(\mathbb{C},0)$ be parabolic with multiplier $\lambda$,
a (primitive) $p$-th root of unity, and $f^{\circ p}\neq\id$. Then
the \emph{\index{parabolic order@\emph{parabolic order}}parabolic
order} of $f$ is the order of $f^{\circ p}$.
\end{defn}

If $\{z_{pn}\}_{n}$ is an orbit of $f^{\circ p}$ converging to $0$
along a direction $v$, then the orbit $\{z_{pn+1}\}_{n}=\{f(z_{pn})\}_{n}$
of $f^{\circ p}$ converges to $0$ along the direction $\lambda v$.
Hence multiplication by $\lambda$ permutes the attracting directions
of $f^{\circ p}$ and it follows:
\begin{cor}
\label{cor:ParabFlowerPermutation}Let $f\in\End(\mathbb{C},0)$ be
parabolic with multiplier $\lambda$, a (primitive) $p$-th root of
unity, and $f^{\circ p}\neq\id$. Then the parabolic order of $f$
is $kp+1$ for some integer $k\ge1$ and $f$ acts on the $kp$ attracting\slash repelling
petals for $f^{\circ p}$ from Theorem~\ref{thm:LeauFatou} as a
permutation composed of $k$ disjoint cycles of period $p$.
\end{cor}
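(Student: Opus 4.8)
The plan is to reduce everything to the tangent-to-the-identity case already handled by Theorem~\ref{thm:LeauFatou}, exploiting the interaction between $f$ and its $p$-th iterate. Write $g=f^{\circ p}$, which is tangent to the identity (multiplier $\lambda^p=1$) and satisfies $g\neq\id$ by hypothesis, so the parabolic order of $f$ is by definition the order of $g$, say $m+1$ for some integer $m\ge1$. First I would observe that $f$ commutes with $g$, hence $f$ maps the stable set and the set of attracting (resp.\ repelling) directions of $g$ into itself; more precisely, as already noted in the excerpt just before the statement, if an orbit $\{z_n\}_n$ of $g$ converges to $0$ along a direction $v$, then $\{f(z_n)\}_n$ converges to $0$ along $\lambda v$, so multiplication by $\lambda$ permutes the $m$ attracting directions of $g$ (and likewise the repelling ones).

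**Counting argument via the permutation.** Since $\lambda$ is a primitive $p$-th root of unity, multiplication by $\lambda$ acting on the $m$ attracting directions of $g$ is a permutation all of whose cycles have length exactly $p$: indeed $\lambda^j v = v$ forces $p\mid j$, so the $\langle\lambda\rangle$-orbit of each attracting direction has exactly $p$ elements. Therefore $p\mid m$, say $m=kp$, and the permutation consists of $k$ disjoint $p$-cycles; this already gives $m+1=kp+1$ with $k\ge1$, proving the statement about the parabolic order. The remaining point is that $k\ge1$, i.e.\ $m\ge1$, which is exactly the hypothesis $g=f^{\circ p}\neq\id$ combined with Theorem~\ref{thm:LeauFatou} (a tangent-to-the-identity germ of finite order $\ge2$ has at least one attracting direction). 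I would also note that, since $f$ is a germ of biholomorphism (it is parabolic hence invertible), $f$ carries the maximal attracting petal $\Omega_v$ of $g$ centred at $v$ onto $\Omega_{\lambda v}$, because $\Omega_v=\bigcup_n g^{-n}(P_v)$ is characterised purely dynamically and $f$ intertwines $g$ with itself.

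**The action on petals.** To upgrade the statement from attracting \emph{directions} to the attracting \emph{petals} $P_j(R,\theta)$ of Theorem~\ref{thm:LeauFatou}, I would work in the standard coordinate normalising $g$ to $g(z)=z(1-z^m/m)+O(z^{m+2})$, so the attracting directions are the $m$-th roots of unity and the petals $P_j=P_j(R,\pi/2)$ are indexed cyclically by $j\in\mathbb{Z}/m$. Multiplication by $\lambda=e^{2\pi i p'/p}$ sends the $j$-th root of unity to a root of unity differing in index by a fixed shift $s$ with $\gcd(s,m)$ governed by $p$; one checks that the induced permutation of $\{0,\dots,m-1\}$, $j\mapsto j+s \bmod m$, has all cycles of length $p=m/\gcd(s,m)$ precisely because the order of $\lambda$ is $p$. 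Since $f$ is a small perturbation of the rotation by $\lambda$ and maps $\Omega_{v_j}$ to $\Omega_{v_{j+s}}$, and since each $\Omega_{v_j}$ contains a unique maximal petal, $f$ permutes the petals of the flower for $g$ exactly according to this index shift, giving $k$ disjoint cycles of period $p$. The same argument applied to $f^{-1}$ (whose attracting directions are the repelling directions of $g$) handles the repelling petals.

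**Main obstacle.** The genuinely delicate point is not the combinatorics but verifying that $f$ actually maps the chosen petal $P_j$ \emph{into} the petal $P_{j+s}$, rather than merely mapping the basins $\Omega_{v_j}\to\Omega_{v_{j+s}}$: the petals of Proposition~\ref{prop:Petals} are specific sets, not canonical, so one only gets that $f(P_j)\subseteq\Omega_{v_{j+s}}$ automatically. I would circumvent this by phrasing the conclusion in terms of the maximal petals $\Omega_{v}$ (which \emph{are} canonical and on which the action is forced), and noting that by shrinking the petals $P_j$ — equivalently, increasing $R$ — one may assume $f(P_j)\subseteq P_{j+s}$ for all $j$ simultaneously, since $f$ is defined and close to $z\mapsto\lambda z$ near $0$ and each $\Omega_{v_{j+s}}$ exhausts from inside by such petals. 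This shrinking step, together with the primitivity of $\lambda$ forcing cycle length exactly $p$, is where the real content lies; everything else is bookkeeping against Theorem~\ref{thm:LeauFatou}.
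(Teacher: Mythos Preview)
Your proposal is correct and follows essentially the same approach as the paper: the paper does not give a separate proof but simply records the observation that if a $g$-orbit converges along $v$ then its $f$-image converges along $\lambda v$, whence multiplication by $\lambda$ permutes the attracting directions of $g=f^{\circ p}$, and declares that the corollary ``follows''. Your write-up makes the counting argument ($p\mid m$ because a primitive $p$-th root of unity has only $p$-cycles) explicit and is more careful than the paper about the distinction between the canonical basins $\Omega_v$ and the particular petals $P_j(R,\theta)$; that last point is a genuine detail the paper glosses over, and your resolution via shrinking (increasing $R$) is the natural one.
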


\subsection{Classification}

It is clear from the dynamical behaviour that the multiplier and the
parabolic order are invariants under conjugation with orientation-preserving
homeomorphisms. In fact, these are sufficient for the topological
classification.
\begin{thm}[Camacho \cite{Camacho1978OntheLocalStructureofConformalMappingsandHolomorphicVectorFieldsinC2},
Shcherbakov \cite{Shcherbakov1982TopologicalClassificationofGermsofConformalMappingswithIdenticalLinearPart}]
Let $f\in\End(\mathbb{C},0)$ be parabolic with multiplier $\lambda$,
a (primitive) $p$-th root of unity, and $f^{\circ p}\neq\id$. Then
there exists a unique $k\in\mathbb{N}$ such that $f$ is topologically
conjugate to $z\mapsto\lambda z(1+z^{pk})$ (and $pk+1$ is the parabolic
order of $f$). In fact the conjugating homeomorphism can be chosen
$\mathcal{C}^{\infty}$ outside of $0$.
\end{thm}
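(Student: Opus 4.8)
The plan is to reduce the general parabolic case to the tangent-to-the-identity case, treat the latter by gluing the Fatou coordinates across the flower, and then check that the resulting global homeomorphism transports the permutation action of $f$ correctly. First I would pass to $g:=f^{\circ p}$, which is tangent to the identity with parabolic order $pk+1$ by Corollary~\ref{cor:ParabFlowerPermutation}, so $g$ has $pk$ attracting and $pk$ repelling directions, and $f$ acts on the petals by $k$ cycles of length $p$. The model map $f_0:z\mapsto\lambda z(1+z^{pk})$ satisfies $f_0^{\circ p}(z)=z(1+pk z^{pk})+O(z^{2pk})$ up to a further linear normalisation, i.e. $f_0^{\circ p}$ is tangent to the identity of the same parabolic order $pk+1$, with the same cyclic action of $f_0$ on its petals. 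So it suffices to produce a homeomorphism germ $h$ conjugating $g$ to $g_0:=f_0^{\circ p}$ that in addition intertwines $f$ with $f_0$, i.e. $h\circ f=f_0\circ h$; such an $h$ is automatically a conjugacy between $f$ and $f_0$, and the passage to $g^{\circ \text{(something)}}$ only forces the discrete invariant $k$.

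The core construction is on a single cycle of petals. By Theorem~\ref{thm:FatouCoord1D} each attracting (resp. repelling) petal $P_v$ of $g$ carries a Fatou coordinate $\psi_v$, unique up to an additive constant, conjugating $g$ to $\zeta\mapsto\zeta+1$; the same holds for $g_0$ with coordinates $\psi_v^0$. On a single petal one simply sets $h:=(\psi_v^0)^{-1}\circ\psi_v$, which conjugates $g$ to $g_0$ there. To assemble these into a homeomorphism on a punctured neighbourhood of $0$ I would work on the overlaps: by Theorem~\ref{thm:LeauFatou}\ref{enu:LeauFatou1petalsNbh} an attracting petal meets exactly the two repelling petals at the neighbouring repelling directions, in simply connected "sepal" regions, and on each such overlap the transition between the two Fatou coordinates is an actual translation (since both conjugate $g$ to the unit translation and the overlap is connected and invariant under large forward or backward iterates). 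The freedom in the additive normalisation of each $\psi_v$ — and a single further rescaling to match the behaviour of $f$ versus $f_0$ on each $p$-cycle of petals — is exactly enough degrees of freedom to make the locally defined pieces $(\psi_v^0)^{-1}\circ\psi_v$ agree on all overlaps, so they glue to a germ of homeomorphism $h$ at $0$ with $h\circ g=g_0\circ h$. Finally, to get $h\circ f=f_0\circ h$ one averages: replace $h$ by $\tilde h:=\frac1p\sum_{j=0}^{p-1}(f_0^{\circ(-j)})\circ h\circ f^{\circ j}$ in suitable coordinates (the same trick used in the linearisation proof just above in the text), which still conjugates $g$ to $g_0$ and now also intertwines $f$ with $f_0$; since this averaging is done with holomorphic maps off $0$, $\tilde h$ is a homeomorphism that is $\mathcal C^\infty$ away from $0$.

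The main obstacle is the gluing across the sepals: one must verify that with the correct choice of additive constants the transition maps on overlapping petals are compatible on the nose, and that the glued map is genuinely a homeomorphism near $0$ (continuity at $0$, injectivity, openness) rather than merely a continuous map on the punctured neighbourhood. This requires controlling the Fatou coordinates near the tips of the petals — using that $\psi_v(z)\sim c\,z^{-pk}$ there, so that the overlap regions are mapped to neighbourhoods of $i\infty$ in the $\zeta$-plane where the translation-valued transitions are transparent — and checking that the number of independent gluing conditions matches the number of additive parameters; this is where the invariant $k$ (the number of cycles) enters, since each cycle of petals can be normalised independently. Uniqueness of $k$ follows because $k$ is determined by the parabolic order $pk+1$, which is a topological invariant of $f$ (it is read off from the number of attracting directions, equivalently from the local structure of $\Sigma_f$ and the $p$-periodic permutation of its "flower" established in Corollary~\ref{cor:ParabFlowerPermutation}).
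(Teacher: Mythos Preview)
The paper does not supply its own proof of this theorem; it only refers the reader to Bracci's survey and Jenkins's paper. So there is no in-paper argument to compare against, and I comment on the proposal itself.

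Your plan has a genuine gap at the gluing step. The assertion that on a sepal the transition between an attracting and a repelling Fatou coordinate is ``an actual translation'' is false. A biholomorphism of a half-plane commuting with $\zeta\mapsto\zeta+1$ has the form $\zeta\mapsto\zeta+P(\zeta)$ with $P$ holomorphic and $1$-periodic, not constant; these periodic parts are exactly the \'Ecalle--Voronin sectorial invariants discussed immediately after this theorem in the paper, and they vanish only when $g$ is already holomorphically conjugate to its polynomial model. Hence the local pieces $(\psi_v^0)^{-1}\circ\psi_v$ do \emph{not} match on the overlaps for any choice of additive constants, and no finite count of free parameters can absorb an infinite-dimensional obstruction. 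The actual construction must deliberately break holomorphicity on each sepal by a $\mathcal C^\infty$ interpolation between the two candidate conjugacies; this is precisely why the statement gives $\mathcal C^\infty$ outside $0$ rather than $C^\omega$.

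The averaging step at the end is also not valid as written. The expression $\tilde h=\frac1p\sum_{j}f_0^{\circ(-j)}\circ h\circ f^{\circ j}$ is a pointwise average of maps into $\mathbb{C}$, and there is no reason for it to be injective or to still conjugate $g$ to $g_0$; the analogous trick in the linearisation proof works only because the target map is linear, so that pre- and post-composition by its powers are $\mathbb{C}$-linear operations compatible with the sum. The standard way to obtain $f$-equivariance is structural rather than by averaging: build $h$ on one attracting and one repelling petal in each $p$-cycle, \emph{define} $h$ on the remaining petals of that cycle by $h:=f_0^{\circ(-j)}\circ h\circ f^{\circ j}$, and then carry out the $\mathcal C^\infty$ gluing on the sepals so that this forced equivariance is consistent.
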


See also \cite[§3.2]{Bracci2010Localholomorphicdynamicsofdiffeomorphismsindimensionone}
for a short proof and \cite{Jenkins2008HolomorphicGermsandtheProblemofSmoothConjugacyinaPuncturedNeighborhoodoftheOrigin}
for a more detailed discussion.

For the formal classification, observe that if $\lambda$ is a primitive
$p$-th root of unity, then by Proposition~\ref{prop:1DPoincareDulac},
$f$ is formally conjugate to the form 
\begin{equation}
f(z)=\lambda z+a_{k}z^{pk+1}+a_{k+1}z^{p(k+1)+1}+\cdots\label{eq:parabolicPoincDulac1D}
\end{equation}
with $k\ge1$ and $a_{k}\neq0$, but this form is far from unique.
The complete classification is somewhat folklore, in \cite{Voronin1981AnalyticClassificationofGermsofConformalMappingsCOCOwithIdentityLinearPart}
and \cite{ArnoldGoryunovLyashkoVassiliev1993DynamicalSystemsVIIISingularityTheoryIIClassificationandApplicationsTranslfromtheRussianbyJSJoel}
it is attributed to N.~Venkov without a reference, whereas \cite{OFarrellShort2015ReversibilityinDynamicsandGroupTheory}
conjectures Kasner's articles \cite{Kasner1915ConformalClassificationofAnalyticArcsorElementsPoincaresLocalProblemofConformalGeometry,Kasner1916InfiniteGroupsGeneratedbyConformalTransformationsofPeriodTwoInvolutionsandSymmetries}
as the earliest source.
\begin{prop}[Formal classification]
\label{prop:1DparaFormalClass}Let $f\in\Pow(\mathbb{C},0)$ be parabolic
with multiplier $\lambda$, a (primitive) $p$-th root of unity and
$f^{\circ p}\neq\id$. Then there exist unique $k\in\mathbb{N}$ and
$b\in\mathbb{C}$ such that $f$ is formally conjugate to 
\begin{equation}
z\mapsto\lambda z+z^{pk+1}+bz^{2pk+1}.\label{eq:parabolicNF}
\end{equation}
Moreover, $pk+1$ is the parabolic order of $f$. If $f$ converges,
i.e.~$f\in\End(\mathbb{C},0)$, we have 
\begin{equation}
b=\frac{1}{2\pi i}\int_{\gamma}\frac{dz}{\lambda z-f(z)}\label{eq:ParabConstb}
\end{equation}
for small positively oriented loops $\gamma$ around $0$.
\end{prop}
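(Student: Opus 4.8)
The plan is to reduce the general parabolic case to the case tangent to the identity via passing to the $p$-th iterate, establish the formal normal form there, and then transport it back. First I would observe that, after the Poincar\'e--Dulac preparation of Proposition~\ref{prop:1DPoincareDulac}, $f$ is formally conjugate to the form \eqref{eq:parabolicPoincDulac1D}, so $f^{\circ p}(z) = z + p a_k z^{pk+1} + O(z^{p(k+1)+1})$ is tangent to the identity of order $pk+1$; after a linear rescaling I may normalise the leading coefficient of $f^{\circ p}$ to any convenient value (say $1$), which also pins down the leading coefficient of $f$. The key structural point is that a formal conjugacy $h$ commuting appropriately with the cyclic symmetry $z \mapsto \lambda z$ descends to a conjugacy of $f$ itself: if $h$ conjugates $f^{\circ p}$ to a normal form $g_0$ tangent to the identity that is invariant under $z\mapsto\lambda z$, then the averaged series $\psi := \frac{1}{p}\sum_{j=0}^{p-1} \lambda^{-j}\, h\circ f^{\circ j}$ (in the spirit of the averaging trick used just above in the linearisation proposition) conjugates $f$ to $\lambda\cdot(\text{something with the right symmetry})$, i.e.\ to a series containing only monomials $z^{pm+1}$.

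Next I would carry out the core computation: for a formal germ tangent to the identity of order $N+1=pk+1$, normalised to $g_0(z) = z + z^{N+1} + c z^{2N+1} + \cdots$, one can kill all higher-order terms except the $z^{2N+1}$ term by a formal change of coordinates, and the coefficient $b$ of $z^{2N+1}$ is the unique formal invariant. The clean way to see this is via the ``formal flow'' / logarithm: $g_0$ is the time-one map of a formal vector field $X = (z^{N+1} + b z^{2N+1} + \cdots)\partial_z$, and after a formal change of coordinates one brings $X$ to $\frac{z^{N+1}}{1 + \mu z^{N}}\partial_z$ for a unique $\mu$ (the residue-type invariant), whose time-one map is exactly of the shape \eqref{eq:parabolicNF} after re-expansion and re-scaling; uniqueness of $b$ follows from uniqueness of the residue of $dz/X$. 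Restricting attention to the $\lambda$-equivariant changes of coordinates throughout keeps everything compatible with the $\mathbb{Z}/p$ action, so only monomials $z^{pm+1}$ ever appear, and the induced normal form for $f$ is \eqref{eq:parabolicNF} with $pk+1$ its parabolic order by Corollary~\ref{cor:ParabFlowerPermutation}.

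Finally, for the integral formula \eqref{eq:ParabConstb} when $f$ converges, I would argue that $b$ is a residue: writing $\lambda z - f(z) = -z^{pk+1}(1 + O(z))$ near $0$, the meromorphic $1$-form $\frac{dz}{\lambda z - f(z)}$ has a pole of order $pk+1$ at $0$, and its residue is a formal conjugacy invariant because under $z = h(w)$ with $h$ equivariant the form $\frac{dw}{\lambda w - (h^{-1}\circ f\circ h)(w)}$ differs from the pullback only by an exact form (one checks the chain-rule identity $\lambda h(w) - f(h(w)) = h'(w)\big(\lambda w - (h^{-1}fh)(w)\big) + O(\text{higher order exact terms})$, so residues agree); evaluating the residue on the normal form \eqref{eq:parabolicNF} directly gives $b$. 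The main obstacle I expect is the bookkeeping in the formal-flow step — making precise that a germ tangent to the identity has a well-defined formal logarithm and that equivariance is preserved at every stage — together with verifying that the residue really is invariant under the full group of formal equivariant coordinate changes rather than just the linear ones; the averaging reduction from $f$ to $f^{\circ p}$ and back is routine once the symmetry is tracked carefully.
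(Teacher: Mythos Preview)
Your detour through $f^{\circ p}$ and the averaging trick is where the plan goes wrong. The averaging $\psi := \frac{1}{p}\sum_{j=0}^{p-1}\lambda^{-j}\,h\circ f^{\circ j}$ works in the paper's earlier proposition precisely because there the target was \emph{linear} (so $h\circ f^{\circ p}=h$ and the sum telescopes to give $\psi\circ f=\lambda\psi$). For a non-linear normal form $g_0$ that relation fails, and there is no reason the averaged $\psi$ should conjugate $f$ to anything of the desired shape. The passage ``$f\leadsto f^{\circ p}\leadsto$ normalise $\leadsto$ transport back to $f$'' therefore has a real hole, and the formal-flow machinery you invoke to normalise the tangent-to-identity iterate is heavy equipment aimed at a step you cannot close.

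The paper avoids all of this by never leaving $f$: after Poincar\'e--Dulac one already has $f(z)=\lambda z+a_kz^{pk+1}+a_{k+1}z^{p(k+1)+1}+\cdots$ (only monomials $z^{pm+1}$), and one simply conjugates by elementary shears $h(z)=z+cz^{pj+1}$, which are themselves $\lambda$-equivariant. A one-line computation gives
\[
h^{-1}\circ f\circ h(z)=f(z)+p(k-j)\,c\,z^{p(k+j)+1}+O\bigl(z^{p(k+j)+2}\bigr),
\]
so for every $j\neq k$ a suitable $c$ kills the $z^{p(k+j)+1}$ term, while for $j=k$ the coefficient $p(k-j)$ vanishes and $a_{2k}=b$ survives as an invariant. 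This is the whole existence argument. Your $\lambda$-equivariance observation is exactly right and is all that is needed---it replaces both the averaging and the passage to $f^{\circ p}$. For uniqueness and the integral formula \eqref{eq:ParabConstb}, the paper just computes the residue of $dz/(\lambda z-f(z))$ on the normal form and gets $b$; invariance of the residue under (holomorphic) coordinate change then pins $b$ down. Your residue discussion is in the right spirit, but the ``chain-rule identity'' you sketch is not the mechanism---it is simply that residues of meromorphic $1$-forms are invariant under change of variable.
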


\begin{proof}
We know $f$ is formally conjugate to the form (\ref{eq:parabolicPoincDulac1D})
and after a linear change of coordinates, we may assume $a_{k}=1$.
A conjugation with 
\[
h(z)=z+cz^{pj+1},\quad c\in\mathbb{C},j\ge1
\]
then conjugates $f$ to 
\[
h^{-1}\circ f\circ h(z)=f(z)+p(k-j)cz^{p(k+j)+1}+O(z^{p(k+j)+2})
\]
(see (\ref{eq:homol1DexplByDegree})), so by suitable choice of $c$,
we can ensure $a_{k+j}=0$, if $j\neq k$. By induction on $j\ge1$,
we can hence ensure $a_{k+j}=0$ for all $j\ge1$, $j\neq k$ and
$j\le j_{{\mathrm{m}ax}}$ up to any finite order $j_{{\mathrm{m}ax}}\in\mathbb{N}$
and by Lemma~\ref{lem:FormalConjIffAnyOrderPolyConj}, we have formal
conjugacy to the form (\ref{eq:parabolicNF}). 

Equation (\ref{eq:ParabConstb}) follows from the residue theorem
and shows uniqueness of the parameter $b$.
\end{proof}
\begin{defn}
The \emph{index\index{index at a parabolic point@\emph{index at a parabolic point}}}
at $0$ of a parabolic germ $f\in\End(\mathbb{C},0)$ with multiplier
$\lambda$ is 
\[
b=\frac{1}{2\pi i}\int_{\gamma}\frac{dz}{\lambda z-f(z)}.
\]
If $\lambda$ is a $p$-th root of unity and $f$ of order $kp+1$,
the \emph{iterative residue\index{iterative residue@\emph{iterative residue}}}
of $f$ at $0$ is 
\[
\m{Resit}(f)=\frac{kp+1}{2}-b.
\]
The holomorphic classification requires an additional much more complicated
functional invariant of infinite dimension, called the \emph{sectorial
invariant}, associated to a germ $f\in\End(\mathbb{C},0)$ tangent
to the identity. It is constructed from the injective holomorphic
maps from Theorem~\ref{thm:LeauFatou} conjugating $f$ to the shift
$\zeta\mapsto\zeta+1$ on each attracting and repelling petal. The
sectorial invariant encodes (see \cite{Abate2010Discreteholomorphiclocaldynamicalsystems}
for details) the difference of the two conjugations on the intersection
of each neighbouring pair of attracting and repelling petals as a
$2k$ holomorphic germs $h_{1}^{\pm},\ldots,h_{k}^{\pm}\in\End(\mathbb{C},0)$
with respective multipliers $\lambda_{1}^{\pm},\ldots,\lambda_{k}^{\pm}$
such that
\begin{equation}
\prod_{j=1}^{k}(\lambda_{j}^{+}\lambda_{j}^{-})=\exp(4\pi^{2}\m{Resit}(f)).\label{eq:VoroninMultiplierCondition}
\end{equation}
Two tuples $(h_{1}^{\pm},\ldots,h_{k}^{\pm})$ and $(g_{1}^{\pm},\ldots,g_{k}^{\pm})$
of germs in $\End(\mathbb{C},0)$ whose multipliers satisfy (\ref{eq:VoroninMultiplierCondition})
are then called equivalent, if, up to cyclic permutation, there exist
$\alpha_{j},\beta_{j}\in\mathbb{C}^{*}$ such that 
\[
\alpha_{j}g_{j}^{-}(z)=h_{j}^{-}(\beta_{j}z)\quad\text{and}\quad g_{j}^{+}(\beta_{j}z)=\alpha_{j+1}h_{j}^{+}(z)
\]
for $z\in\mathbb{C}$ near $0$, and the set of resulting equivalence
classes is denoted $\mathcal{M}_{r}$.
\end{defn}

Then, Écalle and Voronin's classification of parabolic germs has the
form:
\begin{thm}[Holomorphic classification\cite{Ecalle1981LesFonctionsResurgentes1LesAlgebresDeFonctionsResurgentes,Ecalle1981LesFonctionsResurgentes2LesFonctionsResurgentesAppliqueesaLiteration},
\cite{Voronin1981AnalyticClassificationofGermsofConformalMappingsCOCOwithIdentityLinearPart}]
Let $f,g\in\End(\mathbb{C},0)$ be parabolic with the same multiplier
$\lambda$, a (primitive) $p$-th root of unity, and $f^{\circ p},f^{\circ q}\neq\id$.
Then $f$ and $g$ are holomorphically conjugate, if and only if they
have the same order and index and $f^{\circ p}$ and $g^{\circ q}$
have the same sectorial invariant.

Moreover, any combination of multiplier $\lambda$ a $p$-th root
of unity, order $pk+1\ge2$, parabolic index $\iota\in\mathbb{C}$
and sectorial invariant $\mu\in\mathcal{M}_{pk}$ is realised by a
parabolic germ $f\in\End(\mathbb{C},0)$.

\end{thm}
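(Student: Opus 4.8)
The plan is to prove the Écalle--Voronin classification theorem in two halves: the invariance statement (conjugate germs share the listed data) and the realisation statement (any admissible tuple of data is realised).

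\smallskip

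For the invariance direction, I would first reduce to the tangent-to-the-identity case. If $f$ and $g$ are holomorphically conjugate by $\varphi$, then $\varphi$ also conjugates $f^{\circ p}$ to $g^{\circ p}$, both of which are tangent to the identity; and conversely a conjugacy of $f^{\circ p}$ to $g^{\circ p}$ need not descend, but the multiplier being equal forces us to track how $f$ permutes the petals (Corollary~\ref{cor:ParabFlowerPermutation}). So the first step is: multiplier, order, and index are already known to be formal invariants by Proposition~\ref{prop:1DparaFormalClass} and Remark~\ref{rem:multiplierIsInvariant}, hence in particular holomorphic invariants. The substantive point is that the sectorial invariant of $f^{\circ p}$ is a holomorphic-conjugacy invariant. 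For this I would take the Fatou coordinates $\psi_v$ on each attracting and repelling petal furnished by Theorem~\ref{thm:FatouCoord1D} (applied to $f^{\circ p}$ and also to its inverse on repelling petals). A conjugacy $\varphi$ between $f^{\circ p}$ and $g^{\circ p}$ carries a petal for $f^{\circ p}$ into a petal for $g^{\circ p}$ (the petals are canonically attached to the attracting/repelling directions, which are preserved), so $\psi_v^g \circ \varphi$ is another solution of the Abel equation on that petal, hence differs from $\psi_v^f$ by an additive constant. Comparing these constants on the overlap of a neighbouring attracting--repelling pair of petals shows that the transition maps $h_j^{\pm}$ for $f^{\circ p}$ and $g_j^{\pm}$ for $g^{\circ p}$ differ exactly by the rescalings $\alpha_j g_j^-(z)=h_j^-(\beta_j z)$, $g_j^+(\beta_j z)=\alpha_{j+1}h_j^+(z)$, i.e.\ they are equivalent in $\mathcal M_{pk}$. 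The compatibility constraint \eqref{eq:VoroninMultiplierCondition} is checked by computing the total multiplier around the full cycle of petals, which returns $\exp(4\pi^2\,\m{Resit}(f))$ because each Fatou coordinate is determined up to translation and the iterative residue measures precisely the obstruction to gluing them globally.

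\smallskip

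For the converse (sufficiency of the invariants) and the realisation statement, I would argue by explicit construction, again in the tangent-to-identity case $g:=f^{\circ p}$ with $k$ attracting and $k$ repelling petals. Given the target data --- multiplier $\lambda$, order $pk+1$, index $\iota$, and a class $\mu\in\mathcal M_{pk}$ represented by germs $(h_1^{\pm},\dots,h_k^{\pm})$ with multipliers satisfying \eqref{eq:VoroninMultiplierCondition} --- take $2k$ copies of half-plane-like Fatou domains (images of attracting and repelling petals under $w=\varphi(z)=z^{-k}$, cf.\ Proposition~\ref{prop:Petals} and Remark~\ref{rem:tangencyOfPetalBoundary}), each carrying the translation $\zeta\mapsto\zeta+1$, and glue them in a cyclic fashion along the $2k$ overlap sectors using the prescribed transition germs $h_j^{\pm}$ (transported to the $w$-coordinate via the exponential so that they become germs at $\infty$). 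The compatibility relation \eqref{eq:VoroninMultiplierCondition} is exactly what makes the resulting surface biholomorphic to a punctured disc (equivalently, what makes the monodromy of the gluing consistent with a well-defined iterative residue $\iota$), so one obtains a complex manifold that is a punctured neighbourhood of $0$, on which the translations glue to a holomorphic map; filling in the puncture and reading this map in the disc coordinate produces a germ $g$ tangent to the identity of order $pk+1$, parabolic index $\iota$, and sectorial invariant $\mu$. Finally one produces $f$ from $g$: choose $f$ to act on the petal labels as the cyclic permutation of $k$ disjoint $p$-cycles dictated by Corollary~\ref{cor:ParabFlowerPermutation} and with linear part $\lambda\cdot\id$, using the averaging trick from the proof of the linearisation/Proposition above to realise the $\mathbb Z/p$-symmetry so that $f^{\circ p}=g$.

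\smallskip

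The main obstacle is the gluing construction in the realisation half: one has to verify that the abstract Riemann surface obtained by identifying $2k$ parabolic Fatou domains along sectors at infinity via arbitrary germs $h_j^{\pm}$ satisfying \eqref{eq:VoroninMultiplierCondition} is genuinely biholomorphic to a once-punctured disc (so that its one-point compactification is a disc and the translation structure descends to an honest germ), rather than to some other surface or to nothing well-defined at all. This is the heart of the Écalle--Voronin theorem and the part most easily waved at; I would cite Abate's survey \cite{Abate2010Discreteholomorphiclocaldynamicalsystems} for the detailed gluing and uniformisation argument, and restrict myself to explaining why condition \eqref{eq:VoroninMultiplierCondition} is exactly the cocycle/monodromy condition that makes it work, namely that the product of the transition multipliers must equal the total holonomy $\exp(4\pi^2\m{Resit}(f))$ forced by going once around the puncture. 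Everything else --- reduction to the tangent-to-identity case, the uniqueness of Fatou coordinates up to translation, and the permutation bookkeeping --- is routine given the results already established in this section.
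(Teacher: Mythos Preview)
The paper does not prove this theorem; it is stated with attribution to \'Ecalle and Voronin and the reader is pointed to \cite{Abate2010Discreteholomorphiclocaldynamicalsystems} for the construction of the sectorial invariant, but no argument is given in the text itself. So there is nothing to compare your proposal against at the level of the paper's own proof.

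That said, your outline is the standard route and is essentially correct in structure: invariance via uniqueness of Fatou coordinates up to additive constants (Theorem~\ref{thm:FatouCoord1D}), and realisation via gluing abstract Fatou half-planes along prescribed transition germs and uniformising the result as a punctured disc. You correctly identify the genuine analytic content as the verification that the glued surface is a punctured disc, and your invocation of \eqref{eq:VoroninMultiplierCondition} as the monodromy condition is right. One small point: in the last step you should be more careful about producing $f$ from $g=f^{\circ p}$ --- the ``averaging trick'' you allude to works for linearisable germs but does not directly manufacture a $p$-th root of a given tangent-to-identity germ; the usual argument instead builds $f$ directly by incorporating the $\mathbb{Z}/p$-symmetry into the gluing data from the start (equivalently, one works with $pk$ petals permuted cyclically by a rotation of order $p$), rather than constructing $g$ first and then extracting a root.
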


\section{\label{subsec:1D-elliptic}Elliptic fixed points}

If $f\in\End(\mathbb{C},0)$ is elliptic, the multiplier of $f$ is
an irrational rotation $\lambda=e^{2\pi i\theta}$ with $\theta\in\mathbb{R}\backslash\mathbb{Q}$.
This case can lead to the most complicated dynamical behaviour and
has not been fully understood, even though formally any elliptic germ
is linearisable by Corollary~\ref{cor:1DformalLinearisation}. A
fundamental question is then: When does the (unique) formal linearisation
converge? After some topological results, we discuss precise conditions
on the angle $\theta$ in the following sections. First we observe,
that linearisability is characterised by topological dynamics: 
\begin{prop}
\label{prop:linIffStable}Let $f\in\End(\mathbb{C},0)$ be a neutral
germ with multiplier $\lambda\in S^{1}$. Then $f$ is holomorphically\slash topologically
linearisable if and only if $0$ is a stable fixed point for $f$,
i.e. the stable set $\Sigma_{f}$ contains a neighbourhood of $0$.
\end{prop}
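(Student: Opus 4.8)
The plan is to prove the two implications separately, the easy one first. If $f$ is topologically linearisable, then $f$ is topologically conjugate to the rotation $L_\lambda\colon z\mapsto\lambda z$, and since $|\lambda|=1$ the orbits of $L_\lambda$ are bounded (they lie on circles), so every neighbourhood of $0$ contains an $L_\lambda$-invariant neighbourhood of $0$; transporting this back through the conjugating homeomorphism shows $0$ is a stable fixed point for $f$. (Holomorphic linearisability is a special case, so this direction is immediate once the topological case is done.) Alternatively one may quote Corollary~\ref{cor:StabilityAndNormality}: linearisability forces $\{f^{\circ n}\}_n$ to be conjugate to the normal family $\{L_\lambda^{\circ n}\}_n=\{L_{\lambda^n}\}_n$, which is normal at $0$, hence $0$ is stable.

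For the converse, suppose $\Sigma_f(U)$ contains a neighbourhood of $0$ for a suitably small bounded $U$; by Corollary~\ref{cor:StabilityAndNormality} the family $\{f^{\circ n}\}_n$ is then uniformly bounded on some ball $B_r(0)$, so Montel's theorem (Theorem~\ref{thm:MontelBdd}) applies. The key construction is the classical Koenigs-style averaging trick: on $B_r(0)$ form
\[
\varphi_N := \frac{1}{N}\sum_{n=0}^{N-1}\lambda^{-n}\, f^{\circ n}.
\]
Each $\varphi_N$ is holomorphic with $\varphi_N'(0)=1$, and the sequence $\{\varphi_N\}_N$ is uniformly bounded on $B_r(0)$ (each summand is), so by Montel some subsequence converges locally uniformly to a holomorphic $\varphi$ with $\varphi'(0)=1$, in particular $\varphi\in\Aut(\mathbb{C},0)$. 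The telescoping identity
\[
\lambda^{-1}\varphi_N\circ f \;=\; \varphi_N \;+\; \frac{1}{N}\bigl(\lambda^{-N} f^{\circ N} - \id\bigr)
\]
together with boundedness of $f^{\circ N}$ forces the error term to vanish as $N\to\infty$, giving $\varphi\circ f = \lambda\,\varphi$, i.e. $\varphi$ conjugates $f$ to $L_\lambda$. This yields holomorphic linearisation, which of course also gives topological linearisation.

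The main obstacle is making the averaging argument go through cleanly: one must be careful that the chosen subsequence along which $\varphi_N\to\varphi$ is also a subsequence along which $\varphi_N\circ f\to\varphi\circ f$ (both hold on a slightly smaller ball once $f(B_{r'})\subseteq B_r$), and one must verify $\varphi'(0)=1$ survives the limit so that $\varphi$ is genuinely invertible — this is where uniform convergence on a neighbourhood of $0$, not merely pointwise convergence, is essential. I would also note that the hypothesis is used twice: once to invoke Montel (uniform boundedness of the iterates), and implicitly in that stability plus $|\lambda|=1$ is exactly what keeps $\frac1N(\lambda^{-N}f^{\circ N}-\id)$ from blowing up. Everything else — holomorphy of $\varphi_N$, the telescoping identity, the passage to germs — is routine.
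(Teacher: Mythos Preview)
Your proof is correct and follows essentially the same approach as the paper: the forward direction is dispatched quickly, and the converse uses the identical Ces\`aro-averaging map $\varphi_N=\frac{1}{N}\sum_{n=0}^{N-1}\lambda^{-n}f^{\circ n}$, Montel to extract a limit, the observation $\varphi_N'(0)=1$ for invertibility, and the same telescoping identity (the paper writes it as $\varphi_{k}\circ f-\lambda\varphi_{k}=\frac{\lambda}{k}(\lambda^{-k}f^{\circ k}-\id)$, which is your identity multiplied through by $\lambda$).
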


\begin{proof}
If $f$ is linearisable, then $0$ is stable. If $0$ is stable, let
\begin{equation}
\varphi_{k}(z):=\frac{1}{k}\sum_{j=0}^{k-1}\lambda^{-j}f^{j}(z).\label{eq:NeutralLin}
\end{equation}
Let $U\subseteq\mathbb{C}$ be a bounded neighbourhood of $0$. By
stability, there exists a neighbourhood $V\subseteq\Sigma_{f}(U)$
of $0$ and hence $f^{j}$ is uniformly bounded on $V$ and so is
$\varphi_{k}$. Montel's Compactness Principle~\ref{thm:MontelBdd}
then yields a locally uniformly convergent subsequence $\varphi_{k_{j}}\xrightarrow[j\to+\infty]{}\varphi$
on $V$. We have $\varphi_{k_{j}}'(0)=1$ by definition \ref{eq:NeutralLin},
so $\varphi$ is invertible and again by uniform boundedness of $f^{k_{j}}$
on $V$, we have 
\[
\varphi_{k_{j}}\circ f-\lambda\varphi_{k_{j}}=\frac{\lambda}{k_{j}}\paren{\lambda^{-k_{j}}f^{k_{j}}-\id}\xrightarrow[j\to+\infty]{}0.
\]
Thus $\varphi\in\Aut(\mathbb{C},0)$ linearises $f$.
\end{proof}

Secondly, Na{\u\i}shul' \cite{Nauishul1983TopologicalInvariantsofAnalyticandAreaPreservingMappingsandTheirApplicationtoAnalyticDifferentialEquationsinC2andCP2}
shows, that two elliptic germs with different multiplier, cannot be
topologically conjugate, even if they are linearisable (see also Pérez-Marco
\cite[\textbackslash\{\}textsection IV.1]{PerezMarco1997FixedPointsandCircleMaps}
or Bracci \cite{Bracci2010Localholomorphicdynamicsofdiffeomorphismsindimensionone}):
\begin{thm}[Na{\u\i}shul' \cite{Nauishul1983TopologicalInvariantsofAnalyticandAreaPreservingMappingsandTheirApplicationtoAnalyticDifferentialEquationsinC2andCP2}]
For neutral germs $f\in\End(\mathbb{C},0)$ (i.e. with multiplier
$\lambda\in S^{1}$), the multiplier is preserved under conjugation
with orientation-preserving homeomorphisms.
\end{thm}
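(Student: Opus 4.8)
The claim is equivalent to the following: if $f,g\in\End(\mathbb C,0)$ are neutral, with multipliers $\lambda=e^{2\pi i\theta}$ and $\mu=e^{2\pi i\alpha}$, and $g=\varphi\circ f\circ\varphi^{-1}$ for a germ of orientation-preserving homeomorphism $\varphi$ at $0$ with $\varphi(0)=0$, then $\theta\equiv\alpha\pmod 1$. The plan is to attach to every neutral germ $f$ a rotation number $\rho(f)\in\mathbb R/\mathbb Z$ that is (i) manifestly invariant under orientation-preserving topological conjugacy and (ii) equal to $\theta$; then $\theta=\rho(f)=\rho(g)=\alpha$ will finish the proof. The difficulty is that a single fixed point carries no circle of directions that a mere homeomorphism would respect, so the circle on which $f$ acts must be produced canonically from the dynamics.

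\emph{Construction of $\rho$.} Every neutral germ $f$ admits a non-degenerate, fully invariant continuum around $0$: a compact connected set $K$ with $0\in K$, with $\widehat{\mathbb C}\setminus K$ connected, contained in an arbitrarily small neighbourhood of $0$, and with $f(K)=K$. In the linearisable case one may take a round disc after linearising; in the non-linearisable (Cremer or parabolic) case this is a hedgehog, obtained as a nested intersection of $f$-stable Jordan domains, using Montel's theorem and the maximum principle to keep the intersection connected and non-trivial (see the hedgehog theory developed later, e.g. Theorem~\ref{thm:PerezMarcoCremerHedgehog}). Let $\Psi\colon\widehat{\mathbb C}\setminus K\to\widehat{\mathbb C}\setminus\overline{\mathbb D}$ be the Riemann map with $\Psi(\infty)=\infty$. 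Since $f(K)=K$ and $f$ is a local homeomorphism near $K$, it acts on a punctured neighbourhood of $K$, and $\Psi$ conjugates it to a holomorphic map near $\partial\mathbb D$ which, by Carath\'eodory's prime-end theorem, extends to an orientation-preserving homeomorphism of $\partial\mathbb D$. Its Poincar\'e rotation number is $\rho(f,K)\in\mathbb R/\mathbb Z$; it does not depend on $\Psi$, which is unique up to a rotation, and rotations do not change rotation numbers.

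\emph{Invariance is soft.} Given $K$ as above, $K':=\varphi(K)$ is such a continuum for $g$: it is compact and connected, $\widehat{\mathbb C}\setminus K'$ is connected because $\varphi$ is a homeomorphism near $0$, and $g(K')=\varphi(f(K))=K'$. The homeomorphism $\varphi$ maps a punctured neighbourhood of $K$ onto one of $K'$ and conjugates $f$ to $g$ there, hence it induces a homeomorphism on prime ends; composing with the two uniformisations yields an orientation-preserving homeomorphism $\partial\mathbb D\to\partial\mathbb D$ conjugating the two prime-end maps. As the rotation number of a circle homeomorphism is an invariant of orientation-preserving conjugacy, $\rho(f,K)=\rho(g,K')$.

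\emph{The hard part: $\rho(f,K)=\theta$.} For the linear model $f(z)=\lambda z$ with $K=\overline{\mathbb D_c}$ one has $\Psi(z)=z/c$ and the prime-end map is literally $z\mapsto\lambda z$ on $\partial\mathbb D$, of rotation number $\theta$; the same follows whenever $f$ is linearisable, via the (orientation-preserving, holomorphic) linearising conjugacy and the previous paragraph. For a general neutral germ the key input is that the prime-end rotation number of $f$ on \emph{any} such continuum equals $\theta$ (P\'erez-Marco). The mechanism is that $K$ may be taken of arbitrarily small size, so that after rescaling $z\mapsto z/\m{diam}K$ (an orientation-preserving conjugacy, under which $\rho$ is unchanged) $f$ becomes $C^0$-close to the rotation $z\mapsto\lambda z$ on a fixed neighbourhood of the rescaled continuum; one then pushes this closeness through the uniformisation and uses continuity of the rotation number on circle homeomorphisms to conclude $\rho=\theta$. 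I expect the rigorous version of this last step — controlling the Riemann maps of the varying continua uniformly, for which one needs the location of hedgehogs and Koebe-type distortion estimates for $\Psi$ — to be the main obstacle; it is precisely where the holomorphy of $f$ is used in an essential way. For germs whose $p$-th iterate is the identity (so $\lambda$ is a root of unity and $f$ is a finite-order homeomorphism) one may instead invoke the classical rigidity of finite-order orientation-preserving planar homeomorphisms. Combining this with the previous paragraph gives $\theta=\rho(f,K)=\rho(g,\varphi(K))=\alpha$, as required.
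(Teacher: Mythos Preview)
The paper does not prove this theorem: it is stated as a cited result, attributed to Na{\u\i}shul' with pointers to alternative proofs by P\'erez-Marco \cite[\S IV.1]{PerezMarco1997FixedPointsandCircleMaps} and Bracci \cite{Bracci2010Localholomorphicdynamicsofdiffeomorphismsindimensionone}. So there is no ``paper's own proof'' to compare against.

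Your outline follows the P\'erez-Marco route via Siegel compacta/hedgehogs and the prime-end rotation number, which is exactly one of the references the paper gives. The first two steps (existence of a fully invariant continuum $K$ and invariance of the prime-end rotation number under orientation-preserving conjugacy) are correct and, as you note, soft. The third step is where the content lies, and you are honest that your ``rescale and use $C^0$-continuity of the rotation number'' heuristic is not a proof: shrinking $K$ and rescaling does make $f$ close to $z\mapsto\lambda z$, but the rescaled continua need not converge to anything reasonable, and the Riemann maps $\Psi$ of wildly varying full continua are not controlled by $C^0$-closeness of the ambient maps alone. P\'erez-Marco's actual argument does not proceed by such a limit; it uses the analytic structure of the induced circle map (it extends holomorphically to an annulus around $\partial\mathbb D$) together with properties of analytic circle diffeomorphisms and a careful analysis relating the boundary dynamics to the multiplier. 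So as written, your proposal correctly identifies the architecture and the hard step, but the hard step remains a genuine gap.
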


In other words, for neutral germs the unordered pair $\{\lambda,\overline{\lambda}\}$
is a topological invariant.

\subsection{Linearisation and small divisors}

Let $f\in\End(\mathbb{C},0)$ be elliptic with multiplier $\lambda=e^{2\pi i\theta}$
with $\theta\in\mathbb{R}\backslash\mathbb{Q}$. Now we consider the
question when the (unique) formal linearisation of $f$ from Corollary~\ref{cor:1DformalLinearisation}
converges.

Recall that if $f(z)=\sum_{j=1}^{\infty}f_{j}z^{j}$, (\ref{eq:homol1DexplByDegree})
implies that a formal power series $h\in\Pow_{1}(\mathbb{C},0)$ given
by $h(z)=\sum_{j=1}^{\infty}h_{j}z^{j}$ linearising $f$ is uniquely
determined by 
\begin{equation}
h_{j}=\frac{1}{\lambda^{j}-\lambda}\paren[\Big]{\sum_{k=2}^{j}\sum_{l_{1}+\cdots+l_{k}=j}f_{k}h_{l_{1}}\cdots h_{l_{k}}}\label{eq:LinSeries1D}
\end{equation}
for $j\ge2$. The series $h$ diverges if the coefficients $h_{j}$
are too large, which is closely related to the divisors $\lambda^{j}-\lambda$
being too small too often.  Finding suitable notions of divisors
being ``too small too often'' is known as a \emph{\index{small divisors problem@\emph{small divisors problem}}small
divisors} or \emph{small denominators problem}\index{small denominators problem@\emph{small denominators problem}}
and is closely related to the arithmetical properties of the rotation
number $\theta\in\mathbb{R}\backslash\mathbb{Q}$.
\begin{defn}
Then $0$ is a \emph{Siegel point\index{Siegel point@\emph{Siegel point}}}
for $f$, if $f$ is holomorphically linearisable and a \emph{\index{Cremer point@\emph{Cremer point}}Cremer
point} for $f$ otherwise. A \emph{\index{Siegel disk@\emph{Siegel disk}}Siegel
disk} for $f$ at a Siegel point $0$ is an injective holomorphic
map $\phi:\mathbb{D}\to\mathbb{C}$ with $\phi(0)=0$ conjugating
$f$ to an irrational rotation.
\end{defn}

G.A.~Pfeiffer first proved the existence of non-linearisable elliptic
germs in \cite{Pfeiffer1917OntheConformalMappingofCurvilinearAnglestheFunctionalEquationfxa_1x}.
H.~Cremer continued Pfeiffer's work in \cite{Cremer1927ZumZentrumproblem}
and \cite{Cremer1938UberDieHaufigkeitDerNichtzentren} and proved
the following result: 
\begin{thm}[Cremer \cite{Cremer1938UberDieHaufigkeitDerNichtzentren}]
\label{thm:Cremer}Let $\lambda=e^{2\pi i\theta}\in S^{1}$ with
$\theta\in(-1/2,1/2)$. Then $\lambda$ satisfies 
\begin{equation}
\limsup_{n\to+\infty}|\lambda^{n}-1|^{-1/n}=+\infty,\label{eq:Cremer}
\end{equation}
if and only if 
\begin{equation}
\limsup_{n\to+\infty}|\{n\theta\}|^{-1/n}=+\infty,\label{eq:Cremer2}
\end{equation}
where $\pbrace x:=x-\left[x\right]$ is the fractional part and $\left[x\right]$
the integer part of a real number $x$. And if $\lambda$ satisfies
these conditions, then there exists a germ $f\in\End(\mathbb{C},0)$
with multiplier $\lambda$ that is not holomorphically linearisable.
Furthermore, the set of numbers $\lambda\in S^{1}$ satisfying (\ref{eq:Cremer})
contains a countable intersection of dense open sets.
\end{thm}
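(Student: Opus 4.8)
The plan is to treat the three assertions separately: (i) the arithmetic equivalence of \eqref{eq:Cremer} and \eqref{eq:Cremer2}; (ii) construction of a non-linearisable germ under these hypotheses; (iii) the genericity (Baire-category) statement.

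For (i), the idea is that $|\lambda^n-1|$ and $|\{n\theta\}|$ are comparable up to universal constants. Writing $\lambda^n-1 = e^{2\pi i n\theta}-1$, and letting $\langle n\theta\rangle := \operatorname{dist}(n\theta,\mathbb Z)$ denote the distance to the nearest integer, elementary estimates give $4\langle n\theta\rangle \le |\lambda^n-1| \le 2\pi\langle n\theta\rangle$. Since $\langle n\theta\rangle$ and $|\{n\theta\}|$ differ by at most a factor coming from $\{n\theta\}\in\{\langle n\theta\rangle, 1-\langle n\theta\rangle\}$, one has $\langle n\theta\rangle \le |\{n\theta\}| \le 1$, and also $|\{n\theta\}|\ge \langle n\theta\rangle$ while $|\{n\theta\}|^{-1/n}$ and $\langle n\theta\rangle^{-1/n}$ have the same $\limsup$ because $(1-\langle n\theta\rangle)^{-1/n}\to 1$ whenever the $\limsup$ is attained along a subsequence with $\langle n\theta\rangle\to 0$ (and if $\langle n\theta\rangle$ is bounded below the $\limsup$ is just $1$ on both sides). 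Taking $n$-th roots and $\limsup$ absorbs the constant factors, giving the equivalence.

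For (ii), I would follow Cremer's original argument: pick $f(z)=\lambda z + z^2$ (or more generally any $f$ with a nonzero quadratic coefficient), and show its unique formal linearisation \emph{cannot} converge by bounding from below the coefficients $h_j$ from \eqref{eq:LinSeries1D}. The key observation is a lower bound $|h_{j}| \ge$ (something involving $\prod |\lambda^{m}-\lambda|^{-1}$ over a suitable chain of indices) contrasted with an \emph{upper} bound: if $h$ converged with radius $\rho>0$ then $|h_j|\le C\rho^{-j}$. A cleaner route, also due to Cremer, exploits the functional equation directly: if $f$ were linearisable near $0$, then $f$ would have, for each $q$, exactly the "expected" number of periodic points of period $q$ near $0$ — but Cremer counts periodic points of $f(z)=\lambda z+z^2$ via the equation $f^{\circ q}(z)=z$, whose nonzero solutions number $2^q-1$ counted with multiplicity, and whose product of moduli is governed by $|\lambda^q-1|$; condition \eqref{eq:Cremer} forces a subsequence of periodic orbits to accumulate at $0$, which is impossible for a linearisable (hence locally conjugate to a rotation, periodic-point-free) germ. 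I would present this periodic-point argument as the main line, since it is the conceptually transparent one. \textbf{The main obstacle} is making the periodic-point count precise: one must show that the nonzero roots of $f^{\circ q}(z)-z$ cannot all stay bounded away from $0$ when \eqref{eq:Cremer} holds, which requires estimating the lowest-order coefficient of $f^{\circ q}(z)-z$ (it is $\lambda^q-1$ times $z$ plus higher order) against the product of the roots via Vieta / Newton's identities — the delicate point being to extract a subsequence $q_k$ along which $|\lambda^{q_k}-1|^{1/q_k}\to 0$ fast enough to defeat the fixed radius of convergence.

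For (iii), the set of $\lambda=e^{2\pi i\theta}\in S^1$ satisfying \eqref{eq:Cremer} is exhibited as a $G_\delta$: for $N,M\in\mathbb N$ let
\[
U_{N,M} := \bigl\{\lambda\in S^1 : \exists\, n\ge N \text{ with } |\lambda^n-1| < 2^{-Mn}\bigr\}.
\]
Each $U_{N,M}$ is open (finite union over $n$ of preimages of open conditions, made open by allowing $n$ to range over all integers $\ge N$) and dense (roots of unity $e^{2\pi i a/b}$ are dense, and near any such point one finds $\lambda$ with $\lambda^n$ extremely close to $1$ for suitable large $n$ — indeed one can prescribe the first many partial quotients of $\theta$ to be enormous). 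Then $\bigcap_{M}\bigcap_{N} U_{N,M}$ is exactly the set where $\limsup_n |\lambda^n-1|^{-1/n}=+\infty$, a countable intersection of dense open sets as claimed. The realisation statement "$f$ exists with multiplier $\lambda$" follows from part (ii) applied to $\lambda$ in this set. I expect part (iii) to be routine once the logical form of the $\limsup$ condition is unwound; the genuine content is entirely in part (ii).
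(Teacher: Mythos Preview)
Your treatments of parts (i) and (iii) are fine and match the paper's approach (the paper in fact gives no details for (iii)). The real divergence, and a genuine gap, is in part (ii).

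The periodic-point argument for the quadratic $f(z)=\lambda z+z^{2}$ does \emph{not} establish the theorem under the hypothesis \eqref{eq:Cremer}. The polynomial $f^{\circ q}(z)-z$ has degree $2^{q}$; after factoring out the root at $0$, the product of the remaining $2^{q}-1$ roots has modulus $|\lambda^{q}-1|$, so the smallest root has modulus at most $|\lambda^{q}-1|^{1/(2^{q}-1)}$. For this to tend to $0$ along a subsequence you need
\[
\limsup_{q\to\infty}|\lambda^{q}-1|^{-1/2^{q}}=+\infty,
\]
which is strictly stronger than \eqref{eq:Cremer}. This is precisely the gap between Cremer's 1927 result (for the quadratic, under the double-exponential condition) and the 1938 result stated here (for \emph{some} germ, under the single-exponential condition). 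Your ``main obstacle'' paragraph hints at the difficulty but does not identify that the exponent mismatch is fatal for this route.

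The paper's proof takes a different path: rather than analysing a fixed simple germ, it \emph{builds} the germ $f(z)=\sum f_{j}z^{j}$ coefficient by coefficient, choosing each $f_{j}$ on the unit circle with argument selected so that in the recursion \eqref{eq:LinSeries1D} for the linearising coefficients $h_{j}$, the new term $f_{j}h_{1}^{j-1}$ aligns with the sum of the previously determined terms. This prevents any cancellation and yields the clean lower bound $|h_{j}|\ge 1/|\lambda^{j}-\lambda|$ directly, so $\limsup|h_{j}|^{1/j}=+\infty$ follows immediately from \eqref{eq:Cremer}. The construction is short and uses nothing beyond the recursion you already cited; the key idea you are missing is that the freedom to choose $f$ lets you eliminate the cancellation problem entirely, whereas for the quadratic polynomial that cancellation is uncontrolled and forces the stronger hypothesis.
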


\begin{rem}
\label{rem:CremerGeneric}A set containing a countable intersection
of dense open subsets of a topological space is known as a \emph{comeagre
}or \emph{generic} subset in the fields of topology and algebraic
geometry. In particular, a generic set is necessarily dense and uncountable
(by the Baire category theorem) and intersections with other generic
sets are generic. Hence Theorem~\ref{thm:Cremer} shows that the
set of multipliers admitting non-linearisable germs is dense and topologically
generic in $S^{1}$.
\end{rem}

\begin{proof}
Note that 
\[
|\lambda^{n}-1|=2|\sin(\pi n\theta)|\in(4|\pbrace{n\theta}|,2\pi\abs{\pbrace{n\theta}}),
\]
so (\ref{eq:Cremer}) and (\ref{eq:Cremer2}) are finite or infinite
at the same time. Let $f(z)=\sum_{j=1}^{\infty}f_{n}z^{n}$ with $f_{1}=\lambda=e^{2\pi i\theta}$
and $f_{j}=e^{i\theta_{j}}$ with 
\[
\theta_{j}=\arg\paren[\Big]{\sum_{k=2}^{j-1}\sum_{l_{1}+\cdots+l_{k}=j}f_{k}h_{l_{1}}\cdots h_{l_{k}}},
\]
where $h_{j}$ are given by $h_{1}=1$ and (\ref{eq:LinSeries1D}),
so 
\[
|h_{j}|=h_{j}=\frac{1}{|\lambda^{j}-\lambda|}\abs{f_{j}+\sum_{k=2}^{j-1}\sum_{l_{1}+\cdots+l_{k}=j}f_{k}h_{l_{1}}\cdots h_{l_{k}}}>\frac{|f_{j}|}{|\lambda^{j}-\lambda|}=\frac{1}{|\lambda^{j}-\lambda|},
\]
and $\limsup_{n\to\infty}|h_{n}|^{1/n}=+\infty$, hence the linearising
series $h$ diverges.
\end{proof}
On the other hand, a few years later C.~L.~Siegel established guaranteed
linearisability for a large class of multipliers:
\begin{thm}[Siegel \cite{Siegel1942Iterationofanalyticfunctions}]
\label{thm:Siegel}If for $\lambda\in\mathbb{C}$ there exist $C,\mu>0$
such that 
\begin{equation}
|\lambda^{m}-1|\ge Cm^{-\mu}\label{eq:Siegel}
\end{equation}
for every $m\ge2$, then every germ $f\in\End_{\lambda}(\mathbb{C},0)$
is holomorphically linearisable. Furthermore, condition (\ref{eq:Siegel})
is satisfied for $\lambda$ in a subset $S\subseteq S^{1}$ of full
Lebesgue-measure.
\end{thm}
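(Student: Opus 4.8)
The statement splits into two independent claims: the measure assertion is an elementary number-theoretic estimate, while the linearisation assertion is the substance. My plan for the latter is the classical majorant (power-series) method applied to the unique formal linearisation of Corollary~\ref{cor:1DformalLinearisation}. (If the Brjuno condition is already at hand, a shorter route is to observe that \eqref{eq:Siegel} implies it and to quote Theorem~\ref{thm:BrjunoYoccoz1D}; I outline the self-contained argument.)

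\textbf{The measure assertion.} Write $\lambda=e^{2\pi i\theta}$; then $|\lambda^m-1|=2|\sin(\pi m\theta)|$ is comparable, up to fixed constants, to $\lVert m\theta\rVert:=\dist(m\theta,\mathbb Z)$. It suffices to show that for Lebesgue-a.e.\ $\theta$ the bound \eqref{eq:Siegel} holds already with the single exponent $\mu=2$. The set of exceptional $\theta$ lies in $\bigcap_{n\ge1}A_n$, where $A_n:=\bigcup_{m\ge2}\{\theta\in[0,1):\lVert m\theta\rVert<m^{-2}/n\}$; since $\{\theta:\lVert m\theta\rVert<\delta\}$ is a union of $m$ intervals of total length $2\delta$, we get $|A_n|\le\sum_{m\ge2}2m^{-2}/n\xrightarrow[n\to\infty]{}0$. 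Hence the exceptional set is null and the corresponding $S\subseteq S^1$ has full Lebesgue measure.

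\textbf{Linearisation: reductions.} We may assume $|\lambda|=1$: for $|\lambda|\neq1$ the germ $f$ is linearisable by Koenigs' Theorem~\ref{thm:Koenigs} (and \eqref{eq:Siegel} is automatic when $|\lambda|>1$). Then \eqref{eq:Siegel} applied to $m=2,3,\dots$ shows $\lambda$ is not a root of unity, so $f$ is elliptic and Corollary~\ref{cor:1DformalLinearisation} provides the unique formal $h(z)=z+\sum_{j\ge2}h_jz^j$ with coefficients given by \eqref{eq:LinSeries1D}; everything reduces to showing $h$ converges. Replacing $f$ by $z\mapsto\varrho^{-1}f(\varrho z)$ for small $\varrho>0$ — which changes neither the multiplier nor linearisability — I may assume $|f_k|\le1$ for all $k\ge2$. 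Since $|\lambda|=1$ gives $|\lambda^j-\lambda|=|\lambda^{j-1}-1|$, the hypothesis \eqref{eq:Siegel} (plus $\lambda\neq1$ for $j=2$) yields a constant $C''=C''(\lambda,C,\mu)$ with $|\lambda^j-\lambda|^{-1}\le C''j^\mu$ for $j\ge2$. Unwinding \eqref{eq:LinSeries1D} expresses $h_j$ as a sum over rooted plane trees $T$ with $j$ leaves, each internal vertex $v$ of arity $k_v$ carrying weight $f_{k_v}/(\lambda^{|T_v|}-\lambda)$, where $|T_v|$ is the number of leaves below $v$; with $|f_k|\le1$ this gives
\[
|h_j|\;\le\;\sum_{T:\,|T|=j}\ \prod_{v\text{ internal}}\frac{1}{|\lambda^{|T_v|}-\lambda|}.
\]

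\textbf{The crux: Siegel's Lemma.} The step I expect to be the main obstacle is the bound $\prod_{v}|\lambda^{|T_v|}-\lambda|^{-1}\le\sigma^j$ with $\sigma=\sigma(C,\mu)$ independent of $T$. A naive estimate fails badly: $T$ can have $\Theta(j)$ internal vertices, and the per-vertex inequality $|\lambda^{|T_v|}-\lambda|^{-1}\le C''|T_v|^\mu$ alone only yields $\prod_v C''|T_v|^\mu$, of size $(j!)^\mu$ for a path-like $T$. What rescues the estimate is that \eqref{eq:Siegel} makes small divisors \emph{sparse}: if $|\lambda^p-1|<\varepsilon$ and $|\lambda^q-1|<\varepsilon$ with $p<q$, then $|\lambda^{q-p}-1|\le|\lambda^q-1|+|\lambda^p-1|<2\varepsilon$, forcing $q-p>(C/2\varepsilon)^{1/\mu}$. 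Hence, for each dyadic scale $\varepsilon=2^{-\nu}$, at most $O(j\,2^{-\nu/\mu})$ indices $n\le j$ satisfy $|\lambda^n-1|<2^{-\nu}$, and summing $\sum_{\nu\ge0}(\nu+1)\,O(j\,2^{-\nu/\mu})=O(j)$ bounds $\sum_v\log|\lambda^{|T_v|}-\lambda|^{-1}$ by $O(j)$ along any single branch of $T$; a combinatorial bookkeeping — vertices with equal $|T_v|$ have disjoint leaf-sets, and $|T_v|$ strictly decreases along branches — then propagates this to all of $T$. Since the number $a_j$ of plane trees with $j$ leaves grows only geometrically (its generating function satisfies the algebraic equation $A=z+A^2/(1-A)$, so $A$ is analytic at $0$ and $a_j\le\kappa^j$), we obtain $|h_j|\le a_j\sigma^j\le(\kappa\sigma)^j$, so $h$ converges near $0$; as $h'(0)=1$, $h\in\Aut(\mathbb C,0)$ conjugates $f$ to its linear part $L_\lambda:z\mapsto\lambda z$, i.e., $f$ is holomorphically linearisable. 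For the details of Siegel's Lemma I would follow the estimate of \cite{Siegel1942Iterationofanalyticfunctions} (see also \cite{CarlesonGamelin1993ComplexDynamics}); an alternative proof replaces the majorant by a quadratically convergent, KAM-type iteration, and in any case \eqref{eq:Siegel} is subsumed by the Brjuno condition of Theorem~\ref{thm:BrjunoYoccoz1D}.
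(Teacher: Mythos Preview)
Your proposal is correct. The paper does not actually prove Theorem~\ref{thm:Siegel} on the spot: it remarks that Siegel's majorant-series proof ``was further refined by Brjuno'' and, for Theorem~\ref{thm:BrjunoYoccoz1D}, defers to the multi-dimensional version Theorem~\ref{thm:Brjuno}, whose proof appears only later inside the proof of the general elimination Theorem~\ref{thm:GeneralElimination}. That proof uses the same majorant strategy you outline, but organised via the auxiliary sequences $\sigma_r$ (counting terms, shown to have exponential growth by solving a quadratic for its generating function) and $\delta_\alpha$ (carrying the small divisors), together with Siegel's gap Lemma~\ref{lem:Siegel} and Brjuno's counting Lemma~\ref{lem:Brjuno}, rather than through an explicit tree expansion.

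So your self-contained argument is closer to Siegel's original and is logically independent of the Brjuno machinery, while the paper's route is deliberately economical: it absorbs the Siegel condition into the Brjuno condition and proves everything once in~\S\ref{subsec:Brjuno-sets}. Your tree-combinatorics sketch and the paper's $\sigma_r/\delta_\alpha$ decomposition are two packagings of the same estimate; the ``bookkeeping'' step you flag as the crux is precisely what Lemmas~\ref{lem:Siegel} and~\ref{lem:Brjuno} make rigorous, and you are right that the naive per-vertex bound is insufficient. Your measure argument is the standard one and fine; the paper does not spell it out either.
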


\begin{rem}
Milnor \cite[Lemma~C.7]{Milnor2006Dynamicsinonecomplexvariable} shows
that the complement of $S$ has Hausdorff dimension $0$. So this
set is large in the sense of measure theory, but topologically small
as the complement of a generic set by Remark~\ref{rem:CremerGeneric}.
\end{rem}

\begin{rem}
A number $\lambda=e^{2\pi i\theta}$ with $\theta\in\mathbb{R}$ satisfies
the Siegel condition (\ref{eq:Siegel}) if and only if $\theta$ is
\emph{Diophantine,} that is there exist $m,c>0$ such that
\begin{equation}
\abs[\bigg]{\theta-\frac{p}{q}}>\frac{c}{q^{m}}\quad\forall p\in\mathbb{Z},q\in\mathbb{N}_{>0}.\label{eq:Diophantisch}
\end{equation}
In other words, the Siegel condition (\ref{eq:Siegel}) is equivalent
to the rotation number $\theta$ being badly approximated by rationals.
In particular, is is easy to show that every algebraic number $\theta\in\mathbb{R}\backslash\mathbb{Q}$
satisfies (\ref{eq:Diophantisch}).
\end{rem}

Siegel's proof of Theorem~\ref{thm:Siegel} using majorant series
was further refined by Brjuno \cite{Brjuno1965ConvergenceofTransformationsofDifferentialEquationstoNormalForm}
under the following weaker condition:
\begin{defn}
For $\lambda\in\mathbb{C}$ and an integer $m\ge2$, let 
\[
\omega(\lambda,m):=\min\{|\lambda^{j}-\lambda|\mid2\le j\le m\}.
\]
We say $\lambda$ is a \emph{Brjuno number}\index{Brjuno number@\emph{Brjuno number}},
if it satisfies the \emph{Brjuno condition\index{Brjuno condition@\emph{Brjuno condition}}}
\begin{equation}
\sum_{j=0}^{\infty}\frac{1}{p_{j}}\log\frac{1}{\omega(\lambda,p_{j+1})}<\infty\label{eq:1DBrjunoCond}
\end{equation}
for any strictly increasing sequence of positive integers $0<p_{0}<p_{1}<\cdots$.
\end{defn}

\begin{rem}
The Brjuno condition (\ref{eq:1DBrjunoCond}) is independent of the
chosen sequence $\{p_{j}\}_{j}$ and is hence often represented as
\[
\sum_{j=0}^{\infty}\frac{1}{2^{j}}\log\frac{1}{\omega(\lambda,2^{j+1})}<\infty.
\]
\end{rem}

\begin{thm}[Brjuno \cite{Brjuno1965ConvergenceofTransformationsofDifferentialEquationstoNormalForm}]
\label{thm:BrjunoYoccoz1D}If $\lambda\in\mathbb{C}$ satisfies the
Brjuno condition, then every $f\in\End(\mathbb{C},0)$ with multiplier
$\lambda$ is linearisable.
\end{thm}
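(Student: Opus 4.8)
The plan is to prove convergence of the unique formal linearising series $h(z)=\sum_{j\ge1}h_jz^j$ (with $h_1=1$) whose coefficients are given recursively by \eqref{eq:LinSeries1D}, using the classical \emph{majorant series} method of Siegel as refined by Brjuno. The idea is to bound $|h_j|$ from above by the coefficients of an explicit convergent power series. First I would fix a representative $f$ with $f(z)=\lambda z+\sum_{j\ge2}f_jz^j$ convergent, so there is $\rho>0$ with $|f_j|\le M\rho^{-j}$ for all $j$; after the harmless rescaling $z\mapsto\rho z$ we may assume $|f_j|\le M$ for all $j\ge2$, i.e. $f$ is majorised by the geometric series $\lambda z + M\sum_{j\ge2}z^j = \lambda z + \frac{Mz^2}{1-z}$.

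The core of the argument is a two-part estimate on the recursion \eqref{eq:LinSeries1D}. Separate the ``analytic'' contribution of the nonlinear terms from the ``arithmetic'' contribution of the small divisors $\lambda^j-\lambda$. Concretely, introduce the auxiliary sequence $\alpha_j$ defined by the same recursion but \emph{without} the divisors:
\[
\alpha_1=1,\qquad \alpha_j = M\sum_{k=2}^{j}\ \sum_{l_1+\cdots+l_k=j}\alpha_{l_1}\cdots\alpha_{l_k}\quad(j\ge2),
\]
so that $\sum_j\alpha_jz^j$ is the solution of an explicit quadratic-type functional equation (essentially $\sigma = z + \frac{M\sigma^2}{1-\sigma}$), hence has positive radius of convergence by the implicit function theorem; thus $\alpha_j\le A\,c^{j}$ for some $A,c>0$. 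Separately, define $\delta_j := \prod$ of the inverse small divisors encountered along the ``worst branch'' of the recursion tree for index $j$; a bookkeeping argument on the tree structure of \eqref{eq:LinSeries1D} shows $|h_j|\le \alpha_j\cdot \delta_j$, where $\log\delta_j$ is controlled by a sum of terms $\log\frac{1}{\omega(\lambda,\,\cdot\,)}$ distributed over a partition of $j$. The Brjuno condition \eqref{eq:1DBrjunoCond} is precisely what is needed to conclude $\limsup_j \delta_j^{1/j}<\infty$, hence $\limsup_j|h_j|^{1/j}<\infty$ and $h$ converges near $0$; by Proposition~\ref{prop:1DPoincareDulac} the limit germ $h\in\Aut(\mathbb{C},0)$ linearises $f$, and linearisability in any neighbourhood is what is claimed.

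The hard part, and the heart of Brjuno's improvement over Siegel, is precisely the arithmetic estimate on $\delta_j$: one must show that summing $\log\frac{1}{\omega(\lambda,m)}$ over the branching structure of the recursion, where each monomial of degree $j$ is built from lower-degree pieces $l_1,\dots,l_k$, produces a total that is $O(j)$ under \eqref{eq:1DBrjunoCond}. The standard device is to organise the small divisors in a binary-tree fashion (following Brjuno, or Yoccoz's streamlined version): in any product of divisors arising for index $j$, a given ``scale'' $\omega(\lambda,2^{s})$ can appear at most a bounded number of times relative to the size of the sub-index it governs, and telescoping with the dyadic form $\sum_s 2^{-s}\log\frac{1}{\omega(\lambda,2^{s+1})}<\infty$ of the Brjuno condition (Remark preceding Theorem~\ref{thm:BrjunoYoccoz1D}) yields the linear bound. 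I would isolate this as a combinatorial lemma bounding $\log\delta_j \le C\,j$ and treat the rest (majorisation of $f$, convergence of $\sum\alpha_jz^j$, passage to the limit using the homological equation as in the proof sketch of Theorem~\ref{thm:Koenigs}) as routine.
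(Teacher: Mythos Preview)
Your proposal is correct and is precisely the approach the paper takes: it defers the proof to the multi-dimensional version (Theorem~\ref{thm:GeneralElimination}), where the coefficients of the conjugating series are bounded by a product $\sigma_{|\alpha|}\delta_\alpha$, with $\sigma_r$ playing the role of your analytic majorant $\alpha_j$ (satisfying the same quadratic functional equation) and $\delta_\alpha$ your arithmetic factor, estimated via Siegel's and Brjuno's combinatorial lemmas (Lemmas~\ref{lem:Siegel} and \ref{lem:Brjuno}) exactly as you outline.
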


(See the multi-dimensional version, Theorem~\ref{thm:Brjuno} for
the proof.)

J.-C.~Yoccoz \cite{Yoccoz1988LinearisationDesGermesDeDiffeomorphismesHolomorphesDeC0LinearizationofGermsofHolomorphicDiffeomorphismsofC0,Yoccoz1995TheoremeDeSiegelNombresDeBrunoEtPolynomesQuadratiques}
used geometric methods and quasi-conformal conjugations to show that
the Brjuno condition is sharp:
\begin{thm}[Yoccoz \cite{Yoccoz1988LinearisationDesGermesDeDiffeomorphismesHolomorphesDeC0LinearizationofGermsofHolomorphicDiffeomorphismsofC0,Yoccoz1995TheoremeDeSiegelNombresDeBrunoEtPolynomesQuadratiques}]
\label{thm:Yoccoz}If $\lambda\in S^{1}$ and (\ref{eq:1DBrjunoCond})
diverges, then the polynomial $P_{\lambda}(z):=\lambda z+z^{2}$ is
not linearisable around the elliptic fixed point $0$. 
\end{thm}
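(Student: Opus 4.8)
We outline Yoccoz's argument; the complete proof is long and technical, see \cite{Yoccoz1995TheoremeDeSiegelNombresDeBrunoEtPolynomesQuadratiques}. It is enough to prove the contrapositive: if $P_\lambda$ is holomorphically linearisable then $\lambda=e^{2\pi i\theta}$ is a Brjuno number. Expand $\theta$ (mod $1$) as a continued fraction $[a_1,a_2,\dots]$, let $\alpha_0=\theta$, $\alpha_{n+1}=\{1/\alpha_n\}$ be its Gauss iterates, and put $\beta_{-1}=1$, $\beta_n=\alpha_0\cdots\alpha_n$; in this standard equivalent form the Brjuno condition (\ref{eq:1DBrjunoCond}) reads $B(\theta):=\sum_{n\ge 0}\beta_{n-1}\log(1/\alpha_n)<\infty$, and one always has $\sum_{n\ge 0}\beta_{n-1}<\infty$. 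Since $|\lambda|=1$, the filled Julia set of $P_\lambda$ is contained in $\{|z|\le 2\}$, so if $P_\lambda$ has a Siegel disk $\Delta\ni 0$ then $\Delta\subseteq\{|z|\le 2\}$ and its conformal radius $r(\theta)$ at $0$ is bounded above by a universal constant.

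The heart of the proof is a renormalisation scheme. One embeds the germ of $P_\lambda$ at its neutral fixed point into a class of ``almost univalent'' maps --- holomorphic maps with a single critical point and a fixed point of multiplier $e^{2\pi i\alpha}$, defined on suitable sickle-shaped domains --- and builds a renormalisation operator $\mathcal{R}$ that excises a fundamental domain bounded by an orbit arc and its image, reglues the two sides by the dynamics and rescales, producing a map of the same type with rotation number $\{1/\alpha\}$, the position of the critical point being tracked throughout. Writing $r_n$ for the ``linearisation size'' of the $n$-th renormalised map (so $r_0=r(\theta)$), the key quantitative estimate --- obtained from compactness and distortion bounds, the critical point obstructing the linearisation domain --- is a one-step comparison, schematically of the form
\[
\log\frac{1}{r_n}\ \ge\ \alpha_n\log\frac{1}{r_{n+1}}\ +\ \log\frac{1}{\alpha_n}\ -\ C
\]
for a universal $C$. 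Multiplying by $\beta_{n-1}$, using $\beta_{n-1}\alpha_n=\beta_n$, and telescoping over $0\le n\le N$ gives
\[
\log\frac{1}{r(\theta)}\ \ge\ \beta_N\log\frac{1}{r_{N+1}}\ +\ \sum_{n=0}^{N}\beta_{n-1}\log\frac{1}{\alpha_n}\ -\ C\sum_{n=0}^{N}\beta_{n-1}.
\]
Since $r_{N+1}$ is bounded above and $\beta_N\to 0$, the first term on the right is bounded below by a quantity tending to $0$; together with $\sum_n\beta_{n-1}<\infty$, letting $N\to\infty$ yields $\log(1/r(\theta))\ge B(\theta)-C'$ for a universal $C'$. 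Hence linearisability ($r(\theta)>0$) forces $B(\theta)\le\log(1/r(\theta))+C'<\infty$; equivalently, if the Brjuno series (\ref{eq:1DBrjunoCond}) diverges then $r(\theta)=0$ and $P_\lambda$ is not linearisable.

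The main obstacle is the renormalisation itself, not the elementary telescoping: one must choose the class of maps so that it is essentially stable under $\mathcal{R}$, prove a priori geometric bounds on the sickle domains that are uniform along the whole renormalisation tower, handle the orientation and branch issues arising from the sign of the rotation number at each step, and establish the one-step inequality with a constant independent of $n$ and of the starting map. This last point is genuinely special to the quadratic polynomial --- it is the critical point that forces the loss of $\log(1/\alpha_n)$ at each level --- which is why a uniform estimate of Brjuno type cannot hold for arbitrary germs. (An alternative route, via quasiconformal surgery relating $P_\lambda$ to an analytic circle diffeomorphism of rotation number $\theta$, is available under regularity assumptions on $\partial\Delta$ and again isolates the Brjuno condition as the exact threshold.) Combined with Brjuno's Theorem~\ref{thm:BrjunoYoccoz1D}, the result shows $P_\lambda$ is linearisable if and only if $\lambda$ is a Brjuno number, and in fact $\log r(\theta)=-B(\theta)+O(1)$.
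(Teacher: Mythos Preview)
The paper does not give a proof of this theorem: it states the result with a citation to Yoccoz \cite{Yoccoz1988LinearisationDesGermesDeDiffeomorphismesHolomorphesDeC0LinearizationofGermsofHolomorphicDiffeomorphismsofC0,Yoccoz1995TheoremeDeSiegelNombresDeBrunoEtPolynomesQuadratiques}, notes only that Yoccoz ``used geometric methods and quasi-conformal conjugations'', and moves on. So there is nothing in the paper to compare your argument against.

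Your sketch is a faithful high-level outline of Yoccoz's actual proof in \cite{Yoccoz1995TheoremeDeSiegelNombresDeBrunoEtPolynomesQuadratiques}: the continued-fraction reformulation of the Brjuno sum, the renormalisation tower on a class of maps with a critical point, the one-step estimate relating $r_n$ to $r_{n+1}$ with a loss of $\log(1/\alpha_n)$, and the telescoping that yields $\log(1/r(\theta))\ge B(\theta)-C'$. You are also right to flag that the substance lies entirely in constructing the renormalisation class and proving the uniform one-step estimate, not in the telescoping. One small caveat: the final sentence overstates slightly --- the upper bound $\log r(\theta)\ge -B(\theta)-O(1)$ is Yoccoz's contribution here, while the matching lower bound (and hence the full $\log r(\theta)=-B(\theta)+O(1)$) requires the quantitative Brjuno direction, treated separately (cf.\ the paper's subsequent theorem citing \cite{BuffCheritat2006TheBrjunoFunctionContinuouslyEstimatestheSizeofQuadraticSiegelDisks}).
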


In summary: $\lambda$ satisfies (\ref{eq:1DBrjunoCond}) $\Leftrightarrow$
every $f\in\End(\mathbb{C},0)$ with multiplier $\lambda$ is linearisable
$\Leftrightarrow$ $P_{\lambda}$ is linearisable.

\begin{rem}[Continued fractions]
For $\lambda=e^{2\pi i\theta}\in S^{1}$, $\theta\in(0,1)\backslash\mathbb{Q}$
let $\{p_{j}/q_{j}\}_{j\in\mathbb{N}}$ be the sequence of \emph{convergents}
 of the continued fraction expansion of $\theta$ (see e.g. Khinchin
\cite{Khinchin1997ContinuedFractions}). Then $p_{j}/q_{j}\xrightarrow[j\to\infty]{}\theta$
and each $p_{j}/q_{j}$ is the best rational approximation of $\theta$
with denominator at most $q_{j}$. This makes continued fractions
a potent instrument for precise estimates on rational approximability
and small-divisor problems in dimension 1. In particular, the Brjuno
condition (\ref{eq:1DBrjunoCond}) on $\lambda$ is equivalent to:
\begin{equation}
B'(\theta):=\sum_{j=0}^{\infty}\frac{1}{q_{j}}\log q_{j+1}<\infty\label{eq:BrjunoCF}
\end{equation}
and corresponds to slow growth of the denominators $q_{j}$ or bad
approximation by rationals.
\end{rem}

This point of view even allows an estimate of the radius of convergence
of germs with a given multiplier: Let $S_{\lambda}$ denote the space
of (germs of) univalent maps $f:\mathbb{D}\to\mathbb{C}$ with $f(0)=0$
and $f'(0)=\lambda$ and for $f\in S_{\lambda}$ let $h_{f}$ denote
the radius of convergence of a linearisation of $f$. For $\theta\in(0,1)$,
set 
\[
r(\theta):=\inf_{f\in S_{e^{2\pi i\theta}}}r(f).
\]

\begin{thm}[Yoccoz \cite{Yoccoz1995TheoremeDeSiegelNombresDeBrunoEtPolynomesQuadratiques}
(see also \cite{BuffCheritat2006TheBrjunoFunctionContinuouslyEstimatestheSizeofQuadraticSiegelDisks})]
There exists a constant $C>0$, such that 
\[
|\log r(\theta)+B'(\theta)|\le C,
\]
whenever $B'(\theta)$ or $-\log r(\theta)$ is finite.
\end{thm}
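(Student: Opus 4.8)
The plan is to split the two-sided estimate into a uniform lower bound and a single bad example. On the one hand I would show $-\log r(f)\le B'(\theta)+C_1$ for \emph{every} $f\in S_{e^{2\pi i\theta}}$, which gives $-\log r(\theta)\le B'(\theta)+C_1$. On the other hand I would show $-\log r(P_\lambda)\ge B'(\theta)-C_2$ for the single quadratic polynomial $P_\lambda(z)=\lambda z+z^2$ with $\lambda=e^{2\pi i\theta}$; since $r(\theta)=\inf_f r(f)\le r(P_\lambda)$ this gives $-\log r(\theta)\ge B'(\theta)-C_2$. Combining, $|\log r(\theta)+B'(\theta)|\le\max(C_1,C_2)$. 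The degenerate case in which both $B'(\theta)$ and $-\log r(\theta)$ are infinite is consistent and already available: if $B'(\theta)=\infty$ then $P_\lambda$ is not linearisable by Theorem~\ref{thm:Yoccoz}, so $r(P_\lambda)=0=r(\theta)$; the content of the theorem is precisely that finiteness of either side forces finiteness of the other with a controlled gap.

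For the lower bound the structural point I would exploit is that $S_\lambda$ is a normal family: each member is univalent on $\mathbb{D}$ with a fixed point at $0$, hence (after dividing by $\lambda$) lies in the classical Schlicht class, so the Koebe distortion estimates control it and all of its rescaled renormalisations uniformly. The argument is Yoccoz's geometric renormalisation: using the continued fraction convergents $p_j/q_j$ of $\theta$ together with the best-approximation estimate $|\lambda^{q_j}-1|\approx 1/q_{j+1}$ (a standard consequence of the expansion recalled above), one builds the linearising coordinate as a tower of conformal renormalisations, where the change of scale at level $j$ costs a factor comparable to $(q_{j+1})^{-1/q_j}$, that is, a loss of $\tfrac1{q_j}\log q_{j+1}$ in $\log r$. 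Summing these losses over $j$ reproduces $B'(\theta)$ up to a universal additive constant, the constant absorbing the Koebe distortion bounds used at each step and the cost of passing between consecutive continued fraction levels. The soft, non-sharp version of this is exactly Brjuno's majorant-series argument behind Theorem~\ref{thm:BrjunoYoccoz1D}, which only yields $-\log r(f)\le A\,B'(\theta)+A'$ for some universal $A>1$; the real work in the geometric argument is to push the leading constant down to $1$.

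For the upper bound I would work with $P_\lambda$ and its Siegel disk $\Delta_\lambda$, well-defined when $\lambda$ is Brjuno, and note that $r(P_\lambda)$ and the conformal radius of $\Delta_\lambda$ at $0$ agree up to a universal factor via Koebe applied to the linearising map. The strategy now runs \emph{up} the continued fraction: a renormalisation of $P_\lambda$ at the level of the convergent $p_j/q_j$ relates $\Delta_\lambda$ to the Siegel disk of a renormalised map whose rotation number is the $j$-fold Gauss-map iterate of $\theta$, and each such step is shown to shrink the conformal radius by a factor comparable to $(q_{j+1})^{-1/q_j}$ --- the mirror image of the gain in the lower-bound argument. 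The geometric mechanism making the step quantitative is that the critical orbit of the renormalised map bounds the size of its Siegel disk (the critical point cannot lie inside a linearisation domain), combined with distortion control on the renormalisation operator; alternatively one can feed in the continuity and near-additivity of $\theta\mapsto\log(\text{conformal radius})+B'(\theta)$ established in \cite{BuffCheritat2006TheBrjunoFunctionContinuouslyEstimatestheSizeofQuadraticSiegelDisks} together with the behaviour of the conformal radius as $\theta$ tends to a rational (parabolic implosion). Summing the per-step contributions again rebuilds $B'(\theta)$ up to an additive constant.

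The main obstacle is the sharpness of the constant in the renormalisation: obtaining leading coefficient exactly $1$ in front of $B'(\theta)$, rather than the cheap $O(1)$ multiple coming from majorant series, requires uniform distortion control of the renormalised maps through infinitely many scales, which is where univalence of the maps in $S_\lambda$ and the Koebe estimates are indispensable, and which is the technical core of Yoccoz's work. A secondary difficulty is the bookkeeping of transitions between consecutive continued fraction levels, so that the level-by-level losses telescope cleanly into the series defining $B'(\theta)$ without accumulating an unbounded error term; controlling this is essentially what forces one to organise the whole argument around the continued fraction algorithm (the Gauss map) rather than around ad hoc subsequences of denominators.
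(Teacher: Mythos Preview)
The paper does not prove this theorem: it is stated as a citation to Yoccoz \cite{Yoccoz1995TheoremeDeSiegelNombresDeBrunoEtPolynomesQuadratiques} and Buff--Ch\'eritat \cite{BuffCheritat2006TheBrjunoFunctionContinuouslyEstimatestheSizeofQuadraticSiegelDisks}, with no proof or proof sketch given. There is therefore nothing in the paper to compare your proposal against.

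That said, your outline is a fair high-level summary of the actual strategy in Yoccoz's work: the two-sided bound does split as you describe, the lower bound on $r(\theta)$ comes from a uniform linearisation estimate over the schlicht class via geometric renormalisation along the continued fraction, and the upper bound comes from the quadratic family. Your identification of the main technical obstacle --- getting the leading constant in front of $B'(\theta)$ down to exactly $1$ rather than the soft $O(1)$ multiple from majorant series --- is correct and is indeed the heart of Yoccoz's contribution. As a proof proposal it is honest about being a sketch rather than a proof; the renormalisation construction and its distortion estimates are substantial and you have not attempted to carry them out, only to name them.
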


\subsection{Small cycles and Hedgehogs}

The local classification and dynamics near Siegel points are clear.
Around a Cremer point, we know the dynamics are not stable (Proposition~\ref{prop:linIffStable}).
In fact the dynamics are extremely complicated and not completely
understood to this day. The obstruction to linearisation employed
by J.-C. Yoccoz in \cite{Yoccoz1988LinearisationDesGermesDeDiffeomorphismesHolomorphesDeC0LinearizationofGermsofHolomorphicDiffeomorphismsofC0,Yoccoz1995TheoremeDeSiegelNombresDeBrunoEtPolynomesQuadratiques}
to prove the necessity of the Brjuno condition is the small-cycles
property:
\begin{defn}
A germ $f\in\End(\mathbb{C},0)$ has the \emph{small-cycles property\index{small cycles@\emph{small cycles}}}
or just \emph{small cycles}, if for every $\varepsilon>0$ there exists
a periodic orbit of $f$ inside $\mathbb{D}_{\varepsilon}$.
\end{defn}

\begin{prop}[Yoccoz \cite{Yoccoz1988LinearisationDesGermesDeDiffeomorphismesHolomorphesDeC0LinearizationofGermsofHolomorphicDiffeomorphismsofC0}]
If $\lambda\in S^{1}$ does not satisfy the Brjuno condition (\ref{eq:1DBrjunoCond}),
then every quadratic polynomial $P_{\lambda}\in\End_{\lambda}(\mathbb{C},0)$
has small cycles.
\end{prop}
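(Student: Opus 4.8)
The plan is to show directly that a non-Brjuno multiplier $\lambda = e^{2\pi i\theta}$ forces the quadratic polynomial $P_\lambda(z) = \lambda z + z^2$ to have periodic orbits accumulating at $0$. The heart of the matter is a lower bound on how small the cycles can be, phrased in terms of the small divisors $\lambda^n - 1$: if linearisation fails because these divisors are too small too often, then the obstruction manifests concretely as short periodic cycles close to the fixed point. I would organise the argument around the periodic points of $P_\lambda^{\circ q}$ for well-chosen periods $q$ (taken from the denominators $q_j$ of the continued-fraction convergents of $\theta$), and estimate their modulus.

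First I would recall that the $q$-periodic points of $P_\lambda$ other than $0$ are the roots of a polynomial $\frac{P_\lambda^{\circ q}(z) - z}{z}$ of degree $2^q - 1$, whose product of roots (up to sign) is the constant term, which one can compute: the lowest-order term of $P_\lambda^{\circ q}(z) - z$ is $(\lambda^q - 1)z$, so the product of the nonzero $q$-periodic points has modulus $|\lambda^q - 1|$ divided by the leading coefficient (which is $1$, as $P_\lambda^{\circ q}$ is monic of degree $2^q$). Hence at least one nonzero $q$-periodic point $z_q$ satisfies
\[
|z_q| \le |\lambda^q - 1|^{1/(2^q - 1)}.
\]
Next I would use the hypothesis that $\lambda$ violates the Brjuno condition. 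Via the continued-fraction reformulation (\ref{eq:BrjunoCF}), divergence of $\sum q_j^{-1} \log q_{j+1}$ means $\frac{1}{q_j}\log q_{j+1}$ does not tend to $0$ along some subsequence, equivalently $\log q_{j+1}$ grows faster than $q_j$ infinitely often. Since $|\lambda^{q_j} - 1| = 2|\sin(\pi q_j \theta)|$ and $|q_j\theta - p_j|$ is comparable to $1/q_{j+1}$ (the defining property of convergents), this gives $|\lambda^{q_j} - 1| \approx 1/q_{j+1}$, which is super-exponentially small in $q_j$ along that subsequence. Feeding this into the bound above, $|z_{q_j}| \le (1/q_{j+1})^{1/(2^{q_j}-1)} \to 0$, so there are periodic orbits of period $q_j$ in every disc $\mathbb{D}_\varepsilon$.

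**The main obstacle** is making the arithmetic of the estimate airtight rather than merely heuristic: one must be careful that a single root being small is enough (it is — the whole cycle $\{z_q, P_\lambda(z_q), \ldots\}$ then lies near $0$ by continuity, shrinking $\varepsilon$ if needed), and one must correctly relate the failure of (\ref{eq:1DBrjunoCond}) to an explicit subsequence along which $|\lambda^{q_j}-1|^{1/2^{q_j}}$ fails to stay bounded below by a fixed positive constant — here the exponent $1/2^{q_j}$ rather than $1/q_j$ is actually generous, so the non-Brjuno hypothesis is more than sufficient; in fact any non-Cremer-type growth would do. Alternatively, one can invoke Yoccoz's renormalisation machinery wholesale, but the elementary product-of-roots argument above is self-contained and avoids quasiconformal surgery. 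I would close by remarking that this proposition, combined with the fact that small cycles obstruct linearisation, is exactly the mechanism by which Yoccoz proves the necessity of the Brjuno condition in Theorem~\ref{thm:Yoccoz}.
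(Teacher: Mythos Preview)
Your argument has a genuine quantitative gap. What you have rediscovered is Cremer's classical product-of-roots argument, and it does \emph{not} reach the Brjuno threshold.

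The bound $|z_q| \le |\lambda^q - 1|^{1/(2^q-1)}$ is correct, but for it to force $|z_{q_j}| \to 0$ you need
\[
\frac{\log|\lambda^{q_j}-1|^{-1}}{2^{q_j}} \longrightarrow \infty
\qquad\text{along a subsequence,}
\]
that is, $\log q_{j+1}$ must beat $2^{q_j}$ infinitely often. This is enormously stronger than the failure of the Brjuno condition $\sum q_j^{-1}\log q_{j+1} = \infty$. Your claim that the exponent $1/2^{q_j}$ is ``generous'' compared to $1/q_j$ is exactly backwards: a smaller exponent makes the upper bound on $|z_{q_j}|$ \emph{closer to $1$}, hence weaker. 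Concretely, take a Liouville-type $\theta$ with $q_{j+1} \approx e^{q_j}$; then $\frac{1}{q_j}\log q_{j+1} \approx 1$ so the Brjuno sum diverges, yet $(1/q_{j+1})^{1/(2^{q_j}-1)} = e^{-q_j/(2^{q_j}-1)} \to 1$, and your argument yields nothing. (Your intermediate claim that divergence of $\sum q_j^{-1}\log q_{j+1}$ forces $q_j^{-1}\log q_{j+1}$ not to tend to zero is also false in general: the terms can tend to zero slowly, as in the harmonic series.)

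Bridging the gap between the Cremer-type condition and the Brjuno condition is precisely the content of Yoccoz's theorem, and it genuinely requires the geometric renormalisation machinery you dismiss at the end. The elementary root-counting argument cannot see cycles whose period is $q_j$ but whose distance from $0$ is only polynomially (in $q_{j+1}^{-1}$) small rather than $(2^{q_j})$-th-root small; Yoccoz's construction controls the size of the linearisation domain (or its failure) via a recursive estimate on successive renormalisations, and the small cycles are produced by tracking how periodic orbits of the renormalised maps pull back. You should either cite Yoccoz for the full result or, if you want a self-contained statement with an elementary proof, weaken the hypothesis to the Cremer-type condition $\limsup_n |\lambda^n - 1|^{-1/2^n} = +\infty$.
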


\begin{rem}
Neither hyperbolic nor parabolic germs can have the small cycles property.
\end{rem}

On the other hand, if we exclude small cycles, Brjuno's condition
(\ref{eq:BrjunoCF}) can be weakened and still ensure linearisation:
\begin{thm}[Pérez-Marco \cite{PerezMarco1993SurLesDynamiquesHolomorphesNonLinearisablesEtUneConjectureDeVIArnold}]
\label{thm:WeakBrjuno}Let $\{q_{j}\}_{j}$ be the sequence of denominators
of the convergents of $\lambda\in\mathbb{C}$. Then 
\begin{equation}
\sum_{j=0}^{\infty}\frac{1}{q_{j}}\log\log q_{j+1}<\infty,\label{eq:weakBrjuno}
\end{equation}
if and only if every $f\in\End_{\lambda}(\mathbb{C},0)$ without small
cycles is holomorphically linearisable. Otherwise,  there exist
$f\in\End_{\lambda}(\mathbb{C},0)$ and a neighbourhood $U\subseteq\mathbb{C}$
of $0$ such that every stable orbit of $f$ in $U$ has $0$ as an
accumulation point (in particular, $f$ is not linearisable and does
not have small cycles).
\end{thm}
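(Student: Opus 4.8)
The plan is to use P\'erez-Marco's theory of hedgehogs (his \emph{Siegel compacts}), reducing the whole statement to an arithmetic characterisation of the rotation numbers that carry a hedgehog with minimal, recurrent, cycle-free dynamics. Fix $f\in\End_\lambda(\mathbb C,0)$ with $\lambda=e^{2\pi i\theta}$, $\theta\in\mathbb R\setminus\mathbb Q$, that is \emph{not} holomorphically linearisable, together with a Jordan domain $U\ni 0$ on a neighbourhood of whose closure $f$ and $f^{-1}$ are defined and injective. A Zorn's-lemma argument over the compact connected sets $K\subseteq\overline U$ with $0\in K=f(K)$ produces a hedgehog $K_U$: compact, connected, $0\in K_U$, $K_U\cap\partial U\neq\emptyset$, $f(K_U)=f^{-1}(K_U)=K_U$, with $\widehat{\mathbb C}\setminus K_U$ simply connected and — because $f$ is not linearisable — with empty interior; moreover $K_U\subseteq\overline U$ shrinks to $\{0\}$ as $U$ does.

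Next I pass to the boundary. Let $\phi\colon\widehat{\mathbb C}\setminus\overline{\mathbb D}\to\widehat{\mathbb C}\setminus K_U$ be the Riemann map fixing $\infty$. Since $f$ is holomorphic near $K_U$, preserves $K_U$, and moves nearby points towards $K_U$, the conjugate $\psi:=\phi^{-1}\circ f\circ\phi$ is holomorphic on a one-sided annular neighbourhood of $S^1$ with $\abs{\psi(\zeta)}\to1$ as $\abs\zeta\to1$; Schwarz reflection extends $\psi$ across $S^1$, so $g:=\psi|_{S^1}$ is an orientation-preserving analytic circle diffeomorphism, and its rotation number is $\theta$ (from Naishul's theorem, or by deforming to the linear model $\phi(\zeta)=r\zeta$, $g(\zeta)=e^{2\pi i\theta}\zeta$). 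Two structural facts organise the rest. First, an analytic linearisation of $\psi$ would produce $f$-invariant Jordan curves surrounding $0$ and accumulating on $K_U$, whose interiors are simply connected $f$-invariant domains on which $f$ is a biholomorphic automorphism fixing $0$, hence conjugate through the Riemann map to $z\mapsto\lambda z$; since $f$ is not linearisable, $g$ is \emph{not} analytically linearisable. Second, by the reflection symmetry the small cycles of $f$ near $0$ correspond exactly to the periodic orbits of the extended $\psi$ lying off $S^1$, while the local stable set of $f$ near $0$ is, apart from such periodic orbits and their basins, exhausted by $K_U$; so ``$f$ has no small cycles'' is equivalent to ``$\psi$ is free of periodic orbits near $S^1$'', in which case the stable set of $f$ near $0$ is exactly $K_U$. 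Finally, as $\theta$ is irrational and $g$ analytic, Denjoy's theorem makes $g$ topologically conjugate to $\zeta\mapsto e^{2\pi i\theta}\zeta$, hence minimal: every $f$-orbit on $K_U$ is dense in $K_U$, so it accumulates at $0$.

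With this dictionary the theorem becomes: a minimal, non-analytically-linearisable, hedgehog-type analytic circle diffeomorphism free of periodic orbits near $S^1$ exists for rotation number $\theta$ if and only if $\sum_j q_j^{-1}\log\log q_{j+1}=\infty$. For the direction ``$\sum_j q_j^{-1}\log\log q_{j+1}<\infty\Rightarrow$ no small cycles forces linearisation'': assume $f$ non-linearisable with no small cycles, so $\psi$ has no periodic orbit near $S^1$; running the renormalisation driven by the continued fraction of $\theta$ and controlling the conformal modulus of the annulus of holomorphy of the successive renormalised maps, the double-logarithmic summability keeps these moduli bounded below, forcing either a periodic orbit of $\psi$ near $S^1$ or $\mathrm{int}\,K_U\neq\emptyset$ — both excluded — a contradiction. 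For the converse, when $\sum_j q_j^{-1}\log\log q_{j+1}=\infty$ one constructs an analytic circle diffeomorphism $g$ of rotation number $\theta$ that is minimal, non-linearisable and periodic-orbit-free near $S^1$ (this is exactly the step where the failure of the \emph{double}-logarithmic sum, rather than of the Brjuno sum, is what is available and what is needed), and then realises $g$ as the boundary map of a hedgehog $K_U$ of a germ $f\in\End_\lambda(\mathbb C,0)$ by a gluing-and-uniformisation construction. Then the stable set of $f$ near $0$ is exactly $K_U$, and minimality of $g$ gives that every stable orbit of $f$ in a suitable $U$ accumulates at $0$; non-linearisability and absence of small cycles follow at once, since a Siegel disk or a small cycle would supply stable orbits not accumulating at $0$.

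The genuinely hard part is the quantitative estimate in the third paragraph: linking the sum $\sum_j q_j^{-1}\log\log q_{j+1}$ to the (non)existence of minimal, cycle-free hedgehog dynamics. One must track, step by step along the renormalisation of $\theta$, the loss of conformal modulus of the annulus on which the renormalised maps live, and show that these losses are summable precisely under the double-logarithmic condition — the extra logarithm relative to the Brjuno sum appearing because the distortion accumulated over the roughly $q_{j+1}/q_j$ returns at stage $j$ enters only through its logarithm. By comparison, the remaining ingredients — the hedgehog construction, Schwarz reflection, the Jordan-curve/automorphism-of-the-disc argument, Denjoy's theorem, and the gluing realisation of a prescribed boundary circle map — are, once the hedgehog machinery is granted, essentially routine.
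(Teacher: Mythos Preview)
The paper does not prove this theorem at all: it is merely stated with a citation to P\'erez-Marco's 1993 Annales ENS paper, and the only comment is the remark that the parabolic case follows from the Leau--Fatou flower theorem. There is therefore no ``paper's own proof'' to compare against.

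That said, your outline does follow the genuine architecture of P\'erez-Marco's original argument: construct the hedgehog $K_U$, uniformise $\widehat{\mathbb C}\setminus K_U$ and use Schwarz reflection to produce an analytic circle diffeomorphism $g$ of rotation number $\theta$, invoke Denjoy to get minimality on $K_U$, and reduce the dichotomy to an arithmetic renormalisation estimate on circle maps. A few technical points deserve tightening. The Schwarz reflection step needs more than $|\psi(\zeta)|\to 1$: one must show $\psi$ extends continuously to $S^1$ with values in $S^1$, which in P\'erez-Marco's work uses that $K_U$ has empty interior together with capacity/prime-end arguments. Your claim that $f$ ``moves nearby points towards $K_U$'' is not correct and is not used; the reflection argument does not rely on any contraction. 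The dictionary between small cycles of $f$ and off-$S^1$ periodic orbits of $\psi$ is also more delicate than a one-line correspondence; one has to argue that such periodic orbits, as $U$ shrinks, sit arbitrarily close to $0$. Finally, you correctly identify the quantitative renormalisation estimate as the heart of the matter, but your heuristic for why the extra logarithm appears (``distortion over $q_{j+1}/q_j$ returns enters only through its logarithm'') is not really an argument; this is precisely the content of P\'erez-Marco's paper and cannot be reduced to a paragraph.
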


\begin{rem}
Note that the parabolic case is not excluded and in this case the
statement follows from the Leau-Fatou flower theorem~\ref{thm:LeauFatou}.
\end{rem}

\begin{defn}
Let $f\in\End(\mathbb{C},0)$. A domain $U\subseteq\mathbb{C}$ is
\emph{admissible} for $f$, if $U$ is a $\mathcal{C}^{1}$ Jordan
neighbourhood $0$ and both $f$ and $f^{-1}$ are univalent on a
neighbourhood of $\overline{U}$. A \emph{Siegel compactum\index{Siegel compactum@\emph{Siegel compactum}}}
$K$ for $f$ on a neighbourhood $U\subseteq\mathbb{C}$ of $0$ is
a compact subset $K\subseteq\overline{U}$  that
\begin{enumerate}
\item is connected and \emph{full\index{full set@\emph{full set}}} (i.e.\ $\mathbb{C}\backslash K$
is connected),
\item contains the origin: $0\in K$,
\item intersects the boundary: $K\cap\partial U\neq\emptyset$,
\item is completely $f$-invariant: $f(K)=K,f^{-1}(K)=K$.
\end{enumerate}
\end{defn}

\begin{thm}[Pérez-Marco \cite{PerezMarco1995SurUneQuestionDeDulacEtFatou,PerezMarco1997FixedPointsandCircleMaps}]
Let $f\in\End_{\lambda}(\mathbb{C},0)$ with $|\lambda|=1$. Then
for every admissible $\mathcal{C}^{1}$ Jordan domain $0\subseteq U\subseteq\mathbb{C}$,
there exists a Siegel compactum $K\subseteq\overline{U}$ for $f$
on $U$. If $f$ is elliptic, then the unique Siegel compactum $K_{f}(U)$
for $f$ on $U$ is the connected component of the stable set 
\[
\Sigma_{f}(\overline{U})=\{z\in\overline{U}\mid f^{\circ n}(z)\in\overline{U}\enskip\forall n\in\mathbb{N}\}
\]
 containing $0$.

Moreover, if $f^{\circ m}\neq\id$ for all $m\in\mathbb{N}$, then
$f$ is linearisable if and only if $0$ lies in the interior $K^{\circ}$
of any/every Siegel compactum $K$ for $f$.
\end{thm}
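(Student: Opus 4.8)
The plan is to take as Siegel compactum the connected component $K$ of $0$ in the stable set $\Sigma_f(\overline U)$ — more precisely, to build it as a decreasing limit of "doubly invariant" approximating domains and then check it agrees with this component. \emph{Easy properties.} Admissibility gives $f,f^{-1}$ univalent on a neighbourhood of $\overline U$, so every iterate $f^{\circ k}$ with $-n\le k\le n$ is a local homeomorphism near any point whose first $n$ forward and backward iterates stay in $\overline U$. Set $\widehat U_n$ to be the filled connected component of $0$ in the open set of $z$ with $f^{\circ k}(z)\in U$ for $|k|\le n$; this is a decreasing sequence of simply connected domains with $\widehat U_0=U$, and $K:=\bigcap_{n\ge0}\overline{\widehat U_n}$ is compact, connected and contains $0$. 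Complete invariance $f(K)=f^{-1}(K)=K$ follows by tracking the defining conditions through $f^{\pm1}$; and $K$ is full because bounded complementary components of an $f$-invariant continuum have their boundaries in the continuum and are hence permuted by the local homeomorphism $f$, so that the filled continuum $\widehat K$ is again completely invariant with $f^{\circ k}(\widehat K)=\widehat K\subseteq\overline U$ for all $k\in\mathbb Z$, forcing $\widehat K\subseteq\Sigma_f(\overline U)$ and hence $\widehat K=K$. At this stage $K$ has every defining property of a Siegel compactum except meeting $\partial U$.

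\emph{The crux.} Showing $K\cap\partial U\neq\emptyset$ — equivalently, that $K$ is a nontrivial continuum reaching from $0$ to the boundary — is the whole content of the theorem and the step I expect to be the main obstacle. The natural strategy is by contradiction: if $K\subseteq U$ with $K$ compact, then on one hand the disjoint nested ring domains $\widehat U_n\setminus\overline{\widehat U_{n+1}}$ sit inside the ring domain $U\setminus K$, so (provided $K$ is not reduced to $\{0\}$, which must be excluded separately) their moduli are summable and tend to $0$; on the other hand $f$ conjugates consecutive approximating domains conformally while acting, in the uniformising coordinate of $\widehat U_n$, as a perturbation of the rigid rotation by $\lambda$ — and it is precisely the combination $|\lambda|=1$ together with $f^{\circ m}\neq\id$ that makes the induced boundary dynamics a genuine irrational-type rotation, incompatible with the annuli degenerating. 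Making this rigorous, uniformly in $\lambda$ and in particular ruling out $K=\{0\}$, requires controlling the distortion of the uniformising maps by Koebe-type estimates well enough to extract the contradiction; this is P\'erez-Marco's central technical lemma on univalent self-maps of round annuli that almost commute with a rotation, and essentially all the difficulty of the theorem is concentrated here. (One should \emph{not} try to split into the linearisable and non-linearisable cases: even when $0$ is a Siegel point the maximal linearisation disk can be compactly contained in $U$, so the Siegel compactum is in general strictly larger than the closure of the Siegel disk.)

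\emph{Elliptic uniqueness.} If $f$ is elliptic and $K'$ is any Siegel compactum, then complete invariance and $K'\subseteq\overline U$ give $f^{\circ n}(K')=K'\subseteq\overline U$ for all $n$, so $K'\subseteq\Sigma_f(\overline U)$; being connected and containing $0$, $K'$ lies in the component $K$. For the reverse inclusion I would invoke the dynamical structure inside $K$ in the elliptic case — absence of periodic points, together with P\'erez-Marco's result that every stable orbit in $\overline U$ accumulates at $0$ — to see that the full invariant continuum $K'$ cannot separate any point of $K$ from $0$, so $K'=K$. Hence $K$ is the unique Siegel compactum and coincides with the component of $0$ in $\Sigma_f(\overline U)$.

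\emph{Linearisation dichotomy.} If $0\in K^{\circ}$ for some Siegel compactum $K$, then $K^{\circ}$ is an open, $f$-invariant (recall $f$ is an open map) neighbourhood of $0$ lying in $\Sigma_f(\overline U)$, so $0$ is a stable fixed point and $f$ is holomorphically linearisable by Proposition~\ref{prop:linIffStable}. Conversely, if $f$ is linearisable then $\lambda$ cannot be a root of unity (else $f^{\circ m}=\id$, which is excluded), so $f$ is elliptic; Proposition~\ref{prop:linIffStable} makes $0$ stable, so $\Sigma_f(\overline U)$ contains a neighbourhood of $0$, hence so does its component of $0$, which by the previous paragraph is the unique Siegel compactum $K$; thus $0\in K^{\circ}$. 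Finally, if $f$ is not linearisable then either it is parabolic, in which case the Leau--Fatou flower theorem~\ref{thm:LeauFatou} shows $0$ is not interior to $\Sigma_f(\overline U)$, or it is a Cremer germ, in which case $0$ is not stable again by Proposition~\ref{prop:linIffStable}; either way $0\notin\Sigma_f(\overline U)^{\circ}\supseteq K^{\circ}$ for every Siegel compactum $K$.
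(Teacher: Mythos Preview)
The paper does not give its own proof of this theorem: it is stated as a cited result from P\'erez-Marco's papers and followed immediately by the definition of a hedgehog, with no argument or sketch supplied. There is therefore nothing in the paper to compare your proposal against.

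For what it is worth, your outline tracks the shape of P\'erez-Marco's actual argument reasonably well: the construction of $K$ as a nested intersection of filled approximating domains, the identification of $K\cap\partial U\neq\emptyset$ as the entire difficulty, the reduction of that step to a distortion lemma for univalent annulus maps close to a rotation, and the use of Proposition~\ref{prop:linIffStable} for the linearisation equivalence are all correct in spirit. One genuine soft spot is your uniqueness argument in the elliptic case: you correctly obtain $K'\subseteq K$ for any Siegel compactum $K'$, but the reverse inclusion $K\subseteq K'$ does not follow from ``$K'$ cannot separate points of $K$ from $0$'' (since $K'\subseteq K$ and $K$ is connected, $K'$ separates nothing in $K$ anyway). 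In P\'erez-Marco's treatment uniqueness is tied to the same analytic machinery that produces existence rather than to a soft topological argument of this kind.
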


\begin{defn}
A Siegel compactum for an elliptic germ $f\in\End(\mathbb{C},0)$
is called a \emph{hedgehog\index{hedgehog@\emph{hedgehog}}} for $f$
if $0$ is a Cremer point.
\end{defn}

\begin{thm}[{Pérez-Marco \cite[Thm.~5]{PerezMarco1994TopologyofJuliaSetsandHedgehogs}}]
\label{thm:PerezMarcoCremerHedgehog}Let $K$ be a hedgehog on $U$
for $f\in\End(\mathbb{C},0)$ with Cremer point $0$. Then
\begin{enumerate}
\item $K$ has empty interior $K^{\circ}=\emptyset$.
\item $K$ is not locally connected at any point except possibly $0$.
\item $K\backslash\{0\}$ has an uncountable number of connected components.
\item There exists a subsequence $\{n_{k}\}_{k\in\mathbb{N}}\subseteq\mathbb{N}$
such that $\{f^{\circ n_{k}}\}_{k}$ converges uniformly on $K$ to
$\id_{K}$.
\item \label{enu:PerMarAccumulation}There are orbits in $K$ accumulating
at all points of $K$ (in fact almost every orbit with respect to
the harmonic measure with respect to $\mathbb{C}\backslash K$ does).
\item Every point in $K$ is an accumulation point of its orbit.
\item No orbit of $f$ in $U\backslash K$ has an accumulation point in
$K$.
\item In particular, there is no orbit of $f$ non-trivially converging
to $0$. 
\end{enumerate}
\end{thm}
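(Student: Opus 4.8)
The plan is to base all eight statements on the \emph{exterior circle dynamics} of $f$ around the hedgehog $K$. Since $K$ is full and connected, $\widehat{\mathbb C}\setminus K$ is simply connected, so there is a Riemann map $\Psi\colon\widehat{\mathbb C}\setminus\overline{\mathbb D}\to\widehat{\mathbb C}\setminus K$ with $\Psi(\infty)=\infty$. Because $f$ and $f^{-1}$ are univalent on a neighbourhood of $\overline U\supseteq K$ and preserve $K$ (so they also preserve $\widehat{\mathbb C}\setminus K$ near $K$), the conjugate $G:=\Psi^{-1}\circ f\circ\Psi$ is holomorphic and injective on an exterior annulus $\{1<|w|<1+\delta\}$, with $|G(w)|\to 1$ as $|w|\to 1$; by Schwarz reflection $G$ extends across $\partial\mathbb D$ to an orientation-preserving real-analytic circle diffeomorphism $g$. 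The decisive first step is to prove that the rotation number of $g$ equals the rotation number $\theta$ of $f$ at the Cremer fixed point $0\in K$ — morally, the way $f$ turns points around $K$ from outside must match its infinitesimal rotation at the interior fixed point. Since $0$ is a Cremer point, $\theta$ is irrational, so $g$ is a $C^2$ circle diffeomorphism with irrational rotation number, and by Denjoy's theorem $g$ is minimal and topologically conjugate to the rigid rotation by $\theta$.

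Granting this, Part~4 is the first payoff. Recurrence of irrational rotations gives a sequence $n_k:=q_k$ (continued-fraction denominators of $\theta$) with $g^{\circ n_k}\to\id$ uniformly on $\partial\mathbb D$. Because $K\subseteq\Sigma_f(\overline U)$ lies in the stable set, $\{f^{\circ n}\}_n$ is normal near $K$, so $\{f^{\circ n_k}|_K\}_k$ is equicontinuous and, along a subsequence, converges uniformly to a continuous $h\colon K\to K$. On $K^{\circ}$ (should it be nonempty) the limit $h$ is holomorphic and equals $\id$ on $\partial K$ by the prime-end convergence, hence is $\id$ there; on $\partial K$ the convergence $g^{\circ n_k}\to\id$ forces $h$ to fix every accessible boundary point, and these are dense in $\partial K$, so $h=\id_K$. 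As the limit is subsequence-independent, $f^{\circ n_k}\to\id_K$ uniformly, which is Part~4. Part~6 is then immediate, and Part~5 follows from minimality of $g$ (every prime-end orbit is dense, hence every orbit of $f$ in $K$ through accessible points is dense in $K$), with the harmonic-measure wording accounting for the a.e.\ radial-limit behaviour of $\Psi$ on $\partial\mathbb D$.

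Next I would deduce Parts~8 and~7. If an orbit $\{z_n\}$ with $z_n\neq 0$ converged to $0$, it would be stable and accumulate only at $0$; since $K$ is the connected component of $\Sigma_f(\overline U)$ containing $0$, the orbit would eventually lie in $K$, and Part~6 would then force $z_n=0$ for large $n$, hence $z=0$ by injectivity of $f$ — a contradiction, giving Part~8. For Part~7, a stable orbit of some $z\in U\setminus K$ accumulating at a point of $K$ would, read through $\Psi^{-1}$, be an orbit of $G$ whose moduli approach $1$ along a subsequence; but near $\partial\mathbb D$ the map $G$ is a reflection-symmetric holomorphic perturbation of the minimal diffeomorphism $g$, for which the radial multiplier along $\partial\mathbb D$ averages to zero against the invariant measure, so a Schwarz/hyperbolic-metric estimate in $\widehat{\mathbb C}\setminus K$ shows $G$-orbits starting at modulus $>1$ stay bounded away from $\partial\mathbb D$ — an orbit attracted to $K$ while shadowing a recurrent circle orbit is incompatible with $f$ being a local biholomorphism fixing $K$.

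Finally the topological Parts~1--3. For Part~1, the linearisation criterion for Siegel compacta of an elliptic germ (the theorem immediately preceding this one) gives $0\notin K^{\circ}$, so $0\in\partial K$; and if $K^{\circ}$ were nonempty it would be a bounded, completely invariant open set on which $f^{\circ n_k}\to\id$, i.e.\ a union of ``rotation domains'' on whose closure $f$ rotates rigidly, and transporting one of these back to $0$ along the orbit structure of $K$ would produce a Siegel disk at the Cremer point $0$ — impossible. Parts~2 and~3 are the subtlest and I would attack them by the same ``no interior, no linearisation, recurrent'' rigidity: local connectivity of $K$ at some $p\neq 0$ would let $\Psi$ extend continuously across an arc of prime ends and push the minimal dynamics of $g$ onto an arc of $\partial K$ of positive harmonic measure carrying recurrent non-constant dynamics with $K^{\circ}=\emptyset$, and analysing the induced map on that arc yields either an interior rotation domain or a linearisation at $0$; likewise a decomposition of $K\setminus\{0\}$ into countably many components would, by a Baire/harmonic-measure argument, single out an eventually invariant component that is a locally connected sub-continuum, again contradicting the above. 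The hard part throughout is the very first step — identifying the rotation number of $g$ as $\theta$ — together with the faithful transfer of the circle dynamics of $g$ to the genuinely wild continuum $K$ through prime ends when $\Psi$ has no continuous boundary extension; once that machinery is in place, the remaining parts are either soft consequences of $f^{\circ n_k}\to\id_K$ or further instances of the same rigidity argument.
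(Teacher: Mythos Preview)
The paper does not prove this theorem; it is stated with citation only, as befits a survey. Your framework --- uniformise $\widehat{\mathbb C}\setminus K$ by $\Psi$, extend $\Psi^{-1}\circ f\circ\Psi$ across $\partial\mathbb D$ by Schwarz reflection to an analytic circle diffeomorphism $g$, identify $\rho(g)=\theta$, and invoke Denjoy --- is precisely P\'erez-Marco's method, and you are right that the rotation-number identification is the linchpin.

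However, several of your deductions from this framework are not proofs. For Part~4, once Part~1 gives $K^\circ=\emptyset$ you have $K=\partial K$, and the transfer of $g^{\circ n_k}\to\id$ on $\partial\mathbb D$ to $f^{\circ n_k}\to\id$ on $K$ cannot go through ``accessible points are dense'' alone: $\Psi$ has no continuous boundary extension (indeed Part~2 forbids it), so you must argue via the Carath\'eodory correspondence on the space of prime ends and then push forward by the impression map, controlling the diameters of impressions uniformly --- this is where the real work lies. For Part~7, your ``radial multiplier averages to zero against the invariant measure, so a Schwarz estimate keeps orbits away from $\partial\mathbb D$'' is a heuristic, not an argument; P\'erez-Marco's proof is a genuinely delicate hyperbolic-metric estimate exploiting that $g$ has no periodic points, and it does not reduce to an ergodic average. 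Your sketches for Parts~2 and~3 are the thinnest: local connectivity at some $p\neq 0$ does give a continuous extension of $\Psi$ on an arc, but ``analysing the induced map on that arc yields a rotation domain or a linearisation'' hides the entire argument --- P\'erez-Marco's route here passes through the structure of indecomposable continua and is not a soft consequence of minimality of $g$. Finally, your Part~1 is slightly roundabout: the preceding theorem already gives ``$f$ linearisable $\Leftrightarrow$ $0\in K^\circ$'', so $0\notin K^\circ$ is immediate; the remaining step is to show $K^\circ\neq\emptyset$ forces $0\in K^\circ$, which follows from complete invariance and Part~4 (any component of $K^\circ$ is a rotation domain whose closure, being $f$-invariant and connected to $0$ through $K$, must contain $0$), not from ``transporting a rotation domain back to $0$ along the orbit structure''.
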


\begin{thm}[Biswas \cite{Biswas2010CompleteConjugacyInvariantsofNonlinearizableHolomorphicDynamics}]
Two non-linearisable elliptic germs $f,g\in\End(\mathbb{C},0)$ are
holomorphically conjugate by a germ $\varphi\in\Aut(\mathbb{C},0)$
if and only if they have the same multiplier and $\varphi$ maps a
hedgehog $K_{f}$ of $f$ to a hedgehog $K_{g}$ of $g$.
\end{thm}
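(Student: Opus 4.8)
The plan is to prove the two implications separately, since they are of rather different character. The forward direction --- that if $\varphi$ conjugates $f$ to $g$ then the multipliers agree and $\varphi$ carries hedgehogs to hedgehogs --- is routine, and I would dispatch it first. Equality of multipliers is just the chain rule at $0$ together with commutativity of $\mathbb{C}^{*}$; write $\lambda=e^{2\pi i\theta}$ for the common value (a Cremer multiplier, since $f$ is non-linearisable). For the hedgehogs, fix a representative $\varphi\colon W\to W'$ and an admissible domain $U\subseteq W$ for $f$ small enough that $\varphi$ and $\varphi^{-1}$ are univalent on neighbourhoods of $\overline U$ and $\overline{\varphi(U)}$; then $\varphi(U)$ is admissible for $g$, and because $\varphi$ carries $f$-orbits to $g$-orbits it maps $\Sigma_{f}(\overline U)$ onto $\Sigma_{g}(\overline{\varphi(U)})$, hence the connected component of $0$ in the former onto that of $0$ in the latter. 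By P\'erez-Marco's identification of the Siegel compactum of an elliptic germ with this connected component, $\varphi$ maps the hedgehog $K_{f}(U)$ onto the hedgehog $K_{g}(\varphi(U))$ of $g$.

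For the reverse direction I would reduce, via the forward direction just proved, to a rigidity statement. Assume $f$ and $g$ share the multiplier $\lambda$ and that $\varphi$ maps a hedgehog $K_{f}$ of $f$, on an admissible domain $U_{f}$ inside the domain of a representative of $\varphi$, onto a hedgehog $K_{g}$ of $g$. Put $\psi:=\varphi\circ f\circ\varphi^{-1}$, again a non-linearisable elliptic germ with multiplier $\lambda$; since $\varphi$ conjugates $f$ to $\psi$, the forward direction (with a suitable choice of admissible domains) shows that $\varphi$ carries $K_{f}$ to a hedgehog of $\psi$. As $\varphi(K_{f})=K_{g}$ by hypothesis, the single compact set $K:=K_{g}$ is simultaneously a hedgehog of $\psi$ and of $g$, and the theorem reduces to the rigidity statement: \emph{two non-linearisable elliptic germs with the same multiplier that admit a common hedgehog (on possibly different admissible domains) are equal}. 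Granting it, $\psi=g$, i.e.\ $\varphi$ conjugates $f$ to $g$; note this rigidity statement is equivalent to the theorem, being its case $\varphi=\id$.

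To prove the rigidity statement for germs $g_{1},g_{2}$ with common hedgehog $K$, I would pass to the exterior uniformisation. By Theorem~\ref{thm:PerezMarcoCremerHedgehog}, $K$ is a full continuum with empty interior, so $(\mathbb{C}\cup\{\infty\})\setminus K$ is simply connected; fix a conformal uniformisation $R$ of it from the exterior of the closed unit disk, normalised by $R(\infty)=\infty$. Each $g_{i}$ is univalent on a neighbourhood of $K$ and fixes $K$ setwise, so $R^{-1}\circ g_{i}\circ R$ is holomorphic on a one-sided neighbourhood of the unit circle $S^{1}$, and by P\'erez-Marco's extension theorem for the exterior dynamics of a hedgehog \cite{PerezMarco1997FixedPointsandCircleMaps} it continues holomorphically across $S^{1}$; write $h_{i}$ for its restriction to $S^{1}$, an analytic circle diffeomorphism of rotation number $\theta$. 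Conversely, since $0\in K$ every neighbourhood of $K$ is a neighbourhood of $0$, and since $K$ has empty interior the open set $\mathbb{C}\setminus K$ is dense near $K$; hence, by continuity, the germ of $g_{i}$ at $0$ is determined by its restriction to $\mathbb{C}\setminus K$ near $K$, which in turn is determined by the pair $(R,h_{i})$. Therefore $g_{1}=g_{2}$ as germs at $0$ exactly when $h_{1}=h_{2}$ on $S^{1}$, equality on $S^{1}$ propagating to a full neighbourhood by analytic continuation.

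The main obstacle is exactly to show $h_{1}=h_{2}$: one must upgrade the \emph{set-theoretic} coincidence of the two hedgehogs, together with equality of multipliers, into the \emph{dynamical} statement that the induced prime-end circle diffeomorphisms agree. I would not expect to carry this out by hand --- it is the substance of Biswas's theorem and rests on P\'erez-Marco's fine structure theory of hedgehogs (the tube lemma, the quasi-analytic ``monogenic'' nature of $K$, the invariance of harmonic measure on $\mathbb{C}\setminus K$), which together pin down the exterior circle map, relative to $R$, from the pair $(\lambda,K)$ alone; the wildness recorded in Theorem~\ref{thm:PerezMarcoCremerHedgehog} --- non-local-connectedness, uncountably many components, orbits accumulating at every point, a subsequence of iterates tending to $\id_{K}$ --- is the visible trace of it. Non-linearisability enters essentially twice here: to invoke Theorem~\ref{thm:PerezMarcoCremerHedgehog} for the emptiness of $K^{\circ}$, without which the descent from $\mathbb{C}\setminus K$ to a neighbourhood of $0$ collapses, and because only at a Cremer point is the hedgehog genuinely wild, so that the rigidity is non-vacuous.
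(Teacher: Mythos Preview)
The paper does not supply a proof of this theorem; it is stated with attribution to Biswas and immediately followed by the remark that the multiplier together with the conformal class of sufficiently small hedgehogs form a complete system of holomorphic invariants. There is therefore no ``paper's own proof'' to compare against.

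As to your proposal itself: the forward direction is correct and routine, and your reduction of the converse to the rigidity statement (two Cremer germs with the same multiplier sharing a common hedgehog must coincide) is the right reformulation --- this is indeed the case $\varphi=\id$ of the theorem. However, you explicitly stop short of proving that rigidity statement, saying you ``would not expect to carry this out by hand'' and pointing to P\'erez-Marco's machinery and Biswas's paper as the place where it is done. That is an honest assessment, but it means your proposal is not a proof: the entire content of the theorem lies in the step $h_{1}=h_{2}$ that you leave as a black box. Your outline of the exterior-uniformisation framework and the passage to analytic circle diffeomorphisms is accurate as a description of the setting in which Biswas works, but the actual argument that equality of $(\lambda,K)$ forces equality of the induced circle maps is missing. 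In short, you have correctly identified the architecture of the proof and isolated the hard kernel, but have not supplied it --- which matches the paper's own treatment of simply citing the result.
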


Hence the multiplier and the conformal class of (sufficiently small)
hedgehogs form a complete set of holomorphic invariants for non-linearisable
elliptic germs.

\begin{rem}[Semi-local properties]
Hedgehog phenomena can also occur semi-locally. Let $K$ be a Siegel
compactum on $U$ for a neutral germ $f\in\End(\mathbb{C},0)$ with
multiplier $e^{2\pi i\theta}$ for $\theta\in\mathbb{R}$. If $0$
is a Siegel point and $U$ small enough, then $K$ is the closure
of the maximal Siegel disk in $U$. For larger admissible domains
$U$, this may no longer be the case.  If $0$ is a Siegel point
and $K$ is not the closure of a Siegel disk, then $K$ is called
a \emph{linearisable hedgehog} and its boundary again exhibits a rich
and complicated structure. For example, we have the following:
\begin{enumerate}
\item For $\mu_{K}$-almost every $z\in K$, we have $\partial K=\overline{\{z_{n}\}_{n\in\mathbb{Z}}}$,
where $\mu_{K}$ is the harmonic measure with respect to $\mathbb{C}\backslash K$
 (\cite[Thm.~IV.2.3]{PerezMarco1997FixedPointsandCircleMaps}). (Compare
Part~\ref{enu:PerMarAccumulation} of Theorem~\ref{thm:PerezMarcoCremerHedgehog}.)
\item Similarly to Theorems~\ref{thm:BrjunoYoccoz1D} and \ref{thm:WeakBrjuno},
there is a sharp arithmetic condition $\mathcal{H}$ on $\theta$,
called the Herman condition, that ensures that all Siegel compacta
for $f$ are (closures of) Siegel disks (\cite{Herman1985AreThereCriticalPointsontheBoundariesofSingularDomains},
\cite{PerezMarco1997FixedPointsandCircleMaps}) and a weaker condition
$\mathcal{H}'$ ensuring the same if $f$ has no small cycles (\cite{PerezMarco1997FixedPointsandCircleMaps}).
$\mathcal{H}$ is complicated to state, but explicitly known. $\mathcal{H}'$
can be shown to be ``arithmetic'' in a certain sense, but no arithmetic
definition is known (\cite{PerezMarco1997FixedPointsandCircleMaps}).
We have the following implications: \begin{equation*}
    \begin{tikzpicture}[baseline = (current bounding  box.center)]
        \matrix (m) [matrix of math nodes,
        row sep=0em, column sep=1em,
		text height=1.5ex, text depth=0.25ex,text width=1em, align=center]
        {
		  \mathcal{D} & \mathcal{H} & \mathcal{B} & \mathcal{B}' & \theta\text{ irrational},\\
		  && \mathcal{H}'\\
        };
        \path[font=\normalsize]
		(m-1-1) edge[-implies,double equal sign distance] (m-1-2)
		(m-1-2) edge[-implies,double equal sign distance] (m-1-3)
		(m-1-2) edge[-implies,double equal sign distance] (m-2-3)
		(m-1-3) edge[-implies,double equal sign distance] (m-1-4)
		(m-2-3) edge[-implies,double equal sign distance] (m-1-4)
		(m-1-4) edge[-implies,double equal sign distance] (m-1-5)
		;
    \end{tikzpicture}
\end{equation*} where $\mathcal{D}$ is the Diophantine condition (\ref{eq:Diophantisch}),
$\mathcal{B}$ is the Brjuno condition (\ref{eq:1DBrjunoCond}) and
\textit{\emph{$\mathcal{B}'$ }}is condition (\ref{eq:weakBrjuno}).
The reverse implications are known to be false in the first row and
those involving $\mathcal{H}'$ are conjectured to be false in \cite{PerezMarco1997FixedPointsandCircleMaps}.
\end{enumerate}
\end{rem}

\section{\label{sec:1DFatouClassification}Classification of Fatou components}

\selectlanguage{british}

In this section, we obtain Fatou components from local dynamics, and
give an account of the classification of Fatou components of rational
and entire functions.

\subsection{Fatou components from local dynamics}

Several types of Fatou components in one variable arise directly from
local fixed point theory and Montel's theorem~\ref{thm:MontelBdd}.
For this section, let $X$ be a Riemann surface, such as $\mathbb{C}$
or the Riemann sphere $\mathbb{P}$.

\begin{lem}
For an attracting fixed point $p\in X$ of $f\in\End(X)$, the connected
components of the realm of attraction $A_{f}(p)$ are Fatou components
of $f$.
\end{lem}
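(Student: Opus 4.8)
The plan is to show that each connected component $\Omega$ of $A_f(p)$ is open, is contained in a Fatou component, and is in fact a full Fatou component (i.e.\ is relatively closed in the Fatou set). First I would establish openness: by Lemma~\ref{lem:1DhypDynamics}, since $p$ is attracting, there is a neighbourhood $U$ of $p$ on which $f^{\circ n}\to p$ uniformly, so $U\subseteq A_f(p)$; for any $q\in A_f(p)$ there is some $n$ with $f^{\circ n}(q)\in U$, and by continuity $f^{\circ n}$ maps a neighbourhood of $q$ into $U$, hence that neighbourhood lies in $A_f(p)$. Thus $A_f(p)$ is open, and its connected components $\Omega$ are open (and path-connected, being open connected subsets of a Riemann surface).

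Next I would show $\Omega$ lies in the Fatou set, which is the heart of the argument. On $U$ the iterates $f^{\circ n}$ converge uniformly to the constant $p$, so $U$ is contained in the Fatou set. For a general point $q\in\Omega$, pick $n_0$ with $f^{\circ n_0}(q)\in U$; then $f^{\circ n_0}$ maps a small neighbourhood $W$ of $q$ into $U$, and on $W$ we have $f^{\circ n}=f^{\circ(n-n_0)}\circ f^{\circ n_0}\to p$ uniformly for $n\ge n_0$. Hence $\{f^{\circ n}\}_n$ is normal at $q$ (alternative 1 of Definition~\ref{def:normality}, with limit the constant $p$), so $q$ lies in the Fatou set. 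Therefore $\Omega$ is a connected subset of the Fatou set, hence contained in a single Fatou component $\mathcal{C}$.

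Finally I would argue $\Omega=\mathcal{C}$, i.e.\ $\Omega$ is not a proper subset of a larger Fatou component. Since $\mathcal{C}$ is connected and $\Omega$ is open in $\mathcal{C}$, it suffices to show $\Omega$ is also closed in $\mathcal{C}$. Suppose $q\in\mathcal{C}$ is a limit point of $\Omega$. Because $q$ lies in the Fatou set, $\{f^{\circ n}\}_n$ is normal at $q$; extract a subsequence $f^{\circ n_j}$ converging uniformly on a neighbourhood $V$ of $q$ (the divergence alternative is impossible since $V$ meets $\Omega$, where orbits stay bounded near $p$). The limit function $g$ on $V$ satisfies $g(\Omega\cap V)=\{p\}$, and since $\Omega\cap V$ is a nonempty open set, the identity theorem forces $g\equiv p$ on $V$; in particular $f^{\circ n_j}(q)\to p$. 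A short argument then upgrades this to $f^{\circ n}(q)\to p$: once some iterate of $q$ enters the neighbourhood $U$ on which $f^{\circ n}\to p$ uniformly, the whole forward orbit converges to $p$, so $q\in A_f(p)$, and since $q\in\mathcal{C}$ is connected to $\Omega$, in fact $q\in\Omega$. Hence $\Omega$ is closed in $\mathcal{C}$, so $\Omega=\mathcal{C}$ is a Fatou component.

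The main obstacle is the last step, showing $\Omega$ is closed in its Fatou component; everything else is a routine application of uniform attraction near $p$ and the definition of normality. The key insight there is that normality at a boundary point $q$, combined with the identity theorem applied to the locally uniform limit (which is already forced to equal $p$ on the part of $\Omega$ near $q$), guarantees $q$ itself is attracted to $p$ and hence lies in $\Omega$ — ruling out any larger Fatou component.
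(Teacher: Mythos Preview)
Your proposal is correct and follows essentially the same approach as the paper: establish an attracting neighbourhood $U$ via Lemma~\ref{lem:1DhypDynamics}, deduce normality on $A_f(p)$ from uniform convergence to the constant $p$, and then use the identity principle to show that any Fatou component $\mathcal{C}$ containing a component $\Omega$ of $A_f(p)$ must itself lie in $A_f(p)$, forcing $\Omega=\mathcal{C}$. The paper phrases the last step slightly more globally (every limit function on $\mathcal{C}$ is constant $p$, hence $\mathcal{C}\subseteq A_f(p)$) rather than via a pointwise closedness argument, but the substance is identical.
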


\begin{proof}
The proof of Lemma~\ref{lem:1DhypDynamics} yields a bounded connected
attracting neighbourhood $U$ of $p$. By Montel's theorem~\ref{thm:MontelBdd},
$\{f^{\circ n}\}_{n}$ is normal on $U$ (compare Remark~\ref{rem:StableCompFatou}).
Normality extends to the the realm of attraction as the iterated preimage
\[
A_{f}(p)=\bigcup_{n\in\mathbb{N}}f^{-n}(U).
\]
Hence each connected component $C$ of $A_{f}(p)$ is contained in
a Fatou component $C^{+}$ of $f$. Since any limit function of $\{f^{\circ n}\}$
on the open set $C$ is the constant function $p$, by the identity
principle, so is every limit function on $C^{+}$, i.e.~$C^{+}\subseteq A_{f}(p)$.
But since $C$ was a connected component of $A_{f}(p)$, we have $C=C^{+}$.
\end{proof}
In the same way we can use Theorem~\ref{thm:LeauFatou} and Montel's
theorem~\ref{thm:MontelBdd} to prove:
\begin{lem}
For a parabolic fixed point $p\in X$ of $f\in\End(X)$ and an attracting
direction $v$ of $f$ at $p$, the connected components of the parabolic
basin $\Omega_{v}$ of $f$ at $p$ centred at $v$ are Fatou components.
\end{lem}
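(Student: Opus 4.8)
The plan is to mirror the proof of the preceding lemma for attracting fixed points, with the attracting neighbourhood of $p$ replaced by an attracting petal. Since $v$ is an attracting direction, Theorem~\ref{thm:LeauFatou} provides an attracting petal $P_v$ for $f$ centred at $v$; working in a coordinate chart centred at $p$ we may take $P_v$ to be a bounded open set whose forward orbits remain in it and converge to $p$ (with the uniform rate $|z_n|\le C n^{-1/k}$ from part~1 of Proposition~\ref{prop:Petals}). Hence $\{f^{\circ n}\}_n$ is uniformly bounded on $P_v$, so it is normal there by Montel's Theorem~\ref{thm:MontelBdd}, and every locally uniform limit, having value $p$ at each point, is the constant $p$. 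Recalling that the full parabolic basin is the increasing union of open preimages $\Omega_v=\bigcup_n f^{-n}(P_v)$ (using $f(P_v)\subseteq P_v$), normality propagates outward exactly as before: near any $z\in\Omega_v$ some iterate $f^{\circ N}$ maps a neighbourhood into $P_v$, on which the tail $\{f^{\circ(N+m)}\}_m$ is normal. Thus $\{f^{\circ n}\}_n$ is normal on all of $\Omega_v$, and every connected component $C$ of $\Omega_v$ is contained in a single Fatou component $C^+$ of $f$.

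It remains to prove $C=C^+$. On the open set $C$ the iterates converge to $p$, so every locally uniform limit of $\{f^{\circ n}\}$ is the constant $p$; by Weierstra\ss' theorem and the identity principle the same holds on the connected set $C^+$, so $f^{\circ n}\to p$ locally uniformly on $C^+$ and in particular $C^+\subseteq A_f(p)$. Next I would rule out $p\in C^+$: since an attracting direction exists we have $f^{\circ q}\neq\id$ for $q$ the order of the multiplier, so $f$ is not linearisable and hence, by Proposition~\ref{prop:linIffStable}, $p$ is not a stable fixed point; but $p\in C^+$ would make $\{f^{\circ n}\}$ normal at $p$ and hence $p$ stable by Corollary~\ref{cor:StabilityAndNormality}. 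Therefore $C^+\subseteq A_f(p)\setminus\{p\}$.

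Finally, by part~\ref{enu:LeauFatou3StableSet} of Theorem~\ref{thm:LeauFatou} every orbit converging to $p$ does so along some attracting direction, so $A_f(p)\setminus\{p\}=\bigsqcup_{v'}\Omega_{v'}$ is the disjoint union, over the finitely many attracting directions $v'$, of the open basins $\Omega_{v'}=\bigcup_n f^{-n}(P_{v'})$, these being pairwise disjoint because distinct attracting directions have distinct arguments. Since $C^+$ is connected it lies entirely in one $\Omega_{v'}$, and as $C^+\supseteq C$ meets $\Omega_v$ we get $C^+\subseteq\Omega_v$. But $C^+$ is then a connected subset of $\Omega_v$ containing the component $C$, so $C^+=C$, as required.

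Everything here apart from the last separation step is a routine transcription of the attracting‑fixed‑point argument together with the petal estimates already recorded in Proposition~\ref{prop:Petals} and Theorem~\ref{thm:LeauFatou}. The main obstacle is precisely showing that the ambient Fatou component $C^+$ cannot spill into the basin of a neighbouring attracting direction or accumulate stably at $p$; this is where the flower theorem's global description of $A_f(p)$ and the non‑stability of the parabolic point are genuinely needed.
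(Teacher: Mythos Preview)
Your proof is correct and follows exactly the approach the paper indicates (``In the same way we can use Theorem~\ref{thm:LeauFatou} and Montel's theorem~\ref{thm:MontelBdd}''): Leau--Fatou supplies the bounded invariant petal, Montel gives normality, the identity principle pushes the limit $p$ to the whole Fatou component $C^+$, and then the flower theorem's decomposition of stable orbits pins $C^+$ down to a single $\Omega_{v'}$. You have also correctly spotted and handled the extra wrinkle absent in the attracting case, namely that the identity principle alone only gives $C^+\subseteq A_f(p)$ rather than $C^+\subseteq\Omega_v$.

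One small imprecision: the equality $A_f(p)\setminus\{p\}=\bigsqcup_{v'}\Omega_{v'}$ need not hold globally, since $A_f(p)$ may contain non-trivial global preimages of $p$ whose orbits are eventually constant and hence converge along no direction. This does not harm your argument: such preimages lie in the Julia set (it is completely invariant for open $f$, and you have already shown $p$ is in the Julia set via Proposition~\ref{prop:linIffStable} and Corollary~\ref{cor:StabilityAndNormality}), so they cannot meet $C^+$. With that adjustment the decomposition you need is $C^+\subseteq\bigsqcup_{v'}\Omega_{v'}$, and the rest goes through as written.
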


We want to single out the main component near the fixed point:
\begin{defn}
Let $f\in\End(X)$ with fixed point $p\in X$. 
\begin{enumerate}
\item If $p$ is attracting, then the connected component of the realm of
attraction $A_{f}(p)$ containing $p$ is called the \emph{immediate
basin of attraction}\index{immediate basin of attraction} of $p$.
\item If $p$ is parabolic with a parabolic basin $\Omega_{v}$ centred
at an attracting direction $v$ of $f$ at $p$, then then the unique
connected component of $\Omega_{v}$ containing $p$ in its boundary
is called the \index{immediate parabolic basin}\emph{immediate parabolic
basin} of $f$ at $p$ centred at $v$.
\end{enumerate}
\end{defn}

In the elliptic case, we get, again using Montel's theorem~\ref{thm:MontelBdd}
and the identity principle: 
\begin{lem}
For a Siegel point $p\in X$ of $f\in\End(X)$ the maximal Siegel
disk for $f$ at $p$ is a Fatou component.
\end{lem}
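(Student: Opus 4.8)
Write $\Delta$ for the maximal Siegel disk for $f$ at $p$ and $F$ for the Fatou component of $f$ containing $p$; I prove $\Delta=F$ by establishing both inclusions. The inclusion $\Delta\subseteq F$ is the soft one. By definition $\Delta=\phi(\mathbb{D})$ for some injective holomorphic $\phi\colon\mathbb{D}\to X$ with $\phi(0)=p$ conjugating $f$ to the rotation $R\colon z\mapsto\lambda z$, where the multiplier $\lambda=f'(p)$ lies on $S^{1}$ and is not a root of unity. Then $\Delta$ is open, and through $\phi$ the family $\{f^{\circ n}|_{\Delta}\}_{n}$ is conjugate to $\{R^{\circ n}\}_{n}$, a subfamily of the compact rotation group of $\mathbb{D}$ and hence normal (a special case of Montel's Theorem~\ref{thm:MontelBdd}); since normality is preserved under conjugation by a biholomorphism (Remark~\ref{rem:CharNormality}), $\{f^{\circ n}\}_{n}$ is normal on $\Delta$. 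So $\Delta$ lies in the Fatou set and, being connected, in the component of it containing $p$, namely $F$.

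For $F\subseteq\Delta$ the plan is to recognise $F$ itself as a Siegel disk for $f$ at $p$ and then invoke maximality of $\Delta$. Since $\theta$ is irrational there is a sequence $n_{k}\to\infty$ with $\lambda^{n_{k}}\to1$; by normality on $F$ a subsequence of $\{f^{\circ n_{k}}|_{F}\}$ converges locally uniformly --- it cannot diverge, $p$ being fixed --- to a holomorphic $g$, and near $p$ the conjugacy gives $g=\lim R^{\circ n_{k}}=\mathrm{id}$, so $g=\mathrm{id}_{F}$ by the identity principle. From $f^{\circ n_{k}}\to\mathrm{id}_{F}$ one reads off that $f|_{F}$ is injective ($f(a)=f(b)$ forces $f^{\circ n_{k}}(a)=f^{\circ n_{k}}(b)$ for all $k$, hence $a=b$) and that $f(F)=F$ (a subsequential limit $h$ of $\{f^{\circ(n_{k}-1)}|_{F}\}$ satisfies $f\circ h=\mathrm{id}_{F}$, and complete invariance of the Fatou set --- valid since holomorphic maps in one variable are open --- keeps $h(F)$ inside $F$). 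Thus $f|_{F}\in\Aut(F)$ with fixed point $p$, and $\{f^{\circ n}|_{F}\}_{n\in\mathbb{Z}}$ is an infinite subgroup of $\Aut(F)$ (were $f^{\circ m}|_{F}=\mathrm{id}_{F}$ we would get $\lambda^{m}=1$), so $\Aut(F)$ contains an infinite compact subgroup and in particular is not discrete. Now $F$ is hyperbolic, being disjoint from the infinite Julia set of $f$ and hence omitting at least three points of $\mathbb{P}$; and a hyperbolic Riemann surface with non-discrete automorphism group is conformally equivalent to $\mathbb{D}$, to a punctured disk, or to a round annulus. Of these only $\mathbb{D}$ carries an automorphism with a fixed point whose multiplier is not a root of unity (rotations of a punctured disk or annulus have no fixed point, and the annulus inversions have order two), so $F\cong\mathbb{D}$. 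In the disk model $f|_{F}$ is then an automorphism fixing a point with derivative $\lambda$, hence conjugate to $z\mapsto\lambda z$ by Schwarz's lemma; this exhibits $F$ as a Siegel disk for $f$ at $p$ whose image contains $\Delta$, so $\Delta=F$ by maximality.

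The one genuinely non-formal ingredient is the classification of hyperbolic surfaces with large automorphism group used above; I expect this --- equivalently, the fact that $F$ must be conformally the disk --- to be the real obstacle, as everything around it (normality, the identity principle, the elementary group theory) is routine. An alternative, in the settings where the lemma is applied (rational maps of degree $\ge2$ and transcendental entire maps), is to quote the classification of periodic Fatou components: a component on which $\{f^{\circ n}\}$ admits only non-constant limit functions is a Siegel disk or a Herman ring, and the interior fixed point $p$ rules out the annular case. That $F$ is such a rotation domain is immediate, since near $p$ the subsequential limits of $\{f^{\circ n}\}$ are the non-constant rotations $R_{\mu}$ for $\mu$ in the closure $S^{1}$ of $\{\lambda^{n}\}_{n}$, so by the identity principle no limit function on $F$ can be constant.
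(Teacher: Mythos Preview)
Your proof is correct and follows the approach the paper sketches. The paper gives only a one-line hint (``again using Montel's theorem and the identity principle''), and your argument tracks this exactly: Montel yields $\Delta\subseteq F$, and the identity principle transports $f^{\circ n_k}\to\id$ from $\Delta$ to all of $F$. You then supply what the paper leaves implicit, namely the passage from ``$f|_F\in\Aut(F)$ with an irrationally neutral fixed point'' to ``$F$ is itself a Siegel disk'', via the classification of hyperbolic Riemann surfaces with non-discrete automorphism group. That step is indeed the substantive one, and your treatment of it (ruling out $\mathbb{D}^*$ and annuli by the fixed point with irrational multiplier) is clean. One small caveat: your hyperbolicity argument assumes the Julia set is infinite, which you correctly flag as requiring $\deg f\ge 2$ or $f$ transcendental; this is implicit in the lemma's context (a maximal Siegel disk does not exist for a global rotation). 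Your alternative via the classification of periodic components (Theorem~\ref{thm:FatouCompRational}) would be circular here, since the present lemma is an ingredient for that classification, but you present it only as a shortcut for the applications, which is fair.
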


For repelling fixed points and Cremer points, Montel's theorem~\ref{thm:MontelBdd}
and, respectively, Lemma~\ref{lem:1DhypDynamics} and Theorem~\ref{thm:PerezMarcoCremerHedgehog}
rule out Fatou components arising from the stable set.

\subsection{Rational functions}

We interpret rational functions as the holomorphic self-maps of $\mathbb{P}=\mathbb{C}\cup\{\infty\}$,
so normality is determined only by the first part of Definition~\ref{def:normality},
i.e.\ by locally uniform convergence. The Fatou components of rational
functions have been classified by Sullivan in \cite{Sullivan1983ConformalDynamicalSystems}
and \cite{Sullivan1985QuasiconformalHomeomorphismsandDynamicsISolutionoftheFatouJuliaProblemonWanderingDomains}.
We give a short summary following \cite[§16]{Milnor2006Dynamicsinonecomplexvariable}.
The classification of invariant Fatou components is the fruit of efforts
of several authors, including Fatou and Julia:
\begin{thm}
\label{thm:FatouCompRational}Let $f:\mathbb{P}\to\mathbb{P}$ be
a rational map of degree $\deg f\ge2$ and $U\subseteq\mathbb{P}$
be an invariant \emph{(}$f(U)=U$\emph{)} Fatou component of $f$.
Then we have one of the following:
\begin{enumerate}
\item $U$ is the immediate basin of attraction of an attracting fixed point
$p\in\mathbb{P}$.
\item $U$ is an immediate parabolic basin of a parabolic fixed point $p\in\mathbb{P}$
with multiplier $1$.
\item $U$ is a Siegel disk.
\item $U$ is a Herman ring\index{Herman ring}, i.e.\ there exists a biholomorphism
of $U$ to an annulus $A$, conjugating $f$ to an irrational rotation
of $A$.
\end{enumerate}
\end{thm}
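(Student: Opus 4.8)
The plan is to classify invariant Fatou components $U$ by studying the limit functions of the sequence $\{f^{\circ n}\}_n$ restricted to $U$. Since $f$ is rational of degree $\ge 2$, the Fatou set has only the ``convergence'' alternative in Definition~\ref{def:normality} (there is no escape to infinity on the compact space $\mathbb{P}$), so every subsequence of $\{f^{\circ n}|_U\}$ has a locally uniformly convergent sub-subsequence with holomorphic limit $g\colon U\to\mathbb{P}$. The first dichotomy is whether $U$ is \emph{recurrent} (some orbit in $U$ has an accumulation point in $U$) or not. If $U$ is non-recurrent, a normal-family argument shows all limit functions are constant and the common limit value is a point $p\in\partial U$ fixed by $f$; analysing the multiplier of $f$ at $p$ via Lemma~\ref{lem:1DhypDynamics} and Theorem~\ref{thm:LeauFatou} forces $|f'(p)|\le 1$, and since $p\notin U$ one rules out $|f'(p)|<1$ with $p$ interior to its basin — the upshot being that $U$ is an immediate parabolic basin (case 2) or the immediate basin of an attracting fixed point reached ``from the boundary'', which after passing to the component actually containing $p$ gives case 1. (One must be a little careful: the attracting fixed point itself may lie in $U$, which is the genuinely case-1 situation; the parabolic fixed point never lies in $U$.)

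If $U$ is recurrent, the key structural input is that the limit functions now include non-constant maps. Here I would invoke the standard fact (provable from Montel and the identity principle, cf.\ the arguments in Section~\ref{sec:1DFatouClassification}) that the closure of $\{f^{\circ n}|_U\}$ in the compact-open topology is a compact abelian group acting on $U$; equivalently, there is a subsequence $f^{\circ n_k}\to\mathrm{id}_U$. From this one extracts a holomorphic vector field or, more elementarily, shows $f|_U$ is a biholomorphism of $U$ onto itself that is conjugate to a rotation. The type of $U$ is then pinned down by its conformal type as a Riemann surface admitting a faithful action of an irrational rotation: if $U$ is simply connected it is the disc and we are in case~3 (Siegel disc), with the fixed point inside supplied by the fixed point of the rotation; if $U$ is an annulus (doubly connected) we are in case~4 (Herman ring); the remaining conformal types (triply-or-more connected, or the punctured disc, or $\mathbb{C}^*$, etc.) are excluded either because they admit no such rotation action or because the action would have to be trivial, contradicting recurrence of a non-trivial orbit. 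A short extra argument rules out $U$ being all of $\mathbb{P}$ or $\mathbb{C}$ using $\deg f\ge 2$.

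The main obstacle, and the part that needs the most care, is the recurrent case: showing that recurrence actually forces $f|_U$ to be an automorphism of $U$ conjugate to an irrational rotation, rather than, say, an automorphism of infinite order with a wandering-type behaviour inside $U$ or a non-injective self-map. This is where the compactness of $\{f^{\circ n}|_U\}$ (hence existence of $f^{\circ n_k}\to\mathrm{id}$), combined with the classification of conformal automorphism groups of planar domains, does the real work; it also forces injectivity of $f$ on $U$ and rules out $U$ being hyperbolic-with-nonabelian-automorphisms. A secondary subtlety is the bookkeeping in the non-recurrent case to make sure the boundary fixed point one lands on is correctly matched to case~1 versus case~2 (i.e.\ correctly identifying $U$ with the immediate basin for the right component), which uses the Leau--Fatou flower theorem~\ref{thm:LeauFatou} and Corollary~\ref{cor:StabilityAndNormality}. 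Everything else — the normal-family extractions, the identity-principle propagation of constant limits, and the exclusion of escape — is routine given Montel's theorem~\ref{thm:MontelBdd}.
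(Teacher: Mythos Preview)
Your dichotomy is misaligned, and this creates a genuine gap. You split into recurrent versus non-recurrent and then assert that in the recurrent case ``the limit functions now include non-constant maps.'' This is false: the immediate basin of an attracting fixed point $p\in U$ is recurrent (every orbit accumulates at $p\in U$) yet every limit function is the constant $p$. Your parenthetical attempt to slip the attracting case into the non-recurrent branch (``the attracting fixed point itself may lie in $U$'') directly contradicts your own definition of recurrence, and the alternative you float --- an attracting fixed point on $\partial U$ --- is impossible, since attracting fixed points lie in the Fatou set while $\partial U$ lies in the Julia set. The correct split is not recurrent/non-recurrent but rather \emph{all limit maps constant} versus \emph{some limit map non-constant}; equivalently, the paper first establishes that $U$ is hyperbolic and then invokes Denjoy--Wolff theory to obtain exactly this dichotomy: either $U$ is a rotation domain, or all orbits converge to a single fixed point $p\in\overline{U}$.

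The second gap is in the ``orbits converge to a boundary fixed point'' branch. You cite Lemma~\ref{lem:1DhypDynamics} and Theorem~\ref{thm:LeauFatou} to force $|f'(p)|\le 1$, but neither result addresses the elliptic case: if $p$ were a Cremer point you would need Theorem~\ref{thm:PerezMarcoCremerHedgehog} (no orbit converges non-trivially to a Cremer point), which you do not invoke. The paper handles all non-attracting, non-unipotent multipliers in one stroke via the Snail Lemma~\ref{lem:SnailLemma}: the existence of a continuous $f$-equivariant ray in $U$ landing at $p$ forces the multiplier to be attracting or equal to $1$. This single lemma replaces the case-by-case multiplier analysis you sketch and in particular disposes of repelling, Cremer, Siegel-on-boundary, and parabolic-with-nontrivial-root-of-unity possibilities simultaneously. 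Your rotation-domain analysis (ruling out conformal types other than the disc and the annulus) is essentially the same as the paper's, though the paper also explicitly excludes $\mathbb{D}^*$ by extending the conjugacy through the puncture.
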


\begin{rem}
Only Herman rings do not arise from the local theory. As a consequence
of the maximum principle, polynomials (and entire functions) cannot
have Herman rings. More precisely, both connected components of the
complement in $\mathbb{P}$ of a Herman ring must contain at least
one pole.
\end{rem}

\begin{proof}[Proof sketch]
An invariant Fatou component must be hyperbolic. Since rational maps
have only finitely many fixed points and no iterate is the identity,
Denjoy-Wolff theory tells us that $U$ is either 
\begin{enumerate}
\item a rotation domain, i.e. there exists an injective holomorphic map
$\phi:U\to\mathbb{C}$ conjugating $f$ to an irrational rotation.
\item attracting to a fixed point $p\in\overline{U}$, i.e.\ all orbits
in $U$ converge to $p$.
\end{enumerate}
A hyperbolic rotation domain has to be biholomorphic to $\mathbb{D}$,
$\mathbb{D}^{*}=\mathbb{D}\backslash\{0\}$, or an annulus. We can
rule out $\mathbb{D}^{*}$, as the biholomorphic inverse $\phi^{-1}:\mathbb{D}^{*}\to U$
of the conjugating map would extend through $0$ leading to a Siegel
disk.

Though the classification in the attracting case follows from the
local fixed point theory described in the preceding sections, before
Pérez-Marco's Hedgehog theory was available, it had classically been
derived from the Snail lemma (\cite{Sullivan1983ConformalDynamicalSystems},
\cite{DouadyHubbard1984EtudeDynamiqueDesPolynomesComplexesPartieI,DouadyHubbard1985OntheDynamicsofPolynomiallikeMappings},
\cite{Lyubich1986DynamicsofRationalTransformationsTopologicalPicture}):
\begin{lem}[Snail lemma]
\label{lem:SnailLemma}Let $f\in\End(\mathbb{C},0)$ and $\gamma:[0,+\infty)\to\mathbb{P}\backslash\{0\}$
a continuous ray satisfying 
\[
f(\gamma(t))=\gamma(t+1)\quad\text{for all }t\in\mathbb{R}^{+}
\]
and $\gamma(t)\xrightarrow[t\to+\infty]{}0$. Then the multiplier
of $f$ at $0$ is either attracting or equal to $1$.
\end{lem}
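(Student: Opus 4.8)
The plan is to suppose $\lambda:=f'(0)$ is neither attracting nor equal to $1$ and derive a contradiction, treating in turn the cases $|\lambda|>1$, $\lambda\in S^{1}$ not a root of unity, and $\lambda$ a primitive $q$-th root of unity with $q\ge2$. (If $\lambda=0$ there is nothing to prove, $0$ being attracting.) The fact underlying all three cases is that $\gamma$ eventually lies in the realm of attraction: after restricting $f$ to a disc $\mathbb{D}_{r}$ on which it is holomorphic and univalent (possible since $\lambda\ne0$ here) and translating the parameter so that $\gamma([0,\infty))\subseteq\mathbb{D}_{r}$, the forward orbit of any point $\gamma(t)$ is a subsequence of $\{\gamma(t+n)\}_{n}$ and hence tends to $0$; so $\gamma([0,\infty))\subseteq A_{f}(0):=A_{f}(0,\mathbb{D}_{r})$, while $\gamma(t)\ne0$ for every $t$. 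In particular each arc $\gamma|_{[n,n+1]}$ is a nonempty connected subset of $A_{f}(0)\setminus\{0\}$.

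First I would exclude $|\lambda|>1$: from $f(z)/z\to\lambda$ we get $|f(z)|\ge(|\lambda|-\varepsilon)|z|$ for $|z|$ small, and choosing $\varepsilon$ with $|\lambda|-\varepsilon>1$ and $T$ large, induction yields $|\gamma(T+n)|\ge(|\lambda|-\varepsilon)^{n}|\gamma(T)|\to\infty$, against $\gamma(t)\to0$ (equivalently, linearise by Koenigs' Theorem~\ref{thm:Koenigs} and note $w\mapsto\lambda w$ has no nontrivial orbit tending to $0$). Then, if $\lambda\in S^{1}$ is not a root of unity, I claim $A_{f}(0)=\{0\}$ once $r$ is small, already contradicting the previous paragraph: if $0$ is a Cremer point this is part~(8) of Theorem~\ref{thm:PerezMarcoCremerHedgehog}; if $0$ is a Siegel point, take $r$ so small that $\mathbb{D}_{r}$ sits inside a Siegel disc $\phi(\mathbb{D})$, so that $f^{\circ n}(\phi(w_{0}))=\phi(\lambda^{n}w_{0})$ stays on the fixed circle $\phi(\{|w|=|w_{0}|\})$ and cannot converge to $0$ unless $w_{0}=0$.

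The main case is $\lambda=e^{2\pi ip/q}$ a primitive $q$-th root of unity, $q\ge2$. Put $g:=f^{\circ q}$, tangent to the identity. If $g=\id$ then $\gamma(t+q)=g(\gamma(t))=\gamma(t)$, so $|\gamma|$ is $q$-periodic, hence $\ge\min_{[0,q]}|\gamma|>0$, contradicting $\gamma(t)\to0$. Otherwise $g$ is tangent to the identity of finite order $\ell+1\ge2$, and the Leau-Fatou flower theorem~\ref{thm:LeauFatou} applied to $g$ gives, for $r$ small, $A_{f}(0)=A_{g}(0)=\{0\}\sqcup\bigsqcup_{v}\Omega_{v}^{g}$, a disjoint union of the finitely many \emph{open} parabolic basins $\Omega_{v}^{g}$ of $g$ over its attracting directions $v$ (here $A_{f}(0)=A_{g}(0)$ is the one-line remark that an $f$-orbit tends to $0$ iff its $q$-step subsequence does). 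Since $g$ commutes with $f$ and multiplication by $\lambda$ permutes the attracting directions of $g$ (the direction-rotation fact stated just before Corollary~\ref{cor:ParabFlowerPermutation}), $f(\Omega_{v}^{g})\subseteq\Omega_{\lambda v}^{g}$, with $\lambda v\ne v$ because $\lambda\ne1$. Now each arc $\gamma|_{[n,n+1]}$, being connected and inside $\bigsqcup_{v}\Omega_{v}^{g}$, lies in a single basin $\Omega_{v(n)}^{g}$; consecutive arcs share the point $\gamma(n+1)$ and the basins are pairwise disjoint, so $v(n)\equiv v^{*}$ is constant for all large $n$. But then $\gamma(n+1)=f(\gamma(n))\in\Omega_{\lambda v^{*}}^{g}$ as well as $\gamma(n+1)\in\Omega_{v^{*}}^{g}$, while $\Omega_{\lambda v^{*}}^{g}\cap\Omega_{v^{*}}^{g}=\emptyset$ --- contradiction. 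Hence $q=1$, i.e.\ $\lambda=1$, and together the three cases give $|\lambda|\le1$ with equality forcing $\lambda=1$.

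I expect the only genuinely delicate point to be the parabolic case: one must check that the parabolic basins of the iterate $g=f^{\circ q}$ really are the open pieces of $A_{f}(0)$ (so that Leau-Fatou and Corollary~\ref{cor:ParabFlowerPermutation} may be transported from $g$ to $f$) and that $f$ fixes none of them; granting this, the contradiction is the purely topological fact that a path cannot cross from one to another of two disjoint open sets without leaving their union. The rest --- shrinking $\mathbb{D}_{r}$ so that it fits into a Siegel disc and so that Leau-Fatou is exact, and translating the parameter of $\gamma$ --- is routine bookkeeping.
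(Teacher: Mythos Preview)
Your proof is correct, but note that the paper does not actually prove the Snail Lemma --- it merely states it inside the proof sketch of Theorem~\ref{thm:FatouCompRational} and attributes it to \cite{Sullivan1983ConformalDynamicalSystems}, \cite{DouadyHubbard1984EtudeDynamiqueDesPolynomesComplexesPartieI,DouadyHubbard1985OntheDynamicsofPolynomiallikeMappings}, and \cite{Lyubich1986DynamicsofRationalTransformationsTopologicalPicture}. So there is no in-paper proof to match.

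Compared to the classical argument (e.g.\ Milnor \cite[Lemma~16.2]{Milnor2006Dynamicsinonecomplexvariable}), your route is genuinely different. The standard proof treats the case $|\lambda|=1$, $\lambda\neq1$ uniformly: from the spiral $\gamma$ one builds a simply connected ``snail shell'' neighbourhood $W$ of $0$ with $f(\overline{W})\subsetneq W$, and Schwarz's lemma then forces $|\lambda|<1$. You instead split into elliptic and parabolic subcases and invoke the full local classification. Your parabolic argument --- connectedness of $\gamma|_{[n,n+1]}$ pinning the path to a single petal, combined with the cyclic permutation of petals from Corollary~\ref{cor:ParabFlowerPermutation} --- is clean and entirely self-contained. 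The elliptic case, however, leans on P\'erez-Marco's hedgehog theorem (Theorem~\ref{thm:PerezMarcoCremerHedgehog}) for Cremer points. This is logically fine within the paper's development, but it inverts the historical logic: the paper introduces the Snail Lemma precisely as the tool that handled this classification \emph{before} hedgehog theory existed, so proving the lemma via Theorem~\ref{thm:PerezMarcoCremerHedgehog} somewhat undercuts its expository role. What the classical snail-shell argument buys is an elementary, uniform treatment of all neutral $\lambda\neq1$ using nothing beyond Schwarz; what your approach buys is a transparent reduction to already-established structural results, at the cost of importing heavy machinery for one subcase.
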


Now, if $U$ is attracting to $p\in\overline{U}$, a path $\gamma_{0}:[0,1]\to U$
from any point $z\in U$ to its image $f(z)\in U$ extends to a continuous
ray $\gamma$ by patching together the paths $\gamma_{0},f\circ\gamma_{0},f^{\circ2}\circ\gamma_{0},\ldots$
and $\gamma(t)\xrightarrow[t\to+\infty]{}p$. So the Snail lemma~\ref{lem:SnailLemma}
implies, that $U$ falls into one of the first two cases.
\end{proof}
The possible topologies of basins of attraction or parabolic basins
in the rational case fall into two categories:
\begin{thm}[{\cite[Thm.~8.9 \& Prob.~10-f]{Milnor2006Dynamicsinonecomplexvariable}}]
\label{thm:basinConnectivity}Let $f\in\End(\mathbb{P})$ be a rational
function. Then the immediate basin of attraction of an attracting
fixed point of $f$ and any immediate parabolic basin at a parabolic
fixed point of $f$ are each either simply connected or infinitely
connected, i.e.\ their complement in $\mathbb{P}$ has infinitely
many connected components.
\end{thm}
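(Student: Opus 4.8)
The statement is a dichotomy, so the plan is to prove its substantive half: if the immediate basin $U$ has \emph{finite} connectivity $m$, then $m=1$. Here $U$ denotes either the immediate basin of attraction of an attracting fixed point $p$ of $f$, or --- after replacing $f$ by an iterate, which alters neither the Fatou nor the Julia set nor the relevant component --- the immediate parabolic basin at a fixed point $p$ at which $f$ is tangent to the identity. In either case $U$ is a Fatou component, it is forward invariant (since $f(U)$ is a connected subset of the Fatou set accumulating at $p$, hence lies in $U$), and every orbit in $U$ converges to $p$.

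First I would show that $f|_U\colon U\to U$ is a \emph{proper} holomorphic map, hence a surjective branched covering of a finite degree $d\ge 1$. Given a compact $K\subseteq U$, the preimage $f^{-1}(K)$ is compact in $\mathbb P$ and disjoint from $\partial U$: any point of $\partial U$ lies in the Julia set, which is forward invariant, so its image again lies in the Julia set and cannot belong to $K\subseteq U$. Hence $f^{-1}(K)\cap U=f^{-1}(K)\cap\overline U$ is closed in the compact set $f^{-1}(K)$, so compact; this is properness. Since $U$ has finite connectivity it is of finite topological type with $\chi(U)=2-m$, and the Riemann--Hurwitz formula applied to $f|_U$ reads $(d-1)(2-m)=R\ge 0$ with $R$ the total ramification; for $d\ge 2$ this already forces $m\le 2$.

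It remains, assuming $m\ge 2$, to exclude $d=1$ and to exclude $d\ge 2$ with $m=2$. If $d=1$ then $f|_U$ is an automorphism of $U$, while $f^{\circ n}|_U$ converges locally uniformly to the constant $p$ (attracting case) or to $p\in\partial U$ (parabolic case). For a Riemann surface $U$ of finite connectivity $\ge 2$ the group $\Aut(U)$ is compact unless $U\cong\mathbb C^{*}$; in the compact case a subsequence of $\{f^{\circ n}|_U\}$ converges locally uniformly to an automorphism of $U$, which can be neither the constant $p$ nor a map whose image stays in $U$ when $p\notin U$ --- a contradiction; and the remaining case $U\cong\mathbb C^{*}$ is dispatched directly, its automorphisms $z\mapsto\alpha z$ and $z\mapsto\alpha/z$ being incompatible with $p$ being an attracting, resp.\ tangent-to-the-identity, fixed point. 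If instead $d\ge 2$ and $m=2$, then $R=0$, so $f|_U$ is an \emph{unbranched} degree-$d$ self-covering of a doubly connected surface; comparing conformal moduli excludes a finite annulus, leaving $U\cong\mathbb D^{*}$ or $U\cong\mathbb C^{*}$ with $f|_U$ conformally conjugate to $z\mapsto z^{\pm d}$. Examining $f$ near an isolated ideal boundary point of $U$ in $\mathbb P$ then produces a superattracting fixed point of $f$ lying on $\partial U$, hence in the Julia set --- impossible, since a superattracting fixed point lies in the Fatou set (in the parabolic case that point is $p$ itself, forcing its multiplier to be $0$ rather than $1$). Therefore $m=1$, i.e.\ $U$ is simply connected.

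I expect the main obstacle to lie in this last paragraph, together with the properness step, rather than in Riemann--Hurwitz itself: one needs the precise list of Riemann surfaces of connectivity $1$ and $2$, control of their automorphism groups and holomorphic self-coverings, and the observation that an attracting (a fortiori superattracting) fixed point cannot lie in the Julia set. I would also want to verify the routine points that $\chi(U)=2-m$ for a finitely connected planar domain and that Riemann--Hurwitz applies to these open surfaces of finite type equipped with a proper self-map.
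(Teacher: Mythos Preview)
The paper does not supply its own proof of this statement; it is quoted from Milnor with only a reference to Theorem~8.9 and Problem~10-f there. Your argument is correct as it stands: properness of $f|_U$ is proved exactly as you say, Riemann--Hurwitz gives $(d-1)(2-m)=R\ge 0$, and your case analysis disposing of $d=1$ (via compactness of $\Aut(U)$ for finitely connected hyperbolic $U$ of connectivity $\ge 2$, finite for connectivity $\ge 3$ by Heins' theorem) and of $m=2$, $d\ge 2$ (via the model $z\mapsto z^{d}$ on $\mathbb D^{*}$ producing a superattracting boundary fixed point) goes through. The only point worth making explicit is that a univalent map $\mathbb D^{*}\to\mathbb P$ extends holomorphically and locally biholomorphically across the puncture, which is what lets you read off the multiplier of $f$ at the image of $0$.

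The route that lies behind Milnor's treatment is shorter: one first proves (this \emph{is} Theorem~8.9, and its parabolic analogue) that the immediate attracting or parabolic basin always contains a critical point of $f$. Then $R\ge 1$ in the Riemann--Hurwitz relation $(d-1)(2-m)=R$, which forces $d\ge 2$ and then $2-m\ge 1$, i.e.\ $m=1$, in a single step --- no separate discussion of automorphism groups or of self-covers of doubly connected surfaces is needed. Your version trades that one nontrivial input for a longer but entirely self-contained endgame; both are valid. A small shortcut available to you: $U\cong\mathbb C^{*}$ is impossible outright when $\deg f\ge 2$, since it would force $\mathbb P\setminus U$ and hence the Julia set to consist of two points, contradicting its perfectness.
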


The classification was completed by Sullivan's celebrated non-wandering
theorem, that rules out the existence of non-wandering Fatou components
in the following sense:
\begin{defn}
\label{def:WanderingFatComp}A Fatou component $U$ of a map $F\in\End(X)$
of a complex manifold $X$ is:
\begin{enumerate}
\item \emph{\index{periodic Fatou component@\emph{periodic Fatou component}}periodic},
if $F^{\circ n}(U)=U$ for some $n\in\mathbb{N}$,
\item \emph{\index{pre-periodic Fatou component@\emph{pre-periodic Fatou component}}pre-periodic}
or \emph{non-wandering}, if $F^{\circ n}(U)$ is periodic for some
$n\in\mathbb{N}$,
\item \emph{wandering}\index{wandering Fatou component@\emph{wandering Fatou component}},
if it is not pre-periodic.
\end{enumerate}
\end{defn}

\begin{thm}[Sullivan \cite{Sullivan1985QuasiconformalHomeomorphismsandDynamicsISolutionoftheFatouJuliaProblemonWanderingDomains}]
\label{thm:NonWanderingThm}Every Fatou component of a rational map
$f\in\End(\mathbb{P})$ is pre-periodic.
\end{thm}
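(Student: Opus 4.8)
The strategy is Sullivan's quasiconformal deformation argument: a wandering component would supply an infinite-dimensional family of pairwise non-conjugate rational maps of the fixed degree $d=\deg f$, which is impossible since the space $\mathrm{Rat}_d$ of degree-$d$ rational self-maps of $\mathbb{P}$ is a complex manifold of dimension $2d+1$. So suppose for contradiction that $f$ has a wandering Fatou component $U$, and write $U_n:=f^{\circ n}(U)$; by definition of wandering the $U_n$ are pairwise distinct, hence pairwise disjoint. The first step is a standard reduction: after replacing $U$ by $f^{\circ N}(U)$ for a suitable $N$ we may assume that $U$ contains no critical point of $f$ and, more generally, that $f$ restricts to a biholomorphism $U_n\to U_{n+1}$ for every $n\ge 0$ --- this is possible because $f$ has only $2d-2$ critical points and these lie in finitely many of the pairwise disjoint sets $U_n$. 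Let $\mathcal{O}$ be the grand orbit of $U$, i.e.\ the union of all Fatou components mapped onto some $U_n$ by some iterate of $f$; it is a countable disjoint union of Fatou components and is disjoint from the Julia set $J$ of $f$.

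Next comes the deformation construction. Given a Beltrami coefficient $\nu$ on $U$ with $\|\nu\|_{\infty}<1$, I would spread it over $\mathcal{O}$ in an $f$-equivariant way --- transporting it forward along the isomorphisms $U_n\to U_{n+1}$ and pulling it back along the branches of $f$ over the remaining components of $\mathcal{O}$ --- and extend it by $0$ on $\mathbb{P}\setminus\mathcal{O}$. This yields an $f$-invariant Beltrami coefficient $\mu=\mu(\nu)$ on $\mathbb{P}$ with $\|\mu\|_{\infty}<1$. By the measurable Riemann mapping theorem (Morrey--Ahlfors--Bers) there is a quasiconformal homeomorphism $\phi_{\mu}$ of $\mathbb{P}$ with complex dilatation $\mu$, normalised by fixing three chosen points of $J$, and depending holomorphically on $\nu$. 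Since $\mu$ is $f$-invariant, the conjugate $f_{\nu}:=\phi_{\mu}\circ f\circ\phi_{\mu}^{-1}$ preserves the standard complex structure, hence is holomorphic; being a degree-$d$ topological branched self-covering of $\mathbb{P}$, it is a rational map of degree $d$. Thus $\nu\mapsto f_{\nu}$ is a holomorphic map from the infinite-dimensional space of admissible $\nu$'s into the $(2d+1)$-dimensional manifold $\mathrm{Rat}_d$.

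The contradiction must now be extracted, and this is where the real content lies. One has to produce, for some $m>2d+1$, an $m$-dimensional family of coefficients $\nu$ whose associated maps $f_{\nu}$ are pairwise non-conjugate. The heart of the matter is that, $U$ being wandering, the complex structure carried on $U$ is entirely decoupled from the dynamics --- no iterate of $f$ returns to $U$ to impose a compatibility constraint --- so the construction faithfully records the full Teichm\"uller moduli of $U$, together with as many further parameters as one wishes, introduced by marking finitely many points of $U$ and varying their conformal positions; hence the space of inequivalent deformations is infinite-dimensional. For the other direction, $f_{\nu_1}$ and $f_{\nu_2}$ are M\"obius-conjugate only if $\phi_{\mu_2}\circ\phi_{\mu_1}^{-1}$ is a quasiconformal automorphism of $\mathbb{P}$ commuting with $f_{\nu_1}$ and fixing the three reference points; and the rigidity of a rational map of degree $\ge 2$ on its Julia set --- repelling cycles are dense in $J$, are permuted with multipliers preserved by any such automorphism, and near them $f$ is expanding --- forces those equivalences to be governed by only finitely much data, so that $\nu\mapsto f_{\nu}$ is essentially injective on the moduli space. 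Restricting to a complex $(2d+2)$-dimensional sub-family of inequivalent deformations then contradicts invariance of domain. Hence $f$ admits no wandering Fatou component, i.e.\ every Fatou component of $f$ is pre-periodic.

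I expect the rigidity input of the third paragraph to be both the crux and the main obstacle: making precise the claim that the deformation space is infinite-dimensional, and proving that distinct admissible deformations genuinely yield non-conjugate rational maps, amounts to ruling out exotic quasiconformal symmetries of $f$ and is exactly where the Julia-set dynamics (density of repelling cycles, Montel's theorem) has to enter. By comparison, the spreading of the Beltrami coefficient, the invocation of the measurable Riemann mapping theorem, and the final dimension count are routine once that ingredient is in place. There is also a minor loose end in the first paragraph: in the degenerate situation where $f$ cannot be made injective along the forward orbit --- for instance a wandering annulus on which each $f|_{U_n}$ has degree $\ge 2$ --- one must separately check that the space of admissible Beltrami coefficients, now those invariant under the relevant deck groups, is still infinite-dimensional, after which the argument runs as before.
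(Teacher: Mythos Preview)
The paper does not actually prove this theorem: it merely states the result, attributes it to Sullivan, and remarks in one sentence that ``Sullivan applies Beltrami forms and the straightening of almost complex structures in his proof,'' while also pointing to Epstein's alternative approach via Bers' density theorem for quadratic differentials. Your outline is precisely Sullivan's original quasiconformal deformation argument, so it is entirely consistent with what the paper indicates; there is nothing in the paper's own treatment to compare against beyond that one-line description, which your proposal matches.
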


Sullivan applies Beltrami forms and the straightening of almost complex
structures in his proof. A different approach due to Epstein employs
a density theorem of Bers for quadratic differentials and is presented
in \cite{Buff2018WanderingFatouComponentforPolynomials}. 

It follows that every Fatou component of a rational map $f$ is either
periodic, and hence, up to replacing $f$ by an iterate, of one of
the four types in Theorem~\ref{thm:FatouCompRational} (with topologies
given by Theorem~\ref{thm:basinConnectivity}), or some iterate $f^{\circ k}$
maps it to such a periodic component, either biholomorphically or
as a branched covering.

\subsection{Transcendental functions}

Transcendental functions do not extend to $\mathbb{P}$, so we have
to consider both parts of Definition~\ref{def:normality}: locally
uniform convergence and divergence. This leads to one more type of
invariant Fatou components in addition to those from Theorem~\ref{thm:FatouCompRational}:
\begin{defn}
Let $f\in\End(X)$ for a complex manifold $X$. An invariant Fatou
component $U$ of $f$ such that $f$ diverges locally uniformly from
$X$ on $U$ is called a \emph{Baker domain}.
\end{defn}

Fatou showed in \cite{Fatou1919SurLesEquationsFonctionnellesI}, that
for the map $f\in\End(\mathbb{C})$ 
\[
f(z)=z+1+e^{-z},
\]
the right half plane is contained in a Baker domain. 

Even though all orbits in a Baker domain converge to the point $\infty$
in $\mathbb{P}$, the dynamics are very different from an attracting
or parabolic basin, as, instead of a fixed point, the orbits converge
to an essential singularity at $\infty$. 

The topology of invariant Fatou components is simpler than in the
rational case, thanks to the absence of poles:
\begin{thm}[\cite{Baker1984WanderingDomainsintheIterationofEntireFunctions}]
Let $f\in\End(\mathbb{C})$ be an entire function. Then any periodic
Fatou component of $f$ is simply connected.
\end{thm}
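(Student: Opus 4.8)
The plan is to reduce to an invariant component, exhibit a Jordan curve whose bounded complementary region must meet the Julia set if the component is not simply connected, and then exploit the fact that an entire function is holomorphic on that whole closed region --- there are no poles to let a point ``escape'' --- to contradict non-normality at a Julia point.

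First I would use that the Fatou set of $f$ coincides with that of any iterate $f^{\circ p}$, so after replacing $f$ by an iterate we may assume the given periodic Fatou component $U$ satisfies $f(U)=U$. Suppose $U$ is not simply connected. Then, $U$ being a planar domain, there is a Jordan curve $\gamma\subseteq U$ that is not null-homotopic in $U$; let $V$ be the bounded component of $\mathbb{C}\setminus\gamma$. I claim $V\cap J(f)\neq\emptyset$: otherwise $V\subseteq F(f)$, and being connected $V$ lies in a single Fatou component $U'$; since $\gamma=\partial V$ consists of limit points of $V$ and also lies in $U$, while distinct Fatou components are disjoint open sets with $\partial U'\subseteq J(f)$, we must have $U'=U$, whence $\overline V=V\cup\gamma\subseteq U$ and $\gamma$ bounds a closed topological disc inside $U$, contradicting the choice of $\gamma$. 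Fix $z_0\in V\cap J(f)$.

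Now the key point: since $f$ is entire, each $f^{\circ n}$ is holomorphic on $\overline V$, so the maximum principle gives $\max_{\overline V}|f^{\circ n}|=\max_{\gamma}|f^{\circ n}|$ for all $n$. On $U$ the family $\{f^{\circ n}\}$ is normal. If it is locally uniformly bounded on $U$ --- the case of invariant components with bounded orbits, i.e.\ attracting and parabolic basins and Siegel discs --- then $\gamma$ being compact we get $M:=\sup_n\max_{\gamma}|f^{\circ n}|<\infty$, hence by the maximum principle $|f^{\circ n}|\le M$ on $V$ for every $n$; Montel's Theorem~\ref{thm:MontelBdd} then shows $\{f^{\circ n}\}$ is normal on $V$, so $V\subseteq F(f)$, contradicting $z_0\in V\cap J(f)$.

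The remaining possibility --- $\{f^{\circ n}\}$ not locally bounded on $U$, so that $U$ is a Baker domain and, along a subsequence, $f^{\circ n_k}\to\infty$ locally uniformly --- is the crux, and here the transcendence of $f$ is used essentially; indeed the analogue of the statement is false for the basin of infinity of a polynomial with disconnected Julia set. The plan is to follow Baker's growth argument: the iterated curves $\gamma_{n_k}:=f^{\circ n_k}(\gamma)\subseteq U$ run off to infinity in modulus, one shows that for large $k$ they are nested Jordan curves enclosing the disc of radius $\min_\gamma|f^{\circ n_k}|\to\infty$ about $0$, and that the annular regions of $U$ between consecutive such curves have conformal moduli tending to infinity, because a transcendental entire function outgrows every power of its argument; tracking the degree with which $f$ wraps $\gamma_n$ onto $\gamma_{n+1}$ (at least one, since $f$ must have zeros enclosed) one concludes that the orbit of $\gamma$ genuinely wanders, contradicting $f(U)=U$. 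Making this nesting-and-growth estimate precise --- essentially Baker's theorem that multiply connected Fatou components of transcendental entire maps are wandering --- is the technical heart; the bounded case above is routine.
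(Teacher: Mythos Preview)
The paper does not prove this statement; it merely cites Baker's 1984 paper and moves on. So there is no ``paper's own proof'' to compare against, and your outline is already more than the text offers.

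Your reduction to an invariant component and the argument in the locally bounded case are correct and standard: maximum principle plus Montel on $\overline V$ gives normality on $V$, contradicting $z_0\in J(f)$.

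The Baker-domain case is where you should tighten things. First, the images $f^{\circ n}(\gamma)$ need not be Jordan curves, so speaking of ``nested Jordan curves'' is premature. What one actually uses is the open-mapping dichotomy: since $f^{\circ n}$ is entire and open, $\partial f^{\circ n}(V)\subseteq f^{\circ n}(\gamma)\subseteq\{|w|\ge m_n\}$, hence either $D_{m_n}\subseteq f^{\circ n}(V)$ or $f^{\circ n}(V)\cap D_{m_n}=\emptyset$; the second alternative for all large $n$ would force $f^{\circ n}\to\infty$ uniformly on $V$, again giving normality at $z_0$. So eventually $D_{m_n}\subseteq f^{\circ n}(V)$, and this is the correct replacement for your ``enclosing'' claim. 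Second, the parenthetical ``since $f$ must have zeros enclosed'' is unmotivated and not needed; the winding-number bookkeeping is not the mechanism here. Third, the actual contradiction with invariance comes from the growth of transcendental entire functions: because $\log M(r,f)/\log r\to\infty$, one gets $m_{n+1}/M_n\to\infty$, so the closed curves $f^{\circ n}(\gamma)\subseteq U$ lie in pairwise disjoint annuli tending to infinity. From this one deduces that $U$ must contain these annuli (separating distinct Julia points inside and outside), forcing $f^{\circ n}(U)\cap f^{\circ m}(U)=\emptyset$ for large $n\ne m$, i.e.\ $U$ wanders. You gesture at this with ``conformal moduli tending to infinity'' and ``the orbit of $\gamma$ genuinely wanders'', but as you yourself concede, turning this into a proof is precisely Baker's theorem, and your sketch does not yet supply it.

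Finally, note that the statement as written (for arbitrary $f\in\End(\mathbb C)$) is false: a polynomial with disconnected Julia set has an infinitely connected invariant Fatou component (the basin of $\infty$). The subsection heading makes clear that transcendental $f$ is intended, and your argument correctly uses transcendence only in the Baker-domain step; it would be worth flagging this hypothesis explicitly.
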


In conclusion, the classification of invariant Fatou components for
entire functions is that of Theorem~\ref{thm:FatouCompRational},
replacing Herman rings by Baker domains.

This is far from the end of the story in the transcendental case,
as the non-wandering theorem~\ref{thm:NonWanderingThm} does not
hold in this case.  A variety of wandering Fatou components have
been constructed, but the general picture is not completely understood.
For example, we can differentiate wandering components by their orbit
behaviour:
\begin{defn}
\label{def:WanderingTypes}Let $F\in\End(X)$ for a complex manifold
$X$. A wandering Fatou component $U$ of $F$ is 
\begin{enumerate}
\item \emph{escaping}, if all orbits diverge from $X$,
\item \emph{oscillating}, if $\bigcup_{n\in\mathbb{N}}F^{\circ n}(U)$ contains
an unbounded orbit and an orbit with a bounded subsequence.
\item \emph{orbitally bounded}, if every $z\in U$ has a bounded orbit.
\end{enumerate}
\end{defn}

Bergweiler constructs transcendental functions with escaping wandering
Fatou components in \cite{Bergweiler1993IterationofMeromorphicFunctions}.
Eremenko and Lyubich \cite{EreemenkoLjubich1987ExamplesofEntireFunctionswithPathologicalDynamics},
and Bishop \cite{Bishop2015ConstructingEntireFunctionsbyQuasiconformalFolding}
construct oscillating examples. The existence of orbitally bounded
wandering components for functions in $\End(\mathbb{C})$ is open.

More details and recent results on wandering Fatou components can
be found in Schleicher's paper \cite{Schleicher2010DynamicsofEntireFunctions}.

\cleardoublepage{}

\chapter{\label{chap:2DlocDyn}Several complex variables}

In this chapter we present a systematic overview of results in local
dynamics in several complex variables near a fixed point and the implications
for global objects that can be deduced from local considerations.
This includes some of our own contributions. After sections on formal
and holomorphic normal forms, the remaining sections are organised
according to the linear part of the map at the fixed point. Compare
and contrast the survey articles \cite{Bracci2004Localdynamicsofholomorphicdiffeomorphisms},
\cite{Abate2010Discreteholomorphiclocaldynamicalsystems}, \cite{Bracci2011Parabolicattitude},
and \cite{Rong2015Abriefsurveyonlocalholomorphicdynamicsinhigherdimensions},
that provide more details and proofs in many cases, but do not systematically
cover all the mixed cases.

\section{Multipliers, resonances, and normal forms}

\selectlanguage{british}
\global\long\def\Transp{\mathsf{T}}%
\global\long\def\Pow{\m{Pow}}%
\global\long\def\ord{\m{ord}}%

As before, the first object of interest is the linear part and, more
specifically, its eigenvalues.
\begin{defn}
Let $F\in\Pow(\mathbb{C}^{d},0)$. Then the \emph{homogeneous expansion\index{homogeneous expansion}}
of $F$ is the unique representation 
\begin{equation}
F(z)=P_{1}(z)+P_{2}(z)+P_{3}(z)+\ldots,\label{eq:homogenExp}
\end{equation}
where $P_{k}$ is a homogeneous polynomial of degree $k$ (with coefficients
in $\mathbb{C}^{d}$) for each $k\ge1$. The eigenvalues of the linear
part $dF_{0}=P_{1}$ are called \emph{\index{multiplier}multipliers}
of $F$. 
\end{defn}

We may assume $dF_{0}$ to be in Jordan normal form. Like in one dimension,
the dynamics of a germ $F$ are described at the first order by its
linear part. Depending on the multipliers and the Jordan block structure
of $dF_{0}$, in some cases both dynamics are the same.
\begin{note*}
The multipliers and Jordan block structure of $dF_{0}$ are invariants
under holomorphic and formal conjugation, but not under topological
conjugation.
\end{note*}
As in the one-dimensional situation, multipliers fall in one of three
categories:
\begin{defn}
A multiplier $\lambda$ of a (formal) germ $F\in\Pow(\mathbb{C}^{d},0)$
is called
\begin{enumerate}[noitemsep]
\item \emph{hyperbolic}\index{hyperbolic multiplier/germ}, if $|\lambda|\neq1$,
\begin{enumerate}[noitemsep]
\item \emph{attracting}\index{attracting multiplier/germ}, if $|\lambda|<1$.
\begin{enumerate}[noitemsep]
\item \emph{geometrically attracting}, if $0<|\lambda|<1$.
\item \emph{superattracting}, if $\lambda=0$.
\end{enumerate}
\item \emph{repelling}\index{repelling multiplier/germ}, if $|\lambda|>1$.
\end{enumerate}
\item \emph{neutral}\index{neutral fixed point/germ}, if $|\lambda|=1$,
\begin{enumerate}[noitemsep]
\item \emph{parabolic}\index{parabolic multiplier}, if $\lambda$ is a
root of unity.
\item \emph{elliptic}\index{elliptic multiplier}, if $|\lambda|=1$, but
$\lambda$ is not a root of unity.
\end{enumerate}
\end{enumerate}
The germ $F$ (or the fixed point $0$ of $F$) is called \emph{hyperbolic\slash parabolic\slash elliptic\slash}etc.\index{hyperbolic germ}\index{parabolic germ}\index{elliptic germ},
if all its multipliers are.
\end{defn}

In several variables, a germ has more than one multiplier, so the
trichotomy of hyperbolic, parabolic and elliptic germs no longer applies.
As it turns out, even purely elliptic germs can exhibit ``parabolic
behaviour'', so it may not even be the best way to generalise the
one variable definitions.

\selectlanguage{british}

In one complex variable, obstructions to formal linearisation of a
Germ $F\in\End_{\lambda}(\mathbb{C})$ are given by vanishing denominators
of the form $\lambda-\lambda^{k}$, which only occur in the parabolic
and superattracting case. In several variables, formal linearisations
involve additional ``mixed'' denominators, which can vanish even
in the invertible hyperbolic and elliptic cases.

For concise formulas, we use multi-index notation\index{multi-index notation}:
\begin{notation}
For tuples $\lambda=(\lambda_{1},\ldots,\lambda_{d})\in\mathbb{C}^{d}$
and multi-indices $\alpha=(\alpha_{1},\ldots,\alpha_{d})\in\mathbb{N}^{d}$,
set 
\[
\lambda^{\alpha}:=\prod_{j=1}^{d}\lambda_{j}^{\alpha_{j}}\quad\text{and}\quad|\alpha|=\sum_{j=1}^{d}\alpha_{j}.
\]
Moreover, denote by $e_{j}$ the multi-index with $1$ in the $j$-th
component and $0$ in all others.
\end{notation}

The construction of formal conjugations works analogously to the one
variable case: Elimination of a monomial $z^{\alpha}$ in the $j$-th
component $F_{j}$ of $F$ requires a denominator $\lambda_{j}-\lambda^{\alpha}$.
However, in this case, even for well-behaved (e.g. geometrically attracting)
germs, these denominators can vanish due to arithmetic relations between
multipliers. This phenomenon is known as \emph{resonance}.
\begin{defn}
\label{def:resonance}Let $F\in\Pow(\mathbb{C}^{d},0)$ be a (formal)
germ such that $dF_{0}$ is in Jordan normal form with multipliers
$\lambda_{1},\ldots,\lambda_{d}$ occurring in order on the diagonal.
\begin{enumerate}
\item A \emph{resonance\index{resonance}} \emph{for the tuple $\lambda=(\lambda_{1},\ldots,\lambda_{d})$}
or a \emph{resonance for $F$} is a relation of the form 
\begin{equation}
\lambda_{j}=\lambda^{\alpha}\quad\text{with }\alpha\neq e_{j},1\le j\le d.\label{eq:resonance}
\end{equation}
The \emph{order\index{order!of a resonance}} of the resonance (\ref{eq:resonance})
is $|\alpha|$.
\item A monomial $z^{\beta}=z_{1}^{\beta_{1}}\cdots z_{d}^{\beta_{d}}$
in the $j$-th component of $F$ is called \emph{resonant}\index{resonant monomial}
(w.r.t. $(\lambda_{1},\ldots,\lambda_{d})$), if $\lambda_{j}=\lambda^{\beta}$.
\end{enumerate}
\end{defn}

\begin{note*}
By the above definition, the trivial identity $\lambda_{j}=\lambda^{e_{j}}$
is not considered a resonance for $F$, whereas the always present
term $z_{j}$ in the $j$-th component of $F$ is considered a resonant
monomial. This is so that the statements ``$F$ has no resonances''
and ``$F$ has no resonant monomials'' both exclude the obvious
exceptions.
\end{note*}
The best result on formal linearisation from Poincaré-Dulac theory
(see \cite[\textsection 25.B]{Arnold1988GeometricalmethodsinthetheoryofordinarydifferntialequationsTranslfromtheRussianbyJosephSzucsEnglTransledbyMarkLevi2nded}
and \cite[Thm.\ 4.21]{IlyashenkoYakovenko2008Lecturesonanalyticdifferentialequations}
for proofs) is 
\begin{thm}[Poincaré \cite{Poincare1879SurLesProprietesDesFonctionsDefiniesParLesEquationsAuxDifferencesPartielles},
Dulac \cite{Dulac1912SolutionsDunSystemeDequationsDifferentiellesDansLeVoisinageDeValeursSingulieres}]
\label{thm:PoincareDulac}For every $F\in\Pow(\mathbb{C}^{d},0)$
there exists a conjugating series $H\in\Pow(\mathbb{C}^{d},0)$ to
a series 
\[
G=H^{-1}\circ F\circ H
\]
without resonant monomials and $dG_{0}$ in Jordan normal Form. For
$dF_{0}=dG_{0}$ in Jordan normal form, $H$ is unique up to arbitrary
choices of the monomials resonant w.r.t $F$. 
\end{thm}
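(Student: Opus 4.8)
The plan is the classical Poincar\'e--Dulac induction on homogeneous degree. First I would perform a linear change of coordinates so that $dF_0 = L$ is in Jordan normal form, which reduces the problem to finding $H$ with $dH_0 = \id$; write $L = S + N$ for its diagonal part $S = \diag(\lambda_1,\ldots,\lambda_d)$ and its commuting nilpotent part $N$. I would build $H$ as an infinite composition $H = (\id + R_2)\circ(\id + R_3)\circ\cdots$ of near-identity maps, where $R_k$ is a $\mathbb{C}^d$-valued homogeneous polynomial of degree $k$. Composing on the right with $\id + R_k$ alters only the homogeneous terms of degree $\ge k$, so the degree-$m$ part of this product stabilises after the $k=m$ factor; hence $H$ is a well-defined formal germ and no analytic convergence question arises.

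The substance is the choice of $R_k$. Suppose inductively that after $k-1$ steps $F$ has been conjugated to a germ $F^{(k-1)} = L + (\text{resonant terms of degrees } 2,\ldots,k-1) + P_k + O(k+1)$ with $P_k$ homogeneous of degree $k$. A routine expansion of $(\id + R_k)^{-1}\circ F^{(k-1)}\circ(\id + R_k)$ shows that the terms of degree $< k$ are untouched and the degree-$k$ term becomes $P_k + \mathcal{L}_k R_k$, where $\mathcal{L}_k$ is the \emph{homological operator} on the space $\mathcal{P}_k$ of $\mathbb{C}^d$-valued homogeneous degree-$k$ polynomials given by $\mathcal{L}_k R = L\circ R - R\circ L$. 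I would view $\mathcal{L}_k = \mathcal{A} - \mathcal{B}$ as the difference of the commuting operators $\mathcal{A}\colon R\mapsto L\circ R$ and $\mathcal{B}\colon R\mapsto R\circ L$; its Jordan--Chevalley semisimple part is $\mathcal{D}_k\colon R\mapsto S\circ R - R\circ S$, diagonal in the monomial basis with $\mathcal{D}_k(z^\alpha e_j) = (\lambda_j - \lambda^\alpha)\,z^\alpha e_j$. Consequently $\mathcal{P}_k = \ker\mathcal{D}_k \oplus \im\mathcal{D}_k$, with $\ker\mathcal{D}_k$ spanned by the resonant monomials of degree $k$ and $\im\mathcal{D}_k$ by the non-resonant ones; both summands are $\mathcal{L}_k$-invariant, and $\mathcal{L}_k$ is invertible on $\im\mathcal{D}_k$ (invertible semisimple part plus a commuting nilpotent). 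I would therefore take $R_k \in \im\mathcal{D}_k$ to be the unique solution of $\mathcal{L}_k R_k = -(\text{non-resonant part of }P_k)$, which leaves the resonant part of $P_k$ as the new degree-$k$ term. This $R_k$ is unique within $\im\mathcal{D}_k$, but any resonant homogeneous polynomial of degree $k$ may be freely added to it; this is exactly the freedom asserted in the statement.

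Iterating over all $k\ge 2$ produces $H$ and $G = H^{-1}\circ F\circ H$ with $dG_0 = L$ and every term of $G$ of degree $\ge 2$ resonant, which is the asserted normal form. For uniqueness I would take a second $\tilde H$ with $d\tilde H_0 = \id$ conjugating $F$ to a normal form $\tilde G$, set $\Psi = H^{-1}\circ\tilde H$, and note $\tilde G = \Psi^{-1}\circ G\circ\Psi$ with $d\Psi_0 = \id$. An induction on degree parallel to the one above shows that every homogeneous component $Q_k$ of $\Psi$ consists only of resonant monomials: comparing degree-$k$ parts gives $\tilde G_k - G_k = \mathcal{L}_k Q_k + (\text{a resonant correction from the already-resonant lower-order data})$, and since $\tilde G_k$ and $G_k$ are resonant while $\mathcal{L}_k$ is invertible on $\im\mathcal{D}_k$, the non-resonant component of $Q_k$ must vanish. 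Hence, once the linear part of $H$ is normalised, $H$ is determined up to right-composition with a formal automorphism all of whose non-linear terms are monomials resonant with respect to $F$.

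The step I expect to be the real obstacle is the analysis of $\mathcal{L}_k$ when $L$ is not diagonalisable: if $L$ is diagonal, $\mathcal{L}_k$ itself is diagonal in the monomial basis and the splitting $\mathcal{P}_k = \ker\mathcal{D}_k \oplus \im\mathcal{D}_k$, its $\mathcal{L}_k$-invariance and the invertibility of $\mathcal{L}_k$ on $\im\mathcal{D}_k$ are all immediate, whereas for a non-trivial Jordan part one must pass through the commuting-operators / Jordan--Chevalley picture, and must also track how the nilpotent part of $\mathcal{L}_k$ feeds the free resonant choices for $R_k$ back into the resonant terms of $G$. The remaining points --- the degree bookkeeping in the conjugation expansion and the well-definedness of the infinite composition --- are routine.
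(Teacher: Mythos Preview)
Your argument is correct and is the standard ``homological operator'' presentation of Poincar\'e--Dulac; the paper proves the same theorem but by a more bare-hands route. Rather than building $H$ as an infinite composition and analysing $\mathcal{L}_k$ via its Jordan--Chevalley decomposition, the paper writes down the single functional equation $F\circ H = H\circ G$, expands in the monomial basis, and obtains the recursion
\[
(\lambda^\alpha - \lambda_j)\,h_\alpha^j + \varepsilon_j h_\alpha^{j-1} = (\text{known data of lower } |\alpha| \text{ or smaller } j) - g_\alpha^j,
\]
where $\varepsilon_j\in\{0,1\}$ encodes the superdiagonal entries of the Jordan form. The Jordan block is then handled not by an abstract semisimple-plus-nilpotent splitting but simply by solving this triangular system in lexicographic order on $(|\alpha|,j)$: for non-resonant $(\alpha,j)$ one sets $g_\alpha^j=0$ and solves for $h_\alpha^j$, and for resonant $(\alpha,j)$ the coefficient $h_\alpha^j$ is free.

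What each approach buys: the paper's coefficient recursion is shorter and entirely elementary, and it makes the freedom in $H$ explicit coordinate by coordinate, at the cost of the somewhat ad hoc lexicographic ordering. Your homological-operator formulation is more conceptual and explains \emph{why} the Jordan case works (invertible semisimple part plus commuting nilpotent on $\im\mathcal{D}_k$), and it is the form that generalises cleanly to vector fields and to equivariant or filtered settings; the price is the extra linear-algebra infrastructure you correctly flagged as the main hurdle. Both routes yield the same uniqueness statement, and your inductive uniqueness argument via $\Psi = H^{-1}\circ\tilde H$ matches the content of the paper's Proposition~\ref{prop:ConjPreservingPoiDul}.
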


\begin{defn}
A germ $G$ as in Theorem~\ref{thm:PoincareDulac} is called a \emph{\index{Poincaré-Dulac normal form}Poincaré-Dulac
normal form} of $F$.
\end{defn}

It immediately follows:
\begin{cor}[Poincaré \cite{Poincare1879SurLesProprietesDesFonctionsDefiniesParLesEquationsAuxDifferencesPartielles}]
\label{cor:PoincNoResLin}If $F\in\Pow(\mathbb{C}^{d},0)$ has no
resonances of order at least $2$, then it is formally linearisable.
If $F$ has no resonances at all, then the linear part $dF_{0}$ is
moreover diagonalisable.
\end{cor}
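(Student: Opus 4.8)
The plan is to read both assertions directly off the Poincar\'e-Dulac Theorem~\ref{thm:PoincareDulac}. As a harmless preliminary I would conjugate $F$ by a linear map so that $dF_0$ is in Jordan normal form; this affects neither formal linearisability, nor the multiset of multipliers, nor diagonalisability of $dF_0$, and the notion of resonance depends only on the multipliers, so no generality is lost.

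For the first statement, apply Theorem~\ref{thm:PoincareDulac} to obtain $H\in\Pow(\mathbb{C}^d,0)$ conjugating $F$ to a Poincar\'e-Dulac normal form $G=H^{-1}\circ F\circ H$ with $dG_0=dF_0$, whose homogeneous parts of degree $\ge 2$ involve only resonant monomials: for each $j$, every monomial $z^\beta$ with $|\beta|\ge 2$ appearing in the $j$-th component $G_j$ satisfies $\lambda_j=\lambda^\beta$. Since $|\beta|\ge 2$ forces $\beta\neq e_j$, each such relation is precisely a resonance of order $|\beta|\ge 2$ for $F$ (the multipliers of $G$ and $F$ coincide). By hypothesis there are none, so no monomial of degree $\ge 2$ can occur in any $G_j$; hence $G=dG_0=dF_0$ and $F$ is formally conjugate to its linear part, i.e.\ formally linearisable.

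For the second statement I would argue contrapositively about the Jordan structure. Having no resonances at all implies in particular no resonances of order $\ge 2$, so by the first part $F$ is formally linearisable and $dF_0$ is in Jordan normal form. If $dF_0$ were not diagonalisable, some Jordan block would have size $\ge 2$, hence there would be indices $i\neq j$ with $\lambda_i=\lambda_j$, and then $\lambda_j=\lambda_i=\lambda^{e_i}$ with $e_i\neq e_j$ would be a resonance of order $1$ for $F$, a contradiction; so $dF_0$ is diagonalisable. I do not anticipate a genuine obstacle here: the only care needed is the bookkeeping between ``resonant monomial of degree $\ge 2$'' and ``resonance of order $\ge 2$'' (which match because $|\beta|\ge 2\Rightarrow\beta\neq e_j$), together with the observation that the second claim really does use the order-$1$ resonances --- equal multipliers --- since these are exactly what a nontrivial Jordan block produces.
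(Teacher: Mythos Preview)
Your argument is correct and is exactly the approach the paper intends: the corollary is stated with ``It immediately follows'' and no separate proof, so reading both claims off Theorem~\ref{thm:PoincareDulac} (only resonant monomials survive in the normal form, and order-$1$ resonances are precisely repeated multipliers) is the expected derivation. Your bookkeeping about $|\beta|\ge 2\Rightarrow\beta\neq e_j$ and the contrapositive for the Jordan block are the right checks.
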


\begin{proof}[Proof of Theorem~\ref{thm:PoincareDulac}]
Assume $F(z)=\Lambda z+f(z)=\sum_{|\alpha|\ge1}\sum_{j=1}^{d}f_{\alpha}^{j}z^{\alpha}e_{j}$
with $\Lambda=dF_{0}$ in Jordan normal form and eigenvalues $\lambda_{1},\ldots,\lambda_{d}$.
Consider formal series 
\begin{align*}
G(z) & =\Lambda z+g(z)=\sum_{|\alpha|\ge1}g_{\alpha}z^{\alpha}\quad\text{and}\quad H(z)=z+h(z)=\sum_{|\alpha|\ge1}h_{\alpha}z^{\alpha}.
\end{align*}
Then $H$ conjugates $F$ to $G$, if and only if $H$ and $G$ solve
the homological equation $F\circ H=H\circ G$, explicitly
\[
\sum_{|\alpha|\ge1}f_{\alpha}\paren[\Big]{\sum_{|\beta|\ge1}h_{\beta}z^{\beta}}^{\alpha}=\sum_{|\alpha|\ge1}h_{\alpha}\paren[\Big]{\sum_{|\beta|\ge1}g_{\beta}z^{\beta}}^{\alpha},
\]
or, comparing coefficients:
\begin{equation}
(\lambda^{\alpha}\id-\Lambda)h_{\alpha}=f_{\alpha}-g_{\alpha}+\sum_{2\le k<|\alpha|}\sum_{j_{1}\le\cdots\le j_{k}}\sum_{\beta_{1}+\cdots+\beta_{k}=\alpha}(f_{e_{J}}h_{\beta_{1}}^{j_{1}}\cdots h_{\beta_{k}}^{j_{k}}-h_{e_{J}}g_{\beta_{1}}^{j_{1}}\cdots g_{\beta_{k}}^{j_{k}})\label{eq:HomolExplicit-full}
\end{equation}
for $\alpha\in\mathbb{N}^{d}\backslash\{0\}$, where $e_{J}:=e_{j_{1}}+\cdots+e_{j_{k}}$.
The sum on the right hand side contains only coefficients with index
of order less than $|\alpha|$. and the $j$-th component of the left
hand side is 
\[
(\lambda^{\alpha}-\lambda_{j})h_{\alpha}^{j}+\varepsilon_{j}h_{\alpha}^{j-1},
\]
with $\varepsilon_{j}\in\{0,1\}$. Hence, proceeding by induction
in lexicographic order over $(|\alpha|,j)$, whenever $\lambda^{\alpha}-\lambda_{j}\neq0$,
all coefficients of order less than $|\alpha|$ and $h_{\alpha}^{j'}$
with $j'<j$ are given and we can choose $h_{\alpha}^{j}$ (uniquely)
to ensure $g_{\alpha}^{j}=0$. Whenever $\lambda^{\alpha}-\lambda_{j}=0$,
we can freely choose $h_{\alpha}^{j}$. The resulting $G$ has Poincaré-Dulac
normal form by construction.
\end{proof}
Poincaré-Dulac normal forms are not unique in general. In particular,
a divergent normal form does not rule out the existence of a convergent
or even a polynomial normal form. In \cite{AbateRaissy2013FormalPoincareDulacRenormalizationforHolomorphicGerms},
Abate and Raissy describe an approach to unique formal normal forms.
On the other hand, there is a straightforward description of conjugations
preserving normal form:
\begin{prop}[{Raissy \cite[Prop.~1.3.26]{Raissy2010GeometricalMethodsintheNormalizationofGermsofBiholomorphismsDottoratoinMatematica22CicloPhDThesis}}]
\label{prop:ConjPreservingPoiDul}Let $F\in\Pow(\mathbb{C}^{d},0)$
be in Poincaré-Dulac normal form and $H\in\Pow_{1}(\mathbb{C}^{d},0)$.
Then $G=H^{-1}\circ F\circ H$ is in Poincaré-Dulac normal form if
and only if $H$ contains only resonant monomials w.r.t. $F$.
\end{prop}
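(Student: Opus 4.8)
The plan is to recast both sides of the equivalence as commutation with the semisimple part of the linear datum, and then to read off the conjugation relation degree by degree. Write $\Lambda:=dF_{0}=\Lambda_{s}+\Lambda_{n}$ for the Jordan decomposition of the linear part, so $\Lambda_{s}=\diag(\lambda_{1},\ldots,\lambda_{d})$ and $\Lambda_{s}\Lambda_{n}=\Lambda_{n}\Lambda_{s}$. Comparing coefficients of $z^{\alpha}$, a formal germ $\Phi=\sum_{\alpha}\varphi_{\alpha}z^{\alpha}$ satisfies $\Phi\circ\Lambda_{s}=\Lambda_{s}\circ\Phi$ exactly when $\lambda^{\alpha}\varphi_{\alpha}=\Lambda_{s}\varphi_{\alpha}$ for every $\alpha$, i.e.\ exactly when every monomial occurring in $\Phi$ is resonant w.r.t.\ $F$. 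Since $dG_{0}=dF_{0}=\Lambda$ is already in Jordan form (hence built from resonant monomials), ``$G$ is in Poincar\'e--Dulac normal form'' is equivalent to $G\circ\Lambda_{s}=\Lambda_{s}\circ G$; likewise $F$ being in normal form gives $F\circ\Lambda_{s}=\Lambda_{s}\circ F$, and ``$H$ contains only resonant monomials w.r.t.\ $F$'' means $H\circ\Lambda_{s}=\Lambda_{s}\circ H$. The formal germs commuting with $\Lambda_{s}$ are closed under composition and, when invertible, under inversion; so if $H\circ\Lambda_{s}=\Lambda_{s}\circ H$ then also $H^{-1}\circ\Lambda_{s}=\Lambda_{s}\circ H^{-1}$, and
\[
G\circ\Lambda_{s}=H^{-1}\circ F\circ H\circ\Lambda_{s}=H^{-1}\circ F\circ\Lambda_{s}\circ H=H^{-1}\circ\Lambda_{s}\circ F\circ H=\Lambda_{s}\circ G,
\]
which settles the implication ``$\Leftarrow$''.

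For ``$\Rightarrow$'' I would prove by induction on $m\ge1$ that the degree-$m$ homogeneous part $P_{m}$ of $H$ commutes with $\Lambda_{s}$; this is trivial for $m=1$, where $P_{1}=\id$. Assume it for all degrees below $m$, so that the truncation $H^{(<m)}$ commutes with $\Lambda_{s}$. In the identity $F\circ H=H\circ G$ the coefficient $P_{m}$ of $H$ contributes to the homogeneous degree-$m$ part only through linear parts: on the left only via $\Lambda\circ P_{m}$ (everything else in the degree-$m$ part of $F\circ H$ comes from $(F-\Lambda)\circ H^{(<m)}$, whose composed factors all have degree at least $1$), and on the right only via $P_{m}\circ\Lambda$ (arising from $(H-\id)\circ G$ when the linear part of $G$ is selected in each factor). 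Comparing degree-$m$ parts therefore gives $\Lambda\circ P_{m}-P_{m}\circ\Lambda=Q$, where $Q$ gathers the degree-$m$ parts of $G$, of $(F-\Lambda)\circ H^{(<m)}$ and of $(H^{(<m)}-\id)\circ G$. Because $F-\Lambda$, $G$ and $H^{(<m)}$ all commute with $\Lambda_{s}$ (the first two by hypothesis, the last by the inductive assumption), and commutation with $\Lambda_{s}$ passes to compositions and to homogeneous parts, $Q$ commutes with $\Lambda_{s}$.

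It then remains to deduce $P_{m}\circ\Lambda_{s}=\Lambda_{s}\circ P_{m}$. Here I would use the structure of the linear operator $\mathcal{L}\colon X\mapsto\Lambda\circ X-X\circ\Lambda$ on the finite-dimensional space $V_{m}$ of degree-$m$ homogeneous vector germs: splitting $\mathcal{L}=\mathcal{L}_{s}+\mathcal{L}_{n}$ with $\mathcal{L}_{s}(X):=\Lambda_{s}\circ X-X\circ\Lambda_{s}$ and $\mathcal{L}_{n}(X):=\Lambda_{n}\circ X-X\circ\Lambda_{n}$, one checks that $\mathcal{L}_{s}$ is diagonal on the monomial basis with eigenvalue $\lambda_{j}-\lambda^{\alpha}$ on $z^{\alpha}e_{j}$ (so $\ker\mathcal{L}_{s}$ is exactly the resonant subspace of $V_{m}$), that $\mathcal{L}_{n}$ is nilpotent, and that $\mathcal{L}_{s}$ and $\mathcal{L}_{n}$ commute because $\Lambda_{s}$ and $\Lambda_{n}$ do. Thus $\mathcal{L}=\mathcal{L}_{s}+\mathcal{L}_{n}$ is the Jordan decomposition of $\mathcal{L}$, so $\mathcal{L}$ preserves the eigenspaces of $\mathcal{L}_{s}$ and acts invertibly on each of those with nonzero eigenvalue. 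Decomposing $P_{m}=\sum_{w}P_{m}^{(w)}$ into $\mathcal{L}_{s}$-eigencomponents, the relation $\mathcal{L}(P_{m})=Q$ with $Q\in\ker\mathcal{L}_{s}$ forces $\mathcal{L}(P_{m}^{(w)})=0$, hence $P_{m}^{(w)}=0$, for every $w\ne0$; so $P_{m}=P_{m}^{(0)}$ commutes with $\Lambda_{s}$. This closes the induction, and therefore $H$ commutes with $\Lambda_{s}$, i.e.\ $H$ contains only resonant monomials w.r.t.\ $F$.

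I expect the main obstacle to be the second step: pinning down precisely which coefficients of $F$, $G$ and $H$ feed into the degree-$m$ part of $F\circ H=H\circ G$ — this is the same bookkeeping that underlies equation~(\ref{eq:HomolExplicit-full}) in the proof of Theorem~\ref{thm:PoincareDulac} — and checking that the truncations that appear there still commute with $\Lambda_{s}$. The remaining ingredient, invertibility of the homological operator off its resonant eigenspace, is the standard engine of Poincar\'e--Dulac theory; the only extra care needed is that it has to be run with the full $\Lambda$ and not merely $\Lambda_{s}$, which is exactly what the Jordan-decomposition remark for $\mathcal{L}$ supplies.
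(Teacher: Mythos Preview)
Your argument is correct. The paper does not supply its own proof here---it merely cites Raissy's thesis---so there is nothing to compare against directly, but your approach is in fact the standard one: recasting ``Poincar\'e--Dulac normal form'' and ``only resonant monomials'' as commutation with the semisimple part $\Lambda_s$, using closure under composition and inversion for the easy direction, and for the converse running the homological equation degree by degree with the Jordan decomposition $\mathcal{L}=\mathcal{L}_s+\mathcal{L}_n$ of the homological operator to force each $P_m$ into $\ker\mathcal{L}_s$. The bookkeeping you flag as the potential obstacle---that only $P_1,\ldots,P_{m-1}$ contribute to the degree-$m$ part of $(F-\Lambda)\circ H$ and of $(H-\id)\circ G$ beyond the linear-on-$P_m$ terms---is exactly the content of equation~(\ref{eq:HomolExplicit-full}), and your verification that $\mathcal{L}_s$ and $\mathcal{L}_n$ commute (via $[\mathrm{ad}_A,\mathrm{ad}_B]=\mathrm{ad}_{[A,B]}$) and that $\mathcal{L}_n$ is nilpotent is clean.
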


In the formally linearisable case, we have uniqueness (up to linear
conjugation):

\begin{lem}[Rüssmann \cite{Ruessmann2002StabilityofEllipticFixedPointsofAnalyticAreaPreservingMappingsundertheBrunoCondition}]
\label{lem:LinPDNFunique}If $F\in\End(\mathbb{C}^{d},0)$ is formally
linearisable, then every Poincaré-Dulac normal form of $F$ is linear.
\end{lem}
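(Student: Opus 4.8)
The plan is to reduce to the case that $F$ itself is in Poincar\'e-Dulac normal form and then compare lowest-order homogeneous terms in the equation expressing formal linearisability. A Poincar\'e-Dulac normal form $G$ of $F$ is formally conjugate to $F$, hence to $dF_{0}$, and $dG_{0}$ is linearly conjugate to $dF_{0}$; so $G$ is itself formally linearisable. It therefore suffices to prove that \emph{any} germ in $\End(\mathbb{C}^{d},0)$ which is in Poincar\'e-Dulac normal form and formally linearisable coincides with its linear part, and then to apply this to each Poincar\'e-Dulac normal form of $F$.

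So I would write $F=\Lambda+g$, where $\Lambda=dF_{0}$ is in Jordan normal form and $g\in\Pow(\mathbb{C}^{d},0)$ has order $\ge 2$ and contains only resonant monomials, and choose an invertible $\Psi\in\Pow(\mathbb{C}^{d},0)$ with $\Psi\circ F\circ\Psi^{-1}=\Lambda$. Comparing linear parts shows $d\Psi_{0}$ commutes with $\Lambda$, so after replacing $\Psi$ by $(d\Psi_{0})^{-1}\circ\Psi$ I may take $\Psi=\id+\psi$ with $\psi$ of order $\ge 2$. Now $\Lambda$ is trivially in Poincar\'e-Dulac normal form and $\Psi^{-1}\circ\Lambda\circ\Psi=F$ is so by hypothesis, so Proposition~\ref{prop:ConjPreservingPoiDul} forces $\psi$ to contain only resonant monomials as well.

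Expanding $\Psi\circ F=\Lambda\circ\Psi$ gives
\[
g(z)+\psi\paren{\Lambda z+g(z)}=\Lambda\,\psi(z).
\]
Assume $g\neq0$ and let $m\ge 2$ be minimal with the homogeneous component $g_{m}\neq0$. Since $g$ vanishes to order $m$, the argument $\Lambda z+g(z)$ agrees with $\Lambda z$ through order $m$, so the degree-$m$ part of $\psi\paren{\Lambda z+g(z)}$ is just $\psi_{m}(\Lambda z)$; collecting the degree-$m$ terms yields $g_{m}(z)=\Lambda\,\psi_{m}(z)-\psi_{m}(\Lambda z)$. As $\psi_{m}$ is a combination of resonant monomials $z^{\alpha}e_{j}$ (so $\lambda^{\alpha}=\lambda_{j}$) and $\Lambda=\diag(\lambda_{1},\dots,\lambda_{d})$, one has $\psi_{m}(\Lambda z)=\Lambda\,\psi_{m}(z)$, whence $g_{m}=0$, a contradiction. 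Hence $g=0$ and $F=\Lambda$.

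The delicate point is the degree-$m$ bookkeeping: one must check that no ``cross term'' from $\psi\paren{\Lambda z+g(z)}$ reaches degree $m$, which is exactly what minimality of $m$ provides. Diagonalisability of $dF_{0}$ is used in the final step as well (for a general Jordan form the homological operator $\phi\mapsto\phi(\Lambda z)-\Lambda\phi(z)$ no longer annihilates the resonant monomials), so I would invoke the standing hypothesis on the linear part or restrict to that case. In fact Proposition~\ref{prop:ConjPreservingPoiDul} can be bypassed: the displayed identity shows $g_{m}$ lies in the image of that homological operator, which for diagonal $\Lambda$ is the span of the non-resonant monomials, whereas $g_{m}$ is a sum of resonant ones; this clash between being resonant (from the normal form) and non-resonant (from linearisability) is what forces $g_{m}=0$.
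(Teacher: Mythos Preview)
The paper does not actually prove this lemma; it merely cites R\"ussmann. So there is no in-paper argument to compare against. Your approach---reduce to a germ already in Poincar\'e--Dulac form, compare lowest-order homogeneous parts in the conjugacy equation, and observe that the homological operator $\phi\mapsto\Lambda\phi(z)-\phi(\Lambda z)$ kills resonant monomials while $g_m$ must simultaneously be resonant and lie in its image---is the standard one and is correct when $dF_0$ is diagonalisable. The alternative phrasing you give at the end (bypassing Proposition~\ref{prop:ConjPreservingPoiDul}) is equally valid and arguably cleaner.

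Your caveat about diagonalisability is not a minor bookkeeping issue: it is essential, and the lemma as literally stated is false without it. Take $\Lambda=\begin{pmatrix}1&1\\0&1\end{pmatrix}$; every monomial is resonant, so any germ with linear part $\Lambda$ is trivially in Poincar\'e--Dulac normal form. Conjugating $\Lambda$ by $H(z_1,z_2)=(z_1+z_1^2,\,z_2)$ yields
\[
H\circ\Lambda\circ H^{-1}(z_1,z_2)=\bigl(z_1+z_2+2z_1z_2+z_2^2+O(3),\,z_2\bigr),
\]
which is a nonlinear Poincar\'e--Dulac normal form that is by construction formally linearisable. The point is exactly the one you identified: for non-diagonal $\Lambda$ the homological operator is only nilpotent (not zero) on the resonant subspace, so resonant $\psi_m$ can produce nonzero resonant $g_m$. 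R\"ussmann's original setting (area-preserving elliptic maps) and the paper's applications (e.g.\ Theorem~\ref{thm:ReducedBrjunoLin}) all carry a diagonalisability hypothesis, so the lemma should be read with that assumption. You were right to flag it rather than paper over it.
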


This applies in particular when $F$ has no resonances (of order at
least $2$) by Corollary~\ref{cor:PoincNoResLin}.

\section{Hyperbolic fixed points}

\selectlanguage{british}

A hyperbolic germ has only attracting or repelling multipliers. This
is true for generic germs and it is the first case to be studied in
detail. Traditionally, a hyperbolic germ is said to be in the \emph{Poincaré
domain}\index{Poincaré domain}, if it is invertible and either attracting
or repelling. Otherwise, it is said to be in the \emph{\index{Siegel domain}Siegel
domain}. The Poincaré and Siegel domains are named after the authors
of the relevant linearisation theorems \ref{thm:PoincareLinIfNoRes}
and \ref{thm:SiegelSternberg}.

\subsection{The Poincaré domain}

Germs in the Poincaré domain were first studied by the eponymous Henri
Poincaré \cite{Poincare1879SurLesProprietesDesFonctionsDefiniesParLesEquationsAuxDifferencesPartielles,Poincare1886SurLesCourbesDefiniesParUneEquationDifferentielleIV,Poincare1890SurLeProblemeDesTroisCorpsEtLesEquationsDeLaDynamique}
(republished in \cite{Poincare1928vresTome1EquationsDifferentielles}).
As in one variable, the basic dynamics of a purely attracting or repelling
germ correspond to the topological definitions and the topological
dynamics characterise each case. The proof of Lemma~\ref{lem:1DhypDynamics}
applies almost verbatim, with the contraction principle (\cite[Thm.~6.3.5]{MorosawaNishimuraTaniguchiUeda2000HolomorphicDynamicsTranslfromtheJapanese})
taking the place of Schwarz's lemma to show:
\begin{lem}
\label{lem:AttrIffTopAttr}A germ $F\in\End(\mathbb{C}^{d},0)$ is
attracting\slash repelling if and only if it is topologically attracting\slash repelling.
\end{lem}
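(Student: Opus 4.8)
The plan is to mirror the one-variable argument of Lemma~\ref{lem:1DhypDynamics}, replacing Schwarz's lemma by the higher-dimensional contraction principle. First I would treat the attracting case. If $F\in\End(\mathbb{C}^{d},0)$ is attracting, then all multipliers $\lambda_{1},\ldots,\lambda_{d}$ of $dF_{0}$ satisfy $|\lambda_{j}|<1$. Choosing a suitable norm on $\mathbb{C}^{d}$ (for instance one adapted to the Jordan form, so that $\norm{dF_{0}}<1$ even in the presence of nontrivial Jordan blocks), there is $\rho<1$ with $\norm{dF_{0}z}\le\rho\norm z$. Writing $F(z)=dF_{0}z+O(\norm z^{2})$, there is $C>0$ with $\norm{F(z)}\le\rho\norm z+C\norm z^{2}$ for $z$ near $0$. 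Pick $r>0$ small enough that this estimate holds on $B_{r}(0)$ and that $\rho+Cr<1$. Then $F$ maps $B_{r}(0)$ into itself and, by induction, $\norm{F^{\circ n}(z)}\le(\rho+Cr)^{n}\norm z\to0$, giving uniform convergence of $F^{\circ n}$ to $0$ on $B_{r}(0)$; that is, $F$ is topologically attracting. Conversely, if $F^{\circ n}\to0$ uniformly on some ball $B_{r}(0)$, then $F^{\circ n}(B_{r}(0))\subseteq B_{r/2}(0)$ for some $n$, and the contraction principle (Cauchy estimates for the self-map $F^{\circ n}$ of the ball, as in \cite[Thm.~6.3.5]{MorosawaNishimuraTaniguchiUeda2000HolomorphicDynamicsTranslfromtheJapanese}) forces the spectral radius of $dF_{0}^{n}=d(F^{\circ n})_{0}$ to be $<1$, hence $|\lambda_{j}^{n}|<1$ and $|\lambda_{j}|<1$ for all $j$; so $F$ is attracting.

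For the repelling case I would reduce to the attracting one via inverses, exactly as in one variable. If $F$ is repelling, then $0$ is not a multiplier, so $dF_{0}$ is invertible and $F\in\Aut(\mathbb{C}^{d},0)$; the inverse $F^{-1}$ has multipliers $\lambda_{j}^{-1}$ with $|\lambda_{j}^{-1}|<1$, hence $F^{-1}$ is attracting and therefore topologically attracting by the previous paragraph, and then the purely topological Lemma~\ref{lem:TopRepellIsInversTopAttr} (whose proof is stated for germs of homeomorphisms of a locally compact metric space, so it applies here) shows $F$ is topologically repelling. Conversely, if $F$ is topologically repelling, then in particular $\Sigma_{F}(U)=\{0\}$ for some neighbourhood $U$; the attracting-case analysis applied to the (a priori one-sided) local behaviour shows $0$ cannot be a multiplier — if some $\lambda_{j}=0$, or more to the point if $dF_{0}$ had spectral radius $\le1$ one would produce stable orbits — so $dF_{0}$ is invertible, $F\in\Aut(\mathbb{C}^{d},0)$, and by Lemma~\ref{lem:TopRepellIsInversTopAttr} the inverse $F^{-1}$ is topologically attracting; the converse direction of the attracting case then gives that $F^{-1}$ is attracting, i.e.\ $|\lambda_{j}^{-1}|<1$, so $F$ is repelling.

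The main obstacle is the presence of nontrivial Jordan blocks on the unit-modulus boundary of the two cases: the naive operator norm of $dF_{0}$ can exceed $1$ even when all $|\lambda_{j}|<1$, so one must first pass to an adapted norm $\norm{\cdot}_{\varepsilon}$ (rescaling the coordinates within each Jordan block so that the off-diagonal $1$'s become $\varepsilon$) to get a genuine strict contraction at the linear level before absorbing the quadratic tail. A secondary point requiring a little care is the converse in the repelling case: one should argue cleanly that $\Sigma_{F}(U)=\{0\}$ rules out $\lambda_{j}=0$ (otherwise $F$ is not locally injective and has large fibres of escaping-but-not-unique-preimage points, but also, restricting to an invariant subspace, one finds a neighbourhood of stable orbits) and forces invertibility, after which Lemma~\ref{lem:TopRepellIsInversTopAttr} closes the loop. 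Everything else is the routine induction already carried out in dimension one.
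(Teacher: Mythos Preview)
Your proposal is correct and follows essentially the same approach as the paper, which does not spell out a proof but simply remarks that the one-variable argument of Lemma~\ref{lem:1DhypDynamics} carries over ``almost verbatim, with the contraction principle (\cite[Thm.~6.3.5]{MorosawaNishimuraTaniguchiUeda2000HolomorphicDynamicsTranslfromtheJapanese}) taking the place of Schwarz's lemma.'' You have reconstructed precisely this: the adapted-norm contraction for the forward attracting direction, the contraction principle for the converse, and the reduction of the repelling case to the attracting one via inverses and Lemma~\ref{lem:TopRepellIsInversTopAttr}; your identification of the Jordan-block rescaling and the invertibility step in the repelling converse as the only points needing care is accurate and goes slightly beyond what the paper makes explicit.
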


In fact, we can always linearise these germs (see Theorem~\ref{thm:GrobmanHartman})
in the topological category. In the holomorphic category, this is
no longer true due to resonances, but those are the only obstruction:
\begin{thm}[Poincaré \cite{Poincare1879SurLesProprietesDesFonctionsDefiniesParLesEquationsAuxDifferencesPartielles}]
\label{thm:PoincareLinIfNoRes}If $F\in\End(\mathbb{C}^{d},0)$ is
a hyperbolic germ in the Poincaré domain and has no resonances, then
$F$ is holomorphically linearisable.
\end{thm}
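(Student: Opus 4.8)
The plan is to reduce the statement to the Poincar\'e--Dulac normal form and then prove convergence of the normalising series by majorant estimates, exploiting the key geometric fact that in the Poincar\'e domain only finitely many resonances can occur.

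First I would recall that by Corollary~\ref{cor:PoincNoResLin}, since $F$ has no resonances at all, the linear part $dF_0$ is diagonalisable and $F$ is \emph{formally} linearisable: the homological equation (\ref{eq:HomolExplicit-full}) can be solved uniquely for the coefficients $h_\alpha^j$ because every divisor $\lambda^\alpha-\lambda_j$ (for $|\alpha|\ge 2$) is non-zero. So the content of the theorem is purely the \emph{convergence} of this formal linearising series $H$. The crucial observation is that, because $F$ is in the Poincar\'e domain, all multipliers lie strictly inside the unit disc or all strictly outside; after possibly replacing $F$ by $F^{-1}$ (which is legitimate since the inverse of a linearisation linearises the inverse, and $F^{-1}$ is again hyperbolic invertible without resonances) we may assume $0<|\lambda_j|<1$ for all $j$. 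Write $\rho:=\max_j|\lambda_j|<1$ and $\delta:=\min_j|\lambda_j|>0$. Then for $|\alpha|\ge 2$ one has $|\lambda^\alpha|\le\rho^{|\alpha|}$, so $|\lambda^\alpha-\lambda_j|\ge|\lambda_j|-\rho^{|\alpha|}\ge\delta-\rho^{|\alpha|}$, which is bounded below by a positive constant as soon as $|\alpha|\ge k_0$ for some $k_0$ with $\rho^{k_0}<\delta$. In particular there is a uniform lower bound $|\lambda^\alpha-\lambda_j|\ge c>0$ for \emph{all} $|\alpha|\ge 2$ (the finitely many indices $2\le|\alpha|<k_0$ only contribute finitely many non-zero values, so they do not spoil the bound). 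This is the decisive difference from the Siegel domain: there are no small divisors at all.

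With a uniform bound $|\lambda^\alpha-\lambda_j|\ge c$, the recursion (\ref{eq:HomolExplicit-full}) with $g\equiv 0$ becomes, in absolute value,
\[
|h_\alpha^j|\le \frac{1}{c}\Bigl(|f_\alpha^j| + \sum_{2\le m<|\alpha|}\ \sum_{\beta_1+\cdots+\beta_m=\alpha}\ \sum_{j_1,\ldots,j_m}|f^j_{e_J}|\,|h_{\beta_1}^{j_1}|\cdots|h_{\beta_m}^{j_m}|\Bigr).
\]
The standard way to finish is the method of majorants: since $F\in\End(\mathbb{C}^d,0)$ is convergent, its coefficients satisfy $|f_\alpha^j|\le M t^{-|\alpha|}$ for some $M,t>0$; one then introduces a scalar majorant series $\Phi(s)=\sum_{m\ge1}\Phi_m s^m$ defined by the functional equation $\Phi(s)=s+\tfrac{M}{c}\bigl(\tfrac{1}{1-\Phi(s)/t}-1-\Phi(s)/t\bigr)$ (or a similar closed form capturing the composition on the right-hand side), show by the implicit function theorem that $\Phi$ is holomorphic near $s=0$ hence has positive radius of convergence, and verify by induction on $|\alpha|$ that $\sum_{|\alpha|=m}\max_j|h_\alpha^j|\le\Phi_m$. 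This yields a positive radius of convergence for $H$, so $H\in\Aut(\mathbb{C}^d,0)$ and $H^{-1}\circ F\circ H=dF_0$ holomorphically. Alternatively, one can bypass majorants entirely by running the direct dynamical argument of the Koenigs-type proof (see the proof sketch of Theorem~\ref{thm:Koenigs}): set $\varphi_k:=(dF_0)^{-k}\circ F^{\circ k}$ and show $\{\varphi_k\}_k$ converges uniformly on a small polydisc, using $\|F^{\circ k}(z)\|\le C\rho^k\|z\|$ (attracting case, Lemma~\ref{lem:AttrIffTopAttr}) to control $\|\varphi_{k+1}-\varphi_k\|$ as a geometric series; the limit conjugates $F$ to $dF_0$.

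The main obstacle is the convergence estimate itself — organising the combinatorial sum over compositions $\beta_1+\cdots+\beta_m=\alpha$ so that it is genuinely dominated by the coefficients of a single scalar analytic function. Everything else (diagonalisability, formal solvability, the reduction to the attracting case, and above all the uniform lower bound on the divisors) is either immediate from the no-resonance hypothesis or a short consequence of the Poincar\'e-domain condition $|\lambda_j|<1$. I would therefore present the divisor bound carefully as the conceptual heart of the proof and then either cite the majorant computation from \cite{Arnold1988GeometricalmethodsinthetheoryofordinarydifferntialequationsTranslfromtheRussianbyJosephSzucsEnglTransledbyMarkLevi2nded} or \cite{IlyashenkoYakovenko2008Lecturesonanalyticdifferentialequations}, or give the cleaner dynamical convergence argument via $\varphi_k=(dF_0)^{-k}\circ F^{\circ k}$, which avoids the bookkeeping altogether.
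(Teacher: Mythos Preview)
Your proposal is correct. The paper itself does not give a standalone proof of this theorem: it simply remarks that ``Poincar\'e's proof relies on majorant series and is a special case of Theorem~\ref{thm:Brjuno}, but the estimates involving small divisors are much simpler in the Poincar\'e domain.'' In other words, the paper defers to the full Brjuno machinery (ultimately Theorem~\ref{thm:GeneralElimination}, with its Siegel and Brjuno lemmas controlling the small divisors $\varepsilon_\alpha^{-1}$).

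Your argument is precisely the ``much simpler'' direct route the paper alludes to but does not spell out. The key observation you isolate---that in the attracting case $|\lambda^\alpha-\lambda_j|\ge\delta-\rho^{|\alpha|}$ is bounded below uniformly for $|\alpha|\ge k_0$, and the remaining finitely many non-resonant divisors contribute only a finite minimum---is exactly the reason the Poincar\'e domain is benign: the function $\omega_A(m)$ in (\ref{eq:BrjunoFunction}) stays bounded away from zero, so the Brjuno sum (\ref{eq:BrjunoSet}) is trivially finite and the elaborate counting of Lemmas~\ref{lem:Siegel} and \ref{lem:Brjuno} collapses. Your majorant argument is then the skeleton of the proof of Theorem~\ref{thm:GeneralElimination} with all $\delta_\alpha$ bounded by $c^{-|\alpha|}$, so only the $\sigma_r$-estimate (\ref{eq:BoundSigmaR}) remains. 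The alternative Koenigs-type argument $\varphi_k=(dF_0)^{-k}\circ F^{\circ k}$ you mention is also valid and is indeed cleaner for this particular case, though it does not generalise to the Siegel domain. Either approach is a legitimate self-contained proof; what you gain over the paper's deferral to Theorem~\ref{thm:Brjuno} is elementarity and transparency about \emph{why} no small-divisor hypothesis is needed here.
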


Poincaré's proof relies on majorant series and is a special case of
Theorem~\ref{thm:Brjuno}, but the estimates involving small divisors
are much simpler in the Poincaré domain.

If we allow resonances, a simple but crucial observation is the following:
Assume $F\in\End(\mathbb{C}^{d},0)$ is 
\begin{lem}
\label{lem:PoincDomFiniteRes}A hyperbolic germ $F\in\End(\mathbb{C}^{d},0)$
in the Poincaré domain can have only finitely many resonances. 
\end{lem}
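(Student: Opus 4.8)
The plan is to show that the \emph{order} $|\alpha|$ of any resonance $\lambda_j=\lambda^{\alpha}$ of $F$ is bounded above by a constant depending only on the multipliers; finiteness then follows at once, since for any bound $N$ there are only finitely many multi-indices $\alpha\in\mathbb{N}^{d}$ with $|\alpha|\le N$, and only $d$ possible choices of $j$.

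First I would use that $F$ is invertible, so all multipliers are nonzero and I may set $\mu:=\min_{1\le k\le d}|\lambda_{k}|\in(0,\infty)$ and $M:=\max_{1\le k\le d}|\lambda_{k}|\in(0,\infty)$. Being in the Poincar\'e domain means either $|\lambda_{k}|<1$ for every $k$ (attracting case) or $|\lambda_{k}|>1$ for every $k$ (repelling case). In the attracting case set $\rho:=M<1$; then for every $\alpha\in\mathbb{N}^{d}$ we have $|\lambda^{\alpha}|=\prod_{k}|\lambda_{k}|^{\alpha_{k}}\le\rho^{|\alpha|}$, so a resonance $\lambda_{j}=\lambda^{\alpha}$ forces $\mu\le|\lambda_{j}|=|\lambda^{\alpha}|\le\rho^{|\alpha|}$, whence $|\alpha|\le\log\mu/\log\rho$ (both logarithms being negative). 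The repelling case is symmetric: with $\rho:=\mu>1$ one has $|\lambda^{\alpha}|\ge\rho^{|\alpha|}$, so a resonance forces $\rho^{|\alpha|}\le|\lambda_{j}|\le M$ and hence $|\alpha|\le\log M/\log\rho$. Alternatively, $(\alpha,j)$ is a resonance for $F$ if and only if it is one for $F^{-1}\in\End(\mathbb{C}^{d},0)$, whose multipliers $\lambda_{k}^{-1}$ all have modulus $<1$, so the repelling case reduces to the attracting one.

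The argument is elementary and I do not anticipate any genuine obstacle. The only point worth flagging is that invertibility is essential — a vanishing multiplier destroys the uniform geometric estimate on $|\lambda^{\alpha}|$ — and that the analogous statement fails outside the Poincar\'e domain: for instance, a relation $\lambda_{1}\lambda_{2}=1$ among the multipliers (possible in the Siegel domain) yields the infinite family of resonances $\lambda_{1}=\lambda_{1}^{n+1}\lambda_{2}^{n}$, $n\ge1$, of unbounded order.
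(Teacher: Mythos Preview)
Your argument is correct and essentially identical to the paper's: both bound the order of a resonance by comparing $|\lambda^{\alpha}|\le(\max_{k}|\lambda_{k}|)^{|\alpha|}$ against $\min_{k}|\lambda_{k}|$ in the attracting case and reduce the repelling case to this one. The paper just states the existence of a threshold $k$ with $(\max_{j}|\lambda_{j}|)^{k}<\min_{j}|\lambda_{j}|$, whereas you write out the explicit logarithmic bound, but the content is the same.
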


\begin{proof}
Assume, without loss of generality, that $F$ is attracting with multipliers
$\lambda_{1},\ldots,\lambda_{d}$. Then there exists $k\in\mathbb{N}$
such that 
\[
(\max_{j}|\lambda_{j}|)^{k}<\min_{j}|\lambda_{j}|,
\]
and hence $|\lambda^{\alpha}|<|\lambda_{j}|$, whenever $\alpha\in\mathbb{N}^{d}$
with $|\alpha|\ge k$ and $j\in\{1,\ldots,d\}$. 
\end{proof}
This shows in particular, that any Poincaré-Dulac normal form in the
Poincaré domain is finite, i.e.\ polynomial, and it turns out, that
one can always find at least one convergent normalisation:
\begin{thm}[Poincaré \cite{Poincare1886SurLesCourbesDefiniesParUneEquationDifferentielleIV,Poincare1890SurLeProblemeDesTroisCorpsEtLesEquationsDeLaDynamique},
Dulac \cite{Dulac1912SolutionsDunSystemeDequationsDifferentiellesDansLeVoisinageDeValeursSingulieres}]
\label{thm:PoincareNormalisationPolynom}Every hyperbolic germ $F\in\End(\mathbb{C}^{d},0)$
in the Poincaré domain is holomorphically conjugated to one of its
(polynomial) Poincaré-Dulac normal forms.
\end{thm}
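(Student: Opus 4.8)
The plan is to keep the formal normalisation of Theorem~\ref{thm:PoincareDulac} and upgrade it to a holomorphic one, the point being that in the Poincaré domain the divisors $\lambda^{\alpha}-\lambda_{j}$ occurring in the homological equation~\eqref{eq:HomolExplicit-full} are uniformly bounded away from $0$. First I would fix a normal form: after conjugating $dF_{0}=\Lambda$ into Jordan form, Theorem~\ref{thm:PoincareDulac} yields a formal $H\in\Pow_{1}(\mathbb{C}^{d},0)$ with $G:=H^{-1}\circ F\circ H$ in Poincaré-Dulac normal form, and by Lemma~\ref{lem:PoincDomFiniteRes} there are only finitely many resonant monomials, so $G$ is a polynomial (in particular an entire map). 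In the inductive construction of $H$ from~\eqref{eq:HomolExplicit-full} the non-resonant coefficients of $H$ are forced, while the resonant ones are free; I would set all free coefficients of $H$ to $0$. It then suffices to show that this $H$ has positive radius of convergence, since $G=H^{-1}\circ F\circ H$ is then the asserted holomorphic conjugation to a polynomial Poincaré-Dulac normal form.

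The decisive step is the uniform divisor bound. If $F$ is attracting, the argument of Lemma~\ref{lem:PoincDomFiniteRes} gives $k_{0}$ with $|\lambda^{\alpha}|<\tfrac12\min_{j}|\lambda_{j}|$, hence $|\lambda^{\alpha}-\lambda_{j}|\ge\tfrac12\min_{j}|\lambda_{j}|$, for all $|\alpha|\ge k_{0}$; symmetrically, if $F$ is repelling then $|\lambda^{\alpha}|\ge(\min_{j}|\lambda_{j}|)^{|\alpha|}\to\infty$, so $|\lambda^{\alpha}-\lambda_{j}|\ge\tfrac12|\lambda^{\alpha}|$ is large for $|\alpha|$ large. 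In either case only finitely many multi-indices $\alpha$ with $|\alpha|<k_{0}$ remain, among which only finitely many values $|\lambda^{\alpha}-\lambda_{j}|\neq0$ occur, so altogether there is $\delta>0$ with $|\lambda^{\alpha}-\lambda_{j}|\ge\delta$ whenever $\lambda^{\alpha}\neq\lambda_{j}$. Since $\lambda^{\alpha}\id-\Lambda$ is triangular with the numbers $\lambda^{\alpha}-\lambda_{j}$ on the diagonal and $0$ or $1$ just off it, solving~\eqref{eq:HomolExplicit-full} component-by-component in the lexicographic order used in the proof of Theorem~\ref{thm:PoincareDulac} (putting $h_{\alpha}^{j}=0$ on the resonant components) yields an estimate $\lVert h_{\alpha}\rVert\le C_{0}\lVert R_{\alpha}\rVert$ with $C_{0}=C_{0}(\delta,d)$, where $R_{\alpha}$ is the right-hand side of~\eqref{eq:HomolExplicit-full}, built only from the given coefficients $f_{e_{J}}$, from the coefficients of the fixed polynomial $G$, and from the already determined $h_{\beta},g_{\beta}$ with $|\beta|<|\alpha|$.

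Finally I would run a standard majorant argument. As $F$ is holomorphic near $0$ we have $|f_{\alpha}|\le Mr^{-|\alpha|}$ for some $M,r>0$, and $G$ is a fixed polynomial; feeding this into $\lVert h_{\alpha}\rVert\le C_{0}\lVert R_{\alpha}\rVert$ shows that the coefficients of $H$ are dominated coefficientwise by those of the formal solution $\hat H$ of an auxiliary functional equation $\hat H=\Phi(z,\hat H)$ with $\Phi$ holomorphic near $(0,0)$ and vanishing there together with $\partial_{\hat H}\Phi$, to which the analytic implicit function theorem applies. Hence $\hat H$, and therefore $H$, converges on a neighbourhood of $0$, completing the proof.

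The main obstacle is this last step: one must choose the auxiliary equation so that every convolution contribution to $R_{\alpha}$ — in particular those of the (finitely many, but possibly large) resonant coefficients of $G$ and of the off-diagonal Jordan coupling within a fixed degree — is genuinely captured by an analytic fixed-point equation. This bookkeeping is the technical heart of Poincaré's original argument, and it is precisely the positive lower bound $\delta$ on the divisors that makes it go through; it breaks down in the Siegel domain, where resonances and near-resonances accumulate.
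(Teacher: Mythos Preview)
The paper does not actually prove this theorem; it states it with attribution to Poincar\'e and Dulac and remarks only that ``both finiteness of all Poincar\'e--Dulac normal forms and the numerical properties of the multipliers are crucial parts of the proof.'' Your proposal invokes exactly these two ingredients --- Lemma~\ref{lem:PoincDomFiniteRes} for the polynomiality of $G$, and the uniform lower bound $\delta>0$ on the non-resonant divisors $|\lambda^{\alpha}-\lambda_j|$ --- and follows the classical Poincar\'e majorant route, so it is in line with what the paper indicates and with the detailed majorant argument the paper does spell out for the related Theorem~\ref{thm:GeneralElimination}.

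Your outline is correct. The Jordan coupling and the finitely many nonzero $g$-coefficients that you flag as the ``main obstacle'' are indeed only bookkeeping once $\delta$ is in hand: the triangular back-substitution in~\eqref{eq:HomolExplicit-full} gives $\lVert h_\alpha\rVert\le C(d,\delta)\lVert R_\alpha\rVert$ uniformly in $\alpha$, and since $G$ is a fixed polynomial the second convolution sum $\sum h_{e_J}g_{\beta_1}^{j_1}\cdots g_{\beta_k}^{j_k}$ contributes only a fixed polynomial in the unknown $H$ to the right-hand side, which an analytic implicit-function (or direct majorant) argument absorbs without difficulty. In the language of the proof of Theorem~\ref{thm:GeneralElimination}, the small-divisor factor $\delta_\alpha$ here is trivially bounded by $\delta^{-(|\alpha|-1)}$, so the Brjuno-type summation collapses and only the combinatorial factor $\sigma_{|\alpha|}$ remains to control.
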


\begin{rem}
Both finiteness of all Poincaré-Dulac normal forms and the numerical
properties of the multipliers are crucial parts of the proof, so it
does not easily extend easily to a wider class of germs.
\end{rem}

By Lemma~\ref{lem:LinPDNFunique}, the above implies in particular:
\begin{cor}
A hyperbolic germ $F\in\End(\mathbb{C}^{d},0)$ in the Poincaré domain
is holomorphically linearisable, if and only if it is formally linearisable.
\end{cor}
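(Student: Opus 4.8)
The plan is to read the corollary off from the two results immediately preceding it, so that essentially no new work is required. One implication is trivial: a holomorphic conjugacy is in particular a formal conjugacy, hence holomorphic linearisability always implies formal linearisability, and this uses nothing about the Poincar\'e domain.

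For the converse, I would assume $F\in\End(\mathbb{C}^{d},0)$ is a hyperbolic germ in the Poincar\'e domain that is formally linearisable. By Theorem~\ref{thm:PoincareNormalisationPolynom}, $F$ is holomorphically conjugate to one of its Poincar\'e--Dulac normal forms $G$. By Lemma~\ref{lem:LinPDNFunique}, formal linearisability of $F$ forces \emph{every} Poincar\'e--Dulac normal form of $F$ to be linear, so in particular the $G$ provided above is linear. Thus $F$ is holomorphically conjugate to a linear map; since $G$ is required to have $dG_{0}$ in Jordan normal form (which we may take equal to $dF_{0}$, assumed in Jordan normal form) and the Jordan block structure and multipliers of the linear part are invariants of formal, hence holomorphic, conjugation, this linear map is $dF_{0}$. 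Therefore $F$ is holomorphically linearisable.

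I expect no genuine obstacle here: the statement is a direct corollary, and the only point deserving a sentence is the last one, namely that being holomorphically conjugate to \emph{some} linear map is the same as being holomorphically linearisable, which is immediate from the invariance of the linear part under conjugation noted earlier. Equivalently, one may phrase the argument by observing that Theorem~\ref{thm:PoincareNormalisationPolynom} already conjugates $F$ to a $G$ with $dG_{0}=dF_{0}$ in Jordan normal form, and Lemma~\ref{lem:LinPDNFunique} then says this $G$ coincides with its linear part.
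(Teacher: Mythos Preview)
Your proposal is correct and follows exactly the approach the paper intends: the corollary is stated immediately after Theorem~\ref{thm:PoincareNormalisationPolynom} with the remark ``By Lemma~\ref{lem:LinPDNFunique}, the above implies in particular,'' and your argument spells out precisely this two-step deduction. The extra sentence identifying the resulting linear map with $dF_{0}$ is a harmless clarification.
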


\begin{rem}
Lemma~\ref{lem:PoincDomFiniteRes} implies that a germ in the Poincaré
domain only has a finite number of Poincaré-Dulac normal forms. In
\cite{Reich1969DasTypenproblemBeiFormalBiholomorphenAbbildungenMitAnziehendemFixpunkt},
Reich describes unique formal normal forms in the Poincaré domain. 
\end{rem}

In \cite{Reich1969NormalformenBiholomorpherAbbildungenMitAnziehendemFixpunkt},
he concludes that these are also holomorphic normal forms, by demonstrating
that in the Poincaré domain every formally normalising series converges.
However, Rosay and Rudin give a counter example to an important part
of his argument in \cite{RosayRudin1988HolomorphicmapsfrombfCntobfCn}.
In the appendix of the same paper, they show that every germ in the
Poincaré domain is holomorphically conjugate to a lower-triangular
normal form. Studying the dynamics of these special normal forms they
conclude in particular:
\begin{thm}[Rosay, Rudin \cite{RosayRudin1988HolomorphicmapsfrombfCntobfCn}]
\label{thm:AttrBasinFatBieb}If $F\in\Aut(\mathbb{C}^{d},0)$ is
attracting, then $F$ is conjugate to an upper-triangular polynomial
normal form. If $F$ extends to a global automorphism of $\mathbb{C}^{d}$,
then the realm of attraction $A_{F}(0)$ is biholomorphic to $\mathbb{C}^{d}$
and $F$ is conjugate to this normal form on all of $A_{F}(0)$.
\end{thm}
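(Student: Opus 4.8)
The plan is to combine the polynomial normalisation from Theorem~\ref{thm:PoincareNormalisationPolynom} with a global "telescoping" construction of the conjugacy on the whole basin. First I would reduce to a convenient form: by Theorem~\ref{thm:PoincareNormalisationPolynom} the attracting germ $F\in\Aut(\mathbb{C}^d,0)$ is holomorphically conjugate near $0$ to one of its polynomial Poincar\'e--Dulac normal forms $G$. Since $\Lambda=dF_0$ is attracting, one can moreover arrange the Jordan data and the ordering of multipliers so that every resonant monomial $z^\alpha$ appearing in the $j$-th component satisfies: the variables $z_i$ it involves all have multiplier of modulus $\ge|\lambda_j|$ and the block structure is compatible. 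This forces $G$ to be \emph{upper-triangular} in the appropriate sense --- the $j$-th component of $G$ depends only on $z_j,\dots,z_d$ (after reindexing so multipliers are weakly increasing in modulus). This is the content of the Rosay--Rudin appendix and uses only Lemma~\ref{lem:PoincDomFiniteRes} (finitely many resonances) plus the numerical inequality $(\max_i|\lambda_i|)^k<\min_i|\lambda_i|$ from its proof.

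Next, assuming $F$ itself extends to a global automorphism of $\mathbb{C}^d$, I would produce the conjugacy on all of $A_F(0)$. Let $\varphi$ be the local biholomorphism conjugating $F$ to the triangular polynomial normal form $G$ near $0$, i.e. $\varphi\circ F=G\circ\varphi$ on a neighbourhood $U$ of $0$ which we may take inside the attracting neighbourhood from Lemma~\ref{lem:AttrIffTopAttr}. Because $G$ is a polynomial automorphism of triangular type, it extends to a global automorphism $G\in\Aut(\mathbb{C}^d)$ with $G^{-1}$ also polynomial and still attracting to $0$; in particular $A_G(0)=\mathbb{C}^d$. Now define
\[
\Phi_n:=G^{-n}\circ\varphi\circ F^{n}
\]
on $F^{-n}(U)$. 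The intertwining relation gives $\Phi_{n+1}=\Phi_n$ on the overlap, so the $\Phi_n$ glue to a single holomorphic map $\Phi$ on $A_F(0)=\bigcup_n F^{-n}(U)$ satisfying $\Phi\circ F=G\circ\Phi$ globally. One checks $\Phi$ is a biholomorphism onto its image by running the same construction for $F^{-1}$ and $G^{-1}$ (both attracting after shrinking, since $F,G$ are automorphisms with attracting inverse realms), obtaining an inverse. Finally $\Phi(A_F(0))$ is an open, $G$-completely-invariant subset of $\mathbb{C}^d=A_G(0)$ containing $0$; since $A_G(0)=\mathbb{C}^d$ and $\Phi$ is a local biholomorphism near every point of $A_F(0)$ that conjugates $F$ to $G$, surjectivity onto $\mathbb{C}^d$ follows: any $w\in\mathbb{C}^d$ has $G^m(w)\to 0$, so $G^m(w)\in\Phi(U)$ for large $m$, and pulling back by $\Phi^{-1}\circ$ (local inverse) $\circ\,G^m$ lands in $A_F(0)$, whence $w=\Phi(F^{-m}(\Phi^{-1}(G^m(w))))$. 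Thus $\Phi:A_F(0)\xrightarrow{\ \sim\ }\mathbb{C}^d$ conjugates $F$ to $G$ on the whole basin, and $A_F(0)\cong\mathbb{C}^d$.

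The main obstacle is the \emph{triangularisation} step --- showing the attracting condition lets one choose a Poincar\'e--Dulac normal form that is genuinely upper-triangular (so that the resulting polynomial $G$ is a global automorphism with $A_G(0)=\mathbb{C}^d$). A pure Poincar\'e--Dulac normal form need not be triangular, and its polynomial nature alone does not guarantee it is a global automorphism; the Rosay--Rudin refinement is precisely what rescues the argument, and it is needed to know the global dynamics of the model map. Once $G$ is triangular (hence a tame polynomial automorphism with basin all of $\mathbb{C}^d$), the telescoping argument is the same technique already used in the proofs of Koenigs' and B\"ottcher's theorems (cf. the remark after Theorem~\ref{thm:Koenigs}): compose the $n$-th iterate of $F$ with the $n$-th iterate of an inverse of the target and pass to the limit --- here the limit is even \emph{stationary} on each piece, so no analytic estimates are required beyond the local conjugacy itself.
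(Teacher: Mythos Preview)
The paper does not give its own proof of this theorem; it is stated with attribution to Rosay--Rudin and the reader is pointed to Berteloot's exposition for details. Your outline is essentially the standard argument found there: obtain a triangular polynomial normal form $G$, observe that $G$ is then a global automorphism with $A_G(0)=\mathbb{C}^d$, and extend the local conjugacy $\varphi$ to the whole basin via $\Phi_n=G^{-n}\circ\varphi\circ F^{n}$.

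Two points deserve correction. First, your injectivity argument is wrong: $F^{-1}$ and $G^{-1}$ have a \emph{repelling} fixed point at $0$, so ``running the same construction'' for them makes no sense. Injectivity of $\Phi$ is much more direct: if $\Phi(p)=\Phi(q)$, pick $n$ with $F^{n}(p),F^{n}(q)\in U$; then $G^{-n}\circ\varphi\circ F^{n}(p)=G^{-n}\circ\varphi\circ F^{n}(q)$, and since $G$, $\varphi|_U$, and $F$ are all injective, $p=q$. Second, the assertion $A_G(0)=\mathbb{C}^d$ for the triangular normal form is not automatic and needs an argument (an induction over the triangular structure, using that resonant monomials in the $j$-th component involve only variables whose multipliers have strictly larger modulus, so each coordinate is driven to $0$ polynomially against a geometric decay). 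Your sketch of the triangularisation itself is on the right track: from $\lambda_j=\lambda^\alpha$ with $|\alpha|\ge 2$ and all $|\lambda_k|<1$ one gets $|\lambda_j|=\prod_k|\lambda_k|^{\alpha_k}\le|\lambda_i|$ whenever $\alpha_i\ge 1$, with equality impossible for $|\alpha|\ge 2$; this forces the claimed dependence after ordering by modulus and choosing the Jordan blocks consistently.
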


A detailed exposition of Rosay and Rudin's method was given by Berteloot
in \cite{Berteloot2006MethodesDeChangementDechellesEnAnalyseComplexe}
(see also Raissy's PhD thesis \cite[§1.3.4]{Raissy2010GeometricalMethodsintheNormalizationofGermsofBiholomorphismsDottoratoinMatematica22CicloPhDThesis}).

Special cases of this result were proved by Fatou \cite{Fatou1922SurCertainesFonctionsUniformesDeDeuxVariables}
and Bieberbach \cite{Bieberbach1933BeispielZweierGanzerFunktionenZweierKomplexerVariablenWelcheEineSchlichteVolumtreueAbbildungDesR_4AufEinenTeilSeinerSelbstVermitteln}
to construct the first examples of proper subsets of $\mathbb{C}^{d},d\ge2$
biholomorphic to $\mathbb{C}^{d}$. Such domains cannot exist in $\mathbb{C}$
and are today commonly known as \emph{\index{Fatou-Bieberbach domain@\emph{Fatou-Bieberbach domain}}Fatou-Bieberbach
domains}.

\subsection{The Siegel Domain}

Germs in the Siegel domain may have an infinite number of resonances
and an infinite number of Poincaré-Dulac normal forms. The formal
classification is still open even in the invertible case. Jenkins
\cite{Jenkins2008FurtherReductionsofPoincareDulacNormalFormsinCn1}
proves some partial results. 

For a germ outside the Poincaré domain, holomorphic normalisation
is no longer guaranteed. If the germ is formally linearisable, we
discuss sufficient conditions in section~\ref{sec:BrjunoConditions},
otherwise we don't know of any holomorphic normalisation results.
On the other hand, smooth normalisation is always possible:
\begin{thm}[Sternberg \cite{Sternberg1957LocalContractionsandaTheoremofPoincare,Sternberg1958OntheStructureofLocalHomeomorphismsofEuclideannSpaceII},
Chaperon \cite{Chaperon1986GeometrieDifferentielleEtSingularitesDeSystemesDynamiques}]
\label{thm:hypSmoothNormalisation}Let $F,G\in\End(\mathbb{C}^{d},0)$
be hyperbolic and invertible. Then $F$ and $G$ are formally conjugate,
if and only if they are smoothly conjugate.
\end{thm}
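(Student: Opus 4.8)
\emph{The reverse implication} is the easy one: the $\infty$-jet at $0$ of a germ of $C^{\infty}$ diffeomorphism conjugating $F$ to $G$ is an invertible formal power series conjugating $F$ to $G$. (Here, as everywhere in this subject, one tacitly restricts to orientation-preserving conjugacies — equivalently one reads ``formally conjugate'' up to the symmetry $\lambda_{j}\leftrightarrow\overline{\lambda_{j}}$ — since otherwise already $z\mapsto 2iz$ and $z\mapsto-2iz$ on $\mathbb{C}$ would be a counterexample.) For the forward implication the plan is first to reduce, via Borel's lemma, to germs with the same $\infty$-jet at $0$: given an invertible $H\in\Pow(\mathbb{C}^{d},0)$ with $H^{-1}\circ G\circ H=F$, Borel's lemma provides a germ of $C^{\infty}$ diffeomorphism $H_{0}$ of $(\mathbb{R}^{2d},0)\cong(\mathbb{C}^{d},0)$ with $\infty$-jet $H$ at $0$ (a local diffeomorphism, since its linear part is the invertible linear part of $H$). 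Replacing $F$ by the $C^{\infty}$-conjugate germ $\widetilde{F}:=H_{0}\circ F\circ H_{0}^{-1}$, we get a germ of a hyperbolic diffeomorphism whose $\infty$-jet at $0$ equals that of $G$; in particular $d\widetilde{F}_{0}=dG_{0}=:A$ and $\widetilde{F}-G$ vanishes to infinite order at $0$. (Alternatively one may first bring $F,G$ into Poincaré--Dulac normal form by Theorem~\ref{thm:PoincareDulac}, truncate using Proposition~\ref{prop:ConjPreservingPoiDul} and Lemma~\ref{lem:FormalConjIffAnyOrderPolyConj}, and then invoke Borel, but this detour is unnecessary.) It thus suffices to show: \emph{two germs of $C^{\infty}$ hyperbolic diffeomorphisms with the same $\infty$-jet at a common fixed point are $C^{\infty}$-conjugate near it.}

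For this last statement, note that the real linear map $A$ is hyperbolic (its spectrum is $\{\lambda_{j}\}\cup\{\overline{\lambda_{j}}\}$, all off the unit circle), with splitting $\mathbb{R}^{2d}=E^{s}\oplus E^{u}$ into contracting and expanding subspaces. After the usual localisation — extending $\widetilde{F}$ and $G$ to globally defined diffeomorphisms that coincide with $A$ outside a small ball $B_{\rho}$, keeping $\widetilde{F}-G$ globally flat and small — one looks for a conjugacy $\Phi=\id+\varphi$ with $\varphi$ bounded, $C^{r}$ and flat at $0$, solving the equation obtained by rewriting $\Phi\circ\widetilde{F}=G\circ\Phi$ as
\[
\varphi\circ\widetilde{F}-A\varphi=\mathcal{N}(\varphi),\qquad \mathcal{N}(\varphi):=(G-A)\circ(\id+\varphi)-(G-A)-(\widetilde{F}-G).
\]
The decisive point is that $\mathcal{N}$ maps flat germs to flat germs — since $\widetilde{F}-G$ is flat and $G-A=O(\norm{z}^{2})$ — and that, for $\rho$ small, $\mathcal{N}$ has small Lipschitz norm on a small ball of $C^{r}$ maps (its nonlinear part being controlled by $\sup_{B_{\rho}}\norm{dG-dG_{0}}$). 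The operator $\varphi\mapsto\varphi\circ\widetilde{F}-A\varphi$ has a bounded right inverse there — built exactly as in the proof of the Grobman--Hartman theorem~\ref{thm:GrobmanHartman} from the exponential dichotomy (forward iteration along $E^{s}$, backward along $E^{u}$) and preserving flatness — so composing it with $\mathcal{N}$ gives a contraction, with a unique small fixed point $\varphi_{r}\in C^{r}$. Uniqueness of small solutions forces $\varphi_{r}$ to be independent of $r$, whence $\varphi:=\varphi_{r}\in C^{\infty}$ and $\Phi=\id+\varphi$ is the desired conjugacy. (In Chaperon's spirit one may instead use the deformation method: join $\widetilde{F}$ to $G$ through the hyperbolic family $\widetilde{F}_{t}:=\widetilde{F}+t(G-\widetilde{F})$, $t\in[0,1]$, of constant linear part $A$, solve the associated infinitesimal cohomological equation for a $C^{\infty}$ time-dependent vector field — again flat, hence handled by the same dichotomy estimate — and integrate its flow to time $1$.)

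\emph{Main obstacle.} The substance lies entirely in this last step: producing a \emph{bounded} right inverse of $\varphi\mapsto\varphi\circ\widetilde{F}-A\varphi$ on $C^{r}(B_{\rho})$ together with a contractive nonlinearity there, \emph{uniformly in $r$}. This is where hyperbolicity enters — through the exponential dichotomy of $A$, transported to $\widetilde{F}$ via its local stable and unstable manifolds — and where the flatness of $\widetilde{F}-G$ is indispensable: without it one recovers only Sternberg's theorem under non-resonance conditions up to order $r$ (the ``Sternberg numbers''), whereas here there may be infinitely many resonances (this is the Siegel domain, where $F$ need not be holomorphically linearisable at all), and only the flat remainder keeps the relevant series convergent despite arbitrarily small resonant denominators. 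In the Poincaré domain the estimate reduces to one-sided summation and one even obtains a \emph{polynomial} normal form, by Lemma~\ref{lem:PoincDomFiniteRes} and Theorem~\ref{thm:PoincareNormalisationPolynom}.
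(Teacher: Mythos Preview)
The paper does not prove this theorem; it is stated with references to Sternberg and Chaperon and the exposition moves directly on to stable manifolds. Your sketch follows the standard route underlying those references: realise the formal conjugacy by a smooth germ via Borel, thereby reducing to Chen's lemma (two hyperbolic $C^{\infty}$ germs with identical $\infty$-jet at the fixed point are $C^{\infty}$-conjugate), and establish the latter by a contraction argument on flat perturbations of the identity, using the exponential dichotomy to invert the linearised cohomological operator. This is correct in outline; the deformation variant you describe is indeed Chaperon's approach.

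One point deserves more care. In the reverse implication you take the $\infty$-jet of a smooth conjugacy, but this yields a \emph{real} formal power series in $(x_{1},y_{1},\ldots,x_{d},y_{d})$, not an element of $\Pow(\mathbb{C}^{d},0)$ as the paper's definition of ``formally conjugate'' requires. Your parenthetical about orientation-preserving conjugacies and the symmetry $\lambda_{j}\leftrightarrow\overline{\lambda_{j}}$ flags the issue but does not close it: even granting orientation, passing from real formal conjugacy of two holomorphic hyperbolic germs to holomorphic formal conjugacy is a genuine step. It can be carried out --- for instance by observing that an orientation-preserving real conjugacy preserves the multiset of complex multipliers and hence the resonance lattice, and then comparing Poincar\'e--Dulac normal forms via Theorem~\ref{thm:PoincareDulac} and Proposition~\ref{prop:ConjPreservingPoiDul} --- but it is not an immediate consequence of ``take the jet''. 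Alternatively, and more honestly given the cited sources, one reads the theorem as a statement about \emph{real} formal conjugacy versus smooth conjugacy, which is the literal content of Sternberg--Chen and is what your argument actually proves.
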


For a hyperbolic (formal) germ $F\in\Pow(\mathbb{C}^{d},0)$ we have
a canonical splitting 
\[
\mathbb{C}^{d}=E^{\mathrm{s}}\oplus E^{\mathrm{u}},
\]
where $E^{\mathrm{s}}$ ($E^{\mathrm{u}}$) is the direct sum of the
generalised eigenspaces of $dF_{0}$ corresponding to its attracting
(repelling) eigenvalues. 

The extension of this splitting to the non-linear case is the stable
manifold theorem. It was first proved in $\mathcal{C}^{1}$ by \cite{Perron1929UberStabilitatUndAsymptotischesVerhaltenDerIntegraleVonDifferentialgleichungssystemen}
and \cite{Hadamard1901SurLiterationEtLesSolutionsAsymptotiquesDesEquationsDifferentielles}
(translated to English in \cite{Hasselblatt2017OnIterationandAsymptoticSolutionsofDifferentialEquationsbyJacquesHadamard}).
Proofs in $\mathcal{C}^{\infty}$ can be found in \cite{FathiHermanYoccoz1983AProofofPesinsStableManifoldTheorem},
\cite{HasselblattKatok1995IntroductiontotheModernTheoryofDynamicalSystems},
\cite{HirschPughShub1977InvariantManifolds}, \cite{Pesin1976FamiliesofInvariantManifoldsThatCorrespondtoNonzeroCharacteristicExponents},
\cite{Shub1987GlobalStabilityofDynamicalSystems}, and \cite{AbbondandoloMajer2006OntheGlobalStableManifold}.
The holomorphic case was covered in \cite{Wu1993ComplexStableManifoldsofHolomorphicDiffeomorphisms}
and a proof in the non-invertible case can be found in \cite{Abate2001AnIntroductiontoHyperbolicDynamicalSystems}.
\begin{thm}[Stable manifold theorem]
\label{thm:StableMnfThm}Let $F\in\End(\mathbb{C}^{d},0)$ be hyperbolic.
Then the realm of attraction $A_{F}$ and the realm of repulsion $A_{F}^{-}$
are completely $F$-invariant complex manifolds containing $0$ such
that $T_{0}A_{F}=E^{\mathrm{s}}$ and $T_{0}A_{F}^{-}=E^{\mathrm{u}}$.

Moreover there exists a neighbourhood $U\subseteq\mathbb{C}^{d}$
of $0$ such that for every $z\in U\backslash(A_{F}\cup A_{F}^{-})$
both the forward orbit and all of the backward orbits of $z$ escape
from $U$. In other words $A_{F}=\Sigma_{F}(U)$ and $A_{F}^{-}=\Sigma_{F}^{-}(U)$.
\end{thm}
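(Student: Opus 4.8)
The plan is to run the classical Hadamard--Perron argument adapted to the holomorphic category. First I would put $dF_{0}$ in block form with respect to the invariant splitting $\mathbb{C}^{d}=E^{\mathrm{s}}\oplus E^{\mathrm{u}}$ from the statement: writing $z=(x,y)\in E^{\mathrm{s}}\oplus E^{\mathrm{u}}$, we have $F(x,y)=(Ax+f(x,y),\,By+g(x,y))$, where $A$ has spectral radius $\rho_{\mathrm{s}}<1$, every eigenvalue of $B$ has modulus $>1$ (so $B$ is invertible, even when $F$ is not), and $f,g=O(\|z\|^{2})$. Choosing adapted norms on the two factors with $\|A\|\le\mu$, $\|B^{-1}\|\le\nu^{-1}$ for some $\rho_{\mathrm{s}}<\mu<1<\nu$, and the max-norm on the product, on a small enough polydisc $U=U^{\mathrm{s}}\times U^{\mathrm{u}}$ the tails obey $\|f(x,y)\|,\|g(x,y)\|\le\delta(\|x\|+\|y\|)$ with $\delta$ arbitrarily small; I would fix $\delta$ so that $\mu+2\delta<1<\nu-2\delta$, which makes the stable cone $\mathcal{C}^{\mathrm{s}}=\{\|y\|\le\|x\|\}$ and the unstable cone $\mathcal{C}^{\mathrm{u}}=\{\|x\|\le\|y\|\}$ behave as expected under $F|_{U}$.

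Next I would construct the local stable manifold $W^{\mathrm{s}}_{\mathrm{loc}}$ as the graph $\{(x,\phi(x)):x\in U^{\mathrm{s}}\}$ of a holomorphic $\phi\colon(E^{\mathrm{s}},0)\to(E^{\mathrm{u}},0)$ with $d\phi_{0}=0$, by the graph transform: start from $W_{0}=U^{\mathrm{s}}\times\{0\}$ and iterate $W_{n+1}:=F^{-1}(W_{n})\cap U$. The equation $By+g(x,y)=\phi_{n}(Ax+f(x,y))$ can be solved for $y$ in terms of $x$ by the holomorphic implicit function theorem (only invertibility of $B$ is needed), so each $W_{n}$ is the graph of a holomorphic $\phi_{n}$, and the cone condition yields a contraction $\|\phi_{n+1}-\phi_{n}\|_{\infty}\le\kappa\|\phi_{n}-\phi_{n-1}\|_{\infty}$ with $\kappa<1$; hence $\phi_{n}\to\phi$ uniformly and $\phi$ is holomorphic by Weierstrass' theorem. (Alternatively one realizes decaying orbits through a prescribed $x_{0}=\xi$ as the unique fixed point of a Lyapunov--Perron operator on a Banach space of exponentially decaying sequences, with $\phi(\xi)$ the unstable component, depending holomorphically on $\xi$.) By construction $T_{0}W^{\mathrm{s}}_{\mathrm{loc}}=E^{\mathrm{s}}$, $F(W^{\mathrm{s}}_{\mathrm{loc}})\cap U\subseteq W^{\mathrm{s}}_{\mathrm{loc}}$, and $W^{\mathrm{s}}_{\mathrm{loc}}\subseteq F^{-1}(W^{\mathrm{s}}_{\mathrm{loc}})$ near $0$.

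The conceptual core is then to show that, after shrinking $U$, every orbit staying in $U$ already converges to $0$ and lies on $W^{\mathrm{s}}_{\mathrm{loc}}$. If an orbit $\{(x_{n},y_{n})\}\subseteq U$ is not identically $0$ it can never visit $\mathcal{C}^{\mathrm{u}}\setminus\{0\}$, since there $\|y_{n+1}\|\ge(\nu-2\delta)\|y_{n}\|$ forces $\|y_{n}\|$ to grow out of $U^{\mathrm{u}}$; so it stays in $\mathcal{C}^{\mathrm{s}}$, where $\|x_{n+1}\|\le(\mu+2\delta)\|x_{n}\|$ and $\|y_{n}\|\le\|x_{n}\|$, hence $(x_{n},y_{n})\to0$ geometrically. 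This gives $\Sigma_{F}(U)=A_{F}(0,U)$, and uniqueness of the graph identifies both with $W^{\mathrm{s}}_{\mathrm{loc}}$. Globally, $A_{F}(0)=\bigcup_{n\ge0}F^{-n}(W^{\mathrm{s}}_{\mathrm{loc}})$ since every convergent orbit eventually enters $W^{\mathrm{s}}_{\mathrm{loc}}$; complete $F$-invariance is then immediate, and near any $p\in A_{F}(0)$ some iterate $F^{\circ N}$ carries a neighbourhood of $p$ into $W^{\mathrm{s}}_{\mathrm{loc}}$, which together with the invertibility of $B$ (transversality of $F$ to $W^{\mathrm{s}}_{\mathrm{loc}}$ along the unstable directions) realizes $A_{F}(0)$ as a complex manifold. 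The realm of repulsion is symmetric: for invertible $F$ it is the realm of attraction of $F^{-1}$, giving $A_{F}^{-}=\bigcup_{n}F^{\circ n}(W^{\mathrm{u}}_{\mathrm{loc}})$, and in general one repeats the cone/graph-transform argument on decaying backward orbits, again using that $B$ is invertible. Finally $A_{F}=\Sigma_{F}(U)$, $A_{F}^{-}=\Sigma_{F}^{-}(U)$, and a point of $U$ lying in neither cone has its forward orbit escape through $\mathcal{C}^{\mathrm{u}}$ and every backward orbit escape through $\mathcal{C}^{\mathrm{s}}$, so it belongs to neither set.

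The routine but load-bearing part is the adapted-norm and small-polydisc bookkeeping that makes the cone estimates and the contraction constants hold simultaneously. The genuinely delicate point, in the stated generality, is the global manifold structure when $F$ is not invertible: holomorphy of $\phi$ is clean (uniform limits of holomorphic graphs, or the holomorphic implicit function theorem), but showing that $A_{F}$ and especially $A_{F}^{-}$ are honest complex manifolds --- when backward orbits need not be unique and $F^{-n}(W^{\mathrm{s}}_{\mathrm{loc}})$ is not obviously smooth --- requires the extra care of the non-invertible stable/unstable manifold theorem (the treatment of Abate cited above), which exploits that the contraction in the unstable directions comes entirely from the invertible block $B$.
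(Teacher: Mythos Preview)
The paper does not actually prove this theorem: it states it and refers the reader to the literature (Hadamard, Perron, Hirsch--Pugh--Shub, Shub, Katok--Hasselblatt, Wu for the holomorphic case, and Abate for the non-invertible case). Your proposal is a correct outline of precisely the classical Hadamard--Perron/graph-transform argument those references carry out, adapted to the holomorphic setting via Weierstra\ss' theorem, and you have correctly flagged the one genuinely subtle point in the stated generality --- the manifold structure of $A_{F}$ and $A_{F}^{-}$ when $F$ is non-invertible --- and pointed to the same source (Abate) the paper cites for it. So there is nothing to compare: your sketch \emph{is} the standard proof the paper defers to.
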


\begin{rem}
In this case $W^{\mathrm{s}}=A_{F}=\Sigma_{F}$ and $W^{\mathrm{u}}=A_{F}^{-}=\Sigma_{F}^{-}$
are respectively called \emph{stable\index{stable manifold@\emph{stable manifold}}}
and \emph{unstable manifold}\index{unstable manifold@\emph{unstable manifold}}
of $F$.

By Theorem~\ref{thm:PoincareNormalisationPolynom}, the restriction
$F|_{W^{\mathrm{u}}}$ to the unstable manifold is conjugate to a
polynomial normal form and, if $F$ is invertible, so is the restriction
$F|_{W^{\mathrm{s}}}$ to the stable manifold.
\end{rem}

One can think of the stable and unstable manifolds as deformed coordinate
hyperplanes. For invertible germs, these can be topologically ``straightened
out'' yielding one possible proof of the classical Grobman-Hartman
theorem (see e.g.\ \cite{Shub1987GlobalStabilityofDynamicalSystems},
\cite{HasselblattKatok1995IntroductiontotheModernTheoryofDynamicalSystems},
or \cite{Abate2001AnIntroductiontoHyperbolicDynamicalSystems}). The
theorem was first proved by Grobman \cite{Grobman1959HomeomorphismofSystemsofDifferentialEquations,Grobman1962TopologicalClassificationofNeighborhoodsofaSingularityinnSpace}
and Hartman \cite{Hartman1960ALemmaintheTheoryofStructuralStabilityofDifferentialEquations}.
Local differentiability was proved by Guysinsky, Hasselblatt, and
Rayskin \cite{GuysinskyHasselblattRayskin2003DifferentiabilityoftheHartmanGrobmanLinearization}.
The non-invertible attracting case was examined by Hartman \cite{Hartman1960OnLocalHomeomorphismsofEuclideanSpaces}
and the general hyperbolic case by Aulbach and Garay \cite{AulbachGaray1994PartialLinearizationforNoninvertibleMappings}.

\begin{thm}[Grobman-Hartman]
\label{thm:GrobmanHartman}Let $F\in\End(\mathbb{C}^{d},0)$ be a
hyperbolic germ. 
\begin{enumerate}
\item If $F$ is invertible, then $F$ is topologically linearisable by
a local homeomorphism $\Phi$, that is (real) differentiable at the
origin with $\Phi(z)=z+o(\norm z)$ for $z\in\mathbb{C}^{d}$ near
$0$.
\item Let $m$ be the multiplicity of the multiplier $0$ of $F$. Then
$F$ is topologically conjugate to a continuous map
\begin{equation}
(z,w)\mapsto\begin{pmatrix}Lz\\
Nw+o(\norm w)
\end{pmatrix},\label{eq:GrobmanHartmanNilpot}
\end{equation}
where $(z,w)\in\mathbb{C}^{d-m}\times\mathbb{C}^{m}$, $L\in\mathbb{C}^{(d-m)\times(d-m)}$
is invertible, and $N\in\mathbb{C}^{m\times m}$ is nilpotent and
both matrices are in Jordan normal form.
\end{enumerate}
If $d=2$ or $F$ is attracting or repelling, then the local conjugations
(and hence the map (\ref{eq:GrobmanHartmanNilpot})) can be chosen
in $\mathcal{C}^{1}$.

\end{thm}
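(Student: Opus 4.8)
The plan is to prove Theorem~\ref{thm:GrobmanHartman} by the classical functional-equation method: rewrite the conjugacy as a fixed-point equation for a correction $\psi$ with $\Phi=\id+\psi$, solve it by the contraction mapping principle in a Banach space of bounded continuous maps once the germ has been replaced by a well-behaved global model, and then handle invertibility and the $\mathcal C^1$ refinements separately. (The geometric picture behind this is the one indicated after Theorem~\ref{thm:StableMnfThm}: the fixed point $\psi$ is what straightens out the invariant manifolds and the ``mixed'' skew-product behaviour between them.) First I would normalise: after a linear change of coordinates assume $L:=dF_0=\diag(L_s,L_u)$ on $\mathbb{C}^d=E^{\mathrm s}\oplus E^{\mathrm u}$ in Jordan form, with an adapted norm for which $\|L_s\|<1$ and $\|L_u^{-1}\|<1$, and write $F=L+f$ with $f(z)=O(\norm z^{2})=o(\norm z)$. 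Multiplying $f$ by a cut-off equal to $1$ near $0$ and $0$ outside a small ball yields $\tilde F=L+\tilde f$ with $\tilde f$ equal to $f$ near $0$, globally bounded, and of arbitrarily small global Lipschitz constant; a global topological linearisation of $\tilde F$ restricts to one of $F$ near $0$, so it suffices to linearise $\tilde F$.

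The core step is the operator $\mathcal L\psi:=\psi\circ L-L\circ\psi$ on the Banach space $C^0_b(\mathbb{C}^d,\mathbb{C}^d)$ of bounded continuous maps. Writing $\psi=(\psi_s,\psi_u)$ and solving each component, $\mathcal L\psi=g$ is equivalent to $\psi_s=L_s\cdot(\psi_s\circ L^{-1})+g_s\circ L^{-1}$ and $\psi_u=L_u^{-1}\cdot(\psi_u\circ L)-L_u^{-1}g_u$; both of these are fixed-point problems for contractions (operator norms $\le\|L_s\|<1$ and $\le\|L_u^{-1}\|<1$), so $\mathcal L$ is boundedly invertible with $\|\mathcal L^{-1}\|\le C$ for an explicit $C=C(\|L_s\|,\|L_u^{-1}\|)$. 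With $\Phi=\id+\psi$ the conjugacy equation $\Phi\circ L=\tilde F\circ\Phi$ becomes $\mathcal L\psi=\tilde f\circ(\id+\psi)$, i.e.\ $\psi=\mathcal L^{-1}\!\bigl(\tilde f\circ(\id+\psi)\bigr)$; choosing $\mathrm{Lip}(\tilde f)<1/C$ makes the right-hand side a contraction of $C^0_b$, giving a unique fixed point $\psi$ and a map $\Phi$ conjugating $\tilde F$ to $L$.

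It remains to see that $\Phi$ is a homeomorphism, to get differentiability at $0$, and to treat the non-invertible case. For the first point (in the invertible case) I would run the same scheme with $\tilde F$ and $L$ interchanged to produce $\Psi=\id+\psi'$ with $\Psi\circ\tilde F=L\circ\Psi$; then $\Psi\circ\Phi$ is an ``$\id$-plus-bounded'' self-conjugacy of $L$, and the uniqueness already used ($\mathcal L\theta=0\Rightarrow\theta=0$ on $C^0_b$) forces $\Psi\circ\Phi=\id$, and symmetrically $\Phi\circ\Psi=\id$. For differentiability, since $\tilde f(z)=o(\norm z)$ near $0$ one reruns the fixed-point argument on the subspace of $\psi\in C^0_b$ controlled by a modulus of continuity that $\mathcal L^{-1}$ distorts by only a bounded factor, obtaining $\Phi(z)=z+o(\norm z)$; in particular $\Phi(0)=0$. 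For part~2, decompose $\mathbb{C}^d=V\oplus W$ with $W$ the generalised $0$-eigenspace ($\dim W=m$), so that $dF_0=L\oplus N$ with $L$ invertible hyperbolic and $N$ nilpotent; now $\mathcal L$ is still boundedly invertible on the $V$-block but \emph{not} on the $W$-block (as $N$ is not invertible), so the same scheme eliminates exactly the terms involving the $V$-variables and decouples the two blocks, conjugating $F$ to $(z,w)\mapsto(Lz,Nw+o(\norm w))$ — the surviving $W$-nonlinearity is precisely what cannot be removed, hence the $o(\norm w)$ remainder (cf.\ Aulbach--Garay and Hartman).

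Finally, for the $\mathcal C^1$ clause: in the attracting case (and, via $F^{-1}$ and Lemma~\ref{lem:AttrIffTopAttr}, the repelling case) one repeats the construction in a space of $\mathcal C^1$ maps with $d\psi$ bounded; differentiating the fixed-point equation, the smallness of $\mathrm{Lip}(\tilde f)$ together with the \emph{one-sided} spectral gap of a contraction controls the loss of a derivative, so the fixed point is $\mathcal C^1$ and $d\Phi_0=\id$. For $d=2$ the saddle case needs a genuinely two-dimensional argument (Hartman): one straightens the stable and unstable foliations in $\mathcal C^1$ and matches the two linearisations, the only obstruction being a resonance in the relevant mixed derivative, which in complex dimension $2$ can always be dealt with. \textbf{The hard part is exactly this last point.} The bare fixed-point scheme only yields a $\mathcal C^0$ conjugacy, and upgrading to $\mathcal C^1$ requires the hyperbolicity rates to beat the derivative loss; this succeeds when the spectrum is one-sided (contractions/expansions) and, for a separate reason, in dimension $2$, but it genuinely fails for higher-dimensional saddles, which is why the $\mathcal C^1$ statement carries exactly those hypotheses.
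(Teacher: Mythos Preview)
The paper does not give its own proof of this theorem; it only states it and cites Grobman, Hartman, Guysinsky--Hasselblatt--Rayskin (for the differentiability at the origin), and Aulbach--Garay (for the non-invertible case). So there is nothing to compare against directly. Your sketch is the standard functional-equation approach (essentially Pugh's proof, as in Shub or Katok--Hasselblatt), and the architecture --- cut off to a global small-Lipschitz perturbation, invert $\mathcal L$ blockwise using $\|L_s\|<1$ and $\|L_u^{-1}\|<1$, then close up as a contraction on $C^0_b$ --- is correct and matches what the cited references do.

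Two places deserve tightening. First, ``symmetrically $\Phi\circ\Psi=\id$'' is not quite symmetric: $\Psi\circ\Phi$ is an $\id$-plus-bounded self-conjugacy of $L$, so $\ker\mathcal L=0$ applies, but $\Phi\circ\Psi$ is a self-conjugacy of $\tilde F$, not of $L$. The clean fix is to note that $\Psi\circ\Phi=\id$ makes $\Phi$ injective, and since $\Phi=\id+\psi$ with $\psi$ bounded it is proper; invariance of domain then gives that $\Phi$ is a homeomorphism of $\mathbb C^d$. Second, the step ``rerun the fixed-point argument on a modulus-of-continuity subspace to get $\psi(z)=o(\|z\|)$'' hides real content: this is precisely the Guysinsky--Hasselblatt--Rayskin result the paper cites, and it does \emph{not} follow from the $C^0_b$ contraction alone, because $\mathcal L^{-1}$ involves composing with arbitrarily high iterates $L^{\pm n}$, which can push points far from $0$ where the $o(\|z\|)$ information is lost. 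One needs an explicit estimate along hyperbolic orbits (or the H\"older argument in that paper). Your identification of the $\mathcal C^1$ issue --- one-sided spectrum or $d=2$ works, general saddles do not --- is exactly right and matches Hartman's original analysis.
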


\begin{rem}
The conditions given for the last assertion are special cases of a
sufficient condition requiring that all attracting multipliers stay
close to each other and the repelling multipliers do the same (see
\cite{Hartman1960OnLocalHomeomorphismsofEuclideanSpaces}). Otherwise,
the regularity drops to $\alpha$-Hölder continuity with $0<\alpha<1$
depending on the arrangement of the multipliers (see \cite{GuysinskyHasselblattRayskin2003DifferentiabilityoftheHartmanGrobmanLinearization}).
\end{rem}

The precise topological classification of the superattracting part
is still open. In the purely superattracting case, Hubbard and Papadopol
\cite{HubbardPapadopol1994SuperattractiveFixedPointsinCn} ruled out
a result like Theorem~\ref{thm:Boettcher} that allows for general
topological conjugation to the lowest-order non-vanishing term. In
\cite{Favre2000Classificationof2DimensionalContractingRigidGermsandKatoSurfacesI}
and \cite{FavreJonsson2004TheValuativeTree,FavreJonsson2007Eigenvaluations},
Favre and Jonsson classified the so-called \emph{rigid} superattracting
germs in $\mathbb{C}^{2}$ and showed that the bigger class of \emph{dominant}
superattracting germs can be transformed into rigid germs via a sequence
of blow-ups. Ruggiero extended some of their results to non-invertible
germs in $\mathbb{C}^{2}$ in \cite{Ruggiero2012RigidificationofHolomorphicGermswithNoninvertibleDifferential},
and to superattracting germs in higher dimensions in \cite{Ruggiero2013ContractingRigidGermsinHigherDimensions}.
Ueda and Ushiki present Böttcher theorems in special cases of superattracting
germs in \cite{Ueda1992ComplexDynamicalSystemsonProjectiveSpaces}
and \cite{Ushiki1992BottchersTheoremandSuperStableManifoldsforMultidimensionalComplexDynamicalSystems}.
In \cite{BuffEpsteinKoch2012BottcherCoordinates}, Buff, Epstein,
and Koch establish necessary and sufficient conditions for a so-called
\emph{adapted} superattracting germ to admit a holomorphic conjugation
to its lowest order terms in each component. The definition of an
adapted germ implies the existence of a splitting of \noun{$\mathbb{C}^{d}$
}into a direct sum of locally invariant subspaces on which the lowest
order term of the action of the germ is a non-degenerate homogeneous
map. The condition for holomorphic normalisation encodes a ``compatibility''
of the postcritical loci of the germ and its normal form.

\section{Stable manifolds at non-hyperbolic fixed points}

Theorem~\ref{thm:StableMnfThm} is, in fact, a special case of a
more general theorem (see e.g.\ \cite{Abate2001AnIntroductiontoHyperbolicDynamicalSystems}),
that is a powerful tool even for non-hyperbolic germs:
\begin{thm}[Stable, unstable, and centre manifold]
\label{thm:StableCentreMnf}Let $F\in\End(\mathbb{C}^{d},0)$ and
$E^{\mathrm{s}}$, $E^{\mathrm{c}}$, and $E^{\mathrm{u}}$ the direct
sums of the generalised eigenspaces of $dF_{0}$ corresponding to
its attracting, neutral, and repelling eigenvalues, respectively.
Then for each $r\in\mathbb{N}$, there exists a neighbourhood $U\subseteq\mathbb{C}^{d}$
of $0$ and
\begin{enumerate}
\item a unique completely $F$-invariant complex submanifold
\[
W^{\mathrm{ss}}=\{z\in\Sigma_{F}(U)\mid\exists c\in(0,1)\text{ s.t. }z_{n}=O(c^{n})\}\subseteq U
\]
such that $0\in W^{\mathrm{ss}}\subseteq\Sigma_{F}(U)$ and $T_{0}W^{\mathrm{ss}}=E^{\mathrm{s}}$.
\item a real $\mathcal{C}^{r}$-smooth $F$-invariant submanifold $W^{\mathrm{cs}}\subseteq U$
such that $0\in\Sigma_{F}(U)\subseteq W^{\mathrm{cs}}$ and $T_{0}^{\mathbb{R}}W^{\mathrm{cs}}=E^{\mathrm{c}}\oplus E^{\mathrm{s}}$.
\item a real $\mathcal{C}^{r}$-smooth $F$-invariant submanifold $W^{\mathrm{c}}\subseteq U$
such that $0\in\Sigma_{F}(U)\cap\Sigma_{F}^{-}(U)\subseteq W^{\mathrm{c}}$
and $T_{0}^{\mathbb{R}}W^{\mathrm{c}}=E^{\mathrm{c}}$.
\item a real $\mathcal{C}^{r}$-smooth $F$-invariant submanifold $W^{\mathrm{cu}}\subseteq U$
such that $0\in\Sigma_{F}^{-}(U)\subseteq W^{\mathrm{cu}}$ and $T_{0}^{\mathbb{R}}W^{\mathrm{cu}}=E^{\mathrm{c}}\oplus E^{\mathrm{u}}$.
\item a unique completely $F$-invariant complex submanifold
\[
W^{\mathrm{su}}=\{z\in\Sigma_{F}^{-}(U)\mid\exists c\in(0,1),\{z_{-n}\}_{n}\subseteq U\text{ backward orbit of }z\text{ s.t. }z_{-n}=O(c^{n})\}\subseteq U
\]
such that $0\in W^{\mathrm{su}}\subseteq\Sigma_{F}^{-}(U)$ and $T_{0}W^{\mathrm{su}}=E^{\mathrm{u}}$.
\end{enumerate}
\end{thm}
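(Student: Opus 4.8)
The plan is to obtain each of the five sets as the graph of a map from one of the $\Lambda$-invariant subspaces $E^{\mathrm{s}},E^{\mathrm{c}},E^{\mathrm{u}}$ of $\Lambda:=dF_{0}$ into its complement, produced as the fixed point of a graph transform, and then to read off the dynamical characterisations, the tangencies at $0$, and the regularities from the construction. The purely hyperbolic situation (no centre part) is Theorem~\ref{thm:StableMnfThm}, so the new content is the presence of neutral multipliers and the fact that $F$ need not be invertible.

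First I would normalise: put $\Lambda$ in Jordan form so that it preserves $\mathbb{C}^{d}=E^{\mathrm{s}}\oplus E^{\mathrm{c}}\oplus E^{\mathrm{u}}$, and choose a norm adapted to this splitting in which the restriction to $E^{\mathrm{s}}$ has norm $<a<1$, the restriction to $E^{\mathrm{u}}$ has inverse of norm $<b^{-1}$ with $b>1$, and both $\Lambda|_{E^{\mathrm{c}}}$ and its inverse have norm $<1+\varepsilon$, with $\varepsilon$ as small as desired. Writing $F=\Lambda+f$ with $f(z)=O(\|z\|^{2})$, I would fix $r\in\mathbb{N}$ and replace $f$ by $\chi_{\rho}f$ for a bump function supported in $B_{\rho}(0)$: the resulting globally defined $\tilde F=\Lambda+\chi_{\rho}f$ agrees with $F$ on $B_{\rho/2}(0)$ and has nonlinear part of arbitrarily small $\mathcal{C}^{r}$-norm once $\rho$ is small. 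Since $\rho$ must be taken smaller as $r$ grows, the neighbourhood $U$ carrying the $\mathcal{C}^{r}$-manifolds in the statement will depend on $r$, as it must.

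Then for the strong stable manifold I would run the graph transform on Lipschitz maps $\sigma\colon E^{\mathrm{s}}\to E^{\mathrm{c}}\oplus E^{\mathrm{u}}$ with $\sigma(0)=0$: pushing forward the graph of $\sigma$ by $\tilde F$ and re-expressing it as a graph over $E^{\mathrm{s}}$ defines an operator which is a contraction in the sup metric once the rate gap $a(1+\varepsilon)<1$ holds and the nonlinearity is small enough; its unique fixed point has graph $W^{\mathrm{ss}}$, invariance is automatic, the equivalence $z\in W^{\mathrm{ss}}\Leftrightarrow z_{n}=O(c^{n})$ for some $c\in(a,1)$ follows by contrasting the contraction along the graph with the escape of the transverse components, and $T_{0}W^{\mathrm{ss}}=E^{\mathrm{s}}$ comes from linearising the fixed-point equation at $0$. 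The same machinery over $E^{\mathrm{c}}\oplus E^{\mathrm{s}}$ (values in $E^{\mathrm{u}}$) yields $W^{\mathrm{cs}}$, which contains $\Sigma_{F}(U)$ since a non-escaping forward orbit cannot carry a growing $E^{\mathrm{u}}$-component; applying it to the expanding directions via the pullback graph transform, phrased throughout in terms of backward orbits $\{z_{-n}\}_{n}$ so as to cover non-invertible $F$ (for invertible $F$ these are simply $W^{\mathrm{ss}},W^{\mathrm{cs}}$ of $F^{-1}$), yields $W^{\mathrm{su}}$ and $W^{\mathrm{cu}}$; and $W^{\mathrm{c}}$ I would obtain as a graph over $E^{\mathrm{c}}$ either directly (a Lyapunov--Perron selection of orbits of subexponential growth in both time directions) or as the intersection $W^{\mathrm{cs}}\cap W^{\mathrm{cu}}$, with $\Sigma_{F}(U)\cap\Sigma_{F}^{-}(U)\subseteq W^{\mathrm{c}}$ for the analogous reason.

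The remaining work is regularity: $\mathcal{C}^{r}$-smoothness of all five graphs I would get by a fibre-contraction argument, differentiating the graph transform and bootstrapping from $\mathcal{C}^{1}$ to $\mathcal{C}^{r}$. The hard part will be the holomorphy of $W^{\mathrm{ss}}$ and $W^{\mathrm{su}}$: the localisation $\tilde F$ is only smooth, so the graph transform a priori gives smooth manifolds, and I would upgrade this by iterating the transform from the holomorphic initial graph $\sigma\equiv 0$ on $B_{\rho/2}(0)$, where the genuine holomorphic $F$ is seen, and invoking Weierstrass' theorem on the locally uniform limit of the holomorphic iterates. Uniqueness of $W^{\mathrm{ss}}$ and $W^{\mathrm{su}}$ is then immediate from their purely dynamical description, whereas $W^{\mathrm{cs}},W^{\mathrm{c}},W^{\mathrm{cu}}$ remain merely $\mathcal{C}^{r}$ and genuinely non-unique, exactly as the statement allows. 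Beyond the holomorphy issue, I expect the bookkeeping needed to carry the whole scheme through for a general non-invertible $F$ --- where backward orbits need not be unique and the expanding directions must be handled by pullback --- to be the other main technical burden.
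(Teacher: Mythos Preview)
The paper does not give its own proof of this theorem: it is stated as a standard result with a pointer to the literature (``see e.g.\ \cite{Abate2001AnIntroductiontoHyperbolicDynamicalSystems}''), and the surrounding references \cite{HirschPughShub1977InvariantManifolds}, \cite{Shub1987GlobalStabilityofDynamicalSystems}, \cite{HasselblattKatok1995IntroductiontotheModernTheoryofDynamicalSystems}, \cite{Wu1993ComplexStableManifoldsofHolomorphicDiffeomorphisms} supply the various pieces. So there is nothing in the paper to compare your proposal against at the level of argument.

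That said, your sketch is the standard route taken in those references: adapted norm giving a spectral gap, cutoff to globalise, graph transform (or equivalently Lyapunov--Perron) over the relevant invariant subspace, fibre contraction for $\mathcal{C}^{r}$-regularity, and the dynamical characterisation of the strong manifolds via exponential rates. Your identification of the two genuine subtleties --- recovering holomorphy of $W^{\mathrm{ss}}$ and $W^{\mathrm{su}}$ after a non-holomorphic cutoff, and handling the unstable/centre-unstable side for non-invertible $F$ via backward orbits rather than $F^{-1}$ --- is accurate and matches what the specialised references (\cite{Wu1993ComplexStableManifoldsofHolomorphicDiffeomorphisms} for the holomorphic strong manifolds, \cite{Abate2001AnIntroductiontoHyperbolicDynamicalSystems} for the non-invertible case) address. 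One small point: your proposed route to holomorphy by iterating from $\sigma\equiv 0$ and invoking Weierstrass needs care, since the graph-transform iterates leave the ball where $F$ is holomorphic; the cleaner argument in the literature is to observe that $W^{\mathrm{ss}}$, once built as a $\mathcal{C}^{1}$ manifold, is locally $F$-invariant by the genuine holomorphic $F$, and then to use the dynamical characterisation plus a direct holomorphic graph-transform or the complex-analytic implicit function theorem on a small enough ball to upgrade regularity there.
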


\begin{rem}
\begin{enumerate}
\item $W^{\mathrm{ss}}$ and $W^{\mathrm{su}}$, $W^{\mathrm{c}}$, and
$W^{\mathrm{cs}}$ and $W^{\mathrm{cu}}$ are respectively called
\emph{strong stable} and \emph{strong unstable manifold}, \emph{centre
manifold}, and \emph{centre stable} and \emph{centre unstable manifold}.
\item The failure of the three centre manifolds to be complex manifolds
or even $\mathcal{C}^{\infty}$ is closely related to their non-uniqueness.
\item This is still not the most general form of the theorem. It can be
further refined to a filtration of the strong manifolds $W^{\mathrm{ss}}$
and $W^{\mathrm{su}}$ according to the absolute values of the corresponding
eigenvalues, by choosing suitable fixed constants $c\in(0,1)$ in
their definition (see \cite{HirschPughShub1977InvariantManifolds}
and \cite{Abate2001AnIntroductiontoHyperbolicDynamicalSystems}).
\end{enumerate}
\end{rem}

\section{\label{sec:BrjunoConditions}Brjuno conditions and holomorphic normal
forms}

\selectlanguage{british}

As in one variable, even formally linearisable germs may not admit
a holomorphic linearisation. The convergence of formal linearisation
depends on the denominators $\lambda^{\alpha}-\lambda_{j}$ for $\alpha\in\mathbb{N}^{d}$
and $j\le d$. The Brjuno condition (\ref{eq:1DBrjunoCond}) generalises
to several variables as follows:
\begin{defn}
For a tuple $\lambda=(\lambda_{1},\ldots,\lambda_{d})\in\mathbb{C}^{d}$
and an integer $m\ge2$, let 
\[
\omega(\lambda,m):=\min\{|\lambda^{\alpha}-\lambda_{j}|\mid2\le|\alpha|\le m,1\le j\le d\}.
\]
 We say the numbers $\lambda_{1},\ldots,\lambda_{d}$ satisfy the
\emph{Brjuno condition\index{Brjuno condition}}, if 
\begin{equation}
\sum_{i=0}^{\infty}\frac{1}{p_{i}}\log\frac{1}{\omega(\lambda,p_{i+1})}<\infty\label{eq:BrjunoCond}
\end{equation}
for a strictly increasing sequence of positive integers $\{p_{i}\}_{i\in\mathbb{N}}$.
\end{defn}

\begin{rem}
We can, without loss of generality, use $p_{k}=2^{k}$ in the above
definition (and its generalisations below), but unlike in dimension
one, there is no known number theoretic version of the Brjuno condition
in terms of continued fractions. The Brjuno condition implies in particular,
that there are no resonances.
\end{rem}

\begin{thm}[Brjuno \cite{Brjuno1971AnalyticFormofDifferentialEquationsIII}]
\label{thm:Brjuno}Let $F\in\End(\mathbb{C}^{d},0)$ with multipliers
$\lambda_{1},\ldots,\lambda_{d}$. If $(\lambda_{1},\ldots,\lambda_{d})$
satisfies the Brjuno condition (\ref{eq:BrjunoCond}), then $F$ is
holomorphically linearisable.
\end{thm}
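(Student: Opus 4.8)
The plan is to run the classical majorant (method of majorants) argument, which is the natural several-variable analogue of the one-variable Siegel/Brjuno proof, controlling the small divisors $\omega(\lambda,m)$ by the Brjuno sum. First I would normalise the setup: since the Brjuno condition forces $\omega(\lambda,m)>0$ for all $m$, there are no resonances, so by Corollary~\ref{cor:PoincNoResLin} the unique formal linearisation $H\in\Pow_1(\mathbb C^d,0)$ exists, with $dF_0$ diagonalisable; after a linear change of coordinates assume $dF_0=\Lambda=\diag(\lambda_1,\ldots,\lambda_d)$ and write $F(z)=\Lambda z+f(z)$, $H(z)=z+h(z)$, with $h=\sum_{|\alpha|\ge2}h_\alpha z^\alpha$ determined by the homological equation in the form of (\ref{eq:HomolExplicit-full}), here with $g=0$ and no nilpotent correction, i.e. coordinatewise $h_\alpha^j=(\lambda^\alpha-\lambda_j)^{-1}\cdot(\text{polynomial in lower-order }h\text{'s and the }f_\alpha)$.

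The core estimate is a recursive majorant bound. Since $F\in\End(\mathbb C^d,0)$, pick $\rho>0$ with $\sum_\alpha|f_\alpha^j|\rho^{|\alpha|}<\infty$, so $|f_\alpha^j|\le C\sigma^{-|\alpha|}$ for suitable $C,\sigma>0$. Define auxiliary numbers $a_k$ ($k\ge1$) by $a_1=1$ and a recursion that dominates the formal recursion for $|h_\alpha|$ \emph{after} stripping off the small-divisor factors — concretely the generating function $A(t)=\sum_k a_k t^k$ should satisfy a fixed-point/functional equation of the shape $A(t)=t+\text{(const)}\cdot\bigl(\text{analytic function of }A(t)\bigr)$, so that $A$ is a convergent power series with positive radius of convergence $r_0>0$; this is the "resonance-free, divisor-free" majorant and is entirely standard (it is the same computation as bounding a formal conjugation to a \emph{linear} target when all divisors are $\ge1$). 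Then one proves by induction on $|\alpha|$ that
\[
|h_\alpha^j|\le a_{|\alpha|}\cdot\delta(|\alpha|)\,,
\]
where $\delta(m)$ is an explicit product of inverse small divisors picked up along the "decomposition tree" of the monomial $z^\alpha$: each time (\ref{eq:HomolExplicit-full}) is applied at a node one loses a factor $\omega(\lambda,|\alpha|)^{-1}$, and a monomial of degree $m$ has at most $m-1$ internal nodes, so $\delta(m)\le\prod \omega(\lambda,m_i)^{-1}$ over the degrees $m_i$ occurring in some binary-type decomposition of $m$.

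The heart of the matter — and the step I expect to be the main obstacle — is the arithmetic lemma bounding $\delta(m)$, i.e. showing $\limsup_m \delta(m)^{1/m}<\infty$, equivalently $\frac1m\log\delta(m)$ stays bounded, \emph{using exactly} the convergence of the Brjuno series $\sum_i p_i^{-1}\log\omega(\lambda,p_{i+1})^{-1}<\infty$. The mechanism (Brjuno's key combinatorial observation) is that although a degree-$m$ monomial may accumulate up to $m-1$ divisor factors, the factors $\omega(\lambda,k)^{-1}$ for a given size-scale $k$ can occur only a bounded number of times \emph{per unit of degree}: a clean way to see this is to bound, for each dyadic scale, the number of nodes in the decomposition tree whose degree lies in $[2^i,2^{i+1})$ by $O(m/2^i)$, so that $\log\delta(m)\le\sum_i O(m/2^i)\log\omega(\lambda,2^{i+1})^{-1}=O(m)\cdot B(\lambda)$ with $B(\lambda)=\sum_i 2^{-i}\log\omega(\lambda,2^{i+1})^{-1}<\infty$. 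One must be careful that the recursion in (\ref{eq:HomolExplicit-full}) produces \emph{compositions} (products $h_{\beta_1}\cdots h_{\beta_k}$ with variable arity $k$), not just binary products, so the "tree" is really a rooted tree of unbounded branching; the counting argument needs the total number of leaves to be controlled, which it is since $|\beta_1|+\cdots+|\beta_k|=|\alpha|$ forces the tree to have $O(m)$ vertices total. Once $\delta(m)=e^{O(m)}$ is established, combining with the convergence of $A(t)$ gives $|h_\alpha^j|\le K^{|\alpha|}a_{|\alpha|}$ for some $K<\infty$, hence $\sum_\alpha|h_\alpha^j|r^{|\alpha|}<\infty$ for $r<r_0/K$, so $H$ converges on a neighbourhood of $0$ and, having invertible linear part, is a local biholomorphism linearising $F$. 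I would present the majorant construction and the inductive bound in full but relegate the routine convergence of $A(t)$ and the bookkeeping of constants to brief remarks, spending the real effort on the dyadic counting lemma for $\delta(m)$, which is where the Brjuno hypothesis is used and is the only genuinely delicate point.
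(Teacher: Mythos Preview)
Your overall architecture is correct and matches the paper's proof (given there as the special case $A_0=\{|\alpha|\le1\}$, $A=\mathbb N^d\setminus A_0$ of Theorem~\ref{thm:GeneralElimination}): solve the homological equation formally, then bound $\norm{h_\alpha}\le\sigma_{|\alpha|}\delta_\alpha$ with the combinatorial factor $\sigma_r$ handled by a generating-function fixed point (your $A(t)$, the paper's $\sigma(t)$) and the small-divisor factor $\delta_\alpha$ controlled via the Brjuno sum.

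However, your proposed mechanism for bounding $\delta_\alpha$ has a genuine gap. You replace each actual divisor $\varepsilon_\alpha=\min_j|\lambda^\alpha-\lambda_j|$ by the worst case at that degree, $\omega(\lambda,|\alpha|)$, and then claim that the number of tree nodes with degree in $[2^i,2^{i+1})$ is $O(m/2^i)$. That count is false: the ``caterpillar'' tree (at each step split off a single leaf) has internal nodes of every degree $2,3,\ldots,m$, hence about $2^i$ nodes at scale $[2^i,2^{i+1})$; and once you have passed to the degree-only majorant $\hat\delta_m:=\omega(\lambda,m)^{-1}\max\prod\hat\delta_{m_i}$ the caterpillar is in fact the maximiser, giving $\log\hat\delta_m=\sum_{k=2}^m\log\omega(\lambda,k)^{-1}$, which for a Brjuno-admissible choice like $\log\omega(\lambda,2^{i+1})^{-1}\sim 2^i/i^2$ grows like $m^2/(\log m)^2$, so $\tfrac1m\log\hat\delta_m\to\infty$. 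Replacing $\varepsilon_\alpha$ by $\omega(\lambda,|\alpha|)$ discards precisely the arithmetic information needed. What the paper does instead (Lemmas~\ref{lem:Siegel} and~\ref{lem:Brjuno}) is to bin the nodes by the size of the \emph{actual} divisor $\varepsilon_{\alpha_l}$ and invoke Siegel's spacing lemma: if $\varepsilon_\alpha,\varepsilon_\beta<\theta\,\omega(\lambda,m)$ with the same minimising index $j$ and $\alpha>\beta$, then $|\alpha-\beta|\ge m$. This forces nodes with genuinely small divisor to be sparse along any chain of the decomposition tree, and an induction on $|\alpha|$ (Brjuno's lemma) then yields the correct count $\#\{l:\varepsilon_{\alpha_l}<\theta\,\omega(\lambda,2^l)\}\le 2d\,|\alpha|\,2^{-l}$, from which the dyadic bound on $\tfrac1{|\alpha|}\log\delta_\alpha$ follows. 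Your dyadic intuition is right, but the binning must be by $\varepsilon_{\alpha_l}$ rather than by degree, and the $O(m/2^l)$ count needs Siegel's lemma as its engine.
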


In more than one variable, it is not known whether the Brjuno condition
(\ref{eq:BrjunoCond}) is strict, in that we do not know if for any
tuple $(\lambda_{1},\ldots,\lambda_{d})$ not satisfying the Brjuno
condition, there exists a non-linearisable germ with those multipliers.
We do, however still have a negative result under a stronger condition
analogous to Cremer's theorem~\ref{thm:Cremer}:
\begin{thm}[\cite{Brjuno1971AnalyticFormofDifferentialEquationsIII,Brjuno1973Analyticalformofdifferentialequations}]
Let $\lambda=(\lambda_{1},\ldots,\lambda_{d})\in\mathbb{C}^{d}$
be without resonances and such that 
\[
\limsup_{m\to+\infty}\frac{1}{m}\log\paren[{\bigg}]{\frac{1}{\omega(\lambda,m)}}=+\infty.
\]
Then there exists a germ $F\in\End(\mathbb{C}^{d},0)$ with linear
part $\diag(\lambda)$ that is not holomorphically linearisable.
\end{thm}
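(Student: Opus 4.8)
The plan is to transplant Cremer's construction from the proof of Theorem~\ref{thm:Cremer} to $d$ variables: I will build a convergent germ $F$ with linear part $\Lambda:=\diag(\lambda)$ whose unique formal linearisation diverges, because the small divisors $\lambda^\alpha-\lambda_j$ appear as denominators in its Taylor coefficients and their smallness is deliberately not cancelled. Since $\lambda$ has no resonances, for any formal germ $F(z)=\Lambda z+\sum_{|\alpha|\ge 2}\sum_j f^j_\alpha z^\alpha e_j$ the formal linearisation $H(z)=z+\sum_{|\alpha|\ge 2}h_\alpha z^\alpha$ normalised by $dH_0=\id$ exists and is \emph{unique}: specialising the homological equation~(\ref{eq:HomolExplicit-full}) to $\Lambda$ diagonal and to the linear target $G=\Lambda z$ (so all $g_\alpha=0$) yields, for $|\alpha|\ge 2$ and $1\le j\le d$,
\[
(\lambda^\alpha-\lambda_j)\,h^j_\alpha=f^j_\alpha+S^j_\alpha,\qquad S^j_\alpha:=\sum_{2\le k<|\alpha|}\ \sum_{j_1\le\cdots\le j_k}\ \sum_{\beta_1+\cdots+\beta_k=\alpha}f^j_{e_J}\,h^{j_1}_{\beta_1}\cdots h^{j_k}_{\beta_k},
\]
with $e_J:=e_{j_1}+\cdots+e_{j_k}$, where $\lambda^\alpha\neq\lambda_j$ by the no-resonance hypothesis and $S^j_\alpha$ depends only on coefficients $f_\gamma,h_\gamma$ of orders $2\le|\gamma|\le|\alpha|-1$ — in particular the top-order coefficient $f^j_\alpha$ enters only through the displayed linear term. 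Thus $H$ is determined recursively by $F$. Moreover, if such an $F$ were holomorphically linearisable by $\Phi\in\Aut(\mathbb{C}^{d},0)$, then taking linear parts in $\Phi^{-1}\circ F\circ\Phi=\Lambda$ shows $A:=d\Phi_0$ commutes with $\Lambda$, so $\Psi:=\Phi\circ A^{-1}$ is again a holomorphic linearisation with $d\Psi_0=\id$; by uniqueness $\Psi=H$, so $H$ converges. Hence it suffices to produce a convergent $F\in\End(\mathbb{C}^{d},0)$ with linear part $\Lambda$ whose formal linearisation $H$ diverges.

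To exploit the hypothesis I first extract a sequence of near-resonances. Since $m\mapsto\omega(\lambda,m)$ is non-increasing and $\limsup_{m\to\infty}\tfrac1m\log\tfrac1{\omega(\lambda,m)}=+\infty$, I may choose integers $m_1<m_2<\cdots$ with $\omega(\lambda,m_k)<e^{-k\,m_k}$, and for each $k$ a pair $(\alpha^{(k)},j^{(k)})$ realising the minimum defining $\omega(\lambda,m_k)$. Writing $N_k:=|\alpha^{(k)}|\le m_k$, monotonicity of $\omega$ forces $\omega(\lambda,N_k)=\omega(\lambda,m_k)$, hence $0<|\lambda^{\alpha^{(k)}}-\lambda_{j^{(k)}}|<e^{-k N_k}$; and $N_k\to\infty$, since otherwise $\omega(\lambda,N_k)$ would stay bounded below by a positive constant (with no resonances there is no exact vanishing among finitely many divisors).

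Now I construct $F$, choosing the homogeneous layers $\{f^j_\alpha:|\alpha|=n\}$ inductively on $n\ge 2$. Having fixed all coefficients of order $<n$ — which, by the recursion above, determines $h_\gamma$ for all $|\gamma|<n$ and therefore all the numbers $S^j_\alpha$ with $|\alpha|=n$ — I set $f^j_\alpha:=e^{i\arg S^j_\alpha}$ when $S^j_\alpha\neq 0$ and $f^j_\alpha:=1$ otherwise; this is Cremer's phase-alignment trick, and it gives $|f^j_\alpha+S^j_\alpha|=1+|S^j_\alpha|\ge 1$, whence
\[
|h^j_\alpha|=\frac{|f^j_\alpha+S^j_\alpha|}{|\lambda^\alpha-\lambda_j|}\ge\frac{1}{|\lambda^\alpha-\lambda_j|}\qquad(|\alpha|\ge 2).
\]
All $|f^j_\alpha|=1$ and the number of degree-$n$ monomials grows only polynomially in $n$, so the series $F$ converges on the unit polydisc, i.e.\ $F\in\End(\mathbb{C}^{d},0)$ with $dF_0=\Lambda$. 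Finally, restricting to a sub-subsequence on which $j^{(k)}$ equals a fixed index $j_0$, the estimate gives $|h^{j_0}_{\alpha^{(k)}}|\ge e^{k N_k}$, so $|h^{j_0}_{\alpha^{(k)}}|^{1/N_k}\ge e^{k}\to\infty$; hence $\limsup_{|\alpha|\to\infty}|h^{j_0}_\alpha|^{1/|\alpha|}=+\infty$ and the component series $h^{j_0}$ (so also $H$) has empty domain of convergence. Therefore $F$ is not holomorphically linearisable.

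The only step needing care is the structural claim underlying the induction: that $f_\alpha$ of top order occurs in the homological equation only as the isolated linear term, so that $S^j_\alpha$ is already known at the moment $f^j_\alpha$ is chosen, making the construction non-circular. This is exactly the non-circularity used in the proof of Theorem~\ref{thm:PoincareDulac}, resting on $H$ having no linear part and $G$ being linear; everything else is the multi-index bookkeeping of the one-variable argument behind Theorem~\ref{thm:Cremer}.
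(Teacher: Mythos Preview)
Your proof is correct. The paper does not actually supply a proof of this statement---it only cites Brjuno and remarks that the result is ``analogous to Cremer's theorem~\ref{thm:Cremer}''---so there is nothing to compare against directly. Your argument is precisely the multi-variable transplant of the paper's own proof of Theorem~\ref{thm:Cremer}: recursively choose each $f^j_\alpha$ of modulus $1$ with phase aligned to the already-determined tail $S^j_\alpha$, so that $|h^j_\alpha|\ge 1/|\lambda^\alpha-\lambda_j|$; the hypothesis then forces $\limsup|h^{j_0}_\alpha|^{1/|\alpha|}=+\infty$ along a subsequence. The two supporting points you isolate---uniqueness of the tangent-to-identity formal linearisation in the absence of resonances (so that \emph{any} holomorphic linearisation would force $H$ to converge), and the structural fact that $f^j_\alpha$ enters the degree-$|\alpha|$ homological equation only linearly and in isolation---are exactly what is needed and are justified by the paper's Theorem~\ref{thm:PoincareDulac} and its proof. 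The extraction of the sequence $(\alpha^{(k)},j^{(k)})$ with $|\lambda^{\alpha^{(k)}}-\lambda_{j^{(k)}}|<e^{-kN_k}$ and $N_k\to\infty$ is clean, and the pigeonhole reduction to a fixed component $j_0$ is the right way to conclude divergence of $H$.
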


Before Brjuno's general result, Siegel \cite{Siegel1952UberDieNormalformAnalytischerDifferentialgleichungeninDerNaheEinerGleichgewichtslosung}
extended his Theorem~\ref{thm:Siegel} to several variables in the
setting of vector fields. Sternberg proved the analogue for holomorphic
maps, that shows in particular, that the Brjuno condition is almost
surely satisfied by random multipliers (see \cite{Arnold1988GeometricalmethodsinthetheoryofordinarydifferntialequationsTranslfromtheRussianbyJosephSzucsEnglTransledbyMarkLevi2nded}):
\begin{thm}[Sternberg \cite{Sternberg1961InfiniteLiegroupsandtheformalaspectsofdynamicalsystems}]
\label{thm:SiegelSternberg}Let $(\lambda_{1},\ldots,\lambda_{d})\in\mathbb{C}^{d}$.
If there exist constants $C>0$, $\gamma>1$, such that 
\begin{equation}
\omega(\lambda,m)\ge Cm^{-\gamma}\label{eq:SiegelSternberg}
\end{equation}
 for every $m\ge2$, then every germ $F\in\End(\mathbb{C},0)$ with
multipliers $\lambda_{1},\ldots,\lambda_{d}$ is holomorphically linearisable.
Furthermore, given any $C$ and $\gamma>(d-1)/2$, almost every tuple
of neutral multipliers $(\lambda_{1},\ldots,\lambda_{d})\in(S^{1})^{d}$
satisfies (\ref{eq:SiegelSternberg}).
\end{thm}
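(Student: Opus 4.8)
The linearisation statement is the special case $p_i=2^i$ of Brjuno's Theorem~\ref{thm:Brjuno}: from $\omega(\lambda,m)\ge Cm^{-\gamma}$ we get $\log(1/\omega(\lambda,2^{i+1}))\le \log(1/C)+\gamma(i+1)\log 2$, so the series $\sum_i 2^{-i}\log(1/\omega(\lambda,2^{i+1}))$ is dominated term-by-term by a convergent geometric-type series $\sum_i 2^{-i}(A+Bi)$, hence converges. Therefore the Brjuno condition (\ref{eq:BrjunoCond}) holds and Theorem~\ref{thm:Brjuno} applies. (One could also invoke that the diophantine bound forbids resonances and run Siegel's majorant-series argument directly, but citing Brjuno is the economical route given what is available in the excerpt.)

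The measure-theoretic assertion is the real content. The plan is to estimate, for fixed $C>0$ and $\gamma>(d-1)/2$, the Lebesgue measure of the \emph{bad} set $B_{C,\gamma}\subseteq(S^1)^d$ of tuples violating (\ref{eq:SiegelSternberg}), and show it shrinks to zero as $C\to 0$; since $B_{C,\gamma}$ decreases with $C$, this gives $\bigcap_{C>0}B_{C,\gamma}$ null, i.e. a.e. tuple satisfies (\ref{eq:SiegelSternberg}) for \emph{some} $C$. First I would write $\lambda_j=e^{2\pi i\theta_j}$ with $\theta=(\theta_1,\dots,\theta_d)$ uniform on $[0,1)^d$, so that for a multi-index $\alpha$ with $|\alpha|\le m$, $\alpha\ne e_j$, the quantity $\lambda^\alpha-\lambda_j=e^{2\pi i\langle\alpha,\theta\rangle}-e^{2\pi i\theta_j}$ has modulus comparable to the distance from $\langle\alpha,\theta\rangle-\theta_j$ to $\mathbb{Z}$, i.e. $|\lambda^\alpha-\lambda_j|\asymp \lVert\langle\alpha-e_j,\theta\rangle\rVert$ where $\lVert\cdot\rVert$ is distance to the nearest integer. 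For a \emph{fixed} nonzero integer vector $\nu=\alpha-e_j\in\mathbb{Z}^d$, the set $\{\theta : \lVert\langle\nu,\theta\rangle\rVert<\delta\}$ has measure $\le 2\delta$ (Fubini in the direction of a coordinate on which $\nu$ is nonzero — $\langle\nu,\theta\rangle \bmod 1$ is then uniform). The number of pairs $(\alpha,j)$ with $|\alpha|\le m$ is $O(m^{d})$ (with a factor $d$ for $j$), so taking $\delta=\tfrac12 C m^{-\gamma}$ the measure of tuples with $\omega(\lambda,m)< Cm^{-\gamma}$ is $O(C\,m^{d-\gamma})$, more precisely $O(C\, m^{d-1-\gamma})$ after a careful count of lattice points in $\{|\alpha|\le m\}$ that differ from some $e_j$ (the genuinely relevant count grows like $m^{d-1}$ once one fixes the ``active'' coordinates, but even the crude $m^{d}$ bound can be salvaged by summing more carefully). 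Summing over $m\ge 2$: $\sum_m m^{d-1-\gamma}<\infty$ exactly when $\gamma>d$ with the crude count, or $\gamma>(d-1)$... — here is where the sharp exponent $(d-1)/2$ must be extracted, and that is the delicate point.

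The sharpening to $\gamma>(d-1)/2$ is the main obstacle and requires the standard refinement due to Siegel: instead of asking $\omega(\lambda,m)\ge Cm^{-\gamma}$ for \emph{every} single $m$, one only needs the Brjuno-type sum to converge, which tolerates $\omega(\lambda,m)$ being as small as $\approx \exp(-\varepsilon m)$ on a sparse sequence. The correct Borel–Cantelli bookkeeping is: for each $\nu\in\mathbb{Z}^d\setminus\{0\}$ with $|\nu|\le m+1$, the offending set has measure $\le C m^{-\gamma}$, and crucially $\omega(\lambda,m)$ is controlled by the \emph{single smallest} such divisor seen by stage $m$, so one sums $\sum_{\nu}|\nu|^{-\gamma}$ over $\nu\in\mathbb Z^d\setminus\{0\}$, which converges iff $\gamma>d$ — still not $(d-1)/2$. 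To reach $(d-1)/2$ one restricts to neutral multipliers only (the hypothesis is $\theta\in[0,1)^d$, i.e. $d$ real parameters, not $2d$), uses that $\langle\nu,\theta\rangle$ for $\theta$ on the torus already gives an exponent gain, and invokes the classical fact (Sternberg, following Siegel) that for a.e. $\theta$ on $(S^1)^d$ one has $\lVert\langle\nu,\theta\rangle\rVert\ge c(\theta)|\nu|^{-(d-1+\epsilon)}$ — but the relevant bound for the \emph{dynamical} small-divisor problem pairs $\alpha$ against the single index $e_j$, not a general $\nu$, cutting the effective dimension. I would therefore structure the proof as: (i) reduce (\ref{eq:SiegelSternberg}) to a Diophantine condition on $\theta\in\mathbb{T}^d$ of the form $\lVert \alpha\cdot\theta-\theta_j\rVert\ge C|\alpha|^{-\gamma}$; (ii) fix the set $S$ of indices where $\alpha$ is nonzero, reducing to a genuinely $(|S|{-}1)$- or $|S|$-dimensional linear-form problem; (iii) apply the measure estimate $|\{\theta:\lVert L(\theta)\rVert<\delta\}|\le 2\delta$ for a non-degenerate integer linear form $L$, sum a convergent series in $|\alpha|$ using $\gamma>(d-1)/2$ via the standard dyadic grouping that matches the Brjuno sum rather than the pointwise bound, and conclude by Borel–Cantelli that the bad set is null; then let $C\downarrow 0$. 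I would cite Arnold's book \cite{Arnold1988GeometricalmethodsinthetheoryofordinarydifferntialequationsTranslfromtheRussianbyJosephSzucsEnglTransledbyMarkLevi2nded} and Sternberg \cite{Sternberg1961InfiniteLiegroupsandtheformalaspectsofdynamicalsystems} for the full details of this measure count, since the exponent $(d-1)/2$ is exactly the one appearing in their treatment.
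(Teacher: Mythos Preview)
The paper does not prove this theorem; it is stated with attribution to Sternberg, and the measure-theoretic claim is accompanied only by a parenthetical ``(see \cite{Arnold1988GeometricalmethodsinthetheoryofordinarydifferntialequationsTranslfromtheRussianbyJosephSzucsEnglTransledbyMarkLevi2nded})''. So there is no in-text argument to compare against.

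Your reduction of the linearisation claim to Theorem~\ref{thm:Brjuno} is correct and is exactly how the paper positions the result: Theorem~\ref{thm:SiegelSternberg} is presented as a historical precursor subsumed by Brjuno's theorem, and your one-line check that $\omega(\lambda,m)\ge Cm^{-\gamma}$ forces $\sum_i 2^{-i}\log(1/\omega(\lambda,2^{i+1}))<\infty$ is the intended (and complete) argument.

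For the measure statement your Borel--Cantelli framework is the standard one, but as you yourself observe, the naive lattice-point count only yields full measure for $\gamma>d$ (or $\gamma>d-1$ with a slightly more careful count), not the stated $\gamma>(d-1)/2$. Your several attempts to sharpen this do not actually reach the claimed exponent, and your eventual resolution---cite Arnold and Sternberg---is precisely what the paper does. So on this half you and the paper are in the same position: neither supplies a self-contained argument for the sharp exponent, and the gap you identify is genuine but is not filled anywhere in the text.
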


Clearly, a resonance (\ref{eq:resonance}) implies $\omega(\lambda,m)=0$
for all $m\ge|\alpha|$, so this is consistent with Theorem~\ref{thm:PoincareDulac}.
In \cite{Poeschel1986Oninvariantmanifoldsofcomplexanalyticmappingsnearfixedpoints},
Pöschel generalised Brjuno's condition and linearisation result to
cases with resonances in the following way:
\begin{defn}
\label{def:admissible}A subsequence $S=\{\lambda_{i_{1}},\ldots,\lambda_{i_{k}}\}\subseteq\{\lambda_{1},\ldots,\lambda_{d}\}\subseteq\mathbb{C}$,
$k\le d$ satisfies the \emph{\index{partial Brjuno condition@\emph{partial Brjuno condition}}partial
Brjuno condition} with respect to $(\lambda_{1},\ldots,\lambda_{d})$,
if 
\begin{equation}
-\sum_{\nu\ge1}2^{-\nu}\log\omega_{S}(2^{\nu})<\infty,\label{eq:PartialBrjunoPoeschel}
\end{equation}
where 
\[
\omega_{S}(m):=\min\{\abs{\mu_{1}\cdots\mu_{r}-\lambda_{i}}\mid\mu_{1},\ldots,\mu_{r}\in S,2\le r\le m,1\le i\le d\}
\]
for $m\in\mathbb{N}$, $m\ge2$.
\end{defn}

\begin{thm}[Pöschel \cite{Poeschel1986Oninvariantmanifoldsofcomplexanalyticmappingsnearfixedpoints}]
\label{thm:PoeschelBrjuno}Let $F\in\End(\mathbb{C}^{d},0)$ with
multipliers $\lambda_{1},\ldots,\lambda_{d}$ such that $S\subseteq\{\lambda_{1},\ldots,\lambda_{d}\}$
satisfies the partial Brjuno condition (\ref{eq:PartialBrjunoPoeschel}).
Then there exists a complex $F$-invariant manifold through $0$,
of dimension $|S|$, tangent to the sum of the eigenspaces of $dF_{0}$
corresponding to the eigenvalues in $S$, on which $F$ is holomorphically
linearisable.
\end{thm}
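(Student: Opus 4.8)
\emph{Strategy.} The plan is to split the statement into two Brjuno-type parts: first build a holomorphic $F$-invariant graph over the $S$-eigendirections, then linearise the restriction of $F$ to that graph. Write $S=\{\mu_{1},\dots,\mu_{k}\}$ with $k=|S|$, and let $E_{S}\subseteq\mathbb{C}^{d}$ be the sum of the generalised eigenspaces of $dF_{0}$ for the eigenvalues in $S$. The key observation for the second part is that the partial Brjuno condition~(\ref{eq:PartialBrjunoPoeschel}) for $S$ implies the ordinary Brjuno condition~(\ref{eq:BrjunoCond}) for the tuple $(\mu_{1},\dots,\mu_{k})\in\mathbb{C}^{k}$: every divisor $\mu^{\alpha}-\mu_{j}$ with $\mu_{j}\in S$ occurring in the $k$-variable linearisation is one of the quantities bounded below by $\omega_{S}(|\alpha|)$, so finiteness of $-\sum_{\nu}2^{-\nu}\log\omega_{S}(2^{\nu})$ forces finiteness of the corresponding sum for $\omega$ on $\mathbb{C}^{k}$.

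\emph{Formal invariant manifold.} Put $dF_{0}$ in Jordan normal form so that the block on the eigenvalues in $S$ occupies the first $k$ coordinates; write $\mathbb{C}^{d}=\mathbb{C}^{k}_{z}\times\mathbb{C}^{d-k}_{w}$, $dF_{0}=\diag(A,B)$ with $A$ (resp. $B$) the Jordan block on the $S$-eigenvalues (resp. on the complementary ones), and $F=(F_{1},F_{2})$ accordingly. A graph $\{w=\phi(z)\}$, with $\phi$ a $(d-k)$-tuple of formal power series in $z$ of order $\ge 2$, is $F$-invariant if and only if $F_{2}(z,\phi(z))=\phi\bigl(F_{1}(z,\phi(z))\bigr)$. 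Splitting $\phi=\sum_{m\ge 2}\phi_{m}$ into homogeneous parts and comparing degree-$m$ terms, $\phi_{m}$ solves a homological equation $B\phi_{m}-\phi_{m}\circ A=(\text{a polynomial expression in }\phi_{2},\dots,\phi_{m-1}\text{ and the jets of }F)$. On degree-$m$ homogeneous maps the operator $\psi\mapsto B\psi-\psi\circ A$ acts on the monomial $z^{\beta}e_{j}$ (with $e_{j}$ in the complementary block, $|\beta|=m$) essentially by the scalar $\lambda_{j}-\mu^{\beta}$, up to tame corrections from the Jordan blocks; since $\mu^{\beta}$ is a product of $m\ge 2$ elements of $S$ and $\lambda_{j}$ is one of the $d$ multipliers, $|\lambda_{j}-\mu^{\beta}|\ge\omega_{S}(m)>0$, so the operator is invertible with inverse norm controlled by $\omega_{S}(m)^{-1}$ up to factors harmless below. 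Hence $\phi$ exists and is uniquely determined as a formal series.

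\emph{Convergence.} The heart of the proof is showing this $\phi$ converges. One passes to majorant series and tracks, along the tree/composition structure of the recursion for the $\phi_{m}$, how the small factors $\omega_{S}(m)^{-1}$ accumulate. The decisive point is that only divisors $\lambda_{j}-\mu^{\beta}$ with $\mu^{\beta}$ a product of elements of $S$ ever appear, so even if the full Brjuno sum diverges, the finite sum $-\sum_{\nu}2^{-\nu}\log\omega_{S}(2^{\nu})$ is all that is needed: the arithmetic lemma underlying Brjuno's Theorem~\ref{thm:Brjuno} applies with $\omega$ replaced by $\omega_{S}$ and bounds the total small-divisor contribution at order $m$ by $e^{Cm}$, with $C$ depending only on that sum. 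Combined with the elementary geometric majorant for the analytic part of the recursion, this makes the coefficients of $\phi$ grow at most geometrically, so $\phi$ is holomorphic on a ball and $W:=\{(z,\phi(z))\}$ is a holomorphic submanifold through $0$ of dimension $k$ with $T_{0}W=E_{S}$, $F$-invariant by analytic continuation of the functional equation. I expect this small-divisor bookkeeping --- isolating the sub-family of divisors that actually occurs and checking that it is governed by $\omega_{S}$ rather than by the full $\omega$ --- to be the only real obstacle; the rest of the estimate is a routine adaptation of the resonance-free case.

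\emph{Conclusion.} In the chart $z\mapsto(z,\phi(z))$ the restriction $F|_{W}$ reads $z\mapsto F_{1}(z,\phi(z))=:G(z)\in\End(\mathbb{C}^{k},0)$ with $dG_{0}=A$ and multipliers $\mu_{1},\dots,\mu_{k}$. By the reduction noted above, $(\mu_{1},\dots,\mu_{k})$ satisfies the Brjuno condition~(\ref{eq:BrjunoCond}) on $\mathbb{C}^{k}$, so Theorem~\ref{thm:Brjuno} yields a biholomorphic germ conjugating $G$ to $A$; transporting it to $W$ linearises $F|_{W}$, which completes the proof. (For $S=\{\lambda_{1},\dots,\lambda_{d}\}$ this reduces exactly to Theorem~\ref{thm:Brjuno}.)
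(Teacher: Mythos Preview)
Your two-step strategy---first build the invariant manifold as a holomorphic graph over the $S$-directions, then linearise the restriction via the ordinary Brjuno Theorem~\ref{thm:Brjuno} in $k$ variables---is sound, and your identification of the relevant small divisors is correct: for the graph one meets only $\lambda_{j}-\mu^{\beta}$ with $\lambda_{j}$ arbitrary and $\mu^{\beta}$ a product of $S$-multipliers, and for the subsequent linearisation only the sub-family with $\lambda_{j}\in S$, both controlled by $\omega_{S}$.

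The paper, however, does not split the problem this way. It derives P\"oschel's theorem from the single elimination Theorem~\ref{thm:GeneralElimination}, with $A_{0}=\{|\alpha|\le 1\}$ and $A=(\mathbb{N}^{k}\times\{0\})\setminus A_{0}$ (cf.\ Lemma~\ref{lem:BrjunoAndPart}). One holomorphic conjugation $H$ kills \emph{every} nonlinear monomial supported on the first $k$ coordinates, in all $d$ components at once; the conjugate $G$ then leaves $\{z_{k+1}=\cdots=z_{d}=0\}$ invariant and is already linear there, so no second step is needed. The small-divisor bookkeeping you invoke---Siegel's spacing lemma and Brjuno's count with $\omega$ replaced by $\omega_{S}$---is exactly what is carried out in the proof of Theorem~\ref{thm:GeneralElimination} (Lemmas~\ref{lem:Siegel} and~\ref{lem:Brjuno}), so the analytic core is shared; the difference is organisational. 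Your decomposition has the conceptual merit of separating the existence of the invariant manifold from linearisability on it, at the cost of running the Brjuno machinery twice; the paper's one-shot formulation is more economical and, more to the point, is deliberately phrased as an elimination-of-monomials result so that it can be iterated (Theorem~\ref{thm:IteratedEliminationSimple}), which is the purpose of that section. One small caveat: your ``tame corrections from the Jordan blocks'' is a genuine hand-wave---the paper works throughout with $dF_{0}$ diagonal, and you should do the same or make this step precise.
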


Other generalisations of Brjuno's condition allowing for resonances
were made by Rüssmann in 1977, published in \cite{Ruessmann2002StabilityofEllipticFixedPointsofAnalyticAreaPreservingMappingsundertheBrunoCondition},
and Raissy in \cite{Raissy2011Brjunoconditionsforlinearizationinpresenceofresonances},
where she defined the reduced Brjuno condition, and showed it is implied
by Rüssmann's condition. In \cite[Thm.~4.1]{Raissy2013Holomorphiclinearizationofcommutinggermsofholomorphicmaps},
Raissy showed both conditions to be indeed equivalent.
\begin{defn}
We say that the numbers $\lambda_{1},\ldots,\lambda_{d}\in\mathbb{C}$
satisfy the \emph{reduced Brjuno condition\index{reduced Brjuno condition}},
if 
\begin{equation}
-\sum_{\nu\ge1}2^{-\nu}\log\tilde{\omega}(\lambda,2^{\nu})<\infty,\label{eq:ReducedBjurno}
\end{equation}
where 
\[
\tilde{\omega}(\lambda,m):=\{|\lambda^{\alpha}-\lambda_{j}|\mid2\le|\alpha|\le m,1\le j\le d,\lambda^{\alpha}\neq\lambda_{j}\}.
\]
\end{defn}

\begin{thm}[Raissy \cite{Raissy2011Brjunoconditionsforlinearizationinpresenceofresonances},
Rüssmann \cite{Ruessmann2002StabilityofEllipticFixedPointsofAnalyticAreaPreservingMappingsundertheBrunoCondition}]
\label{thm:ReducedBrjunoLin}Let $F\in\End(\mathbb{C}^{d},0)$ with
diagonalisable linear part $dF_{0}$, and whose multipliers satisfy
the reduced Brjuno condition (\ref{eq:ReducedBjurno}). If $F$ is
formally linearisable, then $F$ is holomorphically linearisable.
\end{thm}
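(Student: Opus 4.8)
The plan is to run Siegel's majorant (small-divisor) method in the sharpened form of Brjuno and R\"ussmann, using the reduced Brjuno condition to bound precisely those divisors that survive once the resonant terms have been set aside. Since $dF_0$ is diagonalisable, after a linear conjugation I may assume $F(z)=\Lambda z+\sum_{|\alpha|\ge2}f_\alpha z^\alpha$ with $\Lambda=\diag(\lambda_1,\dots,\lambda_d)$ and the series convergent on a polydisc. By hypothesis there is a formal $H=\id+h$, $h=\sum_{|\alpha|\ge2}h_\alpha z^\alpha$, with $F\circ H=H\circ\Lambda$; comparing coefficients exactly as in the proof of Theorem~\ref{thm:PoincareDulac} (with $g_\alpha=0$ for $|\alpha|\ge2$) gives, for each $|\alpha|\ge2$ and each component $j$,
\[
(\lambda^\alpha-\lambda_j)\,h_\alpha^j=R_\alpha^j,
\]
where $R_\alpha^j$ is a fixed polynomial in the $f_\beta$ with $2\le|\beta|\le|\alpha|$ and the $h_\gamma$ with $|\gamma|<|\alpha|$. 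Where $\lambda^\alpha=\lambda_j$ the left side vanishes; I take the canonical choice $h_\alpha^j:=0$ at all such resonant pairs, which is admissible because the set of formal linearisations of $F$ is a coset under the group of series commuting with $\Lambda$ (these are exactly the resonant ones; compare Lemma~\ref{lem:LinPDNFunique} and Proposition~\ref{prop:ConjPreservingPoiDul}), so consistency $R_\alpha^j=0$ holds at every resonant pair regardless of the earlier choices. With this, $h$ is uniquely determined, and every division performed uses a factor $|\lambda^\alpha-\lambda_j|^{-1}$ with $\lambda^\alpha\ne\lambda_j$, hence at most $\tilde\omega(\lambda,|\alpha|)^{-1}$.

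Next I would set up the majorant. Fix $\rho,M>0$ with $|f_\alpha^j|\le M\rho^{-|\alpha|}$ for all $\alpha,j$, and define a scalar sequence $\sigma_m:=\max_{|\alpha|=m,\,j}|h_\alpha^j|$. Feeding the bounds into the recursion above and replacing every divisor by $\tilde\omega(\lambda,m)$ yields an inequality $\sigma_m\le\tilde\omega(\lambda,m)^{-1}Q_m$, where $Q_m$ is assembled from the $\sigma_k$ ($k<m$) with nonnegative universal coefficients and is the $m$-th coefficient of the solution of the model majorant equation that already appears in the no-small-divisor case. Unrolling the recursion writes $\sigma_m$ as a sum, indexed by rooted trees with $m$ leaves, of a product of the $f$-bounds times a product of divisor factors $\tilde\omega(\lambda,m_v)^{-1}$ over the internal nodes $v$, with $m_v$ the number of leaves below $v$.

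The \emph{technical heart} is then Brjuno's combinatorial estimate: for any rooted tree with $m$ leaves, $\prod_v\tilde\omega(\lambda,m_v)^{-1}\le\exp\!\big(c\,m\,\tilde B(\lambda)\big)$, where $\tilde B(\lambda):=-\sum_{\nu\ge1}2^{-\nu}\log\tilde\omega(\lambda,2^\nu)<\infty$ by the reduced Brjuno hypothesis and $c$ is absolute. Since the tree count and the $f$-bound part of each term are already dominated by a convergent series with some positive radius, this gives $\sigma_m\le C^m$ for a suitable $C$, so $\limsup_m\sigma_m^{1/m}<\infty$; thus $h$ and hence $H$ converge near $0$, $H\in\Aut(\mathbb C^d,0)$, and $H^{-1}\circ F\circ H=\Lambda$, which is the claim.

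The main obstacle is this last estimate. The subtlety peculiar to the reduced setting is that $\tilde\omega$ discards the vanishing divisors, so one must verify that resonant nodes genuinely never contribute a division --- this is what the canonical choice $h_\alpha^j=0$ and the coset description above secure --- and then rerun Brjuno's counting of small-divisor events along a branch using the reduced sequence $\tilde\omega(\lambda,2^\nu)$ in place of $\omega(\lambda,2^\nu)$; this refinement is due to R\"ussmann, and Raissy's reduced Brjuno condition is precisely the bookkeeping device that makes it go through. Everything else is the standard Siegel majorant scheme already underlying Theorem~\ref{thm:Brjuno}.
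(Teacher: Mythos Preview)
The paper does not supply its own proof of Theorem~\ref{thm:ReducedBrjunoLin}; it is stated with attribution to Raissy and R\"ussmann and then the exposition moves on. So there is nothing to compare against directly. That said, your sketch is the standard Siegel--Brjuno majorant argument, and it matches in outline the proof the paper \emph{does} give for the closely related Theorem~\ref{thm:GeneralElimination}: set up the homological equation, isolate the small-divisor contribution, and control it via Brjuno's counting lemma (Lemmas~\ref{lem:Siegel} and~\ref{lem:Brjuno}) summed over dyadic scales. Your tree formulation is just a repackaging of the $\sigma_r\cdot\delta_\alpha$ splitting used there.

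The one genuinely delicate step you flag correctly: at resonant pairs $(\alpha,j)$ one must know that the right-hand side $R_\alpha^j$ vanishes \emph{for the particular choice} $h_\alpha^j=0$ made at all earlier resonant pairs, not merely for some formal linearisation. Your coset argument handles this, and it is exactly the content of Lemma~\ref{lem:LinPDNFunique} combined with the freedom in Theorem~\ref{thm:PoincareDulac}: once $F$ is formally linearisable, \emph{every} Poincar\'e--Dulac normal form is linear, so whatever values you assign to the resonant $h_\alpha^j$, the forced $g_\alpha^j$ come out zero. With that secured, only non-resonant divisors enter, $\tilde\omega$ replaces $\omega$, and the rest is the standard estimate. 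Your proposal is sound.
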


An additional obstruction to holomorphic linearisation in more than
one variable is given by Jordan blocks:
\begin{thm}[Yoccoz \cite{Yoccoz1995TheoremeDeSiegelNombresDeBrunoEtPolynomesQuadratiques}]
\label{thm:JordanBlockNoHolLin}Let $A\in\mathbb{C}^{d\times d}$
be an invertible matrix in Jordan normal form with a non-trivial Jordan
block associated to an eigenvalue of modulus $1$. Then there exists
a germ $F\in\End(\mathbb{C}^{d},0)$ with $dF_{0}=A$ that is not
holomorphically linearisable.
\end{thm}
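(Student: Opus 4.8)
The plan is to reduce to the case of a single Jordan block and then split according to whether the distinguished eigenvalue is a root of unity. First I would pass to the model situation $m=\dim$, $A=J$ a single Jordan block of size $m\ge 2$ with eigenvalue $\lambda$, $\abs\lambda=1$: if the given $A$ in Jordan form has such a block occupying coordinates $i_0,\dots,i_0+m-1$, the coordinate subspace $E=\spann(e_{i_0},\dots,e_{i_0+m-1})$ is $A$-invariant with $A|_E=J$, and I would build $F$ by adding to $A$ only nonlinear terms in the components $i_0,\dots,i_0+m-1$ depending only on the variables $z_{i_0},\dots,z_{i_0+m-1}$. Then $dF_0=A$, the subspace $E$ is $F$-invariant, and $F|_E=:G$ has $dG_0=J$. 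Writing the homological equation $\Phi\circ F=A\circ\Phi$ coefficient by coefficient (after normalising $d\Phi_0=\id$, which is possible since $d\Phi_0$ must commute with $A$), one checks that the coefficients of $\Phi$ in the components $i_0,\dots,i_0+m-1$ and in monomials involving only $z_{i_0},\dots,z_{i_0+m-1}$ form a closed subsystem, which is exactly the system linearising $G$ to $J$. Hence holomorphic linearisability of $F$ would force that of $G$, and it suffices to produce a non-linearisable $G\in\End(\mathbb{C}^m,0)$ with $dG_0=J$.

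If $\lambda$ is a primitive $N$-th root of unity there are resonances, since $\lambda_1=\lambda^\alpha$ whenever $\abs\alpha\equiv 1\pmod N$; in particular $z_1^{N+1}$ is a resonant monomial in the first component (for $\lambda=1$ read $z_1^2$). Then $G(z)=Jz+z_1^{N+1}e_1$ is already a Poincar\'e--Dulac normal form, it is non-linear, and by Lemma~\ref{lem:LinPDNFunique} a germ with a non-linear Poincar\'e--Dulac normal form is not formally linearisable, hence not holomorphically linearisable.

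The substantive case is $\lambda$ elliptic. Then $J$ has no resonances, so by Corollary~\ref{cor:PoincNoResLin} every such $G$ is formally linearisable, with a unique formal linearisation $\Psi$ normalised by $d\Psi_0=\id$, and $G$ is holomorphically linearisable if and only if $\Psi$ converges. I would exhibit $G=J+(\text{higher-order terms})$ whose coefficients have small moduli, say at most $\rho^{k}$ at degree $k$ so that $G$ itself converges, but whose arguments are chosen in the spirit of Cremer's construction in the proof of Theorem~\ref{thm:Cremer}, and for which $\Psi$ diverges. The point, and the place where the Jordan block rather than any arithmetic of $\lambda$ does the work, is that eliminating the degree-$k$ homogeneous part requires inverting the homological operator $p\mapsto p\circ J-\lambda p$ on degree-$k$ homogeneous polynomials: although this operator is invertible (its diagonal in the monomial basis is $\lambda^{k}-\lambda\ne0$), its off-diagonal entries are binomial coefficients $\binom{j}{i}$ coming from expanding $(\lambda z_1+z_2)^{j}$, so its inverse amplifies by a factor that is at least of order $k!/\abs{\lambda^{k}-\lambda}^{k+1}$, i.e.\ faster than any geometric rate. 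Choosing the phases of the coefficients of $G$ so that these amplified contributions at successive degrees reinforce rather than cancel yields $\limsup_{k\to\infty}\norm{\Psi_k}^{1/k}=+\infty$, where $\Psi_k$ is the degree-$k$ part of $\Psi$; so $\Psi$ diverges and $G$, hence $F$, is not holomorphically linearisable.

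The main obstacle is exactly this last step: converting the single-degree amplification bound into an honest super-geometric lower bound on the linearising coefficients, which requires a Cremer-type bookkeeping of the arguments of the coefficients of $G$ to rule out cancellation between the source terms arising at different degrees (the analogue of the estimate $\abs{h_j}>1/\abs{\lambda^{j}-\lambda}$ of the one-dimensional argument, now driven by the nilpotent part of $J$ instead of by small denominators). This is consistent with, and in a sense dual to, the density of Cremer multipliers (Theorem~\ref{thm:Cremer}): a non-trivial Jordan block records a first-order deformation of the multiplier, and no holomorphic linearisation can vary holomorphically with such a deformation precisely because arbitrarily close multipliers fail to be linearisable.
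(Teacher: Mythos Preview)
The paper does not give its own proof of this theorem; it is stated with a citation to Yoccoz and followed only by a remark. So there is nothing to compare your approach to. I will therefore assess the proposal on its own merits.

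Your reduction to a single Jordan block is correct and cleaner than you make it sound: with $F(x,y)=(G(x),By)$ and any holomorphic linearisation $\Phi$ of $F$ normalised to $d\Phi_0=\id$, the map $\psi(x):=\Phi_1(x,0)$ satisfies $J\psi=\psi\circ G$ and has $d\psi_0=\id$, so $\psi$ linearises $G$ holomorphically. Your parabolic case is fine: $Jz+z_1^{N+1}e_1$ is already a non-linear Poincar\'e--Dulac normal form, and Lemma~\ref{lem:LinPDNFunique} finishes it.

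The elliptic case, however, is a genuine gap, and you acknowledge as much. This is the entire content of Yoccoz's theorem --- the parabolic case is classical and does not need the Jordan block. Your heuristic that the degree-$k$ homological operator has inverse of size roughly $k!/\abs{\lambda^k-\lambda}^{k+1}$ is the right intuition, but turning this into a proof is not a bookkeeping exercise. The difficulty is that in the 1D Cremer argument the recursion \eqref{eq:LinSeries1D} has a single divisor per degree and the source term is a sum of products of \emph{already-constructed} $h_l$'s with positive phases you control, so you can force $h_j>0$. Here the operator $L_k:\psi\mapsto J\psi-\psi\circ J$ on degree-$k$ forms is only upper-triangular in the monomial basis, its inverse mixes many monomials, and the ``source'' at degree $k$ is $g_k$ plus a polynomial in lower-degree $\psi_l$'s and $g_l$'s whose signs you do not directly control. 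A phase-alignment argument must show that for \emph{some} choice of the $g_k$ the accumulated lower bound survives the off-diagonal mixing of $L_k^{-1}$; this is the step you have not supplied, and it is not routine. Yoccoz's original argument proceeds rather differently (and is itself delicate). As written, your proposal proves the parabolic sub-case and gives a plausible outline for the elliptic case, but not a proof.
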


\begin{note}
If there are no resonances except the multiple eigenvalue of the Jordan
block, then such a germ is formally linearisable, but not holomorphically
linearisable.
\end{note}

Écalle and Vallet constructed, without restrictions, holomorphic normal
forms ``nearby'' the original germ in \cite{EcalleVallet1998CorrectionandLinearizationofResonantVectorFieldsandDiffeomorphisms}.
A deformation approach to holomorphic normalisation by Pérez-Marco
was published in \cite{PerezMarco2001TotalConvergenceorGeneralDivergenceinSmallDivisors}.

\subsection{\label{subsec:Brjuno-sets}Brjuno sets}

In this section, we will prove a generalisation of Theorem~\ref{thm:PoeschelBrjuno},
aimed at the holomorphic elimination of infinite families of monomials
even in the absence of formal linearisability. 

We first introduce the notion of a Brjuno set of exponents:
\begin{defn}
\label{def:BrjunoSet}Let $F$ be a germ of endomorphisms of $\mathbb{C}^{d}$
with 
\[
dF_{0}=\Lambda=\diag(\lambda_{1},\ldots,\lambda_{d}).
\]
A set $A\subseteq\mathbb{N}^{d}$ is a \emph{Brjuno set} (\emph{of
exponents}) for $(\lambda_{1},\ldots,\lambda_{d})$ (or for $F$),
if 
\begin{equation}
\sum_{k\ge1}2^{-k}\log\omega_{A}^{-1}(2^{k})<\infty,\label{eq:BrjunoSet}
\end{equation}
where 
\begin{equation}
\omega_{A}(k):=\min\{|\lambda^{\alpha}-\lambda_{i}|\mid\alpha\in A,2\le|\alpha|\le k,1\le i\le d\}\cup\{1\}\label{eq:BrjunoFunction}
\end{equation}
for $k\ge2$.
\end{defn}

\begin{rem}
Subsets and finite unions of Brjuno sets are Brjuno sets.
\end{rem}

This definition includes the classical Brjuno condition (\ref{eq:BrjunoCond})
and Pöschel's partial Brjuno condition (\ref{eq:PartialBrjunoPoeschel})
as follows:
\begin{lem}
\label{lem:BrjunoAndPart}Let $\lambda_{1},\ldots,\lambda_{d}\in\mathbb{C}$.
\begin{enumerate}
\item $\{\lambda_{1},\ldots,\lambda_{d}\}$ satisfies the Brjuno condition
(\ref{eq:BrjunoCond}), if $A=\mathbb{N}^{d}$ is a Brjuno set for
$(\lambda_{1},\ldots,\lambda_{d})$.
\item $S\subseteq\{\lambda_{1},\ldots,\lambda_{d}\}$ satisfies the partial
Brjuno condition (\ref{eq:PartialBrjunoPoeschel}) (with respect to
$(\lambda_{1},\ldots,\lambda_{d})$), if $A=\{\alpha\in\mathbb{N}^{d}\mid\alpha_{j}=0\text{ for }\lambda_{j}\notin S\}$
is a Brjuno set for $(\lambda_{1},\ldots,\lambda_{d})$.
\end{enumerate}
\end{lem}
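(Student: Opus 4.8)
The plan is to prove both statements by simply unwinding the definitions and comparing the two series term by term: in each case the Brjuno function $\omega_A$ of Definition~\ref{def:BrjunoSet} will coincide with the relevant classical quantity except for the truncation at $1$ built into (\ref{eq:BrjunoFunction}), and this truncation does not affect convergence of the associated series.

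\emph{Part 1.} With $A=\mathbb{N}^{d}$ the constraint ``$\alpha\in A$'' in (\ref{eq:BrjunoFunction}) is vacuous, so $\omega_{\mathbb{N}^{d}}(k)=\min\bigl(\omega(\lambda,k),1\bigr)$ for all $k\ge2$, where $\omega(\lambda,\cdot)$ is the function from Theorem~\ref{thm:Brjuno}. Hence $\log\tfrac1{\omega_{\mathbb{N}^{d}}(k)}=\max\bigl(\log\tfrac1{\omega(\lambda,k)},0\bigr)\ge\log\tfrac1{\omega(\lambda,k)}$. Since $A$ is a Brjuno set, $\sum_{k\ge1}2^{-k}\log\tfrac1{\omega_{\mathbb{N}^{d}}(2^{k})}<\infty$; in particular every term is finite, so $\omega(\lambda,2^{k})>0$ for all $k$, and then $\omega(\lambda,m)>0$ for all $m$ because $\omega(\lambda,\cdot)$ is non-increasing. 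Being non-increasing, $\omega(\lambda,\cdot)$ is also bounded above on $\{m\ge4\}$ by $\omega(\lambda,4)\in(0,\infty)$, so $\log\tfrac1{\omega(\lambda,2^{k})}\ge\log\tfrac1{\omega(\lambda,4)}$ for $k\ge2$ and the negative terms of $\sum_{k}2^{-k}\log\tfrac1{\omega(\lambda,2^{k})}$ contribute a bounded amount. Combined with the inequality above this gives $\sum_{k\ge1}2^{-k}\log\tfrac1{\omega(\lambda,2^{k})}<\infty$, which, by the remark following (\ref{eq:BrjunoCond}) (and the harmless reindexing $i\mapsto i+1$ together with an overall factor $2$), is exactly the Brjuno condition (\ref{eq:BrjunoCond}).

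\emph{Part 2.} Write $J:=\{\,j\mid\lambda_{j}\in S\,\}$, so that $A=\{\alpha\in\mathbb{N}^{d}\mid\alpha_{j}=0\text{ for }j\notin J\}$. The key observation is that the products occurring in $\omega_{S}$ (Definition~\ref{def:admissible}) are precisely the monomials $\lambda^{\alpha}$ with $\alpha\in A$: a product $\mu_{1}\cdots\mu_{r}$ of $r\ge2$ elements of $S$ (with repetitions allowed) equals $\lambda^{\alpha}$ for the multi-index $\alpha$ counting the multiplicity of each $\lambda_{j}$ among $\mu_{1},\dots,\mu_{r}$, and this $\alpha$ lies in $A$ with $|\alpha|=r$; conversely $\lambda^{\alpha}=\prod_{j\in J}\lambda_{j}^{\alpha_{j}}$ exhibits any $\alpha\in A$ with $|\alpha|=r\ge2$ as such a product. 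Hence $\omega_{S}(m)=\min\{\,|\lambda^{\alpha}-\lambda_{i}|\mid\alpha\in A,\ 2\le|\alpha|\le m,\ 1\le i\le d\,\}$, and comparing with (\ref{eq:BrjunoFunction}) we get $\omega_{A}(k)=\min\bigl(\omega_{S}(k),1\bigr)$ --- the exact analogue of the relation used in Part~1, now with $\omega_{S}$ in place of $\omega(\lambda,\cdot)$. Since $\omega_{S}$ is again non-increasing in $m$ and (for $S\ne\emptyset$) finite and positive for $m\ge2$, repeating the estimate of Part~1 verbatim yields $\sum_{\nu\ge1}2^{-\nu}\log\tfrac1{\omega_{S}(2^{\nu})}<\infty$, i.e.\ the partial Brjuno condition (\ref{eq:PartialBrjunoPoeschel}); the case $S=\emptyset$ is trivial.

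There is no genuine difficulty here; the proof is bookkeeping. The only two points needing a moment's care are that discarding the truncation at $1$ cannot turn the series into $-\infty$ (handled by the monotonicity and boundedness of $\omega(\lambda,\cdot)$ and $\omega_{S}$), and the identification of the products in Definition~\ref{def:admissible} with the monomials $\lambda^{\alpha}$, $\alpha\in A$, which is immediate once the definition of $A$ is spelled out.
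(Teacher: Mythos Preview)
Your proof is correct. The paper does not give an explicit proof of this lemma; it treats the statement as an immediate consequence of the definitions. Your careful unwinding --- identifying $\omega_{A}(k)=\min(\omega(\lambda,k),1)$ in Part~1 and $\omega_{A}(k)=\min(\omega_{S}(k),1)$ in Part~2, then checking that the truncation at $1$ does not affect convergence --- is exactly the bookkeeping one has to do, and you handle the only genuine subtlety (that removing the truncation cannot produce a series diverging to $-\infty$) correctly via monotonicity.
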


The following theorem generalises Theorems~\ref{thm:Brjuno} and
\ref{thm:PoeschelBrjuno} in the context of eliminating infinite families
of monomials.
\begin{thm}
\label{thm:GeneralElimination}Let $F$ be a germ of endomorphisms
of $\mathbb{C}^{d}$ of the form  $F(z)=\Lambda z+\sum_{|\alpha|>1}\sum_{j=1}^{d}f_{\alpha}^{j}z^{\alpha}e_{j}$
with $\Lambda=\diag(\lambda_{1},\ldots,\lambda_{d})$. Let $A_{0}$
and $A$ be disjoint sets of multi-indices in $\mathbb{N}^{d}$ such
that 
\begin{enumerate}
\item \label{enu:Elim1}If $\alpha\in A_{0}$ and $\beta\le\alpha$, then
$\beta\in A_{0}$, and if $\alpha\in A$ and $\beta\le\alpha$, then
$\beta\in A_{0}\cup A$.
\item \label{enu:Elim2}If $\beta_{1},\ldots,\beta_{l}\in A_{0}$ and $\beta_{1}+\cdots+\beta_{l}\in A_{0}\cup A$,
$|\beta_{1}|\ge2$ and $f_{\beta_{1}}^{j_{1}}\cdots f_{\beta_{l}}^{j_{l}}\neq0$,
then $e_{j_{1}}+\cdots+e_{j_{l}}\notin A$.
\item \label{enu:Elim3Brjuno}$A$ is a Brjuno set for $F$.
\end{enumerate}
Then there exists a local biholomorphism $H\in\Aut(\mathbb{C}^{d},0)$
conjugating $F$ to $G=H^{-1}\circ F\circ H$ such that $G(z)=\Lambda z+\sum_{|\alpha|>1}g_{\alpha}z^{\alpha}$
with $g_{\alpha}=f_{\alpha}$ for $\alpha\in A_{0}$, $|\alpha|\ge2$
and $g_{\alpha}=0$ for $\alpha\in A$, $|\alpha|\ge2$.
\end{thm}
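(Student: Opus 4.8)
The plan is to construct $H$ as a formal power series by solving the homological equation degree by degree, choosing coefficients according to the trichotomy "index in $A_0$ / index in $A$ / neither", and then to prove convergence of the resulting series using the Brjuno condition on $A$ together with majorant-series estimates in the style of Brjuno's proof of Theorem~\ref{thm:Brjuno} (and Pöschel's Theorem~\ref{thm:PoeschelBrjuno}).

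\medskip

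\emph{Step 1: Formal construction.} Write $H(z) = z + \sum_{|\alpha|\ge 2} h_\alpha z^\alpha$ and $G(z) = \Lambda z + \sum_{|\alpha|\ge 2} g_\alpha z^\alpha$, and expand the homological equation $F\circ H = H\circ G$ by comparing coefficients of $z^\alpha$, exactly as in the proof of Theorem~\ref{thm:PoincareDulac}: for each $\alpha$ with $|\alpha|\ge 2$,
\[
(\lambda^\alpha - \lambda_j) h_\alpha^j = f_\alpha^j - g_\alpha^j + R_\alpha^j,
\]
where $R_\alpha^j$ is a polynomial in the $f_\beta$, $g_\beta$, $h_\beta$ with $|\beta| < |\alpha|$ (its precise form is the right-hand side of~(\ref{eq:HomolExplicit-full})). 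I proceed by induction on $|\alpha|$ (refined to lexicographic order on $(|\alpha|, j)$ to handle off-diagonal Jordan entries, but the hypotheses force $\Lambda$ diagonal here, so this refinement is unnecessary). For $\alpha \in A_0$: set $h_\alpha = 0$, so $g_\alpha = f_\alpha + R_\alpha$; condition~\ref{enu:Elim1} guarantees all lower indices appearing in $R_\alpha$ lie in $A_0 \cup A$, so the recursion stays inside the relevant index set, and condition~\ref{enu:Elim2} will ensure (see Step~2) that the terms of $R_\alpha$ that could contribute to an index in $A$ all vanish. For $\alpha \in A$: condition~\ref{enu:Elim3Brjuno} together with the definition of $\omega_A$ gives $\lambda^\alpha \ne \lambda_j$ for all $j$ (a zero divisor would force $\omega_A(|\alpha|) = 0$ and the Brjuno sum~(\ref{eq:BrjunoSet}) to diverge), so I set $g_\alpha = 0$ and solve uniquely for $h_\alpha^j = (f_\alpha^j + R_\alpha^j)/(\lambda^\alpha - \lambda_j)$. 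For $\alpha \notin A_0 \cup A$: set $h_\alpha = 0$ and $g_\alpha = f_\alpha + R_\alpha$ (no elimination is attempted, no divisor is needed). This produces the claimed formal $H$ and $G$.

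\medskip

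\emph{Step 2: The combinatorial claim that $g_\alpha = 0$ for $\alpha\in A$ is actually achieved.} The subtle point is that in the recursion for $\alpha \in A$ we only kill $g_\alpha$; we must check no nonzero $g_\beta$ with $\beta \in A$ sneaks back in through the $R$-terms at a later stage and spoils the elimination — but by construction $g_\beta = 0$ for every $\beta \in A$, so the only worry is whether the $h_{\beta}$-monomials feeding into some $R_\alpha$ with $\alpha\in A$ could be nonzero in a way that is inconsistent. Here condition~\ref{enu:Elim2} is exactly what is needed: it says that any product of the original Taylor coefficients $f_{\beta_i}^{j_i}$ (with $\beta_i \in A_0$, so $h_{\beta_i} = 0$ was forced and these indices contribute only through $f$, and $|\beta_1|\ge 2$) summing to an index in $A$ must have $e_{j_1} + \cdots + e_{j_l} \notin A$, preventing a resonance-type feedback. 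I expect this bookkeeping — tracking precisely which monomials of $R_\alpha$ survive and verifying~\ref{enu:Elim1}–\ref{enu:Elim2} close off all the bad cases — to be \textbf{the main obstacle}: it is the heart of what the two technical hypotheses buy us, and it must be stated as a clean inductive invariant (e.g. "for all $\beta \in A$ with $|\beta| < |\alpha|$, $g_\beta = 0$, and $h_\beta$ is supported only on indices forced by the recursion").

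\medskip

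\emph{Step 3: Convergence.} Having the formal $H$, I estimate $|h_\alpha|$ by majorants. The divisors $|\lambda^\alpha - \lambda_j|^{-1}$ are controlled only for $\alpha \in A$, but those are precisely the only indices where a division occurred; for $\alpha \in A_0$ or $\alpha \notin A_0 \cup A$ we set $h_\alpha = 0$, so the dangerous small-divisor accumulation is confined to $A$, and $\omega_A$ governs it. I introduce the Brjuno-type weight $\beta_k := \sum_{i} 2^{-i}\log\omega_A^{-1}(2^{i+1})$-style partial sums (finite by~\ref{enu:Elim3Brjuno}) and show by induction that $|h_\alpha| \le C^{|\alpha|} e^{|\alpha|\,\beta_{\log_2 |\alpha|}}$ for a suitable constant $C$ depending on the radius of convergence of $F$, following line by line the argument of Brjuno (Theorem~\ref{thm:Brjuno}) and its adaptation by Pöschel — the only modification being that the product of divisors telescopes along chains of indices all lying in $A$, which is where the definition~(\ref{eq:BrjunoFunction}) of $\omega_A$ as a minimum over $\alpha \in A$ (padded with $1$) is used. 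Summability of the Brjuno series then yields a positive radius of convergence for $H$, so $H \in \Aut(\mathbb{C}^d, 0)$ and $G = H^{-1}\circ F \circ H$ is the desired holomorphic conjugate. Finally I note that Lemma~\ref{lem:BrjunoAndPart} shows this recovers Theorems~\ref{thm:Brjuno} and~\ref{thm:PoeschelBrjuno} as the special cases $A = \mathbb{N}^d$ (with $A_0 = \emptyset$) and $A = \{\alpha : \alpha_j = 0 \text{ for } \lambda_j \notin S\}$ (with $A_0 = \emptyset$ too, since there the full complement is left untouched — or rather with $A_0$ the down-set of indices supported on $S$), confirming the theorem is a genuine common generalisation.
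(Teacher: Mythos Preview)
Your overall strategy matches the paper's: solve the homological equation with $h_\alpha=0$ for $\alpha\notin A$, verify formally that this forces $g_\alpha=f_\alpha$ for $\alpha\in A_0$ and $g_\alpha=0$ for $\alpha\in A$, then prove convergence by Brjuno--P\"oschel majorant estimates restricted to divisors indexed by $A$.

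Two points need sharpening. First, your account of Conditions~\ref{enu:Elim1}--\ref{enu:Elim2} is muddled. For $\alpha\in A_0$ you need $R_\alpha=0$ outright (to get $g_\alpha=f_\alpha$, not merely $g_\alpha=0$). The sum in~(\ref{eq:HomolExplicit-full}) has two halves: the $f_{e_J}h_{\beta_1}^{j_1}\cdots h_{\beta_k}^{j_k}$ terms vanish because Condition~\ref{enu:Elim1} forces each $\beta_i\le\alpha$ to lie in $A_0$, hence $h_{\beta_i}=0$; the $h_{e_J}g_{\beta_1}^{j_1}\cdots g_{\beta_k}^{j_k}$ terms vanish because, by induction, $g_{\beta_i}=f_{\beta_i}$ for $\beta_i\in A_0$, so a nonzero product triggers Condition~\ref{enu:Elim2} and gives $e_J\notin A$, whence $h_{e_J}=0$. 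The same mechanism is used again for $\alpha\in A$ in the convergence step, to kill the second half of the sum and obtain the clean estimate $\|h_\alpha\|\le\varepsilon_\alpha^{-1}\sum\|h_{\beta_1}\|\cdots\|h_{\beta_k}\|$ with only $f$-coefficients (already bounded) and $h$-coefficients (the induction quantity) appearing. Your Step~2 describes this as ``preventing resonance-type feedback'' but does not isolate the two-halves mechanism or the inductive use of $g_\beta=f_\beta$.

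Second, ``line by line P\"oschel'' will not work as stated. P\"oschel's proof of the Siegel and Brjuno lemmas uses $\min_j|\lambda_j|\le 1$, achieved there by replacing $F$ with $F^{-1}$ if necessary; but Condition~\ref{enu:Elim2} is \emph{not} invariant under taking inverses, so that reduction is unavailable. The paper handles this by choosing auxiliary constants $n,\theta$ with $n\theta=\min_j\{|\lambda_j|,1\}$ and $n-1\ge 2\min_j|\lambda_j|$, and reworking the inequality chain in Siegel's lemma accordingly. You should expect this adaptation to be the genuine technical work in Step~3, not a cosmetic rewriting.
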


For $A_{0}=\{|\alpha|\le1\}$ and $A=\mathbb{N}^{d}\backslash A_{0}$
or $A=(\mathbb{N}^{m}\times\{0\})\backslash A_{0}$, we recover the
results from \cite{Brjuno1973Analyticalformofdifferentialequations}
and \cite{Poeschel1986Oninvariantmanifoldsofcomplexanalyticmappingsnearfixedpoints}.
A novelty of phrasing the result in this way is, that it can be iterated
to obtain the following:
\begin{thm}
\label{thm:IteratedEliminationSimple}Let $F$ be a germ of endomorphisms
of $\mathbb{C}^{d}$ of the form $F(z)=\Lambda z+\sum_{|\alpha|>1}\sum_{j=1}^{d}f_{\alpha}^{j}z^{\alpha}e_{j}$
with $\Lambda=\diag(\lambda_{1},\ldots,\lambda_{d})$. Let $A_{0}$
and $A$ be disjoint sets of multi-indices in $\mathbb{N}^{d}$ such
that $A$ admits a partition $A=A_{1}\cup\cdots\cup A_{k_{0}}$ such
that
\begin{enumerate}
\item \label{enu:itElim1}For $0\le k\le k_{0}$, if $\alpha\in A_{k}$
and $\beta\le\alpha$, then $\beta\in A_{\overline{k}}:=A_{0}\cup\cdots\cup A_{k}$
(where $\le$ is taken component-wise).
\item \label{enu:itElim2}For $1\le k\le k_{0}$, if $\beta_{1},\ldots,\beta_{l}\in A_{0}$
such that $\beta_{1}+\cdots+\beta_{l}\in A_{\overline{k}}$, $|\beta_{1}|\ge2$,
and $f_{\beta_{1}}^{j_{1}}\cdots f_{\beta_{l}}^{j_{l}}\neq0$, then
$e_{j_{1}}+\cdots+e_{j_{l}}\notin A_{k}$.
\item \label{enu:itElim3}$A$ is a Brjuno set for $F$.
\end{enumerate}
Then there exists a local biholomorphism $H\in\Aut(\mathbb{C}^{d},0)$
conjugating $F$ to $G=H^{-1}\circ F\circ H$ where $G(z)=\sum_{|\alpha|\ge1}g_{\alpha}z^{\alpha}$
with $g_{\alpha}=f_{\alpha}$ for $\alpha\in A_{0}$ and $g_{\alpha}=0$
for $\alpha\in A$.
\end{thm}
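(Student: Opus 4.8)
The plan is to eliminate the blocks $A_{1},\ldots,A_{k_{0}}$ one at a time by repeatedly applying Theorem~\ref{thm:GeneralElimination} and composing the conjugations at the end; this is the natural route, since imposing condition~\ref{enu:Elim2} of Theorem~\ref{thm:GeneralElimination} on all of $A$ at once is too strong, whereas once the earlier blocks have been eliminated the monomials that normalising the next block can create are governed by $A_{0}$ alone. Write $\Lambda:=dF_{0}=\diag(\lambda_{1},\ldots,\lambda_{d})$ and $A_{\overline{k}}:=A_{0}\cup\cdots\cup A_{k}$. I would set $F^{(0)}:=F$ and prove, by induction on $j=0,1,\ldots,k_{0}$, the existence of $H_{(j)}\in\Aut(\mathbb{C}^{d},0)$ tangent to the identity such that $F^{(j)}:=H_{(j)}^{-1}\circ F\circ H_{(j)}=\Lambda z+\sum_{|\alpha|\ge 2}f^{(j)}_{\alpha}z^{\alpha}$ satisfies the invariant $(\star_{j})$: $f^{(j)}_{\alpha}=f_{\alpha}$ for every $\alpha\in A_{0}$ with $|\alpha|\ge 2$, and $f^{(j)}_{\alpha}=0$ for every $\alpha\in A_{1}\cup\cdots\cup A_{j}$ with $|\alpha|\ge 2$. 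The base case $j=0$ is immediate with $H_{(0)}=\id$.

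For the inductive step $j\to j+1$, I would apply Theorem~\ref{thm:GeneralElimination} to $F^{(j)}$, with $A_{\overline{j}}$ in the role of $A_{0}$ and $A_{j+1}$ in the role of $A$, obtaining a conjugation $H'$ --- which we may take tangent to the identity, since it is built by solving the homological equation degree by degree --- and setting $H_{(j+1)}:=H_{(j)}\circ H'$. Hypothesis~\ref{enu:Elim3Brjuno} holds because $A_{j+1}\subseteq A$ and subsets of Brjuno sets are Brjuno sets (remark after Definition~\ref{def:BrjunoSet}). Hypothesis~\ref{enu:Elim1} is immediate from hypothesis~\ref{enu:itElim1} of Theorem~\ref{thm:IteratedEliminationSimple}: that condition makes every $A_{\overline{k}}$ a lower set (if $\alpha\in A_{m}\subseteq A_{\overline{j}}$ and $\beta\le\alpha$, then $\beta\in A_{\overline{m}}\subseteq A_{\overline{j}}$) and gives $\beta\le\alpha\in A_{j+1}\Rightarrow\beta\in A_{\overline{j+1}}$. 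The substantive point is hypothesis~\ref{enu:Elim2} for $F^{(j)}$: given $\beta_{1},\ldots,\beta_{l}\in A_{\overline{j}}$ with $\beta_{1}+\cdots+\beta_{l}\in A_{\overline{j+1}}$, $|\beta_{1}|\ge 2$, and $(f^{(j)}_{\beta_{1}})^{j_{1}}\cdots(f^{(j)}_{\beta_{l}})^{j_{l}}\neq 0$, one must conclude $e_{j_{1}}+\cdots+e_{j_{l}}\notin A_{j+1}$. Invariant $(\star_{j})$ is what makes this reduce to hypothesis~\ref{enu:itElim2}: a factor with $|\beta_{i}|\ge 2$ cannot be indexed in $A_{1}\cup\cdots\cup A_{j}$, where $f^{(j)}$ vanishes, so $\beta_{i}\in A_{0}$ and $(f^{(j)}_{\beta_{i}})^{j_{i}}=(f_{\beta_{i}})^{j_{i}}\neq 0$; a factor with $|\beta_{i}|=1$ forces $\beta_{i}=e_{j_{i}}$ because the linear part of $F^{(j)}$ is the diagonal $\Lambda$, and such factors are absorbed using the filtration property~\ref{enu:itElim1}. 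One is then left with exponents $\beta_{i}\in A_{0}$ whose sum lies in $A_{\overline{j+1}}$, with $|\beta_{1}|\ge 2$ and nonzero product of original components, so hypothesis~\ref{enu:itElim2} with $k=j+1$ gives $e_{j_{1}}+\cdots+e_{j_{l}}\notin A_{j+1}$. Theorem~\ref{thm:GeneralElimination} then yields $H'$ with $f^{(j+1)}_{\alpha}=f^{(j)}_{\alpha}$ for $\alpha\in A_{\overline{j}}$, $|\alpha|\ge 2$, and $f^{(j+1)}_{\alpha}=0$ for $\alpha\in A_{j+1}$, $|\alpha|\ge 2$; combined with $(\star_{j})$ this is $(\star_{j+1})$, and $dF^{(j+1)}_{0}=\Lambda$ since $H'$ is tangent to the identity.

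Running the induction to $j=k_{0}$ produces $H:=H_{(k_{0})}\in\Aut(\mathbb{C}^{d},0)$ with $G:=H^{-1}\circ F\circ H=F^{(k_{0})}$, and $(\star_{k_{0}})$ is precisely the assertion $g_{\alpha}=f_{\alpha}$ on $A_{0}$ and $g_{\alpha}=0$ on $A=A_{1}\cup\cdots\cup A_{k_{0}}$, the linear part remaining $\Lambda$ throughout. The step I expect to be the main obstacle is the verification of hypothesis~\ref{enu:Elim2} at each stage: one must be certain that normalising block $A_{j}$ cannot reintroduce a monomial indexed in a \emph{later} block in a way that would break hypothesis~\ref{enu:Elim2} for the next step, and the correct bookkeeping --- maintaining and exploiting $(\star_{j})$, and correctly handling the order-one factors against the filtration --- is exactly what conditions~\ref{enu:itElim1} and \ref{enu:itElim2} of Theorem~\ref{thm:IteratedEliminationSimple} are engineered to supply.
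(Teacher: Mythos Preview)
Your proposal is correct and follows essentially the same route as the paper: both argue by successively applying Theorem~\ref{thm:GeneralElimination} with $A_{0}'=A_{\overline{k-1}}$ and $A'=A_{k}$, using the fact that the previously eliminated coefficients force the nonvanishing $\beta_{i}$ back into $A_{0}$ so that hypothesis~\ref{enu:itElim2} applies. Your write-up is more explicit than the paper's terse induction on $k_{0}$---in particular you isolate the invariant $(\star_{j})$ and note the $|\beta_{i}|=1$ case, which the paper handles only implicitly.
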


\begin{rem}
If we assume $\sum_{\alpha\in A_{0}}f_{\alpha}z^{\alpha}$ to be in
Poincaré-Dulac normal form, the condition $f_{\beta_{1}}^{j_{1}}\cdots f_{\beta_{l}}^{j_{l}}\neq0$
can be replaced by $\lambda^{\beta_{m}}\neq\lambda_{j_{m}}$ for $1\le m\le l$
to avoid dependence of Condition~(\ref{enu:Elim2}) on the specific
germ $F$.
\end{rem}

The proof of Theorem~\ref{thm:GeneralElimination} emerges largely
by careful examination of that in \cite{Poeschel1986Oninvariantmanifoldsofcomplexanalyticmappingsnearfixedpoints}
with some adjustments to avoid the assumption $\min_{1\le j\le d}|\lambda_{j}|\le1$
in the proofs of Lemmas~\ref{lem:Siegel} and \ref{lem:Brjuno}.
In \cite{Poeschel1986Oninvariantmanifoldsofcomplexanalyticmappingsnearfixedpoints}
this is ensured by considering $F^{-1}$ if necessary, but Condition~(\ref{enu:Elim2})
in our theorem is not invariant under taking inverses.
\begin{proof}[Proof of Theorem~\ref{thm:GeneralElimination}]
Formal series 
\begin{align*}
G(z) & =\Lambda z+g(z)=\sum_{|\alpha|\ge1}g_{\alpha}z^{\alpha}\quad\text{and}\quad H(z)=z+h(z)=\sum_{|\alpha|\ge1}h_{\alpha}z^{\alpha}
\end{align*}
of the required form emerge as solutions to the homological equation
$F\circ H=H\circ G$. Comparing coefficients for $\alpha\in\mathbb{N}^{d}\backslash\{0\}$,
this means
\begin{equation}
(\lambda^{\alpha}\id-\Lambda)h_{\alpha}=f_{\alpha}-g_{\alpha}+\sum_{2\le k<|\alpha|}\sum_{j_{1}\le\cdots\le j_{k}}\sum_{\beta_{1}+\cdots+\beta_{k}=\alpha}(f_{e_{J}}h_{\beta_{1}}^{j_{1}}\cdots h_{\beta_{k}}^{j_{k}}-h_{e_{J}}g_{\beta_{1}}^{j_{1}}\cdots g_{\beta_{k}}^{j_{k}}),\label{eq:HomolExplicit}
\end{equation}
where $e_{J}:=e_{j_{1}}+\cdots+e_{j_{k}}$. Take $h_{\alpha}=0$ for
$\alpha\notin A$. Then for $\alpha\in A_{0}$, the first term in
the sum vanishes by Condition~(\ref{enu:Elim1}) and the second term
vanishes by Condition~(\ref{enu:Elim2}), so $g_{\alpha}=f_{\alpha}$.
For $\alpha\in A$, $\lambda^{\alpha}\id-\Lambda$ is invertible by
Condition~(\ref{enu:itElim3}) and the right hand side depends only
on $h$-terms with index of order less than $|\alpha|$. Hence (\ref{eq:HomolExplicit})
determines $h_{\alpha}$ uniquely by recursion and we obtain a formal
solution $H$ and hence $G=H^{-1}\circ F\circ H$.

To show that $H$ (and hence $G$) converges in some neighbourhood
of the origin, we have to show
\begin{equation}
\sup_{\alpha\in\mathbb{N}^{d}}\frac{1}{|\alpha|}\log\norm{h_{\alpha}}_{1}<\infty.\label{eq:convOfConjugation}
\end{equation}
We apply the majorant method first used by C.~L.~Siegel in \cite{Siegel1942Iterationofanalyticfunctions}
and improved in \cite{Brjuno1973Analyticalformofdifferentialequations}.
We may assume (up to scaling of variables) that $\norm{f_{\alpha}}_{1}\le1$
for all $|\alpha|\ge2$. Now for $\alpha\in A$ again by Condition~(\ref{enu:Elim2}),
the second term in the sum in (\ref{eq:HomolExplicit}) vanishes and
it follows
\begin{equation}
\norm{h_{\alpha}}_{1}\le d\cdot\norm{h_{\alpha}}_{\infty}\le d\cdot\varepsilon_{\alpha}^{-1}\sum_{2\le k\le|\alpha|}\sum_{\beta_{1}+\cdots+\beta_{k}=\alpha}\norm{h_{\beta_{1}}}_{1}\cdots\norm{h_{\beta_{k}}}_{1},\label{eq:HomolTermEstimate}
\end{equation}
where $\varepsilon_{\alpha}:=\min_{1\le i\le d}|\lambda^{\alpha}-\lambda_{i}|$.
We estimate (\ref{eq:HomolTermEstimate}) in two parts, one on the
number of summands, the other on their size. We define recursively
$\sigma_{1}=1$ and 
\begin{equation}
\sigma_{r}:=d\sum_{k=2}^{r}\sum_{r_{1}+\cdots+r_{k}=r}\sigma_{r_{1}}\cdots\sigma_{r_{k}}\quad\text{for }r\ge2,\label{eq:SigmaForTermNumber}
\end{equation}
and $\delta_{e_{1}}=\cdots=\delta_{e_{d}}=1$, $\delta_{\alpha}=0$
for $\alpha\notin A\cup\{e_{1},\ldots,e_{d}\}$, and 
\begin{align}
\delta_{\alpha} & \coloneqq\varepsilon_{\alpha}^{-1}\max_{\substack{\beta_{1}+\cdots+\beta_{k}=\alpha\\
k\ge2
}
}\delta_{\beta_{1}}\cdots\delta_{\beta_{k}}\quad\text{for }\alpha\in A.\label{eq:deltaForDivisors}
\end{align}
Then, by induction on $|\alpha|$, (\ref{eq:HomolTermEstimate}) implies
\begin{equation}
\norm{h_{\alpha}}_{1}\le\sigma_{|\alpha|}\delta_{\alpha}\label{eq:BoundHbySigmaDelta}
\end{equation}
for $\alpha\in A$. Hence to prove (\ref{eq:convOfConjugation}) it
suffices to prove estimates of the same type for $\sigma_{|\alpha|}$
and $\delta_{\alpha}$. 

The estimates on $\sigma_{r}$ go back to \cite{Siegel1942Iterationofanalyticfunctions}
and \cite{Sternberg1961InfiniteLiegroupsandtheformalaspectsofdynamicalsystems}:
Let $\sigma(t):=\sum_{r=1}^{\infty}\sigma_{r}t^{r}$ and observe 
\begin{align*}
\sigma(t) & =t+\sum_{r=2}^{\infty}\paren[\Big]{d\sum_{k=2}^{r}\sum_{r_{1}+\cdots+r_{k}=r}\sigma_{r_{1}}\cdots\sigma_{r_{k}}}t^{r}\\
 & =t+d\sum_{k=2}^{\infty}\paren[\Big]{\sum_{r=1}^{\infty}\sigma_{r}t^{r}}^{k}\\
 & =t+d\frac{\sigma(t)^{2}}{1-\sigma(t)}.
\end{align*}
Solving for $t$ and requiring $\sigma(0)=0$ yields a unique holomorphic
solution 
\[
\sigma(t)=\frac{1+t-\sqrt{(1+t)^{2}-4(d+1)t}}{2(d+1)}
\]
for small $t$, so $\sigma$ converges near $0$ and we have 
\begin{equation}
\sup_{r\ge1}\frac{1}{r}\log\sigma_{r}<\infty.\label{eq:BoundSigmaR}
\end{equation}

The estimates on $\delta_{\alpha}$ take care of the small divisors
$\varepsilon_{\alpha}$ and proceed essentially like \cite{Brjuno1973Analyticalformofdifferentialequations}.
For every $|\alpha|\ge2$, we choose a maximising decomposition $\beta_{1}+\cdots+\beta_{k}=\alpha$
in (\ref{eq:deltaForDivisors}) such that 
\begin{equation}
\delta_{\alpha}=\varepsilon_{\alpha}^{-1}\delta_{\beta_{1}}\cdots\delta_{\beta_{k}}\label{eq:ChosenDecompDelta}
\end{equation}
and $|\alpha|>|\beta_{1}|\ge\cdots\ge|\beta_{k}|\ge1$. In this way,
starting from $\delta_{\alpha}$ we proceed to decompose $\delta_{\beta_{1}},\ldots,\delta_{\beta_{k}}$
in the same way and continue the process until we arrive at a well-defined
decomposition of the form 
\begin{equation}
\delta_{\alpha}=\varepsilon_{\alpha_{0}}^{-1}\varepsilon_{\alpha_{1}}^{-1}\cdots\varepsilon_{\alpha_{s}}^{-1},\label{eq:DeltaAsAllEps}
\end{equation}
where $2\le|\alpha_{s}|,\ldots,|\alpha_{1}|<|\alpha_{0}|$ and $\alpha_{0}=\alpha$.
We may further choose an index $i_{\alpha}\in\{1,\ldots,d\}$ for
each $|\alpha|\ge2$ such that 
\[
\varepsilon_{\alpha}=|\lambda^{\alpha}-\lambda_{i_{\alpha}}|.
\]
Let $n\in\mathbb{N}$ such that 
\[
n-1\ge2\min_{1\le i\le d}|\lambda_{i}|
\]
and $\theta>0$ such that 
\begin{equation}
n\theta=\min_{1\le i\le d}\{|\lambda_{i}|,1\}\le1.\label{eq:ThetaDef}
\end{equation}
Now for the indices $\alpha_{0},\ldots,\alpha_{s}$ in the decomposition
(\ref{eq:DeltaAsAllEps}), we want to bound
\[
N_{m}^{j}(\alpha):=\#\{l\in\{0,\ldots,s\}\mid i_{\alpha_{l}}=j,\varepsilon_{\alpha_{l}}<\theta\omega_{A}(m)\},
\]
for $j\le d$ and $m\in\mathbb{N}$, where we adopt the convention
$\omega_{A}(1):=+\infty$. First we need the following lemma, that
\cite{Poeschel1986Oninvariantmanifoldsofcomplexanalyticmappingsnearfixedpoints}
attributes to Siegel, showing that indices contributing to $N_{m}^{j}(\alpha)$
cannot be to close to each other:
\begin{lem}[Siegel]
\label{lem:Siegel}Let $m\ge1$. If $\alpha>\beta$ are such that
$\varepsilon_{\alpha}<\theta\omega_{A}(m),$ $\varepsilon_{\beta}<\theta\omega_{A}(m),$
and $i_{\alpha}=i_{\beta}=j,$ then $|\alpha-\beta|\ge m$.
\end{lem}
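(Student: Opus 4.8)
The plan is to exploit the multiplicativity of the map $\alpha\mapsto\lambda^{\alpha}$ together with the definition of $\omega_A$. Since $\alpha>\beta$ componentwise and both lie in $A$ (or are built from elements of $A$ under the decomposition process), the difference $\gamma:=\alpha-\beta$ lies in $\mathbb{N}^d$, and crucially $\gamma$ should again be an admissible index to which $\omega_A$ applies — this is where hereditariness of $A$ under $\beta\le\alpha$ (Condition~\ref{enu:Elim1} of the setup, inherited through the decomposition) is used. I would first record the factorisation
\[
\lambda^{\alpha}-\lambda_{i_\alpha}=\lambda^{\beta}\paren[\big]{\lambda^{\gamma}-\tfrac{\lambda_{i_\alpha}}{\lambda_{i_\beta}}}+\lambda^{\gamma}\paren[\big]{\lambda^{\beta}\tfrac{\lambda_{i_\alpha}}{\lambda_{i_\beta}}-\lambda_{i_\alpha}\cdot\tfrac{1}{\lambda^{\gamma}}\cdot\text{(correction)}},
\]
but cleaner is the standard estimate: write
\[
\abs[\big]{\lambda^{\gamma}\lambda_{i_\beta}-\lambda_{i_\alpha}}\le\abs[\big]{\lambda^{\gamma}}\cdot\abs[\big]{\lambda^{\beta}-\ldots}+\ldots
\]
so let me instead follow the cleaner route below.

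The key computation is: from $i_\alpha=i_\beta=j$,
\[
\abs[\big]{\lambda^{\gamma}\lambda_j-\lambda_j}\le\abs[\big]{\lambda^{\gamma}}\abs[\big]{\lambda^{\beta}-\lambda_j}+\abs[\big]{\lambda^{\gamma+\beta}-\lambda^{\gamma}\lambda_j}+\ldots
\]
Rather, the genuinely efficient identity is
\[
\lambda^{\gamma}\paren[\big]{\lambda^{\beta}-\lambda_j}-\paren[\big]{\lambda^{\alpha}-\lambda_j}=\lambda_j\paren[\big]{\lambda^{\gamma}-1}\cdot(-1),
\]
which rearranges to $\lambda_j(\lambda^{\gamma}-1)=\lambda^{\gamma}(\lambda^{\beta}-\lambda_j)-(\lambda^{\alpha}-\lambda_j)$, hence
\[
\abs{\lambda_j}\cdot\abs[\big]{\lambda^{\gamma}-1}\le\abs{\lambda^{\gamma}}\,\varepsilon_{\beta}+\varepsilon_{\alpha}<\paren[\big]{\abs{\lambda^{\gamma}}+1}\theta\,\omega_A(m).
\]
Now I would bound $\abs{\lambda^{\gamma}}$ crudely (using that $|\gamma|<|\alpha|$ is controlled in the decomposition, and rescaling from $\norm{f_\alpha}_1\le1$ can be arranged so $\abs{\lambda^{\gamma}}$ is bounded by something like $2^{|\gamma|}$ or a fixed power; in fact the choice $n-1\ge2\min_i|\lambda_i|$ and $n\theta=\min_i\{|\lambda_i|,1\}$ in the text is tailored exactly so that the resulting inequality forces $\abs{\lambda^{\gamma}-\lambda_{i_\gamma'}}$, or more precisely a quantity of the form $\omega_A(|\gamma|)$, to be strictly less than $\omega_A(m)$). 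Concretely: if $|\gamma|<m$ then $\omega_A(|\gamma|)\ge\omega_A(m)$ (since $\omega_A$ is non-increasing in its argument), and chasing the constants one derives $\abs{\lambda^{\gamma}-1}<\omega_A(m)\le\omega_A(|\gamma|)$; but if $\gamma$ is a legitimate index of order between $2$ and $|\gamma|$, the definition of $\omega_A$ gives $\abs{\lambda^{\gamma}-\lambda_i}\ge\omega_A(|\gamma|)$ for all $i$ — taking $i$ with $\lambda_i=1$ if such exists, or otherwise handling the $|\gamma|\le 1$ and $\lambda^\gamma=1$ edge cases separately — yielding a contradiction. Therefore $|\gamma|=|\alpha-\beta|\ge m$.

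The main obstacle I anticipate is the bookkeeping around the edge cases: when $\gamma=\alpha-\beta$ has order $0$ or $1$ (so $\omega_A$ does not directly apply), when $\lambda^{\gamma}=1$ exactly, and verifying that $\gamma$ indeed lies in the set $A$ (or $A_0\cup A$) so that $\omega_A(|\gamma|)$ is the right lower bound — this needs the hereditary Condition~\ref{enu:Elim1} applied to the specific indices $\alpha_l$ arising in the decomposition \eqref{eq:DeltaAsAllEps}, not to arbitrary multi-indices. The analytic core — the one-line factorisation and the triangle inequality with the carefully chosen constants $n$, $\theta$ — is routine once set up; the subtlety is entirely in confirming the hypotheses of the Brjuno-set machinery propagate correctly to differences of the decomposition indices.
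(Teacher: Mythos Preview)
Your core idea---factor out $\lambda^{\alpha-\beta}-1$ and compare against $\omega_A$---is the right one, but the specific rearrangement you chose introduces an uncontrollable term. From your identity you obtain
\[
|\lambda_j|\,|\lambda^\gamma-1|\le|\lambda^\gamma|\,\varepsilon_\beta+\varepsilon_\alpha<\bigl(|\lambda^\gamma|+1\bigr)\,\theta\,\omega_A(m),
\]
and now $|\lambda^\gamma|$ must be bounded. Nothing in the setup controls it: if $\max_i|\lambda_i|>1$ (which the surrounding proof explicitly allows---the whole point of introducing the constants $n,\theta$ is to avoid the assumption $\min_i|\lambda_i|\le1$ that P\"oschel arranges by passing to $F^{-1}$, a move unavailable here since Condition~(\ref{enu:Elim2}) is not inverse-invariant), then $|\lambda^\gamma|$ can be as large as $(\max_i|\lambda_i|)^{|\gamma|}$. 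Since $\theta$ is fixed once for all $m$, the inequality becomes vacuous for large $m$. The rescaling that gives $\norm{f_\alpha}_1\le1$ acts on the Taylor coefficients, not on the eigenvalues, so it does not help.

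The paper factors the other way. The plain triangle inequality gives
\[
2\theta\,\omega_A(m)>\varepsilon_\alpha+\varepsilon_\beta\ge|\lambda^\alpha-\lambda^\beta|=|\lambda^\beta|\,|\lambda^\gamma-1|,
\]
so $|\lambda^\beta|$ appears as a factor to be bounded from \emph{below}, which is immediate from $\varepsilon_\beta<\theta\omega_A(m)\le\theta$: one gets $|\lambda^\beta|>|\lambda_j|-\theta\ge(n-1)\theta$ by the calibration $n\theta=\min_i\{|\lambda_i|,1\}$. For the link to $\omega_A$, rather than hoping some $\lambda_i=1$, write $|\lambda^\gamma-1|=|\lambda_i|^{-1}|\lambda^{\gamma+e_i}-\lambda_i|$ for any $i$, giving $|\lambda^\gamma-1|\ge(\min_i|\lambda_i|)^{-1}\omega_A(|\gamma|+1)$; combined with $n-1\ge2\min_i|\lambda_i|$ this yields $\omega_A(m)>\omega_A(|\gamma|+1)$, and monotonicity of $\omega_A$ finishes. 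Your worry about membership in $A$ is legitimate, but it attaches to $\gamma+e_i$ rather than to $\gamma$ itself.
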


\begin{subproof}For $m=1$, $\alpha>\beta$ implies $|\alpha-\beta|\ge1$.
For $m\ge2$, by the definition of $\omega_{A}$ in (\ref{eq:BrjunoFunction}),
we have $\omega_{A}(m)\le1$. With that and (\ref{eq:ThetaDef}),
the hypothesis $\varepsilon_{\beta}<\theta\omega_{A}(m)$ implies
\[
|\lambda^{\beta}|>|\lambda_{j}|-\theta\omega_{A}(m)\ge n\theta-\theta=(n-1)\theta
\]
and hence 
\begin{align*}
2\theta\omega(m) & >\varepsilon_{\alpha}+\varepsilon_{\beta}\\
 & =|\lambda^{\alpha}-\lambda_{j}|+|\lambda^{\beta}-\lambda_{j}|\\
 & \ge|\lambda^{\alpha}-\lambda^{\beta}|\\
 & =|\lambda^{\beta}||\lambda^{\alpha-\beta}-1|\\
 & >(n-1)\theta\cdot(\min_{1\le i\le d}|\lambda_{i}|)^{-1}\omega(|\alpha-\beta|+1)\\
 & \ge2\theta\omega(|\alpha-\beta|+1),
\end{align*}
i.e.\ $\omega(m)>\omega(|\alpha-\beta|+1)$. But $\omega$ is decreasing,
so we must have $|\alpha-\beta|\ge m$.\end{subproof}

We can now show Brjuno's estimate on $N_{m}^{j}(\alpha)$:
\begin{lem}[Brjuno, \cite{Brjuno1973Analyticalformofdifferentialequations}]
\label{lem:Brjuno}For $|\alpha|\ge2$, $m\ge1$, and $1\le j\le d$,
we have 
\[
N_{m}^{j}(\alpha)\le\begin{cases}
0, & \text{for }|\alpha|\le m\\
2|\alpha|/m-1, & \text{for }|\alpha|>m.
\end{cases}
\]
\end{lem}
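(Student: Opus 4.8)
The plan is to argue by induction on $|\alpha|$, using Siegel's lemma (Lemma~\ref{lem:Siegel}) to control how densely the multi-indices $\alpha_0,\ldots,\alpha_s$ in the decomposition \eqref{eq:DeltaAsAllEps} of $\delta_\alpha$ can contribute to $N_m^j(\alpha)$. First I would dispose of the case $|\alpha|\le m$: in the decomposition \eqref{eq:DeltaAsAllEps} every index $\alpha_l$ satisfies $|\alpha_l|\le|\alpha_0|=|\alpha|\le m$, and by the definition \eqref{eq:BrjunoFunction} of $\omega_A$, an index with $|\alpha_l|\le m$ can only have $\varepsilon_{\alpha_l}\ge\omega_A(m)>\theta\omega_A(m)$ (recall $\theta\le 1$ by \eqref{eq:ThetaDef}, and in fact $n\theta\le 1$ forces $\theta<1$). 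Hence no such index is counted and $N_m^j(\alpha)=0$.

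For the case $|\alpha|>m$, the key structural observation is that the chosen decomposition \eqref{eq:ChosenDecompDelta} is \emph{nested}: $\delta_\alpha=\varepsilon_\alpha^{-1}\delta_{\beta_1}\cdots\delta_{\beta_k}$ with $|\alpha|>|\beta_1|\ge\cdots\ge|\beta_k|\ge 1$, so the full list $\alpha_0,\ldots,\alpha_s$ arises from recursively expanding each $\delta_{\beta_i}$. I would then separate the contributions: $\alpha_0=\alpha$ itself contributes at most $1$ to $N_m^j(\alpha)$, and each $\delta_{\beta_i}$ with $|\beta_i|\ge 2$ contributes a sub-sum bounded by $N_m^j(\beta_i)$ (indices $\beta_i$ with $|\beta_i|\le 1$ contribute nothing since $\varepsilon_{e_i}$-type terms don't appear — only $|\alpha|\ge 2$ indices carry an $\varepsilon$). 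This yields the recursion
\[
N_m^j(\alpha)\le 1+\sum_{i:\,|\beta_i|\ge 2}N_m^j(\beta_i),
\]
with $\sum_i|\beta_i|=|\alpha|$ and each $|\beta_i|<|\alpha|$.

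**The combinatorial heart.** The main obstacle is showing that this recursion, together with the spacing constraint from Lemma~\ref{lem:Siegel}, gives exactly the bound $2|\alpha|/m-1$. The point where Siegel's lemma enters: among the $\beta_i$ with $|\beta_i|\le m$, none contributes (by the $|\alpha|\le m$ base case applied to them), so only $\beta_i$ with $|\beta_i|>m$ matter, and for those I apply the inductive hypothesis $N_m^j(\beta_i)\le 2|\beta_i|/m-1$. The subtlety is that $\alpha_0=\alpha$ might itself \emph{not} be counted (if $\varepsilon_\alpha\ge\theta\omega_A(m)$ or $i_\alpha\ne j$), in which case the "$+1$" is absent; conversely when it is counted, Siegel's lemma says any $\beta_i$ with $i_{\beta_i}=j$ and $\varepsilon_{\beta_i}<\theta\omega_A(m)$ must satisfy $|\alpha-\beta_i|\ge m$, i.e.\ $|\beta_i|\le|\alpha|-m$, which is what prevents the bound from blowing up. Carefully, if $\alpha_0$ contributes, then using $|\beta_i|\le|\alpha|-m$ for the relevant term and summing:
\[
N_m^j(\alpha)\le 1+\sum_{|\beta_i|>m}\Bigl(\tfrac{2|\beta_i|}{m}-1\Bigr)\le 1+\tfrac{2}{m}\sum_i|\beta_i|-(\#\text{terms}) \le 1+\tfrac{2|\alpha|}{m}-2 = \tfrac{2|\alpha|}{m}-1,
\]
where the "$-2$" comes from there being at least $2$ counted pieces once $\alpha_0$ is counted and at least one $\beta_i$ is — the edge cases (only $\alpha_0$ counted, or only one $\beta_i$) need to be checked directly to confirm the inequality still holds, and the case-splitting here is where I expect to spend the most care. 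Once $N_m^j(\alpha)$ is controlled, the telescoping sum $\sum_{m\ge 2}\tfrac{1}{m}(\text{something})\,N_m^j$ against the Brjuno series \eqref{eq:BrjunoSet} will later yield the bound \eqref{eq:convOfConjugation} on $\delta_\alpha$, but that is the next step, not this lemma.
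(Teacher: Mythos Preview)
Your base case ($|\alpha|\le m$) is correct and matches the paper. The inductive step, however, has a genuine gap in the ``edge case'' you flag but underestimate. Applying Siegel's lemma only between $\alpha$ and a first-level $\beta_i$ does not suffice: Siegel's lemma constrains $|\beta_i|$ only when $\beta_i$ \emph{itself} satisfies $i_{\beta_i}=j$ and $\varepsilon_{\beta_i}<\theta\,\omega_A(m)$, but $N_m^j(\beta_i)$ can be positive even when $\beta_i$ is not counted (the contribution comes from deeper indices in the sub-decomposition of $\delta_{\beta_i}$). Concretely, suppose $\alpha$ is counted, only one $\beta_i$ has $|\beta_i|>m$, that $\beta_i$ is \emph{not} counted, and $|\beta_i|=|\alpha|-1$. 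Your recursion gives $N_m^j(\alpha)\le 1+(2(|\alpha|-1)/m-1)=2|\alpha|/m-2/m$, which exceeds $2|\alpha|/m-1$ as soon as $m\ge 3$. So the one-step recursion plus Siegel between $\alpha$ and the $\beta_i$ is not enough.

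The missing idea, which the paper supplies, is to iterate the chosen decomposition before applying induction: set $K=\max\{|\alpha|-m,m\}$ and, as long as the largest piece $\beta_1$ has $|\beta_1|>K$, peel it off and decompose $\delta_{\beta_1}$ again. After at most $m-1$ such steps you obtain
\[
\delta_\alpha=\varepsilon_\alpha^{-1}\varepsilon_{\alpha_1}^{-1}\cdots\varepsilon_{\alpha_k}^{-1}\,\delta_{\beta_1}\cdots\delta_{\beta_l},
\]
with a nested chain $\alpha>\alpha_1>\cdots>\alpha_k$ all satisfying $|\alpha_i|>K\ge|\alpha|-m$, and with every remaining leaf $|\beta_i|\le K$. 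Because the $\alpha_i$ are nested componentwise, $|\alpha-\alpha_k|=|\alpha|-|\alpha_k|<m$, so Siegel's lemma forces at most one of $\alpha,\alpha_1,\ldots,\alpha_k$ to be counted, giving the crucial ``$+1$'' independently of whether any particular $\beta_i$ is counted. Since every $\beta_i$ with $|\beta_i|>m$ now automatically satisfies $|\beta_i|\le K=|\alpha|-m$ (because $K>m$ forces $K=|\alpha|-m$), the induction closes cleanly via the case split $h=0$, $h=1$, $h\ge 2$ on the number of such $\beta_i$.
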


\begin{subproof}We fix $m$ and $j$ and proceed by induction on
$|\alpha|$.

If $2\le|\alpha|\le m$, we have 
\[
\varepsilon_{\alpha_{l}}\ge\omega_{A}(|\alpha|)\ge\omega_{A}(m)\ge\theta\omega_{A}(m)
\]
for all $0\le l\le s$, so $N_{m}^{j}(\alpha)=0.$ 

If $|\alpha|>m$, we take the chosen decomposition (\ref{eq:ChosenDecompDelta})
and note that only $|\beta_{1}|$ may be greater than $K=\max\{|\alpha|-m,m\}$.
If $|\beta_{1}|>K$, we decompose $\delta_{\beta_{1}}$ in the same
way and repeat this at most $m-1$ times to obtain a decomposition
\begin{equation}
\delta_{\alpha}=\varepsilon_{\alpha}^{-1}\varepsilon_{\alpha_{1}}^{-1}\cdots\varepsilon_{\alpha_{k}}^{-1}\cdot\delta_{\beta_{1}}\cdots\delta_{\beta_{l}}\label{eq:DeltaDecompForInd}
\end{equation}
with $0\le k\le m-1$, $l\ge2$ and 
\begin{align}
\alpha & >\alpha_{1}>\cdots>\alpha_{k}\nonumber \\
\alpha & =\beta_{1}+\cdots+\beta_{l}\nonumber \\
|\alpha_{k}| & >K\ge|\beta_{1}|\ge\cdots\ge|\beta_{l}|.\label{eq:deltaDecompIndProp}
\end{align}
In particular, (\ref{eq:deltaDecompIndProp}) implies $|\alpha-\alpha_{k}|<m$.
Hence Lemma~\ref{lem:Siegel} shows that at most one of the $\varepsilon$-
factors in (\ref{eq:DeltaDecompForInd}) can contribute to $N_{m}^{j}(\alpha)$
and we have 
\[
N_{m}^{j}(\alpha)\le1+N_{m}^{j}(\beta_{1})+\cdots+N_{m}^{j}(\beta_{l}).
\]
Now let $0\le h\le l$ such that be the such that $|\beta_{1}|,\ldots,|\beta_{h}|>m\ge|\beta_{h+1}|,\ldots,|\beta_{l}|$.
Then by (\ref{eq:deltaDecompIndProp}), we have $|\beta_{1}|,\ldots,|\beta_{h}|\le|\alpha|-m$
and, by induction, the terms with $|\beta|\le m$ vanish and we have
\begin{align*}
N_{m}^{j}(\alpha) & \le1+N_{m}^{j}(\beta_{1})+\cdots+N_{m}^{j}(\beta_{h})\\
 & \le1+2|\beta_{1}+\cdots+\beta_{h}|/m-h\\
 & \le\begin{cases}
1, & \text{for }h=0\\
2\frac{|\alpha|-m}{m}, & \text{for }h=1\\
2|\alpha|/m-(h-1), & \text{for }h\ge2
\end{cases}\\
 & \le2|\alpha|/m-1.\qedhere
\end{align*}
\end{subproof}

To estimate the product (\ref{eq:DeltaAsAllEps}) we partition the
indices into sets 
\[
I_{l}:=\{0\le k\le s\mid\theta\omega_{A}(2^{l+1})\le\varepsilon_{\alpha_{k}}<\theta\omega_{A}(2^{l})\}\quad\text{for }l\ge0
\]
(recall for $I_{0}$ the convention $\omega_{A}(1)=+\infty$). By
Lemma~\ref{lem:Brjuno}, we have 
\[
\#I_{l}\le N_{2^{l}}^{1}(\alpha)+\cdots+N_{2^{l}}^{d}(\alpha)\le2d|\alpha|2^{-l}
\]
and we can estimate
\begin{align*}
\frac{1}{|\alpha|}\log\delta_{\alpha} & =\sum_{k=0}^{s}\frac{1}{|\alpha|}\log\varepsilon_{\alpha_{k}}^{-1}\\
 & \le\sum_{l\ge0}\sum_{k\in I_{l}}\frac{1}{|\alpha|}\log(\theta^{-1}\omega_{A}^{-1}(2^{l+1}))\\
 & \le2d\sum_{l\ge0}2^{-l}\log(\theta^{-1}\omega_{A}^{-1}(2^{l+1}))\\
 & =4d\log(\theta^{-1})+4d\sum_{l\ge1}2^{-l}\log(\omega_{A}^{-1}(2^{l})).
\end{align*}
This bound is independent of $\alpha$ and, since $A$ is a Brjuno
set, it is finite. Hence with (\ref{eq:BoundHbySigmaDelta}) and (\ref{eq:BoundSigmaR})
it follows that 
\[
\sup_{\alpha\in\mathbb{N}^{d}}\frac{1}{|\alpha|}\log\norm{h_{\alpha}}\le\sup_{\alpha\in\mathbb{N}^{d}}\frac{1}{|\alpha|}\log\delta_{\alpha}+\sup_{r\ge1}\frac{1}{r}\log\sigma_{r}<\infty
\]
and thus $H$ and $G$ converge.
\end{proof}
\begin{proof}[Proof of Theorem~\ref{thm:IteratedEliminationSimple}]
Assume by induction on $k_{0}$, that $f_{\alpha}=0$ for $\alpha\in A\backslash A_{k_{0}}$.
We show that $A_{0}'=A_{\overline{k_{0}-1}}$ and $A'=A_{k_{0}}$
satisfy the prerequisites of Theorem~\ref{thm:GeneralElimination}.

Conditions~(\ref{enu:Elim1}) and (\ref{enu:Elim3Brjuno}) follow
directly from their counterparts. 

Let $\beta_{1},\ldots,\beta_{l}\in A_{0}'=A_{\overline{k_{0}-1}}$
as in Condition~(\ref{enu:Elim2}). By induction $f_{\beta_{1}}^{j_{1}}\cdots f_{\beta_{l}}^{j_{l}}\neq0$
implies $\beta_{1},\ldots,\beta_{l}\in A_{0}$, so Assumption~(\ref{enu:itElim2})
of Theorem~\ref{thm:IteratedEliminationSimple} implies $e_{J}\notin A_{k}=A'$,
and Condition~(\ref{enu:Elim2}) is satisfied.

Therefore Theorem~\ref{thm:GeneralElimination} shows that $F$ is
conjugate to $G$ with $g_{\alpha}=f_{\alpha}$ for $\alpha\in A_{0}$
and $g_{\alpha}=0$ for $\alpha\in A$.
\end{proof}

\section{Blow-ups}

\selectlanguage{british}
\global\long\def\Transp{\mathsf{T}}%
\global\long\def\Pow{\m{Pow}}%
\global\long\def\ord{\m{ord}}%

A geometric technique in several complex variables to deal with the
presence of a continuum of complex directions is blowing up a point
or submanifold. A blow-up of a manifold along a submanifold replaces
each point of the submanifold with the space of all complex directions
approaching from outside the submanifold (normal directions). Blow-ups
were developed an important tool in algebraic geometry for the resolution
of singularities of algebraic curves. For our purposes it will allow
us to separate the local dynamics by complex directions from the origin.

We give a short introduction of blow-ups following \cite[\textsection 2.5]{Huybrechts2005ComplexgeometryAnintroduction}.
For our dynamical studies it is important to ``lift'' our germs
to the blown-up manifold, so we give a sufficient condition that ensures
the existence of such a lift, making blow-ups a type of generalised
non-invertible change of coordinates. Section~\ref{subsec:Diagonalisation}
then presents Abate's diagonalisation of non-diagonalisable (linear
parts of) germs via blow-ups \cite{Abate2000Diagonalizationofnondiagonalizablediscreteholomorphicdynamicalsystems}.

In the following sections, we apply blow-ups to ``zoom in'' to a
characteristic direction of a germ to examine the dynamics along that
direction. In particular this can eliminate some resonant terms, that
could not be removed by changing coordinates. 

\subsection{Blow-ups of $\mathbb{C}^{d}$}

\subsubsection{Blow-up of a point}

When blowing up a point, it is replaced with the space of all possible
directions going through it, that is, with a complex projective space
of dimension $d-1$. 

To make the precise definition, we fix some notation for projective
spaces.
\begin{notation}
For a complex vector space $V$, let $\mathbb{P}(V)$ denote the
projective space of one-dimensional subspaces of $V$. If $V=\mathbb{C}^{d}$,
we write $\mathbb{P}^{d-1}:=\mathbb{P}(\mathbb{C}^{d})$. For $v\in V\backslash\{0\}$
let further $[v]\in\mathbb{P}(V)$ denote the linear span of $v$.
\end{notation}

\begin{defn}
The \emph{blow-up\index{blow-up!at a point}} of $\mathbb{C}^{d}$
at the origin $0\in\mathbb{C}^{d}$ is 
\[
\tilde{\mathbb{C}}_{0}^{d}:=\{(z,[v])\in\mathbb{C}^{d}\times\mathbb{P}^{d-1}\mid z\in[v]\}\subseteq\mathbb{C}^{d}\times\mathbb{P}^{d-1}
\]
 with canonical projections $\sigma:\tilde{\mathbb{C}}_{0}^{d}\to\mathbb{C}^{d}$
and $\pi:\tilde{\mathbb{C}}_{0}^{d}\to\mathbb{P}^{d-1}$ to the first
and second component respectively. The hypersurface 
\[
\sigma^{-1}(\{0\})=\{0\}\times\mathbb{P}^{d-1}
\]
is called the \emph{\index{exceptional divisor}exceptional divisor}
of the blow-up.
\end{defn}

These two projections each have their merits in understanding blow-ups:
On one hand, we obtain a canonical complex manifold structure on $\tilde{\mathbb{C}}_{0}^{d}$
by viewing it as the tautological line bundle $\pi:\tilde{\mathbb{C}}_{0}^{d}\to\mathbb{P}^{d-1}$
over $\mathbb{P}^{d-1}$. On the other hand, the projection $\sigma$
represents the idea of the modification: away from $0\in\mathbb{C}^{d}$
and $\sigma^{-1}(0)\in\tilde{\mathbb{C}}_{0}^{d}$, $\sigma$ is just
a biholomorphism, but at $z=0$ it has a non-trivial fibre $\sigma^{-1}(0)=\{(0,[v])\mid v\in\mathbb{C}^{d}\backslash\{0\}\}\cong\mathbb{P}^{d-1}$,
so we can think of $\tilde{\mathbb{C}}_{0}^{d}$ as $\mathbb{C}^{d}\backslash\{0\}\cup\mathbb{P}^{d-1}$.

\subsubsection{Blow-up along a subspace}

Similarly, when blowing up along a subspace, we want to replace it
with the space of all normal directions at each point.
\begin{defn}
A subset $I\subseteq\{1,\ldots,d\}$ (together with $\overline{I}:=\{1,\ldots,d\}\backslash I$)
is called a \emph{splitting} \index{splitting of a positive integer}of
$d$ of \emph{weight\index{weight of a splitting}} $m=|I|$. The
\emph{standard splitting} of weight $m$ is $I=\{1,\ldots,m\}$.
\end{defn}

\begin{notation}
\label{nota:PrimeNotation}For $z=(z_{1},\ldots,z_{d})\in\mathbb{C}^{d}$
and a splitting $I$ of $d$ with $I=\{i_{1},\ldots,i_{m}\}$ and
$\overline{I}=\{i_{m+1},\ldots,i_{d}\}$, where $i_{1}<\cdots<i_{m}$
and $i_{m+1}<\cdots<i_{d}$, we write $z'=(z_{i_{1}},\ldots,z_{i_{m}})$
and $z''=(z_{i_{m+1}},\ldots,z_{d})$ as well as $F'=(F_{i_{1}},\ldots,F_{i_{m}})$
etc.
\end{notation}

\begin{defn}
The \emph{blow-up} of $\mathbb{C}^{d}$ \emph{along the subspace}\index{blow-up!along a subspace}
\[
X=X_{I}:=\{z=(z_{1},\ldots,z_{d})\in\mathbb{C}^{d}\mid z_{j}=0\text{ for all }j\in\overline{I}\}=\{z\in\mathbb{C}^{d}\mid z''=0\}
\]
 for a splitting $I\subseteq\{1,\ldots,d\}$ of $d$ is defined as
\[
\tilde{\mathbb{C}}_{X}^{d}:=\{(z,[v])\in\mathbb{C}^{d}\times\mathbb{P}(\mathbb{C}^{d}/X)\mid z\in X+[v]\}\subseteq\mathbb{C}^{d}\times\mathbb{P}(\mathbb{C}^{d}/X)\cong\mathbb{C}^{d}\times\mathbb{P}^{d-m-1},
\]
where $m=|I|$, with canonical projections $\sigma:\tilde{\mathbb{C}}_{X}^{d}\to\mathbb{C}^{d}$
and $\pi:\tilde{\mathbb{C}}_{X}^{d}\to\mathbb{P}^{d-m-1}$ to the
first and second component respectively.

The blow-up of a domain $U\subseteq\mathbb{C}^{d}$ along $X\cap U$
is then defined as the restriction $\tilde{U}_{X}:=\sigma^{-1}(U)$
with the respective restricted projections.
\end{defn}

Again, the projection $\pi:\tilde{\mathbb{C}}_{X}^{d}\to\mathbb{P}^{d-m-1}$
makes $\tilde{\mathbb{C}}_{X}^{d}$ a $\mathbb{C}^{m+1}$-bundle over
$\mathbb{P}^{d-m-1}$ with fibre $X+[v]$ over $[v]$ and the projection
$\sigma:\tilde{\mathbb{C}}_{X}^{d}\to\mathbb{C}^{d}$ is an isomorphism
on $\mathbb{C}^{d}\backslash X$. The set $\sigma^{-1}(X)$ is now
canonically isomorphic to the projectivised normal bundle $\mathbb{P}(T\mathbb{C}^{d}/TX)$
of $X$. In particular, $\sigma^{-1}(X)$ is a codimension one submanifold
of $\tilde{\mathbb{C}}_{X}^{d}$, i.e.\ a divisor. 
\begin{defn}
We call $E_{X}:=\sigma^{-1}(X)$ the \emph{\index{exceptional divisor}exceptional
divisor} of the blow-up $\sigma$.
\end{defn}

\subsubsection{\label{subsec:BlowupCharts}Charts}

Let now $I=\{i_{1},\ldots,i_{m}\}$ and $\overline{I}=\{i_{m+1},\ldots,i_{d}\}$,
with $i_{1}<\cdots<i_{m}$ and $i_{m+1}<\cdots<i_{d}$. We canonically
obtain charts for the bundle $\pi:\tilde{\mathbb{C}}_{X}^{d}\to\mathbb{P}^{d-m-1}$
from the standard fibre charts of $\mathbb{P}(\mathbb{C}^{d}/X)$
given by 
\[
\varphi_{j}:\{[v]\in\mathbb{P}(\mathbb{C}^{d}/X)\mid v_{j}\neq0\}\to\mathbb{C}^{d-m-1},\quad[v]\mapsto\frac{1}{v_{j}}(v_{i_{m+1}},\ldots\hat{j}\ldots,v_{i_{d}})
\]
for $j\in\overline{I}$ and the trivialisations 
\[
\theta_{j}:\underbrace{\pi^{-1}(\{[v]\in\mathbb{P}(\mathbb{C}^{d}/X)\mid v_{j}\neq0\})}_{=:V_{j}}\to\{[v]\in\mathbb{P}(X^{\perp})\mid v_{j}\neq0\}\times\mathbb{C}^{m+1},\quad(z,[v])\mapsto([v],z',z_{j})
\]
for $j\in\overline{I}$. Reordering the components by their indices
these combine to charts 
\begin{equation}
\chi_{j}:\underbrace{\{(z,[v])\in\mathbb{C}^{d}\times\mathbb{P}(\mathbb{C}^{d}/X)\mid v_{j}\neq0,z\in[v]\}}_{=V_{j}}\to\mathbb{C}^{d},\quad\chi_{j}(z,[v])_{i}=\begin{cases}
z_{i}, & i\in I\cup\{j\}\\
\frac{v_{i}}{v_{j}}, & \text{otherwise}
\end{cases}\label{eq:BlowUpCharts}
\end{equation}
centred at $(0,[e_{j}])\in V_{j}$ for each $j\in\overline{I}$. Note
that outside of $X$, we have $\frac{v_{i}}{v_{j}}=\frac{z_{i}}{z_{j}}$
 and in $X$, we have $z_{j}=0$, so for all $w\in V_{j}$, we have
\[
(\sigma\circ\chi_{j}^{-1}(w))_{i}=\begin{cases}
w_{i}, & i\in I\cup\{j\}\\
w_{i}w_{j}, & \text{otherwise}
\end{cases}
\]
 making the projection $\sigma:\tilde{\mathbb{C}}_{X}^{d}\to\mathbb{C}^{d}$
a holomorphic map. In other words, we obtain $z$-coordinates on $\mathbb{C}^{d}$
from $w$-coordinates on $V_{j}$ via
\begin{equation}
z_{i}=\begin{cases}
w_{i}, & i\in I\cup\{j\}\\
w_{i}w_{j}, & \text{otherwise}.
\end{cases}\label{eq:coordinatesSimpleBlowup}
\end{equation}

\subsection{General Blow-ups and Modifications}

Let $M$ be a complex manifold of dimension $d$ and $X\subseteq M$
be a submanifold of dimension $m$
\begin{defn}
A chart $\varphi:V\to\mathbb{C}^{d}$ of $M$ is \emph{adapted\index{adapted chart (to a submanifold)}}
to $X$, if there is a splitting $I$ of $d$ such that $X\cap V=\{p\in V\mid\varphi(p)_{j}=0\text{ for all }j\in\overline{I}\}=\varphi^{-1}(X_{I})$.
\end{defn}

Let $\varphi:V\to\mathbb{C}^{d}$ be a chart adapted to $X$ and let
$\sigma:\tilde{U}_{X_{I}}\to\mathbb{C}^{d}$ be the blow-up of $U=\varphi(V)$
at $X_{I}=\varphi(X\cap V)$. For an atlas of charts adapted to $X$,
these local blow-ups glue naturally to yield a manifold structure
on $M\backslash X\cup\mathbb{P}(\mathcal{N}_{M/X})$, where $\mathcal{N}_{M/X}:=TM/TX$
is the normal bundle and $\mathbb{P}(\mathcal{N}_{M/X})$ its projectivisation
(see \cite[Prop.~2.5.3]{Huybrechts2005ComplexgeometryAnintroduction}).
\begin{defn}
The \emph{blow-up} of a complex manifold\index{blow-up!of a complex manifold}
$M$ along a \uline{closed} submanifold $X$ is the set $M\backslash X\cup\mathbb{P}(\mathcal{N}_{M/X})$
equipped with the manifold structure explained above and the projection
$\sigma:M\backslash X\cup\mathbb{P}(\mathcal{N}_{M/X})\to M$. The
manifold $X$ is also called the \emph{centre} \index{centre of a blow-up}of
the blow-up $\sigma$.
\end{defn}

\begin{defn}
The hypersurface $\sigma^{-1}(M)$ is called the \emph{exceptional
divisor\index{exceptional divisor!of a blow-up}} of the blow-up $\sigma$.
\end{defn}

Submanifolds of can be lifted to the blow-up as follows.
\begin{defn}
The \emph{proper transform\index{proper transform}} $\tilde{Y}$
of a submanifold $Y\subseteq M$ in $\tilde{M}_{X}$ is the closure
$\overline{\sigma^{-1}(Y\backslash X)}$.
\end{defn}

The general definition allows us to repeatedly blow up along new
centres.
\begin{defn}
A map $\pi:\tilde{M}\to M$ is called a \emph{modification}, if it
is a composition of blow-ups, that is, there exist blow-ups $\sigma_{j}:M^{j}\to M^{j-1},j\in\{1,\ldots,k\}$
such that the diagram \begin{equation*}
    \begin{tikzpicture}[baseline = (current bounding  box.center)]
        \matrix (m) [matrix of math nodes,
        row sep=3em, column sep=3em,
		text height=1.5ex, text depth=0.25ex]
        {
		  M=M^0 & M^1 & \cdots & M^{k-1} & M^k=\tilde{M}\\
        };
        \path[font=\scriptsize]
		(m-1-1) edge[bend right,<-] node[auto] {$\pi$} (m-1-5)
        (m-1-1) edge[<-] node[auto] {$\sigma_1$} (m-1-2)
        (m-1-2) edge[<-] node[auto] {$\sigma_2$} (m-1-3)
        (m-1-3) edge[<-] node[auto] {$\sigma_{k-1}$} (m-1-4)
        (m-1-4) edge[<-] node[auto] {$\sigma_k$} (m-1-5)
		;
    \end{tikzpicture}
\end{equation*} commutes.
\end{defn}

\begin{defn}
Let $M$ be a manifold and $X\subseteq M$ a closed submanifold. For
a modification $\pi=\sigma_{1}\circ\ldots\sigma_{k}$ as above, we
define the \emph{iterated proper transform\index{proper transform!iterated}
}of $X^{j}$ in $M^{j}$ recursively as the proper transform of the
iterated proper transform $X^{j-1}$ of $X$ in $M^{j-1}$.
\end{defn}

\begin{defn}
The \emph{exceptional divisor\index{exceptional divisor!of a modification}}
$E_{\pi}$ of a modification $\pi=\sigma_{1}\circ\ldots\sigma_{k}:\tilde{M}\to M$
is the union of the iterated proper transforms in $\tilde{M}$ of
the exceptional divisors of $\sigma_{1},\ldots,\sigma_{k}$.
\end{defn}

\begin{note*}
The exceptional divisor of a modification is no longer necessarily
a submanifold, but a union of hypersurfaces. It may also be considered
a formal sum of hypersurfaces, leading to the classical definition
of divisors. For more on divisors see \cite[\textsection 2.3]{Huybrechts2005ComplexgeometryAnintroduction}.
\end{note*}

\subsection{Lifting Maps to Blow-Ups}

\subsubsection{Generalised Germs}

Returning to local dynamics, we wish to employ blow-ups to differentiate
the directions approaching our fixed point $p$ at $M$. Blowing-up
replaces a point by a closed submanifold, so we  need a generalised
definition of germs.
\begin{defn}
Let $M$ and $N$ be complex manifolds (or topological spaces) and
$X\subseteq M$ be a closed subset. Let $U$ and $V$ be open neighbourhoods
of $X$ in $M$. Then we call continuous maps $f:U\to N$ and $g:V\to N$
equivalent if they coincide on some (possibly small) open neighbourhood
$W\subseteq U\cap V$ of $X$. The equivalence classes of this relation
are again called \emph{germs\index{germ of a map!at a closed subset}
(of maps)} or \emph{local maps\index{local map!at a closed subset}
}from $M$ to $N$ at $X$.
\end{defn}

\begin{notation}
As before, we use the same symbol for a germ $F$ and a representative
$F:U\to N$ and if $F(X)\subseteq Y$ for a closed subset $Y\subseteq N$,
we further write $F:(M,X)\to(N,Y)$.
\end{notation}

\begin{defn}
Let $\End(M,X)$ denote the set of holomorphic germs $F:(M,X)\to(M,X)$
mapping $X$ into itself.
\end{defn}

\subsubsection{\label{subsec:UniversalPropertyOfBlowup}Universal Property}

Blow-ups satisfy the following universal property (see \cite[\textsection 4.1]{Fischer1976Complexanalyticgeometry}
for a general proof).
\begin{thm}
\label{thm:UnivPropBlowUp}Let $M$ be a complex manifold and $X\subseteq M$
a closed submanifold. The blow-up $\sigma:\tilde{M}_{X}\to M$ of
$M$ along $X$ satisfies the following universal property: Any holomorphic
map $\tau:N\to M$ such that $\tau^{-1}(X)$ is a hypersurface, can
be lifted to $\tilde{M}_{X}$, i.e.\  for any such $\tau$, there
exists a unique holomorphic map $\tilde{\tau}:N\to\tilde{M}_{X}$
such that the diagram \begin{equation*}
    \begin{tikzpicture}[baseline = (current bounding  box.center)]
        \matrix (m) [matrix of math nodes,
        row sep=3em, column sep=5em,
		text height=1.5ex, text depth=0.25ex]
        {
		  & \tilde M_X \\
		  N & M\\
        };
        \path[font=\scriptsize]
		(m-1-2) edge[->] node[auto] {$\sigma$} (m-2-2)
		(m-2-1) edge[dashed,->] node[auto] {$\exists!\tilde \tau$} (m-1-2)
        (m-2-1) edge[->] node[auto] {$\tau$} (m-2-2)
		;
    \end{tikzpicture}
\end{equation*}commutes.
\end{thm}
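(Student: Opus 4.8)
The plan is to prove the universal property of the blow-up locally, in adapted charts, and then glue. Since uniqueness of $\tilde\tau$ is a local statement and $\sigma$ is an isomorphism away from $X$, it suffices to show that on a neighbourhood of each point of $\tau^{-1}(X)$ there is a unique lift; these local lifts then automatically agree on overlaps by the uniqueness, and patch to a global $\tilde\tau$. So first I would reduce to the model case: pick $q\in\tau^{-1}(X)$, choose a chart $\varphi:V\to\mathbb{C}^d$ of $M$ adapted to $X$ around $\tau(q)$, so that in these coordinates $X$ becomes the linear subspace $X_I=\{z''=0\}$ for a splitting $I$ of weight $m$, and replace $M$ by $U=\varphi(V)\subseteq\mathbb{C}^d$ and $\tilde M_X$ by $\tilde U_{X_I}=\sigma^{-1}(U)$ with the chart description from Section~\ref{subsec:BlowupCharts}.

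Next I would exploit the hypothesis that $Z:=\tau^{-1}(X)$ is a hypersurface. Near $q$ write the components of $\tau$ (in the adapted chart) as $\tau=(\tau',\tau'')$ with $\tau'$ the $I$-components and $\tau''$ the $\overline I$-components; then $\tau''$ is a holomorphic $\mathbb{C}^{d-m}$-valued map vanishing exactly on the hypersurface $Z$. Choose a local defining function $g$ for $Z$ at $q$ (a holomorphic function with $Z=\{g=0\}$ and $dg\neq0$, possible after shrinking). Since each component $\tau''_j$ vanishes on $Z=\{g=0\}$, the Nullstellensatz for the principal ideal $(g)$ — or more elementarily, division with remainder in one variable transverse to $Z$ together with holomorphy — yields holomorphic functions $h_j$ near $q$ with $\tau''_j=g\,h_j$; moreover the $h_j$ do not all vanish simultaneously at $q$, since otherwise $\tau''$ would vanish to order $\ge2$ along $Z$, contradicting that $\{\tau''=0\}=Z$ has the reduced structure $(g)$ (one must be slightly careful: if $Z$ is allowed to be non-reduced as a hypersurface the claim is still fine because "hypersurface" here means a reduced divisor). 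The point $[h(q)]=[(h_{j})_{j\in\overline I}]\in\mathbb{P}(\mathbb{C}^d/X)$ is then well defined, and I define $\tilde\tau(x):=(\tau(x),[h(x)])$ wherever $h(x)\neq0$, extended across $Z$ by continuity using that $[h]$ is holomorphic as a map into $\mathbb{P}^{d-m-1}$ near $q$. One checks directly that $\tilde\tau(x)\in\tilde U_{X_I}$: away from $Z$ we have $\tau''(x)=g(x)h(x)$ with $g(x)\neq0$, so $[h(x)]=[\tau''(x)]$ and $\tau(x)\in X_I+[\tau''(x)]$ is immediate; on $Z$ it follows by continuity. In the chart $\chi_j$ of \eqref{eq:BlowUpCharts} around $(0,[e_j])$ the lift is simply $x\mapsto$ (the $I\cup\{j\}$-components of $\tau$, together with $h_i/h_j$ for $i\neq j$), manifestly holomorphic where $h_j\neq0$, so $\tilde\tau$ is holomorphic near $q$; and $\sigma\circ\tilde\tau=\tau$ by construction.

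For uniqueness, suppose $\tilde\tau_1,\tilde\tau_2$ are two holomorphic lifts near $q$. On the dense open set $U\setminus Z$ the map $\sigma$ is a biholomorphism onto its image, so $\tilde\tau_1=\tilde\tau_2$ there; by continuity (and holomorphy) they agree on all of a neighbourhood of $q$. This same argument shows the local lifts constructed above are independent of the auxiliary choices of chart and defining function, and agree on overlaps, so they glue to a global holomorphic $\tilde\tau:N\to\tilde M_X$ with $\sigma\circ\tilde\tau=\tau$, and it is globally unique by the same density argument. Finally, for a general (not necessarily standard) submanifold $X$ one uses an atlas of charts adapted to $X$, as in the construction of the blow-up of a manifold recalled before Theorem~\ref{thm:UnivPropBlowUp}, and the gluing is compatible because the local blow-ups were glued in precisely the same way.

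The main obstacle I expect is the factorisation step $\tau''_j=g\,h_j$ with the $h_j$ having no common zero on $Z$: this is where the hypothesis "$\tau^{-1}(X)$ is a hypersurface" (rather than some higher-codimension or non-reduced thing) is essential, and it is exactly the content that makes the lift exist at all — one is extracting the "leading normal direction" of $\tau$ along $Z$. Making this rigorous cleanly is easiest by citing the general statement (e.g.~\cite[\textsection 4.1]{Fischer1976Complexanalyticgeometry}) rather than reproving the local division lemma; everything else — the chart computations, holomorphy of $[h]$, and the uniqueness-by-density argument — is routine.
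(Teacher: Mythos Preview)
The paper does not actually prove this theorem; it only cites \cite[\S 4.1]{Fischer1976Complexanalyticgeometry} for a general proof. So there is nothing to compare against, and your outline is a reasonable attempt to fill in what the paper omits. The reduction to adapted charts, the definition of $\tilde\tau$ via $(\tau,[h])$, and the uniqueness-by-density argument are all fine.

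However, there is a genuine gap at the step where you claim the $h_j$ do not all vanish at $q$. Your justification (``otherwise $\tau''$ would vanish to order $\ge 2$ along $Z$, contradicting that $\{\tau''=0\}=Z$ has the reduced structure $(g)$'') is incorrect: the \emph{set} $\{\tau''=0\}$ being a smooth hypersurface does not control the order of vanishing of the individual $\tau''_j$, nor does it prevent the $h_j$ from having an isolated common zero on $Z$. Concretely, take $M=\mathbb{C}^2$, $X=\{0\}$, $N=\mathbb{C}^2$, and $\tau(z,w)=(z^2,zw)$. Then $\tau^{-1}(X)=\{z=0\}$ is a smooth hypersurface, $g=z$, and $h=(z,w)$ vanishes at the origin; the putative lift $((z^2,zw),[z:w])$ has no continuous extension to $(0,0)$ (the limit along $w=cz$ is $[1:c]$), so in fact no holomorphic lift exists at all.

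What this shows is that the hypothesis must be read scheme-theoretically: one needs the \emph{ideal-theoretic} inverse image $\tau^{*}\mathcal{I}_X\cdot\mathcal{O}_N$ to be locally principal (an effective Cartier divisor), not merely that the zero set is a codimension-one submanifold. Under that hypothesis you take $g$ to be a local generator of this pullback ideal (possibly a higher power of a reduced equation), and then the $h_j$ are coprime by construction, so $[h]$ is well defined. Your parenthetical about reduced versus non-reduced $Z$ has the issue backwards; the fix is exactly to \emph{allow} $g$ to be non-reduced. This is precisely the content one gets by unpacking the reference to Fischer, and it is the only place your sketch needs repair.
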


\begin{note}
As usual, this universal property provides an equivalent definition
of the blow-up (up to isomorphisms).
\end{note}

\begin{singlespace}

\end{singlespace}

\begin{defn}
A germ $F\in\End(M,X)$ with $F^{-1}(X)=X$ is called \emph{non-degenerate
along $X$\index{non-degenerate germ!along a submanifold}.}
\end{defn}

\begin{cor}
\label{cor:LiftingMapstoBlowUP}For a non-degenerate germ $F\in\End(M,X)$,
there exists a unique lift under the blow-up $\sigma:\tilde{M}_{X}\to M$
along $X$, that is, a germ $\tilde{F}\in\End(\tilde{M}_{X},E_{X})$
such that \begin{equation}
    \begin{tikzpicture}[baseline = (current bounding  box.center)]
        \matrix (m) [matrix of math nodes,
        row sep=3em, column sep=5em,
		text height=1.5ex, text depth=0.25ex]
        {
		  (\tilde{M}_{X},E_X) & (\tilde{M}_{X},E_X) \\
		  \left(M,X\right) & \left(M,X\right) \\
        };
        \path[font=\scriptsize]
		(m-1-1) edge[->] node[auto] {$\tilde F$} (m-1-2)
		(m-1-1) edge[->] node[auto] {$\sigma$} (m-2-1)
		(m-1-1) edge[->] node[auto] {$\tau$} (m-2-2)
		(m-1-2) edge[->] node[auto] {$\sigma$} (m-2-2)
        (m-2-1) edge[->] node[auto] {$F$} (m-2-2)
		;
    \end{tikzpicture}
	\label{eq:LiftToBlowUp}
\end{equation}commutes.
\end{cor}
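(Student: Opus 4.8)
The plan is to apply the universal property of the blow-up (Theorem~\ref{thm:UnivPropBlowUp}) to the composition $F\circ\sigma$, so the entire content is verifying its single hypothesis: that $(F\circ\sigma)^{-1}(X)$ is a hypersurface in $\tilde M_X$. First I would set $\tau:=F\circ\sigma:\tilde M_X\to M$ and compute $\tau^{-1}(X)=\sigma^{-1}(F^{-1}(X))=\sigma^{-1}(X)=E_X$, where the middle equality is exactly the non-degeneracy hypothesis $F^{-1}(X)=X$. Since $E_X$ is by construction the exceptional divisor of the blow-up, it is a codimension-one submanifold, hence a hypersurface. Theorem~\ref{thm:UnivPropBlowUp} then yields a unique holomorphic map $\tilde F:\tilde M_X\to\tilde M_X$ with $\sigma\circ\tilde F=\tau=F\circ\sigma$, which is the commutativity of the square in~\eqref{eq:LiftToBlowUp}.

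It then remains to check that $\tilde F$ is a germ in $\End(\tilde M_X,E_X)$, i.e.\ that $\tilde F(E_X)\subseteq E_X$. This follows from the already-established commutativity: for $q\in E_X$ we have $\sigma(\tilde F(q))=F(\sigma(q))\in F(X)\subseteq X$ (using $F\in\End(M,X)$), so $\tilde F(q)\in\sigma^{-1}(X)=E_X$. One should also note that $\tilde F$ is genuinely a germ at $E_X$ rather than a global map: since $F$ is only a germ at $X$, represented on some neighbourhood $U\supseteq X$, the lift is defined on $\sigma^{-1}(U)$, a neighbourhood of $E_X$, and independence of the representative follows from the uniqueness clause in the universal property applied on shrinking neighbourhoods.

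Finally, uniqueness of $\tilde F$ as a germ is inherited directly from the uniqueness of the lift in Theorem~\ref{thm:UnivPropBlowUp}: any two lifts agree on $\tilde M_X\setminus E_X$, where $\sigma$ is a biholomorphism (so $\tilde F=\sigma^{-1}\circ F\circ\sigma$ is forced there), and hence agree everywhere by continuity, or again by the universal property applied on a common neighbourhood. I do not expect a genuine obstacle here: the only substantive point is recognising that non-degeneracy of $F$ along $X$ is precisely the condition that makes $(F\circ\sigma)^{-1}(X)$ equal to the exceptional divisor rather than something of lower codimension (which would happen if $F^{-1}(X)$ were strictly larger than $X$), so that the universal property applies. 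Everything else is formal diagram-chasing.
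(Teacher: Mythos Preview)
Your proposal is correct and follows exactly the paper's approach: set $\tau=F\circ\sigma$, compute $\tau^{-1}(X)=\sigma^{-1}(F^{-1}(X))=\sigma^{-1}(X)=E_X$ via non-degeneracy, observe this is a hypersurface, and invoke the universal property. You supply more detail than the paper (the check that $\tilde F(E_X)\subseteq E_X$ and the germ-level remarks), but the argument is the same.
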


\begin{proof}
The map $\tau:=F\circ\sigma:(\tilde{M}_{X},E_{X})\to(M,X)$ satisfies
the conditions of the universal property \ref{thm:UnivPropBlowUp},
since $\tau^{-1}(X)=\sigma^{-1}\circ F^{-1}(X)=\sigma^{-1}(X)\subseteq\tilde{M}_{X}$
is just the exceptional divisor $E_{X}$, which is a hypersurface.
Thus we can lift $\tau$ to a unique germ $\tilde{F}:(\tilde{M}_{X},E_{X})\to(\tilde{M}_{X},E_{X})$
such that \ref{eq:LiftToBlowUp} commutes. 
\end{proof}
This allows us to use blow-ups as a new type of change of coordinates,
which has the drawback of not being invertible along the exceptional
divisor. We can however still establish a useful formula for the lift
on the exceptional divisor. 
\begin{prop}
\label{prop:BlowUpOnExcepDiv}Let $F\in\End(M,X)$ be invertible and
non-degenerate along $X$. Then the lift $\tilde{F}$ to $\tilde{M}_{X}$
on the exceptional divisor $E_{X}\cong\mathbb{P}(TM/TX)$ is given
by 
\begin{equation}
\tilde{F}([v+T_{p}X])=[dF_{p}(v)+T_{p}X]\quad\text{for all }v\in T_{p}M,p\in X.\label{eq:LiftOnEBydF-1}
\end{equation}
\end{prop}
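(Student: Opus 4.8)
The statement to prove is Proposition~\ref{prop:BlowUpOnExcepDiv}: that the lift $\tilde F$ of an invertible germ $F\in\End(M,X)$, non-degenerate along $X$, acts on the exceptional divisor $E_X\cong\mathbb P(TM/TX)$ by the projectivised differential $[v+T_pX]\mapsto[dF_p(v)+T_pX]$. The strategy is to reduce to a purely local computation in adapted coordinates using the blow-up charts from Section~\ref{subsec:BlowupCharts}, and then read off the action on the exceptional divisor by a limit / continuity argument.

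First I would observe that the claim is local on $X$ and invariant under holomorphic change of coordinates, so by choosing a chart $\varphi\colon V\to\mathbb C^d$ adapted to $X$ (and using that $F(X)\subseteq X$ together with invertibility of $F$ to also adapt a chart near the image point) we may assume $M=\mathbb C^d$, $X=X_I=\{z''=0\}$ for the standard splitting $I=\{1,\dots,m\}$, $p=0$, and $F(0)=0$. Write $F=(F',F'')$ in the $z',z''$ block decomposition. Non-degeneracy $F^{-1}(X)=X$ means exactly that $F''(z)=0$ iff $z''=0$, i.e.\ each component of $F''$ is divisible by (the ideal of) $z''$; more precisely, since $F$ is biholomorphic, $dF_0$ must preserve $TX=\{v''=0\}$, hence the $z''$-components of $F$ vanish to first order along $X$ and the linear map $\bar L\colon v''\mapsto (dF_0 v)''$ on $\mathbb C^d/\mathbb C^m$ is invertible. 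Then, in the blow-up chart $\chi_j$ (for $j\in\overline I$) with coordinates $w$ related to $z$ by \eqref{eq:coordinatesSimpleBlowup}, i.e.\ $z_i=w_i$ for $i\in I\cup\{j\}$ and $z_i=w_iw_j$ otherwise, the lifted map $\tilde F$ is determined by $\sigma\circ\tilde F=F\circ\sigma$ together with the universal property (Corollary~\ref{cor:LiftingMapstoBlowUP}); concretely one computes $\tilde F$ in the $w$-coordinates by substituting $z=z(w)$ into $F$, dividing the components corresponding to $\overline I$ by $F_j(z(w))$ (which is legitimate because non-degeneracy forces $F_i(z(w))$ for $i\in\overline I$ to be divisible by $w_j$, while $F_j(z(w))/w_j$ is a unit near the relevant point of $E_X$). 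Setting $w_j=0$ — which is the equation of the exceptional divisor in this chart — all the nonlinear corrections die, and the remaining expression is precisely the formula for $dF_0$ acting on the direction, projectivised; translating back through the identification $E_X\cong\mathbb P(TM/TX)$ gives \eqref{eq:LiftOnEBydF-1}.

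An alternative, more conceptual route that avoids some of the chart bookkeeping: for a fixed normal direction $[v+T_0X]$, pick a holomorphic disc (or curve) $t\mapsto\gamma(t)$ through $0$ with $\gamma(0)=0$ and $\gamma'(0)=v$ transverse to $X$; its proper transform meets $E_X$ at the point $(0,[v+T_0X])$. Since $F$ is a biholomorphism with $F(X)=X$, the image curve $F\circ\gamma$ has $(F\circ\gamma)'(0)=dF_0(v)$, so its proper transform meets $E_X$ at $(0,[dF_0(v)+T_0X])$. By Corollary~\ref{cor:LiftingMapstoBlowUP} the lift $\tilde F$ sends the lifted curve to the lifted image curve (functoriality of the proper transform under lifted maps, itself a consequence of the universal property), and evaluating at the intersection with $E_X$ and using continuity of $\tilde F$ gives $\tilde F(0,[v+T_0X])=(0,[dF_0(v)+T_0X])$. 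One must check that $\tilde F$ indeed maps $E_X$ into $E_X$ (which is built into $\tilde F\in\End(\tilde M_X,E_X)$) and that the proper-transform-of-a-curve description is valid, i.e.\ that $\gamma$ can be chosen so $\gamma^{-1}(X)$ is (locally) a single reduced point so the lift of $\gamma$ is well-defined.

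The main obstacle is the divisibility / unit bookkeeping in the chart computation: one has to verify carefully that non-degeneracy $F^{-1}(X)=X$ plus invertibility of $F$ guarantees both that the $\overline I$-components of $F(z(w))$ are divisible by $w_j$ (so the chart formula for $\tilde F$ is holomorphic, not just meromorphic) and that $F_j(z(w))/w_j$ is nonvanishing at the point of $E_X$ under consideration — equivalently that the relevant minor of $\bar L$ is nonzero, which is why one needs to work in the chart $\chi_j$ adapted to the particular direction. This is exactly the point where invertibility of $F$ (not merely $F^{-1}(X)=X$) enters, and getting it cleanly is the crux; once the formula is holomorphic and one knows it agrees with $[dF_0(\cdot)]$ after setting $w_j=0$, the identification with $\mathbb P(TM/TX)$ and the conclusion are routine.
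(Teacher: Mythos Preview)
Your proposal is correct, and your second (``conceptual'') approach is essentially the paper's argument: the paper simply observes that off the exceptional divisor $\tilde F=\sigma^{-1}\circ F\circ\sigma$ is well defined by non-degeneracy, and that taking the limit $w\to E_X$ reduces to the linear part of $F$. Your first approach via explicit chart computations is more elaborate than necessary here---the paper defers that kind of bookkeeping to the following subsection (\S\ref{subsec:LiftCoords}), where it is actually needed---but it is of course also valid.
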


\begin{proof}
By non-degeneracy, $\tilde{F}(w)=\sigma^{-1}\circ F\circ\sigma(w)$
is well defined for $w\in\tilde{M}_{X}\backslash E_{X}$ (since $F\circ\sigma(w)\notin X$).
Taking the limit $w\to E_{X}$ reduces to the (non-degenerate) linear
part of $F$.
\end{proof}
\begin{note}
A more general version of this Proposition can be used to directly
prove Corollary~\ref{cor:LiftingMapstoBlowUP} (see \cite[Prop.~2.1]{Abate2000Diagonalizationofnondiagonalizablediscreteholomorphicdynamicalsystems}).
\end{note}

\subsubsection{\label{subsec:LiftCoords}The Lift in Coordinates}
\begin{lem}
\label{lem:NoProblematicTerms}Let $F\in\End(\mathbb{C}^{d},0)$ be
invertible with $dF_{0}$ in Jordan canonical form given by 
\[
F(z)=\sum_{\alpha\in\mathbb{N}_{0}^{d}}p_{\alpha}z^{\alpha},
\]
and non-degenerate along $X=X_{I}=\langle e_{i}\mid i\in I\rangle$
for a splitting $I\subseteq\{1,\ldots,d\}$. Then, using Notation~\ref{nota:PrimeNotation},
there are no terms $(z')^{\beta}$ in $F''$.
\end{lem}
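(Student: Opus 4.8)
The plan is to recognise the assertion as essentially a restatement of the invariance $F(X)\subseteq X$, which is built into the hypothesis: being non-degenerate along $X$ means that $F\in\End(\mathbb{C}^{d},X)$ (so a representative maps $X$ into $X$ near $0$) together with $F^{-1}(X)=X$, and in any case $F^{-1}(X)=X$ already forces $F(X)=F(F^{-1}(X))\subseteq X$. Note that neither the invertibility of $F$, nor the Jordan form of $dF_{0}$, nor the full equality $F^{-1}(X)=X$ enters this particular conclusion; these hypotheses are carried along only because the lemma is applied in the coordinate description of the lift in Subsection~\ref{subsec:LiftCoords}.

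First I would unwind Notation~\ref{nota:PrimeNotation}. By definition $X=X_{I}=\{z\in\mathbb{C}^{d}\mid z''=0\}$ with $z''=(z_{j})_{j\in\overline{I}}$. Splitting a multi-index $\alpha\in\mathbb{N}_{0}^{d}$ as $\alpha=(\alpha',\alpha'')$ according to $I$ and $\overline{I}$, we have $z^{\alpha}=(z')^{\alpha'}(z'')^{\alpha''}$, and the monomials of the shape $(z')^{\beta}$ occurring in $F''$ are exactly the $z^{\alpha}$ with $\alpha''=0$. Writing $p_{\alpha}''$ for the tuple of $\overline{I}$-components of the coefficient $p_{\alpha}$, the claim is precisely that $p_{(\beta,0)}''=0$ for every multi-index $\beta$ supported in $I$. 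Substituting $z''=0$ into the convergent expansion $F''(z)=\sum_{\alpha}p_{\alpha}''z^{\alpha}$ and using that $F''(z',0)=0$ (which is exactly what $F(X)\subseteq X$ says) gives the identity of convergent power series in $z'$ near $0$
\[
0=F''(z',0)=\sum_{\alpha''=0}p_{(\alpha',0)}''\,(z')^{\alpha'},
\]
whence $p_{(\alpha',0)}''=0$ for all $\alpha'$ by uniqueness of Taylor coefficients, which is the assertion.

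There is no genuine obstacle here: the statement follows directly from $F(X)\subseteq X$, and the only points requiring attention are the bookkeeping of the splitting $I,\overline{I}$ and the observation that non-degeneracy forces the germ of $X$ at $0$ to be $F$-invariant. As a side remark, since $F$ is invertible one also has $F(X)=X$ locally, so $F^{-1}$ is non-degenerate along $X$ as well and the same conclusion applies to $(F^{-1})''$.
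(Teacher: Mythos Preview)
Your proof is correct and follows essentially the same idea as the paper's: both exploit $F(X)\subseteq X$, i.e.\ $F''(z',0)=0$, to kill the pure $(z')^{\beta}$-terms in $F''$. The paper argues by contradiction via a minimal-order term, while you substitute $z''=0$ directly and invoke uniqueness of Taylor coefficients; your presentation is cleaner but the content is the same.
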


\begin{proof}
Assume there is a term $(z')^{\beta}$ in $F''$ of minimal order.
Then there is a point $z\in X$ such that $(z')^{\beta}\neq0$ and
hence $F_{i}(\varepsilon z)\neq0$ for small $\varepsilon>0$, and
thus $F(\varepsilon z)\notin X$, a contradiction.
\end{proof}
Let $\tilde{F}$ be the lift of $F$ under the blow-up $\sigma$ along
$X$. Now for a proper eigendirection $[e_{j}]$of $dF_{0}$ with
$j\in\overline{I}$ let $(V_{j},\chi_{j})$ be the chart centred at
$[e_{j}]$ defined in (\ref{eq:BlowUpCharts}). Then $\tilde{F}$
fixes $[e_{i}]$ by Proposition~\ref{prop:BlowUpOnExcepDiv}. For
a multi-index $\alpha\in\mathbb{N}_{0}^{d}$ set $k_{\alpha}:=\sum_{i\in\overline{I}\backslash\{j\}}\alpha_{i}$.
Then a monomial $z^{\alpha}$ in $F_{i}$ occurs in $\tilde{F}_{i}$
as 
\begin{equation}
\begin{alignedat}{1}w^{\alpha}w_{j}^{k_{\alpha}} & \quad\text{if }i\in I\cup\{j\},\\
w^{\alpha}w_{j}^{k_{\alpha}}/F_{j}(z) & =w^{\alpha}w_{j}^{k_{\alpha}}\paren[{\Big}]{\lambda_{j}w_{j}+\sum_{|\beta|\ge2}p_{\beta,j}w^{\beta}w_{j}^{k_{\beta}}}^{-1}\\
 & =\lambda_{j}^{-1}w^{\alpha}w_{j}^{k_{\alpha}-1}\paren[{\Big}]{1+\sum_{|\beta|\ge2}\frac{p_{\beta,j}}{\lambda_{j}}w^{\beta}w_{j}^{k_{\beta}-1}}^{-1}\\
 & =\lambda_{j}^{-1}w^{\alpha}w_{j}^{k_{\alpha}-1}\sum_{k=0}^{\infty}\paren[{\Big}]{-\sum_{|\beta|\ge2}\frac{p_{\beta,j}}{\lambda_{j}}w^{\beta}w_{j}^{k_{\beta}-1}}^{k}\\
 & =\lambda_{j}^{-1}w^{\alpha}w_{j}^{k_{\alpha}-1}+HOT\quad\text{if }i\in\overline{I}\backslash\{j\}.
\end{alignedat}
\label{eq:monomialTransfBlowUp}
\end{equation}

\begin{rem}
\label{rem:ProblematicTerms}These are well-defined power series by
Lemma~\ref{lem:NoProblematicTerms}, since $\beta_{j}+k_{\beta}=0$
occurs if and only if $\beta''=0$, i.e.\ $z^{\beta}=(z')^{\beta'}$.
\end{rem}

\begin{rem}
\label{rem:BlowUpsMultipliers}In particular, the eigenvalues of $d\tilde{F}_{[e_{j}]}$
are 
\[
\tilde{\lambda}_{i}=\begin{cases}
\lambda_{i}, & \text{for }i\in I\cup\{j\},\\
\frac{\lambda_{i}}{\lambda_{j}}, & \text{otherwise},
\end{cases}
\]
so any resonance $\lambda_{i}=\lambda^{\alpha}$ for $F$ induces
a resonance for $\tilde{F}$ at $[e_{j}]$ of the form 
\[
\tilde{\lambda}_{i}=\begin{cases}
\tilde{\lambda}^{\alpha}\tilde{\lambda}_{j}^{k_{\alpha}}, & \text{for }i\in I\cup\{j\}\\
\tilde{\lambda}^{\alpha}\tilde{\lambda}_{j}^{k_{\alpha}-1}, & \text{otherwise}.
\end{cases}
\]
Comparing this with (\ref{eq:monomialTransfBlowUp}), we observe that
resonant monomials stay resonant at the lowest order and even at all
higher orders that come from resonant terms $z^{\beta}$ in $F_{j}$.
In conclusion, elimination of non-resonant terms up to high orders
is preserved under these blow-ups.
\end{rem}

\begin{example}
\label{exa:NewResonancesAfterBlowUp}There may still occur new resonances
in $\tilde{\lambda}$ not generated by those of $\lambda$. For example,
if $\lambda_{j}^{3}=\lambda_{k}^{2}$ for some $k\notin I\cup\{j\}$,
this is not a resonance for $F$, but for $\tilde{F}$ at $[e_{j}]$,
we get a resonance 
\[
\tilde{\lambda}_{j}=\lambda_{j}=\paren[{\Big}]{\frac{\lambda_{k}}{\lambda_{j}}}^{2}=\tilde{\lambda}_{k}^{2}.
\]
\end{example}

\begin{rem}
Even if there are new resonances, the blown-up germ $\tilde{F}$ will
not contain any resonant terms corresponding to these resonances,
as those transformed like the multipliers, and hence do not stem from
a polynomial in $F$. In our example above, a term $w_{k}^{2}$ in
$\tilde{F}_{i}$ would have been a term $z_{k}^{2}z_{j}^{-2}$ in
$F_{j}$.
\end{rem}

\begin{singlespace}

\end{singlespace}

\subsection{\label{subsec:Diagonalisation}Diagonalisation of Jordan blocks}

\selectlanguage{british}
\global\long\def\Transp{\mathsf{T}}%
\global\long\def\Pow{\m{Pow}}%
\global\long\def\ord{\m{ord}}%

If the germ $F\in\End(\mathbb{C}^{d},0)$ is not hyperbolic with linear
part $L=dF_{0}$, then the presence of a Jordan block corresponding
to a neutral eigenvalue in $L$, significantly alters the dynamics
of the linear part and can be an obstruction to finding holomorphic
normal forms (see Theorem~\ref{thm:JordanBlockNoHolLin}). Many result
in the following sections hence assume the linear part $L$ to be
diagonalisable. To tackle germs with non-diagonalisable linear part,
Abate describes in \cite{Abate2000Diagonalizationofnondiagonalizablediscreteholomorphicdynamicalsystems}
a general procedure to diagonalise any germ using a sequence of blow-ups:
\begin{thm}[{Diagonalisation Theorem \cite[Thm. 2.4]{Abate2000Diagonalizationofnondiagonalizablediscreteholomorphicdynamicalsystems}}]
\label{thm:AbateDiagonalisation}Let $F\in\End(\mathbb{C}^{d},0)$
be invertible such that 
\[
dF_{0}=\diag(J_{\mu_{1}}(\lambda_{1}),\ldots,J_{\mu_{\rho}}(\lambda_{\rho})),
\]
where $\mu_{1}+\cdots+\mu_{\rho}=d$ and $J_{m}(\lambda)$ denotes
the Jordan block of size $m\in\mathbb{N}$ for the eigenvalue $\lambda\in\mathbb{C}$.

Then there exists a complex $d$-dimensional manifold $M$, a modification
$\pi:M\to\mathbb{C}^{d}$, a point $p\in M$ and a holomorphic lift
$\tilde{F}\in\End(M,\pi^{-1}(0))$ such that 
\begin{enumerate}
\item $\pi:M\backslash\pi^{-1}(0)\to\mathbb{C}^{d}\backslash\{0\}$ is biholomorphic,
\item $\pi\circ\tilde{F}=F\circ\pi$ where defined,
\item $p$ is a fixed point of $\tilde{F}$ and $d\tilde{F}_{p}$ is diagonalizable
as 
\[
\diag(\tilde{\lambda}_{1},D_{\mu_{1}-1}(1),D_{\mu_{2}}(\lambda_{2}/\lambda_{1}),\ldots,D_{\mu_{\rho}}(\lambda_{\rho}/\lambda_{1})),\quad\text{with }\tilde{\lambda}_{1}=\begin{cases}
\lambda_{1}, & \text{if }\mu_{1}>\mu_{2}\\
\frac{\lambda_{1}^{2}}{\lambda_{2}}, & \text{if }\mu_{1}=\mu_{2},
\end{cases}
\]
where $D_{m}(\lambda):=\diag(\lambda,\ldots,\lambda)\in\mathbb{C}^{m\times m}$. 
\end{enumerate}
\end{thm}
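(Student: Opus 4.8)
The plan is to prove the statement by induction, realising the modification $\pi$ as a finite composition of blow-ups of points lying over the origin and, at each step, reading off the linear part of the lift from Proposition~\ref{prop:BlowUpOnExcepDiv} together with the chart formulae of Section~\ref{subsec:LiftCoords}.

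For the base case, if $dF_0$ is already diagonalisable then $M=\mathbb{C}^d$, $\pi=\id$, $p=0$, $\tilde F=F$ do the job. For the inductive step, order the Jordan blocks so that $\mu_1\geq\mu_2\geq\cdots\geq\mu_\rho$ and assume $\mu_1\geq2$. Since $F$ is invertible, $F^{-1}(\{0\})=\{0\}$, so $F$ is non-degenerate along $\{0\}$ and Corollary~\ref{cor:LiftingMapstoBlowUP} produces a lift $\hat F\in\End(\tilde{\mathbb{C}}_0^d,E)$ of $F$ under the blow-up $\sigma\colon\tilde{\mathbb{C}}_0^d\to\mathbb{C}^d$ at the origin, with $\sigma\circ\hat F=F\circ\sigma$. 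One checks that $\hat F$ is again a local biholomorphism near each of its fixed points and that $\hat F^{-1}(E)=E$, so $\hat F$ is itself non-degenerate and the construction can be iterated.

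The core of the argument is the local computation of $d\hat F$ at a suitably chosen fixed point of $\hat F$ on the exceptional divisor $E\cong\mathbb{P}^{d-1}$. By Proposition~\ref{prop:BlowUpOnExcepDiv}, $\hat F$ acts on $E$ as the projectivisation of $dF_0$, so the class $[e_1]$ of the eigenvector of the largest block is a fixed point. Working in the blow-up chart centred at $[e_1]$ from \eqref{eq:BlowUpCharts} via the coordinate change \eqref{eq:coordinatesSimpleBlowup}, and using the monomial transformation formula \eqref{eq:monomialTransfBlowUp} (well defined thanks to Lemma~\ref{lem:NoProblematicTerms}), a direct computation shows that, up to linear conjugacy, $d\hat F_{[e_1]}$ is block diagonal with the first block $J_{\mu_1}(\lambda_1)$ replaced by $\diag(\lambda_1,J_{\mu_1-1}(1))$ and each remaining block $J_{\mu_k}(\lambda_k)$ replaced by $J_{\mu_k}(\lambda_k/\lambda_1)$ (consistent with Remark~\ref{rem:BlowUpsMultipliers}). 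When $\mu_1>\mu_2$ the largest Jordan block has strictly shrunk, so the leading eigenvalue $\tilde\lambda_1=\lambda_1$ is already isolated; when $\mu_1=\mu_2$, the block $J_{\mu_2}(\lambda_2/\lambda_1)$ is now jointly largest, and one further blow-up at its eigendirection both shrinks the maximal block and turns the isolated eigenvalue $\lambda_1$ into $\lambda_1/(\lambda_2/\lambda_1)=\lambda_1^2/\lambda_2$, which accounts for the case distinction for $\tilde\lambda_1$.

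Applying the inductive hypothesis to the germ $\hat F$ at $[e_1]$ (after a linear change of coordinates bringing $d\hat F_{[e_1]}$ into Jordan form) yields a modification $\pi'\colon M\to\tilde{\mathbb{C}}_0^d$, a point $p$, and a lift $\tilde F$ with $d\tilde F_p$ diagonalisable of the asserted shape; then $\pi:=\sigma\circ\pi'$ is the required modification. Property~(2) follows by composing the intertwining relations, and property~(1) because each constituent blow-up is biholomorphic off its exceptional divisor and all blow-up centres lie over $0$. I expect the main obstacle to be twofold. First, setting up a complexity measure that provably decreases at each stage: the lexicographic pair (size of the largest Jordan block, number of blocks of that size) works when a single blow-up already reduces the Jordan type, but in the root-of-unity/parabolic range a single blow-up need not reduce the Jordan defect (surviving resonant quadratic terms can reinstate a nilpotent part), so one must either pass first to a Poincaré–Dulac normal form via Theorem~\ref{thm:PoincareDulac} or else absorb the defect over several consecutive blow-ups. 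Second, the bookkeeping needed to verify that after all the blow-ups the eigenvalue list is exactly $\tilde\lambda_1$, together with $\mu_1-1$ copies of $1$ and $\mu_k$ copies of $\lambda_k/\lambda_1$ — this reduces to keeping track of which factor divides each eigenvalue at every step. The geometric input itself is light once Corollary~\ref{cor:LiftingMapstoBlowUP} and Proposition~\ref{prop:BlowUpOnExcepDiv} are available.
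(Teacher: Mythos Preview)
The paper does not actually prove this theorem; it is stated with attribution to Abate's original paper \cite{Abate2000Diagonalizationofnondiagonalizablediscreteholomorphicdynamicalsystems} and followed only by Remark~\ref{rem:ExceptionalDivisorCollapsingDynamics}. Your overall strategy --- iterate point blow-ups over the origin, use Corollary~\ref{cor:LiftingMapstoBlowUP} to lift, and read off the new linear part from the chart formulae of Section~\ref{subsec:LiftCoords} --- is indeed Abate's approach, so at the level of architecture you are on the right track.

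That said, two of the points you flag as mere ``obstacles'' are more serious than your sketch suggests. First, the recursive application of the theorem does \emph{not} reproduce the stated eigenvalue list. Take $\mu_1>\mu_2$ with $\mu_1-1=\mu_2$: after one blow-up at $[e_1]$ the Jordan data become $(\lambda_1)$, $J_{\mu_1-1}(1)$, $J_{\mu_k}(\lambda_k/\lambda_1)$; now the largest block has eigenvalue $1$ and size equal to the second-largest, so the inductive hypothesis (in its ``$\mu_1=\mu_2$'' branch) returns a leading eigenvalue $1^2/(\lambda_2/\lambda_1)=\lambda_1/\lambda_2$, not the extra copy of $1$ the theorem promises. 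So ``keeping track of which factor divides each eigenvalue'' is not bookkeeping --- the order of blow-ups has to be chosen globally to land on the specific list, and a straight recursion on block size does not do this.

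Second, your proposed remedy for nilpotent parts reappearing --- ``pass first to a Poincar\'e--Dulac normal form via Theorem~\ref{thm:PoincareDulac}'' --- cannot work. Theorem~\ref{thm:PoincareDulac} is only formal, and in any case the quadratic terms that can reinstate a Jordan block after the blow-up at $[e_j]$ are exactly the resonant ones $z_j^2$ in a component $i\neq j$ with $\lambda_j^2=\lambda_i$ (these are the terms Poincar\'e--Dulac \emph{cannot} remove). One really has to argue, as Abate does, that such collisions are controlled over the whole sequence of blow-ups, not eliminated beforehand.
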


\begin{rem}
\label{rem:ExceptionalDivisorCollapsingDynamics}An important caveat
of the diagonalisation theorem is, that the local dynamics of $\tilde{F}$
near $p$ contain only local dynamics of $F$ tangent to $e_{1}$
and the dynamics of $\tilde{F}$ contained in $\pi^{-1}(0)$ will
collapse to $0$ for $F$.
\end{rem}

\begin{singlespace}
\end{singlespace}

\section{\label{sec:Parabolic-cases}Parabolic fixed points}

\selectlanguage{british}

The results in this section are on germs whose only multiplier is
$1$. As in dimension one, any parabolic germ $F\in\End(\mathbb{C}^{d},0)$
has an iterate $F^{\circ n}$ with only multiplier $1$. For most
of this section, we will assume the linear part of $F$ to be diagonalisable,
or\ $F$ to be tangent to the identity:
\begin{defn}
Let $F\in\Pow(\mathbb{C}^{d},0)$. Then  $F$ is \emph{tangent to
the identity\index{tangent germ!to the identity}},  of order $k+1$,
if it has homogeneous expansion 
\begin{equation}
F=\id+P_{k+1}+P_{k+2}+\cdots,\label{eq:TangIdHomExp}
\end{equation}
where $P_{k+1}\neq0$ and $P_{n}$ is a homogeneous polynomial of
order $n$ for all $n\ge k+1$.  Since the linear part is the identity,
the dominant term for the dynamics is the first non-linear term $P_{k+1}$. 
\end{defn}

To find an analogue to one-dimensional parabolic flowers in several
variables, we would like to focus on individual complex directions.
To do so, we need to identify complex directions that are close to
being preserved. More precisely, in \cite{Hakim1998AnalytictransformationsofmathbbCp0tangenttotheidentity},
Hakim calls directions that are preserved by the dominant term \emph{characteristic}:
\begin{defn}
Let $F\in\End(\mathbb{C}^{d},0)$ be tangent to the identity of order
$k+1\ge2$ of the form (\ref{eq:TangIdHomExp}). A \emph{\index{characteristic direction!parabolic}characteristic
direction} of $F$ is the complex direction $[v]\in\mathbb{P}^{d-1}$
of a vector $v\in\mathbb{C}^{d}\backslash\{0\}$ such that 
\[
P_{k+1}(v)=\gamma v
\]
for some $\gamma\in\mathbb{C}$. If $\gamma=0$, we call $[v]$ \emph{degenerate}\index{degenerate characteristic direction},
otherwise \emph{non-degenerate}\index{non-degenerate characteristic direction}. 
\end{defn}

\begin{rem}
\label{rem:CharDirAsFP}The map $P_{k+1}$ in (\ref{eq:TangIdHomExp})
is a homogeneous polynomial of order $k+1$, so if its components
have no common zeroes, $P_{k+1}$ induces a holomorphic self-map $P_{k+1}\in\End(\mathbb{P}^{d-1})$
of $\mathbb{P}^{d-1}$ of order $k+1$, whose fixed points $[v]\in\mathbb{P}^{d-1}$
are precisely the (non-degenerate) characteristic directions of $F$.
 Hence, generically, $F$ has $((k+1)^{d}-1)/k$ non-degenerate characteristic
directions, counted with multiplicity as fixed points of $P_{k+1}$
(see e.g.\ \cite{AbateTovena2003ParaboliccurvesinmathbbC3}).

In the other extreme, if all complex directions are characteristic,
the germ $F$ is called \emph{\index{dicritical@\emph{dicritical}}dicritical}
and this is the case if and only if $P_{k+1}(z)=p_{0}\cdot z$, where
$p_{0}\in\mathbb{C}[z]$ is homogeneous of degree $k$.
\end{rem}

\begin{defn}
Let $F\in\End(\mathbb{C}^{d},0)$. An orbit $\{F^{\circ n}(z)\}_{n\in\mathbb{N}}$
is said to converge to $0$ \emph{tangentially\index{tangential convergence}}
to a complex direction $[v]\in\mathbb{P}^{d-1}$, if $F^{\circ n}(z)\xrightarrow[n\to\infty]{}0$
and $[F^{\circ n}(z)]\xrightarrow[n\to\infty]{}[v]$ in $\mathbb{P}^{d-1}$.
More generally, the convergence is called tangential to a submanifold
$M\subseteq\mathbb{C}^{d}$ containing $0$ or the linear subspace
$V=T_{0}M\subseteq\mathbb{C}^{d}$, if $F^{\circ n}(z)\xrightarrow[n\to\infty]{}0$
and $[F^{\circ n}(z)+V]\xrightarrow[n\to\infty]{}[V]$ in $\mathbb{P}(\mathbb{C}^{d}/V)$.
\end{defn}

The first basic observation is that an orbit converging to $0$ tangentially
to a complex direction forces that direction to be characteristic:
\begin{prop}[Hakim \cite{Hakim1998AnalytictransformationsofmathbbCp0tangenttotheidentity}]
Let $F\in\End(\mathbb{C}^{d},0)$ be tangent to the identity. If
there is an orbit of $F$ converging tangentially to a direction $[v]\in\mathbb{P}^{d-1}$,
then $[v]$ is necessarily a characteristic direction. 
\end{prop}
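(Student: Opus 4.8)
The plan is to extract the tangential direction as a limit and plug it into the leading-order recursion. Suppose $\{z_n\}_n = \{F^{\circ n}(z)\}_n$ converges to $0$ tangentially to $[v] \in \mathbb{P}^{d-1}$, so $z_n \to 0$ and $[z_n] \to [v]$. Writing $F = \id + P_{k+1} + P_{k+2} + \cdots$, we have $z_{n+1} - z_n = P_{k+1}(z_n) + O(\norm{z_n}^{k+2})$. Since $P_{k+1}$ is homogeneous of degree $k+1$, I would factor out the ``radial size'': set $r_n := \norm{z_n}$ and $u_n := z_n / r_n$, so that $u_n \in S^{2d-1}$ and $[u_n] = [z_n] \to [v]$. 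Passing to a subsequence, I may assume $u_n \to u$ for some unit vector $u$ with $[u] = [v]$. The recursion becomes
\[
z_{n+1} - z_n = r_n^{k+1} P_{k+1}(u_n) + O(r_n^{k+2}).
\]

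The key step is a telescoping/averaging argument analogous to the one-variable case (cf.\ the proof of Proposition~\ref{prop:Petals}): the directional part $[z_n]$ converges, so the ``angular increments'' must vanish in a suitable sense, which forces $P_{k+1}(u)$ to be parallel to $u$. Concretely, I would project the recursion onto the orthogonal complement of $u_n$ (or of $u$): writing $\pi_n$ for orthogonal projection onto $u_n^\perp$, tangential convergence gives $\pi_n(z_{n+1}/r_{n+1}) \to 0$, while $\pi_n(z_n) = 0$. Combining with the recursion and the fact that $r_{n+1}/r_n \to 1$ (which itself follows because consecutive terms of a convergent-to-$0$ orbit tangent to a fixed direction cannot change size too fast — one shows $r_{n+1} = r_n(1 + O(r_n^k))$), one obtains that $r_n^{k+1}\pi_n(P_{k+1}(u_n))$ is of smaller order than $r_n^{k+1}$, hence $\pi(P_{k+1}(u)) = 0$ in the limit, i.e.\ $P_{k+1}(u) = \gamma u$ for some $\gamma \in \mathbb{C}$. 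Thus $[v] = [u]$ is characteristic.

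The main obstacle I anticipate is making the order-of-magnitude bookkeeping rigorous: I need that $r_{n+1}/r_n \to 1$ and that the error terms $O(r_n^{k+2})$ are genuinely negligible compared to the main term $r_n^{k+1} P_{k+1}(u_n)$ \emph{after projecting out the radial component}. If $P_{k+1}(u) = 0$ (degenerate case) the main term itself could be of the same order as the error, but in that case $[u]$ is trivially a (degenerate) characteristic direction and there is nothing to prove; so I would split into the cases $\lim_n \abs{P_{k+1}(u_n)} = 0$ and $\liminf_n \abs{P_{k+1}(u_n)} > 0$, handling the first case directly by continuity ($P_{k+1}(u) = 0$ means $[u]$ is characteristic) and the second case by the projection argument above, where the main term dominates and controls the direction of the increment. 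An alternative, cleaner route is to use the blow-up $\sigma\colon\tilde{\mathbb{C}}^d_0 \to \mathbb{C}^d$ at $0$: a tangentially convergent orbit lifts to an orbit of $\tilde F$ converging to the point $(0,[v])$ on the exceptional divisor, which must therefore be a fixed point of $\tilde F|_{E}$; by Proposition~\ref{prop:BlowUpOnExcepDiv} applied to the leading term (or directly, since for tangent-to-identity germs $\tilde F|_E$ is induced by $P_{k+1}$ as in Remark~\ref{rem:CharDirAsFP}), fixed points of $\tilde F|_E$ are exactly the characteristic directions. I would present the elementary telescoping argument as the main proof and mention the blow-up viewpoint as a remark.
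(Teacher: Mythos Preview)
The paper does not prove this proposition; it simply states it with a citation to Hakim, so there is no paper-proof to compare against.

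Your main argument has the right setup but a genuine gap at the decisive step. From the recursion you obtain
\[
r_{n+1}\,\pi_n(u_{n+1}) \;=\; r_n^{k+1}\,\pi_n(P_{k+1}(u_n)) + O(r_n^{k+2}),
\]
and to conclude $\pi_n(P_{k+1}(u_n))\to 0$ you must divide by $r_n^{k+1}$, giving $\pi_n(P_{k+1}(u_n)) = (r_{n+1}/r_n^{k+1})\,\pi_n(u_{n+1}) + O(r_n)$. Since $r_{n+1}/r_n^{k+1}\sim r_n^{-k}\to\infty$ while $\pi_n(u_{n+1})\to 0$, this is an indeterminate $\infty\cdot 0$; tangential convergence yields only $\pi_n(u_{n+1})\to 0$, not $\pi_n(u_{n+1})=o(r_n^{k})$, and the recursion itself gives no better than $O(r_n^k)$. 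The missing ingredient in Hakim's proof (see also \cite{ArizziRaissy2014OnEcalleHakimsTheoremsinHolomorphicDynamics}) is a divergence argument: in affine coordinates $u_n=y_n/x_n$ with $v=e_1$ one gets $u_{n+1}-u_n = x_n^{k}\,q(1,0)+o(|x_n|^k)$, where $q$ collects the last $d-1$ components of $P_{k+1}$; if $q(1,0)\neq 0$ one uses that $\sum|x_n|^k=\infty$ (forced by $x_n\to 0$ with $|x_{n+1}/x_n-1|=O(|x_n|^k)$) together with the slow phase drift $\arg(x_{n+1}^k)-\arg(x_n^k)=O(|x_n|^k)$ to show these increments cannot telescope to a bounded sum, contradicting $u_n\to 0$.

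Your blow-up alternative contains an actual error. For $F$ tangent to the identity, Proposition~\ref{prop:BlowUpOnExcepDiv} gives $\tilde F|_E=[dF_0(\cdot)]=\id_E$, so \emph{every} point of the exceptional divisor is fixed by $\tilde F|_E$; the map whose fixed points are the characteristic directions is the projectivisation of $P_{k+1}$ in Remark~\ref{rem:CharDirAsFP}, which is not $\tilde F|_E$. A correct blow-up argument must analyse the higher-order behaviour of $\tilde F$ at $(0,[v])$ in the chart~\eqref{eq:BlowUpCharts}, and unwinding that is precisely the affine computation above---it does not bypass the divergence step.
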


\begin{rem}
There exist examples of germs tangent to identity that admit orbits
that converge to $0$, but not tangent to any complex direction (see
\cite{Rivi1998LocalBehaviourofDiscreteDynamicalSystems} and \cite{AbateTovena2011PoincareBendixsonTheoremsforMeromorphicConnectionsandHolomorphicHomogeneousVectorFields}).
\end{rem}

\begin{defn}
A \emph{parabolic manifold\index{parabolic manifold}} of dimension
$m\le d$ for a germ $F\in\End(\mathbb{C}^{d},0)$ is (the image $P=\varphi(\Delta)$)
of a holomorphic map $\varphi:\Delta\to\mathbb{C}^{d}$, where 
\begin{enumerate}
\item $\Delta\subseteq\mathbb{C}^{m}$ is a simply connected open subset
with $0\in\partial\Delta$,
\item $\varphi$ extends continuously to $\partial\Delta$ with $\varphi(0)=0$,
\item $P=\varphi(\Delta)$ is $F$-invariant and all orbits in $P$ converge
to $0$.
\end{enumerate}
If all orbits in $P$ converge to $0$ along a complex direction $[v]\in\mathbb{P}^{d-1}$,
then we say $P$ is a parabolic manifold \emph{along $[v]$}.

A parabolic manifold of dimension $1$ is called a \emph{parabolic
curve}\index{parabolic curve}, one of dimension $d$ a \emph{\index{parabolic domain}parabolic
domain}.

If $F$ is parabolic of order $k+1\ge2$, a set of $k$ distinct parabolic
manifolds of dimension $m$ along a fixed direction $[v]\in\mathbb{P}^{d-1}$
is called a \emph{parabolic flower\index{parabolic flower}} of dimension
$m$ for $F$ along $[v]$.

\end{defn}

The first generalisation of the Leau-Fatou theorem~\ref{thm:LeauFatou}
was given by Écalle \cite{Ecalle1985LesFonctionsResurgentes3LequationDuPontEtLaClassificationAnalytiqueDesObjectsLocaux}
using his theory of resurgent series and Hakim \cite{Hakim1998AnalytictransformationsofmathbbCp0tangenttotheidentity}
using more classical techniques (see also \cite{ArizziRaissy2014OnEcalleHakimsTheoremsinHolomorphicDynamics}
for a detailed exposition of Hakim's approach).
\begin{thm}[Écalle \cite{Ecalle1985LesFonctionsResurgentes3LequationDuPontEtLaClassificationAnalytiqueDesObjectsLocaux},
Hakim \cite{Hakim1998AnalytictransformationsofmathbbCp0tangenttotheidentity}]
\label{thm:HakimEcale}Let $F\in\End(\mathbb{C}^{d},0)$ be tangent
to the identity of order $k+1\ge2$. Then for any non-degenerate characteristic
direction $[v]\in\mathbb{P}^{d-1}$, there exist (at least) $k$ parabolic
curves for $F$ tangent to $[v]$.
\end{thm}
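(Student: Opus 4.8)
The plan is to reduce to the one-dimensional picture by restricting the dynamics to a well-chosen one-complex-parameter family of curves, all tangent to $[v]$, and then reuse the Leau--Fatou mechanism from the proof of Proposition~\ref{prop:Petals} (the change-of-variable $w=z^{-k}$ that turns the dominant term into a unit translation). First I would normalise: after a linear change of coordinates send $[v]$ to $[e_1]$, so $P_{k+1}(e_1)=\gamma e_1$ with $\gamma\neq0$; rescaling the first coordinate we may take $\gamma=-1/k$ (mirroring the normalisation $kav^k=-1$ from the one-variable attracting-direction definition). Then write $z=(u,\zeta)\in\mathbb{C}\times\mathbb{C}^{d-1}$ and look for an invariant curve of the form $\zeta=u\,\phi(u)$ with $\phi$ holomorphic near $0$ and $\phi(0)$ prescribed (the $k$ choices of a $k$-th root below will give the $k$ distinct curves). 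Along such a curve the first component of $F$ reads $u\mapsto u(1-\tfrac{u^k}{k})+O(u^{k+1})$, exactly as in \eqref{eq:1DparaSimple}, so once the curve is shown to exist and be $F$-invariant, part~\ref{enu:LeauFatou2rateOfConv} of Theorem~\ref{thm:LeauFatou} (equivalently Proposition~\ref{prop:Petals}) gives convergence of orbits to $0$ with rate $u_n\sim v_j n^{-1/k}$, and since $\zeta_n=u_n\phi(u_n)=o(u_n)$, the orbit converges to $0$ tangentially to $[e_1]$.

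The substantive step is constructing the invariant curve. I would pass to the coordinate $w=u^{-k}$ on a sector $\Sigma_j$ (one of the $k$ sectors $|\arg u-2\pi j/k|<\pi/k$), where the first-component dynamics becomes $w\mapsto w+1+O(w^{-1/k})$, and rescale $\zeta$ to a variable $\eta$ that is bounded; the map then has the schematic form $(w,\eta)\mapsto(w+1+\text{small},\,A\eta/?+\text{small})$. The key point is that, because $[e_1]$ is characteristic with $P_{k+1}(e_1)=\gamma e_1$, the linearisation of the induced map on the $\eta$-direction transversal to $[e_1]$, computed at the fixed point of $P_{k+1}\in\End(\mathbb{P}^{d-1})$, contracts along the orbit fast enough (this is where the non-degeneracy $\gamma\neq0$ is essential — it makes the transversal multipliers strictly dominated, so the curve-finding contraction converges). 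Concretely I would set up a graph transform / Banach fixed-point argument in a space of holomorphic maps $\eta=\psi(w)$ on a half-plane-like domain $H(R,\theta)$ (as in Remark~\ref{rem:tangencyOfPetalBoundary}), with the sup-norm weighted by a suitable negative power of $|w|$; the map $\psi\mapsto$ (push forward $\psi$ by $F$, pull back by the shift) is a contraction for $R$ large, and its unique fixed point is the graph of the desired invariant curve. Taking the $k$ branches of $u=w^{-1/k}$ over the $k$ sectors $\Sigma_j$ produces the $k$ distinct parabolic curves; their images $\varphi_j:\Delta\to\mathbb{C}^d$ (with $\Delta$ a disc-like domain having $0$ on its boundary, pulled back from $H(R,\theta)$) satisfy the three defining properties of a parabolic manifold.

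The main obstacle I anticipate is the transversal estimate: controlling the $\eta$-component uniformly along orbits while $w$ marches off to $+\infty$, and in particular verifying that the error terms coming from $P_{k+2},P_{k+3},\dots$ and from the full non-linearity of $F$ (not just $P_{k+1}$) do not destroy the contraction. This requires a careful bookkeeping of orders: the dominant transversal behaviour is governed by the eigenvalues of $dP_{k+1}$ at $[e_1]$ restricted to $T_{[e_1]}\mathbb{P}^{d-1}$ (Hakim's ``directors''), and for the bare existence statement — only $k$ curves, with no claim on how many — one just needs these to not obstruct a crude contraction, which can be arranged by shrinking the sector angle $\theta$ and enlarging $R$. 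A secondary, more bookkeeping-type obstacle is checking $F$-invariance of the curve genuinely (the fixed point of the graph transform is invariant by construction, but one must confirm the domain is mapped into itself, exactly as in the petal argument of Proposition~\ref{prop:Petals} where one shows $z_1$ cannot leave $\Sigma_j$). I would cite \cite{Hakim1998AnalytictransformationsofmathbbCp0tangenttotheidentity} and \cite{ArizziRaissy2014OnEcalleHakimsTheoremsinHolomorphicDynamics} for the detailed estimates and present here only the structure of the argument.
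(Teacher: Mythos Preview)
The paper does not prove this theorem; it is stated as a cited result, with \cite{ArizziRaissy2014OnEcalleHakimsTheoremsinHolomorphicDynamics} indicated for a detailed account of Hakim's proof. Your outline --- normalise so that $[v]=[e_1]$ with $P_{k+1}(e_1)=-\tfrac{1}{k}e_1$, look for an invariant graph $\zeta=u\,\phi(u)$ (equivalently, blow up and seek a graph over the first axis in the chart of Proposition~\ref{prop:HakimCoords}), pass to $w=u^{-k}$ on each of the $k$ sectors, and run a contraction-mapping argument on a space of holomorphic functions over $H(R,\theta)$ --- is indeed the skeleton of Hakim's strategy, and your identification of the transversal estimate as the main obstacle is correct.

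You have, however, confused the roles of non-degeneracy and the directors. Non-degeneracy $\gamma\neq0$ has nothing to do with ``transversal multipliers being strictly dominated''. Its role is entirely in the \emph{tangential} direction: it ensures the first component of $F$ along any curve tangent to $[e_1]$ is genuinely parabolic of order $k+1$, so that $u_n\sim n^{-1/k}$ and the substitution $w=u^{-k}$ conjugates to a near-translation. If $\gamma=0$ then $P_{k+1}(e_1)=0$, the leading term in the first component vanishes, and the Leau--Fatou mechanism is unavailable. The directors (the eigenvalues of the matrix $A$ in Proposition~\ref{prop:HakimCoords}) do govern the transversal dynamics, but note that Theorem~\ref{thm:HakimEcale} carries \emph{no} hypothesis on them: curve existence holds for every non-degenerate characteristic direction, whatever the directors are. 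In Hakim's argument this is achieved not by any domination of transversal multipliers, but by preliminary polynomial changes of coordinates that push the inhomogeneous term in the transversal equation to sufficiently high order in the tangential variable; after that the graph-transform operator is a contraction for large $R$ regardless of $A$. The directors only enter as genuine constraints in Theorem~\ref{thm:Hakim2}, when one asks for parabolic manifolds of dimension greater than one. A minor point: the phrase ``$\phi(0)$ prescribed'' is misplaced --- the $k$ curves arise from the $k$ sectors $\Sigma_j$ (the $k$ branches of $w^{-1/k}$), not from a choice of initial value of $\phi$.
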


\begin{rem}
Characteristic directions are complex directions unlike the real attracting
and repelling directions in one variable. The characteristic directions
of the inverse $F^{-1}$ are the same as the ones of $F$. The generalisation
of the Leau-Fatou theorem~\ref{thm:LeauFatou} lies in the fact,
that a non-degenerate characteristic direction contains the analogue
of a set of $k$ real attracting (and repelling) directions, that
each have one of the $k$ parabolic curves tangent to them and all
orbits in that petal converging to $0$ along that real attracting
direction.
\end{rem}

To obtain higher dimensional parabolic manifolds, we need to examine
the structure of the dominant term. To this end Hakim performs a series
of changes of coordinates summarised as follows:
\begin{prop}[\cite{Hakim1998AnalytictransformationsofmathbbCp0tangenttotheidentity}]
\label{prop:HakimCoords}Let $F\in\End(\mathbb{C}^{d},0)$ be tangent
to the identity of order $k+1\ge2$, with a non-degenerate characteristic
direction $[v]\in\mathbb{P}^{d-1}$. Then there exists local coordinates
$(x,y)\in\mathbb{C}\times\mathbb{C}^{d-1}$ in which $[v]=[e_{1}]$
and such that for $(x_{1},y_{1})=F(x,y)$, we have 
\begin{equation}
\begin{aligned}x_{1} & =x-\frac{1}{k}x^{k+1}+O(\norm yx^{k},x^{2k+1})\\
y_{1} & =y-x^{k}\paren[\Big]{A+\frac{1}{k}I}y+O(\norm y^{2}x^{k-1},yx^{2k},x^{k+2})
\end{aligned}
\label{eq:HakimCoordsDownstairs}
\end{equation}
and the lift of $F$ to the blow-up $y=ux$ is given by
\begin{align*}
x_{1} & =x-\frac{1}{k}x^{k+1}+O(\norm ux^{k+1},x^{2k+1})\\
u_{1} & =(I-x^{k}A)u+O(\norm u^{2}x^{k},x^{k+1}),
\end{align*}
where $y_{1}=u_{1}x_{1}$. The similarity class of the matrix $A$
is preserved under changes of coordinates preserving the form above.
\end{prop}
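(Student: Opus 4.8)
The plan is to follow Hakim's sequence of normalizations explicitly, tracking how the order-$(k+1)$ term $P_{k+1}$ dictates each step. First I would use the hypothesis that $[v]=[e_1]$ is a non-degenerate characteristic direction: after a linear change of coordinates putting $v=e_1$, non-degeneracy means the first component of $P_{k+1}$ restricted to the $x$-axis is $\gamma x^{k+1}$ with $\gamma\neq0$, and a scaling of $x$ (and a compatible scaling of the $y$-variables) normalizes $\gamma$ to $-1/k$. This yields $x_1 = x - \tfrac1k x^{k+1} + (\text{terms involving }y\text{ or higher order in }x)$. The $O(\norm y x^k, x^{2k+1})$ form of the remainder in the $x$-equation is then just the statement that the only order-$(k+1)$ monomial in $x_1$ not involving $y$ is the one we have isolated, together with the fact that all genuinely higher-order pure-$x$ terms are $O(x^{k+2})$, hence subsumed in $O(x^{2k+1})$ only after the later coordinate changes; so I would be careful to perform the $y$-dependent normalizations before claiming this exact shape.

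Next I would analyze the $y$-component. Writing $y_1 = y + Q_{k+1}(x,y) + \cdots$ where $Q_{k+1}$ is the $y$-part of $P_{k+1}$, the characteristic-direction condition $P_{k+1}(e_1)=\gamma e_1$ forces the pure-$x^{k+1}$ term in $Q_{k+1}$ to vanish, so $Q_{k+1}(x,0)=0$ and hence $Q_{k+1}(x,y) = -x^k L(y) + O(\norm y^2 x^{k-1})$ for some linear map $L$ on $\mathbb{C}^{d-1}$. Define $A := L - \tfrac1k I$, i.e. absorb the "diagonal" contribution $-\tfrac1k x^k y$ that would appear anyway; then $y_1 = y - x^k(A+\tfrac1k I)y + O(\norm y^2 x^{k-1}, \ldots)$. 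At this point I would run Hakim's finitely many further polynomial changes of coordinates of the form $(x,y)\mapsto(x + p(x,y), y + q(x,y))$ with $p,q$ homogeneous, chosen degree by degree, to kill the off-diagonal lower-order cross terms and push the non-normalized remainders into the stated $O$-classes: these are exactly the standard "clean-up" steps and I would not grind through them, only note that at each degree the relevant homological operator is invertible because $1$ is the only multiplier (so the divisor structure is benign away from the finitely many resonances already accounted for by the shape of the target).

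For the blow-up formula I would substitute $y = ux$ into the two equations just obtained. From $x_1 = x(1 - \tfrac1k x^k + \cdots)$ and $y_1 = x u (1 - x^k(A+\tfrac1k I) + \cdots)$ we get
\[
u_1 = \frac{y_1}{x_1} = u\,\frac{1 - x^k(A+\tfrac1k I) + O(\norm u^2 x^k, x^{k+1})}{1 - \tfrac1k x^k + O(\norm u x^k, x^{2k+1})} = u\bigl(I - x^k A\bigr) + O(\norm u^2 x^k, x^{k+1}),
\]
where expanding the denominator as a geometric series cancels the common $\tfrac1k x^k u$ term — this is precisely why $A$ (and not $A+\tfrac1k I$) is the invariant matrix visible upstairs, and why the definition of $A$ was chosen that way. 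The $x_1$ equation in the $u$-coordinates follows by rewriting $O(\norm y x^k) = O(\norm u x^{k+1})$. Lemma~\ref{lem:NoProblematicTerms} and Remark~\ref{rem:ProblematicTerms} guarantee these lifted expressions are genuine power series (no negative powers of $x$), since the vanishing of pure-$x$ terms in $y_1$ is exactly the non-degeneracy we used.

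Finally, for the well-definedness of the similarity class of $A$: I would argue that any coordinate change preserving the form \eqref{eq:HakimCoordsDownstairs} must have linear part $(x,y)\mapsto(ax, By)$ with $a\neq0$ and $B$ invertible (since it must fix $0$, preserve $[e_1]$, and preserve the order-$(k+1)$ leading coefficient $-\tfrac1k$ in the $x$-equation, forcing $a^k=1$; the higher-order part of the coordinate change does not affect $A$, which lives at order $k$ in $x$ and order $1$ in $y$). Conjugating \eqref{eq:HakimCoordsDownstairs} by such a map replaces $A$ by $B^{-1}AB$ up to terms that the preserved normal form forbids, so the conjugacy class of $A$ is an invariant. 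The main obstacle I anticipate is bookkeeping: making the degree-by-degree coordinate changes in the right order so that each $O$-class in the final statement is exactly achieved without accidentally reintroducing lower-order terms, and verifying that the cancellation in the blow-up denominator is clean at all orders, not just the leading one — this is where Hakim's original argument is most delicate and where I would either cite \cite{Hakim1998AnalytictransformationsofmathbbCp0tangenttotheidentity} and \cite{ArizziRaissy2014OnEcalleHakimsTheoremsinHolomorphicDynamics} or reproduce the induction carefully.
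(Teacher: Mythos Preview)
The paper does not include its own proof of this proposition: it is stated with a citation to Hakim \cite{Hakim1998AnalytictransformationsofmathbbCp0tangenttotheidentity} (with \cite{ArizziRaissy2014OnEcalleHakimsTheoremsinHolomorphicDynamics} referenced earlier for a detailed exposition), and the text proceeds directly to the definition of directors. Your proposal is therefore not competing with an in-text argument but with Hakim's original one, and what you outline \emph{is} Hakim's sequence of normalisations: rotate $[v]$ to $[e_1]$, use non-degeneracy to scale the leading $x^{k+1}$-coefficient to $-1/k$, read off the linear-in-$y$ part of $P_{k+1}$ as $-x^kL$ with $L=A+\tfrac1kI$, perform finitely many polynomial coordinate changes to push remainders into the stated $O$-classes, and then compute the lift by substituting $y=ux$ and expanding $1/x_1$ geometrically so that the $\tfrac1kx^k$ contributions cancel. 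Your identification of this cancellation as the reason $A$ (rather than $A+\tfrac1kI$) appears upstairs is exactly the point. The invariance argument via linear parts $(x,y)\mapsto(ax,By)$ with $a^k=1$ is the standard one and is correct at the level you state it.

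Two minor remarks. First, your appeal to Lemma~\ref{lem:NoProblematicTerms} and Remark~\ref{rem:ProblematicTerms} is slightly off target: those statements are phrased for blow-ups along positive-dimensional subspaces, whereas here you are blowing up the origin. The relevant fact---that every monomial in $y_1$ carries at least one factor of $x$ after substituting $y=ux$, so $y_1/x_1$ is holomorphic---is immediate from the displayed $O$-classes in \eqref{eq:HakimCoordsDownstairs} and does not need those results. Second, your own caveat about the ``clean-up'' induction is well placed: the elimination of the pure-$x$ terms $x^{k+2},\ldots,x^{2k}$ in the $x$-equation (the one-variable parabolic normalisation) and the killing of low-order cross terms must be done in the right order, and this is where the cited sources do the real work; citing them, as you suggest, is appropriate and is precisely what the paper itself does.
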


\begin{defn}
\label{def:DirectorsPara}Let $F\in\End(\mathbb{C}^{d},0)$ be tangent
to the identity of order $k+1\ge2$ with non-degenerate characteristic
direction $[v]\in\mathbb{P}^{d-1}$. Then the eigenvalues of the matrix
$A$ from Proposition~\ref{prop:HakimCoords} are called the \emph{\index{director@\emph{director}}directors}
of $[v]$. If all directors of $[v]$ have positive real part, we
call the non-degenerate characteristic direction $[v]$ \emph{\index{attracting non-degenerate characteristic direction@\emph{attracting non-degenerate characteristic direction}}attracting}.
\end{defn}

\begin{rem}
In the setting of Definition~\ref{def:DirectorsPara}, $P_{k+1}$
induces a local holomorphic self-map $P_{k+1}\in\End(\mathbb{P}^{d-1},[v])$
near the fixed point $[v]$ (compare Remark~\ref{rem:CharDirAsFP}).
The directors of $[v]$ are then precisely the eigenvalues of the
linear operator 
\[
\frac{1}{k}((dP_{k+1})_{[v]}-I):T_{[v]}\mathbb{P}^{d-1}\to T_{[v]}\mathbb{P}^{d-1}.
\]
\end{rem}

Now we can state the main local results of \cite{HakimTransformationstangenttotheidentityStablepiecesofmanifolds}
generalising Theorem~\ref{thm:LeauFatou} (in the formulation of
\cite{ArizziRaissy2014OnEcalleHakimsTheoremsinHolomorphicDynamics}):
\begin{thm}[Hakim \cite{HakimTransformationstangenttotheidentityStablepiecesofmanifolds}]
\label{thm:Hakim2}Let $F\in\End(\mathbb{C}^{d},0)$ be tangent to
the identity of order $k\ge2$ with non-degenerate characteristic
direction $[v]\in\mathbb{P}^{d-1}$ with directors $\alpha_{1},\ldots,\alpha_{d-1}\in\mathbb{C}$.
If there exist $c>0$ and $m\in\mathbb{N}$ such that $\Re\alpha_{1},\ldots,\Re\alpha_{m}>c$
and $\Re\alpha_{m+1},\ldots,\Re\alpha_{d-1}<c$. Then:
\begin{enumerate}
\item There exist a parabolic flower of $k$ parabolic manifolds $P_{1},\ldots,P_{k}$
of dimension $m+1$ along $[v]$, each tangent to the direct sum of
$[v]$ and the generalised eigenspaces of the directors $\alpha_{1},\ldots,\alpha_{m}$.
\item There exists a conjugation on each $P_{j},j=1,\ldots,k$ of $F$ to
the translation $(\zeta,\xi)\mapsto(\zeta+1,\xi)$ with $(\zeta,\xi)\in\mathbb{C}\times\mathbb{C}^{m}$
and $\zeta\sim1/x$ as $(x,y)\to0$.
\item $P_{1},\ldots,P_{k}$ can be taken as the $k$ distinct $F$-invariant
connected components of
\begin{equation}
\{(x,y)\in\mathbb{C}^{d}\mid(x_{n},y_{n})\to0\text{ and }x_{n}^{-kc-1}y_{n}\to0\},\label{eq:reducedHakimBasins}
\end{equation}
where $(x,y)$ are the coordinates giving $F$ the form (\ref{eq:HakimCoordsDownstairs})
and $(x_{n},y_{n}):=F^{\circ n}(x,y)$.
\item If, moreover, $\Re\alpha_{m+1},\ldots,\Re\alpha_{m-1}<0$, then the
set (\ref{eq:reducedHakimBasins}) is equal to 
\[
\Omega_{[v]}:=\{z\in\mathbb{C}^{d}\mid F^{\circ n}(z)\to0\text{ tangentially to }[v]\}.
\]
\end{enumerate}
\end{thm}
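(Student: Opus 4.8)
\textbf{Proof plan for Theorem~\ref{thm:Hakim2}.}

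The plan is to follow Hakim's strategy of passing to the coordinates of Proposition~\ref{prop:HakimCoords}, lifting to the blow-up $y=ux$, and then constructing the parabolic manifolds as basins in these coordinates. First I would put $F$ in the form (\ref{eq:HakimCoordsDownstairs}) via Proposition~\ref{prop:HakimCoords}, so that in the blow-up coordinates $(x,u)$ one has $x_1 = x - \tfrac1k x^{k+1} + O(\|u\|x^{k+1}, x^{2k+1})$ and $u_1 = (I - x^k A)u + O(\|u\|^2 x^k, x^{k+1})$, with $A$ having eigenvalues $\alpha_1,\ldots,\alpha_{d-1}$ ordered so that $\Re\alpha_1,\ldots,\Re\alpha_m > c > \Re\alpha_{m+1},\ldots,\Re\alpha_{d-1}$. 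After a further linear change I may assume $A$ is block upper-triangular with the first $m$ eigenvalues in the top block. The splitting $\mathbb{C}^{d-1} = V_+ \oplus V_-$ into the generalized eigenspaces for $\{\Re\alpha_j > c\}$ and $\{\Re\alpha_j < c\}$ is $A$-invariant, and writing $u = (u^+, u^-)$ accordingly, the idea is that on the invariant manifold we want, $u^-$ must vanish to higher order than any power of $x$, while $u^+$ is free. The natural model is the change of coordinate $\zeta = x^{-k}$ (as in (\ref{eq:preFatouTransform1D})), under which $\zeta_{n+1} = \zeta_n + 1 + O(\zeta_n^{-1/k})$, giving $\zeta_n \sim n$ and hence $x_n \sim (\text{const})\, n^{-1/k}$ along orbits that stay small.

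Next I would set up the fixed-point / graph-transform argument. For each of the $k$ choices of $k$-th root of $-k$ (i.e. each petal direction of the $x$-dynamics), define the candidate set $P_j$ as the connected component of
\[
\{(x,y) : (x_n,y_n)\to 0 \text{ and } x_n^{-kc-1} y_n \to 0\}
\]
lying in the corresponding sector. The first task is to show this set is nonempty and has the structure of an $(m{+}1)$-dimensional manifold: I would exhibit it as a graph $u^- = g(x, u^+)$ over a region $\{|x| \text{ small, } |\arg(x^{-k}) | < \theta, \|u^+\| \text{ small}\}$, where $g$ is obtained as the fixed point of the graph transform induced by $F$ in the $(x,u)$-coordinates. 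The contraction is driven by the gap in real parts: along an orbit with $x_n \approx n^{-1/k}$, the factor $(I - x_n^k A)$ contracts $u^-$ by roughly $\prod_n (1 - \Re\alpha_j/(kn)) \approx n^{-\Re\alpha_j/k}$ with $\Re\alpha_j < c$, while the target decay rate $x_n^{kc+1} \approx n^{-c-1/k}$ is slower, so requiring $u^-_n = o(x_n^{kc+1})$ picks out a codimension-$\dim V_-$ submanifold; conversely $u^+$ decays faster than needed and survives as a free parameter. Standard majorant/Gronwall estimates on the error terms $O(\|u\|^2 x^k, x^{k+1})$ make the graph transform a contraction in a suitable weighted sup-norm over the petal. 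Once $P_j$ is built, the conjugation to $(\zeta,\xi)\mapsto(\zeta+1,\xi)$ with $\zeta \sim 1/x$ (statement~2) follows by the Koenigs-type limit $\psi = \lim_n (\text{shifted iterate})$ exactly as in Theorem~\ref{thm:FatouCoord1D} and its several-variable analogue in Theorem~\ref{thm:HakimEcale}: one shows $\zeta_n = \zeta + n + o(n)$ converges after subtracting $n$, and the transverse coordinate $\xi$ is read off from $u^+$ using that $u^+_n \to 0$ geometrically relative to $x_n$.

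For statement~3, I would check that the $P_j$ just constructed are precisely the connected components of the set (\ref{eq:reducedHakimBasins}): every orbit in that set eventually enters one of the petals of the $x$-dynamics (by the one-variable flower theorem applied to the $x$-component, using Part~\ref{enu:LeauFatou3StableSet} of Theorem~\ref{thm:LeauFatou}), and then the condition $x_n^{-kc-1}y_n \to 0$ forces it onto the graph $u^- = g(x,u^+)$ by the same rate comparison, uniqueness of the graph-transform fixed point giving the identification. Statement~4 is the easiest: if in addition $\Re\alpha_{m+1},\ldots,\Re\alpha_{d-1} < 0$, then for \emph{any} orbit converging to $0$ tangentially to $[v]=[e_1]$ one has $u_n = y_n/x_n \to 0$, and the components with $\Re\alpha_j < 0$ automatically decay faster than $x_n^{kc+1}$ (indeed faster than any fixed power, since $\prod(1-x_n^k\alpha_j)$ then \emph{grows} would be wrong — rather $|1 - x_n^k\alpha_j| < 1$ eventually forces decay) so the extra condition in (\ref{eq:reducedHakimBasins}) is automatic, whence $\Omega_{[v]}$ equals the set (\ref{eq:reducedHakimBasins}).

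\textbf{Main obstacle.} The hard part will be the graph-transform contraction estimate in statement~1: one must control the non-autonomous linear cocycle $u \mapsto (I - x_n^k A)u$ plus nonlinear errors along orbits whose $x$-decay is only $\sim n^{-1/k}$ (not geometric), so the relevant products $\prod_{n} \|I - x_n^k A|_{V_\pm}\|$ behave like powers of $n$ rather than exponentials, and the separation one exploits is a \emph{polynomial} gap $n^{-\Re\alpha_j/k}$ versus $n^{-c/k}$. Making the weighted-norm space and the petal small enough that the transform is a genuine contraction — uniformly in the $u^+$ parameter — while simultaneously ensuring the image stays in the petal, requires the same careful bookkeeping as in the proof of Proposition~\ref{prop:Petals} but now coupled to the transverse directions; this is where essentially all the analytic work lies, and it is exactly the content of Hakim's original argument in \cite{HakimTransformationstangenttotheidentityStablepiecesofmanifolds} (see also \cite{ArizziRaissy2014OnEcalleHakimsTheoremsinHolomorphicDynamics}).
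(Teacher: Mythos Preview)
The paper does not give its own proof of this theorem: it is stated with attribution to Hakim \cite{HakimTransformationstangenttotheidentityStablepiecesofmanifolds} and immediately followed by Remark~\ref{rem:ParabolicSingularExamples}, with the detailed exposition deferred to \cite{ArizziRaissy2014OnEcalleHakimsTheoremsinHolomorphicDynamics}. Your proposal follows exactly the strategy those references carry out---pass to the blow-up coordinates of Proposition~\ref{prop:HakimCoords}, exploit the polynomial-rate gap between the $V_+$ and $V_-$ blocks of the cocycle $(I-x_n^kA)$ along orbits with $x_n\sim n^{-1/k}$, and run a graph-transform/fixed-point argument in a weighted norm---so there is nothing to compare against in the paper itself, and your plan is consistent with the cited sources.
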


\begin{rem}
\label{rem:ParabolicSingularExamples}For $m=d-1$, this ensures the
existence of parabolic domains. This is, however, not a necessary
condition for the existence of parabolic domains at $0$. There are
examples of parabolic domains with logarithmic convergence along a
degenerate characteristic direction (\cite{Ushiki1996Parabolicfixedpointsoftwodimensionalcomplexdynamicalsystems})
and convergence not tangent to any complex direction (\cite{Rivi1998LocalBehaviourofDiscreteDynamicalSystems}).
Other singular examples of parabolic domains are discussed in \cite{Vivas2012FatouBieberbachDomainsAsBasinsofAttractionofAutomorphismsTangenttotheIdentity}
and \cite{AbateTovena2011PoincareBendixsonTheoremsforMeromorphicConnectionsandHolomorphicHomogeneousVectorFields}.
\end{rem}

Though the existence of non-degenerate characteristic directions
is generic, we are still looking for general criteria for the existence
of parabolic manifolds. In dimension $2$, López-Hernanz and Rosas
in \cite{LopezHernanzRosas2020CharacteristicDirectionsofTwoDimensionalBiholomorphisms}
proved the existence of parabolic manifolds tangent to any characteristic
direction, building on results from \cite{Vivas2012DegenerateCharacteristicDirectionsforMapsTangenttotheIdentity}
and \cite{LopezHernanzRaissyRibonSanzSanchez2019StableManifoldsofTwoDimensionalBiholomorphismsAsymptotictoFormalCurves}
(see also \cite{Molino2009TheDynamicsofMapsTangenttotheIdentityandwithNonvanishingIndex}
and \cite{LopezHernanzSanzSanchez2018ParabolicCurvesofDiffeomorphismsAsymptotictoFormalInvariantCurves}).
\begin{thm}[\cite{LopezHernanzRosas2020CharacteristicDirectionsofTwoDimensionalBiholomorphisms}]
Let $F\in\End(\mathbb{C}^{2},0)$ be tangent to the identity of order
$k+1$ with a characteristic direction $[v]\in\mathbb{P}^{d-1}$ and
no curve of fixed points through $0$ tangent to $[v]$. Then there
exist $k$ disjoint parabolic manifolds $P_{1},\ldots,P_{k}$ along
$[v]$, such that:
\begin{enumerate}
\item There exists an infinite sequence of blow-ups of points starting in
$0$, such that at each finite step, the liftings of all orbits in
$P_{1}\cup\cdots\cup P_{k}$ converge to a unique point in the exceptional
divisor (tangentially to a unique direction that is the centre of
the next blow-up in the sequence).
\item If all $P_{1},\ldots,P_{k}$ are parabolic domains, then each is foliated
by parabolic curves.
\end{enumerate}
\end{thm}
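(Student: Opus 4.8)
The plan is to follow the strategy of López-Hernanz and Rosas, which combines infinitely many point blow-ups with a limiting argument. First I would set up the resolution: starting from $F\in\End(\mathbb{C}^2,0)$ tangent to the identity of order $k+1$ with characteristic direction $[v]$ and no curve of fixed points through $0$ tangent to $[v]$, blow up the origin. By Corollary~\ref{cor:LiftingMapstoBlowUP} the (non-degenerate-along-$\{0\}$) germ $F$ lifts to $\tilde F$ on the blow-up, and by Proposition~\ref{prop:BlowUpOnExcepDiv} the induced map on the exceptional divisor is the projectivisation of $dF_0=\id$, hence the identity on $\mathbb{P}^1$. So every point of the exceptional line is fixed, and the interesting dynamics happen in the normal direction. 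The key point is that the order drops in a controlled way in the "directions that matter": one analyses the restriction of $\tilde F$ to a neighbourhood of the point $[v]$ on the exceptional divisor, using the coordinate formulas of Section~\ref{subsec:LiftCoords} (specialised to $d=2$), and iterates. The no-fixed-curve hypothesis guarantees that the process does not stall on a curve of fixed points, so after finitely many blow-ups one reaches a situation with a non-degenerate characteristic direction (for the lifted germ), to which Theorem~\ref{thm:HakimEcale} (the Écalle–Hakim theorem) applies, producing $k$ parabolic curves upstairs.

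Next I would run the argument the other way: take the $k$ parabolic curves (or manifolds) produced at the finite stage where Écalle–Hakim applies, push them down through the modification via $\sigma$ (which is a biholomorphism off the exceptional divisor, so images of orbits off the divisor descend cleanly), and check that their closures through $0$ are genuine parabolic manifolds $P_1,\dots,P_k$ for $F$ in the sense of the definition — simply connected, invariant, all orbits converging to $0$, and the $P_j$ pairwise disjoint. Disjointness and the count $k$ come from the fact that the $k$ curves upstairs sit over the $k$ distinct "attracting sub-petals" inside the single non-degenerate characteristic direction, exactly as in the one-variable flower; tangency to $[v]$ is automatic since all orbits in $P_j$ lie in a thin neighbourhood of the line blown up first. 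For part (1), the infinite sequence of blow-ups is obtained by continuing the resolution procedure indefinitely: at each stage the lifted orbits in $P_1\cup\cdots\cup P_k$ converge to a single point on the new exceptional divisor (because within one characteristic direction all orbits share the same tangential behaviour at leading order by the Leau–Fatou asymptotics of Proposition~\ref{prop:Petals}/Theorem~\ref{thm:HakimEcale}), and that point is the centre of the next blow-up. For part (2), if some $P_j$ is a parabolic domain (dimension $2$), then on it $F$ is conjugate to a translation $(\zeta,\xi)\mapsto(\zeta+1,\xi)$ by the Fatou-coordinate part of Hakim's theorem (Theorem~\ref{thm:Hakim2}), and the level sets $\{\xi=\text{const}\}$ pull back to a foliation by parabolic curves.

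The main obstacle I expect is controlling the resolution procedure in the degenerate case: when $[v]$ is a degenerate characteristic direction, or when after blowing up the lifted germ still fails to have a non-degenerate characteristic direction, one must show that finitely many further blow-ups (possibly centred at points that are not simply $[v]$, and possibly requiring a preliminary change of coordinates to put $F$ in a Hakim-type normal form as in Proposition~\ref{prop:HakimCoords}) do reach the non-degenerate situation — and that this is exactly where the hypothesis "no curve of fixed points through $0$ tangent to $[v]$" is used, to rule out the only obstruction to termination. Making this termination argument precise requires a Newton-polygon / order-of-contact bookkeeping along the sequence of blow-ups, tracking how the order $k+1$ and the vanishing order of $F-\id$ along the relevant curves decrease; this is the technical heart of \cite{LopezHernanzRosas2020CharacteristicDirectionsofTwoDimensionalBiholomorphisms} and rests on the earlier structural results of \cite{LopezHernanzRaissyRibonSanzSanchez2019StableManifoldsofTwoDimensionalBiholomorphismsAsymptotictoFormalCurves} and \cite{Vivas2012DegenerateCharacteristicDirectionsforMapsTangenttotheIdentity} on stable manifolds asymptotic to formal invariant curves. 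A secondary subtlety is verifying that the descended sets $P_j$ remain simply connected and that their boundaries pass through $0$ continuously, which follows from the corresponding properties of Hakim's petals upstairs together with the fact that $\sigma$ is proper.
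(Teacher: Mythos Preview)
The paper does not prove this theorem: it is a survey, and the result is simply stated with attribution to \cite{LopezHernanzRosas2020CharacteristicDirectionsofTwoDimensionalBiholomorphisms}, the surrounding text noting only that it ``build[s] on results from \cite{Vivas2012DegenerateCharacteristicDirectionsforMapsTangenttotheIdentity} and \cite{LopezHernanzRaissyRibonSanzSanchez2019StableManifoldsofTwoDimensionalBiholomorphismsAsymptotictoFormalCurves}''. There is no argument in the present text to compare your proposal against.

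As to your outline on its own terms: the central step --- ``after finitely many blow-ups one reaches a situation with a non-degenerate characteristic direction, to which Theorem~\ref{thm:HakimEcale} applies'' --- is not true in general, and this is exactly what makes the L\'opez-Hernanz--Rosas theorem non-trivial. There exist degenerate characteristic directions that remain degenerate under \emph{every} finite sequence of point blow-ups; your proposed termination bookkeeping cannot succeed for these, because no relevant invariant strictly decreases. The hypothesis ``no curve of fixed points through $0$ tangent to $[v]$'' does not exclude such directions; it only removes the trivial obstruction. The argument in the cited source does not globally reduce to \'Ecalle--Hakim: it classifies characteristic directions into types, and for the persistently degenerate ones it invokes the existence of parabolic manifolds asymptotic to a \emph{formal} invariant curve --- the machinery of \cite{LopezHernanzRaissyRibonSanzSanchez2019StableManifoldsofTwoDimensionalBiholomorphismsAsymptotictoFormalCurves} and \cite{LopezHernanzSanzSanchez2018ParabolicCurvesofDiffeomorphismsAsymptotictoFormalInvariantCurves} --- which is a genuinely different mechanism from the non-degenerate case. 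You cite that reference in your obstacle paragraph, but you mis-diagnose its role: it is not a tool for proving termination of the blow-up sequence, it is the \emph{replacement} for Hakim's theorem when termination fails.
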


In particular, this implies the general existence of parabolic curves
first proved by Abate in \cite{Abate2001TheResidualIndexandtheDynamicsofHolomorphicMapsTangenttotheIdentity}
(see also \cite{Bracci2003TheDynamicsofHolomorphicMapsnearCurvesofFixedPoints},
\cite{AbateBracciTovena2004IndexTheoremsforHolomorphicSelfMaps},
and \cite{BrocheroMartinezCanoLopezHernanz2008ParabolicCurvesforDiffeomorphismsinC2}):
\begin{cor}[Abate \cite{Abate2001TheResidualIndexandtheDynamicsofHolomorphicMapsTangenttotheIdentity}]
If $F\in\End(\mathbb{C}^{2},0)$ is tangent to the identity of with
an isolated fixed point at $0$, then $F$ admits a parabolic flower
of parabolic curves tangent to a characteristic direction.
\end{cor}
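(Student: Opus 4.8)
The plan is to obtain the corollary as a direct consequence of the theorem of L\'opez-Hernanz and Rosas stated above; the only steps requiring care are producing a characteristic direction $[v]\in\mathbb{P}^{1}$ to which that theorem applies, and then extracting genuine parabolic curves from the parabolic manifolds it yields.

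First I would exhibit a characteristic direction. Writing the homogeneous expansion $F=\id+P_{k+1}+P_{k+2}+\cdots$ with $P_{k+1}=(P_{k+1}^{1},P_{k+1}^{2})\neq0$ (the order is finite since $F\neq\id$, as $0$ is isolated), a direction $[v]$ is characteristic exactly when the homogeneous polynomial $Q(z):=z_{1}P_{k+1}^{2}(z)-z_{2}P_{k+1}^{1}(z)$, of degree $k+2$, vanishes at $v$. If $Q\not\equiv0$ it factors into linear forms over $\mathbb{C}$, so any of its roots is a characteristic direction. If $Q\equiv0$, then since $z_{1},z_{2}$ are coprime we get $P_{k+1}(z)=p_{0}(z)\,z$ for a homogeneous $p_{0}$ of degree $k$, i.e. $F$ is dicritical and \emph{every} direction is characteristic. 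Fix such a $[v]$. Next, the fixed-point set of $F$ near $0$ is the zero set of $F-\id=P_{k+1}+\cdots$, which by hypothesis equals $\{0\}$; in particular $F$ has no curve of fixed points through $0$ tangent to $[v]$, so the hypotheses of the L\'opez-Hernanz--Rosas theorem hold for $[v]$.

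Applying that theorem gives $k$ pairwise disjoint parabolic manifolds $P_{1},\ldots,P_{k}$ along $[v]$. In $\mathbb{C}^{2}$ each $P_{i}$ has dimension $1$ or $2$: those of dimension $1$ are already parabolic curves tangent to $[v]$; those of dimension $2$ are parabolic domains, hence foliated by parabolic curves by part~(2) of the theorem, and we may select a single leaf $C_{i}\subseteq P_{i}$. In all cases we obtain parabolic curves $C_{1},\ldots,C_{k}$ tangent to $[v]$, lying in the pairwise disjoint sets $P_{1},\ldots,P_{k}$ and therefore distinct; by definition they constitute a parabolic flower of parabolic curves tangent to the characteristic direction $[v]$.

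The entire force of the argument is borrowed from the L\'opez-Hernanz--Rosas theorem. When $[v]$ can be chosen non-degenerate one could instead invoke Theorem~\ref{thm:HakimEcale}, but in general --- most strikingly in the dicritical case, where $P_{k+1}(v)=0$ along all of $\mathbb{P}^{1}$ --- one is forced to work with a degenerate characteristic direction, and it is precisely the treatment of degenerate directions carrying no tangent curve of fixed points that is the heart of the matter (equivalently, Abate's original index-theoretic proof). On our side the only points demanding attention are the elementary dichotomy of the first step (the map $[P_{k+1}]\in\End(\mathbb{P}^{1})$ need not even be defined, so one argues directly with $Q$) and the dimension count of the last step needed to produce actual parabolic \emph{curves} rather than higher-dimensional parabolic domains.
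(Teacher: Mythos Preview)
Your proof is correct and follows exactly the route the paper takes: the corollary is presented there as following ``in particular'' from the L\'opez-Hernanz--Rosas theorem, and you have supplied the omitted verifications---that a characteristic direction always exists (via the homogeneous form $Q(z)=z_{1}P_{k+1}^{2}(z)-z_{2}P_{k+1}^{1}(z)$, with the dicritical case handled separately) and that an isolated fixed point forbids any tangent curve of fixed points. The only wrinkle is that part~(2) of that theorem is stated under the global hypothesis that \emph{all} the $P_{i}$ are parabolic domains, so your case split should be phrased as ``all curves'' versus ``all domains'' rather than treating each $P_{i}$ individually; this is in fact how the L\'opez-Hernanz--Rosas construction behaves, and with that adjustment the extraction of $k$ disjoint parabolic curves goes through cleanly.
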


Even if we allow Jordan blocks, Abate's blow-up procedure of Theorem~\ref{thm:AbateDiagonalisation}
leads to a germ tangent to the identity and the above theorem can
be applied to show:
\begin{thm}[Abate \cite{Abate2001TheResidualIndexandtheDynamicsofHolomorphicMapsTangenttotheIdentity}]
If $F\in\End(\mathbb{C}^{2},0)$ is a parabolic germ, whose linear
part is a single Jordan block of dimension $2$ with eigenvalue $1$
, with an isolated fixed point at $0$, then $f$ admits at least
one parabolic curve tangent to $[1:0]$.
\end{thm}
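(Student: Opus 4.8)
The plan is to reduce the statement to the previous theorem on characteristic directions via Abate's diagonalisation procedure (Theorem~\ref{thm:AbateDiagonalisation}), and to check that a parabolic curve produced upstairs descends to one tangent to $[1:0]$ downstairs. Let $F\in\End(\mathbb{C}^{2},0)$ be parabolic with $dF_{0}=J_{2}(1)$ a single Jordan block of size $2$ with eigenvalue $1$, and with $0$ an isolated fixed point. Apply Theorem~\ref{thm:AbateDiagonalisation} with $\rho=1$, $\mu_{1}=2$, $\lambda_{1}=1$: we obtain a surface $M$, a modification $\pi:M\to\mathbb{C}^{2}$, a point $p\in M$, and a holomorphic lift $\tilde F\in\End(M,\pi^{-1}(0))$ with $\pi\circ\tilde F=F\circ\pi$, with $\pi$ biholomorphic off $\pi^{-1}(0)$, and with $d\tilde F_{p}=\diag(\tilde\lambda_{1},D_{1}(1))=\diag(1,1)$ since $\mu_{1}>\mu_{2}$ vacuously forces $\tilde\lambda_{1}=\lambda_{1}=1$ (the case $\mu_{1}=\mu_{2}$ does not occur as $\rho=1$). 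So $\tilde F$ is a germ of self-maps of the surface $M$ at $p$ whose linear part at $p$ is the identity: after a local chart at $p$, $\tilde F\in\End(\mathbb{C}^{2},0)$ is tangent to the identity.

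Next I would verify the hypotheses of the two-dimensional parabolic-curve theorem for $\tilde F$ at $p$: namely that $p$ is an isolated fixed point of $\tilde F$ (so that $\tilde F$ admits a characteristic direction with no curve of fixed points through $p$ tangent to it, hence a parabolic flower of parabolic curves by the cited theorem, or directly by Abate's corollary). This is where the assumption that $0$ is an isolated fixed point of $F$ is used: off the exceptional divisor $\pi$ is a biholomorphism intertwining $F$ and $\tilde F$, so fixed points of $\tilde F$ near $p$ not on $E_{\pi}$ correspond to fixed points of $F$ near $0$ distinct from $0$, of which there are none; what remains is to rule out that $\tilde F$ has a whole curve of fixed points inside $E_{\pi}$ through $p$ — here one uses the explicit form of the lift on the exceptional divisor from Proposition~\ref{prop:BlowUpOnExcepDiv}, which shows $\tilde F$ restricted to (the component of) $E_{\pi}$ through $p$ is governed by the projectivised action and the off-diagonal entry of $J_{2}(1)$, so that generically $p$ is isolated as a fixed point of $\tilde F|_{E_\pi}$ as well; if $\tilde F$ did have a curve of fixed points through $p$, one must argue it lies in $E_{\pi}$ and then that Abate's index/blow-up machinery still yields a parabolic curve not contained in $E_{\pi}$. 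Assuming isolatedness, the cited two-dimensional theorem (or its corollary, Abate \cite{Abate2001TheResidualIndexandtheDynamicsofHolomorphicMapsTangenttotheIdentity}) gives at least one parabolic curve $\tilde P$ for $\tilde F$ at $p$, tangent to some characteristic direction at $p$.

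Finally I would push $\tilde P$ down: set $P:=\pi(\tilde P\setminus E_{\pi})$, or rather take the image under $\pi$ of $\tilde P$ with the puncture removed. Since $\tilde P$ is $\tilde F$-invariant with all orbits converging to $p$, and $\pi$ conjugates $\tilde F$ to $F$ off $E_{\pi}$ while $\pi(p)=0$, the set $P$ is an $F$-invariant parabolic curve with all orbits converging to $0$. It remains to identify the tangent direction: by Remark~\ref{rem:ExceptionalDivisorCollapsingDynamics} the dynamics of $\tilde F$ near $p$ reflect precisely the dynamics of $F$ tangent to the eigenvector $e_{1}$ of the Jordan block $J_{2}(1)$, i.e.\ tangent to $[1:0]$; concretely, tracking the blow-up coordinates (the lift in coordinates, Section~\ref{subsec:LiftCoords}) shows that any orbit of $\tilde F$ converging to $p$ projects to an orbit of $F$ converging to $0$ tangentially to $[1:0]$, since the single non-trivial Jordan chain forces all non-escaping directions onto the eigenline. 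Hence $P$ is a parabolic curve for $F$ tangent to $[1:0]$, proving the theorem.

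The main obstacle is the middle step: controlling the fixed-point set of the lift $\tilde F$ along the exceptional divisor and confirming that the isolated-fixed-point hypothesis on $F$ transfers to a usable hypothesis on $\tilde F$ at $p$ — i.e.\ that one does not create a curve of fixed points inside $E_{\pi}$ that obstructs applying the tangent-to-identity parabolic-curve theorem. This requires the explicit chart computations of Section~\ref{subsec:LiftCoords} together with Abate's residual-index argument, and is precisely the content that Abate establishes in \cite{Abate2000Diagonalizationofnondiagonalizablediscreteholomorphicdynamicalsystems} and \cite{Abate2001TheResidualIndexandtheDynamicsofHolomorphicMapsTangenttotheIdentity}; the rest is bookkeeping about how tangency directions transform under a single point blow-up.
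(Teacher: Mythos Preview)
Your approach matches the paper's: the paper does not give a detailed proof but simply states that Abate's blow-up procedure of Theorem~\ref{thm:AbateDiagonalisation} reduces $F$ to a germ tangent to the identity, to which the preceding corollary applies. Your write-up is considerably more detailed than the paper's one-sentence justification, and you correctly flag the one genuine technical point the paper leaves implicit --- that the isolated-fixed-point hypothesis survives the blow-up (or, failing that, that a curve of fixed points along $E_{\pi}$ does not obstruct the existence of a parabolic curve transverse to $E_{\pi}$) --- which is indeed handled in Abate's original papers rather than here.
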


\section{Parabolic-hyperbolic fixed points}

\selectlanguage{british}

In the parabolic-hyperbolic case, parabolic manifolds have been constructed
by combining attracting and parabolic dynamics. 
\begin{defn}
A (formal) germ $F\in\Pow(\mathbb{C}^{d},0)$ is called \emph{parabolic-attracting\index{parabolic-attracting germ}}
if $F$ has both parabolic and attracting and no other multipliers.
\end{defn}

The parabolic-attracting case is traditionally known as \emph{semi-attractive}\index{semi-attractive germ@\emph{semi-attractive germ}},
but we use the prefix ``semi'' to include all other types of multipliers.
 Heuristically, the components corresponding to attracting directions
in an orbit shrink so fast that the parabolic dynamics dominate the
behaviour. 

\subsection{Curves of fixed points}

In the simplest case, the parabolic part is the identity, i.e.~there
is a manifold of fixed points through $0$.
\begin{thm}
Let $F\in\End(\mathbb{C}^{d},0)$ with multipliers $\lambda_{1},\ldots,\lambda_{m},1,\ldots,1$
such that $(\lambda_{1},\ldots,\lambda_{m})$ has no resonances. Let
$S\subseteq\mathbb{C}^{d}$ a complex manifold through $0$ of codimension
$m$, pointwise fixed by $F$. 

the space $T_{p}S=\Eig(dF_{p},1)$ is the eigenspace associated to
the eigenvalue $1$ of $dF_{p}$ and all other eigenvalues of $dF_{p}$
are attracting and without resonances among them. Then there exists
a unique holomorphic map $\varphi:A_{F}(S)\to\mathcal{N}_{M/S}$ from
the realm of attraction 
\[
A_{F}(S):=\{z\in M\mid d(F^{\circ n}(z),S)\to0\}
\]
 to the normal bundle $\mathcal{N}_{M/S}=TM/TS$ pointwise fixing
$S$ and such that 
\begin{equation}
dF\circ\varphi=\varphi\circ F,\label{eq:NishimuraConj-1}
\end{equation}
where the action of $dF$ on the normal bundle descends from $TM$.
Moreover, there exists a neighbourhood $U\subseteq A_{F}(S)$ of $S$
such that $\varphi|_{U}$ is a biholomorphism onto its image and hence
(\ref{eq:NishimuraConj-1}) is a conjugation of $F$ to $dF$ on $U$.
If $F$ is an automorphism of $M$, then this holds for $U=A_{F}(S)$
and $\varphi$ maps $A_{F}(S)$ biholomorphically onto $\mathcal{N}_{M/S}$.
\end{thm}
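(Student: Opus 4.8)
The statement is a normal-form/linearisation result along a fixed-point submanifold $S$ in the ``parabolic part is trivial, transverse part attracting without resonances'' setting. The plan is to work fibrewise over $S$ and reduce everything to a family version of the Poincar\'e linearisation theorem~\ref{thm:PoincareLinIfNoRes}, glued together by the uniqueness part of Koenigs-type constructions. First I would fix a point $p\in S$ and a chart adapted to $S$, so that $M$ looks locally like $S\times\mathbb{C}^m$ with $S=\{w=0\}$, $w\in\mathbb{C}^m$ the normal coordinate, and $F(s,w)=(s+O(w),\,A(s)w+O(\|w\|^2))$ where $A(s)=dF_{(s,0)}|_{\mathcal{N}}$ is the transverse linear part, depending holomorphically on $s$ and having as eigenvalues the attracting multipliers $\lambda_1,\dots,\lambda_m$ (constant in $s$, since they are locally constant on the connected $S$ and determined by the no-resonance hypothesis). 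The target map is exactly $dF$ acting on $\mathcal{N}_{M/S}$, which in these coordinates is the fibrewise linear map $(s,w)\mapsto (s,A(s)w)$.

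The construction of $\varphi$ I would carry out by the iteration method highlighted in the Remark after Theorem~\ref{thm:Koenigs}: set
\[
\varphi_n := (dF)^{\circ(-n)}\circ F^{\circ n},
\]
i.e.\ apply $n$ iterates of $F$, then pull back by $n$ iterates of the linear model, and show $\varphi_n$ converges locally uniformly on a neighbourhood $U$ of $S$ to a holomorphic $\varphi$ fixing $S$ pointwise with $d\varphi|_{\mathcal N}=\mathrm{id}$, hence a biholomorphism near $S$. Convergence is where the attracting hypothesis does the work: on a suitable polydisc-bundle neighbourhood one gets, after possibly passing to an iterate and rescaling the metric, $\|w_n\|\le c^n\|w\|$ with $c<1$ uniform, and the tangential drift $s_n-s$ is controlled by $O(\|w\|)$ and hence summable; the no-resonance condition is used precisely to bound the inverse $(dF)^{\circ(-n)}$ applied to the quadratic-and-higher error terms $\|w_n\|^2,\dots$ — the geometric gains $c^{kn}$ beat the losses $(\max|\lambda_j|^{-1})^n$ only because $(\max|\lambda_j|)^k<\min|\lambda_j|$ for $k$ large (Lemma~\ref{lem:PoincDomFiniteRes}'s estimate), which forces the relevant small-divisor-free telescoping sum to converge. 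This is the standard majorant estimate underlying Theorem~\ref{thm:PoincareLinIfNoRes}, done uniformly in the parameter $s$. The relation $dF\circ\varphi_{n+1}=\varphi_n\circ F$ holds by construction, and passing to the limit gives $dF\circ\varphi=\varphi\circ F$, i.e.\ \eqref{eq:NishimuraConj-1}.

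Uniqueness of $\varphi$ follows the Koenigs pattern: if $\varphi,\psi$ both conjugate $F$ to $dF$, fix $S$ pointwise, and have identity normal derivative, then $\eta:=\psi\circ\varphi^{-1}$ commutes with the fibrewise-linear $dF$, fixes $S$, has $d\eta|_{\mathcal N}=\mathrm{id}$; comparing Taylor coefficients in $w$ and using the absence of resonances among $\lambda_1,\dots,\lambda_m$ kills every higher-order term fibrewise, so $\eta=\mathrm{id}$. Once $\varphi$ is constructed and unique on neighbourhoods of each point of $S$, the germs patch to a single holomorphic map on a neighbourhood $U\subseteq A_F(S)$ of $S$ (local pieces agree on overlaps by uniqueness), biholomorphic onto its image. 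To extend from $U$ to all of $A_F(S)$ when $F\in\Aut(M)$, I would use the complete invariance: every $z\in A_F(S)$ has some $F^{\circ n}(z)\in U$, so define $\varphi(z):=(dF)^{\circ(-n)}(\varphi(F^{\circ n}(z)))$; this is well-defined and independent of $n$ by \eqref{eq:NishimuraConj-1}, holomorphic, and since $dF$ is an automorphism of $\mathcal N_{M/S}$ with $A_{dF}(S)=\mathcal N_{M/S}$ (the transverse eigenvalues being attracting), the resulting $\varphi:A_F(S)\to\mathcal N_{M/S}$ is a biholomorphism — exactly as in Theorem~\ref{thm:AttrBasinFatBieb}. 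The main obstacle I anticipate is making the convergence estimate for $\varphi_n$ genuinely uniform in the base parameter $s$ while simultaneously tracking the non-trivial tangential motion $s_n$; one must choose the neighbourhood $U$ as a bundle of polydiscs shrinking appropriately over $S$ and verify $F(U)\subseteq U$ with the geometric contraction in the fibres and summable drift in the base, so that the telescoping product defining $\lim\varphi_n$ converges — this is the technical heart, and it is where one invokes the parameter-dependent version of the Poincar\'e majorant argument rather than citing Theorem~\ref{thm:PoincareLinIfNoRes} as a black box.
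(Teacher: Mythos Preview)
The paper does not actually prove this statement: it is a survey, and the theorem is stated without proof (and then restated immediately afterwards as Theorem~\ref{thm:NishimuraSemiAttrFixedCurve} with attribution to Nishimura \cite{Nishimura1983AutomorphismesAnalytiquesAdmettantDesSousVarietesDePointsFixesAttractivesDansLaDirectionTransversale}). So there is no in-paper argument to compare against; the relevant benchmark is Nishimura's original proof.

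Your plan is the standard one and is, up to presentation, Nishimura's approach: adapted coordinates $(s,w)$ with $S=\{w=0\}$, the Koenigs--Poincar\'e iteration $\varphi_n=(dF)^{\circ(-n)}\circ F^{\circ n}$, convergence via the attracting-domain estimate $\|w_n\|\lesssim c^n$ combined with the no-resonance bound that makes $(dF)^{\circ(-n)}$ applied to the higher-order remainder summable, uniqueness by killing resonant-free Taylor coefficients of a commutant, and extension to $A_F(S)$ by backward iteration when $F$ is an automorphism. You have correctly located the only genuinely delicate point, namely uniformity in the base parameter $s$ together with control of the tangential drift $s_n-s$; once the neighbourhood $U$ is taken as a fibred polydisc with $F(U)\subseteq U$ and the drift shown to be $O(\|w\|)$-summable, the rest is routine. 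Nothing in your outline is wrong or missing a key idea.
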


\begin{thm}[Nishimura \cite{Nishimura1983AutomorphismesAnalytiquesAdmettantDesSousVarietesDePointsFixesAttractivesDansLaDirectionTransversale}]
\label{thm:NishimuraSemiAttrFixedCurve}Let $M$ be a complex manifold
and $F\in\Aut(M,S)$ a germ at a holomorphic submanifold $S\subseteq M$
that  pointwise fix a holomorphic submanifold $S\subseteq M$ such
that for every $p\in S$, the space $T_{p}S=\Eig(dF_{p},1)$ is the
eigenspace associated to the eigenvalue $1$ of $dF_{p}$ and all
other eigenvalues of $dF_{p}$ are attracting and without resonances
among them. Then there exists a unique holomorphic map $\varphi:A_{F}(S)\to\mathcal{N}_{M/S}$
from the realm of attraction 
\[
A_{F}(S):=\{z\in M\mid d(F^{\circ n}(z),S)\to0\}
\]
 to the normal bundle $\mathcal{N}_{M/S}=TM/TS$ pointwise fixing
$S$ and such that 
\begin{equation}
dF\circ\varphi=\varphi\circ F,\label{eq:NishimuraConj}
\end{equation}
where the action of $dF$ on the normal bundle descends from $TM$.
Moreover, there exists a neighbourhood $U\subseteq A_{F}(S)$ of $S$
such that $\varphi|_{U}$ is a biholomorphism onto its image and hence
(\ref{eq:NishimuraConj}) is a conjugation of $F$ to $dF$ on $U$.
If $F$ is an automorphism of $M$, then this holds for $U=A_{F}(S)$
and $\varphi$ maps $A_{F}(S)$ biholomorphically onto $\mathcal{N}_{M/S}$.
\end{thm}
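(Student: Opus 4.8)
The plan is to construct the conjugacy $\varphi$ first on a neighbourhood of $S$ by solving the equation $dF\circ\varphi=\varphi\circ F$ directly, order by order in the transversal variables --- a parametrised version of Poincar\'e's linearisation argument (Theorem~\ref{thm:PoincareLinIfNoRes}) --- and then to extend it to all of $A_{F}(S)$ using the functional equation itself; uniqueness of the normalised local solution lets the local pieces glue and makes the extension canonical. Concretely, in a chart of $M$ adapted to $S$ write points as $(z,w)\in\mathbb{C}^{d-m}\times\mathbb{C}^{m}$ with $S=\{z=0\}$. Since $S$ is pointwise fixed, $F(z,w)=\bigl(A(w)z+O(\norm{z}^{2}),\,w+B(w)z+O(\norm{z}^{2})\bigr)$, where $A(w)$ is the transversal part of $dF_{(0,w)}$, whose eigenvalues are the attracting multipliers (nonzero, as $F$ is invertible), so that $I-A(w)$ is invertible for every $w$. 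In the induced trivialisation $\mathcal{N}_{M/S}\cong\mathbb{C}^{d-m}\times S$ the descended action of $dF$ is the fibrewise-linear map $(\zeta,w)\mapsto\bigl(A(w)\zeta,\,w\bigr)$. One looks for $\varphi(z,w)=\bigl(\Phi(z,w),\Psi(z,w)\bigr)$ with $\Phi(0,w)=0$, $\Psi(0,w)=w$ and the normalisation $\partial_{z}\Phi(0,w)=I$ (so that $\varphi|_{S}=\id_{S}$ and $d\varphi$ along $S$ is the canonical projection $TM|_{S}\to\mathcal{N}_{M/S}$; this normalisation is needed for uniqueness, since otherwise $\varphi$ is determined only up to a fibrewise-linear twist $L(w)$ commuting with $dF$). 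The equation $dF\circ\varphi=\varphi\circ F$ then splits into $\Psi\circ F=\Psi$ ($\Psi$ a first integral of $F$) and $\Phi(F(z,w))=A\bigl(\Psi(z,w)\bigr)\,\Phi(z,w)$.

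\textbf{Formal solution and convergence.} Expanding in powers of $z$, the linear-in-$z$ part of $\Psi$ is forced by invertibility of $I-A(w)$ (this is what removes the term $B(w)z$), the higher parts of $\Psi$ by the nonvanishing divisors $\lambda^{\alpha}-1$ (nonzero because the transversal multipliers $\lambda_{i}(w)$ are attracting), and the parts of $\Phi$ of transversal degree $\abs{\alpha}\ge2$ by the divisors $\lambda^{\alpha}-\lambda_{j}(w)$, which are nonzero \emph{precisely by the hypothesis of no resonances among the attracting multipliers}; the first-order part of $\Phi$ is pinned by the normalisation. Crucially, since the transversal spectrum lies in the Poincar\'e domain, only finitely many of these divisors can be small (as $\lambda^{\alpha}\to0$ with $\abs{\alpha}$), and each of those finitely many is a nowhere-vanishing holomorphic function of $w$, hence bounded below on relatively compact subsets of $S$: there is no small-divisor problem. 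A majorant/Cauchy-estimate argument in the spirit of Poincar\'e's proof of Theorem~\ref{thm:PoincareLinIfNoRes}, now run with holomorphic dependence on the parameter $w$, yields convergence of $\Phi$ and $\Psi$ on a uniform neighbourhood of any relatively compact open subset of $S$, the sum being holomorphic by Weierstrass. As $\varphi|_{S}=\id_{S}$ and $d\varphi$ has full rank along $S$, $\varphi$ is a biholomorphism of a neighbourhood of $S$ onto a neighbourhood of the zero section; two normalised solutions share the Taylor expansion along $S$, hence agree, so the local maps glue to $\varphi$ on a neighbourhood $U$ of $S$.

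\textbf{Dynamics and globalisation.} A Lyapunov estimate using that the transversal multipliers are attracting uniformly on compacta (so $\norm{z_{n+1}}\le c\norm{z_{n}}$ with $c<1$ near $S$, hence $\sum_{n}\norm{z_{n}}<\infty$ and the tangential drift $w_{n+1}-w_{n}=O(\norm{z_{n}})$ is summable) shows that every orbit starting near $S$ converges to a point of $S$, so $A_{F}(S)=\bigcup_{n}F^{\circ(-n)}(U)$. Set $\varphi(x):=dF^{\circ(-n)}\bigl(\varphi(F^{\circ n}(x))\bigr)$ for $n$ large enough that $F^{\circ n}(x)\in U$; the functional equation on $U$ makes this independent of $n$ and holomorphic, extending $\varphi$ and the conjugacy to $A_{F}(S)$, still fixing $S$ and still a local biholomorphism near $S$; any other solution restricts on $U$ to the unique normalised local solution, and the functional equation forces equality on $A_{F}(S)$. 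If $F$ is a global automorphism of $M$, then $A_{F}(S)$ is open and $F$-invariant, and $\varphi$ is injective (from $\varphi(x)=\varphi(y)$ one gets $\varphi(F^{\circ n}x)=dF^{\circ n}\varphi(x)=\varphi(F^{\circ n}y)$ with both points in $U$ for large $n$, where $\varphi$ is injective, so $F^{\circ n}x=F^{\circ n}y$ and $x=y$) and onto $\mathcal{N}_{M/S}$ (for $\eta\in\mathcal{N}_{M/S}$ the images $dF^{\circ n}(\eta)$ tend to the zero section, so $dF^{\circ n}(\eta)=\varphi(x')$ for some $x'\in U$, whence $\eta=\varphi(F^{\circ(-n)}(x'))$ with $F^{\circ(-n)}(x')\in A_{F}(S)$), while $d\varphi$ is everywhere invertible since $\varphi=dF^{\circ(-n)}\circ\varphi\circ F^{\circ n}$ is locally a composition of biholomorphisms; hence $\varphi\colon A_{F}(S)\to\mathcal{N}_{M/S}$ is a biholomorphism.

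\textbf{Main obstacle.} The crux is the convergence step: Poincar\'e's majorant method must be carried out ``with parameters'', tracking holomorphic dependence on $w\in S$ and --- the real subtlety --- producing a domain of convergence that is a genuine \emph{uniform} tubular neighbourhood of a relatively compact piece of $S$, not merely a pointwise-in-$w$ statement; this rests on uniform lower bounds for the finitely many dangerous divisors $\lambda^{\alpha}-\lambda_{j}(w)$, which combine compactness with the no-resonance hypothesis. In contrast to the hyperbolic situation behind Theorem~\ref{thm:AttrBasinFatBieb}, one cannot here simply take a limit of renormalised iterates $dF^{\circ(-n)}\circ j\circ F^{\circ n}$: that limit exists only when the transversal contraction rates dominate in the sense $(\max_{i}\abs{\lambda_{i}})^{2}<\min_{i}\abs{\lambda_{i}}$, which fails in general, so the conjugacy equation must be solved directly. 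A lesser point is the dynamical claim $A_{F}(S)=\bigcup_{n}F^{\circ(-n)}(U)$ --- that orbits near $S$ actually enter the fixed $U$ and converge to $S$ --- which needs the uniform transversal contraction together with summability of the tangential drift.
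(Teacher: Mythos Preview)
The paper does not prove this theorem: it is stated with attribution to Nishimura's original article \cite{Nishimura1983AutomorphismesAnalytiquesAdmettantDesSousVarietesDePointsFixesAttractivesDansLaDirectionTransversale} and then reformulated in local coordinates, but no argument is given. There is therefore nothing in the paper to compare your proposal against.

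That said, your outline is the standard route and is in line with Nishimura's own approach: a parametrised Poincar\'e linearisation in the transversal variables, with the base parameter $w\in S$ carried holomorphically through the majorant estimates, followed by the usual globalisation $\varphi=dF^{\circ(-n)}\circ\varphi\circ F^{\circ n}$ on $A_F(S)$. Your identification of the main issue---uniform lower bounds on the finitely many divisors $\lambda^{\alpha}(w)-\lambda_j(w)$ over relatively compact pieces of $S$, so that the radius of convergence in $z$ does not collapse as $w$ varies---is exactly right, and your remark that the naive renormalised-iterate limit $dF^{\circ(-n)}\circ j\circ F^{\circ n}$ need not converge without a dominance condition on the transversal spectrum is a genuine point. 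One small clarification: the uniqueness you need is only local (on each adapted chart, for the normalised solution), and the gluing then follows; your sketch already says this, but be sure the normalisation you impose on $d\varphi|_S$ is coordinate-free (identification with the canonical projection $TM|_S\to\mathcal{N}_{M/S}$), so that local solutions in overlapping charts really coincide.
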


In other words, centred at each point $p\in S$, we can find regularising
coordinates $(x,y)\in\mathbb{C}^{m}\times\mathbb{C}^{d-m}$ such that
$S=\{x=0\}$ and 
\[
F(x,y)=(A(y)x,y)
\]
with $A(y)\in\mathbb{C}^{m\times m}$.

\subsection{One parabolic multiplier}

The next harder case is if the parabolic dynamics are one-dimensional
and all hyperbolic multipliers are of the same type. In dimension
$2$, this is the only parabolic-hyperbolic case and has been studied
first by Fatou \cite{Fatou1924SubstitutionsAnalytiquesEtEquationsFonctionnellesaDeuxVariables}
and then in more detail by Ueda \cite{Ueda1986LocalstructureofanalytictransformationsoftwocomplexvariablesI,Ueda1991LocalstructureofanalytictransformationsoftwocomplexvariablesII}
who gave a complete description of the local dynamics in the case
of semi-parabolic order $2$. Hakim extended the result to multiple
attracting multipliers in \cite{Hakim1994AttractingDomainsforSemiAttractiveTransformationsofCp}.
Finally, Rivi in \cite{Rivi1998LocalBehaviourofDiscreteDynamicalSystems}
and \cite{Rivi2001ParabolicManifoldsforSemiAttractiveHolomorphicGerms}
and Rong in \cite{Rong2011ParabolicmanifoldsforsemiattractiveanalytictransformationsofmathbfCn}
established parabolic manifolds for general parabolic-attracting germs
with non-degenerate characteristic directions in generalisation of
Theorem~(\ref{thm:Hakim2}).
\begin{lem}[{\cite[Prop.~2.1]{Rivi2001ParabolicManifoldsforSemiAttractiveHolomorphicGerms}}]
\label{lem:SemAttrSetupCoord}Let $F\in\End(\mathbb{C}^{d},0)$ with
multipliers $1$, of matching geometric and algebraic multiplicity
$m$, and $\lambda_{m+1},\ldots,\lambda_{d}$ attracting. Then we
can write $F$ in local coordinates $(x,y)\in\mathbb{C}^{m}\times\mathbb{C}^{d-m}$
such that for $(x_{1},y_{1})=F(x,y)$, we have
\begin{equation}
\begin{aligned}x_{1} & =x+P_{2,y}(x)+P_{3,y}(x)+\cdots\\
y_{1} & =\Lambda y+G(y)+O(\norm x\cdot\norm{(x,y)}),
\end{aligned}
\label{eq:OrdForParAttr}
\end{equation}
where $G:(\mathbb{C}^{d-m},0)\to\mathbb{C}^{d-m}$ is a holomorphic
germ such that $G(y)=O(\norm y^{2})$, and, for each $j\ge2$, $P_{j,y}$
is a polynomial of order $j$ with holomorphic coefficients in $y$.
\end{lem}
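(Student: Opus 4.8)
The plan is to reach the form~(\ref{eq:OrdForParAttr}) through three successive biholomorphic changes of coordinates near the origin, after which it remains only to check the orders of the remainder terms. Throughout, write $(x_1,y_1)=F(x,y)$ for $(x,y)\in\mathbb C^m\times\mathbb C^{d-m}$.

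First I would block-diagonalise the linear part. Since the multiplier $1$ of $dF_0$ has equal algebraic and geometric multiplicity $m$, it carries no non-trivial Jordan block, so $\mathbb C^d$ splits $dF_0$-invariantly as $\ker(dF_0-\id)\oplus E^{\mathrm s}$, where $E^{\mathrm s}$ is the sum of the generalised eigenspaces of the attracting multipliers. After a linear change of coordinates we may therefore assume $dF_0=\diag(I_m,\Lambda)$, where the eigenvalues of $\Lambda$ are $\lambda_{m+1},\dots,\lambda_d$, all of modulus $<1$; the block $\Lambda$ may still contain Jordan blocks, but that is harmless since it is attracting.

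Next I would straighten the strong stable manifold. By part~(1) of Theorem~\ref{thm:StableCentreMnf}, there is a unique completely $F$-invariant complex submanifold $W^{\mathrm{ss}}$ through $0$ with $T_0W^{\mathrm{ss}}=E^{\mathrm s}$, along which orbits decay geometrically. Writing $W^{\mathrm{ss}}$ locally as a graph $x=\phi(y)$ with $\phi(0)=0$ and $d\phi_0=0$, the map $(x,y)\mapsto(x-\phi(y),y)$ is a biholomorphism tangent to the identity, so it preserves the block-diagonal form of $dF_0$ and carries $W^{\mathrm{ss}}$ onto $\{x=0\}$. In the new coordinates $F(\{x=0\})=\{x=0\}$; hence $x_1$ vanishes identically on $\{x=0\}$, and $F$ restricts there to $y\mapsto\Lambda y+G(y)$ with $G(y)=O(\norm{y}^2)$. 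Moreover $dF_0$ contributes no linear $x$-term to $y_1$, so every $x$-dependent monomial of $y_1$ has total degree $\ge2$, i.e.\ $y_1=\Lambda y+G(y)+O(\norm{x}\cdot\norm{(x,y)})$. Expanding $x_1=M(y)x+R(x,y)$ with $R(x,y)=O(\norm{x}^2)$, coefficients holomorphic in $y$, and $M(0)=I_m$, only the $y$-dependence of $M$ remains to be removed.

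Finally I would trivialise the transversal linear part by a change of coordinates $\tilde x=N(y)x$, $\tilde y=y$ with $N(0)=I_m$: substituting and comparing the parts linear in $x$ (the $x$-dependence of $y_1$ enters only at higher order) shows that $\tilde x_1=\tilde x+O(\norm{\tilde x}^2)$ is equivalent to the cohomological relation $N(\Lambda y+G(y))\,M(y)=N(y)$ along the attracting dynamics on $\{x=0\}$, which in turn forces $N(y)=\lim_{n\to\infty}M(y_{n-1})\cdots M(y_1)M(y_0)$ with $y_0=y$ and $y_{k+1}=\Lambda y_k+G(y_k)$. The hard part will be the convergence of this product, which is really the only analytic ingredient: by Lemma~\ref{lem:AttrIffTopAttr} the orbits satisfy $\norm{y_n}\le C\rho^n$ uniformly on a neighbourhood of $0$ for some $\rho\in(0,1)$, so $\norm{M(y_n)-I_m}=O(\rho^n)$ is summable, the partial products converge locally uniformly, and the limit is holomorphic with $N(0)=I_m$ by Weierstrass' theorem; re-indexing the product gives $N(y_1)M(y)=N(y)$, hence $\tilde x_1=\tilde x+N(y_1)R(N(y)^{-1}\tilde x,y)$, a power series of the form $\tilde x+\sum_{j\ge2}P_{j,\tilde y}(\tilde x)$ with coefficients holomorphic in $\tilde y$. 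Since $N(0)=I_m$, the norms $\norm{x}$ and $\norm{\tilde x}$ are comparable near $0$, so the $y$-component keeps the form $y_1=\Lambda y+G(y)+O(\norm{\tilde x}\cdot\norm{(\tilde x,\tilde y)})$; composing the three coordinate changes then yields~(\ref{eq:OrdForParAttr}).
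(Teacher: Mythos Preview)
Your proof is correct and follows the same architecture as the paper's sketch: block-diagonalise, straighten the strong stable manifold $W^{\mathrm{ss}}$ to $\{x=0\}$, then remove the $y$-dependent linear part $M(y)$ from $x_1$. The difference lies in how the last step is carried out. The paper invokes Poincar\'e--Dulac theory to formally eliminate the non-resonant monomials $x^{e_j}y^{\beta}$ in $x_1$ (the divisors $\lambda^{\beta}-1$ are bounded away from zero since $|\lambda^{\beta}|<1$), and then argues convergence of the conjugating series by majorant estimates in the style of Poincar\'e's linearisation in the attracting case. You instead construct the conjugating factor $N(y)$ directly as the infinite product $\prod_{n\ge0}M(y_n)$ along the attracting orbit on $\{x=0\}$, in the spirit of Koenigs' proof of Theorem~\ref{thm:Koenigs}. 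Both arguments ultimately rest on the geometric decay $\norm{y_n}=O(\rho^n)$; yours is more explicit and self-contained, while the paper's fits the problem into the general normalisation framework of Section~\ref{sec:BrjunoConditions}.
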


\begin{proof}[Proof idea]
By the stable/centre manifold theorem~\ref{thm:StableCentreMnf}
there is an invariant strong stable manifold $W^{\mathrm{ss}}$ tangent
to the attracting eigenspaces of $dF_{0}$. In local coordinates,
we may assume that $W^{\mathrm{ss}}=\{x=0\}$. After diagonalising
$dF_{0}$, the formal elimination of terms $x^{e_{j}}y^{\beta}$ in
$x_{1}$ and $x^{\alpha}$ in $y_{1}$ is ensured by Poincaré-Dulac
theory. The attracting multipliers then ensure convergence of the
formal series via majorant series similar to the attracting case due
to Poincaré.
\end{proof}

\begin{defn}
\label{def:SemiParOrderRiv}Let $F\in\End(\mathbb{C}^{d},0)$ be parabolic-attracting
and in the form (\ref{eq:OrdForParAttr}). Then
\[
\inf\{j\ge2\mid P_{j,0}\equiv0\}
\]
is the \emph{\index{semi-parabolic order@\emph{semi-parabolic order}}semi-parabolic
order} of $F$.
\end{defn}

\begin{rem}
The semi-parabolic order is a holomorphic invariant for $F$. $F$
is of infinite semi-parabolic order, if and only if $F$ has a manifold
of fixed points through $0$ of dimension $m$.
\end{rem}

\begin{thm}[Ueda \cite{Ueda1986LocalstructureofanalytictransformationsoftwocomplexvariablesI,Ueda1991LocalstructureofanalytictransformationsoftwocomplexvariablesII}]
\label{thm:SemiAttrUeda}Let $F\in\Aut(\mathbb{C}^{2},0)$ be parabolic-attracting
with multipliers $(1,\lambda)$, $0<|\lambda|<1$ and of semi-parabolic
order $2$. Then:
\begin{enumerate}
\item We can write $F$ in local coordinates $(x,y)\in\mathbb{C}^{2}$ such
that for $(x_{1},y_{1})=F(x,y)$, we have 
\begin{align*}
x_{1} & =x+x^{2}+O(x^{3})\\
y_{1} & =\lambda y+xO(\norm{(x,y)}).
\end{align*}
\item There exists $\rho>0$ such that, for every $\theta\in(0,\pi)$, there
exists $R(\theta)>0$ such that, for any $R\ge R(\theta)$, the set
\[
K:=K(\rho,\theta,R):=\{(x,y)\in\mathbb{C}^{2}\mid|\arg(1/x)-R|<\theta,|y|<\rho\}
\]
is a parabolic domain for $F$ and all orbits in $K$ converge to
$0$ locally uniformly. 
\item The strong stable manifold $W^{\mathrm{ss}}=\{x=0\}$ is contained
in the boundary $\partial K$ near $0$, the realm of attraction is
\[
A_{F}(0)=W^{\mathrm{ss}}\cup\bigcup_{n\in\mathbb{N}}F^{\circ(-n)}(K).
\]
and realm of repulsion $C=A_{F^{-1}}(0)$ is a parabolic curve for
$F^{-1}$ tangent to $[1:0]$ with non-empty intersection $C\cap(\bigcup_{n\in\mathbb{N}}F^{\circ(-n)}(K))$.
In particular, for all other points in $\mathbb{C}$, no forward or
backward orbit converges to $0$.
\item There exists an injective holomorphic function $\varphi:K\to\mathbb{C}^{2}$
conjugating $F$ to the translation $(z,w)\mapsto(z+1,w)$. $\varphi$
extends holomorphically to $\bigcup_{n\in\mathbb{N}}F^{\circ(-n)}(K)$
still satisfying $\varphi+e_{1}=\varphi\circ F$.
\item If $F\in\Aut(\mathbb{C}^{2},0)$ is an automorphism, then $\varphi:\bigcup_{n\in\mathbb{N}}F^{\circ(-n)}(K)\to\mathbb{C}^{2}$
is a biholomorphism and $W^{\mathrm{ss}}$ and $C=A_{F^{-1}}(0)$
are each biholomorphic to $\mathbb{C}$.
\end{enumerate}
\end{thm}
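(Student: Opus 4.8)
\textbf{Proof plan for Theorem~\ref{thm:SemiAttrUeda}.}

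The plan is to reduce the two-dimensional semi-parabolic problem to the one-dimensional parabolic theory of Section~\ref{subsec:1D-parabolic} by treating the attracting variable $y$ as a small perturbation that is slaved to the parabolic variable $x$. First I would invoke Lemma~\ref{lem:SemAttrSetupCoord} (with $m=1$) to bring $F$ into the normal form of Part~1: the stable manifold theorem~\ref{thm:StableCentreMnf} gives a strong stable manifold $W^{\mathrm{ss}}$ tangent to the $\lambda$-eigenspace, which we straighten to $\{x=0\}$, and Poincar\'e--Dulac elimination (convergent here because $\lambda$ is attracting, cf.\ the majorant argument of Theorem~\ref{thm:Brjuno}) removes the terms $x^{0}y^{\beta}$ from $y_{1}$ and normalizes the leading $x$-dynamics to $x_{1}=x+x^{2}+O(x^{3})$ after rescaling. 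Since $F^{\circ p}\neq\id$ and the semi-parabolic order is $2$, the coefficient of $x^{2}$ is nonzero.

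Next, following the one-variable template in Proposition~\ref{prop:Petals}, I would pass to the Fatou coordinate $w=\varphi(x):=-1/x$ on a sector $\Sigma=\{|\arg(1/x)|<\pi/k\}$ around the attracting direction $[1:0]$ (here $k=1$), so that $w_{1}=w+1+O(1/w)$, and simultaneously track $y$. The key estimate is that on a set of the form $K(\rho,\theta,R)=\{|\arg(1/x)-R|<\theta,\ |y|<\rho\}$ with $R$ large and $\rho$ small, both $x_{n}\to0$ (with $|x_{n}|\approx 1/n$, as in Part~\ref{enu:LeauFatou2rateOfConv} of Theorem~\ref{thm:LeauFatou}) and $|y_{n}|$ decays: the $y$-recursion reads $y_{n+1}=\lambda y_{n}+x_{n}O(\|(x_{n},y_{n})\|)$, and since $\sum_{n}|x_{n}|\approx\sum 1/n$ diverges only logarithmically while $|\lambda|<1$ contracts geometrically, a Gronwall-type induction shows $K$ is forward invariant and $|y_{n}|=O(|\lambda|^{n/2})$ or at worst $O(n^{-N})$ for every $N$; in particular all orbits in $K$ converge to $0$ locally uniformly, giving Part~2. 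For Part~3 one combines this with the one-variable flower theorem applied in the $x$-variable: every forward orbit not eventually in $K$ must escape (its $x$-component leaves the parabolic petal in the sense of Part~2 of Proposition~\ref{prop:Petals}), and the realm of attraction is obtained as $A_{F}(0)=W^{\mathrm{ss}}\cup\bigcup_{n}F^{\circ(-n)}(K)$ because any orbit converging to $0$ either has $x\equiv0$ (hence lies in $W^{\mathrm{ss}}$) or converges tangentially to $[1:0]$ and thus enters an attracting petal. The realm of repulsion $C=A_{F^{-1}}(0)$ is handled by applying Theorem~\ref{thm:HakimEcale}/the one-variable repelling petal to $F^{-1}$, which is tangent to the identity in $x$ with the petals rotated; this yields a parabolic curve for $F^{-1}$ tangent to $[1:0]$, and its intersection with $\bigcup_{n}F^{\circ(-n)}(K)$ is nonempty because the attracting and repelling petals overlap near $0$ (Remark~\ref{rem:tangencyOfPetalBoundary}).

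For Parts~4 and~5 I would construct $\varphi$ by the abstract-basin / telescoping method already used for Koenigs' and B\"ottcher's theorems: set $\varphi_{n}:=\Phi\circ F^{\circ n}$ where $\Phi$ is a chosen right-inverse of the model translation $(z,w)\mapsto(z+1,w)$ built from the leading Fatou coordinate in $x$ and the identity in $w\approx y$, and show $\{\varphi_{n}\}$ converges locally uniformly on $K$ using the decay estimates above to control the error terms; the limit satisfies $\varphi\circ F=\varphi+e_{1}$ and is injective on $K$ by the same argument as in Theorem~\ref{thm:FatouCoord1D}. One then extends $\varphi$ to $\bigcup_{n}F^{\circ(-n)}(K)$ by the functional equation $\varphi:=\varphi\circ F^{\circ n}-n e_{1}$ on $F^{\circ(-n)}(K)$. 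When $F$ is a global automorphism of $\mathbb{C}^{2}$, the extended $\varphi$ is a biholomorphism onto its image (an injective holomorphic endomorphism defined on the increasing union, with image all of $\mathbb{C}\times\mathbb{C}$ or a Fatou--Bieberbach-type domain — here one shows surjectivity onto $\mathbb{C}^{2}$ as in Theorem~\ref{thm:AttrBasinFatBieb}), and restricting to $W^{\mathrm{ss}}$ and to $C$ shows each is biholomorphic to $\mathbb{C}$ (they are one-dimensional, simply connected by construction, and carry a complete translation structure). The main obstacle I anticipate is the careful simultaneous control of the coupled $(x,y)$-dynamics in Part~2 — proving that the $O(\|x\|\cdot\|(x,y)\|)$ coupling does not destroy forward invariance of $K$ nor spoil the $1/n$-type asymptotics of $x_{n}$ — since everything else (the one-variable petal geometry, the telescoping construction of $\varphi$, the Fatou--Bieberbach conclusion) is a fairly direct adaptation of techniques already in the excerpt.
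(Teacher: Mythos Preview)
The paper does not provide its own proof of this theorem; it is stated as a survey result with citation to Ueda's original articles. Your plan follows the natural strategy (and essentially Ueda's): decouple via Lemma~\ref{lem:SemAttrSetupCoord}, run the one-variable Leau--Fatou analysis in $x$ while controlling $y$ as a geometrically contracted perturbation, and build the Fatou coordinate by telescoping. The skeleton is sound; two points deserve sharpening.

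For the repelling curve $C$ in Part~3 you invoke Theorem~\ref{thm:HakimEcale} for $F^{-1}$, but $F^{-1}$ is \emph{not} tangent to the identity: its multipliers are $(1,\lambda^{-1})$ with $|\lambda^{-1}|>1$. The correct mechanism is centre-manifold reduction (Theorem~\ref{thm:StableCentreMnf}): since $F^{-1}$ has no attracting eigenvalue, any orbit of $F^{-1}$ converging to $0$ must lie in a one-dimensional centre manifold tangent to the $x$-axis, on which $F^{-1}$ restricts to a one-variable germ tangent to the identity. The repelling petal of that restriction is the parabolic curve $C$, and the holomorphicity of $C$ follows because it lies in the holomorphic locus of the centre manifold (cf.\ the discussion after Theorem~\ref{thm:CentManRedLyubich}).

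For Part~5, the claim that $W^{\mathrm{ss}}$ carries ``a complete translation structure'' is off: on $W^{\mathrm{ss}}=\{x=0\}$ the map $F$ acts as $y\mapsto\lambda y+O(y^{2})$, which is geometrically attracting, not a translation. The correct argument is Koenigs (Theorem~\ref{thm:Koenigs}) together with the global extension as in Theorem~\ref{thm:AttrBasinFatBieb}: the linearising coordinate extends along backward orbits of the global automorphism $F$ to a biholomorphism $W^{\mathrm{ss}}\to\mathbb{C}$. For $C$, the one-variable Fatou coordinate on the repelling petal extends under forward iteration of $F$ (backward for $F^{-1}$) to a biholomorphism $C\to\mathbb{C}$, as in Theorem~\ref{thm:FatouCoord1D}.
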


Canille Martins \cite{CanilleMartins1992HolomorphicFlowsinC30withResonances}
showed, that, topologically, the attracting and parabolic dynamics
can be completely decoupled, leading to a simple classification:
\begin{thm}[Canille Martins \cite{CanilleMartins1992HolomorphicFlowsinC30withResonances}]
\label{thm:TopClassSemAttr}Let $F\in\Aut(\mathbb{C}^{2},0)$ be
parabolic-attracting of semi-parabolic order $k\le+\infty$. Then
$F$ is topologically conjugate to the map $(z,w)\mapsto(z+z^{k},\nicefrac{1}{2}w)$
if $k$ is finite and to the map $(z,w)\mapsto(z,\nicefrac{1}{2}w)$
if $k$ is infinite.
\end{thm}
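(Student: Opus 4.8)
The plan is to separate the parabolic and the contracting directions topologically, leaning on the normal forms and one-dimensional classifications already available. First I would normalise coordinates: by the stable/centre manifold theorem (Theorem~\ref{thm:StableCentreMnf}) the strong stable manifold $W^{\mathrm{ss}}$ of $F$ is a holomorphic curve through $0$, and by Lemma~\ref{lem:SemAttrSetupCoord} (with $m=1$) one may choose coordinates $(x,y)\in\mathbb{C}\times\mathbb{C}$ with $W^{\mathrm{ss}}=\{x=0\}$ in which
\[
x_{1}=x+a\,x^{k}+O(x^{k+1})+x\cdot O(y),\qquad y_{1}=\lambda y+x\cdot O(\norm{(x,y)}),
\]
with $a\neq0$, $0<\abs{\lambda}<1$, and $k<\infty$ the semi-parabolic order; using Theorem~\ref{thm:PoincareDulac} one can moreover put the pure-$x$ part of $x_{1}$ and the $x$-powers-times-$y$ part of $y_{1}$ into a convergent Poincaré--Dulac form, so that along $W^{\mathrm{ss}}$ the $x$-dynamics is, to arbitrary order, the one-dimensional parabolic germ $x\mapsto x+ax^{k}+\cdots$. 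The case $k=+\infty$ is the easy, degenerate one: then $F$ fixes a curve through $0$, Nishimura's Theorem~\ref{thm:NishimuraSemiAttrFixedCurve} conjugates $F$ holomorphically to $(x,y)\mapsto(A(y)x,y)$ with $A(0)=\lambda$, and a fibrewise Koenigs construction (Theorem~\ref{thm:Koenigs}) conjugates this topologically to $(x,y)\mapsto(x/2,y)$, i.e.\ to the model after swapping the variables. So the real content is $k<\infty$.

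For $k<\infty$ the restriction $F|_{W^{\mathrm{ss}}}$ splits into a one-dimensional geometric attraction, topologically conjugate to $w\mapsto w/2$ by Theorem~\ref{thm:Koenigs}, and a transverse one-dimensional parabolic germ of order $k$, topologically conjugate to $z\mapsto z+z^{k}$ by the Camacho--Shcherbakov classification. On the other side, Ueda's Theorem~\ref{thm:SemiAttrUeda} (for $k=2$) and its extensions by Rivi and Rong cited above (for general $k$) give, for each of the $k-1$ attracting directions, an invariant parabolic domain $K_{j}$ with a Fatou-type coordinate conjugating $F$ on $\bigcup_{n}F^{\circ(-n)}(K_{j})$ to the pure translation $(z,w)\mapsto(z+1,w)$, with $A_{F}(0)=W^{\mathrm{ss}}\cup\bigcup_{j,n}F^{\circ(-n)}(K_{j})$ and $A_{F^{-1}}(0)$ a union of $k-1$ parabolic curves for $F^{-1}$. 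Crucially, the model $M_{k}(z,w)=(z+z^{k},w/2)$ admits exactly the same description: its petals arise from the $k-1$ attracting directions of $z\mapsto z+z^{k}$, a one-dimensional Fatou coordinate in $z$ makes $M_{k}$ the translation $(z,w)\mapsto(z+1,w)$ there, $\{0\}\times\mathbb{C}$ is its strong stable manifold, and its repelling/transitory structure matches in the same way.

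I would then assemble the conjugacy piecewise against this common skeleton. On the translation regions: a fibre-preserving homeomorphism between the two families of Fatou images, built on a fundamental domain for the translation and spread by the dynamics, conjugating translation to translation. On $W^{\mathrm{ss}}$: the topological Koenigs homeomorphism onto $\{0\}\times\mathbb{C}$. On the repelling petals: the same construction applied to $F^{-1}$. On the transitory region (points whose forward and backward orbits both leave the neighbourhood): a homeomorphism defined on a first-entry fundamental domain and extended by the functional equation. Each building block is a homeomorphism smooth away from $0$, so once they are glued the resulting $\Phi$ automatically gives the claimed $\mathcal{C}^{\infty}$-outside-$0$ refinement; and $\Phi\circ F=M_{k}\circ\Phi$ holds by construction.

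The main obstacle is the compatibility of these blocks, above all along $W^{\mathrm{ss}}$: one must show that the parametrisation of $\bigcup_{n}F^{\circ(-n)}(K_{j})$ by the \emph{parabolic} Fatou coordinate matches continuously, at the corner $W^{\mathrm{ss}}\cap\overline{\bigcup_{n}F^{\circ(-n)}(K_{j})}$, with the \emph{geometric} Koenigs parametrisation of $W^{\mathrm{ss}}$ — essentially that the Fatou coordinate $\psi(x,y)$ degenerates, as $x\to0$, to the Koenigs coordinate of $F|_{W^{\mathrm{ss}}}$ in the $w$-variable. The analogous matching is needed across the shared boundaries of neighbouring attracting and repelling petals, which overlap in the simply connected regions furnished by Theorem~\ref{thm:LeauFatou}. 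This calls for precise asymptotics of $\psi$ near $W^{\mathrm{ss}}$ and uniform control of Ueda's constants $R(\theta),\rho$ over all $k-1$ petals, so that the model domains really glue to a punctured neighbourhood of $0$; once that is in hand, the remaining patching is the routine "define on a fundamental domain, extend by the functional equation" bookkeeping.
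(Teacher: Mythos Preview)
The paper does not supply its own proof of this theorem; it is stated as a cited result of Canille~Martins and then used as a black box. The only methodological hint the paper gives appears a few paragraphs later, where it notes that Di~Giuseppe generalised Canille~Martins' proof ``using techniques from \cite{PalisTakens1977TopologicalEquivalenceofNormallyHyperbolicDynamicalSystems}''. That points to a rather different route from yours: the Palis--Takens machinery exploits the normal hyperbolicity of the strong stable foliation to topologically trivialise the contracting direction over a full neighbourhood of $0$ first, reducing the problem to the one-dimensional parabolic classification on the quotient/centre direction, rather than building explicit Fatou and Koenigs coordinates on each dynamical region and then gluing.

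Your outline is reasonable but the part you label ``the main obstacle'' is essentially the whole theorem, and your proposed resolution has a real gap. On a parabolic domain $K_{j}$, Ueda's coordinate from Theorem~\ref{thm:SemiAttrUeda} conjugates $F$ to the \emph{pure translation} $(z,w)\mapsto(z+1,w)$, which has no contracting direction at all; there is no reason for this $\psi$ to degenerate along $W^{\mathrm{ss}}\subseteq\partial K_{j}$ to the Koenigs coordinate of $F|_{W^{\mathrm{ss}}}$, since the two models $(z,w)\mapsto(z+1,w)$ and $(z,w)\mapsto(z,w/2)$ are not continuously compatible without an additional, non-canonical reparametrisation that you have not specified. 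Likewise, the ``transitory'' block and the repelling block are not disjoint from the attracting basins (the parabolic curves for $F^{-1}$ meet $\bigcup_n F^{-n}(K_j)$ by Theorem~\ref{thm:SemiAttrUeda}), so you cannot define independent homeomorphisms there and hope they agree. A minor slip: in the $k=\infty$ paragraph your variables are swapped --- with $W^{\mathrm{ss}}=\{x=0\}$ the curve of fixed points is tangent to the $x$-axis, and Nishimura's theorem yields $(x,y)\mapsto(x,A(x)y)$, not $(A(y)x,y)$. The normal-hyperbolicity approach avoids all of this by never introducing the Fatou coordinate: one trivialises the stable direction topologically in one stroke and then invokes Camacho--Shcherbakov on the base.
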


Hakim's extension of Theorem~\ref{thm:SemiAttrUeda} in \cite{Hakim1994AttractingDomainsforSemiAttractiveTransformationsofCp}
to non-invertible germs in higher dimensions and higher orders gives
less details of the local dynamics (though without obvious obstructions):
\begin{thm}[Hakim \cite{Hakim1994AttractingDomainsforSemiAttractiveTransformationsofCp}]
\label{thm:SemAttrHakim}Let $F\in\End(\mathbb{C}^{d},0)$ be parabolic-attracting
with multiplier $1$ of algebraic multiplicity $1$, the remaining
multipliers attracting, and finite semi-parabolic order $k+1$. Then
$0$ is an isolated fixed point of $F$ and there exist $k$ disjoint
parabolic domains for $F$ at $0$. 

If, moreover, $F\in\Aut(\mathbb{C}^{2})$ is a global automorphism,
then, on each of these parabolic domains $P$, there exists a biholomorphism
$\varphi:P\to\mathbb{C}^{2}$ conjugating $F$ to the translation
$(z,w)\mapsto(z+1,w)$.
\end{thm}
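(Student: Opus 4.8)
The plan is to run the one-dimensional parabolic flower construction (Proposition~\ref{prop:Petals}) in the parabolic variable, carrying the attracting variables along as a fast-decaying perturbation, and then, in the automorphism case, to build a global Fatou coordinate following Ueda's proof of Theorem~\ref{thm:SemiAttrUeda}.

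First I would put $F$ in the normal form of Lemma~\ref{lem:SemAttrSetupCoord} with $m=1$ and rescale, obtaining coordinates $(x,y)\in\mathbb{C}\times\mathbb{C}^{d-1}$ with
\[
x_{1}=x-\tfrac1k x^{k+1}+O\!\left(\norm{y}\,x^{2},\,x^{k+2}\right),\qquad y_{1}=\Lambda y+O\!\left(\norm{x}\cdot\norm{(x,y)}\right),
\]
where $\norm{\Lambda}<1$ since all multipliers but $1$ are attracting and $k+1$ is the \emph{finite} semi-parabolic order (so $P_{2,0}\equiv\dots\equiv P_{k,0}\equiv0$, $P_{k+1,0}\not\equiv0$). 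Since the only resonances of the tuple $(1,\lambda_{2},\dots,\lambda_{d})$ hitting the $x$-component are pure powers of $x$, and the denominators $1-\lambda^{\alpha}$ attached to the eliminable mixed monomials are bounded away from $0$ (no small divisors in this setting), one may further normalise so that $x_{1}$ is independent of $y$ and every monomial of $y_{1}-\Lambda y$ carries a $y$-factor. Then $W^{\mathrm{ss}}=\{x=0\}$ is the strong stable manifold, on which $F$ is purely attracting, and $0$ is an isolated fixed point: a fixed point must have $x=0$ by the parabolic equation $-\tfrac1k x^{k+1}+O(x^{k+2})=0$, and then $y=0$ by invertibility of $\Lambda-I$.

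Next, for $j=0,\dots,k-1$, on the sector $\Sigma_{j}=\{\abs{\arg x-2\pi j/k}<\pi/k\}$ set $w=x^{-k}$ as in Section~\ref{subsec:1D-parabolic}, so $w_{1}=w+1+O(\abs{x})+o(1)$, the $o(1)$ coming from the $y$-coupling and being negligible once $\norm{y}$ decays geometrically. I would put
\[
P_{j}:=\left\{(x,y):x\in\Sigma_{j},\ \abs{\arg(x^{-k}-R)}<\theta,\ \norm{y}<\rho\right\},
\]
and prove $F(P_{j})\subseteq P_{j}$ for $\theta\in(0,\pi)$, $R\gg1$, $\rho\ll1$: the $w$-component stays in the half-plane-type region by the computation of Proposition~\ref{prop:Petals} (absorbing the $y$-perturbation), while $\norm{y_{1}}\le\norm{\Lambda}\norm{y}(1+o(1))+O(\abs{x}\norm{(x,y)})<\rho$ because $\norm{\Lambda}<1$ and $\abs{x}$ is small. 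The same estimates yield $\Re w_{n}\to+\infty$ linearly, hence $\abs{x_{n}}=O(n^{-1/k})$ uniformly on $P_{j}$, hence $\norm{y_{n}}\to0$ (geometric contraction beating the $O(\abs{x_{n}})$ forcing), so every orbit in $P_{j}$ converges to $0$ locally uniformly. Each $P_{j}$ is open, simply connected, $F$-invariant and has $0$ in its closure, hence is a parabolic domain in the sense of the definition preceding Theorem~\ref{thm:HakimEcale}, and the $P_{j}$ are pairwise disjoint because the sectors $\Sigma_{j}$ are.

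For the last assertion, take $d=2$ and $F$ a global automorphism and fix one of these domains $P$. Following Ueda's argument for Theorem~\ref{thm:SemiAttrUeda}, I would pass to the completely invariant Fatou component $\widehat P=\bigcup_{n}F^{\circ(-n)}(P)$ and construct a Fatou coordinate $\varphi=(\varphi_{1},\varphi_{2})$ on it: $\varphi_{1}=\lim_{n}(w_{n}-n)$ solves the Abel equation $\varphi_{1}\circ F=\varphi_{1}+1$ as in Theorem~\ref{thm:FatouCoord1D} (with $y$ a decaying parameter), and $\varphi_{2}$ is an $F$-invariant coordinate extracted from a suitably compensated limit of the $y$-orbit. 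One checks $\varphi$ is injective and, using that $F$ is a global automorphism so $\varphi$ propagates through all backward iterates, that $\varphi$ maps $\widehat P$ biholomorphically onto $\mathbb{C}^{2}$, conjugating $F$ to $(z,w)\mapsto(z+1,w)$. The steps I expect to be the real obstacles are (i) making the coupled invariance estimate on $P_{j}$ precise — balancing the polynomial decay of $x_{n}$ against the geometric decay of $y_{n}$ in the $w$-estimate — and, more seriously, (ii) constructing the invariant coordinate $\varphi_{2}$ and proving surjectivity onto $\mathbb{C}^{2}$; the latter is the genuinely two-dimensional ingredient (essentially Ueda's theorem) and the reason the global normalisation is stated only for $\mathbb{C}^{2}$.
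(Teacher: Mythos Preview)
The paper does not give its own proof of this theorem; it is stated as a cited result of Hakim, with the paper only supplying the normal-form Lemma~\ref{lem:SemAttrSetupCoord} and a proof idea for that. Your sketch is the natural strategy and matches the approach of the references the paper cites: run the one-variable Leau--Fatou construction (Proposition~\ref{prop:Petals}) in $x$ while the attracting $y$-variables decay geometrically and enter only as a controllable perturbation, then in the $\mathbb{C}^{2}$ automorphism case build global Fatou coordinates following Ueda (Theorem~\ref{thm:SemiAttrUeda}).

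One remark on your further normalisation: making $x_{1}$ independent of $y$ goes slightly beyond the form recorded in Lemma~\ref{lem:SemAttrSetupCoord}, but it is legitimate, for exactly the reason you give---the monomials $x^{a}y^{\beta}$ with $|\beta|\ge1$ in the $x$-component are non-resonant and the denominators $1-\lambda^{\beta}$ satisfy $|1-\lambda^{\beta}|\ge 1-\max_{j\ge2}|\lambda_{j}|>0$, so there are no small divisors and the eliminating series converges. With that done your isolated-fixed-point argument and the petal invariance estimates go through as written; the genuinely delicate step, as you correctly identify, is the construction of the second Fatou coordinate $\varphi_{2}$ and the surjectivity onto $\mathbb{C}^{2}$, which is precisely the content of Ueda's theorem.
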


\subsection{Several parabolic multipliers}

To tackle the case of multiple parabolic multipliers, Rivi extended
the language of Hakim's parabolic results to the parabolic-attracting
case:
\begin{defn}
Let $F\in\End(\mathbb{C}^{d},0)$ be parabolic-attracting of finite
semi-parabolic order $k$ in the form (\ref{eq:OrdForParAttr}). A
\emph{\index{characteristic direction!parabolic-attracting}characteristic
direction} for $F$ is a direction $[v:0]\in\mathbb{P}^{d-1}$ with
$v\in\mathbb{C}^{m}$ such that $P_{k}(v)=\gamma v$ for some $\gamma\in\mathbb{C}$.
If $\gamma=0$, we call $[v:0]$ \emph{degenerate}\index{degenerate characteristic direction!parabolic-attracting},
otherwise \emph{non-degenerate}\index{non-degenerate characteristic direction!parabolic-attracting}.
\end{defn}

\begin{defn}
Let $F\in\End(\mathbb{C}^{d},0)$ be parabolic-attracting of finite
semi-parabolic order $k$ in the form (\ref{eq:OrdForParAttr}), with
a non-degenerate characteristic direction $[v:0]\in\mathbb{P}^{d-1}$.
Then $P_{k}$ induces a holomorphic self-map of $\mathbb{P}^{m-1}$
and the eigenvalues of the linear operator 
\[
\frac{1}{k-1}((dP_{k})_{[v]}-\id):T_{[v]}\mathbb{P}^{m-1}\to T_{[v]}\mathbb{P}^{m-1}
\]
are called the \emph{directors\index{director@\emph{director!parabolic-attracting}}}
of $[v]$. If all directors of $[v]$ have positive real part, we
call the non-degenerate characteristic direction $[v]$ \emph{\index{attracting non-degenerate characteristic 
direction@\emph{attracting non-degenerate characteristic direction!parabolic-attracting}}attracting}.

The final result of this section was proved by Rivi \cite{Rivi2001ParabolicManifoldsforSemiAttractiveHolomorphicGerms}
and Rong \cite{Rong2011ParabolicmanifoldsforsemiattractiveanalytictransformationsofmathbfCn}
using the techniques of \cite{Hakim1998AnalytictransformationsofmathbbCp0tangenttotheidentity,HakimTransformationstangenttotheidentityStablepiecesofmanifolds}.
\end{defn}

\begin{thm}[Rivi \cite{Rivi2001ParabolicManifoldsforSemiAttractiveHolomorphicGerms},
Rong \cite{Rong2011ParabolicmanifoldsforsemiattractiveanalytictransformationsofmathbfCn}]
Let $F\in\End(\mathbb{C}^{d},0)$ be parabolic-attractingof finite
semi-parabolic order $k+1\ge2$ of the form (\ref{eq:OrdForParAttr})
and with a non-degenerate characteristic direction $[v]\in\mathbb{P}^{d-1}$
with directors $\alpha_{1},\ldots,\alpha_{m-1}\in\mathbb{C}$. If
there exist $c>0$ and $q\in\mathbb{N}$ such that $\Re\alpha_{1},\ldots,\Re\alpha_{q}>c$
and $\Re\alpha_{q+1},\ldots,\Re\alpha_{m-1}<c$. Then there exist
a parabolic flower of $k$ parabolic manifolds $P_{1},\ldots,P_{k}$
of dimension $(d-m)+q+1$, each tangent to the direct sum of $[v]$
and the generalised eigenspaces of the attracting multipliers and
the directors $\alpha_{1},\ldots,\alpha_{m}$, and such that all contained
orbits converge to $0$ tangentially to $[v]$.
\end{thm}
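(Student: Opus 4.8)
The plan is to reduce the $d$-dimensional parabolic-attracting situation to Hakim's tangent-to-the-identity Theorem~\ref{thm:Hakim2} by absorbing the attracting directions into the stable part, and then to run Hakim's fixed-point/graph-transform argument in the remaining coordinates. First I would put $F$ into the normal form \eqref{eq:OrdForParAttr} of Lemma~\ref{lem:SemAttrSetupCoord}, so that the strong stable manifold is $W^{\mathrm{ss}}=\{x=0\}$ and $F(x,y)=(x+P_{k}(x)+\cdots,\Lambda y+G(y)+O(\norm x\cdot\norm{(x,y)}))$, with $\Lambda$ the diagonal matrix of attracting multipliers. The key point is that in the $y$-block the map is a genuine contraction to first order, while the $x$-block is exactly a tangent-to-the-identity germ of order $k+1$ plus $y$-dependent perturbations that vanish on $\{x=0\}$; thus along any orbit that converges to $0$ tangentially to $[v:0]$ with $v\in\mathbb C^m$, the $y$-coordinate decays geometrically and the $x$-coordinate decays like $n^{-1/k}$, so the $y$-terms are subordinate.

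Next I would blow up along $W^{\mathrm{ss}}$ (equivalently, pass to coordinates $x=x$, $y=y$, $u=y/\,$(a chosen component of $x$) as in Proposition~\ref{prop:HakimCoords}) in the $x$-directions, following Hakim: the non-degenerate characteristic direction $[v]$ becomes a fixed point of the induced projective map $P_k\in\End(\mathbb P^{m-1},[v])$, and the directors $\alpha_1,\dots,\alpha_{m-1}$ are the eigenvalues of $\tfrac1{k-1}((dP_k)_{[v]}-\id)$. After a linear change bringing this operator to (block) Jordan form with the $q$ directors of real part $>c$ first, I would choose the candidate manifold as in \eqref{eq:reducedHakimBasins}: the $F$-invariant connected components of $\{(x,y):(x_n,y_n)\to0,\ \norm{y''_n}=o(\norm{x_n}^{kc+1})\}$, where $y''$ collects the slow directors and $y'$ the fast ones (and the full attracting block is always lumped with the "fast/stable" part, contributing the extra $d-m$ dimensions). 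On such a component one builds the conjugacy to $(\zeta,\xi)\mapsto(\zeta+1,\xi)$, $\zeta\sim1/x$, exactly as in Hakim's and Rivi's proofs: write the dynamics in Fatou-type coordinates $\zeta=x^{-k}/(k\cdot(\text{leading coeff}))$ so that $\zeta_{n+1}=\zeta_n+1+o(1)$, and run a contraction-mapping / graph-transform argument on a suitable Banach space of sequences to produce, for each of the $k$ petals of the $1$-variable flower in the $\zeta$-plane, an invariant manifold of dimension $(d-m)+q+1$ on which $F$ is linearised; the estimate $\norm{f_\alpha}\le C$ from scaling and the geometric decay of $\Lambda y$ make the fixed-point operator a contraction.

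The main obstacle, and where the real work lies, is controlling the coupling between the slow directors and the attracting block in the graph-transform estimates: one must show that the $O(\norm x\cdot\norm{(x,y)})$ error in the $y$-equation, which mixes the true contraction $\Lambda y$ with terms of size comparable to the slow decay $\norm{x}^{kc}$, does not destroy the invariance of the candidate set \eqref{eq:reducedHakimBasins} nor the convergence of the iteration defining the conjugacy. This is handled by a careful bookkeeping of the decay rates — $x_n\sim c_j n^{-1/k}$ on the $j$-th petal, $y'_n$ decaying faster than $x_n^{kc+1}$, $y''_n=O(x_n^{k\Re\alpha_i})$ for the slow block, and the attracting coordinates decaying geometrically — exactly the scheme of \cite{HakimTransformationstangenttotheidentityStablepiecesofmanifolds} adapted by Rivi in \cite{Rivi2001ParabolicManifoldsforSemiAttractiveHolomorphicGerms} and completed in general by Rong \cite{Rong2011ParabolicmanifoldsforsemiattractiveanalytictransformationsofmathbfCn}; I would cite those for the detailed convergence estimates rather than reproduce them. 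Finally, tangential convergence to $[v]$ on each $P_j$ follows because $[x_n:y_n]\to[v:0]$ is forced by $y_n/x_n\to0$ together with $[x_n]\to[v]$, the latter coming from $[v]$ being the relevant attracting fixed point of $P_k$ on $\mathbb P^{m-1}$.
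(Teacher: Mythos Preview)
The paper does not give a proof of this theorem at all: it simply states the result and notes that it was proved by Rivi and Rong ``using the techniques of \cite{Hakim1998AnalytictransformationsofmathbbCp0tangenttotheidentity,HakimTransformationstangenttotheidentityStablepiecesofmanifolds}.'' Your proposal goes further than the paper by sketching those techniques, and the overall strategy you describe---put $F$ into the form \eqref{eq:OrdForParAttr}, run Hakim's blow-up and graph-transform scheme in the parabolic block while treating the attracting block as a subordinate geometric contraction, and cite Rivi and Rong for the estimates---is indeed how the argument goes.

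That said, there is one point where your description is genuinely garbled rather than merely terse. You write ``blow up along $W^{\mathrm{ss}}$ (equivalently, pass to coordinates $x=x$, $y=y$, $u=y/\text{(a chosen component of }x\text{)}$)''. This is not the correct blow-up. The strong stable manifold $W^{\mathrm{ss}}=\{x=0\}$ is tangent to the attracting eigenspaces, and one does \emph{not} blow up along it; rather, one works \emph{inside the $m$-dimensional parabolic block}. After a linear change putting $[v]=[e_1]$ in $\mathbb{C}^m$, split $x=(x_1,x')\in\mathbb{C}\times\mathbb{C}^{m-1}$ and set $u=x'/x_1$---this is the analogue of Proposition~\ref{prop:HakimCoords}, and it is from the linearisation of $u\mapsto u_1$ at $u=0$ that the directors $\alpha_1,\dots,\alpha_{m-1}$ arise. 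The attracting coordinates $y\in\mathbb{C}^{d-m}$ stay as they are; they contribute the extra $d-m$ dimensions to the parabolic manifolds precisely because $\Lambda$ is a contraction, not because of any blow-up. Your later paragraphs (the decay bookkeeping and the description of the candidate set) are consistent with this correct picture, so the error looks like a slip in exposition rather than a misconception, but as written the second paragraph would mislead a reader.
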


\begin{rem}
In \cite{Rivi1998LocalBehaviourofDiscreteDynamicalSystems} and \cite{Rivi2001ParabolicManifoldsforSemiAttractiveHolomorphicGerms},
Rivi further adapted Abate's diagonalisation theorem~\ref{thm:AbateDiagonalisation}
to diagonalise Jordan blocks in the parabolic part of a parabolic-attracting
germ and construct examples of parabolic manifolds for parabolic-attracting
germs with general non-diagonalisable linear part.
\end{rem}

In the presence of multiple parabolic multipliers, a description of
the dynamics in a full neighbourhood of the fixed point is strictly
dependent on solving the same problem for parabolic germs.

\subsection{Mixed hyperbolic multipliers}

Recently, Lyubich, Radu, and Tanase gave an alternative proof of Theorem~\ref{thm:SemAttrHakim}
in \cite{LyubichRaduTanase2016HedgehogsinHigherDimensionsandTheirApplications}
via centre manifold reduction, that extends to the case of mixed hyperbolic
multipliers. An important ingredient is the quasiconformal conjugation
of $F$ to a holomorphic map on the (non-analytic) centre manifold:
\begin{thm}[\cite{LyubichRaduTanase2016HedgehogsinHigherDimensionsandTheirApplications}]
\label{thm:CentManRedLyubich}Let $F\in\Aut(\mathbb{C}^{d},0)$ with
one neutral multiplier $\lambda$ and hyperbolic multipliers otherwise.
Let $W^{\mathrm{c}}$ be a $\mathcal{C}^{1}$-smooth centre manifold
as in Theorem~\ref{thm:StableCentreMnf}. Then $F:(W^{\mathrm{c}},0)\to(W^{\mathrm{c}},0)$
is quasiconformally conjugate to a holomorphic germ $h\in\End(\mathbb{C},0)$
of the form $h(z)=\lambda z+O(z^{2})$. Moreover, 
\begin{enumerate}
\item For small enough neighbourhoods $W\subseteq W^{\mathrm{c}}$ of $0$
in $W^{\mathrm{c}}$, the conjugacy is holomorphic on the interior
of the set $\Sigma_{F}(W)\cap\Sigma_{F^{-1}}(W)$ of points whose
forward and backward orbits stay in $W$.
\item If all hyperbolic multipliers of $F$ are attracting, the conjugacy
is holomorphic on the interior of the unstable set $\Sigma_{F^{-1}}(W)$
(as a subset of $W^{\mathrm{c}}$) for small enough neighbourhood
$W\subseteq W^{\mathrm{c}}$ of $0$ in $W^{\mathrm{c}}$.
\end{enumerate}
\end{thm}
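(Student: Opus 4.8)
The plan is to transport the problem to the $\mathcal{C}^r$ centre manifold, build an $F$-invariant measurable conformal structure there, and straighten it by the measurable Riemann mapping theorem. Concretely, I would fix a large $r$ and a $\mathcal{C}^r$ centre manifold $W^{\mathrm c}$ as in Theorem~\ref{thm:StableCentreMnf}, choose a $\mathcal{C}^r$ chart $\psi\colon(\mathbb{C},0)\to(W^{\mathrm c},0)$ with $d\psi_0=\mathrm{id}_{E^{\mathrm c}}$, and set $g:=\psi^{-1}\circ F\circ\psi$. Then $g$ is a germ of $\mathcal{C}^r$ orientation-preserving diffeomorphism of $(\mathbb{C},0)$ with $dg_0(z)=\lambda z$, $|\lambda|=1$. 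Since $dg_0$ is $\mathbb{C}$-linear, the Beltrami coefficient $\mu_g=\bar\partial g/\partial g$ is continuous and vanishes at $0$, so on a small disc the map $g$ (and $g^{-1}$) is quasiconformal with dilatation tending to $1$ as the disc shrinks. If I can find a $g$-invariant Beltrami coefficient $\mu$ with $\|\mu\|_\infty<1$, the straightening $\phi$ of $\mu$ is a quasiconformal homeomorphism with $h:=\phi\circ g\circ\phi^{-1}$ holomorphic; since $\mu(0)=0$ forces $d\phi_0$ conformal, $dh_0=\lambda$, hence $h(z)=\lambda z+O(z^2)$.

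The construction of $\mu$ is the heart of the matter, and here I would follow the strategy of \cite{LyubichRaduTanase2016HedgehogsinHigherDimensionsandTheirApplications}, exploiting that the only non-neutral part of the dynamics is hyperbolic. By Theorems~\ref{thm:StableMnfThm} and~\ref{thm:StableCentreMnf} the strong stable and strong unstable sets through points near $0$ are genuine complex submanifolds, and they laminate the centre–stable, resp.\ centre–unstable, manifold, with $F$ permuting these leaves biholomorphically. Pushing the standard complex structure of a complex transversal across this lamination by complex leaves and restricting to $W^{\mathrm c}$ produces a measurable field that is automatically $g$-invariant (leaves go to leaves, $F$ is holomorphic leafwise), and whose deviation from the standard structure of the $\psi$-chart is controlled by the non-conformality of the holonomy, which is small near $0$; shrinking the neighbourhood gives $\|\mu\|_\infty<1$. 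On the recurrent core — the set of points of $W^{\mathrm c}$ whose full orbit stays near $0$ — I would instead take $\mu\equiv0$: if this core has non-empty interior then $0$ is a Siegel point for $g$ (Proposition~\ref{prop:linIffStable}), the corresponding invariant disc lies in $\Sigma_F\cap\Sigma_F^{-}\subseteq W^{\mathrm c}$ and one checks it is a complex submanifold of $\mathbb{C}^d$ on which $F$ is holomorphically linearisable, so $\psi$ may be chosen holomorphic there and $g$ is already holomorphic; in the Cremer case the core has empty interior and the convention is vacuous.

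With $\mu$ in hand, straightening by the measurable Riemann mapping theorem yields the conjugacy $\phi$ and the holomorphic germ $h(z)=\lambda z+O(z^2)$, proving the first assertion. For part~(1), after shrinking $W$ the set $\Sigma_F(W)\cap\Sigma_{F^{-1}}(W)$ viewed inside $W^{\mathrm c}$ is exactly the recurrent core above, where $\mu\equiv0$ by construction; since $\phi$ is quasiconformal it lies in $W^{1,2}_{\mathrm{loc}}$ and solves $\bar\partial\phi=\mu\,\partial\phi=0$ on the interior of that set, so Weyl's lemma makes $\phi$ holomorphic there. For part~(2), when all hyperbolic multipliers are attracting there is no unstable direction: every point of $\Sigma_{F^{-1}}(W)\cap W^{\mathrm c}$ has a backward orbit staying in $W$ and a forward orbit that remains in $W^{\mathrm{cs}}$, so only the strong stable lamination is needed, one may take $\mu\equiv0$ on the interior of $\Sigma_{F^{-1}}(W)$, and the same Weyl's-lemma argument applies.

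The main obstacle is exactly the step of producing an invariant Beltrami coefficient bounded away from $1$. The naive recipe — pull the standard structure back along orbits — fails near a neutral fixed point because orbits linger arbitrarily long near $0$ and the dilatations of the iterates $g^{\circ n}$ accumulate; this is why the hypothesis that $F$ have a single neutral multiplier with all others hyperbolic is essential, as it confines the non-conformality to the holonomy of a lamination by complex leaves rather than to the base map, so that it is not amplified under iteration. Making this precise — constructing the lamination near the non-smooth centre manifold, checking measurability and exact $g$-invariance of the transported structure, and matching it with the trivial structure on the recurrent core — is the technical core, carried out in \cite{LyubichRaduTanase2016HedgehogsinHigherDimensionsandTheirApplications}.
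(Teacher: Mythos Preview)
The paper does not give its own proof of this statement: Theorem~\ref{thm:CentManRedLyubich} is stated as a result quoted from \cite{LyubichRaduTanase2016HedgehogsinHigherDimensionsandTheirApplications}, with no argument supplied. So there is nothing in the paper to compare your proposal against.

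That said, your sketch does follow the line of the cited reference: parametrise the centre manifold by a $\mathcal{C}^r$ chart, observe that the induced map $g$ has $\mathbb{C}$-linear derivative at $0$ and hence small complex dilatation near $0$, build a $g$-invariant measurable conformal structure with $\|\mu\|_\infty<1$, and straighten by the measurable Riemann mapping theorem. You correctly identify the crux --- that pulling back along orbits of a neutral germ does not by itself give bounded dilatation, and that the hyperbolic transverse directions must be used to produce the invariant structure via the strong (un)stable laminations rather than via naive iteration. Your handling of parts~(1) and~(2), deducing holomorphicity of the conjugacy on the relevant sets from $\mu\equiv 0$ there plus Weyl's lemma, is also in the right spirit. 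As you yourself note, the genuinely delicate step --- constructing the invariant $\mu$ with uniform bound near the merely $\mathcal{C}^r$ centre manifold and matching it across the recurrent core --- is not carried out here but deferred to the reference; that is appropriate for a survey context, and matches what the paper itself does.
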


This facilitates the examination of the dynamics on the centre manifold
with classical one-dimensional techniques to obtain parabolic manifolds
with local Fatou coordinates. Though, a priori, these are only $\mathcal{C}^{1}$,
the fact that the petals lie in the holomorphic part of $W^{\mathrm{c}}$
(see \cite{Abate2001AnIntroductiontoHyperbolicDynamicalSystems}),
ensures holomorphicity:
\begin{thm}[\cite{LyubichRaduTanase2016HedgehogsinHigherDimensionsandTheirApplications}]
Let $F\in\Aut(\mathbb{C}^{d},0)$ be parabolic-attracting with multiplier
$1$ of algebraic multiplicity $1$, one attracting multiplier $\lambda$,
the remaining multipliers repelling, and finite semi-parabolic order
$k+1$. Then $0$ is an isolated fixed point of $F$ and there exist
$k$ disjoint parabolic manifolds of dimension $2$ for $F$ at $0$. 

If, moreover, $F\in\Aut(\mathbb{C}^{2})$ is a global automorphism,
then, on each of these parabolic manifolds $P$, there exists a biholomorphism
$\varphi:P\to\mathbb{C}^{2}$ conjugating $F$ to the translation
$(z,w)\mapsto(z+1,w)$.
\end{thm}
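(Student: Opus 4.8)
The plan is to reduce to the one-dimensional parabolic theory by means of invariant manifolds, as in the centre-manifold approach of Theorem~\ref{thm:CentManRedLyubich}, and then to recover the two ambient dimensions we actually want (the parabolic direction together with the single attracting direction) by a fibred Fatou-coordinate construction of Ueda--Hakim type. First I would split the coordinates as $(x,y,z)\in\mathbb{C}\times\mathbb{C}\times\mathbb{C}^{d-2}$ according to the parabolic multiplier $1$ (of algebraic multiplicity $1$), the attracting multiplier $\lambda$, and the repelling multipliers, and normalise as in Lemma~\ref{lem:SemAttrSetupCoord} so that the dominant parabolic term of the $x$-component is visible and of order $k+1$. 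By Theorem~\ref{thm:StableCentreMnf} there is a $\mathcal{C}^{r}$-smooth $F$-invariant centre--stable manifold $W^{\mathrm{cs}}$ of complex dimension $2$, tangent to $E^{\mathrm{c}}\oplus E^{\mathrm{s}}$ and containing $\Sigma_{F}(U)$, a $\mathcal{C}^{r}$ centre manifold $W^{\mathrm{c}}\subseteq W^{\mathrm{cs}}$ tangent to $E^{\mathrm{c}}$ with $\Sigma_{F}(U)\cap\Sigma_{F}^{-}(U)\subseteq W^{\mathrm{c}}$, and, by a standard refinement, a foliation of $W^{\mathrm{cs}}$ by holomorphic strong stable leaves tangent to $E^{\mathrm{s}}$. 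That $0$ is an isolated fixed point is then immediate: a fixed point near $0$ lies in $\Sigma_{F}(U)\cap\Sigma_{F}^{-}(U)\subseteq W^{\mathrm{c}}$, where by Theorem~\ref{thm:CentManRedLyubich} $F$ is quasiconformally conjugate to a holomorphic germ $h(\zeta)=\zeta+c\zeta^{k+1}+O(\zeta^{k+2})$ with $c\neq0$ (finite semi-parabolic order forcing $h\neq\id$), and $h$ has $0$ as an isolated fixed point.

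The core step is one-dimensional. Theorem~\ref{thm:CentManRedLyubich} provides a quasiconformal homeomorphism $\Phi$ conjugating $F|_{W^{\mathrm{c}}}$ to $h$ and holomorphic on the interior of $\Sigma_{F}(W)\cap\Sigma_{F}^{-}(W)$. Applying the Leau--Fatou theorem~\ref{thm:LeauFatou} and Proposition~\ref{prop:Petals} to $h$, I would fix $k$ pairwise disjoint wide ($\theta$ close to $\pi$) attracting petals $\Delta_{1},\dots,\Delta_{k}$ of $h$ with Fatou coordinates $\psi_{j}$ from Theorem~\ref{thm:FatouCoord1D}, satisfying $\psi_{j}\circ h=\psi_{j}+1$ and $\psi_{j}(\Delta_{j})\supseteq\{\Re\zeta>R'\}$. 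The key geometric observation is that for $\theta$ near $\pi$ each $\overline{\Delta_{j}}\setminus\{0\}$ is covered by attracting and repelling petals of $h$: the backward orbit of a point of $\Delta_{j}$ leaves $\Delta_{j}$, but then enters a neighbouring repelling petal, which is backward invariant and an attracting petal for $h^{-1}$, and converges to $0$ through it. Hence $\Delta_{j}$ lies in the interior of the set of points with both forward and backward $h$-orbit staying near $0$; transporting through $\Phi$, the one-dimensional petal $\hat{\Delta}_{j}:=\Phi^{-1}(\Delta_{j})\subseteq W^{\mathrm{c}}$ lies in the holomorphic locus of $\Phi$, so $\psi_{j}\circ\Phi$ is a genuinely holomorphic Fatou coordinate for $F$ on $\hat{\Delta}_{j}$.

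It remains to thicken in the attracting direction. Let $P_{j}$ be the union of the strong stable leaves through the points of $\hat{\Delta}_{j}$; after shrinking, $P_{j}$ is a complex two-dimensional $F$-invariant subset of $W^{\mathrm{cs}}$, tangent at $0$ to the span of the petal direction $[v_{j}]$ and $E^{\mathrm{s}}$, with every orbit converging to $0$ (parabolically along the petal, exponentially along the leaves), and the $P_{j}$ are pairwise disjoint; each is therefore a parabolic manifold of dimension $2$ along $[v_{j}]$. To produce a Fatou coordinate $\varphi_{j}\colon P_{j}\to\mathbb{C}^{2}$ with $\varphi_{j}\circ F=\varphi_{j}+e_{1}$ one runs, in holomorphic coordinates on $P_{j}$, the telescoping limit construction of Ueda (Theorem~\ref{thm:SemiAttrUeda}) and Hakim (Theorem~\ref{thm:SemAttrHakim}): the transverse directions are all attracting (the repelling directions having been removed by restricting to $W^{\mathrm{cs}}$), and the one-dimensional parabolic base $\hat{\Delta}_{j}$ sits in the holomorphic part of $W^{\mathrm{c}}$, which is exactly what makes the limit converge holomorphically and patch with the base coordinate $\psi_{j}\circ\Phi$; one also reads off that $\varphi_{j}(P_{j})$ contains a set $\{\Re\zeta>R'\}\times\mathbb{C}$. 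Finally, if $F$ extends to a global automorphism, replace $P_{j}$ by the increasing union $\bigcup_{n}F^{\circ(-n)}(P_{j})$, over which $\varphi_{j}$ extends by the functional equation; the extended map is injective, and its image, invariant under $\zeta\mapsto\zeta+e_{1}$ and containing a half-space times $\mathbb{C}$, exhausts $\mathbb{C}^{2}$, so $P_{j}$ is biholomorphic to $\mathbb{C}^{2}$, as in the arguments behind Theorems~\ref{thm:SemiAttrUeda} and \ref{thm:AttrBasinFatBieb}.

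The main obstacle is precisely the passage from the merely $\mathcal{C}^{r}$ centre(-stable) manifold to honestly holomorphic Fatou coordinates: one must exhibit an open holomorphic locus of the quasiconformal conjugacy on $W^{\mathrm{c}}$ that still contains a full attracting petal, which is why the flower must be built with wide, overlapping petals so that points of an attracting petal also have bounded backward orbits. The remaining work --- checking that the transverse telescoping construction converges uniformly on the thickened petal and glues holomorphically to the base coordinate, and the injectivity and exhaustion argument in the automorphism case --- is standard given the attracting character of the transverse directions, but it is where the smoothness of $W^{\mathrm{cs}}$ has to be traded for holomorphy inherited from the base.
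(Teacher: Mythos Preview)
Your proposal is correct and follows essentially the same approach as the paper's (brief) sketch preceding the theorem: centre manifold reduction via Theorem~\ref{thm:CentManRedLyubich}, one-dimensional Leau--Fatou theory on $W^{\mathrm{c}}$, the observation that the resulting petals lie in the holomorphic locus of the quasiconformal conjugacy, and then thickening by the single attracting direction \`a la Ueda--Hakim. Your explicit justification that attracting petals sit inside $\Sigma_{F}(W)\cap\Sigma_{F^{-1}}(W)$---because backward orbits of points in a wide attracting petal are trapped first in that petal and then in an adjacent repelling petal covering a punctured neighbourhood of $0$---is exactly the content behind the paper's remark that ``the petals lie in the holomorphic part of $W^{\mathrm{c}}$'', which the paper does not spell out.
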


Using techniques from \cite{PalisTakens1977TopologicalEquivalenceofNormallyHyperbolicDynamicalSystems},
in \cite{Giuseppe2006TopologicalClassificationofHolomorphicSemiHyperbolicGermsinquasiAbsenceofResonances},
Di Giuseppe generalised the proof of Theorem~\ref{thm:TopClassSemAttr}
from \cite{CanilleMartins1992HolomorphicFlowsinC30withResonances}
to achieve a topological classification with mixed hyperbolic multipliers
with finite resonances.
\begin{thm}[\cite{Giuseppe2006TopologicalClassificationofHolomorphicSemiHyperbolicGermsinquasiAbsenceofResonances}]
Let $F\in\Aut(\mathbb{C}^{1+d},0)$ with multipliers $\mu,\lambda_{1},\ldots,\lambda_{d}$,
where $\mu$ is parabolic and $\lambda_{1},\ldots,\lambda_{d}$ are
hyperbolic with only finitely many resonances among the hyperbolic
multipliers $\lambda_{1},\ldots,\lambda_{d}$. Then $F$ is topologically
conjugate to $dF_{0}$ or to the map 
\[
(x,y)\mapsto(\mu x+x^{k+1},\Lambda y),
\]
where $(x,y)\in\mathbb{C}\times\mathbb{C}^{d}$, $\Lambda=\diag(\lambda_{1},\ldots,\lambda_{d})$
and $k+1$ is the semi-parabolic order of $F$.
\end{thm}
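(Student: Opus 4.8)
The plan is to follow Di Giuseppe \cite{Giuseppe2006TopologicalClassificationofHolomorphicSemiHyperbolicGermsinquasiAbsenceofResonances}, combining the decoupling strategy behind Canille Martins' Theorem~\ref{thm:TopClassSemAttr} with the topological equivalence theory for normally hyperbolic systems of Palis and Takens \cite{PalisTakens1977TopologicalEquivalenceofNormallyHyperbolicDynamicalSystems}. First I would separate the parabolic direction from the hyperbolic ones. By Theorem~\ref{thm:StableCentreMnf}, $F$ admits a one-dimensional $\mathcal{C}^{1}$ centre manifold $W^{\mathrm c}$, together with centre-stable and centre-unstable manifolds $W^{\mathrm{cs}}$, $W^{\mathrm{cu}}$, and by Theorem~\ref{thm:CentManRedLyubich} the restriction $F|_{W^{\mathrm c}}$ is quasiconformally --- hence topologically --- conjugate to a holomorphic germ $h_{0}\in\End(\mathbb{C},0)$ with multiplier $\mu$, so the one-variable theory applies to $h_{0}$. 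Writing $\mu$ as a primitive $p$-th root of unity, this produces the dichotomy of the statement: either $h_{0}^{\circ p}=\id$, so $h_{0}$ (and hence $F|_{W^{\mathrm c}}$) is holomorphically linearisable, or $h_{0}^{\circ p}$ is tangent to the identity of finite order, equal to the semi-parabolic order $k+1$ of $F$, and $h_{0}$ is topologically conjugate to $x\mapsto\mu x+x^{k+1}$ by the Camacho--Shcherbakov theorem; in the latter case the Leau--Fatou flower theorem~\ref{thm:LeauFatou} organises $W^{\mathrm c}$ near $0$ into $k$ attracting and $k$ repelling petals, permuted by $F$ in cycles of length $p$.

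Next I would normalise the hyperbolic directions and exhibit the normally hyperbolic structure. Since only finitely many resonances occur among $\lambda_{1},\dots,\lambda_{d}$, Poincar\'e--Dulac theory (Theorem~\ref{thm:PoincareDulac}) brings $F$, after a holomorphic change of coordinates, to a form that is polynomial in the transverse variable $y$ up to terms vanishing on $W^{\mathrm c}$, so that the transverse linear dynamics over $W^{\mathrm c}$ is governed by $\Lambda=\diag(\lambda_{1},\dots,\lambda_{d})$. This realises $F$ as a normally hyperbolic perturbation of a skew product over $W^{\mathrm c}$, and --- crucially using that the centre orbit rate, of order $n^{-1/k}$ by Theorem~\ref{thm:LeauFatou}, is dominated by the exponential transverse rates --- the graph-transform and $\lambda$-lemma machinery produces $F$-invariant strong-stable and strong-unstable foliations $\mathcal{F}^{\mathrm{ss}}$, $\mathcal{F}^{\mathrm{su}}$ of a punctured neighbourhood of $0$, with leaves transverse to $W^{\mathrm c}$ and carrying the contracting, resp.\ expanding, dynamics; together with $W^{\mathrm c}$ they give a topological product decomposition of a neighbourhood of $0$. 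The finiteness of hyperbolic resonances is exactly what keeps these foliations regular enough for the subsequent gluing.

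With this structure in hand I would build the conjugacy leaf by leaf. On each leaf the transverse dynamics is that of an invertible hyperbolic germ with linear part conjugate to $\Lambda$, hence topologically linearised to $y\mapsto\Lambda y$ by Grobman--Hartman (Theorem~\ref{thm:GrobmanHartman}); in parallel $F|_{W^{\mathrm c}}$ is conjugated to the model $h$, where $h(x)=\mu x$ in the linearisable case and $h(x)=\mu x+x^{k+1}$ otherwise. These base and fibre conjugacies must be assembled into a single homeomorphism conjugating $F$ to $G(x,y)=(h(x),\Lambda y)$: one prescribes the conjugacy on a fundamental domain for the centre dynamics inside each petal, spreads it by $F$ and $G$ to a full petal-neighbourhood, extends it along $\mathcal{F}^{\mathrm{ss}}$ and $\mathcal{F}^{\mathrm{su}}$, and checks that the pieces built over attracting petals, over repelling petals, and over the strong leaves agree continuously on their overlaps. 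This compatibility verification is the heart of the Palis--Takens construction and the main obstacle: the natural trivialisations of $W^{\mathrm{cs}}$ and $W^{\mathrm{cu}}$ are in general only H\"older, and because the base is parabolic the overlap regions accumulate at $0$, so continuity of the glued map there relies delicately on the rate estimates above. Finally, in the first alternative $G$ is linear and topologically conjugate to $dF_{0}$, while in the second $G$ is precisely the model $(x,y)\mapsto(\mu x+x^{k+1},\Lambda y)$, which completes the classification.
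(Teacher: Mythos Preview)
The paper does not supply its own proof here; it states the result and records only that Di Giuseppe obtained it by generalising Canille Martins' argument for Theorem~\ref{thm:TopClassSemAttr} via the normally-hyperbolic machinery of Palis--Takens \cite{PalisTakens1977TopologicalEquivalenceofNormallyHyperbolicDynamicalSystems}. Your outline matches that description and is a reasonable sketch of how such an argument runs, and you correctly identify the gluing over petal boundaries and at the origin as the place where the real work lies.

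One anachronism is worth noting: you invoke Theorem~\ref{thm:CentManRedLyubich} to obtain a holomorphic model on the centre manifold, but that result appeared roughly a decade after Di Giuseppe's paper, so it cannot be what he used. It is also not needed. The finite-resonance hypothesis on $\lambda_{1},\dots,\lambda_{d}$ forces $\lambda^{\alpha}\neq 1$ for every nonzero $\alpha\in\mathbb{N}^{d}$ (cf.\ the remark immediately following the theorem), so after passing to $F^{\circ p}$ one can holomorphically eliminate all pure-$y$ terms from the $x$-equation to any prescribed finite order by Poincar\'e--Dulac; this already pins down the semi-parabolic order and produces an asymptotically decoupled form close to \eqref{eq:OrdForParAttr} from which the one-variable Camacho--Shcherbakov model and the Leau--Fatou petal structure in the $x$-direction can be read off directly, without any quasiconformal intermediate. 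From that form the Palis--Takens construction proceeds essentially as you describe, with the additional bookkeeping of splitting the hyperbolic block into its stable and unstable parts.
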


\begin{rem}
Here, the semi-parabolic order of $F$ can be defined as in Definition~\ref{def:SemiParOrderRiv}
if $\mu=1$: Since $\lambda_{1},\ldots,\lambda_{d}$ have only finitely
many resonances, we have $\lambda^{\alpha}\neq1$ for all $\alpha\in\mathbb{N}^{d}\backslash\{0\}$.
Hence we can eliminate resonant terms $y^{\alpha}$ in $x_{1}$ until
$F$ has the form (\ref{eq:OrdForParAttr}) and define the semi-parabolic
order of $F$ as $\inf\{j\ge2\mid P_{j,0}\equiv0\}$. If $\mu$ is
a primitive $q$-th root of unity, then we define the semi-parabolic
order\index{semi-parabolic order} of $F$ as that of $F^{\circ q}$.
In this case, $k$ is a multiple of $q$.
\end{rem}

\section{\label{sec:Multi-resonant-cases}$m$-resonant fixed points}

\selectlanguage{british}
\global\long\def\Pow{\m{Pow}}%
\global\long\def\Res{\m{Res}}%
\global\long\def\Jac{\m{Jac}}%
\global\long\def\Transp{\mathsf{T}}%

\subsection{One-resonance}

Until now, resonances have occurred as a formal obstruction to linearisation.
In \cite{BracciZaitsev2013Dynamicsofoneresonantbiholomorphisms},
Bracci and Zaitsev exploit a one-dimensional family of resonances
to find a simple normal form as well as conditions for the existence
of parabolic basins. Their main tool is a non-linear projection to
a lower dimensional germ with parabolic dynamics on certain subsets.
This non-linearity is a fundamental difference to many other results
that use linear projections to lower dimensional dynamics.
\begin{defn}
\label{def:oneRes}Let $F\in\Pow(\mathbb{C}^{d},0)$ be a germ with
multipliers $\lambda_{1},\ldots,\lambda_{d}$. Then $F$ is called
\emph{(partially) one-resonant\index{one-resonant germ}} with respect
to $\lambda_{1},\ldots,\lambda_{r}$, $1\le r\le d$, with \emph{\index{generator@\emph{generator}}generator}
$0\neq\alpha\in\mathbb{N}^{r}\times\{0\}^{d-r}$, if we have $\lambda_{j}=\lambda^{\beta}$
for $1\le j\le r$ and $\beta\in\mathbb{N}^{d}$, if and only if $\beta=k\alpha+e_{j}$
for some $k\in\mathbb{N}_{>0}$, i.e.\ the resonances for $F$ in
the first $r$ components are precisely of the form 
\begin{equation}
\lambda_{j}=\lambda^{k\alpha+e_{j}},\quad k\in\mathbb{N}_{>0},1\le j\le r.\label{eq:oneResonance}
\end{equation}
$F$ is (\emph{fully}) \emph{one-resonant}, if it is one-resonant
with respect to all its multipliers, that is $r=d$.
\end{defn}

\begin{rem}
One-resonance of $F$ with respect to $\lambda_{1},\ldots,\lambda_{r}$
immediately implies that the first $r$ multipliers $\lambda_{1},\ldots,\lambda_{r}$
have multiplicity $1$, that is $\lambda_{j}\neq\lambda_{s}$ for
$j\le r$, $s\le n$ with $j\neq s$.
\end{rem}

If $F\in\Pow(\mathbb{C}^{d},0)$ is one-resonant of index $\alpha$
with respect to $\lambda_{1},\ldots,\lambda_{m}$, then the by the
Poincaré-Dulac Theorem~\ref{thm:PoincareDulac}, $F$ is either
formally linearisable in the first $m$ variables or there exist
local coordinates at $0$ such that 
\begin{equation}
F_{j}(z)=\lambda_{j}z_{j}(1+a_{j}z^{\alpha k})+O(\norm z^{(k+1)|\alpha|+1})\quad\text{for }j=1,\ldots,m,\label{eq:OneResPDNF}
\end{equation}
with $a=(a_{1},\ldots,a_{m})\neq0$. Then by one-resonance, we have
\begin{align*}
(F(z))^{\alpha} & =z^{\alpha}\prod_{j=1}^{m}(1+a_{j}z^{k\alpha})^{\alpha_{j}}+O(\norm z^{(k+2)|\alpha|})
\end{align*}
and with the projection $u=\pi(z)=z^{\alpha}$, $u_{1}=(F(z))^{\alpha}$,
we get 
\begin{equation}
u_{1}=\underbrace{u(1+A\cdot u^{k})+h(u)}_{=:\Phi(u)}+O(\norm z^{(k+2)|\alpha|})\label{eq:ParabShadow}
\end{equation}
with $A:=\sum_{j=1}^{d}\alpha_{j}a_{j}$ and $h(u)=O(u^{k+2})$. In
particular, $\Phi\in\End(\mathbb{C},0)$ is a parabolic germ in one
variable, called the \emph{parabolic shadow\index{parabolic shadow@\emph{parabolic shadow}}}
of $F$. If moreover $A\neq0$, $F$ is called \emph{non-degenerate
one-resonant\index{non-degenerate one-resonant germ@\emph{non-degenerate one-resonant germ}}}
of \emph{weighted order\index{weighted order@\emph{weighted order}}}
$k$. 
\begin{rem}
\label{rem:BZ3.2OneResOrderInvariant}Recall from Proposition~\ref{prop:ConjPreservingPoiDul},
that a conjugating germ preserving the form (\ref{eq:OneResPDNF}),
has only resonant terms commuting with $dF_{0}$, and hence leaves
$k$ invariant and rescales the vector $(a_{1},\ldots,a_{m})$ by
a non-zero constant. Hence, one-resonant order and non-degeneracy
are well-defined invariants for $F$. 
\end{rem}

After multiplying all variables with a constant, we may assume $A=-\frac{1}{k}$,
and Bracci and Zaitsev establish a unique formal normal form modelled
after the one-dimensional parabolic case (Proposition~\ref{prop:1DparaFormalClass}):
\begin{thm}[Bracci, Zaitsev \cite{BracciZaitsev2013Dynamicsofoneresonantbiholomorphisms}]
\label{thm:BracciZaitsevNormalForm}Let $F\in\Pow(\mathbb{C}^{d},0)$
be non-degenerate one-resonant of index $\alpha$ with respect to
$\lambda_{1},\ldots,\lambda_{m}$. Then there exist unique $k\in\mathbb{N}$
and $\mu,a_{1},\ldots,a_{m}\in\mathbb{C}$ such that $A=\sum_{j=1}^{m}\alpha_{j}a_{j}=-\frac{1}{k}$
and $F$ is formally conjugated to a germ $\hat{F}\in\Pow(\mathbb{C}^{d},0)$
with 
\begin{equation}
\hat{F}_{j}(z)=\lambda_{j}z_{j}(1+a_{j}z^{k\alpha}+\mu\alpha_{j}|\lambda_{j}|^{-2}z^{2k\alpha})\quad\text{for }j=1,\ldots,m,\label{eq:BZNF}
\end{equation}
and the components $\hat{F}_{j}$ for $m<j\le d$ contain only resonant
monomials.
\end{thm}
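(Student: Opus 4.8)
The plan is to mimic the one-dimensional parabolic normalisation (Proposition~\ref{prop:1DparaFormalClass}) in the ``resonant direction'' $z^{\alpha}$ while simultaneously eliminating all non-resonant monomials in every component via Poincar\'e--Dulac theory. First I would reduce to a convenient starting form: by the Poincar\'e--Dulac Theorem~\ref{thm:PoincareDulac} we may assume $F$ has no non-resonant monomials and $dF_{0}$ is diagonal. Under the one-resonance hypothesis, the only admissible resonant monomials in $F_{j}$ for $j\le m$ are $z_{j}z^{k\alpha}$ for $k\ge1$, so $F_{j}(z)=\lambda_{j}z_{j}(1+\sum_{k\ge1}a_{j}^{(k)}z^{k\alpha})$; for $m<j\le d$ the components are already purely resonant, and Proposition~\ref{prop:ConjPreservingPoiDul} tells us exactly which further conjugations preserve this shape. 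Non-degeneracy and Remark~\ref{rem:BZ3.2OneResOrderInvariant} pin down $k$ (the weighted order) and fix $(a_{1},\ldots,a_{m}):=(a_{1}^{(1)},\ldots,a_{m}^{(1)})$ up to a common nonzero scaling, which we normalise so that $A=\sum_{j}\alpha_{j}a_{j}=-\tfrac1k$; this is the step where uniqueness of $k$ and of the $a_{j}$ enters.

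The core of the argument is the elimination of the higher coefficients $a_{j}^{(k')}$ for $k'\ge2$, $k'\ne2$. I would proceed order-by-order in the weight $|\alpha|k'$, using conjugating maps of the restricted form $H(z)=(\ldots,z_{j}(1+c_{j}z^{k'\alpha}),\ldots)$ in the first $m$ components (and something compatible in the last $d-m$, or simply the identity there if the purely-resonant tail is left untouched), paralleling the $1$-D conjugation $h(z)=z+cz^{pj+1}$ in the proof of Proposition~\ref{prop:1DparaFormalClass}. Computing $H^{-1}\circ F\circ H$ and comparing the coefficient of $z_{j}z^{k'\alpha}$, one finds that the change in $a_{j}^{(k')}$ is governed by a factor proportional to $(k'-2)$ times a combination of the $c_{j}$'s — precisely as in one dimension, where the degree-$j$ conjugation can kill the coefficient at order $k+j$ unless $j=k$ (here the resonant ``order'' is measured in multiples of $|\alpha|$, and the distinguished non-removable level is $k'=2$). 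So for every $k'\ne2$ one can solve for suitable $c_{j}$ to annihilate all $a_{j}^{(k')}$, and the surviving level $k'=2$ contributes a single scalar $\mu$ after accounting for the constraint that the conjugation must respect the relation $A=-1/k$; the appearance of $\mu\alpha_{j}|\lambda_{j}|^{-2}$ in \eqref{eq:BZNF} reflects the natural normalisation of that second-order coefficient relative to $a_{j}$ and $\lambda_{j}$. Iterating over all weights and invoking Lemma~\ref{lem:FormalConjIffAnyOrderPolyConj} upgrades the order-by-order polynomial conjugacies to a genuine formal conjugacy to $\hat F$.

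The main obstacle I anticipate is bookkeeping the interaction between the first $m$ components and the last $d-m$: the conjugations $H$ that adjust $a_{j}^{(k')}$ in $F_{j}$, $j\le m$, will generically perturb the purely-resonant tail components $F_{j}$, $m<j\le d$, and one must check that this does not reintroduce non-resonant terms there and does not obstruct the recursion — i.e.\ that at each weight the ``resonant directions'' decouple cleanly enough that the homological equation in the first $m$ slots can be solved independently. This is where Proposition~\ref{prop:ConjPreservingPoiDul} does the heavy lifting, guaranteeing that staying inside the class of resonant-monomial conjugations keeps us inside the class of Poincar\'e--Dulac normal forms. A secondary technical point is verifying that the scalar $\mu$ is genuinely an invariant (well-defined independent of the choices made along the way), which I would argue exactly as the uniqueness of $b$ in Proposition~\ref{prop:1DparaFormalClass}: once $k$ and the normalisation $A=-1/k$ are fixed, the level-$2$ resonant coefficient is rigid under all remaining admissible conjugations. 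The passage from formal to the precise closed form \eqref{eq:BZNF} (with the exact factors $\alpha_{j}|\lambda_{j}|^{-2}$) is then just a matter of choosing the conjugation's free parameters to land on that representative rather than on any other member of the same formal class.
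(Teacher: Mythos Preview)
The paper does not give its own proof of this theorem: it states the result with attribution to Bracci--Zaitsev and only remarks, just before the statement, that the normal form is ``modelled after the one-dimensional parabolic case (Proposition~\ref{prop:1DparaFormalClass}).'' So there is nothing to compare against beyond that one-line hint, which is precisely the template your proposal follows.

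Your overall strategy is the right one and matches what the surrounding text suggests: pass to Poincar\'e--Dulac normal form so that only monomials $z_{j}z^{l\alpha}$ survive in components $j\le m$, then run the $1$-D parabolic elimination in the variable $u=z^{\alpha}$ using resonant conjugations $z_{j}\mapsto z_{j}(1+c_{j}z^{l\alpha})$, invoking Proposition~\ref{prop:ConjPreservingPoiDul} to stay inside the class. Two points to tighten. First, your indexing is muddled: you write ``$k'\ge2$, $k'\ne2$,'' which is just $k'\ge3$, and you claim the obstruction factor is ``proportional to $(k'-2)$.'' In fact, mirroring the computation in the proof of Proposition~\ref{prop:1DparaFormalClass}, a conjugation at level $l$ modifies the coefficient at level $k+l$ by a factor involving $(k-l)$ (through $A=-1/k$), so the non-removable level is $l=k$, i.e.\ the coefficient of $z^{2k\alpha}$ --- not level ``$2$'' unless you have silently rescaled. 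Second, your uniqueness argument for $\mu$ is only gestured at; you cannot literally use the residue formula~\eqref{eq:ParabConstb}, so you need to argue directly that once $k$ and the normalisation $A=-1/k$ are fixed, the remaining admissible resonant conjugations (those with $c_{j}$ at levels $l\ge1$) act trivially on the $2k$-level coefficient modulo the specific combination $\alpha_{j}$-weighted form in~\eqref{eq:BZNF}. That computation, and the emergence of the particular factor $\alpha_{j}|\lambda_{j}|^{-2}$, is where the actual work beyond the $1$-D analogy lies.
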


With $A=-\frac{1}{k}$ in (\ref{eq:ParabShadow}), the Leau-Fatou
Theorem~\ref{thm:LeauFatou} for $\Phi$ yields a parabolic flower
of $k$ attracting sectors
\[
S_{h}(R,\theta):=\pbrace[\big]{u\in\mathbb{C}\mid{\abs[\big]{u^{k}-\tfrac{1}{2R}}}<\tfrac{1}{2R},{\abs[\big]{\arg(u)-\tfrac{2\pi h}{k}}}<\theta}
\]
for $h=0,\ldots,k-1$ and suitable $R>0$ and $\theta\in(0,\nicefrac{\pi}{2k})$.
The sequence $\{u_{n}\}_{n}=\{\pi(F^{\circ n}(z))\}_{n}$ will inherit
the parabolic dynamics of $\Phi$, if $z$ can be controlled in terms
of $u=\pi(z)$, say in the set 
\[
W(\beta):=\{z\in\mathbb{C}^{d}\mid|z^{j}|<|\pi(z)|^{\beta}\text{ for }j\le d\}.
\]
for some $\beta\in(0,1/|\alpha|)$. In particular we need $z_{n}^{j}\to0$
for each $j\le d$. This can happen at first order, $|\lambda_{j}|<1$
or at second order:
\begin{defn}
Let $F\in\End(\mathbb{C}^{d},0)$ be non-degenerate one-resonant with
respect to $\lambda_{1},\ldots,\lambda_{m}$ of the form (\ref{eq:OneResPDNF})
with $A=\sum_{j=1}^{m}\alpha_{j}a_{j}=-\frac{1}{k}$. Then we call
$F$ \emph{parabolically attracting} with respect to $\lambda_{1},\ldots,\lambda_{m}$,
if $|\lambda_{j}|=1$ and $\Re(a_{j})<0$ for $j=1,\ldots,m$.
\end{defn}

In this case, since $\{u_{n}\}_{n}$ tends to $0$ along the real
direction $v=1$, the factor $(1+a_{j}u)$ in (\ref{eq:OneResPDNF})
will be contracting (compare Definition~\ref{def:DirectorsPara}
of directors in the parabolic case) and Bracci and Zaitsev \cite{BracciZaitsev2013Dynamicsofoneresonantbiholomorphisms}
showed the existence of attracting parabolic domains. With Raissy
\cite{BracciRaissyZaitsev2013Dynamicsofmultiresonantbiholomorphisms}
they also constructed Fatou coordinates on these domains:
\begin{thm}[Bracci, Raissy, Zaitsev \cite{BracciZaitsev2013Dynamicsofoneresonantbiholomorphisms},
\cite{BracciRaissyZaitsev2013Dynamicsofmultiresonantbiholomorphisms}]
\label{thm:BracciZaitsevDynamics}Let $F\in\End(\mathbb{C}^{d},0)$
be non-degenerate one-resonant with generator $\alpha$ of order $k$
and parabolically attracting with respect to $\lambda_{1},\ldots,\lambda_{m}$.
Let moreover $|\lambda_{j}|<1$ for $j>m$. Then there exist $\beta\in(0,1/|\alpha|)$,
$R>0$ and $\theta\in(0,\nicefrac{\pi}{2k})$ such that 
\[
B_{h}:=B_{h}(R,\theta,\beta):=\{z\in W(\beta)\mid\pi(z)\in S_{h}(R,\theta)\}
\]
for $h=0,\ldots,k-1$ are disjoint open $F$-invariant and attracting
to $0$ on their boundaries. Moreover, there exist holomorphic maps
$\psi_{h}:B_{h}\to\mathbb{C}$ such that $\psi_{h}\circ F=\psi_{h}+1$.

\end{thm}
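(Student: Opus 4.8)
The plan is to reduce the multidimensional statement to the one-dimensional Leau–Fatou picture via the non-linear projection $\pi(z)=z^{\alpha}$, exactly in the spirit of the construction preceding the theorem. First I would bring $F$ into the Poincaré–Dulac form (\ref{eq:OneResPDNF}), so that the projected sequence $\{u_{n}\}=\{\pi(F^{\circ n}(z))\}$ satisfies the parabolic recursion (\ref{eq:ParabShadow}) with parabolic shadow $\Phi(u)=u(1+Au^{k})+O(u^{k+2})$ and $A=-\frac1k$. Applying Theorem~\ref{thm:LeauFatou} to $\Phi$ yields, for suitable $R>0$ and $\theta\in(0,\pi/2k)$, the $k$ disjoint attracting sectors $S_{h}(R,\theta)$ with $u_{n}\sim v_{h}n^{-1/k}$ for $u\in S_{h}$. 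The core of the proof is then a \emph{trapping argument}: one shows that if $z\in W(\beta)$ with $\pi(z)\in S_{h}(R,\theta)$ (for $\beta\in(0,1/|\alpha|)$ chosen small enough and $R$ large enough), then $F(z)$ again lies in $W(\beta)$ with $\pi(F(z))\in S_{h}$. Granting this, $B_{h}=B_{h}(R,\theta,\beta)$ is $F$-invariant by construction, the $B_{h}$ are pairwise disjoint because the $S_{h}$ are, and every boundary orbit converges to $0$ because $|u_{n}|\to0$ forces $|z_{n}^{j}|<|u_{n}|^{\beta}\to0$.

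For the trapping step I would estimate each component separately. For the resonant indices $j\le m$: since $\lambda_{j}$ is parabolically attracting, $|\lambda_{j}|=1$ and $\Re a_{j}<0$, and because $u_{n}$ tends to $0$ along the positive real axis inside $S_{h}$, the factor $|1+a_{j}z^{k\alpha}|=|1+a_{j}u_{n}(1+o(1))|<1-c|u_{n}|$ for some $c>0$; hence $|z_{n+1}^{j}|\le(1-c|u_{n}|)|z_{n}^{j}|+O(\|z_{n}\|^{N})$ with $N$ large, and one compares this against the known decay $|u_{n+1}|^{\beta}=|u_{n}|^{\beta}(1-\beta|u_{n}|/k+o(|u_{n}|))$; choosing $\beta<1/k\cdot(\text{something})$ — more precisely $\beta$ small enough that $\beta/k<c$ — makes the bound $|z_{n}^{j}|<|u_{n}|^{\beta}$ self-improving. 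For the remaining indices $j>m$: here $|\lambda_{j}|<1$, so $|z_{n+1}^{j}|\le(|\lambda_{j}|+o(1))|z_{n}^{j}|$, which decays geometrically and thus beats the merely polynomial decay of $|u_{n}|^{\beta}$ with room to spare, provided $R$ is large (so the initial point is already deep enough). One must also check the error terms $O(\|z\|^{(k+1)|\alpha|+1})$ in (\ref{eq:OneResPDNF}) and the tail in (\ref{eq:ParabShadow}): on $W(\beta)$ one has $\|z\|\approx|\pi(z)|^{\beta}$, so these errors are $O(|u|^{\beta((k+1)|\alpha|+1)})$, which is $o(|u|^{k+1})$ once $\beta((k+1)|\alpha|+1)>k+1$, i.e. $\beta>(k+1)/((k+1)|\alpha|+1)$; reconciling this lower bound with the upper bounds from the component estimates is where the admissible range of $\beta$ is pinned down, and I expect some care is needed to verify this range is non-empty — this is the main technical obstacle.

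Finally, for the Fatou coordinates $\psi_{h}\colon B_{h}\to\mathbb{C}$ with $\psi_{h}\circ F=\psi_{h}+1$, I would transport the one-dimensional Fatou coordinate $\varphi$ for the shadow $\Phi$ (Theorem~\ref{thm:FatouCoord1D}, so $\varphi\circ\Phi=\varphi+1$ and $\varphi(u)\sim -\tfrac1k u^{-k}$) along the projection, setting $\psi_{h}^{(0)}:=\varphi\circ\pi$ on $B_{h}$, which satisfies $\psi_{h}^{(0)}\circ F=\psi_{h}^{(0)}+1+O(\|z_{n}\|^{N})$ by (\ref{eq:ParabShadow}); the error is summable along orbits by the established decay rates, so the limit
\[
\psi_{h}(z):=\lim_{n\to\infty}\bigl(\psi_{h}^{(0)}(F^{\circ n}(z))-n\bigr)
\]
converges uniformly on compacts in $B_{h}$ to a holomorphic function with the exact functional equation $\psi_{h}\circ F=\psi_{h}+1$. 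This telescoping/abel-summation construction is the same technique used in Koenigs' theorem (Theorem~\ref{thm:Koenigs}) and in the one-dimensional parabolic linearisation, and it goes through here without new difficulties once the orbit estimates from the trapping step are in hand. The uniqueness claims and the extension through $\bigcup_n F^{\circ(-n)}(B_h)$, if needed, follow by the same pull-back argument as in Theorem~\ref{thm:SemiAttrUeda}.
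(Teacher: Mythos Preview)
The paper itself does not prove this theorem: it is quoted from \cite{BracciZaitsev2013Dynamicsofoneresonantbiholomorphisms,BracciRaissyZaitsev2013Dynamicsofmultiresonantbiholomorphisms}, and the surrounding text only sets up the parabolic shadow~(\ref{eq:ParabShadow}), the sectors $S_h$, and the domain $W(\beta)$. Your proposal is in fact the argument of those sources---project via $\pi(z)=z^\alpha$, invoke Leau--Fatou for the shadow $\Phi$, run a trapping argument on $W(\beta)\cap\pi^{-1}(S_h)$, then telescope to obtain $\psi_h$---so there is no alternative route in the paper to contrast with.

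Two remarks are nevertheless in order. First, a slip: the contracting factor is $1+a_j z^{k\alpha}=1+a_j u^k$, not $1+a_j u$, and correspondingly $|u_{n+1}|^\beta=|u_n|^\beta\bigl(1-\tfrac{\beta}{k}|u_n|^k+o(|u_n|^k)\bigr)$; all of your displayed inequalities should carry $|u_n|^k$ rather than $|u_n|$, though the final comparison $\beta/k<c$ survives unchanged. Second---and this is a genuine gap---the Fatou-coordinate step does \emph{not} go through with the form~(\ref{eq:OneResPDNF}) as stated. Since $\varphi(u)\sim u^{-k}$, the per-step defect in the Abel equation is of order $|u_n|^{-k-1}\cdot\bigl|\pi(F(z_n))-\Phi(u_n)\bigr|\sim|u_n|^{\beta N-k-1}$, and with $|u_n|\sim n^{-1/k}$ summability requires $\beta N>2k+1$. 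Combined with $\beta<1/|\alpha|$ this forces $N>(2k+1)|\alpha|$, whereas the tail in~(\ref{eq:OneResPDNF}) has only $N=(k+1)|\alpha|+1$; for $k\ge1$ your telescoping sum diverges. The remedy is precisely what one-resonance allows: every resonant monomial in $F_j$ for $j\le m$ is of the form $z_j(z^\alpha)^l$ and hence contributes to $\Phi$ rather than to the error, so a further \emph{finite-order} (polynomial, hence holomorphic) Poincar\'e--Dulac conjugation pushes the non-resonant tail in~(\ref{eq:OneResPDNF}) and~(\ref{eq:ParabShadow}) to any prescribed order $N$. You must invoke this explicitly---choosing $N$ depending on $k$ and $|\alpha|$---before forming the limit $\psi_h$; the phrase ``$N$ large'' in your trapping paragraph is the right instinct, but it has to be carried through, and justified, to the Fatou-coordinate construction as well.
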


More precisely, we observed in \cite{Reppekus2019PeriodiccyclesofattractingFatoucomponentsoftypemathbbCtimesmathbbCd1inautomorphismsofmathbbCd}
that each $B_{h}$ is connected, if and only if $\gcd(\alpha)=1$.
Generally, each $B_{h}$ consists of $\ell=\gcd(\alpha)$ components
that are cyclically permuted by $F$ and these components correspond
to the invariant attracting basins of $F^{\circ\ell}$, which is one-resonant
of generator $\alpha_{0}:=\alpha/\ell$ ($\gcd(\alpha_{0})=1$), weighted
order $k\cdot\ell$ and parabolically attracting. This is the analogue
of the permuting action on the petals of the Leau-Fatou flower when
the multiplier is a root of unity in dimension one as described in
Corollary~\ref{cor:ParabFlowerPermutation}.  
\begin{rem}[Shape and arrangement]
\label{rem:BZbasinShape}The shape and topology of the local basins
becomes apparent in polar coordinates. Assume $\gcd(\alpha)=1$ 
and $0<R<1$, so we can ignore the bounds in terms of $R$. For $z\in\mathbb{C}^{d}$
take componentwise argument $\arg(z)\in\mathbb{T}^{d}=(S^{1})^{d}$
and modulus $|z|\in\mathbb{R}_{+}^{d}$, where we identify $S^{1}=\mathbb{R}/(2\pi\mathbb{Z})$.
Then the modulus component of all $B_{h}$ is 
\[
\{x\in\mathbb{R}_{+}^{d}\mid|x^{j}|<|x^{\alpha}|^{\beta}\},
\]
which is simply connected and contains the $x^{j}$-axis precisely
when $\alpha_{j}=0$. The argument component of $B_{h}$ is the $\theta$-neighbourhood
of the hypersurface 
\[
\alpha_{1}\arg(z^{1})+\cdots+\alpha_{m}\arg(z^{m})=2\pi h/k.
\]
 that winds around the torus $\mathbb{T}^{d}$ according to the non-zero
entries of $\alpha$. The basin $B_{h}$ hence winds around the $z_{j}$-axes
for $\alpha_{j}\neq0$ and contains the $z_{j}$-axes (near $0$)
for $\alpha_{j}=0$. In particular, $B_{h}$ has homotopy type $(S_{1})^{\#\{\alpha_{j}\neq0\}-1}$.
Figure~\ref{fig:Argument-components-attracting} shows this decomposition
for $d=2$, $\alpha=(1,1)$, $k=2$, where the argument component
takes the shape of a ``ribbon'' winding around the torus $\mathbb{T}^{2}$
$\alpha_{1}=1$-time in one component while winding $\alpha_{2}=1$-time
in the other.

For general $R>0$, the basins are truncated, but remain the same
near the origin and preserve their homotopy type.
\end{rem}

\begin{figure}[h]
$\begin{array}{c}\includegraphics[width=0.4\columnwidth]{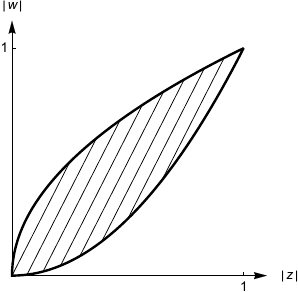}\end{array}\bigtimes\quad{}\begin{array}{c}\includegraphics[bb=0bp 0bp 263bp 191bp,width=0.5\columnwidth]{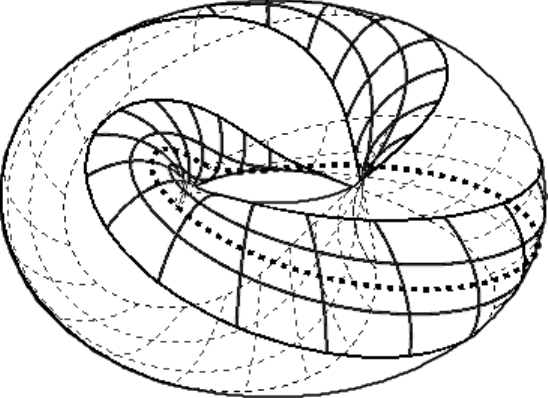}\end{array}$

\caption{\label{fig:Argument-components-attracting}Decomposition in modulus
and argument components for two local basins $B_{0}$ and $B_{1}$
with central curve $\arg z+\arg w\equiv0$ in $B_{0}$}
\end{figure}

\lyxaddress{}

If $F$ is in (or conjugate to) Poincaré-Dulac normal form, then the
tail in (\ref{eq:ParabShadow}) depends only on $u$, and we get parabolic
dynamics for $u$ in a full neighbourhood of the origin $0$ and the
dynamics in $z_{1},\ldots,z_{r}$ are described by multiplication
with a function $1+O(u)$ in $u$ (see \cite{BracciZaitsev2013Dynamicsofoneresonantbiholomorphisms}).
In particular, on the $F$-invariant set $\{u=0\}$, $F$ is a rotation
in the first $r$ components. If $F$ is moreover fully one-resonant
and parabolically attracting, then so is $F^{-1}$ and the dynamics
on a full neighbourhood are described as follows:
\begin{thm}[Bracci, Raissy, Zaitsev \cite{BracciRaissyZaitsev2013Dynamicsofmultiresonantbiholomorphisms}]
Let $F\in\Aut(\mathbb{C}^{d},0)$ be fully one-resonant with generator
$\alpha\in\mathbb{N}^{d}$ and holomorphically conjugate to a Poincaré-Dulac
normal form. If $F$ is neutral ($|\lambda_{j}|=1$ for $j\le d$),
non-degenerate, and parabolically attracting, then there exists a
neighbourhood $U\subseteq\mathbb{C}^{d}$ of the origin that is a
disjoint union of 
\begin{enumerate}
\item for each $j\in\{1,\ldots,d\}$ with $\alpha_{j}\neq0$, a Siegel hypersurface\index{Siegel hypersurface}
$M_{j}\subseteq U$ tangent to $\{z_{j}=0\}$ through $0$, i.e.\ a
hypersurface such that $F|_{M_{j}}:M_{j}\to M_{j}$ acts as (conjugate
of) a rotation on $M_{j}$.
\item for each $h=0,\ldots,k-1$ the attracting parabolic basin $\Omega_{h}:=\bigcup_{n\in\mathbb{N}}F|_{U}^{\circ(-n)}(B_{h})$
in $U$ corresponding to $B_{h}$ from Theorem~\ref{thm:BracciZaitsevDynamics}.
\item the analogous $k$ local basins in $U$ for $F^{-1}$ (repelling basins
for $F$).
\end{enumerate}
\end{thm}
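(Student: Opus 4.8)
The statement is essentially a several-variable version of the Leau--Fatou picture, patched together from the pieces already developed in this section. The skeleton of the argument runs: (1) use the non-linear projection $\pi(z)=z^{\alpha}$ to reduce the $u$-dynamics to the parabolic shadow $\Phi\in\End(\mathbb{C},0)$, which by the Leau--Fatou Theorem~\ref{thm:LeauFatou} has a flower of $k$ attracting petals and $k$ repelling petals whose union (with $0$) is a punctured neighbourhood of $0$ in the $u$-line; (2) lift the attracting petals to the attracting basins $B_h$ of Theorem~\ref{thm:BracciZaitsevDynamics} and take their saturations $\Omega_h:=\bigcup_n F|_U^{\circ(-n)}(B_h)$; do the same with $F^{-1}$ (which is again fully one-resonant, non-degenerate and parabolically attracting, since in the neutral case $|\lambda_j|=1$ so the sign of $\Re a_j$ simply flips and $\Phi^{-1}$ is the repelling shadow) to get $k$ repelling basins; (3) handle the ``spine'' $\{u=0\}$: on the $F$-invariant analytic set $\pi^{-1}(0)$ the Poincaré--Dulac normal form shows $F$ acts in the first $r=d$ components as multiplication by $1+O(u)=1$, i.e.\ purely as the linear rotation, so $\pi^{-1}(0)$ decomposes into the $d$ coordinate hyperplanes' worth of invariant pieces; (4) show that these three families exhaust a neighbourhood $U$ and are pairwise disjoint.

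For step (3), I would single out, for each $j$ with $\alpha_j\ne0$, the set $\{z_j=0\}\cap\pi^{-1}(0)$ and thicken it: because $F$ is conjugate to a Poincaré--Dulac normal form and $|\lambda_j|=1$, the Siegel--Sternberg / reduced-Brjuno machinery is not needed — instead one observes directly that $\{z_j=0\}$ is $F$-invariant (a resonant monomial in $F_j$ is divisible by $z_j$, by the one-resonance relation $\lambda_j=\lambda^{k\alpha+e_j}$ which has $(e_j)$-component $\ge 1$), so $F$ restricts to $\{z_j=0\}$, which is again a one-resonant neutral germ in one fewer variable. The hypersurface $M_j$ should be taken as the component of $\Sigma_F(U)\cap\Sigma_{F^{-1}}(U)$ lying along $\{z_j=0\}$: points whose full (two-sided) orbit stays in $U$ and which are not in any parabolic basin for $F$ or $F^{-1}$. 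That $F|_{M_j}$ is conjugate to a rotation follows from the one-variable principle Proposition~\ref{prop:linIffStable} applied inductively on the restriction, or more cleanly from the averaging trick \eqref{eq:NeutralLin}: the iterates are bounded on $M_j$, so Montel gives a convergent subsequence of $\varphi_k=\frac1k\sum\Lambda^{-j}F^{\circ j}$ yielding a linearising map.

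For step (4) — which is where I expect the real work to be — the argument mirrors how Part~\ref{enu:LeauFatou1petalsNbh} of Theorem~\ref{thm:LeauFatou} is deduced from Proposition~\ref{prop:Petals}: one must show that every point $z$ near $0$ whose orbit does \emph{not} escape a fixed small $U$ either has $\pi(z_n)$ entering an attracting petal of $\Phi$ (so $z\in\Omega_h$ for some $h$), or $\pi(z_{-n})$ entering an attracting petal of the repelling shadow (so $z$ is in a repelling basin), or else $\pi(z_n)\to0$ so slowly in \emph{both} time directions that $z$ must lie on the spine $\{u=0\}$, forcing $z\in M_j$ for the appropriate $j$. The subtlety is the ``corner'' region of the $u$-flower where an attracting petal of $\Phi$ overlaps a repelling one: one has to check that the control set $W(\beta)=\{|z^j|<|\pi(z)|^\beta\}$ used in Theorem~\ref{thm:BracciZaitsevDynamics} is wide enough that the preimages $F^{\circ(-n)}(B_h)$, together with the corresponding repelling preimages, genuinely cover a full (not just sectorial) neighbourhood of $\{u=0\}$ minus the spine — exactly as in Remark~\ref{rem:tangencyOfPetalBoundary} where $\theta\nearrow\pi$ makes the one-variable petals fill a punctured disk. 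Away from $\{u=0\}$ this is the one-variable statement pulled back through $\pi$; near $\{u=0\}$ one uses that in normal form the $u$-dynamics decouple completely and the transverse dynamics are a contraction $1+O(u)$ with $u\to0$, so no orbit can linger transversally without $u$ also converging. Disjointness of the $\Omega_h$ from each other and from the repelling basins is immediate from disjointness of the petals of $\Phi$ (and of $\Phi$ vs.\ $\Phi^{-1}$) under $\pi$, and disjointness from the $M_j$ because points of $M_j$ have $u\equiv0$ whereas points of the basins have $u_n\ne0$ with $u_n\to0$ non-trivially.

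The main obstacle, then, is the covering claim in step (4): proving that the local basins $B_h$, their saturations, the analogous repelling objects, and the Siegel hypersurfaces leave no gaps. I would attack it by transporting the one-variable flower-fills-a-neighbourhood statement through the (non-injective) projection $\pi$, carefully tracking the fibre $\pi^{-1}(u)$ for $u$ in an attracting or repelling petal and for $u=0$ separately, and invoking the normal-form decoupling on $\pi^{-1}(0)$ to pin the leftover points onto the hyperplanes $\{z_j=0\}$; the rest is bookkeeping already carried out in the cited works of Bracci--Raissy--Zaitsev and in \cite{Reppekus2019PeriodiccyclesofattractingFatoucomponentsoftypemathbbCtimesmathbbCd1inautomorphismsofmathbbCd}.
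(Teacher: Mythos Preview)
The paper does not give a proof of this theorem --- it is stated with attribution to \cite{BracciRaissyZaitsev2013Dynamicsofmultiresonantbiholomorphisms}. However, the paragraph immediately preceding the statement contains the essential observation that drives the argument, and your plan largely misses it: when $F$ is in Poincar\'e--Dulac normal form, the error term $O(\norm z^{(k+2)|\alpha|})$ in \eqref{eq:ParabShadow} is itself a function of $u=z^{\alpha}$ alone, so that $\pi\circ F=\Phi\circ\pi$ holds \emph{exactly}, not just to leading order. Concretely, each $F_{j}(z)=\lambda_{j}z_{j}(1+g_{j}(u))$ with $g_{j}$ holomorphic in $u$, and $\lambda^{\alpha}=1$ gives $(F(z))^{\alpha}=u\prod_{j}(1+g_{j}(u))^{\alpha_{j}}=\Phi(u)$.

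This collapses most of the difficulties you flag. Your step~(3) is far too elaborate: in normal form the hyperplane $\{z_{j}=0\}$ (for $\alpha_{j}\neq0$) is \emph{exactly} invariant and contained in $\{u=0\}$, and there $F$ is \emph{literally} the diagonal rotation $z\mapsto\Lambda z$ since $g_{l}(0)=0$ for all $l$. There is no need for a dynamical definition of $M_{j}$, nor for averaging, Montel, or induction --- $M_{j}$ is just $\{z_{j}=0\}\cap U$ in these coordinates (tangent to $\{z_{j}=0\}$ in the original ones via the conjugacy), and the restriction is already linear. Your proposed identification of $M_{j}$ as a component of $\Sigma_{F}(U)\cap\Sigma_{F^{-1}}(U)$ would require separately proving that this set is a manifold, which is unnecessary.

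For step~(4), the exact semiconjugacy reduces the $u$-picture to precisely the one-variable Leau--Fatou flower, so the ``corner'' worry about overlapping petals is just the one-dimensional statement, not a new obstacle. The genuine residual work is showing that whenever $u_{n}=\pi(z_{n})$ enters an attracting sector $S_{h}$, the full point $z_{n}$ eventually enters $B_{h}$, i.e.\ that $|z_{n}^{j}|<|u_{n}|^{\beta}$ eventually. This is an estimate, not a topological covering argument: from $|z_{n}^{j}|=|z_{0}^{j}|\prod_{m<n}|1+g_{j}(u_{m})|$ and $u_{m}^{k}\sim 1/m$ along the petal, parabolic attraction $\Re a_{j}<0$ gives $|z_{n}^{j}|\lesssim n^{\Re a_{j}}$, while $|u_{n}|^{\beta}\sim n^{-\beta/k}$, so the inclusion holds once $\beta<k\min_{j}(-\Re a_{j})$. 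This is the computation done in the cited references; your plan gestures at it (``transverse dynamics are a contraction $1+O(u)$'') but does not isolate it as the actual content of the covering claim.
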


In particular, this shows that in this case the basins $\Omega_{h}$
are (unions of) Fatou components. For general one-resonant germs,
the dynamics outside of the parabolic domains from Theorem~\ref{thm:BracciZaitsevDynamics}
have not been understood. Without Siegel hypersurfaces we can expect
dynamics at least as complicated as one-dimensional Hedgehog dynamics
(see Theroem~\ref{thm:PerezMarcoCremerHedgehog}). It is not known
in general weather the basins $\Omega_{h}$ are Fatou components.

Bracci, Raissy and Stensønes \cite{BracciRaissyStensonesAutomorphismsofmathbbCkwithaninvariantnonrecurrentattractingFatoucomponentbiholomorphictomathbbCtimesmathbbCk1}
study in detail the dynamics of one-resonant germs with generator
$(1,\ldots,1)$ of the form (\ref{eq:ReppGerms}) with $k=1$. Given
the existence of Siegel hypersurfaces tangent to each axis, they apply
a novel Kobayashi distance argument to show that the basins $\Omega_{h}$
are in fact Fatou components. Under the partial Brjuno condition on
$\{\lambda_{1},\ldots,\lambda_{d}\}\backslash\{\lambda_{j}\}$ for
each $j=1,\ldots,d$, they use Pöschel's theorem~\ref{thm:PoeschelBrjuno}
to ensure the existence of these Siegel hypersurfaces. In \cite{Reppekus2019PeriodiccyclesofattractingFatoucomponentsoftypemathbbCtimesmathbbCd1inautomorphismsofmathbbCd},
we extend their considerations to the case $k>1$ and observe that
under the same partial Brjuno condition, for any $\ell\in\mathbb{N}$
Theorem~\ref{thm:IteratedEliminationSimple} yields coordinates such
that the tail on the parabolic shadow in (\ref{eq:ParabShadow}) is
of order $O(u^{\ell})$ on a full neighbourhood of the origin. This
allows us to classify the stable orbits on some neighbourhood of the
origin:
\begin{thm}[Reppekus \cite{Reppekus2019PeriodiccyclesofattractingFatoucomponentsoftypemathbbCtimesmathbbCd1inautomorphismsofmathbbCd}]
\label{thm:FatouCCstarMulti}Let $F\in\Aut(\mathbb{C}^{d},0)$ be
of the form 
\begin{equation}
F(z^{1},\ldots,z^{d})=(\lambda_{1}z^{1},\ldots,\lambda_{d}z^{d})\paren[\Big]{1-\frac{(z^{1}\cdots z^{d})^{k}}{kd}}+O(\norm z^{l}),\label{eq:ReppGerms}
\end{equation}
with $|\lambda_{j}|=1$ for all $1\le j\le d$, one-resonant with
generator $(1,\ldots,1)$ and $l>2kd+1$. If each subset $\{\lambda_{1},\ldots,\lambda_{d}\}\backslash\{\lambda_{j}\}$,
$j=1,\ldots,d$ satisfies the Brjuno condition \ref{eq:BrjunoCond},
then there exists a neighbourhood $U\subseteq\mathbb{C}^{d}$ of the
origin such that all stable orbits in $U$ are contained in one of:
\begin{enumerate}
\item a Siegel hypersurface\index{Siegel hypersurface} $M_{j}\subseteq U$
tangent to $\{z_{j}=0\}$ through $0$ for $j\in\{1,\ldots,d\}$.
\item the attracting basins $\bigcup_{n\in\mathbb{N}}F|_{U}^{\circ(-n)}(B_{h})$,
in $U$ corresponding to $B_{h}$ from Theorem~\ref{thm:BracciZaitsevDynamics}
for $h\in\{0,\ldots,k-1\}$.
\end{enumerate}
\end{thm}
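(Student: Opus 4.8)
The plan is to combine two ingredients: first, the existence of $d$ Siegel hypersurfaces $M_1,\ldots,M_d$ guaranteed by the Brjuno hypotheses on the subsets $\{\lambda_1,\ldots,\lambda_d\}\setminus\{\lambda_j\}$; second, a sharpened form of the parabolic-shadow analysis of Bracci--Raissy--Zaitsev that holds on a \emph{full} neighbourhood of the origin rather than only on the winding sets $W(\beta)$. The first ingredient is essentially P\"oschel's theorem~\ref{thm:PoeschelBrjuno}: for each $j$, the set $S_j:=\{\lambda_i : i\neq j\}$ satisfies the partial Brjuno condition (\ref{eq:PartialBrjunoPoeschel}), so there is an $F$-invariant holomorphic hypersurface $M_j$ through $0$, tangent to $\{z_j=0\}$, on which $F$ is holomorphically linearised, hence acts as (a conjugate of) an irrational rotation; this is the Siegel hypersurface. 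Here one must check that the partial Brjuno condition for $S_j$ is indeed implied by the stated Brjuno condition on $\{\lambda_1,\ldots,\lambda_d\}\setminus\{\lambda_j\}$, which follows from Lemma~\ref{lem:BrjunoAndPart}(2) applied with $A=\{\alpha\in\mathbb N^d : \alpha_j=0\}$.

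Next I would normalise the germ: since $l>2kd+1$ and the generator is $(1,\ldots,1)$, the monomials $z^\alpha$ appearing in the tail $O(\|z\|^l)$ with $\alpha_j=0$ for some fixed $j$ are non-resonant (as $\lambda^\alpha=\lambda_i$ would force $\alpha\geq k(1,\ldots,1)+e_i$, contradicting $\alpha_j=0$). More precisely I would invoke Theorem~\ref{thm:IteratedEliminationSimple}: taking $A_0=\{|\alpha|\le 2kd+1\}$ together with the resonant indices, and $A$ a suitable Brjuno set partitioned by the number of vanishing coordinates of $\alpha$, one eliminates, on a full neighbourhood of $0$, all tail terms in the projected equation except those divisible by $(z^1\cdots z^d)^k$. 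The point is that the resulting coordinates make the parabolic shadow $u_1=\Phi(u)+O(u^\ell)$ hold with the error depending only on $u=z^\alpha$ and of arbitrarily high order $\ell$, not merely on the thin set $W(\beta)$. This upgrades Theorem~\ref{thm:BracciZaitsevDynamics} so that the basins $B_h(R,\theta,\beta)$ capture \emph{every} stable orbit whose $u$-coordinate is eventually small and avoids the bad directions.

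With this normal form in hand, the core of the argument is a dichotomy for a stable orbit $\{z_n\}_n\subseteq U$. Set $u_n=\pi(z_n)=z_n^\alpha$. Since each $|\lambda_j|=1$, the modulus $|u_n|$ evolves, to leading order, by the real one-dimensional parabolic map $t\mapsto t(1-t^k/k)+O(t^{k+1})$; such a map has the property that $|u_n|$ is eventually monotone, so either $u_n\to 0$ (and then, by the Leau--Fatou picture for $\Phi$, the argument $\arg(u_n^k)$ must tend to one of the $k$ attracting values, placing $z_n$ in the preimage $\bigcup_n F|_U^{-n}(B_h)$ of some basin, provided $z_n$ stays in $W(\beta)$, which I must verify using the control on the other coordinates coming from contraction along the petal), or else $u_n$ stays bounded away from $0$, which for a stable orbit in a small enough $U$ forces $u_n\equiv 0$, i.e.\ $z_n^j=0$ for some $j$ eventually, hence (by complete invariance of the coordinate hyperplanes up to the normalisation, and uniqueness of the invariant hypersurface tangent to $\{z_j=0\}$) the orbit lies on $M_j$. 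The main obstacle I anticipate is precisely the bookkeeping in the first branch: showing that an orbit with $u_n\to 0$ actually remains in the winding set $W(\beta)$ so that Theorem~\ref{thm:BracciZaitsevDynamics} applies, rather than leaking out of $U$ or drifting toward a coordinate hyperplane without converging. This requires the usual Fatou-coordinate estimates — comparing $|z_n^j|$ with $|u_n|^\beta$ along the petal — together with the fact that off the hyperplanes the dynamics transverse to the petal is a genuine contraction governed by the $\Re(a_j)<0$ condition; closing this estimate uniformly on a neighbourhood is where the high-order normalisation from Theorem~\ref{thm:IteratedEliminationSimple} is essential.
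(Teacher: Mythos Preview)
Your strategy matches the paper's outline: P\"oschel's theorem~\ref{thm:PoeschelBrjuno} for the Siegel hypersurfaces, Theorem~\ref{thm:IteratedEliminationSimple} (with $A$ assembled from the sets $\{\alpha:\alpha_j=0\}$, each a Brjuno set by Lemma~\ref{lem:BrjunoAndPart} under the hypothesis) to make the parabolic-shadow error $O(u^\ell)$ on a full neighbourhood, then a Leau--Fatou dichotomy on $u_n=\pi(z_n)$. You also correctly isolate the main analytic work: showing that an orbit with $u_n\to 0$ actually stays in $W(\beta)$.

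The dichotomy paragraph, however, contains a genuine error. The shadow $\Phi(u)=u(1-u^k/k)+\cdots$ is a \emph{complex} map; the modulus $|u_n|$ does not follow a closed real recursion $t\mapsto t(1-t^k/k)$ and is not eventually monotone in general, since $|u_{n+1}|\approx|u_n|(1-\Re(u_n^k)/k)$ and the sign of $\Re(u_n^k)$ flips between attracting and repelling sectors. The correct input is Theorem~\ref{thm:LeauFatou}\,(\ref{enu:LeauFatou3StableSet}): in a small punctured disc, every stable orbit of a one-dimensional parabolic germ converges to $0$ along an attracting direction, and every other orbit escapes. Applied to the perturbed sequence $\{u_n\}$ --- the error $u_{n+1}-\Phi(u_n)=O(u_n^\ell)$ with $\ell>k+1$ is harmless for the petal estimates of Proposition~\ref{prop:Petals} --- this gives the clean trichotomy: either $u_0=0$ (so $z_0$ lies on a coordinate hyperplane, which the normalisation makes coincide with some $M_j$), or $u_n\to 0$ along an attracting direction (so $z_n$ is eventually in some $B_h$), or $\{u_n\}$ escapes the disc (so $\{z_n\}$ escapes $U$, contradicting stability). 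Your sentence ``$u_n$ stays bounded away from $0$ \ldots\ forces $u_n\equiv 0$'' is self-contradictory as written and conflates the last two branches.
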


Moreover, for each $h=0,\ldots,k-1$, there exists a holomorphic map
$\phi:\Omega_{h}\to\mathbb{C}\times(\mathbb{C}^{*})^{d-1}$, injective
on $B_{h}$ such that $\phi\circ F=\phi+e_{1}$ and if $F\in\Aut(\mathbb{C}^{d})$,
then $\phi$ is biholomorphic.

Another interesting observation in \cite{Reppekus2019PeriodiccyclesofattractingFatoucomponentsoftypemathbbCtimesmathbbCd1inautomorphismsofmathbbCd}
is, that in this case, the convergence to $0$ of orbits in $B_{h}$
is never tangent to any one complex direction, since the arguments
accumulate on the whole central hyperplane from \ref{rem:BZbasinShape}.
In fact orbits accumulate at each complex direction outside the coordinate
hyperplanes. This is a consequence of the identical contracting factor
$(1-u^{k}/kd)$ causing the same shrinking rate in each component
of (\ref{eq:ReppGerms}). For varying contracting factors, one can
expect accumulation on proper complex subspaces or even a single complex
direction.

\subsection{Multi-Resonance}

In \cite{BracciRaissyZaitsev2013Dynamicsofmultiresonantbiholomorphisms},
Bracci, Raissy, and Zaitsev generalise this idea to the multi-resonant
case, where instead of a single generator a finite number of generators
induce a projection to a multi-dimensional parabolic shadow. 
\begin{defn}
For $m\in\mathbb{N}$, a germ $F\in\End(\mathbb{C}^{d},0)$ with multipliers
$\lambda_{1},\ldots,\lambda_{d}$ is (partially) \emph{$m$-resonant}
or \emph{multi-resonant} with respect to the first $r$ multipliers
$\lambda_{1},\ldots,\lambda_{r}$, where $1\le r\le d$, if there
exist $m$ $\mathbb{Q}$-linearly independent multi-indices $\alpha^{1},\ldots,\alpha^{m}\in\mathbb{N}^{r}\times\{0\}^{d-r}$,
such that the resonances for $F$ in the first $r$ components are
precisely of the form 
\[
\lambda_{j}=\lambda^{k_{1}\alpha^{1}+\cdots+k_{r}\alpha^{r}+e_{j}},\quad k_{1},\ldots,k_{r}\in\mathbb{N},1\le j\le r.
\]
\end{defn}

\begin{rem}
If $F\in\End(\mathbb{C}^{d},0)$ is multi-resonant with respect to
$\lambda_{1},\ldots,\lambda_{r}$, then the multipliers $\lambda_{1},\ldots,\lambda_{r}$
again have multiplicity $1$.
\end{rem}

If $F\in\Pow(\mathbb{C}^{d},0)$ is \emph{$m$-resonant} with respect
to the first $r$ multipliers $\lambda_{1},\ldots,\lambda_{r}$, $1\le r\le d$,
with generators $\alpha^{1},\ldots,\alpha^{m}\in\mathbb{N}^{r}\times\{0\}^{d-r}$,
then its Poincaré-Dulac normal forms (Theorem~\ref{thm:PoincareDulac})
take the form
\begin{equation}
G_{j}(z)=\lambda_{j}z_{j}(1+\sum_{k=k_{0}}^{\infty}g_{kj}(z^{\alpha^{1}},\ldots,z^{\alpha^{m}})),\quad1\le j\le r,\label{eq:PDNFmultires}
\end{equation}
with $g_{kj}$ homogeneous polynomials of order $k$ and $g_{k_{0}j}\not\equiv0$.
We call $k_{0}$ the \emph{weighted order} of $F$. Once again, the
dynamics of the normal form via the the projection $\pi(z)=(z^{\alpha^{1}},\ldots,z^{\alpha^{m}})$
depend on those of the \emph{parabolic shadow}\index{parabolic shadow}
$\Phi\in\End(\mathbb{C}^{m},0)$ such that $\pi\circ G=\Phi\circ\pi$
and $\Phi$ has the form:
\[
\Phi(u)=u+H_{k_{0}+1}(u)+O(\norm u^{k_{0}+2}).
\]
If $\Phi$ has attracting parabolic domains in \noun{$\mathbb{C}^{m}$}
and the behaviour of $u_{n}$ induces attraction through $g_{k_{0}j}$
in (\ref{eq:PDNFmultires}), we can again find attracting parabolic
domains for the original germ $F$ within their $\pi$-preimage. Bracci,
Raissy, and Zaitsev \cite{BracciRaissyZaitsev2013Dynamicsofmultiresonantbiholomorphisms}
apply this to Hakim's parabolic domains from Theorem~\ref{thm:Hakim2}:
\begin{thm}[Bracci, Raissy, Zaitsev \cite{BracciRaissyZaitsev2013Dynamicsofmultiresonantbiholomorphisms}]
\label{thm:BRZmultiRes}Let $F\in\Pow(\mathbb{C}^{d},0)$ be \emph{$m$-resonant}
with respect to the first $r$ multipliers $\lambda_{1},\ldots,\lambda_{r}$,
$|\lambda_{j}|=1$ for $j\le r$ and $|\lambda_{j}|<1$ for $r<j\le d$.
If a parabolic shadow $\Phi$ of $F$ has an attracting non-degenerate
characteristic direction $[v]\in\mathbb{P}^{m-1}$ in the sense of
Definition~\ref{def:DirectorsPara}, $H_{k_{0}+1}(v)=-\frac{1}{k_{0}}v$
and $\Re(g_{k_{0}j}(v))<0$ for $1\le j\le r$ in (\ref{eq:PDNFmultires}).
Then there exist $k_{0}$ open sets $B_{1},\ldots,B_{k_{0}}\subseteq\mathbb{C}^{d}$,
disjoint, open, $F$-invariant, and attracting to $0$ on their boundaries,
and holomorphic maps $\psi_{j}:B_{j}\to\mathbb{C}$ such that $\psi_{j}\circ F=\psi_{j}+1$
for $1\le j\le k_{0}$.
\end{thm}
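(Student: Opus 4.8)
The plan is to reduce the several–variable statement to one–variable parabolic theory through the resonant projection $\pi(z)=(z^{\alpha^{1}},\ldots,z^{\alpha^{m}})$ onto the parabolic shadow $\Phi\in\End(\mathbb{C}^{m},0)$, apply Hakim's theorem to $\Phi$, and then lift its parabolic domains to $\mathbb{C}^{d}$ while keeping all the coordinates $z_{1},\ldots,z_{d}$ confined to a thin funnel over the shadow. First I would replace $F$ by a convergent \emph{partial} Poincar\'e--Dulac normalisation (Theorem~\ref{thm:PoincareDulac} applied up to finite order is a polynomial, hence convergent, change of coordinates), so that for $1\le j\le r$
\[
F_{j}(z)=\lambda_{j}z_{j}\Bigl(1+\sum_{k=k_{0}}^{K}g_{kj}(\pi(z))\Bigr)+O(\norm z^{N+1}),
\]
with $K$, and hence $N$, as large as we wish. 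Since the resonance relations with a single $k_{i}=1$ force $\lambda^{\alpha^{i}}=1$ for every generator $\alpha^{i}$, a direct computation gives $\pi\circ F=\Phi\circ\pi+O(\norm z^{N+1})$, where $\Phi(u)=u+H_{k_{0}+1}(u)+O(\norm u^{k_{0}+2})$ is tangent to the identity of order $k_{0}+1$, has the prescribed non-degenerate characteristic direction $[v]$ with all directors of positive real part, and $H_{k_{0}+1}(v)=-\tfrac1{k_{0}}v$.

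Next I would invoke Hakim's theorem for $\Phi$. As every director of $[v]$ has positive real part, Theorem~\ref{thm:Hakim2} together with Proposition~\ref{prop:HakimCoords} produces $k_{0}$ pairwise disjoint $\Phi$-invariant parabolic domains $P_{1},\ldots,P_{k_{0}}\subseteq\mathbb{C}^{m}$ tangent to $[v]$, each carrying an injective Fatou coordinate $\varphi_{h}\colon P_{h}\to\mathbb{C}$ with $\varphi_{h}\circ\Phi=\varphi_{h}+1$ whose image contains a right half-plane; moreover (Theorem~\ref{thm:LeauFatou}) every orbit $\{u_{n}\}$ in $P_{h}$ satisfies $\norm{u_{n}}\approx n^{-1/k_{0}}$, $|\varphi_{h}(u_{n})|\approx n$, $|D\varphi_{h}(u_{n})|=O(n^{1+1/k_{0}})$, and $u_{n}/\norm{u_{n}}\to v/\norm v$. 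Writing $u_{n}=c_{n}w_{n}$ with $c_{n}>0\to0$ and $w_{n}\to v$, the hypothesis $\Re g_{k_{0}j}(v)<0$ together with homogeneity gives $\Re g_{k_{0}j}(u_{n})\le-b\,c_{n}^{k_{0}}$ with some $b>0$ for large $n$, where $c_{n}^{k_{0}}\approx 1/n$.

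Then I would lift. Fix a small $\beta\in(0,1/\max_{i}|\alpha^{i}|)$ and, for a small $\delta>0$, set
\[
B_{h}:=\{z\mid \pi(z)\in P_{h},\ |z_{i}|<|\pi(z)|^{\beta}\ (1\le i\le d),\ \norm z<\delta\},\qquad h=1,\ldots,k_{0};
\]
these are open and disjoint. For $F$-invariance one checks on $B_{h}$: (i) $\pi(F(z))\in P_{h}$, since $\Phi(\pi(z))\in P_{h}$ for a sufficiently deep Hakim domain and the error $O(\norm z^{N+1})=O(|\pi(z)|^{\beta(N+1)})$ is, for $N$ large relative to $1/\beta$, dominated by the transverse parabolic displacement $\approx|\pi(z)|^{k_{0}+1}$; (ii) $|F_{i}(z)|<|\pi(F(z))|^{\beta}$ for each $i$: for $i\le r$, $|F_{i}(z)|\le|z_{i}|\,(1-b'|\pi(z)|^{k_{0}})$ while $|\pi(F(z))|^{\beta}\ge|\pi(z)|^{\beta}(1-2\beta c'|\pi(z)|^{k_{0}})$, so for $\beta$ small enough that $b'>2\beta c'$ the bound $|z_{i}|<|\pi(z)|^{\beta}$ propagates with a margin $\approx|\pi(z)|^{\beta+k_{0}}$ that also absorbs the $O(\norm z^{N+1})$ tail when $N$ is large; for $i>r$ the factor $|\lambda_{i}|<1$ makes this a fortiori true. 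Hence every orbit in $B_{h}$ has $\pi(F^{\circ n}(z))\to0$ and $|F^{\circ n}(z)_{i}|<|\pi(F^{\circ n}(z))|^{\beta}\to0$, so $F^{\circ n}(z)\to0$ and $0\in\partial B_{h}$. Finally, since $\pi\circ F$ agrees with $\Phi\circ\pi$ only up to the tail, I would \emph{not} pull $\varphi_{h}$ back but instead set $\psi_{h}(z):=\lim_{n\to\infty}(\varphi_{h}(\pi(F^{\circ n}(z)))-n)$ on $B_{h}$; the increments $\varphi_{h}(\pi(F^{\circ(n+1)}(z)))-\varphi_{h}(\pi(F^{\circ n}(z)))-1$ are $O(n^{1+1/k_{0}}\norm{F^{\circ n}(z)}^{N+1})=O(n^{1+1/k_{0}-\beta(N+1)/k_{0}})$, summable once $\beta(N+1)>2k_{0}+1$, so $\psi_{h}$ is a well-defined holomorphic limit and $\psi_{h}\circ F=\psi_{h}+1$ by telescoping.

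The main obstacle is the invariance of the funnels $B_{h}$: one must control simultaneously the slow parabolic base (rate $n^{-1/k_{0}}$) and the only \emph{polynomially} contracting neutral fibres $z_{1},\ldots,z_{r}$ (whose moduli shrink like $n^{-b}$, purely because $\Re g_{k_{0}j}(v)<0$), while the non-normalised tail $O(\norm z^{N+1})$ threatens to eject orbits from $\{|z_{i}|<|\pi(z)|^{\beta}\}$. This is what forces the coupled choice ``$\beta$ small, then $N$ large'' and the near-the-ray estimate $\Re g_{k_{0}j}(u_{n})\le-b\,c_{n}^{k_{0}}$ in place of the bare value at $v$, and it is where Hakim's explicit description of the parabolic domains (Proposition~\ref{prop:HakimCoords}) and the Leau--Fatou convergence rate (Theorem~\ref{thm:LeauFatou}) are indispensable. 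Disjointness, attraction on the boundary, and the telescoped construction of $\psi_{h}$ are then routine.
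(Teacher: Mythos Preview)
Your proposal is correct and follows exactly the approach the paper sketches (and that the cited reference \cite{BracciRaissyZaitsev2013Dynamicsofmultiresonantbiholomorphisms} carries out in detail). The paper itself does not give a proof of this theorem beyond the preceding paragraph, which says precisely that one applies Hakim's Theorem~\ref{thm:Hakim2} to the parabolic shadow $\Phi$ and then lifts the resulting parabolic domains to $\mathbb{C}^{d}$ inside their $\pi$-preimage, exactly as in the one-resonant case (Theorem~\ref{thm:BracciZaitsevDynamics}) where the funnels $W(\beta)=\{|z^{j}|<|\pi(z)|^{\beta}\}$ and $B_{h}=\{z\in W(\beta)\mid\pi(z)\in S_{h}(R,\theta)\}$ are introduced explicitly; your finite-order normalisation, the coupled choice of small $\beta$ then large $N$, and the telescoped construction of $\psi_{h}$ are the standard way to make this outline rigorous.
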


In \cite{RaissyVivas2013Dynamicsoftworesonantbiholomorphisms}, Raissy
and Vivas perform a similar construction for parabolic shadows with
so-called irregular attracting non-degenerate characteristic directions
(in particular not fully attracting in the sense of Definition~\ref{def:DirectorsPara}).
They moreover show that parabolic attraction ($\Re(g_{k_{0}j}(v))<0$
for $1\le j\le r$) cannot be satisfied for a degenerate characteristic
direction $[v]\in\mathbb{P}^{m-1}$ of a parabolic shadow, so in this
case an analogue of Theorem~\ref{thm:BRZmultiRes} cannot be established
in the same way.

\section{Parabolic-elliptic fixed points}

\selectlanguage{british}

\global\long\def\Pow{\m{Pow}}%
\global\long\def\Res{\m{Res}}%
\global\long\def\Jac{\m{Jac}}%
\global\long\def\Transp{\mathsf{T}}%

The dynamics of parabolic-elliptic germs in dimension $2$ have been
studied by Bracci and Molino in \cite{BracciMolino2004ThedynamicsnearquasiparabolicfixedpointsofholomorphicdiffeomorphismsinmathbbC2}
and \cite{LopezHernanzRaissyRibonSanzSanchez2019StableManifoldsofTwoDimensionalBiholomorphismsAsymptotictoFormalCurves}
and in higher dimensions by Rong in \cite{Rong2008QuasiparabolicanalytictransformationsofmathbbCn,Rong2010QuasiparabolicanalytictransformationsofmathbfCnParabolicmanifolds}
combining ideas from \cite{BracciMolino2004ThedynamicsnearquasiparabolicfixedpointsofholomorphicdiffeomorphismsinmathbbC2}
and \cite{Hakim1998AnalytictransformationsofmathbbCp0tangenttotheidentity,HakimTransformationstangenttotheidentityStablepiecesofmanifolds}.
\begin{defn}
A (formal) germ $F\in\Pow(\mathbb{C}^{d},0)$ is called 
\begin{enumerate}
\item \emph{parabolic-elliptic\index{parabolic-elliptic germ}} if $F$
has both parabolic and elliptic multipliers and no other multipliers.
\item \emph{\index{semi-parabolic germ@\emph{semi-parabolic germ}}semi-parabolic},
if $F$ has at least one parabolic multiplier.
\end{enumerate}
\end{defn}

\begin{note}
The results from these papers were actually shown for the larger class
of \emph{quasi-parabolic\index{quasi-parabolic germ}} germs. However,
the cases not covered below are parabolic and the concluded parabolic
manifolds are the same as Hakim's from Theorems~\ref{thm:HakimEcale}
and \ref{thm:Hakim2}. To get the general results, replace all elliptic
multipliers by neutral multipliers $\lambda\neq1$.
\end{note}

\subsection{Parabolic curves}

To tackle the case of multiple parabolic multipliers, we extend the
language of Hakim's parabolic results to semi-parabolic germs following
\cite{Rong2008QuasiparabolicanalytictransformationsofmathbbCn,Rong2010QuasiparabolicanalytictransformationsofmathbfCnParabolicmanifolds}.

Let $F\in\End(\mathbb{C}^{d},0)$ with multipliers $1$ of multiplicity
$m$ and $\lambda_{m+1},\ldots,\lambda_{d}$ non-parabolic, and with
diagonal linear part $dF_{0}=\diag(1,\ldots,1,\lambda_{m+1},\ldots,\lambda_{d})$.
Then we can write $F$ in local coordinates $(w,z)\in\mathbb{C}^{m}\times\mathbb{C}^{d-m}$
such that for $(w_{1},z_{1})=F(w,z)$, we have 
\begin{equation}
\begin{aligned}w_{1} & =w+\sum_{k\in\mathbb{N}}p_{k}(w)+O(\norm z\cdot\norm{(w,z)})\\
z_{1} & =\Lambda z+\sum_{k\in\mathbb{N}}q_{k}(w)+O(\norm z\cdot\norm{(w,z)})
\end{aligned}
\label{eq:semi-paraForOrder}
\end{equation}
where $\Lambda=\diag(\lambda_{m+1},\ldots,\lambda_{d})$ and $p_{k}$
and $q_{k}$ are tuples of homogeneous polynomials of degree $k$
with respective dimensions $m$ and $d-m$.
\begin{rem}
If in (\ref{eq:semi-paraForOrder}), we have 
\[
\nu:=\inf\{k\ge2\mid p_{k}\not\equiv0\}=+\infty,
\]
then $\{z=0\}$ is a manifold of fixed points. Otherwise, since $q_{k}(w)$
contains only non-resonant terms, the Poincaré-Dulac theorem~\ref{thm:PoincareDulac}
allows us to holomorphically eliminate any finite number of the polynomials
$q_{k}$  in $z_{1}$ until 
\[
\mu:=\inf\{k\ge2\mid q_{k}\not\equiv0\}\ge\nu.
\]
Under a formal change of coordinates, we can even achieve $\mu=+\infty$.
\end{rem}

\begin{defn}
If $\nu\le\mu$ in (\ref{eq:semi-paraForOrder}), then we say that
$F$ is in \emph{ultra-resonant\index{ultra-resonant form@\emph{ultra-resonant form}}}
form and call $\nu$ the \emph{\index{semi-parabolic order@\emph{semi-parabolic order}}semi-parabolic
order} of $F$. If $\mu=+\infty$, then we say $F$ is in \emph{\index{asymptotic ultra-resonant form@\emph{asymptotic ultra-resonant form}}asymptotic
ultra-resonant form}.
\end{defn}

\begin{lem}[{\cite[Lem.~2.3]{Rong2008QuasiparabolicanalytictransformationsofmathbbCn}}]
Unless $\mu<\nu=+\infty$ in (\ref{eq:semi-paraForOrder}) (and hence
we have a manifold of fixed points), the semi-parabolic order is a
well-defined holomorphic invariant for $F$.
\end{lem}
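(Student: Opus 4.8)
The plan is to pin down the semi-parabolic order $\nu$ as an \emph{intrinsic} invariant of $F$, by showing it equals the order of tangency to the identity of $F$ restricted to a canonical formal invariant submanifold; well-definedness (independence of the coordinates realising (\ref{eq:semi-paraForOrder})) and holomorphic invariance then follow at once. First I would produce that submanifold. Let $E^{\mathrm{par}}=\ker(dF_{0}-\id)^{m}$ be the generalised eigenspace of $dF_{0}$ for the eigenvalue $1$; in the coordinates of (\ref{eq:semi-paraForOrder}) it is exactly $\{z=0\}$, since $\Lambda$ has no eigenvalue $1$. Then $F$ has a \emph{unique} formal $m$-dimensional submanifold $W=\{z=\eta(w)\}$ through $0$ with $\eta(0)=0$, $d\eta_{0}=0$, that is $F$-invariant: writing invariance as $z_{1}=\eta(w_{1})$ and comparing the homogeneous parts of degree $k\ge2$ in $w$ yields $(\Lambda-\id)\,\eta_{k}(w)=-q_{k}(w)+(\text{a universal polynomial in }q_{2},\dots,q_{k-1},\eta_{2},\dots,\eta_{k-1})$, which is uniquely solvable because $\Lambda-\id$ is invertible. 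This determines $\eta$ recursively, and restricting $F$ to $W$ with $w$ as coordinate gives a formal germ $f:=F|_{W}\in\Pow(\mathbb{C}^{m},0)$, $f(w)=w+P(w)+R_{1}(w,\eta(w))$, tangent to the identity.

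Next I would read off $\nu$ from this germ. When $F$ is in ultra-resonant form the first non-vanishing $q_{k}$ has degree $\mu$, so the recursion gives $\eta(w)=O(\norm{w}^{\mu})$, hence $R_{1}(w,\eta(w))=O(\norm{\eta(w)}\cdot\norm{(w,\eta(w))})=O(\norm{w}^{\mu+1})$. Since ultra-resonance means $\mu\ge\nu$, we obtain $f(w)-w=P(w)+O(\norm{w}^{\mu+1})$ with $P$ of order exactly $\nu<\mu+1$; thus $f$ has order exactly $\nu$, and its leading homogeneous part is $p_{\nu}$. In particular $\nu$ depends only on $F$, not on the chosen ultra-resonant coordinates. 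Moreover $\nu=+\infty$ iff $f=\id$ as a formal germ, i.e.\ iff $W$ is a (formal) manifold of fixed points; in the ultra-resonant case $\mu\ge\nu=+\infty$ forces $\mu=+\infty$, so then $W=\{z=0\}$ is a genuine holomorphic manifold of fixed points, whereas the excluded case $\mu<\nu=+\infty$ is exactly the one in which $F$ admits no holomorphic ultra-resonant form at all and $\nu$ in the sense of (\ref{eq:semi-paraForOrder}) is simply not available.

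For holomorphic invariance, suppose $\tilde F=\Phi^{-1}\circ F\circ\Phi$ with $\Phi\in\Aut(\mathbb{C}^{d},0)$. Then $d\Phi_{0}$ conjugates $d\tilde F_{0}$ to $dF_{0}$, so $E^{\mathrm{par}}(\tilde F)=d\Phi_{0}^{-1}E^{\mathrm{par}}(F)$, and $\Phi^{-1}$ carries the $F$-invariant $W$ to a $\tilde F$-invariant formal submanifold tangent to $E^{\mathrm{par}}(\tilde F)$, which by the uniqueness above must be $W(\tilde F)$. Hence $\Phi$ restricts to a formal isomorphism $W(\tilde F)\to W(F)$ conjugating $\tilde F|_{W(\tilde F)}$ to $F|_{W(F)}$; since the order of tangency to the identity — the degree of the lowest non-zero homogeneous term of $f(w)-w$ — is preserved under conjugation by an invertible formal change of coordinates, $\nu(\tilde F)=\nu(F)$. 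Combined with the previous step, this shows $\nu$ is a well-defined holomorphic invariant whenever $F$ admits a holomorphic ultra-resonant form, i.e.\ outside the case $\mu<\nu=+\infty$.

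The step I expect to be the main obstacle is the second one: one must verify that replacing the true formal invariant manifold $W$ by the coordinate hyperplane $\{z=0\}$ perturbs the induced one-dimensional-in-$w$ dynamics only by terms of degree strictly larger than $\nu$, so that the naive order $\nu$ from (\ref{eq:semi-paraForOrder}) really is the order of the intrinsic germ $F|_{W}$. This is precisely where the ultra-resonance inequality $\mu\ge\nu$ — achieved in (\ref{eq:semi-paraForOrder}) via finitely many Poincar\'e--Dulac eliminations of the $q_{k}$ (Theorem~\ref{thm:PoincareDulac}) — is indispensable, and it is also what singles out $\mu<\nu=+\infty$ as the one configuration requiring separate, purely formal treatment.
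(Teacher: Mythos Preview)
The paper does not supply its own proof of this lemma; it merely cites \cite[Lem.~2.3]{Rong2008QuasiparabolicanalytictransformationsofmathbbCn}. So there is no in-paper argument to compare against. Your approach---identifying $\nu$ with the order of the tangent-to-the-identity germ $F|_{W}$ on the unique formal invariant manifold $W$ tangent to $E^{\mathrm{par}}$---is correct and is arguably cleaner than the direct coordinate computation one typically finds in the source. The key induction showing $\eta=O(\norm{w}^{\mu})$ in ultra-resonant coordinates goes through exactly as you outline; one small omission is that the recursion for $\eta_{k}$ also picks up contributions from the $p_{j}$'s (via $\eta(w_{1})$), but since $p_{j}\equiv 0$ for $j<\nu\le\mu$ and $\eta=O(\norm{w}^{2})$, a degree count shows these do not enter below degree $\mu$, so your conclusion stands.

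Two remarks on the discussion of the excluded case. First, your description ``the one in which $F$ admits no holomorphic ultra-resonant form at all'' is not quite right as stated: a germ can satisfy $\mu<\nu=+\infty$ in \emph{some} coordinates of the shape (\ref{eq:semi-paraForOrder}) while still admitting ultra-resonant coordinates with finite $\nu$ (e.g.\ $F(w,z)=(w+wz,\lambda z+w^{2})$ has $\mu=2<\nu=+\infty$ as written, but after eliminating $q_{2}$ one finds $\nu=3$). The genuinely exceptional case is when the intrinsic order $\nu^{*}$ of $F|_{W}$ is $+\infty$; your own argument then shows $W$ coincides with the holomorphic solution of $z_{1}=z$ (via the implicit function theorem, since $\Lambda-I$ is invertible) and hence is a holomorphic manifold of fixed points---so even here an ultra-resonant form exists, with $\mu=\nu=+\infty$. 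Second, the paper's parenthetical ``and hence we have a manifold of fixed points'' should be read in this intrinsic sense; read literally for arbitrary coordinates with $\mu<\nu=+\infty$ it is false, as the example above shows.
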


\begin{rem}
In the parabolic-attracting case, we can always achieve $\mu=+\infty$
and the above definition is equivalent to Definition~\ref{def:SemiParOrderRiv}.
\end{rem}

Now we can once more define characteristic directions:
\begin{defn}
Let $F\in\End(\mathbb{C}^{d},0)$ be semi-parabolic of finite semi-parabolic
order $\nu<\infty$ in ultra-resonant form (\ref{eq:semi-paraForOrder}).
A \emph{\index{characteristic direction!semi-parabolic}characteristic
direction} for $F$ is a direction $[v:0]\in\mathbb{P}^{d-1}$ with
$v\in\mathbb{C}^{m}$ such that $p_{\nu}(v)=\gamma v$ for some $\gamma\in\mathbb{C}$
and $q_{\nu}(v)=0$. If $\gamma=0$, we call $[v:0]$ \emph{degenerate}\index{degenerate characteristic direction!semi-parabolic},
otherwise \emph{non-degenerate}\index{non-degenerate characteristic direction!semi-parabolic}.
\end{defn}

To ensure parabolic curves tangent to a non-degenerate characteristic
direction, we need the additional condition of dynamical separation
introduced by Bracci and Molino in \cite{BracciMolino2004ThedynamicsnearquasiparabolicfixedpointsofholomorphicdiffeomorphismsinmathbbC2}.

Let $F\in\End(\mathbb{C}^{d},0)$ be semi-parabolic of finite semi-parabolic
order $\nu<\infty$ with a non-degenerate characteristic direction
$[v]\in\mathbb{P}^{d-1}$. Then, as in the parabolic case, up to a
linear change of coordinates, we may assume that $[v]=[1:0:\cdots:0]$.
Eliminating all non-resonant terms $w^{\alpha}z^{\beta}$ in $z_{1}$
up to order $\nu$ in (\ref{eq:semi-paraForOrder}), and splitting
the coordinates $w=(x,y)\in\mathbb{C}\times\mathbb{C}^{m-1}$, we
can write $F$ as 
\begin{equation}
\begin{alignedat}{2}x_{1} & =x+p_{\nu}(x,y) &  & +P(x,y,z)+O(\nu+1),\\
y_{1} & =y+q_{\nu}(x,y) &  & +Q(x,y,z)+O(\nu+1),\\
z_{1} & =\Lambda z &  & +R(x,y,z)+O(\nu+1),
\end{alignedat}
\label{eq:2.9quasipar3var}
\end{equation}
where $p_{\nu}$ and $q_{\nu}$ are homogeneous of order $\nu$ and
$P,Q$ and $R$ only contain resonant terms of total order between
$2$ and $\nu$ and order in $z$ at least $1$.
\begin{defn}
Let $F\in\End(\mathbb{C}^{d},0)$ be quasi-parabolic of finite order
$\nu<\infty$ with characteristic direction $[v]=[1:0:\cdots:0]$
in the form (\ref{eq:2.9quasipar3var}). 
\begin{enumerate}
\item \emph{\index{ultra-resonant monomial}Ultra-resonant }terms are resonant
terms of the form
\begin{enumerate}
\item $x^{i}$ with $i\ge2$ in $x_{1}$,
\item $x^{i}z^{e_{k}}$ with $i\ge1$ and $1\le k\le d-m$ in $z_{1}$ (via
$\lambda_{k}=\lambda_{j}$ in the $j$-th component of $z_{1}$).
\end{enumerate}
\item $F$ is \emph{\index{dynamically separating}dynamically separating}
in the characteristic direction $[v]$, if $R$ contains no ultra-resonant
terms $x^{i}z^{e_{k}}$, $k\le d-m$ up to order $\nu-1$ (with $1\le i<\nu-1$).
\end{enumerate}
\end{defn}

\begin{lem}[{\cite[Lem.~2.10]{Rong2008QuasiparabolicanalytictransformationsofmathbbCn}}]
Dynamic separation is a holomorphic invariant.
\end{lem}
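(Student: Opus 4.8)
The plan is to prove invariance by comparing two representatives of $F$ in the ultra-resonant form (\ref{eq:2.9quasipar3var}) sharing the characteristic direction $[1:0:\cdots:0]$, and analysing the conjugation between them via the homological equation. The multipliers and the semi-parabolic order $\nu$ are holomorphic invariants (the latter by the lemma quoted just above), so it suffices to show: if $\tilde F$ and $\hat F$ are both of the form (\ref{eq:2.9quasipar3var}) with the same $\nu$ and the same characteristic direction, $\hat F=H^{-1}\circ\tilde F\circ H$ for some $H\in\Aut(\mathbb{C}^d,0)$, and $\tilde F$ is dynamically separating, then so is $\hat F$ (the converse being symmetric). First I would pin down the admissible $H$. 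Since $H$ must preserve the diagonal linear part $dF_0=\diag(1,\ldots,1,\lambda_{m+1},\ldots,\lambda_d)$, its linear part is block-diagonal for the splitting $\mathbb{C}\times\mathbb{C}^{m-1}\times\mathbb{C}^{d-m}$, with the $\mathbb{C}^{d-m}$-block commuting with $\Lambda=\diag(\lambda_{m+1},\ldots,\lambda_d)$ and the $\mathbb{C}^m$-block fixing $[e_1]$; conjugation by this linear part is explicit, and a direct check following the $x$-Taylor expansion of the $z$-linear part of $F$ along $\{y=z=0\}$ (using that these blocks commute with $\Lambda$) shows it preserves the dynamically separating property, so after absorbing it we may assume $H=\id+h$ with $h$ of order $\ge 2$. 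Moreover, since the hyperbolic multipliers are $\ne 1$ the operator $\Lambda-\id$ is invertible; a short computation of $\hat F^{(z)}|_{z=0}$ then forces $H$ to preserve $\{z=0\}$ modulo terms of order $\nu+1$, i.e.\ the $z$-components of $h$ have $z$-degree $\ge1$ in all monomials of order $\le\nu$.

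Next I would write the coefficient comparison for $\tilde F\circ H=H\circ\hat F$ (cf.\ (\ref{eq:HomolExplicit-full}), with diagonal linear part):
\[
(\lambda^\alpha-\lambda_j)h_\alpha^j=\tilde f_\alpha^j-\hat f_\alpha^j+\sum_{2\le k<|\alpha|}\sum_{j_1\le\cdots\le j_k}\sum_{\beta_1+\cdots+\beta_k=\alpha}\bigl(\tilde f_{e_J}h_{\beta_1}^{j_1}\cdots h_{\beta_k}^{j_k}-h_{e_J}\hat f_{\beta_1}^{j_1}\cdots\hat f_{\beta_k}^{j_k}\bigr),
\]
where $e_J=e_{j_1}+\cdots+e_{j_k}$ and every term on the right involves only coefficients of order strictly below $|\alpha|$. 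Now fix an ultra-resonant $z$-monomial: a multi-index $\alpha$ of $x$-degree $i$, $y$-degree $0$, whose $z$-part equals $e_k$ with $\lambda_k=\lambda_j$, and with $2\le|\alpha|=i+1\le\nu-1$. For such $\alpha$ the divisor on the left vanishes, so $\hat f_\alpha^j=\tilde f_\alpha^j+C_\alpha^j$ with $C_\alpha^j$ the displayed sum. I would prove by induction on $|\alpha|$ that $C_\alpha^j=0$, whence $\hat f_\alpha^j=0$ and $\hat F$ is dynamically separating. The inductive mechanism is a $z$-degree/$y$-degree count: below order $\nu$ every nonlinear coefficient of $\tilde F$ and $\hat F$ has $z$-degree $\ge1$ (the homogeneous parts $p_\nu,q_\nu$ occur only at order exactly $\nu$ and only in the $x,y$-components), the $w$-coordinates carry $z$-degree $0$, pure-$x$ coefficients $x^{i'}$ with $2\le i'\le\nu-1$ vanish in the $x$-component as well, and --- by the reduction above --- the $z$-components of $h$ also have $z$-degree $\ge1$ at orders $\le\nu$. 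Hence any product in $C_\alpha^j$ producing a monomial of $z$-degree $1$ and $y$-degree $0$ in a $z$-component must use exactly one factor that is a $z$-component coefficient of $z$-degree $1$ and $y$-degree $0$, and such a factor is necessarily of ultra-resonant type $x^{i'}z^{e_{k'}}$ of order strictly less than $|\alpha|$ (a non-ultra-resonant resonant $z$-linear term would carry an uncancellable $y$- or higher-$z$-factor and is anyway absent up to order $\nu$ in (\ref{eq:2.9quasipar3var}); the configurations with this factor replaced by a linear coefficient $\lambda_k$ are excluded by the range constraint $k<|\alpha|$). Such factors of $\tilde F$ vanish because $\tilde F$ is dynamically separating, those of $\hat F$ vanish by the inductive hypothesis, and the remaining factors ($h_\beta^{j'}$ and $x$-component coefficients, which may include the unconstrained $x^{i'}$ terms of order $\ge\nu$ but carry $z$-degree $0$) only build the pure-$w$ part of the monomial and obstruct nothing; thus $C_\alpha^j=0$.

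I expect this combinatorial verification to be the main obstacle: one must check, uniformly over all partitions $\beta_1+\cdots+\beta_k=\alpha$ and index choices, that no contribution to an order-$\le(\nu-1)$ ultra-resonant $z$-monomial can be assembled entirely from non-ultra-resonant pieces, which requires simultaneously controlling the $x$-, $y$- and $z$-degrees, confirming in particular that the $z$-components of $h$ and the $O(\nu+1)$ tails of $\tilde F,\hat F$ cannot conspire to manufacture a low-order ultra-resonant term, and keeping the two families of ultra-resonant monomials (the unconstrained $x^i$ in $x_1$, the constrained $x^iz^{e_k}$ in $z_1$) separate on the right-hand side. A more conceptual repackaging, which amounts to the same bookkeeping, is to read dynamical separation as a condition on the $z$-linear action $w\mapsto M(w)\in\GL_{d-m}$ of $F$ over the $F$-invariant set $\{z=0\}$ --- the analogue of the normal-bundle action in Theorem~\ref{thm:NishimuraSemiAttrFixedCurve} --- and to track how the restriction of $M$ to the characteristic direction transforms under conjugation.
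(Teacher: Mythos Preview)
The paper does not supply a proof of this lemma; it is quoted from \cite[Lem.~2.10]{Rong2008QuasiparabolicanalytictransformationsofmathbbCn} with only the heuristic remark that ultra-resonant terms are those that survive blow-ups along $[v]$, so there is no in-paper argument to compare against.

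Your homological-equation approach is the natural one, and for $m=1$ (the Bracci--Molino setting, no $y$-coordinates) your degree count closes cleanly. For $m\ge2$ there is a genuine gap in the ``uncancellable $y$-factor'' step. Contrary to what you assert, non-ultra-resonant resonant $z$-linear terms --- coefficients $\tilde f_{e_J}^{\,j}$ with $e_J$ of shape $x^{a}y^{b}z^{e_k}$, $b\ge1$, $\lambda_k=\lambda_j$ --- are \emph{not} absent from $R$ in the form (\ref{eq:2.9quasipar3var}); and the $y$-index in such an $e_J$ pairs in the product with a factor $h_{\beta_l}^{j_l}$ coming from a \emph{$y$-component} of $H$, which may well contain pure-$x$ monomials $x^{s}$ with $2\le s\le\nu-2$ (the $y$-component of the homological equation restricted to $\{y=0,z=0\}$ yields only a tautology at orders below $\nu$, so these are not excluded). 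Concretely: with $d=3$, $m=2$, $\nu=5$, the germ $\tilde F(x,y,z)=(x+x^{5},\,y+x^{4}y+y^{5},\,\lambda z+xyz)$ is in the form (\ref{eq:2.9quasipar3var}) with isolated non-degenerate characteristic direction $[1{:}0{:}0]$ and is dynamically separating, yet $H(x,y,z)=(x,\,y+x^{2},\,z)$ conjugates it to $\hat F$ with $\hat z_1=\lambda z+xyz+x^{3}z$, still in the form (\ref{eq:2.9quasipar3var}) with the same characteristic direction but now failing dynamic separation. So either the normalisation summarised here is weaker than the one Rong actually uses (an additional step forcing $h^{y}(x,0,0)=O(x^{\nu-1})$, for instance by first straightening a formal invariant curve tangent to the $x$-axis, would close the gap), or your argument is missing precisely that step; you should consult Rong's precise set-up before completing the $m\ge2$ case.
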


\begin{rem}
Ultra-resonant terms are those resonant terms, that cannot be eliminated
via blow-ups centred at the characteristic direction $[v]$. Dynamical
separation ensures that large values of the dominant parabolic variable
$x$ do not interfere too much with the behaviour of the $z$ component.
\end{rem}

Now we are ready to state Bracci, Molino, and Rong's first main result
generalising Theorem~\ref{thm:HakimEcale}:
\begin{thm}[Bracci, Molino \cite{BracciMolino2004ThedynamicsnearquasiparabolicfixedpointsofholomorphicdiffeomorphismsinmathbbC2},
Rong \cite{Rong2008QuasiparabolicanalytictransformationsofmathbbCn}]
\label{thm:QuasiParaCurves}Let $F\in\End(\mathbb{C}^{d},0)$ be
quasi-parabolic of finite order $\nu<\infty$, with diagonalisable
linear part $dF_{0}$, and dynamically separating in a non-degenerate
characteristic direction $[\nu]$. Then there exists a parabolic flower
of $\nu-1$ parabolic curves tangent to $[v]$ at $0$.
\end{thm}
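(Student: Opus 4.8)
The plan is to follow the proof of the \'Ecalle--Hakim Theorem~\ref{thm:HakimEcale}, using the dynamical separation hypothesis precisely to keep the non-parabolic directions from interfering with the dominant parabolic dynamics. First I would normalise $F$: after a linear change of coordinates placing the non-degenerate characteristic direction at $[v]=[1:0:\cdots:0]$ and splitting the parabolic variables as $w=(x,y)\in\mathbb{C}\times\mathbb{C}^{m-1}$, Poincar\'e--Dulac (Theorem~\ref{thm:PoincareDulac}) allows elimination of all non-resonant monomials $w^{\alpha}z^{\beta}$ in $z_{1}$ up to order $\nu$, bringing $F$ to the form (\ref{eq:2.9quasipar3var}). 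After rescaling $x$ so that the leading parabolic coefficient equals $-1/(\nu-1)$, the $x$-component is tangent to the identity of order $\nu$, the $y$-component reads $y_{1}=(I-x^{\nu-1}(A+\tfrac{1}{\nu-1}I))y+\cdots$ with $A$ the matrix of directors as in Proposition~\ref{prop:HakimCoords}, and dynamical separation guarantees that $z_{1}$ contains no ultra-resonant term $x^{i}z^{e_{k}}$ with $i\le\nu-2$.

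Second, I would blow up along the $x$-axis, passing to the chart from (\ref{eq:BlowUpCharts}) centred at $[e_{1}]$, i.e. coordinates $(x,u,\zeta)$ with $y=ux$, $z=\zeta x$. This ``zooms in'' on $[v]$, turning it into a genuine fixed point of the lift $\tilde F$ on the exceptional divisor ($\tilde F([e_{1}])=[e_{1}]$ by Proposition~\ref{prop:BlowUpOnExcepDiv}); the $x$-dynamics remain tangent to the identity of order $\nu$, the $\zeta$-dynamics acquire contracting factors from the non-parabolic multipliers (or a factor $1+O(x^{\nu-1})$ for parabolic indices), and, crucially, no $x^{-1}$-singularity appears, exactly as in Lemma~\ref{lem:NoProblematicTerms} and Remark~\ref{rem:ProblematicTerms}, because the offending terms are precisely the ultra-resonant ones excluded by hypothesis.

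Third, introduce the Fatou-type variable $t=x^{-(\nu-1)}$, which conjugates the $x$-part to $t_{1}=t+1+O(t^{-1/(\nu-1)})$ as in (\ref{eq:preFatouTransform1D}), and for each of the $\nu-1$ sectors $\Sigma_{h}$ around the $(\nu-1)$-th roots of unity take the half-plane $H(R,\theta)$ in the $t$-variable, pulling it back to an $x$-petal $P_{h}$; intersect with a region $W=\{\,\norm{u}<\abs{x}^{\delta},\ \norm{\zeta}<\abs{x}^{\delta}\,\}$ for a small $\delta>0$. The main obstacle is the core invariance estimate: showing $\tilde F(P_{h}\cap W)\subseteq P_{h}\cap W$. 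For $x$ this is the one-dimensional argument of Proposition~\ref{prop:Petals}; for $u$ one needs the bound $\norm{u}<\abs{x}^{\delta}$ to be preserved, which follows from the orders of the error terms; and for $\zeta$ the contraction must outpace the growth of $\abs{x}^{-\delta}$ along the slowly shrinking petal, which is exactly what dynamical separation secures, since a term $x^{i}z^{e_{k}}$ with $i\le\nu-2$ would otherwise produce a contribution of size $\abs{x}^{i+\delta}$ in $z_{1}$ that could dominate. Balancing these inequalities pins down the admissible ranges of $\delta$, $R$, $\theta$.

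Finally, on the invariant set $P_{h}\cap W$ one has $x_{n}\to0$ (hence all coordinates $\to0$) with $t_{n}\sim n$, so $F^{\circ n}\to 0$ tangentially to $[v]$. To extract the parabolic curve proper, I would realise it as an invariant graph: seek $\varphi_{h}\colon P_{h}\to\mathbb{C}^{d-1}$ with $\varphi_{h}(x)=O(x)$ whose graph is $F$-invariant, obtained as the unique fixed point of the graph-transform operator in a weighted sup-norm, which is a contraction thanks to the gap between the parabolic rate $\abs{x_{n}}\approx n^{-1/(\nu-1)}$ and the geometric decay of the transversal directions; alternatively, take the vertical disks $F^{\circ(-n)}(\{x=\text{const}\}\cap W)$ and show they converge uniformly to such a graph. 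One then checks each $P_{h}$ is simply connected, has $0$ on its boundary with $\varphi_{h}$ extending continuously there, is $F$-invariant, and has all orbits converging to $0$; the $P_{h}$ for $h=0,\dots,\nu-2$ are pairwise disjoint because the sectors $\Sigma_{h}$ are, yielding the parabolic flower, and pushing back through the blow-up $\sigma$ (a biholomorphism off the exceptional divisor) returns $\nu-1$ parabolic curves in $\mathbb{C}^{d}$ tangent to $[v]$.
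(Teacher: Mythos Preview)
The paper does not give a proof of this theorem; it is stated with references to \cite{BracciMolino2004ThedynamicsnearquasiparabolicfixedpointsofholomorphicdiffeomorphismsinmathbbC2} and \cite{Rong2008QuasiparabolicanalytictransformationsofmathbbCn}, and only the normal-form reduction for the companion Theorem~\ref{thm:RongQuasiParaParaMnf} is sketched. That said, your outline has a genuine gap.

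The error is in the $\zeta$-direction. In the quasi-parabolic setting the non-parabolic multipliers $\lambda_{m+1},\dots,\lambda_d$ are \emph{neutral} ($|\lambda_j|=1$, $\lambda_j\neq 1$), not attracting---see the Note preceding the subsection. After your blow-up (which is the blow-up at the origin, not ``along the $x$-axis''; your coordinates $(x,u,\zeta)=(x,y/x,z/x)$ are those of Section~\ref{subsec:BlowupCharts} for the point blow-up), the $\zeta$-equation reads $\zeta_1=\Lambda\zeta\,(x/x_1)+\cdots=\Lambda\zeta\bigl(1+\tfrac{1}{\nu-1}x^{\nu-1}+\cdots\bigr)+\cdots$. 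Along the attracting petal one has $x_n^{\nu-1}\sim 1/n$ with argument tending to $0$, so $|\zeta_n|\sim|\zeta_0|\prod_k\bigl(1+\tfrac{1}{(\nu-1)k}\bigr)\sim|\zeta_0|\,n^{1/(\nu-1)}$ \emph{grows}. Hence $W=\{\|\zeta\|<|x|^{\delta}\}$ is not $F$-invariant, your step~3 fails, and there is no ``geometric decay of the transversal directions'' for the graph transform. In original coordinates $|z_n|=|\zeta_n x_n|$ stays bounded away from $0$ for generic starting points: there is no attracting open set at all, which is consistent with the theorem only asserting one-dimensional curves.

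What actually works in Bracci--Molino and Rong is not contraction of $\Lambda$ but the invertibility of $I-\Lambda$ (available precisely because the remaining multipliers are $\neq 1$). One constructs the curve directly as the fixed point of Hakim's functional operator on graphs $w=\varphi(x)$ over the petal; dynamical separation guarantees the coupling terms in the $\zeta$-equation are $O(x^{\nu-1})$, hence summable along the orbit since $|x_n|^{\nu-1}\sim 1/n$, and combined with the uniform boundedness of $\Lambda^{n}$ this yields convergence of the iteration. Your closing ``alternatively, take vertical disks'' aside gestures toward this, but the preceding invariant-region argument cannot be salvaged.
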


A more general sufficient condition in dimension two in terms of formal
invariant curves is given in \cite[Thm.~2]{LopezHernanzRaissyRibonSanzSanchez2019StableManifoldsofTwoDimensionalBiholomorphismsAsymptotictoFormalCurves}.

\subsection{Parabolic manifolds}

In \cite{Rong2010QuasiparabolicanalytictransformationsofmathbfCnParabolicmanifolds},
Rong further extended Theorem~\ref{thm:QuasiParaCurves} to include
higher dimensional parabolic manifolds by adapting Hakim's proof of
Theorem~\ref{thm:Hakim2} in \cite{HakimTransformationstangenttotheidentityStablepiecesofmanifolds}.
The formulation does not look much different from Theorem~\ref{thm:Hakim2}:
\begin{thm}[Rong \cite{Rong2010QuasiparabolicanalytictransformationsofmathbfCnParabolicmanifolds}]
\label{thm:RongQuasiParaParaMnf}Let $F\in\End(\mathbb{C}^{d},0)$
be quasi-parabolic of finite order $\nu<\infty$, with diagonalisable
linear part $dF_{0}$, and dynamically separating in a non-degenerate
characteristic direction $[v]$ with directors $\alpha_{1},\ldots,\alpha_{d-1}$.
If there exist $c>0$ and $k\le d-1$ such that $\Re\gamma_{1},\ldots,\Re\gamma_{k}>c$
and $\Re\gamma_{k+1},\ldots,\Re\gamma_{d-1}<c$, then there exists
a parabolic flower of $\nu-1$ parabolic manifolds of dimension $k+1$
tangent to $[v]$. If $F\in\Aut(\mathbb{C}^{d})$ is a global automorphism,
each such parabolic manifold $M$ admits a biholomorphism $\varphi:M\to\mathbb{C}^{k+1}$
conjugating $F$ to the translation $(z,w)\mapsto(z+1,w)$.
\end{thm}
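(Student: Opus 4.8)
The plan is to reduce the statement to Hakim's Theorem~\ref{thm:Hakim2} by a sequence of reductions that strip away the non-parabolic directions and the dynamical-separation hypothesis, arriving at a germ tangent to the identity to which the earlier machinery applies. First I would put $F$ into the ultra-resonant form \eqref{eq:2.9quasipar3var}, split the coordinates as $w=(x,y)\in\mathbb{C}\times\mathbb{C}^{m-1}$ along the non-degenerate characteristic direction $[v]=[1:0:\cdots:0]$, and eliminate all non-ultra-resonant terms up to order $\nu$ using the Poincar\'e--Dulac normalisation of Theorem~\ref{thm:PoincareDulac}; dynamical separation guarantees that the remaining resonant $z$-terms of order $<\nu$ do not couple $z$ back into itself via $x^{i}z^{e_{k}}$, which is exactly what is needed to control the $z$-component.

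Next I would perform the blow-up $(y,z)=(ux,\zeta x)$ centred along $[v]$ (this is the blow-up along the subspace in the sense of Section~\ref{subsec:BlowupCharts}), and lift $F$ to $\tilde F$ via Corollary~\ref{cor:LiftingMapstoBlowUP}, checking non-degeneracy along the relevant subspace. In the blow-up chart the lifted germ acquires a leading $x$-dynamics of the form $x_{1}=x-\tfrac{1}{\nu-1}x^{\nu}+\cdots$ (after rescaling so that $p_{\nu}(v)=-\tfrac{1}{\nu-1}v$), while the transverse variables $(u,\zeta)$ are contracted at rate governed by $(I-x^{\nu-1}A)$, where $A$ is the matrix whose eigenvalues are the directors $\alpha_{1},\dots,\alpha_{m-1}$ together with the $\tfrac{1}{\nu-1}$-shifted logarithms of $\lambda_{m+1},\dots,\lambda_{d}$. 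Grouping the directors with $\Re\gamma_{i}>c$ for $i\le k$ and the attracting transverse multipliers gives a splitting of the transverse space into a ``slow'' part of dimension $k$ and a ``fast'' part; this is precisely the spectral separation hypothesis of Theorem~\ref{thm:Hakim2}, and Hakim's construction of stable pieces of manifolds then yields $\nu-1$ parabolic manifolds $P_{1},\dots,P_{\nu-1}$ of dimension $k+1$ for $\tilde F$, each tangent to the direct sum of $[v]$ and the chosen generalised eigenspaces, on each of which $F$ is conjugated to $(\zeta,\xi)\mapsto(\zeta+1,\xi)$ with $\zeta\sim 1/x$. Pushing these down through $\sigma$ (a biholomorphism away from the exceptional divisor) gives the parabolic flower for $F$ downstairs, and orbits in it converge to $0$ tangentially to $[v]$.

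For the global statement, suppose $F\in\Aut(\mathbb{C}^{d})$. On each parabolic manifold $P=P_{j}$ the local Fatou coordinate $\varphi_{\mathrm{loc}}:P\to\mathbb{C}^{k+1}$ satisfies $\varphi_{\mathrm{loc}}\circ F=\varphi_{\mathrm{loc}}+e_{1}$; the plan is to extend it to all of $P$ by the standard ``flow-box'' argument: set $\varphi:=\varphi_{\mathrm{loc}}\circ F^{\circ n}-n\,e_{1}$ on $F^{\circ(-n)}(P_{0})$ for a suitable sub-petal $P_{0}$ with $\bigcup_{n}F^{\circ(-n)}(P_{0})=P$, and check consistency via the functional equation. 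Injectivity and surjectivity of the extension follow from the facts that $\varphi_{\mathrm{loc}}$ maps $P_{0}$ biholomorphically onto a set containing a half-space $\{\Re\zeta>R'\}\times\Delta$ (here I would invoke the image statement in Theorem~\ref{thm:Hakim2}(2)--(3), combined with $F$ being a global automorphism so that the backward iterates are genuine biholomorphisms), exactly as in the one-variable Fatou-coordinate extension and in Ueda's and Hakim's earlier automorphism statements (Theorems~\ref{thm:SemiAttrUeda} and \ref{thm:SemAttrHakim}). Thus $P\cong\mathbb{C}^{k+1}$ conjugately.

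The main obstacle I expect is verifying that the blow-up lift $\tilde F$ genuinely satisfies the hypotheses of Theorem~\ref{thm:Hakim2}, i.e. that after the Poincar\'e--Dulac preparation and the blow-up the transverse linear part is block-diagonalisable with the correct director spectrum and \emph{no residual coupling} between the dominant variable $x$ and the fast transverse directions --- this is where the dynamical-separation condition is indispensable and where the bookkeeping from \cite{Rong2010QuasiparabolicanalytictransformationsofmathbfCnParabolicmanifolds} is delicate: one must track which ultra-resonant terms survive the blow-up (those transform like the multipliers, cf.\ Remark~\ref{rem:BlowUpsMultipliers}) and confirm they do not destroy the estimates underlying Hakim's stable-manifold construction. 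The global extension step, by contrast, is routine once the local picture and the image half-space are in hand.
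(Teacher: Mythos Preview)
Your plan has a genuine gap: you claim that after the blow-up the lifted germ $\tilde F$ satisfies the hypotheses of Hakim's Theorem~\ref{thm:Hakim2}, but that theorem applies only to germs \emph{tangent to the identity}, and $\tilde F$ is not. Look at the paper's computation \eqref{eq:2.18quasiparBlownUp}: the $v$-component (the blow-up of the non-parabolic $z$-directions) has linear part $\Lambda v$, so $d\tilde F_{0}=\diag(1,I_{m-1},\Lambda)$ is still quasi-parabolic with the same elliptic multipliers. In the grouped form \eqref{eq:2.19quasiparAlmostDoneMine} and, after further blow-ups, \eqref{eq:quasiParPenultimate}, the transverse dynamics are
\[
w_{1}=(L-x^{\nu-1}M)w+\cdots,\qquad L=\diag(I_{m-1},\Lambda),
\]
not $(I-x^{\nu-1}A)w$ as you wrote. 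The non-parabolic multipliers sit in the \emph{linear} part $L$; they are not ``$\tfrac{1}{\nu-1}$-shifted logarithms'' entering the director matrix $A$. This is why the directors here are defined as the eigenvalues of $N=L^{-1}M$, not of $M$ alone: the extra rotation by $L$ at each step is a genuine new feature that Hakim's theorem does not cover.

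The paper (following Rong) does not reduce to Theorem~\ref{thm:Hakim2}; it \emph{adapts} Hakim's construction of stable pieces of manifolds to the presence of the $L$-block. The sequence of blow-ups you outline is correct in spirit and matches the paper's preparation leading to \eqref{eq:quasiParPenultimate}, but from there one must redo Hakim's graph-transform/fixed-point estimates with the rotation $L$ in place---and this is precisely where dynamical separation is used: it guarantees that the ultra-resonant terms $x^{i}z^{e_{k}}$ of order $<\nu$ are absent, so the $L$-rotated transverse variables are genuinely contracted by the factor $(I-x^{\nu-1}N)$ without interference from lower-order coupling. Your global-extension argument for the automorphism case is fine once the local parabolic manifolds and Fatou coordinates are in hand.
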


However, the definition of the directors is a bit more involved in
this case:

Let $F\in\End(\mathbb{C}^{d},0)$ be as in (\ref{eq:2.9quasipar3var})
and $\sigma:(\tilde{\mathbb{C}}_{0}^{d},E_{0})\to(\mathbb{C}^{d},0)$
be the blow-up at $0\in\mathbb{C}^{d}$. Then the coordinates on $\{x\neq0\}$
as in Section~\ref{subsec:BlowupCharts} are just $(x,u,v)=(x,\frac{y}{x},\frac{z}{x})$
and are centred at $(0,[1:0:0])\in\tilde{\mathbb{C}}_{0}^{d}$. Since
$[1:0:0]$ was a characteristic direction, the lift $\tilde{F}$ of
$F$ in these coordinates is of the form 
\begin{equation}
\begin{alignedat}{2}x_{1} & =x+x^{\nu}p_{\nu}(1,u,0) &  & +\tilde{P}(x,u,v)+O(\nu+1),\\
u_{1} & =u+x^{\nu-1}(q_{\nu}(1,u,0)-p_{\nu}(1,u,0)\cdot u) &  & +\tilde{Q}(x,u,v)+O(\nu+1)+O(x^{\nu}),\\
v_{1} & =\Lambda v+x^{\nu-1}(r_{\nu}(1,0,v)-p_{\nu}(1,u,0)\cdot\Lambda v) &  & +\tilde{R}(x,u,v)+O(\nu+1)+O(x^{\nu}),
\end{alignedat}
\label{eq:2.18quasiparBlownUp}
\end{equation}
where $r_{\nu}$ contains all terms of the form $x^{\nu-1}z_{k}$,
$k\in\{1,\ldots,m\}$ in $R$. In particular, $\tilde{F}$ is still
a quasi-parabolic germ at $0$ in these coordinates with characteristic
direction $[1:0:0]$.
\begin{rem}
\label{rem:MonomialsTransformUnderBlowUp}Recall From (\ref{eq:monomialTransfBlowUp})
that terms of the form $x^{i}y^{\alpha}z^{\beta}$ transform into
$x^{i+|\alpha|+|\beta|}u^{\alpha}v^{\beta}$ in $x_{1}$ and to 
\[
x^{i+|\alpha|+|\beta|}u^{\alpha}v^{\beta}/x_{1}=x^{i+|\alpha|+|\beta|-1}u^{\alpha}v^{\beta}+HOT,
\]
 otherwise (assuming total order $i+|\alpha|+|\beta|\ge2$). In particular,
the terms $x^{i}z_{k}$ in $z_{1}$ relevant for dynamic separation
are precisely the terms that occur as $x^{i}v_{k}$ in $v_{1}$. Therefore,
dynamic separation is preserved, and we may repeat the procedure as
many times as we like.
\end{rem}

\begin{rem}
Note that the new characteristic direction $[1:0:0]$ is not tangent
to the exceptional divisor, so all parabolic manifolds tangent to
it will be outside the exceptional divisor, where $\sigma$ is a biholomorphism,
so they will be mapped to parabolic manifolds of the original germ.
\end{rem}

Let now $w=(u,v)$, to get 
\begin{equation}
\begin{alignedat}{2}x_{1} & =x+x^{\nu}p_{\nu}(1,u,0) &  & +\tilde{P}(x,u,v)+O(\nu+1),\\
w_{1} & =Lw+x^{\nu-1}s(u,v) &  & +\tilde{S}(x,u,v)+O(\nu+1)+O(x^{\nu}),
\end{alignedat}
\label{eq:2.18quasiparBlownUpGrouped}
\end{equation}
where $L=\diag(I_{l-1},\Lambda)$. Rescaling $x$ by $(-p_{\nu}(1,0,0))^{-1/(\nu-1)}$
and using Taylor expansion of $s$, this finally becomes 
\begin{equation}
\begin{alignedat}{2}x_{1} & =x-x^{\nu} &  & +\tilde{P}(x,w)+O(x^{\nu-1},x^{\nu}\lVert w\rVert),\\
w_{1} & =(L-x^{\nu-1}\tilde{M})w &  & +\tilde{S}(x,w)+O(x^{\nu-1}\lVert w\rVert^{2},x^{\nu}\lVert w\rVert,x^{\nu}),
\end{alignedat}
\label{eq:2.19quasiparAlmostDoneMine}
\end{equation}
where $p_{\nu}(1,0,0)\cdot\tilde{M}$ is the linear part of $s$ at
$(0,0)$ and $\tilde{P}$ and $\tilde{S}$ only contain resonant terms
of total order between $2$ and $\nu$ and order in $v$ at least
$1$.

\begin{rem}
\label{rem:2.12TranformOfDirectors}Changes of coordinates in $w$
act on $\tilde{M}$ via conjugation with their linear part, so the
class of similarity of $\tilde{M}$ is preserved.
\end{rem}

To eliminate the lower order terms in $\tilde{P}$ and $\tilde{S}$,
we perform a number of blow-ups centred at $[1:0]$. Each blow up
increases the total order of the terms in $\tilde{P}$ and $\tilde{S}$
that are not purely powers of $x$ by at least one (see Remark~\ref{rem:MonomialsTransformUnderBlowUp}),
so we need at least
\[
\delta:=\nu(F)-\min\{|\alpha|\mid\tilde{P}_{[i,\alpha]}\neq0\}\cup\{|\alpha|-1\mid\tilde{S}_{[i,\alpha]}\neq0\}
\]
blow-ups, resulting in 
\begin{equation}
\begin{alignedat}{1}x_{1} & =x-x^{\nu}+O(x^{\nu-1},x^{\nu}\lVert w\rVert)\\
w_{1} & =(L-x^{\nu-1}M)w+\tilde{S}(x)+O(x^{\nu-1}\lVert w\rVert^{2},x^{\nu}\lVert w\rVert,x^{\nu}),
\end{alignedat}
\label{eq:quasiParPenultimate}
\end{equation}
 where $M=\tilde{M}-\delta\cdot I_{d-1}$ and $\tilde{S}$ contains
only pure powers of $x$ (and only in the $u_{1}$-components). Based
on this form we can finally define the last remaining invariant we
need to generalise Hakim's results.
\begin{defn}
The eigenvalues of $N:=L^{-1}M$ are the \emph{directors\index{director of a characteristic direction}}
of the characteristic direction $[1:0]$.
\end{defn}

Thus we have all the definitions necessary to understand the statement
of Theorem~\ref{thm:RongQuasiParaParaMnf}.

\subsection{Other cases}

In \cite[§4]{Rong2008QuasiparabolicanalytictransformationsofmathbbCn},
Rong gives examples of a parabolic-elliptic germs that are not dynamically
separating, but still exhibit parabolic curves and domains. In \cite{BracciRong2014Dynamicsofquasiparaboliconeresonantbiholomorphisms},
Bracci and Rong find conditions for parabolic domains for parabolic-elliptic
germs with simple multiplier $1$ and the remaining multipliers forming
a one-resonant tuple in the sense of Definition~\ref{def:oneRes}.

\section{\label{sec:Elliptic-attracting-fixed-points}Elliptic-attracting
fixed points}

\selectlanguage{british}

In the elliptic-attracting case with only one elliptic multiplier,
we can expect one-dimensional elliptic dynamics, such as Siegel disks
and Hedgehogs.
\begin{defn}
A (formal) germ $F\in\Pow(\mathbb{C}^{d},0)$ is called 
\begin{enumerate}
\item \emph{elliptic-attracting\index{elliptic-attracting germ}} if $F$
has both elliptic and attracting multipliers and no other multipliers.
\item \emph{neutral-attracting\index{neutral-attracting germ}} if $F$
has both neutral and attracting multipliers and no other multipliers.
\end{enumerate}
\end{defn}

A first negative result in the case of a single elliptic multiplier
is a generalisation of the snail lemma~\ref{lem:SnailLemma} by Lyubich
and Peters:

\begin{thm}[Lyubich, Peters \cite{LyubichPeters2014ClassificationofinvariantFatoucomponentsfordissipativeHenonmaps}]
\label{thm:2DsnailLemma}Let $F\in\End(\mathbb{C}^{d},0)$ with multipliers
$\lambda_{1},\ldots,\lambda_{d}$ with $|\lambda_{1}|=1$ and $|\lambda_{j}|<1$
for $j\ge2$. If there exists an open set $W\subseteq\mathbb{C}$
such that $f(W)\cap W\neq\emptyset$ and the sequence $\{F^{\circ n}\}_{n}$
converges uniformly to $0$ on $W$, then $\lambda_{1}=1$.
\end{thm}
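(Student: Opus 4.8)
The plan is to adapt the one-dimensional Snail Lemma~\ref{lem:SnailLemma} by restricting attention to a suitable invariant curve. First I would pick a point $z_{0}\in W$ with $z_{1}:=F(z_{0})\in W$, which is possible since $F(W)\cap W\neq\emptyset$; choosing a path $\gamma_{0}\colon[0,1]\to W$ from $z_{0}$ to $z_{1}$ and patching together $\gamma_{0},F\circ\gamma_{0},F^{\circ2}\circ\gamma_{0},\ldots$ gives a continuous ray $\gamma\colon[0,+\infty)\to\mathbb{C}^{d}\setminus\{0\}$ with $F(\gamma(t))=\gamma(t+1)$ and, by uniform convergence of $F^{\circ n}$ to $0$ on $W$, with $\gamma(t)\to0$ as $t\to+\infty$. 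So far this is exactly the $d=1$ setup, except the ray now lives in $\mathbb{C}^{d}$.

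Next I would project the dynamics onto the elliptic direction. After a linear change of coordinates we may assume $dF_{0}=\diag(\lambda_{1},\Lambda)$ with $\Lambda\in\mathbb{C}^{(d-1)\times(d-1)}$ having all eigenvalues of modulus $<1$; write $F(z)=(\,\lambda_{1}z^{1}+\cdots,\ \Lambda z''+\cdots)$ in the notation of Notation~\ref{nota:PrimeNotation} with $z=(z^{1},z'')$. By the stable/centre manifold Theorem~\ref{thm:StableCentreMnf} there is a $\mathcal{C}^{r}$ centre manifold $W^{\mathrm c}$ with $T_{0}W^{\mathrm c}=\Eig(dF_{0},\lambda_{1})$, and $\Sigma_{F}(U)\cap\Sigma_{F}^{-}(U)\subseteq W^{\mathrm c}$ for small $U$; but the orbit of $z_{0}$ need not lie in $W^{\mathrm c}$. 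Instead I would argue more directly: the coordinate projection $p\colon\mathbb{C}^{d}\to\mathbb{C}$, $p(z)=z^{1}$, does not conjugate $F$ to a one-dimensional map, so the cleaner route is to invoke the centre manifold for the \emph{purpose of identifying the first-order behaviour}. The key step is to show that the ray $\gamma$ converges to $0$ \emph{tangentially} to the elliptic eigenspace: since the $z''$-components contract at a geometric rate $\approx\|\Lambda\|^{n}$ faster than any subexponential decay forced along the neutral direction, one shows $\|\gamma(t)''\| = o(\|\gamma(t)'\|)$ as $t\to\infty$ (this requires the decay of $\gamma(t)$ to be only subexponential, which follows because a geometric rate along the first coordinate would already force $|\lambda_{1}|<1$ by the Snail Lemma applied to $p\circ\gamma$ — contradiction unless we are in the $\lambda_1=1$ case we want). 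Hence $[\gamma(t)]\to[e_{1}]$ in $\mathbb{P}^{d-1}$, and the orbit converges tangentially to $\Eig(dF_{0},\lambda_{1})$.

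Then I would run the classical Snail Lemma argument on the lifted picture. Blowing up $0$ (Corollary~\ref{cor:LiftingMapstoBlowUP}) the lift $\tilde F$ fixes $[e_{1}]\in E_{0}$ with $d\tilde F_{[e_{1}]}$ having eigenvalues $\lambda_{1}$ (tangent to $E_{0}$) and $\lambda_{j}/\lambda_{1}$ by Remark~\ref{rem:BlowUpsMultipliers}; but the more economical finish is: restrict to a local $\mathcal{C}^{1}$ centre manifold $W^{\mathrm c}$ through $0$ tangent to $\Eig(dF_{0},\lambda_{1})$, observe that the tail of $\gamma$ eventually enters a neighbourhood of $W^{\mathrm c}$ and, after projecting along the strong-stable foliation (which is $\mathcal{C}^{0}$ and $F$-equivariant), yields a ray $\bar\gamma$ in $W^{\mathrm c}\cong(\mathbb{C},0)$ with $h(\bar\gamma(t))=\bar\gamma(t+1)$ and $\bar\gamma(t)\to0$, where $h=F|_{W^{\mathrm c}}\in\End(\mathbb{C},0)$ has multiplier $\lambda_{1}$, $|\lambda_{1}|=1$. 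Applying the one-dimensional Snail Lemma~\ref{lem:SnailLemma} to $h$ and $\bar\gamma$ forces $\lambda_{1}$ to be attracting or $\lambda_{1}=1$; since $|\lambda_{1}|=1$, the only possibility is $\lambda_{1}=1$, which is the claim.

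The main obstacle I expect is the projection step onto the one-dimensional centre dynamics: the centre manifold is only $\mathcal{C}^{1}$ and not unique, the strong-stable foliation through a neighbourhood is only continuous, and one must check that patching $\gamma$ with the holonomy of that foliation produces an \emph{honest} continuous ray satisfying the exact functional equation $h\circ\bar\gamma(t)=\bar\gamma(t+1)$ rather than one holding only asymptotically. A way to sidestep the foliation is to use Theorem~\ref{thm:StableCentreMnf} to get the strong stable manifold $W^{\mathrm{ss}}$ and the centre stable manifold $W^{\mathrm{cs}}$ with $\Sigma_{F}(U)\subseteq W^{\mathrm{cs}}$; since the tail of $\gamma$ is a stable orbit (it converges to $0$, hence stays in any given $U$ eventually), it lies in $W^{\mathrm{cs}}$, and one then projects $W^{\mathrm{cs}}\to W^{\mathrm c}$ along the $W^{\mathrm{ss}}$-direction. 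Even this projection is only $\mathcal{C}^{1}$, but that is enough to preserve continuity of the ray and the semiconjugacy relation on the nose, because $F$ genuinely maps $W^{\mathrm{cs}}$ to itself and the quotient $W^{\mathrm{cs}}/W^{\mathrm{ss}}$ carries a well-defined induced holomorphic germ by the analogue of Theorem~\ref{thm:NishimuraSemiAttrFixedCurve} / the quasiconformal centre-manifold reduction of Theorem~\ref{thm:CentManRedLyubich}. Invoking that reduction directly is in fact the cleanest finish: it provides a germ $h\in\End(\mathbb{C},0)$ with $h(z)=\lambda_{1}z+O(z^{2})$ to which $F|_{W^{\mathrm c}}$ is (quasiconformally) conjugate, turning the problem into the one-variable Snail Lemma verbatim.
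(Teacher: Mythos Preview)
The paper states this theorem without proof, simply citing Lyubich--Peters, so there is no in-paper argument to compare against; I will assess your proposal on its own and against the original source.

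Your eventual route --- project the ray along the strong stable foliation of $W^{\mathrm{cs}}$ onto a centre manifold $W^{\mathrm c}$, then transport through the quasiconformal conjugacy of Theorem~\ref{thm:CentManRedLyubich} to a holomorphic germ $h\in\End(\mathbb{C},0)$ with multiplier $\lambda_{1}$, and apply Lemma~\ref{lem:SnailLemma} to $h$ --- is conceptually sound and does yield $\lambda_{1}=1$. Three points, though. First, the intermediate appeal to ``the Snail Lemma applied to $p\circ\gamma$'' to force subexponential decay is invalid: the coordinate projection $p\circ\gamma$ satisfies no functional equation $g(p\circ\gamma(t))=p\circ\gamma(t+1)$ for a one-variable holomorphic $g$, since $F^{1}(\gamma(t))$ depends on all of $\gamma(t)$. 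You abandon this line yourself, so it does no damage, but as written it is circular. Second, the reference to Theorem~\ref{thm:NishimuraSemiAttrFixedCurve} is misplaced --- Nishimura's theorem is about a submanifold of \emph{fixed points}, not about quotients by the strong stable foliation --- so only the invocation of Theorem~\ref{thm:CentManRedLyubich} actually does the work. Third, and most substantively, Theorem~\ref{thm:CentManRedLyubich} is stated only for $F\in\Aut(\mathbb{C}^{d},0)$, whereas Theorem~\ref{thm:2DsnailLemma} is for $F\in\End(\mathbb{C}^{d},0)$ and allows superattracting $\lambda_{j}$; and it is a \emph{later} result of Lyubich--Radu--Tanase, so using it to establish Lyubich--Peters is anachronistic.

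The original Lyubich--Peters argument is more elementary and stays in the holomorphic category throughout. After straightening the holomorphic strong stable manifold to $W^{\mathrm{ss}}=\{z^{1}=0\}$, one has $F^{1}(x,y)=x\,g(x,y)$ with $g(0,0)=\lambda_{1}$, so the local strong stable lamination (whose leaves are the complex submanifolds on which orbits converge together exponentially) has a holomorphic transverse structure; the induced map on the leaf space is a genuine one-variable holomorphic germ with multiplier $\lambda_{1}$, to which the open set $W$ and the ray $\gamma$ project. Lemma~\ref{lem:SnailLemma} then applies directly, with no $\mathcal{C}^{r}$ centre manifold and no quasiconformal correction. Your approach and theirs land at the same one-dimensional endpoint, but theirs is self-contained, covers $\End$ as well as $\Aut$, and does not presuppose the machinery it is meant to precede.
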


Just like in dimension one, this rules out the existence of uniformly
attracting domains at $0$ in this case.

More precise information in dimension $2$ was recently established
by Firsova, Lyubich, Radu, and Tanase, and in \cite{FirsovaLyubichRaduTanase2016HedgehogsforNeutralDissipativeGermsofHolomorphicDiffeomorphismsofC20}
and \cite{LyubichRaduTanase2016HedgehogsinHigherDimensionsandTheirApplications},
who use the centre manifold reduction of Theorem~\ref{thm:CentManRedLyubich}
to recover Perez-Marco's hedgehogs:

\begin{thm}[\cite{FirsovaLyubichRaduTanase2016HedgehogsforNeutralDissipativeGermsofHolomorphicDiffeomorphismsofC20}]
\label{thm:SCVSiegelCompacta}Let $F\in\End(\mathbb{C}^{2},0)$ be
neutral-attracting with neutral multiplier $\lambda\in S^{1}$. For
sufficiently small open balls $B\subseteq\mathbb{C}^{2}$ containing
$0$ (such that $F$ is partially hyperbolic on a neighbourhood $B'\subseteq\mathbb{C}^{2}$
of $\overline{B}$), there exists $K\subseteq\overline{B}$ such that 
\begin{enumerate}
\item $K$ is contained in any centre manifold $W^{\mathrm{c}}$ (relative
to $B'$) of $F$.
\item $K$ is compact, connected, completely $F$-invariant and full.
\item $0\in K$ and $K\cap\partial B\neq\emptyset$.
\end{enumerate}
Moreover: 
\begin{enumerate}
\item $K$ contains a neighbourhood of $0$ inside $W^{\mathrm{c}}$, if
and only if $F$ is holomorphically conjugate to a map $(x,y)\mapsto(\lambda x,O(y))$.
\item If $\lambda$ is elliptic, $K$ is unique and equal to the connected
component containing $0$ of the completely stable set 
\[
\Sigma_{F}^{\pm}(\overline{B})=\{z\in W^{\mathrm{c}}\mid F^{\circ n}(z)\in\overline{B}\text{ for all }z\in\mathbb{Z}\}.
\]
\end{enumerate}
\end{thm}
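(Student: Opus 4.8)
The statement is Theorem~\ref{thm:SCVSiegelCompacta}, reconstructing Perez-Marco's hedgehog/Siegel-compactum theory inside a centre manifold. The plan is to reduce the two-dimensional problem to the one-dimensional theory of Section~\ref{subsec:1D-elliptic} by means of the centre manifold reduction of Theorem~\ref{thm:CentManRedLyubich}. First I would fix a small ball $B$ and a slightly larger neighbourhood $B'$ on which $F$ is partially hyperbolic, invoke Theorem~\ref{thm:StableCentreMnf} to obtain a $\mathcal{C}^{1}$-smooth, $F$-invariant centre manifold $W^{\mathrm{c}}$ (relative to $B'$) with $T_{0}W^{\mathrm{c}}=E^{\mathrm{c}}$, the one-dimensional neutral eigenspace, and then apply Theorem~\ref{thm:CentManRedLyubich} to produce a quasiconformal conjugacy $\Psi$ between $F|_{W^{\mathrm{c}}}$ and a genuine holomorphic germ $h\in\End(\mathbb{C},0)$ with $h(z)=\lambda z+O(z^{2})$.

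The core step is then transport: to $h$ (a neutral germ in one variable) I would apply the P\'erez-Marco hedgehog/Siegel-compactum theorem (the one stated right after Definition of Siegel compactum in Section~\ref{subsec:1D-elliptic}) on the $\mathcal{C}^{1}$ Jordan domain $\Psi(W^{\mathrm{c}}\cap B)$ — here I must first arrange, shrinking $B$ if necessary, that this image is admissible (a $\mathcal{C}^{1}$ Jordan domain on a neighbourhood of whose closure $h$ and $h^{-1}$ are univalent), which is possible because $\Psi$ is a homeomorphism and $h$ is a biholomorphic germ. This yields a compact, connected, full, completely $h$-invariant set $K_{h}$ containing $0$ and meeting the boundary. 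Pulling back, $K:=\Psi^{-1}(K_{h})\subseteq\overline{B}\cap W^{\mathrm{c}}$; since $\Psi$ is a homeomorphism it preserves compactness, connectedness, fullness (complement connected), the property $0\in K$, and $K\cap\partial B\neq\emptyset$, and since $\Psi$ conjugates the dynamics it preserves complete $F$-invariance. This establishes items~(1)--(3) and the set-up.

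For the ``moreover'' part: the neighbourhood-of-$0$ dichotomy translates, via the conjugacy, into the statement that $0\in K_{h}^{\circ}$ iff $h$ is linearisable (the ``linearisable iff $0$ in the interior of any/every Siegel compactum'' clause of the quoted one-dimensional theorem, using $h^{\circ m}\neq\id$ which holds as $\lambda$ is neutral non-root-of-unity in the elliptic case and more generally since $F$ is a germ of automorphisms); linearisability of $h$ corresponds under the quasiconformal conjugacy, combined with the holomorphy of $\Psi$ on the interior of the stable set from part~(1) of Theorem~\ref{thm:CentManRedLyubich}, to $F|_{W^{\mathrm{c}}}$ being holomorphically conjugate to a rotation on a neighbourhood of $0$ in $W^{\mathrm{c}}$, i.e.\ $F$ being holomorphically conjugate to $(x,y)\mapsto(\lambda x, O(y))$ (the stable directions carrying the $O(y)$ part). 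For uniqueness in the elliptic case I would use the uniqueness clause of the one-dimensional theorem: for $h$ elliptic the Siegel compactum on a given admissible domain is unique and equals the connected component of $0$ in $\Sigma_{h}(\overline{\Psi(W^{\mathrm{c}}\cap B)})$; transporting back and noting $\Sigma_{F}^{\pm}(\overline{B})\cap W^{\mathrm{c}}$ corresponds under $\Psi$ exactly to that stable set (points whose full forward orbit in $W^{\mathrm{c}}$ stays in $\overline{B}$ — and in the elliptic, partially hyperbolic picture forward-stability in $W^{\mathrm{c}}$ already forces two-sided stability, so the $\pm$ is harmless), one gets that $K$ is the connected component of $0$ in $\Sigma_{F}^{\pm}(\overline{B})$.

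\textbf{Main obstacle.} The delicate point is not the one-dimensional input, which is quoted wholesale, but the compatibility of the reduction: checking that the quasiconformal conjugacy $\Psi$ sends the dynamically defined objects for $F$ on $W^{\mathrm{c}}$ (the stable set $\Sigma_{F}^{\pm}(\overline B)\cap W^{\mathrm{c}}$, interiors, connected components) precisely to their $h$-counterparts on an \emph{admissible} Jordan domain, and that the required admissibility and $\mathcal{C}^{1}$-Jordan regularity can be achieved by shrinking $B$. A quasiconformal homeomorphism need not send a round ball to a $\mathcal{C}^{1}$ Jordan domain, so one either appeals to the fact (from the construction in \cite{FirsovaLyubichRaduTanase2016HedgehogsforNeutralDissipativeGermsofHolomorphicDiffeomorphismsofC20}) that the relevant image domains can be chosen with $\mathcal{C}^{1}$ boundary, or works with the abstract version of P\'erez-Marco's theorem that only needs a topological Jordan domain with $h$ univalent near its closure; I would pursue the latter route to keep the argument clean, and flag that the holomorphy statements in the ``moreover'' clauses rely essentially on the holomorphy part of Theorem~\ref{thm:CentManRedLyubich}, which is where the partial hyperbolicity and the location of the petals/hedgehog in the holomorphic part of $W^{\mathrm{c}}$ are used.
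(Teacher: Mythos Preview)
The paper does not supply its own proof of this theorem; it is quoted from \cite{FirsovaLyubichRaduTanase2016HedgehogsforNeutralDissipativeGermsofHolomorphicDiffeomorphismsofC20}, with the surrounding text indicating only that the authors ``use the centre manifold reduction of Theorem~\ref{thm:CentManRedLyubich} to recover P\'erez-Marco's hedgehogs.'' Your plan is a faithful elaboration of exactly that hint: reduce to $W^{\mathrm c}$, conjugate quasiconformally to a one-variable neutral germ $h$, apply the one-dimensional Siegel-compactum theorem, and pull back.

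Two points are worth flagging. First, there is a mild logical dependence to watch: in the source papers, Theorem~\ref{thm:CentManRedLyubich} (the quasiconformal conjugacy) is developed in the companion paper \cite{LyubichRaduTanase2016HedgehogsinHigherDimensionsandTheirApplications}, whereas the hedgehog in \cite{FirsovaLyubichRaduTanase2016HedgehogsforNeutralDissipativeGermsofHolomorphicDiffeomorphismsofC20} is built directly on the centre manifold by adapting P\'erez-Marco's construction, rather than by first conjugating to a holomorphic $h$ and then invoking the one-dimensional theorem as a black box. Your route is cleaner to state but relies on having Theorem~\ref{thm:CentManRedLyubich} already in hand; as long as you are happy to cite both papers, this is fine for a survey-level argument.

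Second, the admissibility issue you identify is real and is the only genuine technical gap in your sketch. P\'erez-Marco's theorem as stated in Section~\ref{subsec:1D-elliptic} requires a $\mathcal C^{1}$ Jordan domain, and a quasiconformal image of a round ball need not be $\mathcal C^{1}$. Your proposed workaround (use a version requiring only a topological Jordan domain with $h$ univalent near its closure) is correct in spirit but needs a reference; alternatively, since $\Psi$ is defined on a neighbourhood of $\overline{W^{\mathrm c}\cap B}$, you can replace the ball $B$ by a slightly smaller domain whose trace on $W^{\mathrm c}$ has $\mathcal C^{1}$ image under $\Psi$, and the conclusions (1)--(3) are insensitive to this adjustment. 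Either fix completes the argument.
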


\begin{defn}
In the setting of Theorem~\ref{thm:SCVSiegelCompacta}, if $\lambda$
is elliptic and $F$ does not admit a Siegel disk at $0$ (without
loss of generality contained in $K$), then the compact $K$ is called
a \emph{\index{hedgehog@\emph{hedgehog}}hedgehog} of $F$ in $B$.
\end{defn}

The analogue of Theorem~\ref{thm:PerezMarcoCremerHedgehog} is:
\begin{thm}[\cite{LyubichRaduTanase2016HedgehogsinHigherDimensionsandTheirApplications}]
\label{thm:EllAttrHedgehog}Let $F\in\End(\mathbb{C}^{2},0)$ be
elliptic-attracting with elliptic multiplier $\lambda\in S^{1}$ and
a hedgehog $K$ in $B\subseteq\mathbb{C}^{2}$. Then 
\begin{enumerate}
\item $K$ has empty interior $K^{\circ}=\emptyset$.
\item $K$ is not locally connected at any point except possibly $0$.
\item $K\backslash\{0\}$ has an uncountable number of connected components.
\item There exists a subsequence $\{n_{k}\}_{k\in\mathbb{N}}\subseteq\mathbb{N}$
such that $\{F^{\circ n_{k}}\}_{k}$ converges uniformly on $K$ to
$\id_{K}$.
\item All orbits outside $K$ accumulating on $K$ do so exponentially fast.
\item In particular, all orbits converging to $0$ do so exponentially fast,
i.e. the realm of attraction $A_{F}(0)$ is equal to the strong stable
manifold $W^{\mathrm{ss}}$.
\end{enumerate}
\end{thm}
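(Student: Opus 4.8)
The plan is to deduce Theorem~\ref{thm:EllAttrHedgehog} from the one-dimensional hedgehog theory (Theorem~\ref{thm:PerezMarcoCremerHedgehog}) via the centre manifold reduction of Theorem~\ref{thm:CentManRedLyubich}. First I would invoke Theorem~\ref{thm:CentManRedLyubich}: since $F\in\End(\mathbb{C}^2,0)$ is elliptic-attracting with a single neutral multiplier $\lambda$, it has a $\mathcal{C}^1$-smooth centre manifold $W^{\mathrm{c}}$, and the restriction $F|_{W^{\mathrm{c}}}$ is quasiconformally conjugate to a holomorphic germ $h\in\End(\mathbb{C},0)$ with $h(z)=\lambda z+O(z^2)$. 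By Theorem~\ref{thm:SCVSiegelCompacta}, the hedgehog $K\subseteq\overline{B}$ is contained in $W^{\mathrm{c}}$ and equals the connected component containing $0$ of the completely stable set of $F$ in $W^{\mathrm{c}}$; under the quasiconformal conjugacy it corresponds to the unique Siegel compactum (hedgehog) of $h$ on the image domain. Since $F$ admits no Siegel disk at $0$, neither does $h$, so $0$ is a Cremer point of $h$ and the corresponding set in $\mathbb{C}$ is a genuine hedgehog in the sense of the one-variable theory.

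Next I would transport the topological conclusions (1)--(4) of Theorem~\ref{thm:PerezMarcoCremerHedgehog} back to $K$. Parts (1)--(3) — empty interior, failure of local connectivity away from $0$, uncountably many connected components of $K\backslash\{0\}$ — are invariant under homeomorphisms, and a quasiconformal map is in particular a homeomorphism, so these transfer verbatim from the one-dimensional hedgehog to $K$. For part (4), one first obtains from Theorem~\ref{thm:PerezMarcoCremerHedgehog}(4) a subsequence $\{n_k\}$ along which $h^{\circ n_k}\to\mathrm{id}$ uniformly on the one-dimensional hedgehog; conjugating back gives $F^{\circ n_k}\to\mathrm{id}_K$ uniformly on $K$ (here one should note that $K$ is compact and completely $F$-invariant, so the iterates restrict to $K$, and uniform convergence is preserved by the conjugacy on the compact set $K$). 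One subtlety I would want to address carefully: the quasiconformal conjugacy is only a topological equivalence on all of $W^{\mathrm{c}}$, but by Theorem~\ref{thm:CentManRedLyubich}(2), since all the remaining multipliers of $F$ are attracting, the conjugacy is \emph{holomorphic} on the interior of the unstable set $\Sigma_{F^{-1}}(W)$ inside $W^{\mathrm{c}}$; this holomorphicity is not needed for (1)--(4) but is the mechanism that makes the quantitative statements (5)--(6) meaningful.

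For parts (5) and (6) — exponential convergence of orbits accumulating on $K$, hence $A_F(0)=W^{\mathrm{ss}}$ — I would argue using the partial hyperbolicity of $F$ normal to the centre manifold. The centre-stable splitting from Theorem~\ref{thm:StableCentreMnf} gives a strong stable manifold $W^{\mathrm{ss}}$ with $T_0W^{\mathrm{ss}}=E^{\mathrm{s}}$ along which orbits decay geometrically, $z_n=O(c^n)$ for some $c\in(0,1)$. Conversely, if an orbit $\{z_n\}$ converges to $0$ but is not contained in $W^{\mathrm{ss}}$, its centre-manifold component (projecting along the strong stable foliation) is an orbit of $F|_{W^{\mathrm{c}}}$ converging to $0$; conjugating to $h$, this would be an orbit of $h$ converging to the Cremer point $0$, which by Theorem~\ref{thm:PerezMarcoCremerHedgehog}(8) (no orbit non-trivially converges to a Cremer point) forces the orbit to be trivial, i.e.\ $z_n\in K$ eventually — but $K$ has empty interior and is completely $F$-invariant with $F^{\circ n_k}\to\mathrm{id}_K$, so an orbit landing in $K$ cannot converge to $0$ unless it starts at $0$. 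Hence every orbit converging to $0$ lies in $W^{\mathrm{ss}}$, giving (6), and along $W^{\mathrm{ss}}$ the convergence is exponential, giving (5) for orbits converging to $0$; the general statement for orbits merely accumulating on $K$ follows by the same projection argument combined with the normal hyperbolicity estimates (any accumulation on $K$ must be along the strong stable direction since the centre dynamics on $K$ are recurrent by (4)).

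The main obstacle I anticipate is making rigorous the interplay between the $\mathcal{C}^1$ centre manifold and the holomorphic/quasiconformal structure: one must ensure that the strong stable foliation projection onto $W^{\mathrm{c}}$ is well enough behaved ($\mathcal{C}^1$ with appropriate uniformity) to convert "orbit accumulates on $K$" into "projected orbit is an orbit of $h$ accumulating on the one-dimensional hedgehog", and that the exponential rates normal to $W^{\mathrm{c}}$ dominate uniformly near $K$. This is precisely where Theorem~\ref{thm:CentManRedLyubich}(2) and the partial hyperbolicity estimates of Theorem~\ref{thm:StableCentreMnf} must be combined, and it is the technical heart of \cite{LyubichRaduTanase2016HedgehogsinHigherDimensionsandTheirApplications} that I would cite rather than reprove in full detail.
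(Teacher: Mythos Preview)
The paper does not contain a proof of this theorem: it is stated as a result of \cite{LyubichRaduTanase2016HedgehogsinHigherDimensionsandTheirApplications} and simply cited, in keeping with the survey character of the chapter. Your proposal is therefore not being compared against a proof in the paper but against the strategy the paper \emph{attributes} to the cited work, namely ``use the centre manifold reduction of Theorem~\ref{thm:CentManRedLyubich} to recover P\'erez-Marco's hedgehogs''. On that level your outline is exactly right: reduce to a one-dimensional germ $h$ on $W^{\mathrm{c}}$ via Theorem~\ref{thm:CentManRedLyubich}, identify $K$ with a hedgehog of $h$ via Theorem~\ref{thm:SCVSiegelCompacta}, pull back the topological properties (1)--(4) from Theorem~\ref{thm:PerezMarcoCremerHedgehog} through the (quasiconformal, hence homeomorphic) conjugacy, and invoke normal hyperbolicity for (5)--(6). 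You also correctly flag the genuine technical point --- controlling the strong-stable projection onto the merely $\mathcal{C}^1$ centre manifold well enough to run the argument for (5)--(6) --- and sensibly defer it to the source. There is nothing to correct; your sketch matches the intended architecture.
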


\section{Other elliptic fixed points}

\selectlanguage{british}

The understanding of general elliptic behaviour in higher dimensions
is wide open, outside full or partial linearisation in Section~\ref{sec:BrjunoConditions}
or reduction to one variable in Section~\ref{sec:Elliptic-attracting-fixed-points}.
Two major open problems are the structure of resonances and formal
normal forms and the convergence (or divergence) of formal normalisations.
Before we say a few words about both of these problems, let us state
the analogue of Proposition~\ref{prop:linIffStable} relating linearisation
and stability:
\begin{prop}
Let $F\in\End(\mathbb{C}^{d},0)$ be a neutral germ with multipliers
$\lambda_{1},\ldots,\lambda_{n}\in S^{1}$. Then the following are
equivalent:
\begin{enumerate}[noitemsep]
\item $F$ is holomorphically linearisable and $dF_{0}$ diagonalisable.
\item $F$ is topologically linearisable and $dF_{0}$ diagonalisable.
\item $0$ is a stable fixed point for $F$, i.e. $\{F^{n}\}$ is normal
at $0$.
\end{enumerate}
\end{prop}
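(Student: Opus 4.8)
The plan is to prove the cycle of implications $(1)\Rightarrow(2)\Rightarrow(3)\Rightarrow(1)$, with the bulk of the work concentrated in the last step, which will simultaneously yield both the holomorphic linearisation and the diagonalisability of $dF_{0}$.

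The implication $(1)\Rightarrow(2)$ is immediate, since a local biholomorphism is in particular a homeomorphism. For $(2)\Rightarrow(3)$ I would use that stability of a fixed point is a purely topological notion, hence preserved under topological conjugation, together with the observation that $0$ is a stable fixed point of the linear map $\Lambda:=dF_{0}$: after a linear change of coordinates $\Lambda=\diag(\lambda_{1},\ldots,\lambda_{d})$ with $|\lambda_{j}|=1$, so $\Lambda^{n}$ has operator norm $1$ for every $n$, the family $\{\Lambda^{n}\}_{n}$ is uniformly bounded on any bounded neighbourhood of $0$, and hence $\Sigma_{\Lambda}(V)$ contains a neighbourhood of $0$ for every neighbourhood $V$. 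Transporting this through the conjugating homeomorphism gives stability of $0$ for $F$, and Corollary~\ref{cor:StabilityAndNormality} then gives normality of $\{F^{\circ n}\}_{n}$ at $0$.

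For $(3)\Rightarrow(1)$ I would first extract diagonalisability of $dF_{0}$: stability of $0$ means that on some neighbourhood $V$ of $0$ the family $\{F^{\circ n}\}_{n}$ is uniformly bounded, so by the Cauchy estimates the matrices $dF_{0}^{n}=d(F^{\circ n})_{0}$ are uniformly bounded; a power-bounded matrix whose eigenvalues all lie on $S^{1}$ is necessarily diagonalisable, since a non-trivial Jordan block would force polynomial growth of its powers. Having fixed coordinates with $dF_{0}=\Lambda=\diag(\lambda_{1},\ldots,\lambda_{d})$, I would then run the averaging argument of Proposition~\ref{prop:linIffStable} in several variables: set
\[
\varphi_{k}(z):=\frac{1}{k}\sum_{j=0}^{k-1}\Lambda^{-j}F^{\circ j}(z),
\]
note that $\|\Lambda^{-j}\|=1$, so the $\varphi_{k}$ are uniformly bounded on a neighbourhood $V\subseteq\Sigma_{F}(U)$ of $0$, apply Montel's Theorem~\ref{thm:MontelBdd} to obtain a locally uniformly convergent subsequence $\varphi_{k_{j}}\to\varphi$, observe that $d(\varphi_{k})_{0}=\id$ for all $k$ so that $\varphi\in\Aut(\mathbb{C}^{d},0)$, and compute the telescoping difference
\[
\varphi_{k}\circ F-\Lambda\circ\varphi_{k}=\frac{1}{k}\bigl(\Lambda^{-(k-1)}F^{\circ k}(z)-\Lambda z\bigr)\xrightarrow[k\to\infty]{}0
\]
uniformly on $V$ (after shrinking $V$ so that $F(V)$ stays in the domain), so that the limit $\varphi$ conjugates $F$ to $\Lambda=dF_{0}$.

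The main obstacle is the step $(3)\Rightarrow(1)$, and within it the only genuinely new ingredient compared with the one-variable Proposition~\ref{prop:linIffStable} is establishing that $dF_{0}$ is diagonalisable; the remainder is a formal transcription of the one-dimensional averaging argument, which works precisely because the diagonal linear part has unitary operator norm. I expect no serious difficulty, but care is needed to record that $(3)\Rightarrow(1)$ really delivers both halves of statement~(1), and that the neighbourhood on which $\varphi$ is defined and invertible may be shrunk freely so that $F$ maps it into the common domain of the $\varphi_{k_{j}}$.
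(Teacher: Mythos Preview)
Your proposal is correct and follows exactly the approach indicated in the paper: the paper's proof simply says ``the proof is the same as that of Proposition~\ref{prop:linIffStable} once we observe that if $dF_{0}$ contains a Jordan block, then $dF_{0}^{\circ n}$ does not converge and hence $\{F^{\circ n}\}$ cannot be normal at $0$.'' You have spelled out precisely this---deriving diagonalisability from normality via boundedness of $dF_0^{n}$ (the contrapositive of the Jordan-block observation), and then running the several-variable averaging argument verbatim with $\Lambda^{-j}$ in place of $\lambda^{-j}$.
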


The proof is the same as that of Proposition~\ref{prop:linIffStable}
once we observe that if $dF_{0}$ contains a Jordan block, then $dF_{0}^{\circ n}$
does not converge and hence $\{F^{\circ n}\}$ cannot be normal at
$0$.

\subsection{Structure of resonances}

We have fundamentally seen two types of resonances: Those that occur
singularly, as is the case in the Poincaré domain and those that are
generated by identities of the form $\lambda^{\alpha}=1$ and hence
belong to an infinite family of resonances. These are two extremes
of all possible resonances. The structure of general resonances has
been studied by Raissy in \cite{Raissy2010Torusactionsinthenormalizationproblem}.
We categorise resonances with the following definition following \cite{Abate2011OpenProblemsinLocalDiscreteHolomorphicDynamics}:
\begin{defn}
Let $\lambda=(\lambda_{1},\ldots,\lambda_{d})\in\mathbb{C}^{d}$. 
\begin{enumerate}[noitemsep]
\item The set of resonances of$\lambda$ is 
\[
\Res(F):=\Res(\lambda):=\pbrace{(\alpha,j)\in\mathbb{N}^{d}\times\{1,\ldots,d\}\mid|\alpha|\ge2,\lambda^{\alpha}=\lambda_{j}}.
\]
\item For a product $\lambda^{\alpha}=1$, with $\alpha\neq0$, the multi-index
$\alpha$ is called a \emph{resonance generator }for $\lambda$. We
let 
\[
\m{ResGen}(\lambda)=\{\alpha\in\mathbb{N}^{d}\backslash\{0\}\mid\lambda^{\alpha}=1\}
\]
 denote the set of resonance generators.
\item A resonance $\lambda_{j}=\lambda^{\beta}\neq0$, $|\beta|\ge2$ is
called \emph{regular}, if $\beta_{j}\ge1$ and $\lambda^{\beta-e_{j}}=1$,
and \emph{irregular} otherwise.
\item $\lambda$ is called \emph{resonance effective}, if $\lambda$ has
only regular resonances.
\item An irregular resonance $\lambda_{j}=\lambda^{\beta}$ is called \emph{singular},
if it does not properly contain a generator, i.e.\ there is no multi-index
$\alpha<\beta$ such that $\lambda^{\alpha}=1$.
\item A resonance $0=\lambda_{j}=\lambda^{\alpha}$ is called \emph{degenerate.}
\end{enumerate}
\end{defn}

\begin{rem}[Properties]
Let $\lambda=(\lambda_{1},\ldots,\lambda_{d})\in\mathbb{C}^{d}$.
\begin{enumerate}[noitemsep]
\item A generator $\lambda^{\alpha}=1$ generates an infinite number of
regular resonances $\lambda_{j'}=\lambda^{k\alpha}\lambda_{j'}$ for
$j'\in\{1,\ldots,d\}$ and $k\in\mathbb{N}$ and all regular resonances
are generated this way. If $\lambda$ is resonance effective, then
all resonances of $\lambda$ are induced by generators, in that $\lambda^{\alpha}=\lambda_{j}$,
if and only if $\alpha-e_{j}\in\Res_{1}(\lambda)$. 
\item The set $\m{ResGen}(\lambda)$ is closed under addition and either
empty or infinite.
\item An irregular resonance $\lambda_{j}=\lambda^{\beta}$ is either singular
or can be decomposed into a singular resonance $\lambda_{j}=\lambda^{\beta'}$
and a generator $\lambda^{\alpha}=1$ such that $\beta=\alpha+\beta'$.
This decomposition is in general not unique.

\begin{singlespace}

\end{singlespace}
\item If $\lambda$ has a finite number of resonances, they are all singular
and non-degenerate. $\lambda$ can only have a finite number of non-degenerate
singular resonances.
\end{enumerate}
\end{rem}

\begin{example}
Let $F\in\End(\mathbb{C}^{d},0)$ with multipliers $\lambda=(\lambda_{1},\ldots,\lambda_{d})$.
\begin{enumerate}[noitemsep]
\item If $F$ is in the Poincaré domain, then $F$ has a finite number
of resonances and they are all singular. 
\item If $F$ is parabolic, $\lambda$ is resonance effective, if and only
if at most one multiplier is equal to $1$.
\item If $F$ is tangent to the identity, $\Res(\lambda)$ contains any
$|\alpha|\ge2$. All resonances are generated by the generators $e_{1},\ldots,e_{d}$
and the singular resonances $\lambda_{j}=\lambda_{k}$ for $j\neq k$.
\item If $F$ is multi-resonant (see Definition~\ref{def:oneRes}), then
$F$ is resonance effective and there exists a basis of generators
$P^{1},\ldots,P^{m}\in\m{ResGen}(\lambda)$.
\end{enumerate}
\end{example}

Even outside the multi-resonant case, \cite[Prop.~7.19]{Raissy2010Torusactionsinthenormalizationproblem}
implies the following representation (see also \cite{Abate2011OpenProblemsinLocalDiscreteHolomorphicDynamics}):
\begin{lem}
\label{lem:ResGenDecomp}Let $\lambda\in(\mathbb{C}^{*})^{d}$. There
exist a finite number of distinct elements $P^{1},\ldots,P^{m},C_{1},\ldots,C_{r}\in\m{ResGen}(\lambda)$,
such that every $\alpha\in\m{ResGen}(\lambda)$ can be written as
\[
\alpha=a_{1}P^{1}+\cdots+a_{m}P^{m}+\varepsilon C_{j}
\]
for suitable $a_{1},\ldots,a_{m}\in\mathbb{N}$, $\varepsilon\in\{0,1\}$,
and $j\in\{1,\ldots,r\}$.
\end{lem}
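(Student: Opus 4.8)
The plan is to translate the statement into the language of affine monoids. First I would observe that $\chi\colon\mathbb{Z}^{d}\to\mathbb{C}^{*}$, $\alpha\mapsto\lambda^{\alpha}$, is a group homomorphism (this uses $\lambda\in(\mathbb{C}^{*})^{d}$), so $\Gamma:=\ker\chi$ is a subgroup of $\mathbb{Z}^{d}$, free abelian of rank $\le d$, and by definition $\m{ResGen}(\lambda)=(\Gamma\cap\mathbb{N}^{d})\setminus\{0\}$. Hence $M:=\Gamma\cap\mathbb{N}^{d}$ is a submonoid of $\mathbb{N}^{d}$, and the whole claim becomes a purely combinatorial statement about $M$: produce finitely many $P^{1},\dots,P^{m},C_{1},\dots,C_{r}\in M\setminus\{0\}$ such that $M=N\cup\bigcup_{j}(C_{j}+N)$, where $N:=\mathbb{N}P^{1}+\dots+\mathbb{N}P^{m}$. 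If $M=\{0\}$ this is vacuous, so assume $M\neq\{0\}$.

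Next I would bring in convex geometry. Let $C:=\m{pos}(M)$ be the cone generated by $M$, $W:=\m{span}(C)$ and $\Lambda:=\Gamma\cap W$. Since $C$ is generated by elements of $\Lambda$, it is a rational polyhedral cone with respect to $\Lambda$, and Gordan's lemma gives that $M=\Lambda\cap C$ is finitely generated with $\m{pos}(M)=C$. Triangulate $C$ into finitely many simplicial subcones $C^{(1)},\dots,C^{(s)}$, each full-dimensional in $W$ and with extreme rays passing through points of $M$. For each $l$, let $P^{(l),1},\dots,P^{(l),n_{l}}$ be the $\Lambda$-primitive generators of the extreme rays of $C^{(l)}$; these are linearly independent, form a $\mathbb{Q}$-basis of $W$, and a $\mathbb{Z}$-basis of a finite-index sublattice $\Lambda^{(l)}:=\bigoplus_{i}\mathbb{Z}P^{(l),i}\subseteq\Lambda$.

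The heart of the argument is a coset-representative computation inside each simplex. For $\alpha\in M\cap C^{(l)}$ write $\alpha=\sum_{i}s_{i}P^{(l),i}$ with $s_{i}\ge0$ rational; the fractional parts $\bar s_{i}:=s_{i}-\lfloor s_{i}\rfloor$ depend only on the coset of $\alpha$ in the finite group $\Lambda/\Lambda^{(l)}$, and $c:=\sum_{i}\bar s_{i}P^{(l),i}$ lies in $M$, since it is a nonnegative combination of the $P^{(l),i}$ (hence in $C^{(l)}\subseteq C\subseteq\mathbb{N}^{d}$) and differs from $\alpha$ by an element of $\Lambda^{(l)}\subseteq\Lambda$. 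As $\alpha=c+\sum_{i}\lfloor s_{i}\rfloor P^{(l),i}$ with $\lfloor s_{i}\rfloor\in\mathbb{N}$, this shows $M\cap C^{(l)}$ is the union, over the finitely many cosets in $\Lambda/\Lambda^{(l)}$, of the sets $c+\sum_{i}\mathbb{N}P^{(l),i}$, where the trivial coset yields $c=0$. Taking $\{P^{1},\dots,P^{m}\}$ to be all the distinct $P^{(l),i}$ and $\{C_{1},\dots,C_{r}\}$ the distinct nonzero representatives $c$ that occur — and discarding any $C_{j}$ that happens to already lie in $N$, so the two lists are disjoint and all entries lie in $\m{ResGen}(\lambda)$ — and using $M=\bigcup_{l}(M\cap C^{(l)})$, one obtains $M=N\cup\bigcup_{j}(C_{j}+N)$, which is exactly the asserted decomposition with $\varepsilon\in\{0,1\}$.

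I expect the main obstacle to be bookkeeping rather than any single deep fact: arranging the triangulation with extreme rays through $M$, keeping straight the lattices $\Gamma,\Lambda,\Lambda^{(l)}$ and the finiteness of each $\Lambda/\Lambda^{(l)}$, checking that the representatives $c$ genuinely land in $M$, and verifying that "at most one $C_{j}$'' survives after merging the contributions of the different simplices. A shorter route — very likely the one intended, given the reference attached to the statement — is simply to invoke \cite[Prop.~7.19]{Raissy2010Torusactionsinthenormalizationproblem} (see also \cite{Abate2011OpenProblemsinLocalDiscreteHolomorphicDynamics}), which already records precisely this monoid decomposition of $\m{ResGen}(\lambda)$; the argument sketched above is essentially a self-contained proof of that result.
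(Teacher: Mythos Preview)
Your proposal is correct. The paper does not give its own proof of this lemma: it simply records the statement as a consequence of \cite[Prop.~7.19]{Raissy2010Torusactionsinthenormalizationproblem} (with a pointer to \cite{Abate2011OpenProblemsinLocalDiscreteHolomorphicDynamics}), exactly the ``shorter route'' you anticipate in your final paragraph. So there is no divergence to flag --- you have supplied a self-contained argument where the paper only cites one.

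Your argument via the kernel lattice $\Gamma=\ker(\alpha\mapsto\lambda^{\alpha})$, Gordan's lemma, simplicial subdivision of the cone $\operatorname{pos}(M)$, and coset representatives in $\Lambda/\Lambda^{(l)}$ is the standard route to such monoid decompositions and is essentially how the cited proposition is proved. Two cosmetic points worth tightening: when you write ``$c\in C^{(l)}\subseteq C\subseteq\mathbb{N}^{d}$'' you mean $C\subseteq\mathbb{R}_{\ge0}^{d}$ together with $c\in\Lambda\subseteq\mathbb{Z}^{d}$, hence $c\in\mathbb{N}^{d}\cap\Gamma=M$; and the phrase ``at most one $C_{j}$ survives'' in your final sentence is misleading --- what you need (and what your construction gives) is that for each fixed $\alpha$ only one $C_{j}$ is used, while the total list $C_{1},\dots,C_{r}$ may well have $r>1$. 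Neither affects the validity of the argument.
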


\begin{defn}
The elements $P^{1},\ldots,P^{m}$ are called \emph{minimal}, the
elements $C_{1},\ldots,C_{r}$ \emph{cominimal}.
\end{defn}

\begin{rem}
In this language, multi-resonance is equivalent to resonance effectiveness
with 
\begin{enumerate}[noitemsep]
\item $\mathbb{Q}$-linearly independent minimal elements and 
\item no cominmial elements. 
\end{enumerate}
By Lemma~\ref{lem:ResGenDecomp} if $F$ is resonance effective and
in Poincaré-Dulac normal form, we may be able to study the dynamics
via a parabolic shadow on $\mathbb{C}^{m+r}$. In \cite{Abate2011OpenProblemsinLocalDiscreteHolomorphicDynamics},
Abate poses the question, if it is possible to obtain similar results
to Theorem~\ref{thm:BracciZaitsevDynamics} if we drop one of above
conditions.
\end{rem}

On the other extreme, germs with only singular resonances have finite
Poincaré-Dulac normal forms. This is sometimes referred to as the
\emph{nodal} case.

\subsection{Normalisation}

There are two important questions about normalisation of general elliptic
germs: 
\begin{enumerate}[noitemsep]
\item What are sufficient conditions for a normalisation to converge?
\item If the normalisation diverges, can we still understand the dynamics
of the germ?
\end{enumerate}
Raissy gives some perspective on possible answers to both questions
in the final section of \cite{Raissy2018ALocalApproachtoHolomorphicDynamicsinHigherDimension}.
We give only a basic idea of nature of both problems:
\begin{enumerate}
\item The proof of holomorphic normalisation in the Poincaré domain from
Theorem~\ref{thm:PoincareNormalisationPolynom} does not extend to
resonant elliptic germs even with finite sets of resonances. A suitable
additional condition for holomorphic normalisation of holomorphic
vector fields has be been given in \cite{Brjuno1971AnalyticFormofDifferentialEquationsIII},
but no analogue for biholomorphisms is known. Raissy investigated
the differences between the two settings in \cite{Raissy2010Torusactionsinthenormalizationproblem}.
\item Even the dynamics of elliptic germs $F\in\End(\mathbb{C}^{d},0)$
that are formally, but not holomorphically, linearisable are still
mysterious. Before a general theory like Pérez-Marco's Siegel compacta
can take form, a complete description of the parabolic case might
be required.

\end{enumerate}

\cleardoublepage{}

\chapter{\label{chap:FatouAut}Fatou components of automorphisms}

\selectlanguage{british}

In dimension $2$ and above, we are far from understanding the Fatou
components of endomorphisms of $\mathbb{C}^{2}$. The class $\End(\mathbb{C}^{2})$
is much larger and more complicated than $\End(\mathbb{C})$. Already
the group of automorphisms $\Aut(\mathbb{C}^{2})$ has infinite dimension
(whereas $\Aut(\mathbb{C})$ or $\Aut(\mathbb{P})$ have finite dimension
and trivial dynamics). For a systematic treatment, see Forstneri\v{c}
\cite[Chap.~4]{Forstneriv2017SteinmanifoldsandholomorphicmappingsThehomotopyprincipleincomplexanalysis}.

Therefore most classification results apply only to certain subclasses.
We start with a quick review of the research on polynomials and projective
endomorphisms, before honing in on the class of automorphisms $\Aut(\mathbb{C}^{2})$
and its invariant components, that we discuss in detail.

First, we introduce some terminology:
\begin{defn}
Let $X$ be a complex manifold and $F\in\End(X)$. 
\begin{enumerate}[noitemsep]
\item The \emph{$\omega$-limit set\index{omega-limit set@\emph{$\omega$-limit set}}}
$\omega_{F}(p)$ of a point $p\in X$ or $\omega_{F}(U)$ of an open
set $U\subseteq\mathbb{C}^{2}$ under $F$ is the set of all accumulation
points of orbits under $F$ starting in $p$ or $U$ respectively.
\item If $V$ is a Fatou component of $F$ and $\{n_{k}\}_{k}$ is a subsequence
such that $F^{\circ n_{k}}\xrightarrow[k\to+\infty]{}F_{\infty}$
locally uniformly on $V$, then $F_{\infty}:V\to\omega_{F}(V)$ is
called a \emph{\index{limit function@\emph{limit function}}limit
function} on $V$.
\end{enumerate}
\end{defn}

\begin{defn}
Let $X$ be a complex manifold, $F\in\End(X)$, and $V\subseteq X$
be a Fatou component of $F$.
\begin{enumerate}[noitemsep]
\item $V$ is \emph{\index{invariant Fatou component@\emph{invariant Fatou component}}invariant},
if $F(V)=V$.
\item $V$ is \emph{recurrent}\index{recurrent Fatou component@\emph{recurrent Fatou component}},
if $V$ contains an accumulation point of an $F$-orbit, and \emph{non-recurrent}
or \emph{transient} otherwise.
\item $V$ is \emph{attracting} \index{attracting Fatou component@\emph{attracting Fatou component}}to
$P\in X$, if $F^{\circ n}(z)\xrightarrow[n\to+\infty]{}P$ for all
$z\in V$.
\item $V$ is a \emph{rotation domain}\index{rotation domain@\emph{rotation domain}},
if there exists a subsequence $\{n_{k}\}_{k}$ such that $F^{\circ n_{k}}|_{V}\xrightarrow[k\to+\infty]{}\id_{V}$,
i.e.\ $\id_{V}$ is a limit function on $V$.
\item $V$ is \emph{$p$-periodic} for some $p\in\mathbb{N}^{*}$, if $F^{\circ p}(V)=V$
and $F^{\circ n}(V)\neq V$ for any $n<p$. In this case, $\{F^{\circ n}(V)\}_{0\le n<p}$
is a \emph{$p$-periodic cycle\index{periodic Fatou component@\emph{periodic Fatou component}}}
of Fatou components.
\item $V$ is \emph{\index{wandering Fatou component@\emph{wandering Fatou component}}wandering},
if no image $F^{\circ n}(V)$ is periodic for any $n$.
\end{enumerate}
\end{defn}

\begin{rem}
Let $V$ be a Fatou component of $F\in\End(X)$.
\begin{enumerate}[noitemsep]
\item $V$ is recurrent, precisely when $z\in\omega_{F}(z)$ for some $z\in V$.
A non-periodic point of this type is called a \emph{recurrent point\index{recurrent point@\emph{recurrent point}}}
for $F$.
\item If $V$ is recurrent, then $V$ is periodic.
\item If $V$ is attracting to $P\in X$, then $P$ is a fixed point of
$F$. 
\item If $V$ is invariant and attracting to $P$, then $P\in\overline{V}$
and $V$ is recurrent if and only if $P\in V$.
\end{enumerate}
\end{rem}

\section{Classification results }

\subsection{Polynomials and projective endomorphisms}

In dimension $2$, we immediately recover products of all the types
of Fatou components from Section~\ref{sec:1DFatouClassification},
by considering direct products $F\in\End(\mathbb{C}^{2})$,
\begin{equation}
F(z,w)=(f(z),g(w)),\label{eq:dirProd}
\end{equation}
where $f$ and $g$ are polynomials (or entire functions). However,
already for polynomial \emph{skew-products\index{skew-product@\emph{skew-product}}}
\begin{equation}
F(z,w)=(f(z,w),g(w)),\label{eq:SkewProd}
\end{equation}
where $f:\mathbb{C}^{2}\to\mathbb{C}$ and $g:\mathbb{C}\to\mathbb{C}$
are polynomials, several new phenomena appear: Boc Thaler, Fornæss
and Peters \cite{BocThalerFornessPeters2015FatouComponentswithPuncturedLimitSets}
constructed examples of Fatou components with punctured limit set,
and wandering components have been found in \cite{AstorgBuffDujardinPetersRaissy2016ATwoDimensionalPolynomialMappingwithaWanderingFatouComponent}
and \cite{AstorgBocThalerPeters2019WanderingDomainsArisingfromLavaursMapswithSiegelDisks}.
Dynamics of polynomial skew products have been studied in \cite{Heinemann1998JuliaSetsofSkewProductsinC2},
\cite{Rivi1998LocalBehaviourofDiscreteDynamicalSystems}, \cite{Jonsson1999DynamicsofPolynomialSkewProductsonC2},
\cite{HruskaRoeder2010TopologyofFatouComponentsforEndomorphismsofCPkLinkingwiththeGreensCurrent},
\cite{Roeder2011ADichotomyforFatouComponentsofPolynomialSkewProducts},
\cite{PetersVivas2016PolynomialSkewProductswithWanderingFatouDisks},
\cite{Raissy2017PolynomialSkewProductsinDimension2BulgingandWanderingFatouComponents},
\cite{PetersSmit2018FatouComponentsofAttractingSkewProducts}, \cite{AstorgBianchi2018HyperbolicityandBifurcationsinHolomorphicFamiliesofPolynomialSkewProducts},
and \cite{PetersRaissy2019FatouComponentsofEllipticPolynomialSkewProducts}
.

A rational map $F:\mathbb{C}^{2}\to\mathbb{P}^{2}$ extends to an
endomorphism $F\in\End(\mathbb{P}^{2})$, if and only if all components
of $F$ have the same degree. Hence, examples of endomorphisms $F\in\End(\mathbb{P}^{2})$\textbf{
}can be readily constructed as direct products (\ref{eq:dirProd})
and skew-products (\ref{eq:SkewProd}) of rational maps $f$ and $g$
of common degree. In particular, the skew-products in \cite{AstorgBuffDujardinPetersRaissy2016ATwoDimensionalPolynomialMappingwithaWanderingFatouComponent},
that admit wandering components, can be chosen such that they extend
to projective endomorphisms.

A classification of recurrent Fatou components of endomorphisms of
$\mathbb{P}^{2}$ similar to Theorem~\ref{thm:PolAutRecurrent} follows
from \cite{FornessSibony1995Classificationofrecurrentdomainsforsomeholomorphicmaps}
and \cite{Ueda2008HolomorphicmapsonprojectivespacesandcontinuationsofFatoumaps},
and has been extended to $\mathbb{P}^{d},d\ge2$ in \cite{FornessRong2014ClassificationofRecurrentDomainsforHolomorphicMapsonComplexProjectiveSpaces}.
Further results on the dynamics of projective endomorphisms can be
found in \cite{FornaessSibony1994ComplexdynamicsinhigherdimensionI},
\cite{Sibony1999DynamiqueDesApplicationsRationnellesDePk}, and \cite{DinhSibony2010DynamicsinSeveralComplexVariablesEndomorphismsofProjectiveSpacesandPolynomiallikeMappings}.

\subsection{Polynomial automorphisms}

An important class of polynomial automorphisms are \emph{Hènon maps\index{Hènon map@\emph{Hènon map}}}
\[
H(z,w)=(p(z)-\delta w,z),\quad(z,w)\in\mathbb{C}^{d-1}\times\mathbb{C}
\]
for a polynomial $p\in\mathbb{C}[z]$ and a constant $\delta\in\mathbb{C}^{*}$.
For $d=2$, they contain all interesting dynamics of Polynomial automorphisms
(Friedland, Milnor \cite{FriedlandMilnor1989DynamicalPropertiesofPlanePolynomialAutomorphisms}).
This is no longer true for $d\ge3$ (Shestakov, Umirbaev \cite{ShestakovUmirbaev2004TheTameandtheWildAutomorphismsofPolynomialRingsinThreeVariables}).
This was the starting point for the study of extensive investigations
of the dynamics of polynomial Automorphisms and Hénon maps on $\mathbb{C}^{2}$
by Hubbard, Oberste-Vorth, Papadopol, and Veseloc in \cite{Hubbard1986TheHenonMappingintheComplexDomain},
\cite{HubbardObersteVorth1994HenonMappingsintheComplexDomainItheGlobalTopologyofDynamicalSpace},
\cite{HubbardObersteVorth1995HenonMappingsintheComplexDomainIIProjectiveandInductiveLimitsofPolynomials},
and \cite{HubbardPapadopolVeselov2000ACompactificationofHenonMappingsinC2AsDynamicalSystems},
by Bedford, Lyubich, and Smillie in \cite{BedfordSmillie1991FatouBieberbachDomainsArisingfromPolynomialAutomorphisms},
\cite{BedfordSmillie1991PolynomialDiffeomorphismsofC2CurrentsEquilibriumMeasureandHyperbolicity},
\cite{BedfordSmillie1991PolynomialdiffeomorphismsofmathbbC2IIStablemanifoldsandrecurrence},
\cite{BedfordSmillie1992PolynomialDiffeomorphismsofC2IIIErgodicityExponentsandEntropyoftheEquilibriumMeasure},
\cite{BedfordLyubichSmillie1993DistributionofPeriodicPointsofPolynomialDiffeomorphismsofC2},
\cite{BedfordLyubichSmillie1993PolynomialDiffeomorphismsofC2IVtheMeasureofMaximalEntropyandLaminarCurrents},
\cite{BedfordSmillie1998PolynomialDiffeomorphismsofC2VCriticalPointsandLyapunovExponents},
\cite{BedfordSmillie1998PolynomialDiffeomorphismsofC2VIConnectivityofJ},
\cite{BedfordSmillie1999PolynomialDiffeomorphismsofC2VIIHyperbolicityandExternalRays},
\cite{BedfordSmillie1999ExternalRaysintheDynamicsofPolynomialAutomorphismsofC2},
and \cite{BedfordSmillie2002PolynomialDiffeomorphismsofC2VIIIQuasiExpansion},
and by Fornæss and Sibony in \cite{FornessSibony1992ComplexHenonmappingsinmathbbC2andFatouBieberbachdomains},
\cite{FornessSibony1995Classificationofrecurrentdomainsforsomeholomorphicmaps},
and \cite{FornessSibony1999Complexdynamicsinhigherdimension}.

A basic property restricting the dynamics of Hénon maps is the following
\emph{filtration} of the modulus plane:
\begin{prop}[\cite{BedfordSmillie1991PolynomialDiffeomorphismsofC2CurrentsEquilibriumMeasureandHyperbolicity}]
\label{prop:HenonFiltr}Let $H\in\Aut(\mathbb{C}^{2})$ be a Hénon
map of the form $H(z,w)=(p(z)-\delta w,z)$. Then for $R>0$ large
enough, the sets 
\begin{align}
W & :=\{(z,w)\in\mathbb{C}^{2}\mid\max(|z|,|w|)\le R\}\nonumber \\
V_{+} & :=\{(z,w)\in\mathbb{C}^{2}\mid|z|\ge\max(|w|,R)\}\label{eq:HenonFiltr}\\
V_{-} & :=\{(z,w)\in\mathbb{C}^{2}\mid|w|\ge\max(|z|,R)\}\nonumber 
\end{align}
satisfy: $H(V_{+})\subseteq V_{+}$, $H^{-1}(V_{-})\subseteq V_{-}$,
any point in $V_{+}$ converges to $[1:0:0]$ in $\mathbb{P}^{2}$
and any orbit diverging from $\mathbb{C}^{2}$ enters $V_{+}$. Hence
$\bigcup_{n\in\mathbb{N}}H^{\circ(-n)}(V_{+})$ is a Fatou component
``attracting to $[1:0:0]$''.
\end{prop}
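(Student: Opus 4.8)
The plan is to fix once and for all a radius $R>0$ large enough for a handful of growth estimates, using that $p$ has degree $k\ge 2$ (as is standard for H\'enon maps and as in the cited reference): if $a_k\neq 0$ is the leading coefficient, then $|p(z)|\ge\tfrac12|a_k|\,|z|^k$ for $|z|\ge R$, and I will take $R$ so large that in addition $\tfrac12|a_k|R^{k-1}-|\delta|\ge 2$. First I would prove the two invariance claims. For $(z,w)\in V_+$ we have $|z|\ge|w|$ and $|z|\ge R$, so the first coordinate of $H(z,w)$ satisfies $|p(z)-\delta w|\ge|p(z)|-|\delta|\,|z|\ge|z|\bigl(\tfrac12|a_k|\,|z|^{k-1}-|\delta|\bigr)\ge 2|z|\ge 2R$, while the second coordinate is $z$; hence $H(z,w)\in V_+$ and its first coordinate has at least doubled. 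For $H^{\circ(-1)}(V_-)\subseteq V_-$ I would observe that $H^{\circ(-1)}(z,w)=(w,\delta^{-1}(p(w)-z))$ is again a H\'enon-type map with the coordinate roles interchanged, so the same estimate applies verbatim.

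Next I would deduce the convergence statement. Iterating the doubling estimate on $V_+$ gives $|z_n|\ge 2^n|z_0|\ge 2^nR\to\infty$ for an orbit $\{(z_n,w_n)\}$ in $V_+$. Writing the orbit point in homogeneous coordinates as $[z_n:w_n:1]=[1:w_n/z_n:1/z_n]$, using $w_n=z_{n-1}$ and the superlinear bound $|z_n|\ge|z_{n-1}|\bigl(\tfrac12|a_k|\,|z_{n-1}|^{k-1}-|\delta|\bigr)$, both $1/z_n$ and $w_n/z_n=z_{n-1}/z_n$ tend to $0$, with bounds uniform over $V_+$ since $|z_n|\ge 2^nR$ there. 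Thus $H^{\circ n}\to[1:0:0]$ uniformly on $V_+$, which also explains the quotation marks around ``attracting to $[1:0:0]$'': the limit point lies on the line at infinity of $\mathbb{P}^2$.

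The crux is the escaping-orbit statement, which I would recast as: \emph{an orbit that never meets $V_+$ is bounded}. Since $\mathbb{C}^2=W\cup V_+\cup V_-$, such an orbit $\{(z_n,w_n)\}$ lies in $W\cup V_-$ at every time, and two elementary observations finish it. First, whenever $(z_n,w_n)\in V_-$ one has $|w_{n+1}|=|z_n|\le|w_n|$, so $|w_n|$ is non-increasing along any run spent in $V_-$. Second, a short case check gives the implication $|w_n|\le R\implies|w_{n+1}|\le R$: either $(z_n,w_n)\in W$, where $|w_{n+1}|=|z_n|\le R$, or it lies in $V_-$, which forces $|w_n|=R$ and again $|w_{n+1}|=|z_n|\le|w_n|=R$. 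Hence if $|w_n|\le R$ ever holds it holds forever after, and then $|z_n|=|w_{n+1}|\le R$ from that point on, so the orbit is bounded; otherwise $|w_n|>R$ for all $n$, so the orbit is never in $W$ and, by hypothesis, never in $V_+$, hence always in $V_-$, whence $|w_n|\le|w_0|$ and $|z_n|\le|w_n|\le|w_0|$. In both cases the orbit is bounded, which is the contrapositive of the claim. The same dichotomy identifies $K_+:=\{p\in\mathbb{C}^2:\text{the forward orbit of }p\text{ is bounded}\}$ with $\bigcap_{n\ge0}H^{\circ(-n)}(W\cup V_-)$, a closed set since $W\cup V_-=\mathbb{C}^2\setminus\operatorname{int}(V_+)$, and shows $\Omega:=\bigcup_{n\in\mathbb{N}}H^{\circ(-n)}(V_+)=\mathbb{C}^2\setminus K_+$.

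Finally I would check that $\Omega$ is a single Fatou component attracting (in the above sense) to $[1:0:0]$. Since $H(V_+)\subseteq V_+$, the open sets $H^{\circ(-n)}(V_+)$ increase with $n$, and each is connected ($V_+$ is connected and $H^{\circ n}$ is a biholomorphism of $\mathbb{C}^2$), so $\Omega$ is connected and open. Every point of $\Omega$ has an orbit entering $\operatorname{int}(V_+)$ (one further application of the doubling estimate makes the first coordinate strictly dominate), and on any compact subset of $\Omega$ this entry occurs by a uniform time $N$; together with $|z_{N+m}|\ge 2^mR$ this shows $\{H^{\circ n}\}$ leaves every compact of $\mathbb{C}^2$ uniformly on compacta of $\Omega$, so $\Omega$ lies in the Fatou set. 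For maximality, if $\Omega$ were properly contained in a Fatou component $U$, connectedness of $U$ would furnish $q\in U\cap\partial\Omega\subseteq U\cap K_+$; normality of $\{H^{\circ n}\}$ at $q$ then forces, on a neighbourhood $U_q\subseteq U$ (and $U_q\cap\Omega\neq\emptyset$ as $q\in\partial\Omega$), a subsequence that either converges uniformly to a holomorphic $\mathbb{C}^2$-valued limit — impossible, since on $U_q\cap\Omega$ the iterates leave every compact — or diverges uniformly off $\mathbb{C}^2$ — impossible, since the orbit of $q\in K_+$ is bounded. This contradiction shows $\Omega$ is exactly one Fatou component. The one genuinely delicate step is the boundedness dichotomy of the third paragraph; everything else is routine polynomial growth bookkeeping (using $\deg p\ge 2$), a one-line homogeneous-coordinate computation, and the standard boundary–normality argument for maximality.
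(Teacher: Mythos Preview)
The paper does not give its own proof of this proposition; it is stated with a citation to Bedford--Smillie and used as a black box. Your argument is correct and is essentially the standard filtration proof from that reference: the polynomial growth estimate (requiring $\deg p\ge 2$, which you rightly make explicit), the invariance of $V_{\pm}$, the superlinear escape on $V_+$ read off in homogeneous coordinates, the contrapositive boundedness dichotomy for orbits avoiding $V_+$, and the boundary--normality argument for maximality are all the expected ingredients, cleanly executed. One cosmetic point: the ``verbatim'' for $H^{-1}$ needs the constant $R$ chosen large enough for a slightly different inequality (involving $|\delta|^{-1}$ rather than $|\delta|$), but this is absorbed into ``$R$ large enough'' and does not affect the argument.
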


The recurrent Fatou components were classified by Bedford and Smillie,
and Fornæss and Sibony:
\begin{thm}[\cite{BedfordSmillie1991PolynomialdiffeomorphismsofmathbbC2IIStablemanifoldsandrecurrence},
\cite{FornessSibony1992ComplexHenonmappingsinmathbbC2andFatouBieberbachdomains},
\cite{FornessSibony1995Classificationofrecurrentdomainsforsomeholomorphicmaps}]
\label{thm:PolAutRecurrent}Let $F\in\Aut(\mathbb{C}^{2})$ be a
polynomial automorphism with Jacobian $\delta=\det df\neq0$ (necessarily
constant) and let $\Omega$ be a $p$-periodic Fatou component of
$F$. Then we have one of the following:
\begin{enumerate}[noitemsep]
\item $|\delta|<1$ and $\Omega$ is the basin of attraction of an attracting
fixed point $P\in\Omega$ under $F^{\circ p}$ (and $F^{\circ p}$
is conjugated to a polynomial normal form on $\Omega\cong\mathbb{C}^{d}$).
\item \label{enu:PolAutLimit1}$|\delta|<1$ and there is a biholomorphic
map $\varphi:\Omega\to A\times\mathbb{C}$ conjugating $F$ to 
\[
(z,w)\mapsto(e^{i\theta}z,\lambda w),
\]
where $\theta\in\mathbb{R}$ and $|\lambda|<1$ such that $\lambda e^{i\theta}=\delta$,
and $A\subseteq\mathbb{C}$ is an annulus or a disk. In particular,
$\omega_{F}(\Omega)=\varphi^{-1}(A\times\{0\})$ is a Siegel disk
or Herman ring.
\item $|\delta|=1$ and $\Omega$ is a rotation domain.
\end{enumerate}
In all three cases, $\Sigma=\omega_{F}(\Omega)\subseteq\Omega$ is
a closed complex submanifold, any two limit maps $\Omega\to\Sigma$
differ by an automorphism of $\Sigma$, and, in particular, $F|_{\Sigma}\in\Aut(\Sigma)$.
\end{thm}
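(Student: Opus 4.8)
The plan is to first replace $F$ by $F^{\circ p}$, so that $\Omega$ becomes invariant --- the Fatou set and all the limit-type invariants are unchanged under passing to an iterate, and $|\delta^{p}|$ lies in the same one of the three regimes $<1$, $=1$, $>1$ as $|\delta|$ --- and to note that $\delta=\det dF$ is a nowhere-vanishing polynomial, hence constant. Since the conclusion asserts $\omega_{F}(\Omega)\subseteq\Omega$, I read the hypothesis as including recurrence of $\Omega$ (the remaining periodic components of a polynomial automorphism are the escaping ones from filtrations as in Proposition~\ref{prop:HenonFiltr}); recurrence gives an orbit in $\Omega$ with a bounded subsequence, so $\{F^{\circ n}|_{\Omega}\}_{n}$ is locally uniformly bounded and Montel's theorem~\ref{thm:MontelBdd} supplies limit maps $h:\Omega\to\Omega$. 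The structural core of the argument is a constant-rank lemma: by a Cartan-type argument --- and this is where injectivity of $F$ is genuinely used, via convergence theorems for sequences of injective holomorphic maps --- every limit map has the same generic rank $\rho\in\{0,1,2\}$, the set $\Sigma:=\omega_{F}(\Omega)$ is an $F$-invariant closed complex submanifold of $\Omega$ of dimension $\rho$, and the restrictions $h|_{\Sigma}$ together with the $F^{\circ n}|_{\Sigma}$ generate a compact abelian subgroup $\mathcal{G}\subseteq\Aut(\Sigma)$. Granting this lemma, $F|_{\Sigma}\in\mathcal{G}\subseteq\Aut(\Sigma)$ and any two limit maps $\Omega\to\Sigma$ differ by an element of $\mathcal{G}$, so the three ``in all cases'' assertions follow at once and it remains to analyse the three values of $\rho$.

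If $\rho=0$, every limit map is a constant $P$, all orbits in $\Omega$ converge to $P$, and recurrence forces $P\in\Omega$; its multipliers $\lambda_{1},\lambda_{2}$ satisfy $|\lambda_{i}|\le1$ and $\lambda_{1}\lambda_{2}=\delta$. A modulus-one multiplier is impossible here: a neutral multiplier $\neq1$ is excluded by the higher-dimensional snail lemma~\ref{thm:2DsnailLemma} (there is no uniformly attracting domain), and in the parabolic-attracting case the fixed point lies on $\partial\Omega$ by Ueda's theorem~\ref{thm:SemiAttrUeda} and its higher-order and higher-dimensional extensions, contradicting $P\in\Omega$. Hence $P$ is geometrically attracting, so $|\delta|=|\lambda_{1}\lambda_{2}|<1$, and by the theorem of Rosay and Rudin~\ref{thm:AttrBasinFatBieb} (cf.\ Theorem~\ref{thm:PoincareNormalisationPolynom}) $\Omega=A_{F}(P)$ is biholomorphic to $\mathbb{C}^{2}$ with $F$ conjugate on $\Omega$ to an upper-triangular polynomial Poincar\'e--Dulac normal form: this is case~1.

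If $\rho=2$, then $\Sigma=\Omega$, each limit map $h$ is an automorphism of $\Omega$ with $\det dh=\lim_{k}\delta^{n_{k}}$ a nonzero constant, forcing $|\delta|=1$; as $\mathcal{G}=\overline{\{F^{\circ n}|_{\Omega}\}}$ is then a compact group it contains $\id_{\Omega}$ as a limit map, so $\Omega$ is a rotation domain --- case~3. If $\rho=1$, then $\Sigma$ is a non-compact Riemann surface (no compact curve embeds in $\mathbb{C}^{2}$) carrying the automorphism $F|_{\Sigma}\in\mathcal{G}$ with a recurrent orbit; uniformization, together with the exclusion of $\mathbb{D}^{*}$ exactly as in the one-variable classification (cf.\ the proof sketch of Theorem~\ref{thm:FatouCompRational}), shows $\Sigma$ is biholomorphic to a disk or an annulus and $F|_{\Sigma}$ is conjugate to an irrational rotation $z\mapsto e^{i\theta}z$. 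Since $\Sigma$ is Stein and non-compact, its normal bundle in $\Omega$ is holomorphically trivial; running a Koenigs-type averaging in the transverse variable along orbits of the base rotation --- the mechanism of Nishimura's theorem~\ref{thm:NishimuraSemiAttrFixedCurve}, adapted from a pointwise-fixed curve to an invariant curve on which $F$ merely rotates --- produces a biholomorphism $\varphi:\Omega\to A\times\mathbb{C}$ with $\varphi\circ F\circ\varphi^{-1}(z,w)=(e^{i\theta}z,\lambda(z)w)$. The base rotation forces $\lambda$ to be constant, the constant Jacobian gives $\lambda e^{i\theta}=\delta$, boundedness of $w$-orbits gives $|\lambda|\le1$, and $|\lambda|=1$ would make the transverse dynamics recurrent too (returning to $\rho=2$), so $|\lambda|<1$ and $|\delta|<1$: this is case~2, with $\omega_{F}(\Omega)=\varphi^{-1}(A\times\{0\})$ a Siegel disk or a Herman ring.

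I expect two main obstacles. First, the constant-rank/compact-group lemma underpinning all three cases: making precise that limit maps of an injective-map sequence have locally constant rank with smooth image, and that their restrictions form a compact group, is exactly where the hypothesis $F\in\Aut(\mathbb{C}^{2})$ (rather than a general endomorphism) cannot be dispensed with. Second, and harder, the transverse linearization in the rank-one case: Nishimura's argument linearizes the normal bundle when the base is pointwise fixed and the transverse multipliers are attracting, so a Poincar\'e-domain majorant works; here the base is only invariant with $F|_{\Sigma}$ a rotation, so one must average along the rotation's orbits, and triviality of the normal bundle over the non-compact surface $\Sigma$ has to be combined with uniform control of the transverse contraction rate. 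A lesser but real point is excluding $\Sigma\cong\mathbb{C},\mathbb{C}^{*}$ in case~2 (leaving only a disk or an annulus), which I would import from \cite{BedfordSmillie1991PolynomialdiffeomorphismsofmathbbC2IIStablemanifoldsandrecurrence} via properties of the Green's function of $F$.
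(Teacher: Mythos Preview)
The paper does not contain a proof of this theorem: it is stated as a classification result from the cited references \cite{BedfordSmillie1991PolynomialdiffeomorphismsofmathbbC2IIStablemanifoldsandrecurrence}, \cite{FornessSibony1992ComplexHenonmappingsinmathbbC2andFatouBieberbachdomains}, \cite{FornessSibony1995Classificationofrecurrentdomainsforsomeholomorphicmaps}, and the text immediately moves on to a definition. There is therefore no ``paper's own proof'' to compare against.

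That said, your outline is essentially the argument of the original sources. The rank trichotomy via a constant-rank lemma for limits of injective holomorphic maps is precisely the Forn{\ae}ss--Sibony mechanism, and the exclusion of $\Sigma\cong\mathbb{C},\mathbb{C}^{*}$ in the rank-one case via Green's-function considerations is the Bedford--Smillie input. Your identification of the two genuine technical points --- the compact-group/constant-rank lemma and the transverse linearization over a rotating (rather than pointwise-fixed) base --- is accurate; the latter is handled in the literature not by adapting Nishimura directly but by working with the holomorphic retraction onto $\Sigma$ and the line-bundle structure of its fibres, which is close in spirit to what you propose. One small correction: in the $\rho=0$ case you need not invoke Ueda's parabolic-attracting theory to exclude multiplier $1$, since the snail lemma (Theorem~\ref{thm:2DsnailLemma}) already covers all neutral multipliers including $\lambda_{1}=1$ when the other multiplier is attracting --- but in fact the standard argument simply observes that a neutral multiplier would force $\{dF^{\circ n}_{P}\}_{n}$ to be unbounded or non-convergent to $0$, contradicting local uniform convergence of $F^{\circ n}$ to the constant $P$.
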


\begin{defn}
A Fatou component as in case \ref{enu:PolAutLimit1} of Theorem~\ref{thm:PolAutRecurrent}
is called a (\emph{recurrent}) \emph{Siegel\index{Siegel cylinder@\emph{Siegel cylinder}}}
or \emph{Herman cylinder}\index{Herman cylinder@\emph{Herman cylinder}},
if $A$ is a disk or an annulus, respectively.
\end{defn}

Siegel cylinders are known to exist via local dynamics, but the existence
of Herman cylinders is an open question.

The classification of invariant Fatou components has been almost completed
in the \emph{moderately dissipative} case, by Lyubich and Peters in
\cite{LyubichPeters2014ClassificationofinvariantFatoucomponentsfordissipativeHenonmaps},
where they showed the only non-recurrent invariant Fatou components
to be attracting:
\begin{thm}[\cite{LyubichPeters2014ClassificationofinvariantFatoucomponentsfordissipativeHenonmaps}]
Let $F\in\Aut(\mathbb{C}^{2})$ be a non-elementary polynomial automorphism
of degree $k\ge2$ with (constant) Jacobian $\delta$ such that $|\delta|<1/k^{2}$.
If $\Omega$ is an invariant non-recurrent Fatou component of $F$
with bounded orbits, then $\Omega$ is attracting to a parabolic-attracting
fixed point $P\in\partial\Omega$ of $F$ with parabolic multiplier
$1$.
\end{thm}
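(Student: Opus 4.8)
The plan is to establish, in order: (i) that $\Omega$ is attracting to a single point $P\in\partial\Omega$; (ii) that $P$ is a fixed point of $F$; and (iii) that $dF_{P}$ has eigenvalues $1$ and $\delta$, so that $P$ is parabolic-attracting with parabolic multiplier $1$. After conjugation we may assume (Friedland--Milnor) that the non-elementary map $F$ is a composition of generalised H\'enon maps of total degree $k$, so that the filtration of Proposition~\ref{prop:HenonFiltr} applies; since all orbits in $\Omega$ are bounded, $\Omega$ is contained in the interior of the set $K^{+}$ of points with bounded forward orbit, and $\omega_{F}(\Omega)$ is a compact, connected, completely $F$-invariant subset of $\mathbb{C}^{2}$, disjoint from $\Omega$ because $\Omega$ is non-recurrent.

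Step (i) is the heart of the matter and is where moderate dissipativity $|\delta|<1/k^{2}$ is essential. Because each $F^{\circ n}$ is injective, no limit function of $\{F^{\circ n}\}$ on $\Omega$ can have rank $2$: an injective limit that were a local biholomorphism somewhere would carry an open piece of $\Omega$ biholomorphically into $\omega_{F}(\Omega)\subseteq\overline\Omega$, producing a recurrent orbit. Thus every limit function has rank $\le1$, and the real work is to rule out rank $1$. If some limit function $h=\lim F^{\circ n_{j}}$ had rank $1$ its image would be a Riemann surface $S\subseteq\partial\Omega$; one would show that $S$ is contained in a bounded $F$-invariant analytic set whose transverse direction is uniformly contracted --- this is exactly where the bound on $|\delta|$ is used, since for indifferent tangential dynamics the transverse multiplier has modulus $|\delta|<1$ --- and then the dynamics induced by $F$ on the closure $\overline S$ would be that of an automorphism of a hyperbolic Riemann surface, hence (compare Theorem~\ref{thm:PolAutRecurrent} and the underlying Denjoy--Wolff dichotomy) either a rotation of a disc or annulus, contradicting non-recurrence of $\Omega$, or convergence to a point of $\overline S$, which after composing with further iterates forces the limit functions on $\Omega$ to be constant. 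A further, finer argument then excludes $\omega_{F}(\Omega)$ being a non-degenerate connected invariant set inside $\partial\Omega$ under a moderately dissipative map, leaving $\omega_{F}(\Omega)=\{P\}$ a single point. Hence $F^{\circ n}\to P$ locally uniformly on $\Omega$, i.e.\ $\Omega$ is attracting to $P$, and $P\in\partial\Omega$ since $\Omega$ is non-recurrent. I expect this rank-reduction step --- showing the limit functions are constant and the limit set is a point --- to be the main obstacle: it is the technical core of Lyubich--Peters and genuinely requires the geometry of the stable lamination together with the dissipativity bound.

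Once (i) holds, step (ii) is immediate: $F(P)=\lim F^{\circ(n+1)}=P$. For step (iii) write the eigenvalues of $dF_{P}$ as $\mu_{1},\mu_{2}$ with $\mu_{1}\mu_{2}=\delta$ and $|\mu_{2}|\le|\mu_{1}|$; then $|\mu_{2}|^{2}\le|\delta|<1/k^{2}$, so $\mu_{2}$ is strongly attracting, and it remains to pin down $\mu_{1}$. If $|\mu_{1}|<1$, then $P$ is an attracting fixed point, its realm of attraction $A_{F}(P)$ is biholomorphic to $\mathbb{C}^{2}$ and in particular connected (Theorem~\ref{thm:AttrBasinFatBieb}); as $\Omega\subseteq A_{F}(P)$ and both are Fatou components this gives $\Omega=A_{F}(P)\ni P$, contradicting $P\in\partial\Omega$. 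If $|\mu_{1}|>1$, then $P$ is a saddle and, by Theorem~\ref{thm:StableMnfThm}, $\{z:F^{\circ n}(z)\to P\}$ is the one-dimensional manifold $W^{\mathrm{s}}(P)$, which cannot contain the open set $\Omega$. If $\mu_{1}$ is elliptic, then $P$ is elliptic-attracting and, by Theorems~\ref{thm:SCVSiegelCompacta} and \ref{thm:EllAttrHedgehog} (or directly via the centre-manifold reduction of Theorem~\ref{thm:CentManRedLyubich}), the realm of attraction again reduces to the one-dimensional strong stable manifold $W^{\mathrm{ss}}(P)$ --- the same contradiction. Hence $|\mu_{1}|=1$ and $\mu_{1}$ is a root of unity; if it were a primitive $q$-th root of unity with $q>1$, then $F^{\circ q}$ would be parabolic-attracting at $P$ with parabolic multiplier $1$, while $F$ permutes the local parabolic domains of $F^{\circ q}$ at $P$ by the rotation $\mu_{1}$ (the several-variable counterpart of Corollary~\ref{cor:ParabFlowerPermutation}; see Theorems~\ref{thm:SemiAttrUeda} and \ref{thm:SemAttrHakim} for the local structure), so no such domain, and hence no Fatou component accumulating at $P$ through it, could be $F$-invariant --- contradicting invariance of $\Omega$. (Alternatively, once $|\mu_{1}|=1$ one may apply the two-dimensional snail lemma, Theorem~\ref{thm:2DsnailLemma}, to a fundamental piece of $\Omega$ near $P$ to obtain $\mu_{1}=1$ directly.) Therefore $dF_{P}$ has eigenvalues $1$ and $\delta$: $P$ is a parabolic-attracting fixed point on $\partial\Omega$ with parabolic multiplier $1$, and $\Omega$ is attracting to it, which is the assertion.
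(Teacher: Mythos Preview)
The paper does not give its own proof of this theorem: it is stated as a cited result from \cite{LyubichPeters2014ClassificationofinvariantFatoucomponentsfordissipativeHenonmaps} and is immediately followed by the next paragraph on the H\'enon filtration, with no proof or proof sketch in between. There is therefore nothing in the paper to compare your proposal against.

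That said, your outline is a reasonable roadmap of the Lyubich--Peters argument, and you are honest that step~(i) --- reducing the limit set to a point --- is the genuine content you have not supplied. Two small remarks on step~(iii): first, the hedgehog results you invoke for the elliptic case (Theorems~\ref{thm:SCVSiegelCompacta} and~\ref{thm:EllAttrHedgehog}) postdate \cite{LyubichPeters2014ClassificationofinvariantFatoucomponentsfordissipativeHenonmaps}; the original argument handles $|\mu_{1}|=1$ directly via the two-dimensional snail lemma (Theorem~\ref{thm:2DsnailLemma}), which you mention only as an alternative but which is in fact the route taken there. Second, the snail lemma already forces $\mu_{1}=1$ once there is an open set on which iterates converge uniformly to $P$, so the separate elimination of elliptic multipliers and of primitive $q$-th roots of unity with $q>1$ is unnecessary once you have step~(i).
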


For Hénon maps, the filtration (\ref{eq:HenonFiltr}) excludes (suitable
generalisations of) Baker domains, as well as escaping or oscillating
wandering Fatou components.

The question of existence of (orbitally bounded) wandering domains
was finally settled by Berger and Biebler in \cite{BergerBiebler2020EmergenceofWanderingStableComponents},
where they demonstrated such wandering domains for a family of Hénon
maps. Before that, Hahn and Peters modified the construction in \cite{AstorgBuffDujardinPetersRaissy2016ATwoDimensionalPolynomialMappingwithaWanderingFatouComponent}
to obtain examples of wandering Fatou components in polynomial automorphisms
of $\mathbb{C}^{4}$.

\subsection{General Automorphisms}

In \cite{ArosioBeniniFornessPeters2019DynamicsoftranscendentalHenonmaps},
Arosio, Benini, Fornæss, and Peters extend the classification theorem~\ref{thm:PolAutRecurrent}
for recurrent Fatou components to holomorphic automorphisms of $\mathbb{C}^{2}$
with constant Jacobian:
\begin{thm}[\cite{ArosioBeniniFornessPeters2019DynamicsoftranscendentalHenonmaps}]
Let $F\in\Aut(\mathbb{C}^{2})$ with constant Jacobian $\delta\neq0$
and let $\Omega$ be an invariant recurrent Fatou component for $F$.
Then there exists a holomorphic retraction $\rho:\Omega\to\Sigma$
to a closed complex submanifold $\Sigma\subseteq\Omega$ such that
for all limit maps $h$, there exists $\eta\in\Aut(\Sigma)$ such
that $h=\eta\circ\rho$. Every orbit converges to $\Sigma$ and $F|_{\Sigma}\in\Aut(\Sigma)$.
Moreover
\begin{enumerate}[noitemsep]
\item If $\dim\Sigma=0$, then $\Omega$ is the basin of an attracting
fixed point and biholomorphically equivalent to $\mathbb{C}^{2}$.
\item If $\dim\Sigma=1$, then either 
\begin{enumerate}[noitemsep]
\item $\Sigma$ is biholomorphic to a rotation domain $A\subseteq\mathbb{C}$,
and there exists a biholomorphism $\Omega\to A\times\mathbb{C}$ conjugating
$F$ to 
\[
(z,w)\mapsto(e^{i\theta}z,\delta e^{-i\theta}w),
\]
for some $\theta\in\mathbb{R}$, or 
\item there exists $j\in\mathbb{N}$ such that $F^{\circ j}|_{\Sigma}=\id_{\Sigma}$
and there exists a biholomorphism $\Omega\to\Sigma\times\mathbb{C}$
conjugating $F^{\circ j}$ to 
\[
(z,w)\mapsto(z,\delta^{j}w).
\]
\end{enumerate}
\item $\dim\Sigma=2$, if and only if $|\delta|=1$ and in this case, there
exists a subsequence $\{n_{k}\}_{k}\subseteq\mathbb{N}$ such that
$F^{\circ n_{k}}|_{\Omega}\xrightarrow[k\to+\infty]{}\id_{\Omega}$.
\end{enumerate}
\end{thm}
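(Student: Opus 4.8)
The plan is to adapt the strategy of Bedford--Smillie and Forn\ae ss--Sibony behind Theorem~\ref{thm:PolAutRecurrent}, replacing the polynomial-specific filtration estimates by the abstract structure of limit maps on a recurrent Fatou component. \textbf{Step 1: the retraction.} Since $\Omega$ is a Fatou component, $\{F^{\circ n}\}_n$ is a normal family on $\Omega$, and recurrence supplies at least one non-escaping subsequential limit. Among all subsequential limits $h=\lim_k F^{\circ n_k}$ choose one of minimal generic rank $q$. The key technical claim is that such an $h$ has constant rank $q$, so that $\Sigma:=h(\Omega)$ is a $q$-dimensional complex submanifold, closed in $\Omega$: here one uses that $F$ is injective (hence each $F^{\circ n}$ is, and by a Hurwitz-type argument a rank drop of the limit is propagated), together with $F$-invariance of $\Omega$ and recurrence, to exclude a proper degeneracy locus. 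Passing to the differences $n_{k+1}-n_k$ and extracting again produces a limit map restricting to the identity on $\Sigma$; suitably composing such maps yields a holomorphic retraction $\rho:\Omega\to\Sigma$ with $\rho\circ\rho=\rho$ and $\rho|_\Sigma=\id_\Sigma$. Any limit map $h$ then factors as $h=h\circ\rho$, and $\eta:=h|_\Sigma$ is injective (Hurwitz) and surjective (recurrence), so $\eta\in\Aut(\Sigma)$ and $h=\eta\circ\rho$. Invariance $F(\Sigma)=\Sigma$ and $F|_\Sigma\in\Aut(\Sigma)$ follow since $F$ conjugates the family of limit maps to itself while preserving $\Omega$, and $\dist(F^{\circ n}(z),\Sigma)\to 0$ for every $z\in\Omega$ because $\Sigma=\omega_F(\Omega)$.

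\textbf{Step 2: the Jacobian dichotomy.} As $\det dF\equiv\delta$, we have $\det d(F^{\circ n})\equiv\delta^{n}$, so for a limit map $h=\lim_k F^{\circ n_k}$, after passing to a subsequence with $\delta^{n_k}\to c$, we obtain $\det dh\equiv c$ with $|c|=\lim_k|\delta|^{n_k}$. If $|\delta|<1$, then $c=0$, every limit map is everywhere degenerate, and $q=\dim\Sigma\le 1$. If $|\delta|=1$, then $|c|=1$, so $dh$ is everywhere invertible, $h$ is open, $\Sigma=h(\Omega)$ is open in $\mathbb{C}^2$, and therefore $\Sigma=\Omega$ (being open and closed in the connected $\Omega$). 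In that case pick a recurrent point $z_0$ with $F^{\circ n_k}(z_0)\to z_0$; extracting so that $F^{\circ n_k}\to h$ gives $h(z_0)=z_0$ and $h(\Omega)$ open; extracting once more so that $F^{\circ(n_{k+1}-n_k)}\to g$ locally uniformly, the identity $h=g\circ h$ forces $g=\id$ on the open set $h(\Omega)$, hence $g=\id_\Omega$. So $F^{\circ(n_{k+1}-n_k)}\to\id_\Omega$, which is case~(3); and $|\delta|=1$ is necessary for $\dim\Sigma=2$ by the same determinant computation.

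\textbf{Step 3: low-dimensional limit sets.} If $\dim\Sigma=0$, then $\Sigma=\{P\}$, $P$ is a fixed point, and by Step~1 every limit map is the constant $P$, so $F^{\circ n}\to P$ locally uniformly on $\Omega$; as $P\in\Omega$, Cauchy estimates at $P$ yield $(dF_P)^{\circ n}\to 0$, so the spectral radius of $dF_P$ is $<1$ and $P$ is attracting (alternatively, Theorem~\ref{thm:2DsnailLemma} rules out a neutral multiplier). Then $\Omega=A_F(P)$, and Theorem~\ref{thm:AttrBasinFatBieb} (Rosay--Rudin) gives $\Omega\cong\mathbb{C}^2$ with $F$ conjugate to a polynomial normal form. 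If $\dim\Sigma=1$, then $\Sigma$ is a non-compact Riemann surface (a compact submanifold of $\Omega\subseteq\mathbb{C}^2$ would be a point), $F|_\Sigma$ is a recurrent automorphism, and the recurrent points of $\Omega$ lie in $\Sigma$. The one-variable classification of recurrent automorphisms of open Riemann surfaces leaves two options: either $F|_\Sigma$ is conjugate to a rotation of a disc, an annulus, $\mathbb{C}$, or $\mathbb{C}^*$ (case~(a), with angle $\theta$), or $F^{\circ j}|_\Sigma=\id_\Sigma$ for some $j\in\mathbb{N}$ (case~(b)). In case~(b), pass to $F^{\circ j}$: then $\Sigma$ is pointwise fixed, every orbit converges to it, and its single transverse multiplier is $\delta^{j}$ (tangential multiplier $1$, product of all multipliers $\delta^j$), which is attracting since $|\delta|<1$ and admits no resonances; Theorem~\ref{thm:NishimuraSemiAttrFixedCurve} (Nishimura) conjugates $F^{\circ j}$ on $\Omega=A_{F^{\circ j}}(\Sigma)$ to fibrewise multiplication by $\delta^j$ on the normal bundle $\mathcal{N}_{\Omega/\Sigma}$; since line bundles over open Riemann surfaces are trivial, $\Omega\cong\Sigma\times\mathbb{C}$ with $F^{\circ j}:(z,w)\mapsto(z,\delta^jw)$. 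Case~(a) is the same argument after conjugating $F|_\Sigma$ to an honest rotation of $A$, giving $\Omega\cong A\times\mathbb{C}$ and $F:(z,w)\mapsto(e^{i\theta}z,\delta e^{-i\theta}w)$, the transverse factor $\delta e^{-i\theta}$ being forced by $\det dF=\delta$.

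\textbf{Main obstacle.} The crux is Step~1: showing that the minimal-rank limit map has \emph{constant} rank with locally closed smooth image, and assembling an idempotent holomorphic retraction $\rho$ through which all limit maps factor. This is precisely where recurrence and the automorphism property must be combined --- bounding where limit maps can degenerate, running the diagonal-subsequence argument that manufactures $\id_\Sigma$ (or $\id_\Omega$) as a limit, and applying Hurwitz-type results for sequences of injective holomorphic maps on a possibly non-taut domain. Once $\Sigma$ and $\rho$ are available, the remaining case analysis is comparatively routine, reducing to Rosay--Rudin, Nishimura, and the one-variable picture.
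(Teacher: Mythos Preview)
The paper under review is a survey and \emph{does not give a proof} of this theorem: it is simply stated with attribution to \cite{ArosioBeniniFornessPeters2019DynamicsoftranscendentalHenonmaps}, as an extension of Theorem~\ref{thm:PolAutRecurrent} to general automorphisms with constant Jacobian. So there is no ``paper's own proof'' to compare against here.

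That said, your outline follows the standard line of argument from the cited source (and from Bedford--Smillie/Forn\ae ss--Sibony before it): extract a minimal-rank limit map, show constant rank to get a closed submanifold $\Sigma$ and a retraction, use the constant-Jacobian identity $\det d(F^{\circ n})=\delta^n$ to force $\dim\Sigma\le 1$ when $|\delta|<1$ and $\dim\Sigma=2$ when $|\delta|=1$, and then classify the one-dimensional case via recurrent automorphisms of open Riemann surfaces together with Nishimura's linearisation along a fixed curve. You have also correctly identified the crux: the constant-rank/retraction step is where the real work lies, and this is precisely what \cite{ArosioBeniniFornessPeters2019DynamicsoftranscendentalHenonmaps} carries out carefully in the non-polynomial setting, where the filtration and $K^+$-based arguments of Bedford--Smillie are unavailable. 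One caveat in Step~3(a): Nishimura's theorem as stated (Theorem~\ref{thm:NishimuraSemiAttrFixedCurve}) requires a pointwise-fixed submanifold, so in the irrational-rotation case you cannot invoke it directly on $F$; the cited paper handles this by working with the fibre structure of the retraction and the transverse contraction separately, rather than by a direct appeal to Nishimura.
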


\selectlanguage{british}

In \cite{JupiterLilov2004InvariantnonrecurrentFatoucomponentsofautomorphismsofmathbbC2},
Jupiter and Lilov take first steps towards narrowing down the possibilities
for non-recurrent invariant components. They split their discussion
according to the maximal rank of limit maps of $\{F^{\circ n}\}_{n}$
on the Fatou component $V$. Their main result on rank 0 is the following:
\begin{thm}[\cite{JupiterLilov2004InvariantnonrecurrentFatoucomponentsofautomorphismsofmathbbC2}]
\label{thm:JupLilRank0}Let $\Omega$ be an invariant non-recurrent
Fatou component of $F\in\Aut(\mathbb{C}^{2})$ such that all limit
maps on $\Omega$ have rank $0$ (i.e.\ are constant). If $\Sigma:=\omega_{F}(\Omega)$
contains more than one point, then:
\begin{enumerate}
\item $\Sigma$ is closed, uncountable and has no isolated points.
\item There is a one-dimensional subvariety $V$ of fixed points of $F$
such that $\Sigma\subseteq V\cap\partial\Omega$.
\item There exists a unique rotation $\alpha\in S^{1}$ such that $F$ has
multipliers $1$ and $\alpha$ at all points in $\Sigma$, and $\alpha$
is a non-diophantine rotation, that is, $\alpha$ does not satisfy
(\ref{eq:Siegel}).
\end{enumerate}
\end{thm}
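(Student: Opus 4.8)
The plan is to extract from the hypotheses a rigid local picture along $\Sigma$ and then read off the three assertions. Throughout write $F^{\circ n_k}\to c_\infty$ locally uniformly on $\Omega$ for a convergent subsequence of iterates; by hypothesis each such limit is a \emph{constant}, and its value lies in $\Sigma$. First I would record the soft facts. Since $\omega$-limit sets are always closed, $\Sigma$ is closed; non-recurrence of $\Omega$ gives $\Sigma\cap\Omega=\emptyset$, hence $\Sigma\subseteq\overline\Omega\setminus\Omega=\partial\Omega$. Because every limit map is constant, a subsequence $F^{\circ n_k}\to c_\infty$ on $\Omega$ forces $F^{\circ n_k}(z)\to c_\infty$ for \emph{every} $z\in\Omega$; consequently $\omega_F(z)=\Sigma$ for every $z\in\Omega$, so all orbits of $\Omega$ share the single $\omega$-limit set $\Sigma$, and no orbit escapes to infinity. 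Applying $F$ and $F^{-1}$ to convergent subsequences and using that $F$ is an automorphism gives $F(\Sigma)=\Sigma$; assuming boundedness of the orbits (which one either builds into the setting or derives) $\Sigma$ is a nonempty compact completely $F$-invariant set.

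The heart of the argument is the local analysis at a point $p\in\Sigma$, aimed at showing $F(p)=p$ with $dF_p$ having multipliers $1$ and a rotation $\alpha$. I would combine three ingredients: (i) Cauchy estimates applied to $F^{\circ n_k}\to$ const give $d(F^{\circ n_k})_z\to 0$ on compact subsets of $\Omega$, so the orbit is, on average, strongly contracted; (ii) the Jacobian $\det dF$, a nowhere-vanishing holomorphic function, must have modulus one at every point of $\Sigma$ — if $|\det dF_p|\neq 1$ for some $p\in\Sigma$ then (after checking that an indifferent eigenvalue is forced) $p$ is a parabolic-attracting or parabolic-repelling fixed point, and the realm-of-attraction descriptions of Theorems~\ref{thm:SemiAttrUeda} and \ref{thm:2DsnailLemma} (and their higher-dimensional relatives) would collapse $\Omega$ to the basin of $p$ with $\Sigma=\{p\}$, a contradiction; (iii) an indifferent fixed point with the wrong multiplier configuration — a genuine rotation in \emph{both} eigenvalues, or an attracting eigenvalue paired with a rotation — is likewise incompatible with $\Sigma=\omega_F(\Omega)$ being non-trivial and all limit maps constant. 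Ruling out the alternatives leaves: each $p\in\Sigma$ is a fixed point of $F$ with multipliers $1$ and $\alpha(p):=\det dF_p$, and $|\alpha(p)|=1$.

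This multiplier step is the step I expect to be the main obstacle: pinning down the pair $(1,\alpha)$ requires a careful case analysis of parabolic, parabolic-elliptic and elliptic-attracting germ dynamics, using centre-manifold reduction (Theorems~\ref{thm:StableCentreMnf} and \ref{thm:CentManRedLyubich}) to exclude every configuration except "$1$ together with an indifferent rotation", and then, within that case, to exclude the Diophantine (Siegel) rotations: if $\alpha$ satisfied (\ref{eq:Siegel}), the elliptic direction at $p$ would be holomorphically linearisable on a local centre manifold, producing a trapping family of rotation disks near $p$ that is incompatible with $\Omega$ being non-recurrent and $\Sigma\neq\{p\}$; hence $\alpha$ is non-Diophantine. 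Constancy of $\alpha$ on $\Sigma$ (and thus uniqueness) then follows once $\Sigma$ is known to be connected, since $\det dF$ is continuous and the dynamics on the connected set $\Sigma$ pins its value.

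Finally the remaining assertions follow. By the previous step $\Sigma\subseteq\mathrm{Fix}(F)$, so the orbits of $\Omega$ shadow the fixed locus ($\|F(z_n)-z_n\|\to 0$), whence $\Sigma$, being the $\omega$-limit set of such an orbit, is connected; together with $|\Sigma|>1$ this makes $\mathrm{Fix}(F)$ an analytic subvariety of $\mathbb{C}^2$ containing an uncountable connected set, hence one-dimensional (it cannot be $2$-dimensional, else $F=\mathrm{id}$). Taking $V$ to be the union of the components of $\mathrm{Fix}(F)$ meeting $\Sigma$ gives $\Sigma\subseteq V\cap\partial\Omega$, which is assertion (2), and the constancy of $\alpha$ on the connected $\Sigma$ completes assertion (3). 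For assertion (1): $\Sigma$ is closed, and it is perfect because an isolated point $p\in\Sigma$ would again be an indifferent fixed point whose parabolic- or Siegel-type local normal form forces the nearby orbits of $\Omega$ to converge to (or be trapped near) $p$, yielding $\Sigma=\{p\}$ and contradicting $|\Sigma|>1$; a nonempty perfect set has no isolated points and is uncountable, which is assertion (1).
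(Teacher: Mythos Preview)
The paper does not actually give a proof of this theorem: it is quoted from \cite{JupiterLilov2004InvariantnonrecurrentFatoucomponentsofautomorphismsofmathbbC2} as a cited result, and the text moves on directly to a remark and the rank-$1$ case. So there is no ``paper's own proof'' to compare against.

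That said, a few comments on your sketch. Your soft facts are fine, and the observation that every $p\in\Sigma$ is fixed is correct and easier than you make it: if $F^{\circ n_k}\to p$ on $\Omega$, then evaluating at $F(z)\in\Omega$ gives $F^{\circ n_k+1}(z)\to p$, while continuity of $F$ gives $F^{\circ n_k+1}(z)\to F(p)$, so $F(p)=p$. Once $\Sigma\subseteq\mathrm{Fix}(F)$ contains more than one point, the analytic set $\mathrm{Fix}(F)$ has a one-dimensional component $V$ through it, and at smooth points of $V$ one multiplier is automatically $1$ (tangent to $V$); this is a cleaner route to the pair $(1,\alpha)$ than your case-by-case elimination. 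Your argument for the \emph{constancy} of $\alpha$ is the weakest point: continuity of $\det dF$ on a connected set does not force constancy. The honest reason is that $\alpha=\det dF|_V$ is holomorphic on the curve $V$ and has modulus $1$ on the non-discrete set $\Sigma$, so the open mapping theorem (or maximum principle) on $V$ forces it to be constant on each component. Similarly, your exclusion of $|\alpha|\neq1$ should be phrased via the rank of the limit maps: if $|\alpha|<1$ along $V$, Nishimura's Theorem~\ref{thm:NishimuraSemiAttrFixedCurve} linearises $F$ transversely to $V$ and orbits in $\Omega$ would converge to $V$, producing rank-$1$ limit maps, contradicting the hypothesis; $|\alpha|>1$ is excluded because such points repel nearby orbits and cannot lie in $\omega_F(\Omega)$. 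Finally, connectedness of $\Sigma$ follows directly from the standard fact that the $\omega$-limit set of a bounded orbit is connected, not from ``shadowing''.
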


\begin{rem}
There are no known examples with more than one rank $0$ limit map.
\end{rem}

Limit maps of rank $1$ have regular image:
\begin{lem}[{Lyubich, Peters \cite[Lem.~13]{LyubichPeters2014ClassificationofinvariantFatoucomponentsfordissipativeHenonmaps}}]
Let $\Omega$ be an invariant non-recurrent Fatou component of $F\in\Aut(\mathbb{C}^{2})$
and $F_{\infty}$ a limit map of rank $1$ on $\Omega$. Then the
image $F_{\infty}(\Omega)\subseteq\mathbb{C}^{2}$ is an injectively
immersed Riemann surface.
\end{lem}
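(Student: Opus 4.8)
The plan is to read this as a statement about locally uniform limits of injective holomorphic maps: the only feature of $F_{\infty}$ that matters is that it is the locally uniform limit of the automorphisms $F^{\circ n_{k}}$ of $\mathbb{C}^{2}$, hence of injective holomorphic self-maps. (The non-recurrence of $\Omega$ plays no role in this particular statement; it is used only in the broader classification.) First I would record the local structure. Since $\Omega$ is connected and the generic rank of $F_{\infty}$ is $1$, the set $Z:=\{\,z\in\Omega\mid dF_{\infty,z}=0\,\}$ is a proper analytic subset, and on $\Omega\setminus Z$ the map $F_{\infty}$ is a holomorphic immersion, so by the rank theorem every point of $\Omega\setminus Z$ has a neighbourhood on which $F_{\infty}$ is, up to local biholomorphism in source and target, the projection $\mathbb{C}^{2}\to\mathbb{C}\times\{0\}$; in particular its image is locally a smooth one-dimensional complex submanifold, and the connected components of the fibres of $F_{\infty}$ are, near such points, smooth discs transverse to the image direction.

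The heart of the matter is to promote this to a global statement, and injectivity of the approximants $F^{\circ n_{k}}$ is the key resource. I would prove two rigidity facts. \emph{No critical points:} $Z=\emptyset$. If $p\in Z$, then the germ of $F_{\infty}$ at $p$ has rank $0$, so the germ of its image is a singular curve germ (a cusp); but a cusp cannot be the image germ of a locally uniform limit of injective \emph{immersions} of a disc into $\mathbb{C}^{2}$, because resolving such an infinitesimal fold would force a self-intersection — this is a manifestation of positivity of intersection of complex curves, and should be extractable from a Hurwitz/Rouch\'e count along transversals to $Z$ together with the disjointness of the images of disjoint pieces under each $F^{\circ n_{k}}$. \emph{Coherence:} if $F_{\infty}(p)=F_{\infty}(p')=q$ with $p\neq p'$, the image germs at $q$ coming from $p$ and from $p'$ coincide. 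Otherwise $F_{\infty}(\Omega)$ would contain two distinct smooth curve germs through $q$, and since $F^{\circ n_{k}}$ is injective on a neighbourhood of two disjoint balls $B(p)$, $B(p')$, the images $F^{\circ n_{k}}(B(p))$ and $F^{\circ n_{k}}(B(p'))$ would be disjoint open sets degenerating onto these two germs, which is impossible because any neighbourhoods of two distinct complex curve germs that meet at $q$ must overlap near $q$.

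With these two facts, $F_{\infty}(\Omega)$ is, at each of its points, a single smooth one-dimensional complex submanifold germ of $\mathbb{C}^{2}$; assembling these germs into charts — using, at points where $F_{\infty}(\Omega)$ accumulates on itself, the intrinsic rather than the subspace topology — exhibits $F_{\infty}(\Omega)$ as the image of an injective immersion of a Riemann surface $S$, which is exactly the assertion. (One may also feed in the functional equation $F\circ F_{\infty}=F_{\infty}\circ F$, obtained by composing $F^{\circ n_{k}}\to F_{\infty}$ with $F$ on either side, which gives $F(F_{\infty}(\Omega))=F_{\infty}(\Omega)$ and a biholomorphism of $S$ induced by $F$; this is not needed for the lemma itself but is the natural bridge to the subsequent results.) I expect the genuine obstacle to be the first rigidity fact — ruling out critical points of a limit of injective immersions, i.e.\ showing no cusp can appear — since that is the one place where the complex-analytic, as opposed to merely smooth, nature of the maps must be used in an essential way.
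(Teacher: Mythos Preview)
The paper does not give a proof of this lemma; it merely records the statement and cites \cite[Lem.~13]{LyubichPeters2014ClassificationofinvariantFatoucomponentsfordissipativeHenonmaps}. So there is no ``paper's proof'' to compare against, and your attempt has to be judged on its own.

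Your strategy hinges on the claim $Z=\emptyset$, and your justification for it --- ``if $dF_{\infty,p}=0$ then the image germ is a cusp, and a cusp cannot arise as a limit of injective immersions'' --- breaks at the very first step. A rank-$0$ point does \emph{not} force a singular image germ: take the automorphisms
\[
F_{n}(x,y)=\bigl(x^{2}+\varepsilon_{n}y,\ \varepsilon_{n}x\bigr),\qquad \varepsilon_{n}\to 0,
\]
which are polynomial automorphisms of $\mathbb{C}^{2}$ with inverse $F_{n}^{-1}(u,v)=\bigl(v/\varepsilon_{n},\ (u-v^{2}/\varepsilon_{n}^{2})/\varepsilon_{n}\bigr)$. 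They converge locally uniformly to $F_{\infty}(x,y)=(x^{2},0)$, a map of generic rank $1$ whose differential vanishes along $\{x=0\}$, yet whose image is the smooth line $\{v=0\}$. So limits of injective holomorphic maps of generic rank $1$ \emph{can} have critical points, and at those points the image germ need not be a cusp. Since you explicitly base your argument only on the fact that $F_{\infty}$ is a locally uniform limit of injective maps, this example shows that the route ``prove $Z=\emptyset$ via Hurwitz/positivity of intersections'' cannot succeed at that level of generality. The same example also undercuts the Hurwitz-style fibre argument you gesture at: $F_{\infty}^{-1}(1,0)=\{x=1\}\cup\{x=-1\}$ is disconnected.

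What actually has to be shown is that the \emph{image} germ is smooth at every point, not that $F_{\infty}$ is an immersion. The obstruction to a genuine cusp in the image is more algebraic than your sketch suggests. For instance, trying to realise $(x^{2},x^{3})$ as a limit of maps with constant nonzero Jacobian forces, after the natural ansatz $f_{n}=x^{2}+\varepsilon_{n}y$, the second component to be of the form $-\tfrac{c_{n}}{\varepsilon_{n}}x+G_{n}(x^{2}+\varepsilon_{n}y)$, and letting $n\to\infty$ would require $x^{3}$ to be a holomorphic function of $x^{2}$, which it is not. This is the flavour of constraint that makes the image smooth; your coherence step has a similar issue, since two disjoint open sets in $\mathbb{C}^{2}$ can perfectly well collapse onto two transverse curve germs through the same point without overlapping. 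In short, the outline ``no critical points, then coherence'' does not survive contact with the counterexample, and the proof needs to address the image directly rather than the map.
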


\begin{rem}
This does not rule out images that are the regular part of singular
analytic sets. In \cite{BocThalerFornessPeters2015FatouComponentswithPuncturedLimitSets},
Boc Thaler, Fornæss, and Peters construct polynomial endomorphisms
with Fatou components with limit maps equal to the regular part of
an arbitrary analytic set. It is unknown if there are automorphisms
with such limit sets.
\end{rem}

For rank $1$, the main result of \cite{JupiterLilov2004InvariantnonrecurrentFatoucomponentsofautomorphismsofmathbbC2}
is:
\begin{thm}
Let $\Omega$ be an invariant non-recurrent Fatou component of $F\in\Aut(\mathbb{C}^{2})$
and $F_{\infty}$ and $G_{\infty}$ be limit maps on $\Omega$ of
rank $1$. Then the intersection of their images $F_{\infty}(\Omega)\cap G_{\infty}(\Omega)$
is either empty or a (relatively) open subset of both $F_{\infty}(\Omega)$
and $G_{\infty}(\Omega)$.
\end{thm}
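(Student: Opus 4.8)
The statement compares two rank-$1$ limit maps $F_\infty$ and $G_\infty$ on an invariant non-recurrent Fatou component $\Omega$ of $F\in\Aut(\mathbb{C}^2)$, and asserts that if the images $\Sigma_F:=F_\infty(\Omega)$ and $\Sigma_G:=G_\infty(\Omega)$ meet at all, then the intersection is relatively open in each. The guiding idea is that both $F_\infty$ and $G_\infty$ are \emph{obtained as locally uniform limits} $F^{\circ n_k}\to F_\infty$ and $F^{\circ m_k}\to G_\infty$, so $G_\infty$ differs from $F_\infty$ by post-composition with a limit of suitable iterates of $F$ along the difference of the two subsequences. First I would set up this comparison carefully: passing to further subsequences of $\{n_k\}$ and $\{m_k\}$, I may assume $m_k-n_k$ is eventually constant sign, and that $F^{\circ(m_k-n_k)}$ converges locally uniformly on a neighbourhood of $\Sigma_F$ (using that $\Sigma_F\subseteq\Omega$ is a closed submanifold of $\Omega$, hence $F$-orbits starting near $\Sigma_F$ stay in the Fatou set; a diagonal/normality argument gives a convergent subsequence). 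The limit, call it $\Psi$, is a holomorphic map defined near $\Sigma_F$ with $G_\infty=\Psi\circ F_\infty$, at least after shrinking.

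\textbf{Key steps.} (1) Show $F_\infty:\Omega\to\Sigma_F$ is a holomorphic retraction onto the injectively immersed Riemann surface $\Sigma_F$ (this is the rank-$1$ analogue of the retraction statement for recurrent components; it follows from $F_\infty\circ F_\infty$ being again a limit map of rank $1$ with the same image, combined with the identity principle on the one-dimensional fibres). Likewise for $G_\infty$. (2) Using the comparison $G_\infty=\Psi\circ F_\infty$ near points where both are relevant, observe that on $\Sigma_F\cap\Sigma_G$ the map $\Psi$ restricts to a map $\Sigma_F\to\Sigma_G$; symmetrically there is $\Phi$ with $F_\infty=\Phi\circ G_\infty$ and $\Phi$ maps $\Sigma_G\to\Sigma_F$. (3) On the overlap these compositions are mutually inverse where defined, because $F_\infty$ and $G_\infty$ both act as the identity on their respective images (retraction property) — so $\Phi\circ\Psi=\mathrm{id}$ on a piece of $\Sigma_F$ and $\Psi\circ\Phi=\mathrm{id}$ on a piece of $\Sigma_G$. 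Hence near any point of $\Sigma_F\cap\Sigma_G$, $\Psi$ is a local biholomorphism of one-dimensional manifolds carrying a relative neighbourhood in $\Sigma_F$ into $\Sigma_G$ and conversely, which forces the intersection to be relatively open in both. (4) Finally, if the intersection were nonempty but somewhere not relatively open, I would extract a sequence of points in $\Sigma_F\cap\Sigma_G$ accumulating at a point of $\Sigma_F\setminus\Sigma_G^\circ$ relative to $\Sigma_F$, and apply the identity principle to $\Psi$ restricted to the Riemann surface $\Sigma_F$: since $\Psi|_{\Sigma_F}$ agrees with (a branch of) the inclusion $\Sigma_F\hookrightarrow\mathbb{C}^2$ followed by $G_\infty$-retraction on a set with an accumulation point, it must agree on a whole component, giving relative openness.

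\textbf{Main obstacle.} The delicate point is step~(1)–(2): making sense of the germ $\Psi$ near $\Sigma_F$ and of the relation $G_\infty=\Psi\circ F_\infty$ \emph{globally enough} to run an identity-principle argument on the (possibly wildly immersed, non-closed) Riemann surface $\Sigma_F$. One must be careful that $\Sigma_F$ is only \emph{injectively immersed}, not embedded, so ``relatively open'' refers to the intrinsic topology of the immersed surface, and the accumulation/identity-principle argument has to be carried out on the abstract Riemann surface mapping into $\mathbb{C}^2$, not on its image. Establishing local uniform convergence of $F^{\circ(m_k-n_k)}$ on a genuine neighbourhood (in $\mathbb{C}^2$) of points of $\Sigma_F$ — rather than just on $\Omega$ — also requires knowing such points lie in the Fatou set, which one gets since $\Sigma_F\subseteq\Omega$; but uniformity up to $\partial\Omega$ is where care is needed, and I would handle it by working only with compact pieces of $\Sigma_F$ contained in $\Omega$ and exhausting. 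Once $\Psi$ is in hand with the retraction identities, the rest is the identity theorem on Riemann surfaces.
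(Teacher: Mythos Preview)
There is a fundamental gap in your set-up that unravels the whole argument. You repeatedly use that $\Sigma_F=F_\infty(\Omega)\subseteq\Omega$ (to get normality near $\Sigma_F$, to form the composition $F_\infty\circ F_\infty$, and to speak of a retraction $\Omega\to\Sigma_F$). But $\Omega$ is assumed \emph{non-recurrent}: by definition no accumulation point of an $F$-orbit starting in $\Omega$ lies in $\Omega$. Since every point of $F_\infty(\Omega)$ is such an accumulation point, we have $F_\infty(\Omega)\cap\Omega=\emptyset$; in fact $\Sigma_F\subseteq\partial\Omega$. The same holds for $\Sigma_G$. So $\Sigma_F$ sits in the Julia set, not in the Fatou set, and none of the following steps go through:
\begin{itemize}
\item You cannot compose $F_\infty$ with itself (its target is disjoint from its domain), so the ``retraction'' claim in step~(1) has no content here. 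The retraction picture you invoke is exactly the \emph{recurrent} classification (Bedford--Smillie, Forn{\ae}ss--Sibony, Arosio--Benini--Forn{\ae}ss--Peters), and it does not transfer.
\item You cannot extract a convergent subsequence of $F^{\circ(m_k-n_k)}$ on a neighbourhood of $\Sigma_F$ by normality, because there is no normality on any neighbourhood of a point of $\partial\Omega$. Thus the germ $\Psi$ with $G_\infty=\Psi\circ F_\infty$ is not available by your method.
\end{itemize}

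Without $\Psi$ (and its inverse $\Phi$) the identity-principle argument in steps~(3)--(4) has nothing to act on. The actual proof (Jupiter--Lilov) has to work entirely with objects defined on $\Omega$ and their boundary values, exploiting that $F$ is a global automorphism so that one can compare $F_\infty$ and $G_\infty$ via \emph{pre}-composition with iterates $F^{\circ n}$ on $\Omega$ (where normality is available), rather than via post-composition near the limit set (where it is not). Rethink the comparison of $F_\infty$ and $G_\infty$ from that side.
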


Jupiter and Lilov moreover construct explicit automorphisms with Fatou
components with a unique rank-$1$ limit map or with an uncountable
family of rank-$1$ limit maps whose common image is a coordinate
axis. The latter are a subclass of the ``parabolic cylinders'' examined
by Boc Thaler, Bracci, and Peters \cite{BocBracciPeters2019AutomorphismsofmathbbC2withnonrecurrentSiegelcylinders}:
\begin{defn}
An invariant non-recurrent Fatou component $V$ of $F\in\End(\mathbb{C}^{2})$
is called a \emph{parabolic cylinder}\index{parabolic cylinder@\emph{parabolic cylinder}},
if
\begin{enumerate}
\item the closure of $\omega_{F}(V)$ contains an isolated fixed point,
\item there is an injective holomorphic map $\Phi:V\to\mathbb{C}^{2}$ conjugating
$F$ to the translation $(z,w)\mapsto(z+1,w)$,
\item all limit maps of $\{F^{\circ n}\}_{n}$ on $\Omega$ have dimension
$1$.
\end{enumerate}
\end{defn}

The parabolic cylinders in \cite{JupiterLilov2004InvariantnonrecurrentFatoucomponentsofautomorphismsofmathbbC2}
and \cite{BocBracciPeters2019AutomorphismsofmathbbC2withnonrecurrentSiegelcylinders}
arise from parabolic dynamics in the $z_{1}$-direction attracting
to the $z_{2}$-axis on which $F$ acts as a (diophantine) irrational
rotation. The limit maps differ precisely by arbitrary rotations of
the $z_{2}$-axis. They are biholomorphic to $\mathbb{C}^{2}$ via
the conjugating map $\Phi$.

In \cite{Reppekus2019PuncturednonrecurrentSiegelcylindersinautomorphismsofmathbbC2},
we used a blow-up of local one-resonant dynamics to obtain an automorphism
of $\mathbb{C}^{2}$ with a ``punctured'' parabolic cylinder biholomorphic
via $\Phi$ to $\mathbb{C}\times\mathbb{C}^{*}$ and with limit maps
mapping to a punctured axis $\{0\}\times\mathbb{C}^{*}$. The dynamics
in this case are one-resonant so the orbits approach the limit set
on a path spiralling in both variables.
\begin{rem}
There are no known examples of rank-$1$ limit maps with non-identical
images or limit maps of different rank.
\end{rem}

Oscillating wandering Fatou components for general automorphisms of
$\mathbb{C}^{2}$ have been realised by Fornæss and Sibony in \cite{FornaessSibony1998FatouandJuliaSetsforEntireMappingsinCk}.
In \cite{ArosioBeniniFornessPeters2019DynamicsoftranscendentalHenonmaps},
\cite{ArosioBeniniFornessPeters2019DynamicsofTranscendentalHenonMapsII},
and \cite{ArosioBocThalerPeters2019ATranscendentalHenonMapwithanOscillatingWanderingShortC2},
Arosio, Benini, Boc Thaler, Fornæss, and Peters study the subclass
of \emph{transcendental Hénon maps} and give examples of escaping
and oscillating wandering domains as well as (a suitable generalisation
of) Baker domains. In \cite{BocThaler2020AutomorphismsofCmwithBoundedWanderingDomains},
Boc Thaler gave an example of an automorphism of $\mathbb{C}^{2}$
with a bounded (not just orbitally bounded) wandering domain.

\section{\label{chap:Aut}Automorphisms with prescribed local dynamics}

\selectlanguage{british}

Returning to local dynamics, one might ask whether any given local
(invertible) dynamical behaviour can be realised by an automorphism.
The answer is almost always yes, as for any invertible germ, we can
find an automorphism with identical series expansion up to any finite
order. See Forstneri\v{c} \cite[Chap.~4]{Forstneriv2017SteinmanifoldsandholomorphicmappingsThehomotopyprincipleincomplexanalysis}
for a detailed introduction to this phenomenon.

An important class of Automorphisms are \emph{shears}\index{shear}
\begin{equation}
F(z,w)=(z,e^{h(z)}w+f(z)),\quad(z,w)\in\mathbb{C}^{d-1}\times\mathbb{C}\label{eq:shearDef}
\end{equation}
where $h,f:\mathbb{C}^{d-1}\to\mathbb{C}$ are holomorphic functions.
Compositions of (conjugates of) shears uniformly approximate any automorphism
of $\mathbb{C}^{d}$ on compacts (Andersén, Lempert \cite{AndersenLempert1992OntheGroupofHolomorphicAutomorphismsofCn},
Forstneri\v{c}, Rosay \cite{ForstnericRosay1993ApproximationofBiholomorphicMappingsbyAutomorphismsofCn}).
In fact the same is true for arbitrary holomorphic maps on polynomially
convex compacts (Forstneri\v{c} \cite{Forstneriv1999InterpolationbyholomorphicautomorphismsandembeddingsinmathbbCn}).
More importantly for our purposes, the power series expansion of these
approximation can be made to coincide with that of the original map
to any finite order at any finite number of points. In particular,
we have:
\begin{thm}[\cite{Weickert1998AttractingbasinsforautomorphismsofbfC2}, \cite{Forstneriv1999InterpolationbyholomorphicautomorphismsandembeddingsinmathbbCn}]
\label{thm:JetIntWeickFor}For every invertible holomorphic germ
$F\in\Aut(\mathbb{C}^{d},0)$, and every $k\in\mathbb{N}$, there
exists an automorphism $G\in\Aut(\mathbb{C}^{d},0)$ such that 
\[
G(z)=F(z)+O(\norm z^{k})
\]
as $z\to0$.
\end{thm}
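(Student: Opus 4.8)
The plan is to reduce the statement to the Andersén--Lempert--Forstnerič approximation theory for automorphisms of $\mathbb{C}^{d}$, which guarantees that compositions of shears (conjugates of maps of the form \eqref{eq:shearDef}) are dense among holomorphic maps on polynomially convex compacts, together with the jet-interpolation refinement that lets one prescribe a finite Taylor jet at finitely many points. Concretely, given $F\in\Aut(\mathbb{C}^{d},0)$ and $k\in\mathbb{N}$, I would first replace $F$ by a polynomial automorphism $P$ of $\mathbb{C}^{d}$ agreeing with $F$ to order $k$ at the origin: the $(k-1)$-jet of $F$ at $0$ is a polynomial map $P_{0}$ with $\det dP_{0}(0)\neq0$, and one can correct $P_{0}$ by a polynomial of order $\ge k$ to make it a genuine (polynomial) automorphism — for instance by writing $P_{0}$ as a finite composition of elementary (shear-like) polynomial automorphisms up to order $k$, each of which extends to a global polynomial automorphism. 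This already reduces the problem to approximating a \emph{polynomial} automorphism $P$ by an entire automorphism $G$ with the same $(k-1)$-jet at $0$.

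The second step is to invoke Theorem~\ref{thm:JetIntWeickFor} in the form actually proved by Forstnerič in \cite{Forstneriv1999InterpolationbyholomorphicautomorphismsandembeddingsinmathbbCn} (building on Weickert \cite{Weickert1998AttractingbasinsforautomorphismsofbfC2}): a holomorphic map from a neighbourhood of a polynomially convex compact $K\subseteq\mathbb{C}^{d}$ into $\mathbb{C}^{d}$ that is close to the identity on $K$ can be approximated uniformly on $K$ by an automorphism of $\mathbb{C}^{d}$ which moreover interpolates a prescribed finite jet at a prescribed finite set of points. Applying this with $K=\{0\}$ (polynomially convex), with the target jet being the $(k-1)$-jet of $F$, and with the map $P$ constructed in the first step, produces $G\in\Aut(\mathbb{C}^{d})$ with $G(0)=0$ and $dG_{0}$ invertible, hence $G\in\Aut(\mathbb{C}^{d},0)$, satisfying $G(z)-F(z)=G(z)-P(z)=O(\norm z^{k})$ as $z\to0$, which is exactly the claim.

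The technical heart — and the step I expect to be the main obstacle to present cleanly rather than to believe — is the first reduction: exhibiting the truncated jet of an arbitrary $F\in\Aut(\mathbb{C}^{d},0)$ as (the jet of) an honest polynomial automorphism. The linear part is handled trivially since $\mathrm{GL}_{d}(\mathbb{C})$ consists of automorphisms; the higher-order correction is the delicate point, because a generic polynomial map with invertible linear part need not be an automorphism. The clean way around this is the standard fact that any formal automorphism of $(\mathbb{C}^{d},0)$ can be matched to order $k$ by a finite composition of \emph{triangular} (shear) polynomial automorphisms: one peels off one homogeneous degree at a time, at each stage using a shear of the form $z\mapsto z+(\text{homogeneous term})\cdot e_{j}$ or its triangular analogue, each of which is a global polynomial automorphism with polynomial inverse. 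Assembling finitely many such shears gives the desired polynomial automorphism $P$ with the prescribed $(k-1)$-jet. Alternatively, since Theorem~\ref{thm:JetIntWeickFor} as cited already encodes this interpolation, one may simply apply it directly to $F$ itself on $K=\{0\}$ without the intermediate polynomial model; I would present the argument this way for brevity, treating the jet-interpolation theorem as the black box it is and remarking that the polynomial-automorphism reduction is the content hidden inside its proof via the Andersén--Lempert technique.
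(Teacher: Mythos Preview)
The paper does not give its own proof of this theorem; it is stated with citations to Weickert and Forstneri\v{c}, preceded by the remark that the Anders\'en--Lempert approximation can be made to match a prescribed finite jet at finitely many points. So there is no ``paper's proof'' to compare against beyond the black-box invocation of those references.

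Your proposal correctly identifies the real content: realising the $(k-1)$-jet of $F$ as the jet of a finite composition of (over)shears, each of which is a global polynomial automorphism of $\mathbb{C}^{d}$. But your two-step structure contains a redundancy. Once you have produced a polynomial automorphism $P\in\Aut(\mathbb{C}^{d})$ with $P(z)=F(z)+O(\norm z^{k})$, you are finished: take $G=P$. A polynomial automorphism is already an entire automorphism, so there is nothing left to ``approximate''. Your second step---invoking the approximation-with-jet-interpolation theorem on $K=\{0\}$ to pass from $P$ to $G$---is superfluous. (Your closing sentence also drifts toward circularity: you cannot invoke ``Theorem~\ref{thm:JetIntWeickFor} as cited'' to prove Theorem~\ref{thm:JetIntWeickFor}; you mean the underlying jet-interpolation theorem in Forstneri\v{c}'s paper, which should be named as such.)

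One further caution about the shear-peeling step: matching a general homogeneous term $P_{m}$ is not quite ``one triangular shear per degree''. The $j$-th component of $P_{m}$ may contain monomials involving $z_{j}$, which a single $j$-th-coordinate shear $z\mapsto z+q(z_{1},\ldots,\widehat{z_{j}},\ldots,z_{d})e_{j}$ cannot produce. One needs interleaved linear changes of coordinates (or compositions of shears in several directions) to hit all monomials; this is exactly what Weickert works out explicitly in dimension two and what the Anders\'en--Lempert machinery handles in general. It is not a gap in the overall strategy, but the sentence ``one peels off one homogeneous degree at a time, at each stage using a shear'' understates what is required.
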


With this, any dynamical behaviour that depends on finitely many terms
of the series expansion can be realised by Automorphisms of $\mathbb{C}^{d}$.

The essential ingredients in the proof of the Theorem~\ref{thm:JetIntWeickFor}
were formalised by Varolin in \cite{Varolin1999AGeneralNotionofShearsandApplications,Varolin2001Thedensitypropertyforcomplexmanifoldsandgeometricstructures,Varolin2000ThedensitypropertyforcomplexmanifoldsandgeometricstructuresII}
in the form of the density property on Lie algebras of vector fields,
enabling far reaching generalisations. In particular, \cite[Theorem~5.1]{Varolin2001Thedensitypropertyforcomplexmanifoldsandgeometricstructures},
\cite[Theorem~1]{Varolin2000ThedensitypropertyforcomplexmanifoldsandgeometricstructuresII},
and \cite[Example~1]{Varolin2000ThedensitypropertyforcomplexmanifoldsandgeometricstructuresII}
show:
\begin{thm}[Varolin \cite{Varolin2001Thedensitypropertyforcomplexmanifoldsandgeometricstructures,Varolin2000ThedensitypropertyforcomplexmanifoldsandgeometricstructuresII}]
\label{thm:JetIntFixedAxis}For every invertible germ $F\in\Aut(\mathbb{C}^{2},0)$
pointwise fixing $\{z=0\}$ and every $l\in\mathbb{N}$, there exists
a global automorphism $G\in\Aut(\mathbb{C}^{2})$ such that 
\[
G(z,w)=F(z,w)+zO(\norm{(z,w)}^{l})
\]
as $(z,w)\to0$.
\end{thm}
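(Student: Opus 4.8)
The plan is to deduce Theorem~\ref{thm:JetIntFixedAxis} from the general density-property machinery of Varolin, using the structural features of germs that pointwise fix a coordinate axis. First I would set up the target: write the given germ $F\in\Aut(\mathbb{C}^{2},0)$ pointwise fixing $\{z=0\}$ as $F(z,w)=(z,w)+zR(z,w)$, where $R=(R_1,R_2)$ is a holomorphic germ near $0$; the factor $z$ reflects the fact that $F$ is the identity on the axis. The jet we want to match is the $l$-th order jet of $F$ along the axis, i.e.\ we want $G$ agreeing with $F$ modulo $z\cdot O(\norm{(z,w)}^l)$. The natural ambient group is the group of automorphisms of $\mathbb{C}^{2}$ that pointwise fix $\{z=0\}$, and the key point is that its ``Lie algebra'' — the complete holomorphic vector fields on $\mathbb{C}^{2}$ that vanish on the axis — enjoys the relevant (relative/volume) density property. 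This is exactly the content that \cite[Theorem~5.1]{Varolin2001Thedensitypropertyforcomplexmanifoldsandgeometricstructures} and \cite[Theorem~1,\,Example~1]{Varolin2000ThedensitypropertyforcomplexmanifoldsandgeometricstructuresII} provide.

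The second step is to produce, abstractly, a formal (or smooth) path of automorphisms fixing the axis whose time-one map realises the desired jet. Concretely, $F$ restricted to a neighbourhood of $0$, being a local biholomorphism tangent to the identity transversally to the axis, can be interpolated by the flow of a (local) vector field $X$ vanishing on $\{z=0\}$: one writes $F=\exp(X)+(\text{higher order along the axis})$ after a Borel-type summation, or more elementarily one only needs the $l$-jet of such an $X$, which exists by a routine recursion solving the homological-type equations order by order in $z$. Here the hypothesis that $F$ is the identity on $\{z=0\}$ is essential: it guarantees the interpolating field can be taken to vanish on the axis, so we stay inside the subgroup under consideration. The density property then lets us approximate the time-one map of $X$ (hence the $l$-jet of $F$) uniformly on a polynomially convex compact by a genuine global automorphism $G$ of $\mathbb{C}^{2}$ fixing the axis; an accompanying jet-interpolation statement (the relative analogue of Theorem~\ref{thm:JetIntWeickFor}, which is also part of Varolin's package and of \cite{Forstneriv1999InterpolationbyholomorphicautomorphismsandembeddingsinmathbbCn}) upgrades ``approximate'' to ``exact jet matching at $0$'', yielding $G(z,w)=F(z,w)+zO(\norm{(z,w)}^{l})$.

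The main obstacle, and the step deserving the most care, is verifying that one stays inside the correct category at every stage: the subgroup of automorphisms pointwise fixing $\{z=0\}$ and its vector-field Lie algebra $\{X : X|_{\{z=0\}}=0\}$. One must check that this algebra is spanned (in the density-property sense) by complete fields of shear and over-shear type adapted to the axis — e.g.\ fields of the form $z\,p(z,w)\partial_z$ and $z\,q(z,w)\partial_w$ with $p,q$ polynomial — which is precisely what Varolin's examples establish, and that the interpolating field for $F$ lies in its closure. A secondary technical point is the bookkeeping in the jet recursion: writing $F=\id+zR$ and $G=\id+z\tilde R$, matching jets means solving $\tilde R\equiv R\pmod{\norm{(z,w)}^l}$ while respecting the group constraint, which is a finite triangular linear system and therefore solvable; I would not grind through it but simply invoke the relative jet-interpolation theorem. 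Modulo these identifications the result is an immediate application, which is why it is stated here as a consequence of the cited theorems rather than proved from scratch.
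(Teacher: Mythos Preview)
Your proposal is aligned with the paper's treatment: the paper does not give an independent proof of this theorem but simply records it as a direct consequence of \cite[Theorem~5.1]{Varolin2001Thedensitypropertyforcomplexmanifoldsandgeometricstructures} together with \cite[Theorem~1 and Example~1]{Varolin2000ThedensitypropertyforcomplexmanifoldsandgeometricstructuresII}. Your sketch expands on how those density-property and relative jet-interpolation results are applied to the subgroup fixing the axis, which is exactly the intended mechanism, so there is nothing to correct.
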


\section{Attracting invariant Fatou components}

\selectlanguage{british}

\subsection{\label{subsec:LocalBasinsFatou}Local basins and Fatou components}

Invariant Fatou components attracting to a fixed point $p$ can be
identified by understanding the local dynamics near $p$. All the
examples of attracting or parabolic domains we have seen in Chapter~\ref{chap:2DlocDyn}
correspond to Fatou components by the following simple consequence
of Montel's compactness principle~\ref{thm:MontelBdd}:
\begin{lem}
\label{lem:localbasinIsInFatouComp}Let $F\in\Aut(\mathbb{C}^{d},0)$
and $D\subseteq\mathbb{C}^{d}$ be a bounded, $F$-invariant open
set, such that $F^{\circ n}(z)\to0$ for all $z\in D$. Then $D$
is contained in a Fatou component of $F$ attracting to $0$.
\end{lem}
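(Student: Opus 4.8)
The plan is to apply Montel's compactness principle (Theorem~\ref{thm:MontelBdd}) directly on $D$ and then extend normality to the whole Fatou component by the identity principle, exactly as in the one-dimensional Lemmas of Section~\ref{sec:1DFatouClassification}. First I would observe that since $D$ is bounded, the family $\{F^{\circ n}|_D\}_n$ is uniformly bounded on $D$, hence locally uniformly bounded; by Montel's theorem it is normal on $D$, so $D$ is contained in the Fatou set of $F$ and therefore in some Fatou component $\Omega \supseteq D$.

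Next I would identify the limit behaviour on $\Omega$. By hypothesis $F^{\circ n}(z) \to 0$ for every $z \in D$, so the constant map $z \mapsto 0$ is the only locally uniform limit function of $\{F^{\circ n}\}_n$ on $D$: any subsequential limit $F_\infty$ on $\Omega$ restricts on the open subset $D$ to the constant $0$, and since $\Omega$ is connected and $F_\infty$ is holomorphic, the identity principle forces $F_\infty \equiv 0$ on all of $\Omega$. As the limit is independent of the subsequence, the full sequence $F^{\circ n}$ converges locally uniformly to $0$ on $\Omega$, i.e.\ $\Omega$ is attracting to $0$ in the sense of the definitions in Chapter~\ref{chap:FatouAut}. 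This also shows $0 \in \overline{\Omega}$.

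The only genuinely delicate point is making sure that the identity principle is legitimately applicable, i.e.\ that $D$ is genuinely open and nonempty (so that agreement of holomorphic maps on $D$ propagates across the connected component $\Omega$) — this is immediate from the hypotheses, so there is in fact no serious obstacle here. One should merely be slightly careful that $F$ is an automorphism (or at least that $F^{\circ n}$ is defined on a fixed neighbourhood of $D$, which holds since $F \in \Aut(\mathbb{C}^d,0)$ and $D$ is forward-invariant), so that the iterates $F^{\circ n}$ are a well-defined family of holomorphic self-maps to which Montel applies. With these remarks the proof is complete; compare Remark~\ref{rem:StableCompFatou}, which records precisely this identification of attracting Fatou components from invariant pieces of the stable set.
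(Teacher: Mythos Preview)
Your proof is correct and follows essentially the same approach as the paper: apply Montel's theorem on the bounded invariant set $D$ to get normality, then use the identity principle to propagate the constant limit $0$ from $D$ to the entire Fatou component. The paper's version is just a terser rendition of exactly these two steps.
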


\begin{proof}
Since $\{F^{\circ n}|_{D}:D\to D\}_{n}$ is uniformly bounded, by
Montel's theorem~\ref{thm:MontelBdd}, $D$ is contained in a Fatou
component $V$. Every limit map $F^{\circ n_{k}}\xrightarrow[k\to+\infty]{}F_{\infty}$
is the constant $0$ on $D$, hence, by the identity principle, the
same is true on $V$ and $V$ is attracting to $0$.
\end{proof}
The next step is identifying the Fatou component containing $D$.
The obvious (and minimal) candidate is the following:
\begin{defn}
Let $F\in\Aut(\mathbb{C}^{d},0)$ and $D\subseteq\mathbb{C}^{d}$
be a bounded, $F$-invariant open set, such that $F^{\circ n}(z)\to0$
for all $z\in D$. Then we call $D$ a \emph{local basin\index{local basin@\emph{local basin}}}
for $F$ at $0$ and the \emph{global} \emph{basin\index{global basin@\emph{global} \emph{basin}}}
corresponding to $D$ is 
\[
\Omega:=\bigcup_{n\in\mathbb{N}}F^{\circ(-n)}(D).
\]
\end{defn}

\begin{rem}[Properties]
Let $D$ be a local basin and $\Omega$ the corresponding global
basin.
\begin{enumerate}
\item $\Omega$ is completely $F$-invariant, open and all orbits in $\Omega$
converge to $0$ locally uniformly.
\item If $F$ is a global automorphism, the global basin $\Omega$ is a
growing union of biholomorphic preimages of $D$. Hence, $\Omega$
is connected, contained in the same Fatou component as $D$, and has
the same topology as $D$. In the proofs of the results from Chapter~\ref{chap:2DlocDyn},
the more precise biholomorphic type was obtained via local coordinates
that are compatible with the dynamics of $F$ on $D$, and that then
extend to $\Omega$ via backward iteration of $F$.
\item For two disjoint local basins $D_{1}$ and $D_{2}$, their respective
global basins $\Omega_{1}$ and $\Omega_{2}$ are still disjoint.
\end{enumerate}
\end{rem}

A global basin is not in general equal to the containing Fatou component,
and the Fatou components containing two disjoint global basins may
still coincide:
\begin{example}
Let $F:\mathbb{C}^{2}\to\mathbb{C}^{2}$ with $F(z,w)=(z/2,w/2)$,
then $\Omega_{1}=\{\Re z>0\}$ and $\Omega_{2}=\{\Re z<0\}$ are global
basins, but they are contained in the same Fatou component $\mathbb{C}^{2}$
of $F$ and are both different from the containing Fatou component
$\mathbb{C}^{2}$.
\end{example}

Ruling out these cases can still be transformed into a local problem
thanks to normality:
\begin{lem}
\label{lem:localFatou}Let $F\in\Aut(\mathbb{C}^{d},0)$ and $\varepsilon>0$.
\begin{enumerate}
\item Let $\Omega$ be the global basin to a local basin $D\subseteq B_{\varepsilon}(0)$.
Then $\Omega$ is not a Fatou component, if and only if there exists
a local basin $D'\subseteq B_{\varepsilon}(0)$ intersecting $D$
and not contained in $\Omega$.
\item Let $D_{1},D_{2}\subseteq B_{\varepsilon}(0)$ be local basins for
$F$. Then $D_{1}$ and $D_{2}$ are contained in the same Fatou component
of $F$, if and only if there exists a local basin $D\subseteq\mathbb{C}^{d}$
containing both $D_{1}$ and $D_{2}$.
\end{enumerate}
\end{lem}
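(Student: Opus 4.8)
The two parts are really the same statement in two guises, and both hinge on the observation that normality is an \emph{open} and \emph{local} condition: a point $x$ lies in the Fatou set of $F$ if and only if it lies in the interior of the stable set $\Sigma_F(U)$ for some (equivalently, any sufficiently small) bounded neighbourhood $U$, by Corollary~\ref{cor:StableCompFatou}. So the plan is to translate ``$D$, $D_1$, $D_2$ lie in the same Fatou component'' into an assertion purely about the existence of a connected $F$-invariant open set of orbits converging to $0$, and then to manufacture such a set from the normality hypothesis.

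For part~(1): the ``if'' direction is immediate, since if such a $D'$ exists then $D \cup D'$ is a local basin (open, $F$-invariant as a union of $F$-invariant sets, with all orbits converging to $0$) strictly larger than $D$ but still with the same global basin relation failing to be closed, so by Lemma~\ref{lem:localbasinIsInFatouComp} the Fatou component containing $D$ also contains $D'$, whose backward orbit reaches outside $\Omega$; hence $\Omega$ is properly contained in that Fatou component. For the ``only if'' direction, suppose $\Omega$ is not a Fatou component, so it is a proper open subset of a Fatou component $V$ with $F^{\circ n}\to 0$ on $V$ (by Lemma~\ref{lem:localbasinIsInFatouComp} and the identity principle, exactly as in the proof of that lemma). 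Pick a point $x \in \partial\Omega \cap V$. Since $x \in V$ and $F^{\circ n}(x)\to 0$, some forward iterate $F^{\circ N}(x)$ lies in $B_{\varepsilon/2}(0)$, and by continuity a whole neighbourhood $W$ of $x$ has $F^{\circ N}(W)\subseteq B_\varepsilon(0)$; shrinking $W$ and using normality on $V$ we can also arrange $F^{\circ n}(W)\subseteq B_\varepsilon(0)$ for all $n\ge N$ and $F^{\circ n}(W)\to 0$ uniformly. Then $D' := \bigcup_{n\ge N} F^{\circ n}(W)$ is a local basin inside $B_\varepsilon(0)$; it meets $D$ because $x\in\overline{\Omega}$ forces $W$ (hence $D'$, after pushing forward) to intersect $\Omega$, and a point of $\Omega\cap D'$ has its orbit eventually in $D$ since $\Omega=\bigcup F^{\circ(-n)}(D)$ and $D$ is $F$-invariant — so in fact $D'$ meets $D$ after finitely many iterations, and replacing $D'$ by a further forward iterate we may take $D'\cap D\ne\emptyset$ outright. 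Finally $D'\not\subseteq\Omega$: if it were, then $W$ (a neighbourhood of the \emph{boundary} point $x$) would lie in $\Omega$, contradicting that $\Omega$ is open and $x\notin\Omega$.

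For part~(2): again ``if'' is trivial, since a local basin $D$ containing $D_1\cup D_2$ is connected-through-them only after we note $D$ lies in a single Fatou component by Lemma~\ref{lem:localbasinIsInFatouComp}, and both $D_1,D_2\subseteq D$ lie in that same component. For ``only if'', suppose $D_1$ and $D_2$ lie in a common Fatou component $V$. Then $F^{\circ n}\to 0$ on $V$ (identity principle again), so for each point of $V$ some forward iterate enters $B_\varepsilon(0)$, and by normality plus compactness, covering a path in $V$ joining a point of $D_1$ to a point of $D_2$, we can cover that path by finitely many open sets $W_1,\dots,W_k\subseteq V$ each of which, after a common number $N$ of forward iterations, lands and stays in $B_\varepsilon(0)$ with uniform convergence to $0$. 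Set $D := \bigcup_{j}\bigcup_{n\ge N} F^{\circ n}(W_j)$; this is open, $F$-invariant, contained in $B_\varepsilon(0)$, with all orbits converging to $0$, hence a local basin, and by $F$-invariance of $D_1$ and $D_2$ it contains $F^{\circ N}(D_1)=D_1$ and $F^{\circ N}(D_2)=D_2$.

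\textbf{Main obstacle.}
The delicate point in both parts is the uniformity: from ``$F^{\circ n}\to 0$ on the Fatou component $V$'' one only gets \emph{locally} uniform convergence, so producing a single $N$ that works on a whole covering family $\{W_j\}$ — and simultaneously keeping all iterates $F^{\circ n}(W_j)$, $n\ge N$, inside the \emph{fixed} ball $B_\varepsilon(0)$ — requires compactness of the chosen path (or of $\overline{W}$ for a small $W$) inside $V$ together with the characterisation of normality in Remark~\ref{rem:CharNormality} (orbits of nearby points stay close). I would isolate this as a small lemma: \emph{if $K\subseteq V$ is compact and $F^{\circ n}\to 0$ uniformly on $K$, then there is $N$ with $F^{\circ n}(K)\subseteq B_\varepsilon(0)$ for all $n\ge N$}, which is immediate once uniform convergence on $K$ is in hand, the latter following from local uniform convergence by a finite subcover. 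Everything else is bookkeeping with the relations $\Omega=\bigcup_n F^{\circ(-n)}(D)$ and the $F$-invariance of the local basins.
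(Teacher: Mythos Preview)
Your approach is essentially the paper's, but you are working harder than necessary and introduce a genuine (if easily repaired) error in part~(2).

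The paper avoids your ``main obstacle'' entirely. Rather than choosing a boundary point in part~(1) or covering a path by finitely many $W_j$ in part~(2), it simply picks $p\in D$ (resp.\ $p\in D_1$) and $q\in V\setminus\Omega$ (resp.\ $q\in D_2$) and takes a single connected open $U$ with $\{p,q\}\subseteq U\subseteq\overline{U}\subseteq V$, which exists because $V$ is a connected open set. Uniform convergence of $F^{\circ n}\to 0$ on the one compact $\overline{U}$ then gives a single $n_0$ with $\bigcup_{n\ge n_0}F^{\circ n}(\overline{U})\subseteq B_\varepsilon(0)$, and $D':=\bigcup_{n\ge n_0}F^{\circ n}(U)$ does everything at once. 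No finite subcover bookkeeping is needed.

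The error: in part~(2) you set $D:=\bigcup_j\bigcup_{n\ge N}F^{\circ n}(W_j)$ and then claim ``by $F$-invariance of $D_1$ and $D_2$ it contains $F^{\circ N}(D_1)=D_1$''. Both halves of this fail. First, $F$-invariance only gives $F^{\circ N}(D_1)\subseteq D_1$, not equality. Second, your $W_j$ cover a \emph{path}, not $D_1$, so there is no reason $F^{\circ N}(D_1)\subseteq D$. The fix is exactly what the paper does: set $D:=D_1\cup D_2\cup D_3$ with $D_3$ the forward-iterate union. Then $D$ is a local basin containing $D_1$ and $D_2$ by fiat, and it is connected because each $F^{\circ n}(U)$ contains $F^{\circ n}(p)\in D_1$ and $F^{\circ n}(q)\in D_2$ (using $F$-invariance of $D_1,D_2$), so every piece of $D_3$ meets both $D_1$ and $D_2$.
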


\begin{proof}
\begin{enumerate}
\item Let $V$ be the Fatou component containing $D$ and $p\in D$ and
$q\in V\backslash\Omega$. Then there exists a common neighbourhood
$U\subseteq\mathbb{C}^{d}$ of $p$ and $q$ such that $\overline{U}\subseteq V$.
By normality of $\{F^{\circ n}\}_{n}$ on $V$, there exists $n_{0}\in\mathbb{N}$
such that $\bigcup_{n\ge n_{0}}F^{\circ n}(\overline{U})\subseteq B_{\varepsilon}(0)$
and hence $D':=\bigcup_{n\ge n_{0}}F^{\circ n}(U)$ is a local basin
intersecting $D$ in $F^{\circ n_{0}}(p)$ and containing $F^{\circ n_{0}}(q)\notin\Omega$.
\item Let $D_{1}$ and $D_{2}$ be contained in a Fatou component $V$ and
let $p\in D_{1}$ and $q\in D_{2}$. Then there exists a common neighbourhood
$U\subseteq\mathbb{C}^{d}$ of $p$ and $q$ such that $\overline{U}\subseteq V$.
By normality, there exists $n_{0}\in\mathbb{N}$ such that $\bigcup_{n\ge n_{0}}F^{\circ n}(\overline{U})\subseteq B_{\varepsilon}(0)$
and hence $D_{3}:=\bigcup_{n\ge n_{0}}F^{\circ n}(U)$ is a local
basin intersecting $D_{1}$ and $D_{2}$. Hence, $D=D_{1}\cup D_{2}\cup D_{3}\subseteq B_{\varepsilon}(0)$
is a local basin.
\end{enumerate}
\end{proof}

In conclusion, for $F\in\Aut(\mathbb{C}^{d},0)$, once we identify
the open connected components of the realm of attraction $A_{F}(0)\subseteq U$
with respect to a bounded neighbourhood $U\subseteq\mathbb{C}^{d}$
of $0$, we have identified the Fatou components attracting to $0$.

If we want to identify a global basin as a Fatou component, but do
not know the full realm of attraction with respect to a neighbourhood
of the fixed point, a different approach may be necessary. One such
approach will be described in the following section.

\subsection{State of classification}

The state of classification of invariant attracting Fatou components
of automorphisms of $\mathbb{C}^{2}$ is closely tied to the state
of understanding local dynamics near fixed points. All currently known
examples can be derived from a complete understanding of the stable
orbits near the corresponding fixed point. The following corollary
follows immediately via Lemma~\ref{lem:localbasinIsInFatouComp}
from Theorem~\ref{thm:AttrBasinFatBieb}, Lemma~\ref{lem:AttrIffTopAttr},
Theorem~\ref{thm:SemiAttrUeda}, and Theorem~\ref{thm:FatouCCstarMulti}
and summarises results from Rosay and Rudin \cite{RosayRudin1988HolomorphicmapsfrombfCntobfCn},
Ueda \cite{Ueda1986LocalstructureofanalytictransformationsoftwocomplexvariablesI},
Bracci, Raissy, and Stensønes \cite{BracciRaissyStensonesAutomorphismsofmathbbCkwithaninvariantnonrecurrentattractingFatoucomponentbiholomorphictomathbbCtimesmathbbCk1},
and the author \cite{Reppekus2019PeriodiccyclesofattractingFatoucomponentsoftypemathbbCtimesmathbbCd1inautomorphismsofmathbbCd}.
\begin{cor}
Let $F\in\Aut(\mathbb{C}^{2},0)\cap\Aut(\mathbb{C}^{2})$.
\begin{enumerate}
\item \label{enu:recurrentInSum}If $0$ is an attracting fixed point, then
the basin of attraction $\Omega$ of $0$ is a recurrent attracting
Fatou component admitting a biholomorphism $\varphi:\Omega\to\mathbb{C}^{2}$
conjugating $F$ to a polynomial normal form. All recurrent attracting
Fatou components are of this form.
\item If $0$ is an isolated parabolic-attracting fixed point of order $k+1$,
then the only Fatou components attracting to $0$ are $k$ (non-recurrent)
parabolic domains $\Omega_{1},\ldots,\Omega_{k}$, each admitting
a biholomorphism $\varphi:\Omega_{j}\to\mathbb{C}^{2}$ conjugating
$F$ to the translation $(z,w)\mapsto(z+1,w)$.
\item If $F$ is one-resonant of index $(1,1)$ and of the form 
\begin{equation}
F(z,w)=\paren[\Big]{\lambda z\paren[\Big]{1-\frac{(zw)^{k}}{2k}},\overline{\lambda}w\paren[\Big]{1-\frac{(zw)^{k}}{2k}}}+O(\norm{(z,w)}^{l})\label{eq:oneResFormSummary}
\end{equation}
with $l\ge4$, and satisfies the reduced Brjuno condition, then the
only Fatou components attracting to $0$ are $k$ parabolic domains
$\Omega_{1},\ldots,\Omega_{k}$, each admitting a biholomorphism $\varphi:\Omega_{j}\to\mathbb{C}\times\mathbb{C}^{*}$
conjugating $F$ to the translation $(z,w)\mapsto(z+1,w)$.
\end{enumerate}
\end{cor}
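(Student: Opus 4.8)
The plan is to treat each of the three cases by combining the relevant local description of the stable orbits near $0$ from Chapter~\ref{chap:2DlocDyn} with the ``local basin $\Rightarrow$ Fatou component'' machinery of Section~\ref{subsec:LocalBasinsFatou}, specifically Lemma~\ref{lem:localbasinIsInFatouComp} and Lemma~\ref{lem:localFatou}. The common strategy is: (i)~use the local theorem to produce, for a small enough bounded neighbourhood $U$ of $0$, a complete list of the connected open pieces of the realm of attraction $A_F(0)$ inside $U$; (ii)~invoke Lemma~\ref{lem:localbasinIsInFatouComp} to see each such piece (a local basin) lies in a Fatou component attracting to $0$; (iii)~use the remark following Section~\ref{subsec:LocalBasinsFatou} that, since $F$ is a global automorphism, the global basin $\Omega=\bigcup_n F^{\circ(-n)}(D)$ is connected, sits in the same Fatou component as $D$, and carries the biholomorphic model extended by backward iteration of $F$; and (iv)~use Lemma~\ref{lem:localFatou} together with the fact that the local list is exhaustive to conclude that distinct global basins lie in distinct Fatou components and that each global basin is itself a full Fatou component.

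For Part~1, if $0$ is an attracting fixed point, Lemma~\ref{lem:AttrIffTopAttr} gives a connected attracting neighbourhood, so $A_F(0)$ has a single open component near $0$, and Theorem~\ref{thm:AttrBasinFatBieb} (Rosay--Rudin) gives $A_F(0)\cong\mathbb{C}^{2}$ with $F$ conjugate to an upper-triangular polynomial normal form on all of $A_F(0)$; since $0\in A_F(0)$ this component is recurrent, and by Theorem~\ref{thm:PolAutRecurrent} every recurrent attracting Fatou component arises this way. For Part~2, Theorem~\ref{thm:SemiAttrUeda} (Ueda) applied to a parabolic-attracting $F\in\Aut(\mathbb{C}^{2},0)$ of semi-parabolic order $k+1$ yields $k$ parabolic domains $K=K_h$ on which $F$ is conjugate to the translation, with $A_F(0)=W^{\mathrm{ss}}\cup\bigcup_n F^{\circ(-n)}(K_h)$ and with no other points having forward orbit converging to $0$; since $W^{\mathrm{ss}}$ is a one-dimensional submanifold it contains no open local basin, so the open pieces of $A_F(0)$ are exactly the $k$ global basins $\Omega_h=\bigcup_n F^{\circ(-n)}(K_h)$, and each extends the Fatou coordinate $\varphi$ of Theorem~\ref{thm:SemiAttrUeda} to a biholomorphism onto $\mathbb{C}^{2}$ by part~5 of that theorem. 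For Part~3, Theorem~\ref{thm:FatouCCstarMulti} (together with the displayed conclusion after it) gives, under the stated Brjuno hypothesis and form~(\ref{eq:ReppGerms}) specialised to $d=2$, $\alpha=(1,1)$, $l\ge4>2k\cdot2+1$, a neighbourhood $U$ in which every stable orbit lies either in one of the two Siegel hypersurfaces $M_1,M_2$ (again one-dimensional, hence no open local basin inside them) or in one of the $k$ attracting basins $\Omega_h=\bigcup_n F|_U^{\circ(-n)}(B_h)$; the displayed conclusion provides the holomorphic map $\phi:\Omega_h\to\mathbb{C}\times\mathbb{C}^{*}$, injective on $B_h$ with $\phi\circ F=\phi+e_1$, which is a biholomorphism when $F$ is a global automorphism, so each $\Omega_h\cong\mathbb{C}\times\mathbb{C}^{*}$.

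The remaining work is to upgrade ``global basin'' to ``Fatou component''. Here I would argue as follows: if a global basin $\Omega_h$ were properly contained in its Fatou component $V$, then by Lemma~\ref{lem:localFatou}(1) there would be a local basin $D'\subseteq U$ meeting $D=B_h$ (or $K_h$) but not contained in $\Omega_h$; shrinking and pushing forward, $D'$ would contribute points of $A_F(0)\cap U$ lying in a connected open piece not among the listed ones (it meets $B_h$ but is not inside $\Omega_h$, and the $B_h$ together with $M_j$/$W^{\mathrm{ss}}$ exhaust the stable set in $U$ by the cited local theorems), a contradiction; hence $\Omega_h=V$. The same local-exhaustiveness, via Lemma~\ref{lem:localFatou}(2), shows two distinct $\Omega_h$ cannot share a Fatou component, and that nothing attracting to $0$ is missed. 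I expect the main obstacle to be exactly this step in Parts~2 and~3: one must be careful that the local classification theorems really describe \emph{all} stable orbits in a full neighbourhood of $0$ (not merely all orbits converging tangentially to a fixed direction, nor merely the ones inside a parabolic domain) — for Ueda's theorem this is its part~3, and for Theorem~\ref{thm:FatouCCstarMulti} it is the ``all stable orbits in $U$ are contained in one of'' clause, whose proof in turn relied on the iterated-elimination Theorem~\ref{thm:IteratedEliminationSimple} to control the parabolic shadow on a whole neighbourhood; invoking those clauses correctly, rather than re-proving them, is the crux.
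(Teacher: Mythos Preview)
Your approach is exactly the paper's: the corollary is stated there as following ``immediately via Lemma~\ref{lem:localbasinIsInFatouComp} from Theorem~\ref{thm:AttrBasinFatBieb}, Lemma~\ref{lem:AttrIffTopAttr}, Theorem~\ref{thm:SemiAttrUeda}, and Theorem~\ref{thm:FatouCCstarMulti}'', and your write-up simply unpacks that sentence together with the concluding remark after Lemma~\ref{lem:localFatou} that identifying the open connected components of $A_F(0)$ in a small $U$ identifies the attracting Fatou components.

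Two small slips to fix. First, in Part~2 you invoke Theorem~\ref{thm:SemiAttrUeda} for general semi-parabolic order $k+1$, but as stated that theorem is for order $2$; for general $k$ the existence of the $k$ parabolic domains and the conjugation to a translation come from Hakim's Theorem~\ref{thm:SemAttrHakim}, and the ``only'' part (exhaustion of $A_F(0)$) needs the natural higher-order analogue of Ueda's part~3. Second, your parenthetical check ``$l\ge 4>2k\cdot 2+1$'' in Part~3 is false: for $d=2$ one has $2kd+1=4k+1\ge 5$, so $l\ge 4$ never implies the hypothesis of Theorem~\ref{thm:FatouCCstarMulti} as stated; the $l\ge 4$ in the corollary matches the original Bracci--Raissy--Stens{\o}nes hypothesis for $k=1$, not the general $l>4k+1$ condition. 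Neither of these affects the structure of your argument, only the bookkeeping of which theorem you cite for which~$k$.
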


\begin{rem}
It is reasonable to assume, that the last part extends to non-degenerate
one-resonant germs that are parabolically attracting and satisfy the
reduced Brjuno condition. The resulting Fatou components would be
of type $\mathbb{C}^{2}$ or $\mathbb{C}\times\mathbb{C}^{*}$ (depending
on the index of one-resonance) with conjugation to a translation.
\end{rem}

Further global basins arise from Theorem~\ref{thm:Hakim2}, Theorem~\ref{thm:RongQuasiParaParaMnf},
and Theorem~\ref{thm:FatouCCstarMulti}. These are candidates for
Fatou components, but the corresponding Fatou component may be larger.
\begin{cor}
\label{cor:FatouCandidates}Let $F\in\Aut(\mathbb{C}^{2},0)\cap\Aut(\mathbb{C}^{2})$.
\begin{enumerate}
\item If $F$ is tangent to the identity of order $k+1$ then, for each
attracting non-degenerate characteristic direction $[v]$, there exist
$k$ parabolic domains $\Omega_{1},\ldots,\Omega_{k}$ tangent to
$v$, each admitting a biholomorphism $\varphi:\Omega_{j}\to\mathbb{C}^{2}$
conjugating $F$ to the translation $(z,w)\mapsto(z+1,w)$.
\item If $F$ is parabolic-elliptic with parabolic multiplier $1$, dynamically
separating and of semi-parabolic order $k+1$, and the unique non-degenerate
characteristic direction $[v]$ is attracting, then there exist $k$
parabolic domains $\Omega_{1},\ldots,\Omega_{k}$ tangent to $v$,
each admitting a biholomorphism $\varphi:\Omega_{j}\to\mathbb{C}^{2}$
conjugating $F$ to the translation $(z,w)\mapsto(z+1,w)$.
\item If $F$ is one-resonant of the form (\ref{eq:oneResFormSummary}),
then there exist $k$ parabolic domains $\Omega_{1},\ldots,\Omega_{k}$,
each admitting a biholomorphism $\varphi:\Omega_{j}\to\mathbb{C}^{2}$
conjugating $F$ to the translation $(z,w)\mapsto(z+1,w)$.
\end{enumerate}
\end{cor}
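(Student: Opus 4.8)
The plan is to handle all three parts by one common two–step scheme: first invoke the pertinent local normalisation theorem from Chapter~\ref{chap:2DlocDyn} to produce, near $0$, finitely many disjoint $F$-invariant local basins (petals) carrying a partial conjugation of $F$ to a translation in the $e_{1}$-direction; then, since $F$ is a global automorphism, saturate each petal under backward iteration, propagate the conjugation, and place the resulting global basin inside a Fatou component attracting to $0$ via Lemma~\ref{lem:localbasinIsInFatouComp}.

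First I would verify the hypotheses of the local theorems in dimension two. In~(1) a non-degenerate characteristic direction $[v]\in\mathbb{P}^{1}$ has a single director $\alpha_{1}$, and ``$[v]$ attracting'' means $\Re\alpha_{1}>0$; taking $c\in(0,\Re\alpha_{1})$ puts one in the case $m=1$ of Theorem~\ref{thm:Hakim2}, which yields $k$ parabolic domains $P_{1},\dots,P_{k}$ tangent to $[v]$ together with injective holomorphic maps $\varphi_{j}$ satisfying $\varphi_{j}\circ F=\varphi_{j}+e_{1}$ with image in $\mathbb{C}\times\mathbb{C}^{m}=\mathbb{C}^{2}$. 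In~(2) the hypotheses of Theorem~\ref{thm:RongQuasiParaParaMnf} hold verbatim with $\nu=k+1$ the semi-parabolic order and $d-1=1$, $\Re\alpha_{1}>0$, so that theorem itself supplies $\nu-1=k$ parabolic domains tangent to $[v]$ and, because $F$ is a global automorphism, biholomorphisms onto $\mathbb{C}^{2}$ conjugating $F$ to $(z,w)\mapsto(z+1,w)$, so here essentially no further work is needed. In~(3), comparing (\ref{eq:oneResFormSummary}) with (\ref{eq:OneResPDNF}) shows $F$ is non-degenerate one-resonant with generator $\alpha=(1,1)$ and weighted order $k$, with $a_{1}=a_{2}=-\tfrac{1}{2k}$ and $A=-\tfrac{1}{k}$; since $|\lambda|=1$ and $\Re a_{j}<0$ it is parabolically attracting, so Theorem~\ref{thm:BracciZaitsevDynamics} gives disjoint $F$-invariant local basins $B_{0},\dots,B_{k-1}$, attracting to $0$ on their boundaries, each connected since $\gcd(1,1)=1$, carrying one-dimensional Fatou coordinates $\psi_{h}$ with $\psi_{h}\circ F=\psi_{h}+1$, which are completed to a conjugation to $(z,w)\mapsto(z+1,w)$ as in the proof of Theorem~\ref{thm:FatouCCstarMulti}.

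The globalisation step is the same in each case. Let $P$ be one of the local petals and put $\Omega:=\bigcup_{n\ge0}F^{\circ(-n)}(P)$. As $F$ is a global automorphism, $\Omega$ is an increasing union of biholomorphic copies of $P$, hence connected, open, completely $F$-invariant, of the same homotopy type as $P$, and $F^{\circ n}\to0$ locally uniformly on it; Lemma~\ref{lem:localbasinIsInFatouComp} then places $\Omega$ inside a Fatou component attracting to $0$. The partial conjugation $\varphi$ extends from $P$ to $\Omega$ by $\varphi(z):=\varphi(F^{\circ n}(z))-n e_{1}$ for $z\in F^{\circ(-n)}(P)$, which is well defined and holomorphic by the functional equation, and injective because $F$ is injective and $\varphi\circ F=\varphi+e_{1}$. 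The one point I expect to be the real obstacle is surjectivity of the extended $\varphi$ onto the claimed model: one must read off from Theorem~\ref{thm:Hakim2} (resp.\ \ref{thm:RongQuasiParaParaMnf}, \ref{thm:FatouCCstarMulti}) that $\varphi(P)$ contains a product of a right half-plane $\{\Re\zeta>R\}$ with a full transverse fibre, so that $\varphi(\Omega)=\bigcup_{n}(\varphi(P)-n e_{1})$ exhausts the model; this is exactly the ``local coordinates compatible with the dynamics, then extended by backward iteration'' mechanism described after the definition of global basin in Section~\ref{subsec:LocalBasinsFatou}, and in~(1) it has to be carried out directly since Theorem~\ref{thm:Hakim2} is stated only locally.

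Finally I would record the caveat that is the reason the statement speaks only of \emph{candidates}: absent an arithmetic condition on the multipliers one controls orbits only inside the petals, so $\Omega_{j}$ need not be the entire Fatou component (compare the trivial example $F(z,w)=(z/2,w/2)$, whose two global basins lie in the single Fatou component $\mathbb{C}^{2}$). In case~(3) the matching of the local Fatou coordinate of Theorem~\ref{thm:BracciZaitsevDynamics} to a global biholomorphism is where the reduced Brjuno condition underlying Theorem~\ref{thm:FatouCCstarMulti} enters, and the homotopy computation of Remark~\ref{rem:BZbasinShape} identifies the model there with $\mathbb{C}\times\mathbb{C}^{*}$; this delicacy, rather than any of the steps above, is the substantive content of the proof.
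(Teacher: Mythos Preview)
Your approach is essentially what the paper does: the corollary is stated without proof as an immediate consequence of Theorem~\ref{thm:Hakim2}, Theorem~\ref{thm:RongQuasiParaParaMnf}, and Theorem~\ref{thm:FatouCCstarMulti}, together with the globalisation mechanism described in Section~\ref{subsec:LocalBasinsFatou}. Your elaboration of that mechanism (saturation under $F^{-1}$, extension of $\varphi$ via $\varphi(F^{\circ n}(z))-ne_{1}$, surjectivity from a half-plane in the image) is correct and is indeed the content packaged into those theorems; in particular for~(2) Theorem~\ref{thm:RongQuasiParaParaMnf} already states the global biholomorphism, so nothing further is needed there, as you note.

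There is one point where you have the logic backwards. In your last paragraph you say that in case~(3) the reduced Brjuno condition is what makes the local Fatou coordinate into a global biholomorphism. That is not how the paper uses it: the Brjuno-type hypothesis in Theorem~\ref{thm:FatouCCstarMulti} is what produces the Siegel hypersurfaces and hence the classification of \emph{all} stable orbits near~$0$, which is exactly what upgrades the global basins from ``candidates'' to genuine Fatou components in the preceding corollary. The construction of the local basins $B_{h}$, the Fatou coordinates, and (for an automorphism) the biholomorphism onto the model proceeds from Theorem~\ref{thm:BracciZaitsevDynamics} and the internal-dynamics argument in \cite{BracciRaissyStensonesAutomorphismsofmathbbCkwithaninvariantnonrecurrentattractingFatoucomponentbiholomorphictomathbbCtimesmathbbCk1,Reppekus2019PeriodiccyclesofattractingFatoucomponentsoftypemathbbCtimesmathbbCd1inautomorphismsofmathbbCd} without invoking the Brjuno condition; this is precisely why the present corollary can drop that hypothesis at the price of only obtaining candidates. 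Your observation that the model in~(3) is $\mathbb{C}\times\mathbb{C}^{*}$ rather than $\mathbb{C}^{2}$ is correct and matches Remark~\ref{rem:BZbasinShape} and the statement of Theorem~\ref{thm:FatouCCstarMulti}; the $\mathbb{C}^{2}$ in the corollary appears to be a slip.
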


In all of these cases, we ask a number of open questions:
\begin{question}
\begin{enumerate}
\item Can we rule out orbits outside these parabolic domains converging
to $0$?
\item Can we rule out local basins/attracting Fatou components outside these
parabolic domains?
\item Are these parabolic domains Fatou components?
\item Are these parabolic domains contained in distinct Fatou components?
\end{enumerate}
A positive answer to any of these questions implies a positive answer
for all following questions.
\end{question}

The only recurrent attracting Fatou components are the basins of attracting
fixed points, in particular, they are Fatou-Bieberbach domains. For
the non-recurrent case, we know the following:
\begin{rem}
By \cite[Proposition~5.1]{Ueda1986LocalstructureofanalytictransformationsoftwocomplexvariablesI},
attracting Fatou components are Runge, and, by \cite{Serre1955UneproprietetopologiquedesdomainesdeRunge},
for every Runge domain $D\subseteq\mathbb{C}^{d}$, we have $H^{q}(D)=0$
for $q\ge d$. Hence $\mathbb{C}\times(\mathbb{C}^{*})^{d-1}$ has
the highest possible degree of non-vanishing cohomology for an attracting
Fatou component. 
\end{rem}

Our known results suggest a few questions regarding the general classification:
\begin{question}[Bracci]
Let $F\in\Aut(\mathbb{C}^{2})$ and $V$ a non-recurrent invariant
attracting Fatou component of $F$.
\begin{enumerate}
\item \label{enu:CCstarProds}Is $V$ necessarily biholomorphic to $\mathbb{C}^{2}$
or $\mathbb{C}\times\mathbb{C}^{*}$?
\item Does $V$ admit a conjugation to a translation?
\item \label{enu:fibrebundle}Does $V$ have a fibre-bundle structure preserved
by $F$ (necessarily over $\mathbb{C}$)?
\item \label{enu:KobayashiVanish}Does the Kobayashi metric $k_{V}$ on
$V$ vanish?
\item Can $V$ have more than one ``hole''?
\end{enumerate}
\end{question}

Positive answers to \ref{enu:fibrebundle} and \ref{enu:KobayashiVanish},
would imply \ref{enu:CCstarProds}. The difficulty with this ``internal
dynamics'' approach is, that the fibrations in the known examples
all stem from the local dynamics near the fixed point.

From the opposite direction, we approach the classification by elimination.
Some negative results, that do not require complete understanding
of stable orbits, follow from Theorem~\ref{thm:NishimuraSemiAttrFixedCurve},
Theorem~\ref{thm:2DsnailLemma}, Theorem~\ref{thm:StableCentreMnf},
and Theorem~\ref{thm:Brjuno}:
\begin{cor}
Let $F\in\Aut(\mathbb{C}^{2},0)$. Then under any of the following
conditions, there is no invariant Fatou component of $F$ attracting
to $0$:
\begin{enumerate}
\item $0$ is a parabolic-attracting fixed point contained in a curve of
fixed points.
\item $0$ is elliptic-attracting.
\item $F$ has at least one repelling multiplier.
\item $0$ is elliptic and satisfies the Brjuno condition.
\end{enumerate}
\end{cor}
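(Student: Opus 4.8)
The plan is to treat each of the four cases separately, in each case exhibiting a mechanism that forces every stable orbit near $0$ to lie in a lower-dimensional set, so that no open $F$-invariant attracting set can exist; by Lemma~\ref{lem:localbasinIsInFatouComp} this rules out an invariant attracting Fatou component. For case~(1), $0$ lies in a curve $S$ of fixed points with the eigenvalue $1$ of $dF_p$ exactly tangent to $S$ and all other eigenvalues attracting; Theorem~\ref{thm:NishimuraSemiAttrFixedCurve} gives regularising coordinates $(x,y)$ with $S=\{x=0\}$ and $F(x,y)=(A(y)x,y)$ near $0$, so an orbit converges to $0$ only if it converges to a fixed point of $F$ on $S$, i.e.~only to $0$ itself, which forces $x$-component $\to 0$ while $y$ is constant; hence the only orbits converging to $0$ lie on $\{y=0\}$, a proper analytic subset, so no open invariant set can have all orbits tending to $0$. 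For case~(2), $0$ is elliptic-attracting: the $2$D snail lemma (Theorem~\ref{thm:2DsnailLemma}) says that if an open set $W$ with $F(W)\cap W\neq\emptyset$ has $F^{\circ n}\to 0$ uniformly on $W$, then the neutral multiplier equals $1$, contradicting ellipticity; an invariant attracting Fatou component would furnish exactly such a $W$, so none exists.

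For case~(3), $F$ has at least one repelling multiplier, so $E^{\mathrm u}\neq\{0\}$. The plan is to invoke the stable/centre manifold theorem (Theorem~\ref{thm:StableCentreMnf}): there is a neighbourhood $U$ of $0$ and a centre-stable manifold $W^{\mathrm{cs}}\subseteq U$ with $\Sigma_F(U)\subseteq W^{\mathrm{cs}}$ and $T_0^{\mathbb R}W^{\mathrm{cs}}=E^{\mathrm c}\oplus E^{\mathrm s}$, which is a proper real submanifold since $E^{\mathrm u}\neq 0$. Thus for a small enough bounded neighbourhood $U$ of $0$ the stable set $\Sigma_F(U)$ has empty interior, so $A_F(0,U)$ has empty interior; by the localisation discussion of Section~\ref{subsec:LocalBasinsFatou} (in particular Lemma~\ref{lem:localbasinIsInFatouComp} and the observation that a global basin has the same topology and lies in the same Fatou component as its local basin) any invariant attracting Fatou component would restrict to an open subset of $\Sigma_F(U)$ inside $U$, a contradiction. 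For case~(4), $0$ is elliptic and satisfies the Brjuno condition, so by Theorem~\ref{thm:Brjuno} $F$ is holomorphically linearisable, $F=\varphi^{-1}\circ dF_0\circ\varphi$ with $dF_0=\diag(\lambda_1,\lambda_2)$, $|\lambda_1|=|\lambda_2|=1$; then $\lVert F^{\circ n}(z)\rVert$ stays bounded away from $0$ for $z$ near but not at $0$ (the linear map is an isometry in suitable coordinates), so no orbit except $\{0\}$ converges to $0$, hence there is no local basin at $0$ at all.

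The main obstacle is making the reductions in cases~(1) and~(3) airtight: in case~(1) one must be careful that the Nishimura normal form is only valid on a neighbourhood of $S$ and that "converging to $0$" genuinely forces the $y$-coordinate to stabilise at $0$, which uses that $0$ is isolated in the fixed-point set only in the $y$-transverse sense — here one should phrase the conclusion as "the realm of attraction $A_F(0)$ is contained in the strong stable manifold $W^{\mathrm{ss}}=\{y=0\}$", a genuine submanifold of positive codimension, and then apply Lemma~\ref{lem:localbasinIsInFatouComp} contrapositively. In case~(3) the subtlety is that $W^{\mathrm{cs}}$ is only $\mathcal C^r$, not analytic, but this does not matter: what we need is merely that it is a proper topological submanifold, so has empty interior, which is enough to conclude $\Sigma_F(U)$ has empty interior and hence no open invariant set of orbits converging to $0$ fits inside $U$. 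All remaining steps are routine given the cited theorems.
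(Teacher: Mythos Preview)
Your proposal is correct and follows exactly the approach indicated in the paper, which simply cites Theorems~\ref{thm:NishimuraSemiAttrFixedCurve}, \ref{thm:2DsnailLemma}, \ref{thm:StableCentreMnf}, and \ref{thm:Brjuno} for cases (1)--(4) respectively without further detail. You have correctly identified which theorem handles which case and supplied the routine arguments the paper omits; the subtleties you flag in cases (1) and (3) are real but, as you note, are resolved by the fact that the relevant stable/centre-stable sets are proper (real or complex) submanifolds and hence have empty interior.
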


\begin{rem}[Perspectives]
In dimension two:
\begin{enumerate}
\item In the parabolic-elliptic case, if the elliptic multiplier satisfies
the Brjuno condition, Theorem~\ref{thm:PoeschelBrjuno} provides
a Siegel disk. This might enable an argument similar to that of the
previous section.
\item In the general parabolic-elliptic or one-resonant case, it seems an
understanding of higher dimensional elliptic dynamics is necessary.
However, the dynamics induced by non-Brjuno elliptic multipliers are
still mysterious.
\item In the parabolic case, simplification by elimination of terms is limited.
The dynamics in non-characteristic directions do not seem to admit
one-dimensional analogues and will likely require a highly specific
approach.
\item All three cases from Corollary~\ref{cor:FatouCandidates} have corresponding
non-attracting and degenerate cases, that will require a more detailed
study.
\end{enumerate}
\end{rem}

The general elliptic case is completely open. Finding a generalisation
of the snail Lemma~\ref{lem:SnailLemma} would be a major step. Based
on the parabolic attracting behaviour observed an (ambitious) guess
is the following:
\begin{conjecture}
If there exists a uniform local basin for $F\in\Aut(\mathbb{C}^{2},0)$
at the non-attracting fixed point $0$, then there is a non-empty
product of multipliers of $F$ that is equal to $1$.
\end{conjecture}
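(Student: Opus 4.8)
The plan is to strip away, multiplier by multiplier, the configurations that are vacuous or immediate, leaving a single hard kernel, and then to attack that kernel by a blow-up argument generalising the one- and two-dimensional Snail Lemmas. Let $\lambda_{1},\lambda_{2}$ be the multipliers of $F$ at $0$, ordered so $|\lambda_{1}|\ge|\lambda_{2}|$, and let $D$ be a uniform local basin: a bounded, open, $F$-invariant set with $F^{\circ n}\to0$ uniformly on $D$. Fix a bounded neighbourhood $U\supseteq\overline D$, so $D\subseteq\Sigma_{F}(U)$. If $|\lambda_{1}|>1$, Theorem~\ref{thm:StableCentreMnf} forces $\Sigma_{F}(U)\subseteq W^{\mathrm{cs}}$, a submanifold of real dimension $\le2$, which cannot contain the open set $D$: the statement is vacuously true. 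If $|\lambda_{1}|=1>|\lambda_{2}|$ and $\lambda_{1}$ is a root of unity we are done; if $\lambda_{1}$ is elliptic, then $D$ is open with $F(D)\cap D=F(D)\ne\emptyset$ and $F^{\circ n}\to0$ uniformly on $D$, so Theorem~\ref{thm:2DsnailLemma} gives $\lambda_{1}=1$, a contradiction, and again the statement is vacuous. (The case $|\lambda_{1}|<1$ is excluded by hypothesis.) So assume $|\lambda_{1}|=|\lambda_{2}|=1$, and, roots of unity being done, that both are elliptic, $\lambda_{j}=e^{2\pi i\theta_{j}}$ with $\theta_{j}\notin\mathbb{Q}$. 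Disposing similarly of the remaining (isolated) resonance patterns that carry no relation $\lambda_{1}^{a}\lambda_{2}^{b}=1$ --- these are resonances $\lambda_{j}=\lambda_{i}^{b}$, $b\ge2$, for which an essentially unique Poincar\'e--Dulac normal form forces the $z_{i}$-component to stay on a circle, excluding a uniform local basin --- we are reduced to the kernel: $\lambda_{1},\lambda_{2}$ admit no multiplicative relation $\lambda_{1}^{a}\lambda_{2}^{b}=1$ with $(a,b)\in\mathbb{Z}^{2}\setminus\{0\}$ (the negation of the conjectural conclusion), equivalently $1,\theta_{1},\theta_{2}$ are $\mathbb{Q}$-linearly independent; in particular $F$ has no resonances and is formally linearisable with $dF_{0}=\diag(\lambda_{1},\lambda_{2})$ by Corollary~\ref{cor:PoincNoResLin}, and the task is to show no uniform local basin exists.

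For the kernel I would use the oriented real blow-up. By Lemma~\ref{lem:localbasinIsInFatouComp} the basin $D$ lies in a Fatou component $V$ attracting to $0$, and since $F(D)\subseteq D\subseteq V$ one gets $F(V)\subseteq V$; all orbits in $V$ converge to $0$. Because $dF_{0}$ is unitary, $F$ extends continuously over the blow-up $S^{3}\times[0,\varepsilon)\to\mathbb{C}^{2}$ at $0$ to a homeomorphism near the boundary that fixes $S^{3}\times\{0\}$ and acts there as the torus rotation $R=R_{\theta_{1},\theta_{2}}$ of $S^{3}=\{|z_{1}|^{2}+|z_{2}|^{2}=1\}$. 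Pick $z\in V\setminus\{0\}$, put $z_{n}=F^{\circ n}(z)$, $r_{n}=|z_{n}|\to0$, $c_{n}=|z_{n}^{1}|/r_{n}\in[0,1]$, $\zeta_{n}=z_{n}/r_{n}\in S^{3}$. From $F(w)=dF_{0}w+O(|w|^{2})$ and unitarity of $dF_{0}$ one gets $r_{n+1}=r_{n}+O(r_{n}^{2})$, $|z_{n+1}^{1}|=|z_{n}^{1}|+O(r_{n}^{2})$, hence $|c_{n+1}-c_{n}|=O(r_{n})\to0$ and $d(\zeta_{n+1},R\zeta_{n})=O(r_{n})\to0$. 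Thus the accumulation set of $(c_{n})$ is a subinterval $[a,b]\subseteq[0,1]$, and $(\zeta_{n})$ is an asymptotic pseudo-orbit of $R$, so its $\omega$-limit set $\Lambda_{z}$ is nonempty, compact, $R$-invariant, with each point's $|z_{1}|$-value in $[a,b]$. Under $\mathbb{Q}$-independence $R$ is minimal on every invariant torus $T_{c}=\{|z_{1}|=c\}\cap S^{3}$ with $c\in(0,1)$ and on each coordinate circle $C_{1}=\{z_{1}=0\}\cap S^{3}$, $C_{2}=\{z_{2}=0\}\cap S^{3}$. If $[a,b]\subseteq\{0\}$ or $[a,b]\subseteq\{1\}$, then $\Lambda_{z}$ is one coordinate circle and the orbit of $z$ is asymptotic to a coordinate axis; extracting a ray $\gamma$ from $V$ with $F\circ\gamma(t)=\gamma(t+1)$, $\gamma(t)\to0$, projecting to that axis and applying a perturbative form of the one-dimensional Snail Lemma~\ref{lem:SnailLemma} (or analysis on the invariant curve tangent to the axis, holomorphic under the single-multiplier P\"oschel condition of Theorem~\ref{thm:PoeschelBrjuno}, available to arbitrary order otherwise) would give $\theta_{1}=0$ or $\theta_{2}=0$, contradicting ellipticity.

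The remaining case --- $[a,b]$ containing an interior point $c^{*}\in(0,1)$, whence $\Lambda_{z}$ contains the \emph{entire} two-torus $T_{c^{*}}$ and the orbit spirals around \emph{both} axes with asymptotically equidistributed angles --- is the real obstacle, and the reason the statement remains a conjecture. It is exactly the behaviour of the Bracci--Zaitsev one-resonant domains of Chapter~\ref{chap:2DlocDyn} (there the relation $\lambda_{1}\lambda_{2}=1$ makes $R$ non-minimal on the tori, $\arg z^{1}+\arg z^{2}$ is conserved, and $\Lambda_{z}$ is a proper invariant circle, cf.\ Remark~\ref{rem:BZbasinShape}); ruling the case out under $\mathbb{Q}$-independence is ruling out this spiralling \emph{without} a multiplicative relation. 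Doing so seems to require quantitative control of the ratios $|z_{n}^{1}|/|z_{n}^{2}|$ along such an orbit --- the comparison $F\approx dF_{0}$ degrades just when one coordinate becomes negligible relative to the norm --- together with control of how their fluctuations interact with the small divisors of $\theta_{1},\theta_{2}$ and $\theta_{1}-\theta_{2}$; this is precisely the regime in which no multidimensional analogue of the Snail Lemma or of P\'erez--Marco's hedgehog theory is available, and where the weakness of the hypothesis (a uniform local basin carries no \emph{rate} of convergence, unlike the exponential bounds underlying the hyperbolic and parabolic-attracting theory) obstructs a direct Fatou-coordinate or normal-form attack. A dual route --- showing that a uniform local basin must leave a resonant monomial of order $\ge2$ in every Poincar\'e--Dulac normal form (Theorem~\ref{thm:PoincareDulac}), which would immediately yield a relation $\lambda^{\beta}=1$ with $0\ne\beta\in\mathbb{N}^{2}$ --- meets the same wall, since it reverses the implication ``surviving resonance $\Rightarrow$ parabolic or attracting domain'' that underlies every known construction in Chapter~\ref{chap:2DlocDyn}. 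I would expect the decisive step to need genuinely new input: a topological winding invariant for uniform local basins at neutral fixed points incompatible with minimal torus rotations, or the multidimensional hedgehog theory whose absence is noted at the end of Chapter~\ref{chap:2DlocDyn}.
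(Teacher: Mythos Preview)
The paper does not prove this statement: it is explicitly presented as a conjecture (``an ambitious guess''), with no proof and with the remark that a generalisation of the Snail Lemma would be ``a major step''. Your proposal is therefore not to be compared against a proof in the paper, and you are right to conclude that the core case remains open.

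Your reductions are mostly sound. The repelling case is vacuous via the centre-stable manifold; the neutral-attracting case is handled by the Lyubich--Peters snail lemma (Theorem~\ref{thm:2DsnailLemma}); roots of unity give the conclusion directly. One caveat: in the ``isolated resonance'' step (e.g.\ $\lambda_{1}=\lambda_{2}^{b}$ with both elliptic and no relation $\lambda^{\alpha}=1$), your argument that the $z_{2}$-component stays on a circle presupposes a \emph{holomorphic} conjugation to Poincar\'e--Dulac normal form in that variable. This is not automatic for elliptic multipliers without a Brjuno-type hypothesis, so that disposal is incomplete as stated; you would need either a Brjuno assumption on $\lambda_{2}$ or a different argument (e.g.\ via P\"oschel's Theorem~\ref{thm:PoeschelBrjuno} for the invariant curve, which again needs an arithmetic condition).

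Your analysis of the kernel --- both multipliers elliptic with $1,\theta_{1},\theta_{2}$ rationally independent --- correctly isolates the genuine obstruction. The real-blow-up/pseudo-orbit picture and the contrast with the one-resonant basins (where $\Lambda_{z}$ is a circle rather than a full torus) capture exactly why known techniques fail here: there is no conserved angular quantity, no rate of convergence to exploit, and no multidimensional hedgehog theory. This matches the paper's own assessment that the elliptic case is ``completely open'' and that the conjecture would follow from a suitable generalisation of the Snail Lemma, which is not currently available.
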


One may try to approach this by finding a deformation of a distinguished
boundary of a polydisc around $0$.

\cleardoublepage{}

\phantomsection\addcontentsline{toc}{chapter}{\indexname}\printindex\cleardoublepage{}
\phantomsection\addcontentsline{toc}{chapter}{\bibname}

\newcommand{\etalchar}[1]{$^{#1}$}
\providecommand{\bysame}{\leavevmode\hbox to3em{\hrulefill}\thinspace}
\providecommand{\MR}{\relax\ifhmode\unskip\space\fi MR }
\providecommand{\MRhref}[2]{%
  \href{http://www.ams.org/mathscinet-getitem?mr=#1}{#2}
}
\providecommand{\href}[2]{#2}

\end{document}